\documentclass[reqno]{amsart}
\usepackage[english]{babel}
\usepackage{fourier} 
\usepackage{graphicx}
\usepackage{amscd}
\usepackage{amsmath}
\usepackage{amsfonts}
\usepackage{amssymb}
\usepackage{mhequ}
\usepackage{bbm}
\usepackage{setspace}
\usepackage{enumerate}         
\usepackage{fixme}
\usepackage{color}
\usepackage{url}
\usepackage{amsthm}
\usepackage{microtype}

\usepackage{orcidlink}

\usepackage{amsaddr}

\usepackage{bm}
\usepackage{xy}
\usepackage{comment}
\usepackage{stmaryrd}
\usepackage{tikz-cd}
\usepackage{cprotect}
\usepackage{hyperref}

\usetikzlibrary{external}

\AtBeginEnvironment{tikzcd}{\tikzexternaldisable}
\AtEndEnvironment{tikzcd}{\tikzexternalenable}

\def\dash{\leavevmode\unskip\kern0.18em--\penalty\exhyphenpenalty\kern0.18em}
\def\slash{\leavevmode\unskip\kern0.15em/\penalty\exhyphenpenalty\kern0.15em}

\colorlet{darkblue}{blue!90!black}
\colorlet{darkred}{red!90!black}

\def\enlarge{edge[draw=none] (0,-0.1) edge[draw=none] (0,0.1)}

\theoremstyle{plain}
\newtheorem{theorem}{Theorem}[section]

\newtheorem{claim}[theorem]{Claim}

\newtheorem{corollary}[theorem]{Corollary}

\newtheorem{lemma}[theorem]{Lemma}

\newtheorem{prop}[theorem]{Proposition}

\newtheorem{definition}[theorem]{Definition}

\newtheorem{assumption}[theorem]{Assumption}
\theoremstyle{remark}
\newtheorem{remark}[theorem]{Remark}

\def\Vec{{\mathrm{Vec}}}
\def\VecB{{\mathrm{VecB}}}

\def\Iso{{\mathrm{Iso}}}
\def\symset{\mcb{s}}
\def\bbar#1{\bar{\bar #1}}

\numberwithin{equation}{section}

\newcommand{\rsto}{]\!\kern-1.8pt ]}
\newcommand{\lsto}{[\!\kern-1.7pt [}

\vfuzz2pt 
\hfuzz2pt 

\numberwithin{equation}{section}
\def\ob{\mathord{\mathrm{Ob}}}

\newcommand{\RR}{\mathbb{R}}

\newcommand{\PP}{\mathbb{P}}

\newcommand{\NN}{\mathbb{N}}

\newcommand{\cC}{\mathcal{C}}

\renewcommand{\Im}{\mathrm{Im}}


\newcommand{\id}{\operatorname{id}}
 
\newcommand{\diag}{\delta}

\let\eps\varepsilon

\newcommand{\pT}{\bar{T}}
\newcommand{\pTset}{{\mathtt{T}}}

\newcommand{\vertiii}[1]{{\left\vert\kern-0.25ex\left\vert\kern-0.25ex\left\vert #1 
		\right\vert\kern-0.25ex\right\vert\kern-0.25ex\right\vert}}

\DeclareMathOperator{\Tr}{Tr}
\DeclareMathOperator{\trr}{\mathfrak{tr}}

\DeclareMathOperator{\tr}{tr}
\DeclareMathOperator{\sign}{sign}

\newcommand{\supp}{\ensuremath{\mathsf{supp}}}

\newcommand{\fraks}{\mathfrak{s}}
\newcommand{\frakl}{\mathfrak{l}}
\newcommand{\frakt}{\mathfrak{t}}
\newcommand{\frakf}{\mathfrak{f}}
\newcommand{\frakk}{\mathfrak{k}}
\newcommand{\dep}{\operatorname{dep}}
\newcommand{\Con}{\operatorname{Con}}
\newcommand{\Per}{\operatorname{Per}}
\newcommand{\E}{\operatorname{E}}

\newcommand{\D}{\operatorname{\mathfrak{A}}}
\newcommand{\red}{\operatorname{red}}
\newcommand{\ind}{\operatorname{ind}}
\newcommand{\dif}{\operatorname{dif}}

\newcommand{\re}{\operatorname{red}_*}
\newcommand{\mft}{\mathfrak{t}}
\newcommand{\Poly}{\mathcal{P}}
\newcommand{\dVol}{d\operatorname{Vol}}
\newcommand{\Vol}{\operatorname{Vol}}

\def\proj{\mathbf{p}}
\def\PP{\mathbf{P}}
\def\reg{{\mathop{\mathrm{reg}}}}

\def\lll{{\mathop{\prec}}}
\def\rrr{{\mathop{\succ}}}

\def\loc{\mathrm{loc}}

\let\tto\rightsquigarrow

\newcommand{\mfL}{\mathfrak{L}}

\newcommand{\mfl}{\mathfrak{l}}

\def\hotimes{\mathbin{\hat\otimes}}

\def\Lab{\mathfrak{S}}

\def\Hom{\mathord{\mathrm{Hom}}}

\def\Iso{\mathord{\mathrm{Iso}}}
\def\TStruc{\mathord{\mathrm{TStruc}}}
\def\SSet{\mathord{\mathrm{SSet}}}

\def\Func{\mathbf{F}}

\def\symset{\mcb{s}}

\DeclareMathAlphabet{\mathbbm}{U}{bbm}{m}{n}

\overfullrule=3mm
\marginparwidth=3.7cm

\DeclareFontFamily{U}{BOONDOX-calo}{\skewchar\font=45 }
\DeclareFontShape{U}{BOONDOX-calo}{m}{n}{
  <-> s*[1.05] BOONDOX-r-calo}{}
\DeclareFontShape{U}{BOONDOX-calo}{b}{n}{
  <-> s*[1.05] BOONDOX-b-calo}{}
\DeclareMathAlphabet{\mcb}{U}{BOONDOX-calo}{m}{n}
\SetMathAlphabet{\mcb}{bold}{U}{BOONDOX-calo}{b}{n}

%

\colorlet{symbols}{blue!90!black}

\colorlet{testcolor}{green!60!black}

\usetikzlibrary{calc}
\usetikzlibrary{shapes.misc}
\usetikzlibrary{shapes.symbols}
\usetikzlibrary{shapes.geometric}
\usetikzlibrary{snakes}
\usetikzlibrary{decorations}
\usetikzlibrary{decorations.markings}

\tikzset{
	root/.style={circle,fill=testcolor,inner sep=0pt, minimum size=2mm},
	broot/.style={circle,fill=gray,inner sep=0pt, minimum size=2mm},
	dot/.style={circle,fill=black,inner sep=0pt, minimum size=1mm},
		reddot/.style={circle,fill=red,inner sep=0pt, minimum size=1mm},
			bluedot/.style={circle,fill=blue,inner sep=0pt, minimum size=1mm},
	eps/.style={circle,fill=white,draw=symbols,inner sep=0pt,minimum size=0.8mm},
	int/.style={circle,fill=black,draw=black,inner sep=0pt,minimum size=0.7mm},
	var/.style={circle,fill=black!10,draw=black,inner sep=0pt, minimum size=2mm},
	dotred/.style={circle,fill=black!50,inner sep=0pt, minimum size=2mm},
	generic/.style={semithick,shorten >=1pt,shorten <=1pt},
	dist/.style={ultra thick,draw=testcolor,shorten >=1pt,shorten <=1pt},
	testfcn/.style={ultra thick,testcolor,shorten >=1pt,shorten <=1pt,<-},
	testfcnx/.style={ultra thick,testcolor,shorten >=1pt,shorten <=1pt,<-,
		postaction={decorate,decoration={markings,mark=at position 0.6 with {\drawx}}}},
	keps/.style={semithick,shorten >=1pt,shorten <=1pt,densely dashed,->},
	kprimex/.style={semithick,shorten >=1pt,shorten <=1pt,densely dashed,->,
		postaction={decorate,decoration={markings,mark=at position 0.4 with {\drawx}}}},
	kernel/.style={semithick,shorten >=1pt,shorten <=1pt,->},
	multx/.style={shorten >=1pt,shorten <=1pt,
		postaction={decorate,decoration={markings,mark=at position 0.5 with {\drawx}}}},
	kernelx/.style={semithick,shorten >=1pt,shorten <=1pt,->,
		postaction={decorate,decoration={markings,mark=at position 0.4 with {\drawx}}}},
	kernel1/.style={->,semithick,shorten >=1pt,shorten <=1pt,postaction={decorate,decoration={markings,mark=at position 0.45 with {\draw[-] (0,-0.1) -- (0,0.1);}}}},
	kernel2/.style={->,semithick,shorten >=1pt,shorten <=1pt,postaction={decorate,decoration={markings,mark=at position 0.45 with {\draw[-] (0.05,-0.1) -- (0.05,0.1);\draw[-] (-0.05,-0.1) -- (-0.05,0.1);}}}},
	kernelBig/.style={semithick,shorten >=1pt,shorten <=1pt,decorate, decoration={zigzag,amplitude=1.5pt,segment length = 3pt,pre length=2pt,post length=2pt}},
	rho/.style={dotted,semithick,shorten >=1pt,shorten <=1pt},
	renorm/.style={shape=circle,fill=white,inner sep=1pt},
	labl/.style={shape=rectangle,fill=white,inner sep=1pt},
	xi/.style={circle,fill=symbols!10,draw=symbols,inner sep=0pt,minimum size=1.2mm},
	xix/.style={crosscircle,fill=symbols!10,draw=symbols,inner sep=0pt,minimum size=1.2mm},
	xib/.style={circle,fill=symbols!10,draw=symbols,inner sep=0pt,minimum size=1.6mm},
	xibx/.style={crosscircle,fill=symbols!10,draw=symbols,inner sep=0pt,minimum size=1.6mm},
	not/.style={circle,fill=symbols,draw=symbols,inner sep=0pt,minimum size=0.5mm},
cumu2n/.style={inner sep=3pt},
cumu2/.style={draw=red!80,fill=red!40},
cumu2b/.style={draw=blue!80,fill=blue!40},
cumu2nv/.style={inner sep=3pt},
cumu2v/.style={draw=red!80,fill=white,very thick},
cumu3/.style={regular polygon, regular polygon sides=3,draw=red!80,rounded corners=3pt,fill=red!40,minimum size=5mm},
cumu4/.style={regular polygon, regular polygon sides=4,draw=red!80,rounded corners=3pt,fill=red!40,minimum size=7mm},
cumu5/.style={regular polygon, regular polygon sides=5,draw=red!80,rounded corners=3pt,fill=red!40,minimum size=7mm},
	>=stealth,
	not/.style={circle,fill=symbols,draw=symbols,inner sep=0pt,minimum size=0.5mm},
kernels2/.style={very thick,segment length=12pt},
	}
\makeatletter
\def\DeclareSymbol#1#2#3{%
	\expandafter\gdef\csname MH@symb@#1\endcsname{\tikzsetnextfilename{symbol#1}%
		\tikz[baseline=#2,scale=0.15,draw=symbols,line join=round]{#3}}%
	\expandafter\gdef\csname MH@symb@#1s\endcsname{\scalebox{0.75}{\tikzsetnextfilename{symbol#1}%
			\tikz[baseline=#2,scale=0.15,draw=symbols,line join=round]{#3}}}%
	\expandafter\gdef\csname MH@symb@#1ss\endcsname{\scalebox{0.65}{\tikzsetnextfilename{symbol#1}%
			\tikz[baseline=#2,scale=0.15,draw=symbols,line join=round]{#3}}}%
}
\def\<#1>{\ifthenelse{\boolean{mmode}}{\mathchoice{\csname MH@symb@#1\endcsname}{\csname MH@symb@#1\endcsname}{\csname MH@symb@#1s\endcsname}{\csname MH@symb@#1ss\endcsname}}{\csname MH@symb@#1\endcsname}}
\makeatother

\DeclareSymbol{X}{-2.4}{\node[eps] {};}
\DeclareSymbol{I}{0}{\draw[white] (-.5,0) -- (.5,0); \draw (0,0)  -- (0,1.5) {};}
\DeclareSymbol{1}{0}{\draw[white] (-.5,0) -- (.5,0); \draw (0,0)  -- (0,1.5) node[eps] {};}
\DeclareSymbol{2}{0}{\draw (-0.5,1.5) node[eps] {} -- (0,0) -- (0.5,1.5) node[eps] {};}
\DeclareSymbol{3}{0}{\draw (0,0) -- (0,1.5) node[eps] {}; \draw (-.7,1.3) node[eps] {} -- (0,0) -- (.7,1.3) node[eps] {};}
\DeclareSymbol{E3}{0}{\draw (0,0) -- (0,1.5) node[not] {}; \draw (-.7,1.3) node[not] {} -- (0,0) -- (.7,1.3) node[not] {}; \node[eps] (0,0) {};}

\DeclareSymbol{E5E4}{1}{
\draw (.5,2) node[not] {} -- (1.5,1.5) -- (2.5,2) node[not] {};
\draw (1.5,1.5) -- (1.5,2.7) node[not] {}; \draw (0.8,2.5) node[not] {} -- (1.5,1.5) -- (2.2,2.5) node[not] {};
\draw (0,0) -- (0,1.5) node[not] {}; \draw (-.7,1.3) node[not] {} -- (0,0) -- (.7,1.3) node[not] {};
\draw (-1.2,0.7) node[not] {} -- (0,0);
\draw (0,0) to[bend right=50] (1.5,1.5); 
\node[eps] at (0,0) {};\node[eps] at (1.5,1.5) {};
}
\DeclareSymbol{E4E4}{1}{
\draw (.5,2) node[not] {} -- (1.5,1.5) -- (2.5,2) node[not] {};
\draw (1,2.6) node[not] {} -- (1.5,1.5) -- (2,2.6) node[not] {};
\draw (0,0) -- (0,1.5) node[not] {}; \draw (-.7,1.3) node[not] {} -- (0,0) -- (.7,1.3) node[not] {};
\draw (-1.2,0.7) node[not] {} -- (0,0);
\draw (0,0) to[bend right=50] (1.5,1.5); 
\node[eps] at (0,0) {};\node[eps] at (1.5,1.5) {};
}
\DeclareSymbol{3E4}{1}{
\draw (1.5,1.5) -- (1.5,2.7) node[not] {}; \draw (0.8,2.5) node[not] {} -- (1.5,1.5) -- (2.2,2.5) node[not] {};
\draw (0,0) -- (0,1.5) node[not] {}; \draw (-.7,1.3) node[not] {} -- (0,0) -- (.7,1.3) node[not] {};
\draw (-1.2,0.7) node[not] {} -- (0,0);
\draw (0,0) to[bend right=50] (1.5,1.5); 
\node[eps] at (0,0) {};
}

\DeclareSymbol{3E3}{1}{
\draw (1.5,1.5) -- (1.5,2.7) node[not] {}; \draw (0.8,2.5) node[not] {} -- (1.5,1.5) -- (2.2,2.5) node[not] {};
\draw (0,0) -- (0,1.5) node[not] {}; \draw (-.7,1.3) node[not] {} -- (0,0) -- (.7,1.3) node[not] {};
\draw (0,0) to[bend right=50] (1.5,1.5); 
\node[eps] at (0,0) {};
}

\DeclareSymbol{E5}{0}{\draw (0,0) -- (0,1.5) node[not] {}; \draw (-.7,1.3) node[not] {} -- (0,0) -- (.7,1.3) node[not] {};
\draw (-1.2,0.7) node[not] {} -- (0,0) -- (1.2,0.7) node[not] {}; \node[eps] (0,0) {};}
\DeclareSymbol{E52}{-3}{\draw (0,0) -- (0,-1); \draw (1.5,-0.4) node[not] {} -- (0,-1) -- (-1.5,-0.4) node[not] {}; 
\draw (-1,0.5) node[not] {} -- (0,0) -- (1,0.5) node[not] {};
\draw (0,0) -- (0,1.2) node[not] {}; \draw (-.7,1) node[not] {} -- (0,0) -- (.7,1) node[not] {};\node[eps] (0,0) {};}
\DeclareSymbol{E32}{-3}{\draw (0,0) -- (0,-1); \draw (1.5,-0.4) node[not] {} -- (0,-1) -- (-1.5,-0.4) node[not] {}; 
\draw (0,0) -- (0,1.2) node[not] {}; \draw (-.7,1) node[not] {} -- (0,0) -- (.7,1) node[not] {};\node[eps] (0,0) {};}
\DeclareSymbol{E50}{-3}{\draw (0,0) -- (0,-1); 
\draw (-1,0.5) node[not] {} -- (0,0) -- (1,0.5) node[not] {};
\draw (0,0) -- (0,1.2) node[not] {}; \draw (-.7,1) node[not] {} -- (0,0) -- (.7,1) node[not] {};\node[eps] (0,0) {};}

\DeclareSymbol{3E2}{-3}{\draw (0,0) -- (0,-1); \draw (1,0) node[not] {} -- (0,-1) -- (-1,0) node[not] {}; \draw (0,0) -- (0,1.2) node[not] {}; \draw (-.7,1) node[not] {} -- (0,0) -- (.7,1) node[not] {};\node[eps] at (0,-1) {};}

\DeclareSymbol{31}{-2}{\draw (0,0) -- (0,-1) -- (1,0) node[eps] {}; \draw (0,0) -- (0,1.2) node[eps] {}; \draw (-.7,1) node[eps] {} -- (0,0) -- (.7,1) node[eps] {};}
\DeclareSymbol{30}{-2}{\draw (0,0) -- (0,-1); \draw (0,0) -- (0,1.2) node[eps] {}; \draw (-.7,1) node[eps] {} -- (0,0) -- (.7,1) node[eps] {};}
\DeclareSymbol{32}{-2}{\draw (0,0) -- (0,-1); \draw (1,0) node[eps] {} -- (0,-1) -- (-1,0) node[eps] {}; \draw (0,0) -- (0,1.2) node[eps] {}; \draw (-.7,1) node[eps] {} -- (0,0) -- (.7,1) node[eps] {};}
\DeclareSymbol{32b}{-4.3}{\draw (0,0.5) -- (0,-1); \draw (1,0) node[not] {} -- (0,-1) -- (-1,0) node[not] {};}
\DeclareSymbol{22}{-2}{\draw (0,0.3)  -- (0,-1); \draw (1,0) node[eps] {} -- (0,-1) -- (-1,0) node[eps] {};\draw (-.7,1) node[eps] {} -- (0,0.3) -- (.7,1) node[eps] {};}
\DeclareSymbol{20}{-2}{\draw (0,0)  -- (0,-1);\draw (-.7,1) node[not] {} -- (0,0) -- (.7,1) node[not] {};}
\DeclareSymbol{12}{-3}{\draw (0,0.3) -- (0,-1); \draw (1,0) node[not] {} -- (0,-1) -- (-1,0) node[not] {};\draw (-.7,1) node[not] {} -- (0,0.3);}
\DeclareSymbol{10}{-3}{\draw (0,0.3) -- (0,-1);\draw (-.7,1) node[not] {} -- (0,0.3);}
\DeclareSymbol{21}{-3}{\draw (0,0.3) -- (0,-1) -- (1,0) node[not] {};\draw (-.7,1) node[not] {} -- (0,0.3) -- (.7,1) node[not] {};}


\DeclareSymbol{1'}{0}{\draw[white] (-.5,0) -- (.5,0); \draw (0,0)  -- (0,1.5) node[not] {}; \draw ( -0.2,1)-- ( 0.2, 1);}

\DeclareSymbol{3'}{0}{\draw (0,0) -- (0,1.5) node[not] {};
\draw  (-0.25,1.1) -- ( 0.25, 1.1);
 \draw (-.7,1.3) node[eps] {} -- (0,0) -- (.7,1.3) node[eps] {};}

\DeclareSymbol{3''}{0}{\draw (0,0) -- (0,1.5) node[eps] {};
 \draw (-.7,1.3) node[not] {} -- (0,0) -- (.7,1.3) node[not] {};
 \draw  (-0.8,0.7) -- ( -0.25, 1);
  \draw  (0.8,0.7) -- ( 0.25, 1);
 }
 
 \DeclareSymbol{3'''}{0}{\draw (0,0) -- (0,1.5) node[not] {};
\draw  (-0.3,1.05) -- ( 0.3, 1.05);
 \draw (-.7,1.3) node[not] {} -- (0,0) -- (.7,1.3) node[not] {};
 \draw  (-0.8,0.7) -- ( -0.25, 1);
  \draw  (0.8,0.7) -- ( 0.25, 1);
 }
 
 \DeclareSymbol{3'2}{-2}{\draw (0,0)  -- (0,-1); \draw (1,0) node[eps] {} -- (0,-1) -- (-1,0) node[eps] {}; \draw (0,0) -- (0,1.4) node[not] {}; \draw (-.7,1) node[eps] {} -- (0,0) -- (.7,1) node[eps] {};
\draw  (-0.25,1) -- ( 0.25, 1); 
 }
 
 \DeclareSymbol{3''2}{-2}{\draw (0,0)  -- (0,-1); \draw (1,0) node[eps] {} -- (0,-1) -- (-1,0) node[eps] {}; \draw (0,0) -- (0,1.3) node[eps] {}; \draw (-.7,1) node[not] {} -- (0,0) -- (.7,1) node[not] {};
 \draw  (-0.8,0.5) -- ( -0.25, 0.8);
  \draw  (0.8,0.5) -- ( 0.25, 0.8); 
 }
 
  \DeclareSymbol{3'''2}{-2}{\draw (0,0)  -- (0,-1); \draw (1,0) node[eps] {} -- (0,-1) -- (-1,0) node[eps] {}; \draw (0,0) -- (0,1.4) node[not] {}; \draw (-.7,1) node[not] {} -- (0,0) -- (.7,1) node[not] {};
 \draw  (-0.8,0.5) -- ( -0.25, 0.8);
  \draw  (0.8,0.5) -- ( 0.25, 0.8); 
  \draw ( -0.25,1)-- ( 0.25, 1);
 }
 
  \DeclareSymbol{3'2'}{-2}{\draw (0,0)  -- (0,-1); \draw (1,0) node[eps] {} -- (0,-1) -- (-1,0) node[not] {}; \draw (0,0) -- (0,1.4) node[not] {}; \draw (-.7,1) node[eps] {} -- (0,0) -- (.7,1) node[eps] {};
\draw  (-0.25,1) -- ( 0.25, 1); 
\draw  (-0.8,-0.6) -- ( -0.4, -0.2);
 }
 
  \DeclareSymbol{3'2''}{-2}{\draw (0,0)  -- (0,-1); \draw (1,0) node[not] {} -- (0,-1) -- (-1,0) node[not] {}; \draw (0,0) -- (0,1.4) node[not] {}; \draw (-.7,1) node[eps] {} -- (0,0) -- (.7,1) node[eps] {};
\draw  (-0.25,1) -- ( 0.25, 1); 
\draw  (-0.8,-0.6) -- ( -0.4, -0.2);
\draw  (0.8,-0.6) -- ( 0.4, -0.2);
 }

   \DeclareSymbol{3''2'}{-2}{\draw (0,0)  -- (0,-1); \draw (1,0) node[eps] {} -- (0,-1) -- (-1,0) node[not] {}; \draw (0,0) -- (0,1.3) node[eps] {}; \draw (-.7,1) node[not] {} -- (0,0) -- (.7,1) node[not] {};
 \draw  (-0.8,0.5) -- ( -0.25, 0.8);
  \draw  (0.8,0.5) -- ( 0.25, 0.8); 
\draw  (-0.8,-0.6) -- ( -0.4, -0.2);
 }
  \DeclareSymbol{32'}{-2}{\draw (0,0) -- (0,-1); \draw (1,0) node[eps] {} -- (0,-1) -- (-1,0) node[not] {}; \draw (0,0) -- (0,1.3) node[eps] {}; \draw (-.7,1) node[eps] {} -- (0,0) -- (.7,1) node[eps] {};
\draw  (-0.8,-0.6) -- ( -0.4, -0.2);
 }
  \DeclareSymbol{3'0}{-2}{\draw (0,0)  -- (0,-1); 
   \draw (0,0) -- (0,1.4) node[not] {}; \draw (-.7,1) node[eps] {} -- (0,0) -- (.7,1) node[eps] {};
\draw  (-0.25,1) -- ( 0.25, 1); 
 }


\DeclareSymbol{1}{0}{\draw[white] (-.4,0) -- (.4,0); \draw (0,0)  -- (0,1.2) node[eps] {};}
\DeclareSymbol{2}{0}{\draw (-0.5,1.2) node[eps] {} -- (0,0) -- (0.5,1.2) node[eps] {};}
\DeclareSymbol{11}{0}{\draw (0,1.8) node[eps] {} -- (-0.7,0.9) -- (0,0) -- (0.7,1) node[eps] {};}
\DeclareSymbol{10}{0}{\draw (0,1.8) node[eps] {} -- (-0.8,0.9) -- (0,0);}
\DeclareSymbol{21}{0.7}{\draw (-1,1.8) node[eps] {} -- (-0.5,0.9); \draw (0,1.8) node[eps] {} -- (-0.5,0.9) -- (0,0) -- (0.5,0.9) node[eps] {};}
\DeclareSymbol{20}{0.7}{\draw (-1,1.8) node[eps] {} -- (-0.5,0.9); \draw (0,1.8) node[eps] {} -- (-0.5,0.9) -- (-0.5,0);}
\DeclareSymbol{210}{1.1}{\draw (-0.5,2.4) node[eps] {} -- (-1,1.6);\draw (-1.5,2.4) node[eps] {} -- (-0.5,0.8); \draw (0,1.6) node[eps] {} -- (-0.5,0.8) -- (-0.5,0);}
\DeclareSymbol{211}{1}{\draw  (-0.5,2.4) node[eps] {} -- (-1,1.6);\draw (-1.5,2.4) node[eps] {} -- (-0.5,0.8); \draw (0,1.6) node[eps] {} -- (-0.5,0.8) -- (0,0) -- (0.5,0.8) node[eps] {};}
\DeclareSymbol{22j}{0.7}{\draw (-1.5,1.8) node[eps] {} -- (-1,0.9) -- (0,0) -- (1,0.9) -- (1.5,1.8) node[eps] {};
\draw (-0.5,1.8) node[eps] {} -- (-1,0.9);\draw (0.5,1.8) node[eps] {} -- (1,0.9);}

\DeclareSymbol{2'}{0}{\draw (-0.65,1.3) node[not] {} -- (0,0) -- (0.6,1.3) node[eps] {};
\draw  (-0.8,0.7) -- ( -0.15, 1.1);} 

\DeclareSymbol{2''}{0}{\draw (-0.65,1.3) node[not] {} -- (0,0) -- (0.6,1.3) node[not] {};
\draw  (-0.8,0.7) -- ( -0.15, 1.1);
\draw  (0.8,0.7) -- ( 0.15, 1.1);
} 
 
\DeclareSymbol{2'1}{0.7}{\draw (-1,1.8) node[not] {} -- (-0.5,0.9); \draw (0,1.8) node[eps] {} -- (-0.5,0.9) -- (0,0) -- (0.5,0.9) node[eps] {};
\draw  (-1.1,1.2) -- ( -0.5, 1.6);} 

\DeclareSymbol{2''1}{0.7}{\draw (-1,1.8) node[not] {} -- (-0.5,0.9); \draw (0,1.8) node[not] {} -- (-0.5,0.9) -- (0,0) -- (0.5,0.9) node[eps] {};
\draw  (-1.1,1.2) -- ( -0.5, 1.6);
\draw  (.15,1.2) -- ( -0.45, 1.6);} 
 
\DeclareSymbol{2''1'}{0.7}{\draw (-1,1.8) node[not] {} -- (-0.5,0.9); \draw (0,1.8) node[not] {} -- (-0.5,0.9) -- (0,0) -- (0.55,1) node[not] {};
\draw  (-1.1,1.2) -- ( -0.5, 1.6);
\draw  (.15,1.2) -- ( -0.45, 1.6);
\draw  (0.8,0.4) -- ( 0, 0.8);
} 
\DeclareSymbol{2'1'}{0.7}{\draw (-1,1.8) node[not] {} -- (-0.5,0.9); \draw (0,1.8) node[eps] {} -- (-0.5,0.9) -- (0,0) -- (0.55,1) node[not] {};
\draw  (-1.1,1.2) -- ( -0.5, 1.6);
\draw  (0.8,0.4) -- ( 0, 0.8);} 

\DeclareSymbol{21'}{0.7}{\draw (-1,1.8) node[eps] {} -- (-0.5,0.9); \draw (0,1.8) node[eps] {} -- (-0.5,0.9) -- (0,0) -- (0.55,1) node[not] {};
\draw  (0.8,0.4) -- ( 0, 0.8);} 

\DeclareSymbol{2'0}{0.7}{\draw (-1,1.8) node[not] {} -- (-0.5,0.9); \draw (0,1.8) node[eps] {} -- (-0.5,0.9) -- (-0.5,0);
\draw  (-1.1,1.2) -- ( -0.5, 1.6);}

\DeclareSymbol{b2}{0}{
\draw[kernels2] (0,0) -- (-0.5,1.4);
\draw[kernels2] (0,0) -- (+0.5,1.4); 
 \draw  (-0.5,1.4) node[eps] {};
 \draw  (0.5,1.4) node[eps] {};
 }
 
\DeclareSymbol{bI}{0}{\draw[white] (-.5,0) -- (.5,0); \draw[kernels2] (0,0)  -- (0,1.5) {};} 

\DeclareSymbol{Xi2}{0}{\draw[white] (-.5,0) -- (.5,0); \draw (0,0)  -- (0,1.5) node[eps] {};
\draw (0,0) node[eps] {};}

 \DeclareSymbol{1'Xi}{0}{\draw[white] (-.5,0) -- (.5,0); \draw (0,0)  -- (0,1.5) node[not] {}; \draw ( -0.2,1)-- ( 0.2, 1);
 \draw (0,0) node[eps] {};
 }

\def\CCE{\mathbb{E}}
\def\CCV{\mathbb{V}}
\def\CCG{\mathbb{G}}
\def\CCT{\mathbb{T}}

\def\neigh{N}


\begin{document}
\title{Regularity Structures on Manifolds and Vector Bundles}

\author{Martin Hairer\orcidlink{0000-0002-2141-6561}}
\address{EPFL, Switzerland and Imperial College London, UK}
\email{martin.hairer@epfl.ch}

\author{Harprit Singh\orcidlink{0000-0002-9991-8393}}
\address{Imperial College London, UK}
\email{h.singh19@imperial.ac.uk}


\maketitle


\begin{abstract}
We develop a generalisation of the original theory of regularity structures, \cite{Hai14}, which is able to treat SPDEs on manifolds with values in vector bundles. Assume $M$ is a Riemannian manifold and $E\to M$ and $F^i\to M$ are vector bundles (with a metric and connection), this theory allows to solve subcritical equations of the form 
\begin{equation*}
\partial_t u + \mathcal{L}u = \sum_{i=0}^m G_i(u, \nabla u,\ldots, \nabla^n u)\xi_i\ ,
\end{equation*}
where $u$ is a (generalised) section of $E$, $\mathcal{L}$ is a uniformly elliptic operator on $E$ of order strictly greater than $n$, the $\xi_i$ are $F^i$-valued random distributions (e.g.\ $F^i$-valued white noises), and the $G_i:E\times TM^*\otimes E \times\ldots\times (TM^*)^{\otimes n} \otimes E \to L(F^i ,E)$ are local functions.

We apply our framework to three example equations which illustrate
that when $\mathcal{L}$ is a Laplacian it is possible in most cases to renormalise such equations by adding spatially homogeneous 
counterterms and we discuss in which cases more sophisticated renormalisation procedures (involving
the curvature of the underlying manifold) are required.
\end{abstract}
\thispagestyle{empty}

\setcounter{tocdepth}{1}

\tableofcontents

\section{Introduction}

The purpose of this work is to extend the theory of regularity structures as developed in \cite{Hai14} for solving singular partial differential equations on $\mathbb{R}^d$ or $\mathbb{T}^d$ to a theory able to treat equations on vector bundles over a Riemannian manifold. 
That is, assume $M$ is a Riemannian manifold and $E\to M$ and $F^i\to M$ are vector bundles (with a metric and connection), we develop a theory for solving subcritical equations of the form 
\begin{equation}\label{equation to solve}
\partial_t u + \mathcal{L}u = \sum_{i=0}^m G_i(u, \nabla u,\ldots, \nabla^n u)\xi_i\ ,
\end{equation}
where $u$ is a (generalized) section of the vector bundle $E \to M$ and $\mathcal{L}$ is a uniformly elliptic operator on $E$ of order grater than $n$, $\xi_i$ are $F^i$ valued random distributions (e.g. $F^i$-valued white noise) and $G_i:E\times TM^*\otimes E \times\ldots\times (TM^*)^{\otimes n} \otimes E \to L(F^i ,E)$ are local functions.

As in \cite{Hai14} and subsequent works \cite{BHZ19}, \cite{CH16}, \cite{BCCH20} the treatment of such equations can naturally be divided into the following steps:
\begin{enumerate}
\item Introducing regularity structures and models as general tool and developing an analytic framework allowing to reformulate and solve such equations as abstract equations ``driven'' by a model. 
\item Building a regularity structure for a specific equation (or system of equations) and a large enough renormalisation group, as well as identifying its action on the equation.
\item Identifying the correct renormalisation procedure and showing convergence of renormalised models.
\end{enumerate}
In this work we treat the first two steps in full generality and execute the last step for the following equations.
\begin{itemize}
\item Let $M$ be a compact Riemannian $2$-manifold, $f\in \cC^{\infty}(\RR)$ and $A(\cdot, \cdot)$ a smooth bilinear form on the tangent bundle $TM$. The g-PAM equation reads
\begin{equation}\tag{g-PAM}
\label{gpam_intro}
\partial_t u + \Delta u = A( \nabla u, \nabla u) +f(u)\xi, 
\end{equation}
where $\Delta$ is the Laplace--Beltrami operator\footnote{We use the sign convention more commonly used in geometry. Thus, Laplacians are positive operators.} and $\xi$ is spatial white noise constant in time. 

\item Let $M$ be a compact Riemannian 2 or 3-manifold and $E\to M$ a vector bundle with a metric $h$ and compatible connection $\nabla^{E}$. The natural geometric generalisation of the $\Phi^4$ equation reads
\begin{equ}\tag{$\Phi^4_3$}\label{phi4_intro}
\partial_t u + \Delta^E u =  -u\cdot |u|^2_h +\xi,
\end{equ}
where $\xi$ is an $E$-valued spacetime white noise, $|\cdot|_h$ the norm on $E$ induced by $h$ and $\Delta^E$ 
a generalised Laplacian acting on sections of $E$, an example being
the connection Laplacian induced by the connection $\nabla^E$ and the Levi-Civita connection, c.f.\ \eqref{eq:connection_laplacian}.

The invariant measure of this dynamic is of interest in constructive quantum field theory and can formally be written as
$$\exp\Big(-\int_M |\nabla^E\phi|^2_{g\times h} +V_{C,\lambda}(\phi)  \Big)d\mathfrak{L}(\phi)\ , $$ 
where
$|\cdot |_{g\times h}$ is the induced norm on $T^*M\otimes E$,
$V_{C,\lambda}(u)= C|u|_{h}^4- \lambda |u|_{h}^2 \ $ is a double well potential and $\mathfrak{L}(\phi)$ denotes the (non-existent) Lebesgue measure on sections of $E$. 

\item While it turns out that in many cases, such as the two above, the renormalisation procedure needed to solve singular SPDEs is insensitive to the curvature of the underlying manifold, this is not always the case. The equation 
\begin{equ}\tag{$\phi^3_4$}\label{eq:phi34_intro}
\partial_t u + \Delta u = u^2  +\xi
\end{equ}
on a compact Riemannian $4$ manifold is an example where an additional counter-term proportional to the scalar curvature of $M$ has to be introduced.
\end{itemize}

At this point we mention the work \cite{DDD19} which builds a minimal framework to solve Equation (\ref{gpam_intro}) in the special case $f=0$ and $g=\id$ within regularity structures. Of course flat compact manifolds, such as the torus or the Klein bottle could already be treated in \cite{Hai14}, since all flat compact manifolds are quotients of Euclidean space by a free crystallographic group. 
The theory of para-controlled distributions \cite{GIP15}, an alternative theory to deal with SPDEs, has been extended to the manifolds setting in \cite{BB16} to be able to deal with the analytic step of solving Equation (\ref{gpam_intro}), see also \cite{Mou22}. 
Furthermore, Equation~\ref{phi4_intro} was recently independently studied in \cite{BDFT23a}, \cite{BDFT23b} based on a transform introduced in \cite{JP23}. 

\subsection*{Outline of the first half} 
Section~\ref{section geometric setting} introduces the geometric setting for this work, in particular a notion of scaling $\fraks$ similar to \cite{Hai14}. 

In Section~\ref{basic facts on jets} we first recall known results about jets, the analogue of polynomials on manifolds. Then, the notion of an \textit{admissible realisation} introduced. 
Since in the setting here, i.e.\ in the presence of a connection and metric, there exists a canonical grading and norm on jet bundles $J^k$ which we use to construct a graded space $J$.  
The space $J$ will play the role of abstract polynomials and comes with a canonical inclusion 
$$J^k\hookrightarrow J \ $$
for each $k$.

Section~\ref{section Regularity structures and models} introduces the notion of a regularity structure and model. As in \cite{DDD19}, we only impose a condition informally expressible as
\begin{equ}\label{eq g}
\Pi_p \Gamma_{p,q}\sim \Pi_q
\end{equ}
instead of the rigid relation $\Pi_p \Gamma_{p,q}= \Pi_q$ in \cite{Hai14} and further introduce a bigrading $T= \bigoplus_{\alpha,\delta} T_{(\alpha,\delta)}$ where the parameter $\delta$ keeps track of a quantitative bound on the error implicit in \eqref{eq g}.  While this allows for more flexibility in the definition of jet models, it has the drawback that in order to formulate a general Schauder estimate in Section~\ref{Section Singular Kernels} it forces us to abandon the lower triangular structure of the maps $\Gamma$. 
To be able to treat equations with components taking values in different vector bundles such as $u$ and $\nabla u$ in Equation (\ref{gpam_intro}), we work with \textit{regularity structure ensembles}. In contrast to \cite{DDD19} our maps $\Gamma$ and $\Pi$ are defined globally. We introduce vector bundle valued jet regularity structures and models as examples.

Up to modifications of some proofs the ideas in Section~\ref{section modelled distributions and reconstruction}, where modelled distributions are introduced and a reconstruction theorem in our setting is provided, are the same as in \cite[Section 2]{Hai14}.

In Section~\ref{section Local operations} we show how to implement local operations on regularity structures. The notion of products is similar to the one in \cite{Hai14} with the caveat of having to use regularity structure ensembles in the bundle valued setting and the need to keep track of the bi-grading. Compositions with smooth local functions relies on interpreting a local function $G: E\to F$, where $\pi_E: E\to M$, $\pi_F:F\to M$ are vector bundles, as a section of the bundle $E\ltimes F:= \pi_E^* (F)$ and needs additional considerations not present in the flat setting. Lastly, we explain in this section, how to lift differential operators relying on the fact that every $k$th order differential operator $\partial: \mathcal{C}^\infty(E)\to \mathcal{C}^\infty(F)$ corresponds to a bundle morphism $T_\partial\in L(J^k(E), F)$ together with considerations from Section~\ref{basic facts on jets}.

In Section~\ref{Section Singular Kernels} we follow \cite[Section 5]{Hai14} to define integration against singular kernels between vector bundles. Due to (\ref{eq g})
which has as consequence that
\begin{equation}\label{locloc}
(I+J)\circ \Gamma\neq \Gamma\circ (I+J)
\end{equation}
(as opposed to the setting in \cite{Hai14}), we have to abandon the lower triangular structure of the maps $\Gamma$ and introduce a new operator to compensate \eqref{locloc}. While the presence of this operator technically complicates the analysis, it has a natural interpretation in terms of the ``elementary'' modelled distributions introduced in \cite{ST18}, see Remark~\ref{philosophical remark}.

Section~\ref{section singular modelled distributions} and Section~\ref{section solution to semilinear...} are except for technical changes close to \cite[Section 6, 7]{Hai14}. Since throughout the main part of the text we only work with the integral form of the equations of interest, Section~\ref{section:excoursion} contains a short discussion about the existence of fundamental solutions for differential operators of the form $\partial_t + \mathfrak{L}$.

This completes the analytic part of the theory. 

\begin{remark}\label{rem:flat space interpretation}
For readers mostly interested in the case of $M=\mathbb{R}^d$ or $\mathbb{T}^d$ the jet bundle can canonically be identified with polynomials, c.f.\ Sections~\ref{jets in local coordinates} and Remark~\ref{remark homogenious/trivial bundle}. From this point of view one can interpret many results as recovering known statements of \cite{Hai14} under weaker assumptions, see also Remark~\ref{rem:flat space interpretation2}.
\end{remark}

\subsection*{Outline of the second half}
In Section~\ref{section symmetric sets and Vector bundles} we recall the notion of symmetric sets developed in \cite{CCHS22}, and introduce vector bundle assignments, a direct modification of vector space assignments. We also recall the analogue of the functor $\Func_W$ therein.

In Section~\ref{section sets of trees for...} sets of (combinatorial decorated) trees which form the backbone of the constructions that follow are introduced. These, in contrast to previous works, are built from three sets of edge types $\mathcal{E}_+$, $\mathcal{E}_-$ and $\mathcal{E}_0$. The first encodes kernels, the second noises and the last serves as a place holder for jets. From these, similarly to \cite{BHZ19}, sets of trees $\mathfrak{T}, \mathfrak{T}_+$ and $\mathfrak{T}_-$ as well as coloured versions thereof are built from a rule.
The trees in $\mathfrak{T}_-$ are used later for negative renormalisation, contain no $\mathcal{E}_0$ type edges and are used to index spaces of differential operators $\mathfrak{Dif}$, while the trees in $\mathfrak{T}$ resp.\ $\mathfrak{T}_+$ are used in the construction of the regularity structure ensemble, resp.\ for positive renormalisation utilising on the machinery of symmetric sets.
The coloured versions of these sets of trees such as $\mathfrak{T}^{(-)}$ and $\mathfrak{T}_-^{(-)}$ are crucially used since both renormalisations are encoded by colouring operations instead of extractions.

In Section~\ref{section regularity structure, models,...} , after fixing a cutoff $\delta_0>0$, we construct vector bundles $\mathcal{T}$ and $\mathcal{T}_+$ from a subsets of $\mathfrak{T}$ and $\mathfrak{T}_+$. This is done by first turning trees into symmetric sets and then using the functor $\Func_W$,  informally this corresponds to ``attaching"  jets to edges of type  $\mathcal{E}_0$. Finally, $\mathcal{T}$, is equipped with a bi-grading and thus a regularity structure ensemble is constructed.
Then, we construct pre-models which turn elements of $\mathcal{T}$ into concrete distributions as well as the renormalisation group $\mathfrak{G}_-$. After investigating the action of $\mathfrak{G}_-$ on a pre-model $\pmb{\Pi}$, we show how to construct a model $(\Pi, \Gamma)$ from it. To do so, positive renormalisation is still described by ``admissible cuts", though in this setting, as a consequence of \eqref{eq g} one has to cut trees of negative homogeneity as well. 
Then, we study the action of $\mathfrak{G}_-$ on models. Since we do not restrict ourselves to constant coefficient differential operators, this action is not described by extraction operations but by colouring operations. 
For both renormalisations, due to the fact that all bundles are constructed from symmetric sets, significant care has to be taken to ensure that everything is well defined.
Lastly, explicit formulas for the renormalisation of canonical smooth models are obtained in this section.

Section~\ref{section application to spdes} starts with a subcritical system of SPDEs and constructs the required bundle assignments, edge types and rule. Then the machinery of the previous sections yields a regularity structure as well as a renormalisation group. First, for the canonical model for smooth noises we check that the corresponding (reconstructed) abstract solution agrees the classical solution to the system of equations. 
Then, in the central Proposition~\ref{prop:renormalised equation} of this section, we prove a formula for the renormalised equation, which is solved (in the classical sense) by the reconstructed abstract solution with respect to a renormalised model.

\begin{remark}\label{rem:flat space interpretation2}
In the context of Remark~\ref{rem:flat space interpretation} one can interpret the results of this second part of the article as giving an alternative non-algebraic point of view on the results in \cite{BHZ19}, \cite{BB21} and \cite{BCCH20}. In particular, our framework allows for a rather compact proof of 
 Proposition~\ref{prop:renormalised equation}, a slight generalisation of the main result of \cite{BCCH20} even in the case of the torus.
\end{remark}

\subsection*{Outline of concrete applications and stochastic estimates}
When working with non-translation invariant equations, one is faced with the problem that the strategy of regularity structures, when applied naively might yield an infinite-dimensional solution family. The renormalisation group constructed in Section~\ref{sec:renormalisation group} is indeed infinite dimensional. In Section~\ref{sec:canonical renormalisation}, we outline a general strategy to remedy this issue. 
%
In particular, we explain why for most singular SPDEs where the linear term is given by a generalised Laplacian, renormalisation constants are sufficient. More precisely, one expects the convergence results for such singular SPDEs on trivial vector bundles over the torus to translate directly to any (possibly non-trivial) vector bundle over a compact manifold.

In Section~\ref{sec:concrete applications} we turn to the treatment of specific examples of singular SPDEs, each of which illustrates certain aspects of the theory developed in this article.
\begin{itemize}
\item While \eqref{gpam_intro} is a scalar valued equation, the right-hand side involves the gradient of the solution, i.e.\ a section of $TM$ and the equation involves a rather general non-linearity. Since it requires only Wick renormalisation, it serves as an example illustrative of the general workings of the theory which does not many stochastic estimates.
In Theorem~\ref{thm:g-pam} we exhibit an (at most) two dimensional canonical solution family.
\item The $E$-valued~\ref{phi4_intro}-equation illustrates how the theory developed in this article
can be used to solve singular SPDEs taking values in general vector bundles. While the scalar valued analogue
on a compact manifold can essentially be treated as in \cite{Hai14},
when the vector bundle $E$ is non-trivial, a technical novelty arises when establishing the convergence of  
$$\left(\hat{\Pi}^\epsilon_q  \hat{\Gamma}^\epsilon_{q,p}- \hat{\Pi}^\epsilon_p\right) \left(\<3'2>_{_{(e_p)}}\right)\ , $$
see Lemma~\ref{lem:stochastic bounds 3'2}.
As in \cite{Hai14}, we exhibit an (at most) $2$-dimensional solution family in Theorem~\ref{thm:phi4_3}.
\item The last example, the~\ref{eq:phi34_intro} equation, is meant to illustrate an SPDE, where a geometric counterterm is needed to renormalise the model. Here we obtain a (at most) $5$-dimensional solution family in Theorem~\ref{thm:phi3_4}.
\end{itemize}

Typically, in the framework of this article, when showing that a family of models $Z^\epsilon=(\Pi^\epsilon, \Gamma^\epsilon)$ converges, one has to not only obtain appropriate stochastic estimates on terms of the form 
${\Pi}^\epsilon_p\tau (\phi^\lambda_p)$, but also additionally for 
$$\left(\Pi^\epsilon_q\Gamma_{q,p}^\epsilon \tau_p-\Pi^\epsilon_p\tau_p \right)(\phi^\lambda_q)\ .$$
While this can significantly increase the number of estimates needed, their general form is close to the Feynman diagrams usually encountered in regularity structures. Thus, for example for the equations treated here, the straightforward adaptations of the kernel estimates in \cite{Hai14} and \cite{HQ18} in Section~\ref{sec:Kernels estimates}, are mostly sufficient.
\subsection*{Acknowledgements}
HS gratefully acknowledges funding by a President’s PhD Scholarship from Imperial College London
and would like to thank S.\ Paycha for valuable discussions.
MH gratefully acknowledges funding from the Royal Society through 
a research professorship, grant RP\textbackslash R1\textbackslash 191065.

\section{Geometric Setting: Orthogonal Foliations of Manifolds}\label{section geometric setting}

In this section we motivate a natural geometric setup for incorporating a scaling $\mathfrak{s}$ in the sense of \cite{Hai14}.
Let $(M,g)$ be a (complete) $d$-dimensional Riemannian manifold and suppose one is given $n\leq d$ mutually orthogonal distributions\footnote{Here ``distribution'' is meant in the geometric sense, i.e.\ a smooth section of the Grassmann bundle over $M$.} $\mathcal{D}^1,\ldots, \mathcal{D}^n\subset TM$, such that $T_p M= \bigoplus_{i=1}^n \mathcal{D}^i_p$ together with an element $\mathfrak{s}\in \mathbb{N}^n$. Then one defines the scaled length of a tangent vector $$X_p= X_p^1+\ldots+X_p^n\in T_pM \ ,$$ where $X_p^i\in \mathcal{D}_p^i$ as
\begin{equation}\label{scaled norm}
|X_p|_\mathfrak{s}=\sum_{i=1}^n g_p(X_p^i,X_p^i)^{\frac{1}{2\mathfrak{s}_i}} \ .
\end{equation}
On geometric grounds it is natural to assume that these distributions are integrable and parallel. 
Under these assumptions the following theorem of de Rham, c.f.\  \cite[Ch. 4, Thm. 4.4]{BF06} holds.
\begin{theorem}\label{thm:cultural}
In addition to the above setup, assume that $M$ is simply connected. We denote by $\mathfrak{F}^1,\ldots,\mathfrak{F}^n$ the foliations of $M$ corresponding to $\mathfrak{D}^1,\ldots,\mathfrak{D}^n$ respectively. Then, for every $p\in M$, there exists a foliation preserving isometry from $(M,g)$ to the Riemannian product manifold
$(L^1, g^1)\times\ldots\times (L^n,g^n)$, mapping each leaf $\mathfrak{F}^i$ to $L^i$.
\end{theorem}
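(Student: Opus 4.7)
The plan is to follow the classical proof of de Rham's decomposition theorem, proceeding first locally and then globalising via the simply connected hypothesis. First, since each $\mathcal{D}^i$ is integrable, Frobenius's theorem produces the leaves of $\mathfrak{F}^i$ as immersed submanifolds tangent to $\mathcal{D}^i$. The parallel assumption (namely $\nabla_X Y \in \Gamma(\mathcal{D}^i)$ whenever $Y \in \Gamma(\mathcal{D}^i)$) combined with the orthogonal decomposition $TM = \bigoplus_i \mathcal{D}^i$ forces each leaf of $\mathfrak{F}^i$ to be totally geodesic in $M$: if $\gamma$ is tangent to a leaf at one point, then $\dot\gamma$ stays in $\mathcal{D}^i$ along $\gamma$ by parallel transport, so $\gamma$ remains in the leaf. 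Moreover, the induced Levi--Civita connection on a leaf agrees with the restriction of $\nabla$.

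Next, I would establish the \emph{local} product structure around any point $p \in M$. Let $L^i_p$ denote the leaf of $\mathfrak{F}^i$ through $p$, and for each $i$ choose a small neighbourhood $U^i \subset L^i_p$ of $p$. Define a map $\Phi : U^1 \times \cdots \times U^n \to M$ by iterating the flows of suitable vector fields transverse to the foliations, or equivalently by composing the local foliation charts. The crucial computation is that $\Phi$ is an isometry onto its image: this uses that the distributions are parallel (so the orthogonal splitting is preserved under parallel transport in every direction), that each leaf is totally geodesic (so the induced metrics on leaves are what one expects), and that mixed brackets like $[\mathcal{D}^i, \mathcal{D}^j]$ for $i \neq j$ vanish modulo the Koszul formula applied to the parallel orthogonal complements. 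In short, locally $M$ is isometric to a metric product $U^1 \times \cdots \times U^n$.

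The final and hardest step is \emph{globalisation}. Fix a base point $p$. For each $q \in M$, connect $p$ to $q$ by a smooth path, cover the path by finitely many local product charts as above, and compose the transition maps to obtain coordinates identifying a neighbourhood of $q$ with a product of pieces of leaves through $p$. The potential obstruction is monodromy: one must show that the resulting identification is independent of the chosen path. This is where simple connectedness enters — any two paths are homotopic, and the identifications agree on overlapping product charts along a homotopy because the local product decomposition is canonically determined by the parallel orthogonal splitting (hence depends only on the endpoint, not the path). This yields a well-defined global submersion $M \to L^1_p \times \cdots \times L^n_p$ which is a local isometry. Completeness of $M$, together with total geodesicity, implies completeness of each leaf $L^i_p$ in its induced metric, so the product target is complete. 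A complete local isometry between complete Riemannian manifolds with simply connected codomain is a covering map, and since the fibres are trivial at $p$ (as the map is locally an isometry near $p$), it must be a global isometry. The construction is foliation preserving by definition.

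The main obstacle is the monodromy argument in the globalisation step: one needs to verify carefully that transporting the local product decomposition along a loop returns to the original decomposition. This reduces to the statement that the holonomy of $\nabla$ preserves each $\mathcal{D}^i$ (immediate from parallelism) and, combined with simple connectedness, that the associated holonomy representation on each factor is trivial enough to make the development into the product single-valued. Since the theorem is cited from \cite{BF06}, in the paper itself one would simply invoke this classical result rather than reproduce the argument.
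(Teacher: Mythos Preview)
Your sketch of the classical de Rham decomposition argument is essentially correct, and you have already identified the key point: the paper does not prove this theorem at all. It is stated with a direct citation to \cite[Ch.~4, Thm.~4.4]{BF06} and serves only to motivate Definition~\ref{def scaling on manifold}; the label \texttt{thm:cultural} is a hint that the authors regard it as background rather than a contribution. So there is nothing to compare your argument against in the paper itself---your final sentence is exactly right.
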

%
%
The above discussion motivates the following definition.
\begin{definition}\label{def scaling on manifold}
A scaling $\mathfrak{s}$ on a manifold $(M,g)$ is a decomposition $M=(M^1,g^1)\times\ldots\times (M^n,g^n)$ together with an element $\mathfrak{s}\in \mathbb{N}^n$. To a scaling $\mathfrak{s}$ on $(M,g)$ we associate the scaled ``norm'' on $TM$ defined in (\ref{scaled norm}), as well as the distance\footnote{Sometimes it will be more convenient to work with the (equivalent) scaled distance $d_\mathfrak{s}(p,q):=\sup_{i\in \{1,\ldots,n\}} d^i(p_i,q_i)^{1/\mathfrak{s}_i}$, which we shall do without elaborate explanation.} defined for $p,q\in M$ by
$$d_\mathfrak{s}(p,q)=\sum_{i=1}^n d^i(p_i,q_i)^{\frac{1}{\mathfrak{s}_i}}$$
where $d^i$ denotes the usual distance induced on $M^i$ by $g^i$ and where $p=(p_1,\ldots,p_n)$, $q=(q_1,\ldots,q_n) \in (M^1,g^1)\times\ldots\times (M^n,g^n)$ .
\end{definition}
%
%
The following simple lemma illustrates that the norm $|\cdot |_\mathfrak{s}$ and distance $d_\mathfrak{s}(\cdot, \ \cdot)$ fit together well.
\begin{lemma}
In the above setting for every point $p\in M$ and $X_p\in T_p M$ small enough the identity 
$$d_\mathfrak{s}(p,\exp_p{X_p}) = |X_p|_\mathfrak{s}\ ,$$
holds.
\end{lemma}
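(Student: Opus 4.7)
The plan is to exploit the product structure provided by Definition~\ref{def scaling on manifold}. Since $(M,g) = (M^1,g^1)\times\ldots\times (M^n,g^n)$ as a Riemannian manifold, the tangent space decomposes orthogonally as $T_pM = T_{p_1}M^1\oplus\ldots\oplus T_{p_n}M^n$, with the summand $T_{p_i}M^i$ being precisely the subspace along which $X_p^i$ was defined in \eqref{scaled norm}. The core observation is that in a Riemannian product, geodesics split: the geodesic in $M$ with initial velocity $X_p = X_p^1+\ldots+X_p^n$ is the tuple of geodesics in the factors with initial velocities $X_p^i$. Consequently, the exponential map factors as
\begin{equation*}
\exp_p(X_p) = \bigl(\exp_{p_1}(X_p^1),\,\ldots,\,\exp_{p_n}(X_p^n)\bigr).
\end{equation*}

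First I would record this factorisation of $\exp_p$, which is a standard fact about Riemannian products and could be cited. Next, I would use that for each $i$, provided $X_p^i$ lies inside the injectivity radius of $(M^i,g^i)$ at $p_i$, the radial geodesic $t\mapsto \exp_{p_i}(tX_p^i)$ is length-minimising on $[0,1]$, so that
\begin{equation*}
d^i\bigl(p_i,\exp_{p_i}(X_p^i)\bigr) = \sqrt{g^i_{p_i}(X_p^i,X_p^i)}.
\end{equation*}
Taking $X_p$ small enough ensures that each component $X_p^i$ lies in the appropriate injectivity radius; this is the smallness assumption in the statement.

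Substituting into the definition of $d_\mathfrak{s}$ gives
\begin{equation*}
d_\mathfrak{s}\bigl(p,\exp_p(X_p)\bigr) = \sum_{i=1}^n d^i\bigl(p_i,\exp_{p_i}(X_p^i)\bigr)^{1/\mathfrak{s}_i} = \sum_{i=1}^n g^i_{p_i}(X_p^i,X_p^i)^{1/(2\mathfrak{s}_i)} = |X_p|_\mathfrak{s},
\end{equation*}
which is exactly the claimed identity.

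There is no real obstacle: the only step requiring care is the factorisation of the exponential, which follows because each $M^i$ is totally geodesic in the product (the second fundamental form of a factor vanishes). The ``small enough'' caveat is unavoidable and reflects precisely the injectivity radii of the factors.
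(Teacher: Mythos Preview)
Your proof is correct. The paper does not give a proof of this lemma at all (it is introduced as a ``simple lemma'' and left to the reader), and your argument via the factorisation of the exponential map on a Riemannian product together with the standard identity $d^i(p_i,\exp_{p_i}X_p^i)=g^i_{p_i}(X_p^i,X_p^i)^{1/2}$ inside the injectivity radius is exactly the intended one.
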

We call a chart $\phi : M\supset U \to V\subset \mathbb{R}^d$ an $\mathfrak{s}$-chart, if there exist charts 
$\phi^{i}: M^i\supset U^i \to V^i\subset \mathbb{R}^{d_i}$ for $i\in \{1,\ldots,n\}$ such that
$U=U^1\times\ldots\times U^n$ and $\phi=(\phi^1,\ldots,\phi^n)$. For a multi-index $k=(k^1,\ldots,k^n)$, where $k^i\in \mathbb{N}^{d_i}$, we write $$|k|_\mathfrak{s}=\sum_{i=1}^n \fraks_i\sum_{j=1}^{d_i} k^i_j \ .$$

In the sequel it will be useful to single out the vector fields $V$ on $M$, which take values in $TM^i\subset TM$. We call such a vector field an $\mathfrak{s}_i$-vector field. We say $W$ is a $\fraks$-vector field, if it is a $\mathfrak{s}_i$-vector field for some $i$ and declare the degree of a $\fraks$-vector field $V$ to be given by 
\begin{equation}\label{deg of vec}
\text{deg}_\fraks V =\begin{cases}  0 & \text{if } V=0 \ ,\\
\fraks_i & \text{if } V \text{ is an }\fraks_i\text{-vector field .} 
\end{cases}
\end{equation}
We extend this to tuples of $\fraks$-vector fields $ (V_1,\ldots,V_k)$ by 
$$ \text{deg}_\fraks (V_1,\ldots,V_k) = \sum_j \deg_\fraks(V_j) \ .$$

\begin{remark}
One could also work with real-valued scalings $\fraks\in \mathbb{R}_+^n$, but the restriction $\fraks\in \mathbb{N}^n$ simplifies notation later on. Furthermore, since in the theory of regularity structures one always has ``an epsilon of room'', this is not usually much of a restriction. 
\end{remark}

\begin{remark}
In the setting of Theorem~\ref{thm:cultural} the Frobenious theorem, c.f.\ \cite{Lun92}, characterises integrable distributions. 
The somewhat complementary setting where $\mathcal{D}^1$ is spanned by Hörmander vector fields, the simplest examples being Carnot groups,  was considered in \cite{MS23}.
\end{remark}

\subsection{Vector bundles and metric connections}\label{section bundles and metric}
Usually, when working with a vector bundle $\pi: E\to M$, we shall assume that it is equipped with a metric $\langle\cdot , \cdot\rangle_{E}$ and a connection $\nabla^E$ which is compatible with the metric, i.e.\ for any two sections $V,W$ of $E$ and $X_p\in T_p M$ one has
$$X_p(\langle V, W \rangle_E) =\langle \nabla_{X_p}^E V, W \rangle_E + \langle V, \nabla_{X_p}^E W \rangle_E \ , $$
where the left-hand side uses the usual identification of tangent vectors with derivations.
We shall write $(E,\nabla^E, \langle\cdot, \cdot\rangle_E)$ in this case. We shall denote by $\cC^\infty(E)$ the space of smooth sections of $E$ and by $\cC_c^\infty(E)$ those which are furthermore compactly supported. 

Let us recall some important constructions and examples of vector bundles. 
\subsubsection{Trivial bundle}\label{subsubsec:trivial}
Any real-valued function on $M$ can naturally be identified with a section of the trivial bundle $M\times \RR$. (Recall that there exist a canonical metric and connection on this bundle, the latter being independent of the Riemannian structure of $M$.)
\subsubsection{Tangent bundle}
We shall always equip the tangent bundle $TM$ with the Levi-Civita connection $\nabla^{TM}$ .
\subsubsection{Dual bundle}
Given a vector bundle $(E,\nabla^E, \langle\cdot, \cdot\rangle_E)$ as above, denote by $E^*$ the dual bundle and $(\cdot, \cdot): E^*|_p \otimes E|_p\to \RR$ the natural pairing above any point $p\in M$. We shall always equip $E^*$ with the canonically induced metric $\langle\cdot , \cdot\rangle_{E^*}$ and connection  $\nabla^{E^*}$, which are characterised by the following properties:
\begin{itemize}
\item If $\{e_i\}$ is an orthonormal basis of $E|_p$, then the unique basis $\{e_i^*\}$ of $E^*|_p$ satisfying $$(e_k^*,e_i)= \delta_{i,k} \ $$
 is an orthonormal basis of $E^*|_p$.
\item If $V, V^*$ are sections of $E$, resp. $E^*$ and $X_p\in T_p M$, then
$$\nabla_{X_p} (V^*,V) = (\nabla_{X_p}^{E^*} V^*, V) + (V^*,\nabla_{X_p}^{E}  V)\ .$$
\end{itemize}
One can observe that $\nabla^{E^*}$ is a metric connection with respect to $\langle\cdot , \cdot\rangle_{E^*}$.

\subsubsection{Product bundle}\label{section hat tensorproduct} 
Given two vector bundles $E\to M, \ F\to N$, let $E\hotimes F$ be the vector bundle over $M\times N$ with fibres $E\hotimes F|_{(p,q)}=  E|_p \otimes F|_q$ for $(p,q)\in M\times N$ and smooth structure characterised by the fact that if $V\in \cC^\infty(E),\ W\in \cC^\infty(F)$, then $(p,q)\mapsto V(p)\otimes W(q)$, for which we write $V(p)\hotimes W(q)$ from now, is an element of $\cC^\infty (E\hotimes F)$.

Furthermore $(E,\nabla^E, \langle\cdot, \cdot\rangle_E)$ and $(F,\nabla^F, \langle\cdot, \cdot\rangle_F)$ as above, then we shall always equip $E\hotimes F$ with the canonically induced metric $\langle\cdot , \cdot\rangle_{E\otimes F}$ and connection $\nabla^{E\otimes F}$, which are characterised by the following properties: 
\begin{itemize}
\item If $\{e_i\}$, $\{f_j\}$ is an orthonormal basis of $E|_p$ respectively $F|_q$, then $\{e_i\hotimes f_j\}_{i,j}$ is an orthonormal basis of $(E\otimes F)|_{(p,q)}$ .
\item If $V, W$ are local sections of $E$ resp. $F$, $X_p\in T_p M$ and $Y_q\in T_q N$, then $$\nabla_{(X_p,Y_q)}^{E\hotimes F} (V\hotimes W) = (\nabla_{X_p}^{E} V)\otimes W(q) +  V(p)\otimes \nabla_{Y_q}^{F} W \ ,$$
where we implicitly identified $T(M\times N)\simeq TM\hotimes TN$ under the canonical isomorphism.
\end{itemize}
One can observe that $\nabla^{E\hotimes F}$ is compatible with $\langle\cdot , \cdot\rangle_{E\hotimes F}$. 

\subsubsection{Tensor product bundle}
Given two vector bundles $(E,\nabla^E, \langle\cdot, \cdot\rangle_E)$ and $(F,\nabla^F, \langle\cdot, \cdot\rangle_F)$ with the same manifold $M$ as base space, let $E\otimes F:=\diag^*(E\hotimes F)$ where $\diag:M\to M\times M$ is the diagonal map and $\diag ^*$ the induced pullback of vector bundles. We equip $E\otimes F$ with the canonically induced metric $\langle\cdot , \cdot\rangle_{E\otimes F}$ and connection $\nabla^{E\otimes F}$, which are characterised by the following properties: 
\begin{itemize}
\item If $\{e_i\}$ and $\{f_j\}$ are orthonormal bases of $E|_p$ and $F|_p$, then $\{e_i\otimes f_j\}_{i,j}$ is an orthonormal basis of $(E\otimes F)|_p$.
\item If $V, W$ are local sections of $E$, resp. $F$ and $X_p\in T_p M$, then $$\nabla_{X_p}^{E\otimes F} (V\otimes W) = (\nabla_{X_p}^{E} V)\otimes W +  V\otimes \nabla_{X_p}^{F} W \ .$$
\end{itemize}
One can observe that $\nabla^{E\otimes F}$ is compatible with $\langle\cdot , \cdot\rangle_{E\otimes F}$. Furthermore they agree with the pulled back metric and connection under $\diag$ .

\subsubsection{Symmetric tensor product bundle}
Working with one vector bundle $(E,\nabla^E, \langle\cdot, \cdot\rangle_E)$, we write $E\otimes_s E:=\diag^*(E\hotimes_s E)$ for the subbundle of $E\otimes E$ consisting of symmetric tensors equipped with the metric and connection obtained by restriction of the ones on $E\otimes E$.

\subsubsection{Higher order derivatives}
The constructions above give a meaning to higher order derivatives of sections of $ E$ given inductively as $$(\nabla^E)^{n+1}:= \nabla^{T^*M^{\otimes n}\otimes E}\circ (\nabla^E)^n \ , $$
where $(\nabla^E)^1:=\nabla^E$ .

We shall usually simply write $\langle\cdot , \cdot \rangle$ and $\nabla$ omitting reference to the underlying vector bundle. 

\subsection{Bundle valued distributions}
Given a vector bundle $E\to M$, we write $\mathcal{D}(E)$ for the space of smooth compactly supported sections. We equip $$\mathcal{D}(E)= \bigcup_{K\subset M\ {\text{compact}}} \mathcal{C}^\infty_{c,K}(E)\ $$ with the inductive limit topology, where each $\mathcal{C}^\infty_{c,K}(E):=\{ \phi\in \mathcal{C}^\infty_{c}(E) \ : \supp (\phi)\subset K\}$ is equipped with the usual Fr\'{e}chet topology induced be the semi-norms $$\|\phi \|_k:= \sup\{ |\nabla^l \phi (x)|\ : \ x\in K, \ l\leq k\}\ .$$
The space of generalised sections (or $E$-valued distributions) $\mathcal{D}'(E)$ is defined as the dual space of $\mathcal{D}(E)$.
We shall usually identify $u\in \mathcal{C}^\infty (E)$ with an element of $\mathcal{D}'(E)$ via the canonical inclusion
$$\mathcal{C}^\infty (E)\hookrightarrow \mathcal{D}'(E),\qquad u\mapsto T_u \ , $$
where $T_u(\phi):= \int_M \langle u(p), \phi(p)\rangle_E  \, \dVol_p$ for $\phi\in \mathcal{D}(E)$ and 
$\dVol_p$ stands for integration in $p$ with respect to the usual Riemannian density. 

\section{Jet Bundles}\label{basic facts on jets}

In this section we collect the definitions and basic properties of the jet bundle of a vector bundle $\pi:E\to M$ where $M$ is equipped with a scaling $\fraks$ and at the same time we fix notation used in the sequel. For a beautiful exposition on jet bundles we refer to \cite{Pal65}.

\subsection{Definition and basic properties}\label{section def and basic prop}
Throughout this section we do not assume any structure other than $M$ being a smooth manifold and $\pi:E\to M$ being a smooth vector bundle.

For $k\in \mathbb{N},p\in M$ we define an equivalence relation on $\cC^\infty(M)$ by postulating that
$$f\sim_{k,p} g$$ if in some (and therefore any) $\fraks$-chart $(U,\phi)$ around $p=\phi (0) \in M$ and for all multiindices
$l$ with $|l|_{\fraks}\leq k$ one has $D^l(f\circ \phi)(0) =D^l(g\circ \phi)(0)$. We define  
$$J_p^k M := \cC^\infty(M)/\sim_{k,p}$$ and denote by $j^k_p f$ the equivalence class of $f\in \mathcal{C}^\infty(M)$.

For a vector bundle $E\to M$ we define the space 
$$J_p^k E:= \cC^\infty(E)/\sim_{E,k,p}\ ,$$
where two section $f,g\in \cC^\infty(E)$ are equivalent with respect to $\sim_{E,k,p}$ if in some (and therefore any) smooth local frame $\{e_i\}_i$ of $E$ around $p$ the component functions of $f=\sum_i f^i e_i$, $g=\sum_i g^i e_i$ satisfy $f^i\sim_{k,p} g^i$ for each $i$ .

We define $J^k E:= \bigcup_{p} J_p^k E$, which can be endowed with the structure of a vector bundle (c.f.\ \cite[Chapter 2.3]{Hir12} or \cite[Chapter 6]{Sau89}) and is called the $k$-jet bundle of $E$. To any smooth section $f\in \mathcal{C}^\infty (E)$ one can associate the following section of $J^k E$: 
$$j^k f :M \to J^k E,\qquad  p\mapsto  j_p^k f\ .$$
The map $j^k$ is called the prolongation map.

\begin{remark}
To make notation consistent one should write $J^k (M\times\RR)$, instead of  $J^k M$ but we shall usually opt for the latter for notational convenience.
\end{remark}

\begin{remark}
Classically jet bundles are defined in the absence of a non-trivial scaling. But since the scaling only causes minor modifications of the classical concepts, committing slight abuse of nomenclature, we use the classical terminology in this setting as well.
\end{remark}
\begin{remark}
The notion of jet bundles extends to fibre bundles, but the jet bundle of a fibre bundle is only an affine space in general, c.f.\ \cite{Sau89}.
\end{remark}
\begin{remark}
In the absence of a scaling $\mathfrak{s}$,  an equivalent condition 
for $f\sim_{k,p} g$ for two functions $f,g\in \cC^\infty(M)$ is to ask that for any smooth curve $\gamma \colon (-\epsilon, \epsilon) \to M$ 
such that $\gamma(0)=p$, one has $$\partial_s^n (f\circ \gamma)(s)|_{s=0}=\partial_s^n (g\circ \gamma)(s)|_{s=0}$$ for all $n\leq k\ .$
This is \textit{not} true in the presence of a non-trivial scaling $\mathfrak{s}$!
\end{remark}

\begin{remark}\label{alternative definition remark}
Let us mention an alternative characterisation of the space $J_p^k M$, resp.\ $J_p^k E$, for a proof in the absence of a scaling we refer to the discussion in \cite[Chapter IV.2]{Pal65}. 

For $p=(p_1,\ldots,p_n)\in M$ let $I^i_p:=\{f\in  \cC^\infty(M^i)\,:\, f(p)=0\}$ be the ideal of $\mathcal{C}^\infty(M^i)$ of functions vanishing at $p$.
Identify $I^i_p\subset \mathcal{C}^\infty (M)$ in the canonical way and define $\bar{I}_p^k$ as the smallest ideal containing $$\bigcup_{|l|_\fraks > k} (I_p^1)^{l^1} \cdots (I_p^n)^{l^n}\;.$$
Then one can define $J^k_p(M)= \mathcal{C}^\infty(M)/\bar{I}^k_p(M)$, which shows in particular, that $J^k(M)$ has a natural algebra structure.

Similarly for a vector bundle $E$, if one defines $Z^k_p= \bar{I}_p^k \cdot \cC^\infty(E)\subset \cC^\infty(E)$ , one can set $$J^k_p(E)= \mathcal{C}^\infty(E)/Z^k_p\;,$$ which implies that $J^k_p(E)$ is a $J^k_p(M)$-module. 
\end{remark}

\subsubsection{Pull-back of jets}\label{section pullback of jets}
Let $\phi:M\to N$ be a smooth map, the notion of pullback $$\phi^*: C^{\infty}(N)\to C^{\infty}(M)\ ,\qquad  g\mapsto g\circ \phi\ ,$$ induces a notion of pullback on jets. We define a map $\phi^*:J^k N \to J^k M$ determined for $p\in U$, $q=\phi(p)$ by 
\begin{equation}\label{eq pullback of scalar jets}
\phi^*|_q: J^k_q V \to J^k_p U \ , \qquad  j^k_q f\mapsto j^k_{p} (\phi^* f) =j^k_{p} (f\circ \phi) \ .
\end{equation}
Working on vector bundles $E\to M$, $F\to N$, let $\Phi: E\to F$ be a bundle morphism which restricts to isomorphisms on fibres.
Write $\Phi=(\phi, G)$ where $\phi:M\to N$ is a smooth map and $G(p):E|_p\to F|_{\phi(p)}$ is an invertible linear map for each $p\in M$,  such that one can write $\Phi(p,v)= (\phi(p), G(p)(v))$ for every $(p,v)\in E$. Such a bundle morphism induces a pullback $\Phi^*: \cC^\infty(F)\to \cC^\infty(E)$ by
\begin{equation}\label{eq pullback of general jets}
\big(\Phi^* g\big) (p)= G^{-1}(p) \big(g\circ \phi(p)\big)\ .
\end{equation}
One can thus define a map $\Phi^*:J^k F \to J^k E$ determined by the fact that for any $g\in\cC^\infty(F)$ and $p\in M$, $q=\phi(p)$
$$\Phi^*|_q: J^k_q F \to J^k_p E \ , \qquad  j^k_q g\mapsto j^k_{p} (\Phi^*g) \ .$$
One easily checks that the maps introduced in \eqref{eq pullback of scalar jets} and \eqref{eq pullback of general jets} are well defined.

\subsubsection{Differentiation of jets}
For $V$ a vector field in a neighbourhood of $p$ the map $$\nabla_V: J^k_p(E) \to J_p^{{\lfloor k-\max_{i}\fraks_i\rfloor}} (E),\quad 
j^k_p f \mapsto j^{{\lfloor k-\max_{i}\fraks_i\rfloor}}_p \nabla_Vf $$ is well defined. If $V$ is an $\fraks_i$-vector field, this can improved and the  map 
\begin{equation}\label{differentiation of jets}
\nabla_V: J^k_p(M) \to J_p^{{\lfloor k-\fraks_i\rfloor}}(M) ,\quad 
j^k_p f \mapsto j^{{\lfloor k-\fraks_i\rfloor}}_p \nabla_Vf \ 
\end{equation}
is well defined. This follows from a direct computation in charts, or alternatively from the fact that in the setting of Remark~\ref{alternative definition remark} if $r\in Z_p^k$, then $\nabla_{V}r \in Z_p^{\lfloor k-\fraks_i\rfloor}$.
Given a global $\fraks_i$-vector field $V$, this induces a smooth map $J^k E\to J^{\lfloor k-\fraks_i\rfloor} E$.

\subsubsection{Scalar valued jets in local coordinates}\label{jets in local coordinates}

Let $\mathbb{R}^d$ be equipped with a scaling $\mathfrak{s}$ in the sense of \cite[Equation (2.10)]{Hai14} and denote by $\Poly$ polynomials in indeterminates $X_1,\ldots,X_d$. If $M=U$ is an open subset of $\mathbb{R}^d$, we can and will identify $J^k M$ with $U\times \Poly^k$, where $\Poly^k\subset \Poly$ denotes the space of abstract polynomials of (scaled) degree up to $k$
and we identify for $f\in \mathcal{C}^\infty(M)$
$$j^k_p f= \sum_{|l|_\fraks\leq k} \frac{X^l}{l!} D^{l}f(p) \ .$$
Note that $\Poly^k$ comes with a natural algebra structure, where the product is given by the usual product on $\Poly$ composed with the projection onto $\Poly^k$ and, for any $p \in M$, this agrees with the algebra structure on $J^k_p$ previously mentioned. 
We now investigate the notion of pullback of jets under this identification. Let $U,V \subset \mathbb{R}^m$ and $\phi: U\to V$ be a smooth map. For $p\in U$, $q=\phi(p)$ one can express the map
 $$\phi^*|_q: J^k_q V \to J^k_p U \ , \qquad  j^k_q f\mapsto j^k_{p} (\phi^* f) =j^k_{p} (f\circ \phi) \ ,$$
under the above identification of jets with polynomials as follows: Denoting by $P^k_{q}\phi$ the polynomial of order $k$ associated to the $\mathbb{R}^d$-valued jet $j_q^k \phi$, the map $\phi^*$ is given by the algebra (bundle) morphism $\{q\}\times \Poly^k \to \{p\} \times \Poly^k$ determined by $X_i \mapsto \big( P_{q}\phi - p \big)_i$. Note that this map is not graded in general, but upper triangular with respect to the natural grading on polynomials.

\begin{remark}
The fact that the map $\phi^*|_q: J^k_q V \to J^k_p U$ does not preserve the grading on polynomials is consistent with the fact that in the setting of smooth manifolds, the jet bundle is only a filtered bundle as will be discussed in Section~\ref{section grading on jets}.
\end{remark}

If $(U,\phi)$ is a chart of $M$ at $p\in U \subset M$, then this induces a chart 
\begin{equation}\label{chart for jets}
\theta^k_{(U,\phi)}:J^k U \to \phi(U)\times \Poly^k
\end{equation}
determined by $\theta^k_{(U,\phi)}:  j^k_q f\mapsto j^k_{\phi(q)} (\phi^{-1})^* f$, where we can identify $ j^k_{\phi(q)} (\phi^{-1})^* f$ with a polynomial. The smooth structure of $JM$ agrees with the one given by such charts.

\subsubsection{Bundle valued jets in local coordinates}\label{bundle valued jets in local coordinates}

We extend the previous discussion to bundle valued jets. Let $M=U$ be an open subset of $\mathbb{R}^d$ as before, let $E=U\times\mathbb{R}^n$ be the trivial bundle over $U$, and denote by $\{e_i\}_{i=1,..,n}$ the canonical basis of $\mathbb{R}^n$. Then we can and will identify $J^k E$  with $U\times \Poly^k(\RR^n)$, where $\Poly^k(\RR^n)=\Poly^k \otimes \mathbb{R}^n$ denotes the space of abstract $\RR^n$-valued polynomials of (scaled) degree up to $k$. Note that $\Poly^k(\RR^n)$ is naturally a $\Poly^k$-module. We thus identify for $f=\sum_{i=1}^n f^i\otimes e_i$
$$j^k_p f= \sum_{|l|_\fraks\leq k} \sum_{i=1}^n \frac{D^{l}f^i(p)}{l!} X^l\otimes e_i \ .$$

Let $U,V \subset \mathbb{R}^d$ and $E=U\times \RR^n$, $F=V\times \mathbb{R}^n$ be trivial bundles. Let $\Phi: E\to F$ be a bundle morphism restricting to an isomorphism on fibres. Write $\Phi=(\phi, G)$ where $\phi:U\to V$ and $G:U\mapsto \text{GL}(\mathbb{R}^n)$ are smooth maps such that $\Phi(p,v)= (\phi(p), G(p)(v))$ for each $(p,v)\in E$. Then the pull-back map $\Phi^*: \cC^\infty(F)\to \cC^\infty(E)$ is given on $g= \sum_i g^i e_i \in\cC^\infty(F)$ by
$$\big(\Phi^* g\big) (p)= G^{-1}(p) \big(g\circ \phi(p)\big)= \sum_i (g^i\circ\phi)(p) \, G^{-1}(p)(e_i) \ .$$
To describe the map $$\Phi^*|_q: J^k_q F \to J^k_p E \ , \qquad  j^k_q g\mapsto j^k_{p} (\Phi^*g) \ $$ for $p\in U$, $q=\phi(p)$ explicitly, we use the algebra structure of $\Poly^k$ and that $\Poly^k(\RR^n)$ is a $\Poly^k$-module. Then, making the above identification of jets with polynomials and denoting for $\Phi= (\phi, G)$ by $\{g^{i,j}\}_{i,j}$ the components of $G^{-1}$, the map $\Phi^*|_q: J^k_q F\to J^k_p E$ is the unique linear map such that for every multi index $l$
\begin{equation}\label{equationn}
X^l  e_j\mapsto (P^k_{q}\phi - p)^l \cdot (P^k_p G^{-1})e_j =\sum_{i} (P_{q}\phi - p)^l \cdot (P^k_p g^{i,j}) e_i 
\end{equation}
where $P^k_{q}\phi$ is the polynomial of order $k$ associated to the $\mathbb{R}^d$ valued jet $j_q \phi$ and similarly for $P^k_p (G^{-1})$ and $P^k_p g^{i,j}$.

Now suppose $(\pi^{-1} U,\Phi)$ is a local trivialisation of $E$, then this induces a chart 
$\theta^k_{(\pi^{-1} U,\Phi)}: J^k(\pi^{-1} U) \to J^k(\phi(U)\times \mathbb{R}^n)=\phi(U)\times \Poly^k(\RR^n)$ determined by 
\begin{equation}\label{bundle jet chart}
\theta^k_{(\pi^{-1} U,\Phi)}:  j^k_q f \mapsto j^k_{\phi(q)} \big((\Phi^{-1})^* f\big) \ ,
\end{equation}
where we identify $j^k_{\phi(q)} \big((\Phi^{-1})^* f\big)$ with a polynomial. 

\begin{remark}
As in the scalar case one sees from (\ref{equationn}) the jet bundle is only a filtered bundle and the maps in \eqref{bundle jet chart} are charts for the jet bundle.
\end{remark}
\subsubsection{Differential operators}\label{section differential operators}
\begin{definition}\label{definition:differential_operator}
Given vector bundles $E, F$ over a manifold $(M,g)$ with a scaling $\fraks$, we call $\mathcal{A}: \mathcal{D}(E) \to \mathcal{D}(F)$ a (smooth) $k$th order differential operator if there exists $T_\mathcal{A} \in \mathcal{C}^\infty (L(J^kE , F))$ such that 
$ \mathcal{A}= T_\mathcal{A}\circ j^k\ .$
We denote by $\mathfrak{Dif}_k(E,F)$ the space of such differential operators. 

\end{definition}
Observe that if the scaling is trivial, this notion of $k$-th order differential operators agrees with the standard notion, c.f.\ \cite[Section~3]{Pal65}.
\subsection{Grading on jets}\label{section grading on jets}
In this section we observe that there is a canonical grading on $J^k E$, if the underlying manifold $M$ is equipped with a Riemannian metric and $E$ with a (metric) connection.
Note that for each $l<k$ the linear map $p_{k,l}: J^k E\to J^l E$, $j^k f \mapsto j^l f$ is well defined, which 
turns $(J^kE)_{k\in \mathbb{N}}$ into  a filtered bundle. 
One then has a \textit{highly non-canonical} isomorphism of vector bundles
$$J^k E \simeq \ker p_{k,k-1} \oplus \Im p_{k,k-1} \simeq \ker p_{k,k-1} \oplus J^{k-1} E\;,$$ 
so that one can write
$$J^k E \simeq \bigoplus_{l=0}^k \ker p_{l,l-1}\ ,$$
where we identify $J^{-1} E=M\times \{0\}$ and $ p_{0,-1}: J^{0} E\to J^{-1} E$ is the unique bundle morphism preserving the base space.
Thus, by choosing for each $k$ an isomorphism $ \ker p_{k,k-1} \oplus J^{k-1} E\to J^k E $, one constructs a grading on $J^k M$. It is natural to restrict our attention to maps which act on $\ker p_{k,k-1}$ as the identity, and thus it suffices to choose an injection 
$$i_{k-1,k}: J^{k-1} E\to J^k E \ ,$$
for which it is natural to further impose that $p_{k,k-1}\circ i_{k-1,k}= \id_{J^{k-1}(E)}$.

We first discuss the case when $E=M\times \RR$ is the trivial bundle and later turn to the more general case.
For $p\in M$ we define
\begin{equation}\label{def of i}
i_{k-1,k}: J^{k-1}_p M \to J^{k}_p M
\end{equation}
as follows. Let $\phi$ be an exponential $\fraks$-chart at $p$, i.e.\ an exponential chart at $p$ which is also an $\fraks$-chart, then for 
 $f\in \cC^\infty(M)$ set
$$i_{k-1,k}(j^{k-1}_p f) = j_p^k h \ ,$$
where $h$ is a function satisfying
\begin{enumerate}
\item $j^{k-1} h = j^{k-1} f$
\item For any multi-index $l$ with $|l|_{\mathfrak{s}}=k$, $D^l h (p)=0$. 
\end{enumerate}
\begin{lemma}
For each $k$ the map  $i_{k-1,k}$ is a well defined bundle morphism and furthermore it is a right inverse of the map $p_{k,k-1}$, i.e.\ $p_{k,k-1}\circ i_{k-1,k}= \id_{J^{k-1}M}$.
\end{lemma}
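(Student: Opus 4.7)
The plan is to prove the lemma in four steps: first, that a representative $h$ satisfying conditions (1) and (2) exists for any $f$; second, that the resulting jet $j^k_p h$ depends only on the class $j^{k-1}_p f$; third—the key step—that it is independent of the chosen exponential $\fraks$-chart; and finally, that the construction varies smoothly in $p$, with the right-inverse property being essentially immediate.

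For the first step, I would construct $h$ explicitly: in the exponential $\fraks$-chart $\phi$ at $p$, define $h$ to agree near $p$ with the scaled Taylor polynomial of $f \circ \phi^{-1}$ of scaled degree at most $k-1$, and extend arbitrarily by a cutoff outside a small neighbourhood of the origin. Then (1) and (2) hold by direct inspection. For the second step, if $j^{k-1}_p f_1 = j^{k-1}_p f_2$ and $h_1, h_2$ both satisfy (1) and (2) relative to $f_1$ and $f_2$, then in the chart the difference $h_1 - h_2$ has all partial derivatives of scaled order $\leq k-1$ vanishing at the origin by (1), and all partial derivatives of scaled order exactly $k$ vanishing at the origin by (2). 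Hence $h_1 \sim_{k,p} h_2$, so $j^k_p h$ is a well-defined function of $j^{k-1}_p f$.

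The key step is chart independence. Two exponential $\fraks$-charts at $p$ arise from choosing orthonormal bases of each $T_{p_i} M^i$ in the product decomposition $(M,g) = (M^1, g^1) \times \cdots \times (M^n, g^n)$ from Definition~\ref{def scaling on manifold}, so the transition map $\phi_2 \circ \phi_1^{-1}$ is a block orthogonal linear map $A = A_1 \oplus \cdots \oplus A_n$ on $\mathbb{R}^d$ that preserves the product splitting. Such a linear map sends monomials of scaled degree $k$ to linear combinations of monomials of scaled degree $k$, so condition (2)—the vanishing of all partial derivatives of scaled order exactly $k$ at the origin—is invariant under the change of exponential $\fraks$-chart. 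The subtle point here, which I would emphasise, is that exponential charts (unlike arbitrary coordinate charts) genuinely differ by a \emph{linear} transformation globally, not merely to first order, and that this linear transformation respects the $\fraks$-grading on polynomials introduced in Section~\ref{jets in local coordinates}.

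Smoothness in $p$ follows by constructing a smoothly varying family of exponential $\fraks$-charts in a neighbourhood of any base point (for instance by parallel transporting an initial orthonormal frame adapted to the product splitting), so that $p \mapsto i_{k-1,k}|_p$ is locally given by smoothly parameterised Taylor truncation and is therefore a smooth bundle morphism. The right-inverse identity $p_{k,k-1} \circ i_{k-1,k} = \id_{J^{k-1} M}$ is then immediate from condition (1). The main obstacle I anticipate is the chart independence step: the whole argument hinges on recognising that the relevant transition maps act linearly and block-orthogonally on $\mathbb{R}^d$ and hence respect the $\fraks$-graded structure on $\Poly$, which in turn relies crucially on the product structure of $(M,g)$.
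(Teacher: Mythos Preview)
Your proposal is correct and follows essentially the same approach as the paper: polynomial truncation in an exponential $\fraks$-chart, with smoothness established via a smoothly varying family of such charts built from a local orthonormal frame. You are in fact more careful than the paper, which does not explicitly verify chart independence; your observation that two exponential $\fraks$-charts at $p$ differ by a block-linear map preserving the $\fraks$-grading on $\Poly$ (orthogonality is actually unnecessary here---any linear identification $\mathbb{R}^{d_i} \cong T_{p_i} M^i$ gives a block-linear transition) is precisely the reason the construction is canonical.
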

Before proceeding to the proof, let us point out that in the vector bundle case, these two conditions (respectively their naive adaptation) are in general not compatible as can be seen from the following example. Let $E=TM$, then in the same exponential chart $(\nabla_{\partial_1}\nabla_{\partial_2}-\nabla_{\partial_2}\nabla_{\partial_1} )f= R(\partial_1,\partial_2)f$ where $R$ denotes the Riemann curvature tensor.
\begin{proof}
To show that the map $i_{k-1,k}$ is well defined, it suffices to take $h$, in an exponential $\fraks$-chart, to be the polynomial of scaled degree $k$ satisfying these conditions. 
To see that it is a right inverse of $p_{k,k-1}$, note that 
$$p_{k,k-1}\circ i_{k-1,k} j_p^{k-1} f= p_{k,k-1} j^k_p h = j_p^{k-1} h = j_p^{k-1} f\ . $$
To observe that $i_{k-1, k}$ is indeed a bundle morphism, it remains to show that it is smooth. Fix some point $p\in M$, and let $U\subset M$ be a neighbourhood of $p$, such that there exists an orthonormal frame of $TU$ consisting of $\fraks$-vector fields. Fix such a frame $(V_1,\ldots,V_d)$ and denote by $r_{\tilde{p}}\in (0,\infty)$ the injectivity radius of the exponential map at ${\tilde{p}}\in M$. Denote by 
$$\phi_{\tilde{p}}: B_{r_{\tilde{p}}}(0)\to M\ ,\qquad (\lambda_1,\ldots,\lambda_d)\mapsto \exp_{\tilde{p}} (\lambda_1V_1({\tilde{p}}),\ldots,\lambda_d V_d({\tilde{p}}))$$ the induced exponential chart at ${\tilde{p}}\in U$. Write $\neigh=\{({\tilde{p}},q)\in M\times M\ | \  d({\tilde{p}},q)<r_{\tilde{p}}\}$. It is well known that $\neigh\ni ({\tilde{p}},q)\mapsto (\phi_{\tilde{p}})^{-1}(q)$ is smooth.

Let $O:=\{({\tilde{p}},q,v_q)\in U\times JU \ | \ d({\tilde{p}},q)< r_{\tilde{p}} \}$ and define $$\psi^k: U\times JU \supset O\to U\times \Poly^k, \qquad ({\tilde{p}},v_q)\mapsto \theta^k_{\tilde{p}}(v_q) \ ,$$
where $\theta^k_{\tilde{p}}=(\phi^{-1}_{\tilde{p}})^*$ is defined as in (\ref{chart for jets}). Observe that $\psi^k$ is smooth, which follows from the fact that the exponential chart map is smooth in both entries and the explicit transformation rule for jets in charts from Section~\ref{jets in local coordinates}. 
We define $$\theta^k: JU \to U\times \Poly^k, \qquad ({\tilde{p}},v_{\tilde{p}})\mapsto \psi^k({\tilde{p}},v_{\tilde{p}})\;.$$
Then $i_{k-1,k}$ can locally be factored as
$$J^{k-1}M\supset J^{k-1}U \overset{\theta^{k-1}}\longrightarrow U\times \Poly^{k-1}\hookrightarrow U\times \Poly^{k} \overset{(\theta^{k})^{-1}}\longrightarrow  J^{k}U\subset  J^{k}M \ ,$$
and since all maps involved are smooth, the claim follows.
\end{proof}

By defining for $l<k-1$ the maps $i_{l,k}:= i_{l, l+1}\circ \ldots\circ i_{k-1,k}$, we have fixed an injection
$$J^l M\supset \ker p_{{l},{(l-1)}}\to J^k M\ .$$
We set $(J^k M)_l:= i_{l,k}(\ker p_{{l},{(l-1)}})$ and obtain a (canonical) grading 
\begin{equation}
J^k M = \bigoplus_{l\leq k} (J^k M)_l \ .
\end{equation}
For $k\geq l$ we shall write $Q^k_l$ for the canonical projection 
$$J^k M\to (J^k M)_l\subset J^k M\ ,$$ 
which can explicitly be written as 
\begin{equation}
Q^k_l=  i_{l,k}\circ p_{k,l} -  i_{l-1,k}\circ p_{k,l-1} \ .
\end{equation}

Before we construct a (canonical) grading on the jet bundle of a genuine vector bundle $E$, we turn to a short discussion about higher order covariant derivatives.
\subsubsection{Higher order derivatives}\label{section higher order derivatives}
Denote by $T^*M$ the dual space of the tangent space of $M$. This space naturally decomposes as 
$$T^*M= T^*M^1\oplus \ldots\oplus T^*M^n \ .$$ For a multiindex $l=(l^1,\ldots,l^n)\in \mathbb{N}^n$ define
$$ |l|_\fraks:=\sum_{i=1}^n \fraks_i l^i \qquad \text{ and }\qquad  |l|:=\sum_{i=1}^n  l^i\ .$$
and define
$$(T^*M)^{\otimes_s l }:= \bigotimes_{i=1}^n (T^*M^i)^{\otimes_s l^i} \ ,$$ where
$\otimes_s$ denotes the symmetric tensor product. For $m\in \mathbb{N}$ set 
$$(T^*M)^{m}= \bigoplus_{|l|_\mathfrak{s} =m} (T^*M)^{\otimes_s l } \ .$$
Define for a multi-index  $l\in \mathbb{N}^n$
\begin{equation}\label{canonical differential op}
D^l: \mathcal{C}^\infty(M) \to \cC^\infty\big( (T^*M)^{\otimes_s l }\big) \ \text{ respectively }\  D^l: \mathcal{C}^\infty(E) \to \cC^\infty\big((T^*M)^{\otimes_s l }\otimes E \big)
\end{equation}
to be given as follows:
For $f\in \mathcal{C}^\infty(M) $ , respectively  $f\in \mathcal{C}^\infty(E)$ 
$$D^l f:= \nabla^{|l|} f\big|_{(TM)^{\otimes_s l }}\ ,$$
where we implicitly use the canonical identification $\big((TM)^{\otimes_s l }\big)^*= (T^*M)^{\otimes_s l }$ . 
We define the map
\begin{equation}\label{differential operator}
\bar D_{\fraks}^k: \mathcal{C}^\infty(E)\to\cC^\infty\big( \bigoplus_{m=0}^k (T^*M)^m\otimes E \big)\ ,\qquad f\mapsto \sum_{|l|_\mathfrak{s} \leq k} D^l f \ .
\end{equation}

\begin{remark}
Note that in the absence of a non-trivial scaling $\bar{D}^k f= \bigoplus_{m=0}^k \nabla^m f$.
\end{remark}

\subsubsection{Grading on bundle valued jets}

Once we have \eqref{canonical differential op}, we have a natural injection $i_{k-1,k}: J^{k-1} E\to J^k E$
by setting, for any section $f\in \cC^\infty(E)$, $$ i_{k-1,k}(j^{k-1}_p f)= j_p^k h \ ,$$
where $h$ is a section of $E$ satisfying
\begin{enumerate}
\item $j_p^{k-1} h = j_p^{k-1} f$
\item for any multi-index $l\in \mathbb{N}^n$ with $|l|_{\mathfrak{s}}=k$, $\big(D^l h\big) (p)=0$ in $(T^*M)_p^{\otimes_s l}\otimes E_p$ . 
\end{enumerate}
\begin{remark}
Note that in the case of the trivial bundle $E=M\times \RR$ this definition of $i_{k-1,k}$ agrees with the one in (\ref{def of i}) since $M\times \RR$ is a flat bundle and thus $D^l= \nabla^{|l|}$ in this case.
\end{remark}
\begin{lemma}\label{i well defined}
For each $k$ the map  $i_{k-1,k}$ is a well defined bundle morphism, and furthermore it is a right inverse of the map $p_{k,k-1}$, i.e.\ $p_{k,k-1}\circ i_{k-1,k}= \id_{J^{k-1}E}$.
\end{lemma}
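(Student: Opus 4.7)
The plan is to imitate the proof given for the trivial-bundle case above, while accounting for the fact that $D^l$ now involves both the Levi-Civita connection on $TM$ and the connection $\nabla^E$ on $E$. The key observation is that in suitably chosen local trivialisations the statement reduces to a coordinate computation identical to the scalar case.

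\emph{Local model at a point.} Fix $p \in M$, choose an exponential $\fraks$-chart at $p$, and choose a local frame $\{e_\alpha\}$ of $E$ near $p$ obtained by parallel transport of a fixed basis of $E_p$ along geodesics emanating from $p$. In this chart and frame the Christoffel symbols of the Levi-Civita connection and the connection coefficients of $\nabla^E$ all vanish at $p$ itself. Writing $h = \sum_\alpha h^\alpha e_\alpha$, a direct calculation then shows that for any multi-index $l \in \mathbb{N}^n$ the value $(D^l h)(p) \in (T^*M)^{\otimes_s l}_p \otimes E_p$ is simply the collection of partial derivatives $\partial^{l'} h^\alpha (p)$ of $\fraks$-order $|l|_\fraks$ whose index structure is compatible with $l$ (the discrepancy with covariant derivatives only involves Christoffel/connection symbols which vanish at $p$).

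\emph{Well-definedness.} Suppose $h_1, h_2$ both satisfy (1) and (2) for a given representative $f$. From (1) all coordinate partial derivatives of the components $h_1^\alpha, h_2^\alpha$ of $\fraks$-order $\leq k-1$ agree at $p$, while the preceding paragraph shows that (2) forces all coordinate partial derivatives of $\fraks$-order exactly $k$ to vanish at $p$. Hence $h_1 \sim_{E,k,p} h_2$, so $j^k_p h_1 = j^k_p h_2$, and $i_{k-1,k}(j^{k-1}_p f)$ does not depend on the choice of $h$ (nor on the choice of representative $f$ of $j^{k-1}_p f$, by the same argument applied to $h_1 = h$ and $h_2$ built from a second representative). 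Existence of a valid $h$ is straightforward: given $f$, subtract a cutoff of its $k$-th order Taylor polynomial in the exponential $\fraks$-chart (expressed in the parallel frame) to kill the order-$k$ partials while preserving the lower-order jet. The right-inverse property $p_{k,k-1} \circ i_{k-1,k} = \id$ is then immediate from condition (1).

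\emph{Smoothness.} This is the step that requires the most care, and is the main obstacle because it is precisely where the non-triviality of $E$ enters. I would upgrade the factorisation argument used in the scalar proof: shrink $U$ so that every $\tilde p \in U$ has injectivity radius bounded below and that there is a smooth orthonormal frame $(V_1,\ldots,V_d)$ of $TU$ by $\fraks$-vector fields, and fix a smooth local frame of $E$ over $U$. For each $\tilde p \in U$ use the exponential map $\phi_{\tilde p}(\lambda) = \exp_{\tilde p}(\sum_j \lambda_j V_j(\tilde p))$ together with parallel transport of the fixed frame of $E_{\tilde p}$ along the radial geodesics to build a local trivialisation $\Phi_{\tilde p}$ of $E$ near $\tilde p$, and observe that $(\tilde p, q) \mapsto \Phi_{\tilde p}^{-1}$ is smooth (the exponential map and its inverse, as well as solutions of the parallel transport ODE, depend smoothly on their parameters). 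The associated charts $\theta^k_{\tilde p}$ of Sections~\ref{jets in local coordinates}--\ref{bundle valued jets in local coordinates} therefore assemble into a smooth map $\theta^k: JU|_U \to U \times \Poly^k(\RR^n)$ (cf.\ the scalar proof), and in this trivialisation the definition of $i_{k-1,k}$ is simply the inclusion $\Poly^{k-1}(\RR^n) \hookrightarrow \Poly^k(\RR^n)$, since the parallel frame was precisely chosen so that condition (2) becomes ``coefficient of $X^l$ vanishes for $|l|_\fraks = k$''. Composing smooth maps with a linear inclusion yields a smooth bundle morphism, as required.
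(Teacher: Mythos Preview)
Your approach is essentially the paper's: reduce to the polynomial model via the exponential $\fraks$-chart together with the radially parallel-transported frame of $E$, and then factor $i_{k-1,k}$ locally through the linear inclusion $\Poly^{k-1}(\RR^n)\hookrightarrow\Poly^k(\RR^n)$ to read off smoothness.

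One justification is not correct as written. You claim that in this chart and frame $(D^l h)(p)$ coincides with the bare coordinate derivatives $\partial^{l'}h^\alpha(p)$ because ``the discrepancy only involves Christoffel/connection symbols which vanish at $p$''. For $|l|\ge 2$ this is false at face value: already $(\nabla^2 h)^\alpha_{ij}(p)=\partial_i\partial_j h^\alpha(p)+(\partial_i A^\alpha_{j\beta})(p)\,h^\beta(p)$, and the curvature term $\partial_i A(p)$ need not vanish. Your conclusion is nonetheless right, but for a subtler reason: $D^l$ is the \emph{symmetrised} restriction of $\nabla^{|l|}$, and the radial gauge condition $\sum_m x^m A^\alpha_{m\beta}(x)\equiv 0$ (which encodes that the frame is parallel along radial geodesics) forces all symmetrised derivatives of $A$ at the origin to vanish. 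Equivalently, $(\nabla^{|l|}h)(p)(v^{\otimes |l|})$ can be computed as $\tfrac{d^{|l|}}{dt^{|l|}}\big|_{t=0} h(\gamma_v(t))$ along the radial geodesic $\gamma_v$, and there both $\dot\gamma_v$ and the $e_\alpha$ are parallel, reducing this to pure coordinate derivatives of the components $h^\alpha$. The paper itself does not spell this out either; once this point is fixed, the remainder of your argument goes through.
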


\begin{proof}
The proof proceeds similarly to the scalar valued case, but with some modifications.
For $p\in M$, we again use $r_p$ to denote the injectivity radius of the exponential map. Given an orthonormal basis $\{e_{i,p}\}$ of $E_p$, we construct the following ``special'' orthonormal frame on $B_{r_p}(p)$ where for $q\in B_{r_p}(p)$
\begin{equation}\label{paralell}
e_{i,p}(q)\in E_q \text{ is obtained by by parallel translation of } e_{i,p}
\end{equation}
along the shortest geodesic connecting $p$ to $q$. We denote by $\Phi^k_p$ the trivialisation of $\pi^{-1}(B_{r_p}(p))\subset E$ induced by the exponential chart and the frame $e_{i,p}(\cdot)$.

To show that the map $i_{k-1,k}$ is well defined, it suffices to take $h$ in the trivialisation $\Phi^k_p$ to be the (unique) 
appropriate polynomial of scaled degree $k$ and to note that in this trivialisation 
$$j_p^{k-1} h = j_p^{k-1} f$$
is a condition on the part of the polynomial of scaled degree up to $k-1$,  while 
$$\big(D^l h\big) (p)=0  \ \text{for each } l\in \mathbb{N}^n \text{ such that}\ |l|_{\mathfrak{s}}=k$$ 
is a condition on the part of the polynomial of (scaled) homogeneity exactly $k$.

The map $i_{k-1,k}$ is a right inverse of $p_{k,k-1}$ since
$$p_{k,k-1}\circ i_{k-1,k} j_p^{k-1} f= p_{k,k-1} j^k_p h = j_p^{k-1} h = j_p^{k-1} f\ . $$

To observe that $i_{k-1, k}$ is indeed a bundle morphism, it remains to show that it is smooth. We proceed by a slightly more elaborate version of the construction for the $J^k M$ case.
Fix some point $p_0\in M$, and let $U\subset M$ be a neighbourhood of $p_0$, such that there exists an orthonormal frame of $TU$ consisting of $\fraks$ vector fields and an orthonormal frame of $\pi^{-1}(U)\subset E$. Fix such frames $\{V_1(p),\ldots,V_d(p)\}$ of $T_p M$ and $\{e_{p,1},\ldots,e_{p,m}\}$ of $E|_p$ and denote by $r_p\in (0,\infty)$ the injectivity radius of the exponential map at $p$. Denote by 
$$\phi_p: B_{r_p}(0)\to M\ ,\quad (\lambda_1,\ldots,\lambda_d)\mapsto \exp_p (\lambda_1V_1(p),\ldots,\lambda_d V_d(p)) $$
 the induced exponential chart at $p\in U$. Write $\neigh=\{(p,q)\in M\times M\ | \  d(p,q)<r_p\}$ and recall that $\neigh \ni (p,q)\mapsto (\phi_p)^{-1}(q)$ is smooth. 
 
Denote for $(p,q)\in \neigh$ by $e_{p,i}(q)\in E_q$ the vector obtained by parallel translating $e_{p,i}$ to $q$ as in (\ref{paralell}) and note that the maps $ e_i: \neigh\to E,\ (p,q)\mapsto e_{p,i}(q)$ are smooth by construction. Write $\Phi_p:  \pi^{-1}(B_{r_p}(p))\to B_{r_p}(0)\times \mathbb{R}^n$ for the induced trivialisation.

Let $O:=\{(p,q,v_q)\in U\times\pi^{-1}(U) \ | \ d(p,q)< r_p \}$ and define 
$$\Psi^k: U\times JU \supset O\to U\times \Poly^k(\mathbb{R}^n),\quad \ (p,v_q)\mapsto \theta^k_p(v_q) \ ,$$
where $\theta^k_p:=(\Psi_p^{-1})^*$ is defined as in (\ref{bundle jet chart}). Observe that the map $\Psi^k$ is smooth, which follows from the fact that the exponential chart map as well as the maps $e_i$ are smooth (in both entries) and the explicit transformation rule for jets  under change of bundle trivialisations (\ref{equationn}) . 
We define $$\theta^k: JU \to U\times \Poly^k, \quad (p,v_p)\mapsto \Psi^k(p,v_p) \ .$$
 
Then $i_{k-1,k}$ can locally be factored as
$$J^{k-1}E\supset J^{k-1}\pi^{-1}(U) \overset{\theta^{k-1}}\longrightarrow U\times \Poly^{k-1}(\mathbb{R}^n)\hookrightarrow U\times \Poly^{k}(\mathbb{R}^n) \overset{(\theta^{k})^{-1}}\longrightarrow  J^{k}\pi^{-1}(U)\subset  J^{k}E \ $$
and since all involved maps are smooth, the claim follows.
\end{proof}

Exactly as in the scalar valued case, we set $i_{l,k}:= i_{l, l+1}\circ \ldots\circ i_{k-1,k}$ and thus single out a ``canonical'' injection
$$J^l E\supset \ker p_{{l},{(l-1)}}\to J^k E\ .$$
Set $(J^k E)_l:= i_{l,k}(\ker p_{{l},{(l-1)}})$ to obtain a grading 
\begin{equation}\label{grading of J^k}
J^k E = \bigoplus_{l\leq k} (J^k E)_l \ .
\end{equation}
For $k\geq l$ we shall write $Q^k_l$ for the associated canonical projection 
$$J^k E\to (J^k M)_l\subset J^k E\ ,$$ 
which can explicitly be written as 
\begin{equation}\label{explicit formula for jet projection}
Q^k_l=  i_{l,k}\circ p_{k,l} -  i_{l-1,k}\circ p_{k,l-1} \ .
\end{equation}

We define a map $${\pmb{D}}_\fraks^k: JE^k\to \bigoplus_{m=0}^k (T^*M)^m\otimes E$$ as the unique bundle morphism such that the following diagram commutes
\[\begin{tikzcd}
C^{\infty}(E) \arrow{r}{j^k_{\cdot}} \arrow[swap]{dr}{\bar{D}_\fraks^k}  & J^k E \arrow{d}{{\pmb{D}_\fraks^k}}\\
&\bigoplus_{m=0}^k (T^*M)^m\otimes E \ ,
\end{tikzcd}  
\] 
where $\bar{D}_\fraks^k$ was introduced in (\ref{differential operator}).
We recover the following (in the non-scaled setting classical) result, c.f.\ the Corollary after Theorem 7 in \cite[Chapter IV. 9]{Pal65}. The result also substantiates the claim that the grading constructed above is ``canonical''.
\begin{prop}\label{prop relation to cotangent}
The map $\pmb{D}^k$ is well defined and an isomorphism of graded vector bundles.
\end{prop}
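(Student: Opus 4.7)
My plan is to verify well-definedness, the graded property, and bijectivity in turn by local computations at $p\in M$ in the ``special trivialisation'' from the proof of Lemma~\ref{i well defined}: an exponential normal $\fraks$-chart combined with the frame of $E$ obtained by parallel-transporting an orthonormal basis of $E_p$ along radial geodesics. At $p$ in this trivialisation the Christoffel symbols of $\nabla^{TM}$ and the connection one-form of $\nabla^{E}$ vanish; the product Riemannian structure $M=M^1\times\cdots\times M^n$ moreover kills all mixed Christoffel components identically, so that $D^{l'}h(p)$ equals $\partial^{l'}h^{a}(p)$ plus correction terms of the form $\prod_i\partial^{m_i}\Gamma^{\cdot}(p)\cdot\partial^{j}h^{a}(p)$ (or the analogue with $A$), with $m_i\geq 1$ and scaled orders satisfying $|j|_\fraks<|l'|_\fraks$. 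The starting point is the auxiliary observation $j_p^m f=0\Rightarrow D^{l'}f(p)=0$ for all $|l'|_\fraks\leq m$, proved by induction on $|l'|_\fraks$ using this expansion. It shows that $\bar D_\fraks^k f(p)$ depends only on $j_p^k f$, so $\pmb{D}^k$ is a well-defined bundle morphism making the stated diagram commute.

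For the graded property, given $v=i_{l,k}(w)$ with $w\in\ker p_{l,l-1}$, I would inductively construct a representative $h\in\cC^\infty(E)$ of $v$ along the factorisation $i_{l,k}=i_{l,l+1}\circ\cdots\circ i_{k-1,k}$ such that $D^{l'}h(p)=0$ for every $|l'|_\fraks\leq k$ with $|l'|_\fraks\neq l$. Vanishing in scaled degree strictly less than $l$ is immediate from $j_p^{l-1}h=p_{l,l-1}(w)=0$ combined with the auxiliary observation. For scaled degrees $m\in\{l+1,\ldots,k\}$ the inductive step reads: having secured the vanishings up to scaled order $m-1$, the defining property of $i_{m-1,m}$ supplies a new representative whose $D^{l'}$ at $p$ also vanishes for $|l'|_\fraks=m$, and applying the auxiliary observation to the difference of successive representatives preserves the earlier vanishings. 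Consequently $\pmb{D}^k(v)=\sum_{|l'|_\fraks=l}D^{l'}h(p)\in(T^{*}M)^{l}\otimes E$.

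For bijectivity, since $(J^{k}E)_l$ and $(T^{*}M)^{l}\otimes E$ have the same fibre rank (both equal to the dimension of $E_p$-valued scaled-degree-$l$ polynomials on $T_pM$), it suffices to check injectivity on each graded component. In the special trivialisation, ordering the scaled-degree-$l$ monomial basis of $(J^kE)_l$ by increasing unscaled degree $|l'|$, the matrix of $\pmb{D}^k|_{(J^kE)_l}$ is triangular: the ``diagonal'' entry on $x^{l'}$ is $\partial^{l'}h(p)\propto$ coefficient of $x^{l'}$ (up to factorial), while off-diagonal contributions come from correction terms involving $\partial^jh(p)$ with $|j|<|l'|$, corresponding to coefficients of monomials of strictly smaller unscaled degree. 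This triangular system with nonzero diagonal is invertible, so $\pmb{D}^k(v)=0$ forces $w=0$. The technically most delicate step is the inductive bookkeeping in Step~2, where at each lifting one must verify that the covariant vanishings already imposed are genuinely inherited rather than merely their partial-derivative counterparts; this is precisely the content of the auxiliary observation applied to differences of successive representatives.
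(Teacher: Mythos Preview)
Your argument is correct and rests on the same local computation as the paper's: the upper-triangular relationship between $D^{l'}$ and $\partial^{l'}$ at $p$ in the special trivialisation. The paper packages this more economically. Rather than proving only the inclusion $\ker j_p^k\subset\ker\bar D_\fraks^k|_p$ (your auxiliary observation) and then running a separate injectivity argument on each graded piece, it notes that the same induction read in reverse gives $\ker\bar D_\fraks^k|_p\subset\ker j_p^k$ as well; equality of kernels then makes $\pmb D^k_p$ a well-defined injection between spaces of equal dimension, hence an isomorphism, in one stroke. Your Step~2 is a correct and explicit unpacking of the paper's one-line ``follows directly from the construction of the grading''.

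Two minor remarks on Step~3. First, the phrase ``scaled-degree-$l$ monomial basis of $(J^kE)_l$'' conflates $(J^kE)_l$ with $\ker p_{l,l-1}$: in the special trivialisation the monomials $x^{l'}e_a$ with $|l'|_\fraks=l$ span the latter, but only their images $i_{l,k}(x^{l'}e_a)$ (which generically acquire higher scaled-degree tails) span the former. Second, with the representative $h$ from your Step~2 one has $\partial^jh(p)=0$ for all $|j|_\fraks<l$, so the ``off-diagonal'' corrections you track actually vanish and the matrix is diagonal, not merely triangular---the ordering by unscaled degree is unnecessary. You also do not address smoothness of $\pmb D^k$ as a bundle morphism; the paper handles this separately, along the lines of Lemma~\ref{i well defined}.
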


\begin{proof}
We first construct and show uniqueness of the map $\pmb{D}^k_p$ which makes the following diagram commute.
\[\begin{tikzcd}
C^{\infty}(E) \arrow{r}{j_p^k} \arrow[swap]{dr}{\bar{D}^k|_p}  & J_p^k E \arrow{d}{\pmb{D}_p^k}\\
& \bigoplus_{m=0}^k (T_p^*M)^k\otimes E|_p \ .
\end{tikzcd}  
\] 
Indeed, existence, uniqueness and the fact that $\pmb{D}^k_p:   \bigoplus_{m=0}^k J_p^k E\to  (T_p^*M)^k\otimes E|_p$ is an isomorphism follow from the fact that $\ker j_p^k= \ker \bar{D}^k|_p$ as can, for example, be seen by going to an exponential chart at $p$. 
It follows directly from the construction of the grading that $\pmb{D}^k_p$ is graded.

To see that the thus defined map $\pmb{D}^k$ with the property 
$\pmb{D}^k|_p=\pmb{D}^k_p$ is indeed a bundle morphism, it suffices to check smoothness. This can for example be seen similarly as in the smoothness part of the proof of Lemma~\ref{i well defined} by using a map $\theta$ defined in a neighbourhood of $p$. Alternatively it follows directly from \cite[Chapter~IV.3, Theorem~3]{Pal65}.
\end{proof}

We fix the scalar product on $J^kE$ making $\pmb{D}^k$ an isomorphism of inner product bundles.
\begin{remark}\label{norm remark}
Let $(U,\phi)$ be an exponential $\fraks$-chart at $p$ and denote by $\partial_i$ the coordinate vector fields. Then one sees from the above construction, that 
\begin{align*}
|j^k_p f|_n:= |Q^k_n j^k_p f|&\lesssim \sup_{\deg(\partial_{i_1},\ldots,\partial_{i_m})=n} |(\nabla_{\partial_{i_1}}\circ\ldots\circ \nabla_{\partial_{i_m}} f)(p)|\\
&=\sup_{\deg(\partial_{i_1},\ldots,\partial_{i_m})=n} |(\nabla^m_{(\partial_{i_1},\ldots,\partial_{i_m})} f)(p)|  \ , 
\end{align*}
for every $f\in \mathcal{C}^\infty (E)$ .
\end{remark}

The following is a direct consequence of the preceding discussion.
\begin{corollary}\label{lemma gradedness exponential chart}
For a point $p$ denote by $\phi_p$ $\fraks$-normal coordinates in a neighbourhood $U$ of $p$. Then, the map $\theta_p:=(\phi_p^{-1})^*|_{J^k_p M}$ is graded with respect to the grading introduced in (\ref{grading of J^k}) .
Similarly, in the vector bundle valued case, denote by $\Phi_p$ a trivialisation of $\pi^{-1}(U)\subset E$ constructed as in (\ref{paralell}) from an orthonormal frame $\{e_{i,p}\}_i$ of $E|_p$ and an underlying $\fraks$-exponential chart. Then the map 
$\theta_p:=(\Phi^{-1}_p)^*|_{J^k_p M}$ is graded.
\end{corollary}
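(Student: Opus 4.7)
The plan is to reduce everything to a single-factor, scalar \emph{key lemma}: on a single Riemannian manifold with its Levi-Civita connection, if $u$ is a polynomial of pure degree $d$ in Riemannian normal coordinates at $p$, then the symmetric part of $\nabla^m u|_p$ vanishes for every $m \ne d$. To prove this lemma, I would use the geodesic Taylor identity: along any geodesic $\gamma_v(t) = \exp_p(tv)$ one has $\nabla_{\dot\gamma_v}\dot\gamma_v = 0$, and since the Levi-Civita connection acts on scalars by ordinary differentiation, iteration yields
\begin{equation*}
\nabla^m u|_p(v,\ldots,v) \;=\; \left.\frac{d^m}{dt^m}\right|_{t=0} u\bigl(\exp_p(tv)\bigr).
\end{equation*}
In normal coordinates $\exp_p(tv) = tv$, so for pure-degree-$d$ polynomial $u$ this equals $\frac{d^m}{dt^m}|_{t=0} u(tv) = d!\,u(v)\,\delta_{m,d}$. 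Polarisation then determines the symmetric part of $\nabla^m u|_p$ and makes it vanish for $m\ne d$.

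To handle the scaled product $M = M^1 \times \cdots \times M^r$, any pure scaled-degree-$n$ polynomial $P$ in $\fraks$-normal coordinates decomposes as a sum of products $P^{(1)}(x^{(1)})\cdots P^{(r)}(x^{(r)})$ with $\sum_j \fraks_j \deg P^{(j)} = n$. Since the Levi-Civita connection on a Riemannian product is the product of the factor connections, $\nabla^{|l|} P|_p$ restricted to $(TM^1)^{\otimes_s l^1}\otimes\cdots\otimes (TM^r)^{\otimes_s l^r}$ factorises into factor-wise covariant derivatives $(\nabla^{M^j})^{l^j} P^{(j)}|_{p_j}$. The key lemma applied to each factor then gives $D^l P(p) = 0$ unless $l^j = \deg P^{(j)}$ for every $j$, which forces $|l|_\fraks = n$. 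For the vector bundle case, the parallel-transport frame $e_{i,p}(\cdot)$ of the trivialisation $\Phi_p$ satisfies $\nabla^E_{\dot\gamma} e_{i,p}|_\gamma = 0$ along every geodesic through $p$ by construction (see (\ref{paralell})), so iterated covariant derivatives of $f = f^i e_{i,p}(\cdot)$ along $\gamma$ act only on the scalar components $f^i$, and the argument carries over component-wise.

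Granting the vanishing $D^l P(p) = 0$ for $|l|_\fraks \ne n$ whenever $P$ is a pure scaled-degree-$n$ polynomial, the corollary follows quickly: such a $P$ has no terms of scaled degree below $n$, so $j^{n-1}_p P = 0$ and $j^n_p P \in \ker p_{n,n-1}$, and the vanishing $D^l P(p) = 0$ for $n < |l|_\fraks \le k$ is precisely the condition that characterises $i_{n,k}(j^n_p P)$ in the proof of \lemref{i well defined}. Hence $j^k_p P = i_{n,k}(j^n_p P) \in (J^k_p E)_n$ with $\theta_p(j^k_p P) = P$, yielding $\theta_p((J^k_p E)_n) \supseteq (\Poly^k)_n$; equality follows by dimension count using the decomposition (\ref{grading of J^k}) and Proposition~\ref{prop relation to cotangent}. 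The main obstacle is the scaled case: the natural candidate curves $t\mapsto \exp_p(t^{\fraks_1}v_1,\ldots, t^{\fraks_r}v_r)$ are not geodesics in $M$, so a single geodesic Taylor expansion cannot simultaneously test all scaled multi-indices; the resolution is the factor-by-factor reduction above, which exploits that the product Riemannian structure makes the Levi-Civita connection block-diagonal, the parallel frame split, and the polynomial decomposition respect the factorisation, so that each factor can be treated with its unit scaling.
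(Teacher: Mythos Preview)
Your proposal is correct and, in fact, supplies considerably more detail than the paper itself. The paper treats this corollary as an immediate consequence of the preceding construction and gives no separate argument: the key fact you isolate as your ``key lemma''---that in the special trivialisation $\Phi_p$, the condition $(D^l h)(p)=0$ for $|l|_\fraks=k$ depends only on the scaled-degree-$k$ part of the polynomial $h$---is simply asserted in the proof of Lemma~\ref{i well defined} (``\ldots is a condition on the part of the polynomial of (scaled) homogeneity exactly $k$''). Once that is granted, the corollary is indeed immediate: $(J^k_pE)_n$ is by definition the image under $i_{n,k}$ of pure degree-$n$ jets, and in the special trivialisation these are carried to pure degree-$n$ polynomials.

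Your route fills this gap via the geodesic Taylor identity, which is the natural and efficient tool. The reduction to factors is correct: for scalar product functions on a Riemannian product one can verify directly (by induction on the number of derivatives, using that the Levi-Civita connection is block-diagonal and cross-factor Christoffel symbols vanish) that $\nabla^{|l|}P|_p$ evaluated on vectors grouped by factor factorises as $\prod_j(\nabla^{M^j})^{l^j}P^{(j)}|_{p_j}$. For the bundle case your argument via the radial parallel frame works because $D^l$ is, by design, obtained from the \emph{fully symmetric} part of $\nabla^{|l|}$ (this is what makes the dimension count in Proposition~\ref{prop relation to cotangent} go through), and the fully symmetric part is determined by evaluation on diagonals $(v,\ldots,v)$, where the frame is parallel. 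One small caution: if one were to interpret $D^l$ via a block-ordered (rather than fully symmetrised) restriction, curvature of $E$ could produce nonzero cross-factor contributions even for constant sections; but that interpretation is not the one compatible with Proposition~\ref{prop relation to cotangent}, so your reading is the right one.
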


\begin{remark}
Note that the search for a canonical map $i_{k-1,k}$ in this section is equivalent to the search for a canonical splitting of the \textit{jet bundle exact sequence} 
 \[ \begin{tikzcd}
0  \arrow{r} & (T^*M)^k\otimes E \arrow{r}  & J^k(E) \arrow{r}{p_{k,k-1}} & J^{k-1}(E)   \arrow{r} & 0 \ ,
\end{tikzcd}
\] 
c.f.\ \cite[Chapter~IV, Thm.~1]{Pal65}.
A differential operator $\mathcal{A}\in \mathfrak{Dif}_k(E,T^*M)^k\otimes E)$ such that $T_\mathcal{A}$ splits the jet bundle exact sequence is called a \textit{$k$th total differential}.\footnote{It turns out that in the setting of a trivial scaling, $k$th total differentials can be characterised as exactly those differential operators $\mathcal{A}$ whose symbol, c.f.\ Section~\ref{section:excoursion}, is given by $\sigma_k(\mathcal{A})(v_p)(e_p)= v_p^{\otimes k}\otimes e_p$, see \cite[Chapter~IV.9, Thm.~5]{Pal65}. } The operator
$$\mathcal{A}=\big(\sum_{l\in \mathbb{N}^n : |l|_\fraks=k } D^l \big)\, : \mathcal{C}^\infty(E)\to \mathcal{C}^\infty((T^*M)_p^k\otimes E )\ ,$$ 
where $D^l$ was defined in (\ref{canonical differential op}), has this property. 
\end{remark}

\subsection{Infinite jet bundle}\label{section infinite jet bundle}
At this point it is natural to introduce the infinite jet bundle $J^\infty E$ as the inverse limit of the inverse system of vector bundles $(J^k E, p_{k,l})_{l,k\in \mathbb{N}: \ l\leq k}$. 
It is straightforward to see that $(J^k E, p_{k,l})_{l,k\in \mathbb{N}: \ l\leq k}$ is a perfect inverse system and that there is a canonical isomorphism $J_p^\infty E\simeq \mathcal{C}^\infty(E)/\sim_{\infty,p}$
where the equivalence relation $\sim_{\infty,p}$ is defined analogously to the relation $\sim_{k,p}$ introduced in Section~\ref{basic facts on jets}. This in particular implies that for each $k\in \mathbb{N}$ there is a unique projection 
$$p_{\infty,k}: J^\infty E\to J^kE$$
such that the family of maps $\{p_{\infty, k}\}_{k\in \mathbb{N}}$ satisfies the identity $p_{\infty,l}=p_{\infty,k}\circ  p_{k,l}$ for all $l\leq k$. The analogous statement also holds for the maps $i_{k,\infty}$.

In particular, it follows that the maps $\{Q^k_l\}_{k,l\in \mathbb{N}}$ introduced in \eqref{explicit formula for jet projection} extend to a map $Q_l:J^\infty E\to J^\infty E$ given by 
\begin{equation}\label{def of proj}
Q_l=i_{l,\infty}\circ p_{\infty,l} -  i_{l-1,\infty}\circ p_{\infty,l-1} \ .
\end{equation}

We define the spaces $(JE)_l= Q_l J^\infty E$, which come with a natural scalar scalar product induced from the one on $J^kE$ .
We define the graded space
$$JE:= \bigoplus_{m=0}^\infty (JE)_m \ ,$$
which in view of Proposition~\ref{prop relation to cotangent} is canonically isomorphic to $\bigoplus_{m=0}^\infty (T^*M)^m\otimes E$.
Note the strict inclusion
$JE\subsetneq J^\infty E$.

\begin{remark}
The reason for introducing the space $JE\subset J^\infty E$ is that working with $J^\infty E$ would introduce non-trivial convergence considerations in subsequent sections, while this can be circumvented on $JE$. In principle it would have been possible to work with $J^k M$ for some big enough $k\in \mathbb{N}$ throughout this article, but our decision to work with $JE$ seems more natural from the point of view of \cite{Hai14}, as $J_pE$ is the canonical analogue of the space of polynomials of arbitrary but finite degree at $p$, while $J_p^\infty$ allows for infinite power series.
\end{remark}

\begin{remark}
From now on we shall use the notation
$$j_p f:= j^\infty_p f \ ,$$
since this will make many expressions less cluttered. Note that $j_p f\notin J_p M$ in general, but this will cause no confusion, since the space $J^\infty M$ will play no role from Section~\ref{section Regularity structures and models} onwards.
\end{remark}
\begin{remark}\label{rem isometry}
One can see that $JE$ and $JM\otimes E$ are canonically isometrically isomorphic, since it follows from Proposition~\ref{prop relation to cotangent} that for each $k$ the vector bundles $J^k E$ and $J^kM \otimes E$ are isometrically isomorphic. 
\end{remark}

\subsection{Jets and analytic bounds}
We make some remarks on the growth of a function $f$ depending on its jet $j^k_p f$.
\begin{lemma}\label{pointwise bound}
Suppose $W=(W_1,\ldots,W_n)$ is a tuple of $\fraks$-vector fields defined on $B_1(p)$. Then there exists a constant (depending on $W$), such that the following bound holds uniformly over $f, g\in \mathcal{C}^\infty(M)$ satisfying $j^k_p f= j^k_p g$
\begin{align*}
 |(\nabla_{W_1}\cdots \nabla_{W_n} f)(q)&-(\nabla_{W_1}\cdots \nabla_{W_n} g)(q)|\\
 & \lesssim_{W}  d_\fraks(p,q)^{\lfloor k+1-\deg_\fraks(W)\rfloor\vee 0}\ \big\| |j_\cdot^{\lfloor k+\max_i \fraks_i\rfloor\vee \deg(W)} (f-g)|\big\|_{L^\infty(B_1)} ,
\end{align*}
where $\deg_\fraks(W)$ was defined in (\ref{deg of vec}) . 
\end{lemma}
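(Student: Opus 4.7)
Set $h:=f-g$, so $j^k_p h = 0$ by linearity, and write $\nabla_W := \nabla_{W_1}\circ\cdots\circ\nabla_{W_n}$; the task is to bound $|(\nabla_W h)(q)|$. I distinguish two cases depending on the size of $\deg_\fraks(W)$ relative to $k$.

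\textit{Trivial case $\deg_\fraks(W) \geq k+1$.} The exponent on $d_\fraks(p,q)$ equals $0$ and the claim reduces to a pointwise bound. Since $(\nabla_W h)(q)$ is determined by $j^{\deg_\fraks(W)}_q h$ together with smooth $W$-dependent coefficients, Remark~\ref{norm remark} immediately yields the estimate.

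\textit{Main case $\deg_\fraks(W) \leq k$.} Iterating \eqref{differentiation of jets} shows that $\nabla_W$ descends to a bundle map $J^k M \to J^{k-\deg_\fraks(W)} M$, so $j^{k-\deg_\fraks(W)}_p(\nabla_W h)=0$. Writing $u:=\nabla_W h$ and $m:=k-\deg_\fraks(W)\geq 0$, the problem reduces to the scaled Taylor remainder estimate
\[
|u(q)| \lesssim d_\fraks(p,q)^{m+1}\ \sup_{\tilde q\in B_1}|j^{m+\max_i \fraks_i}_{\tilde q} u|,
\]
since by smoothness of $W$ the right-hand side is in turn bounded by a constant multiple of $\sup_{\tilde q\in B_1}|j^{k+\max_i \fraks_i}_{\tilde q} h|$, which matches the stated bound. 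To prove this remainder estimate I would work in an $\fraks$-exponential chart centered at $p$, which exists by Definition~\ref{def scaling on manifold} as a product of ordinary exponential charts on the factors $M^i$. In such coordinates, $j^m_p u = 0$ is equivalent to $D^l u(0) = 0$ for every multi-index $l$ with $|l|_\fraks \leq m$. Connecting $0$ to the image of $q$ by concatenating straight segments in each of the $n$ factor directions and applying the integral form of Taylor's formula block-by-block, one obtains a remainder expressible as a finite sum of integrals of $D^l u$ with $|l|_\fraks\in (m,m+\max_i\fraks_i]$; the upper end $m+\max_i\fraks_i$ comes from the fact that one partial derivative in the $i$-th factor raises $|\cdot|_\fraks$ by exactly $\fraks_i$. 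Using $|q^l|\lesssim d_\fraks(p,q)^{|l|_\fraks}\lesssim d_\fraks(p,q)^{m+1}$ on $B_1$ and Remark~\ref{norm remark} to replace coordinate derivatives by the jet norm completes the bound.

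\textbf{Main obstacle.} The only genuinely non-routine bookkeeping is the scaled Taylor expansion in the curved Riemannian setting: one has to relate iterated covariant derivatives along the $\fraks$-vector fields $W_i$ to coordinate partials in the exponential chart, with Christoffel symbols producing only lower-(scaled-)order corrections that are absorbed into the implicit $W$-dependent constant. Once this translation is in place, the argument collapses to a standard scaled Taylor remainder estimate in the flat product model provided by the scaling.
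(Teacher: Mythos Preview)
Your proof is correct and follows essentially the same approach as the paper: set $h=f-g$, use \eqref{differentiation of jets} iteratively to conclude $j^{k-\deg_\fraks(W)}_p(\nabla_W h)=0$, then pass to an $\fraks$-chart and invoke a scaled Taylor remainder estimate. The only difference is that the paper dispatches the Taylor remainder by citing \cite[Prop.~A.1]{Hai14} directly, whereas you sketch its proof via block-by-block Taylor expansion along the factor directions; your concern about Christoffel symbols is also unnecessary, since the jet condition $j^m_p u=0$ is chart-independent and the paper simply works in any $\fraks$-chart rather than insisting on exponential coordinates.
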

\begin{proof}

We show the claim for $|j_p^k f|=|j_p^k f|=1$, the general case follows from linearity. We begin with the case $n=0$:
Let $h= f-g$, since $j^k_p h=0$ the discussion in this Section~\ref{jets in local coordinates} implies that in any $\fraks$-coordinate chart $(U,\phi)$ around $p$ the Taylor polynomial $j^k_{\phi(p)} (h\circ \phi^{-1})$ of order $k$ vanishes. Thus it follows from \cite[Prop. A1]{Hai14}, that one has a bound of the form $|h\circ \phi^{-1}(x)|\lesssim |x-\phi(p)|_\fraks^{k+1}$, which in turn implies the bound.
In the bundle valued case the same argument applies replacing the $\fraks$-chart $(U,\phi)$ by a trivialisation.

If $n>0$, note that $j^{\lfloor k-\text{deg}_\fraks(W)\rfloor}_p(\nabla_{W_1}\cdots \nabla_{W_n} h)=0$ by (\ref{differentiation of jets}) and thus the claim follows as above.
\end{proof}

\subsection{Admissible realisations}
In this section we define a notion of ``admissible realisation'' of $J E$ for a smooth vector bundle $E$ which will lay the groundwork for defining the analogue of the polynomial regularity structure. In short, an admissible realisation of $J E$ associates to each element $v\in J E$ a concrete function. We shall first define admissible realisations on $J^kE$ and obtain admissible realisations of $JE$ by a (soft) limiting argument. The only place where more than the smooth structure on $E$ is needed is when formulating quantitative analytic bounds on admissible realisations (Lemma~\ref{easy crucial bound}, Lemma~\ref{bound on admissable realisations} and Lemma~\ref{comparing admissible realisations}).

\begin{definition}\label{def admissible realisation}
We call a map $R^k: J^k E \to \mathcal{C}^\infty(E)$ an admissible realisation of $J^k E$ if the following properties are satisfied:
\begin{itemize}
\item For each $p\in M$, the map $R^k|_{J^k_pE}$ is a right inverse of $j_p^k$, i.e.\  $j_p^k \circ R^k|_{J^k_pE}= \id|_{J^k_p E}$.
\item The map $R^k$ is linear on each fiber of $J^k E$.
\item The map $J^k E \times M \to E, ((p, v_p),q) \mapsto R^k( (p, v_p))(q)$ is smooth.
\end{itemize}
We shall often just write $R^k(v_p)$ instead of $R^k(p,v_p)$ or $R^k_{E|_p}(v_p)$.
\end{definition}

Let $F\mapsto N$ be a vector bundle and $\Phi:E\to F$ bundle isomorphism. Then the notion of pullback of sections $\Phi^*: \mathcal{C}^\infty (F)\to \mathcal{C}^\infty (E)$ induces the notion of pullback $\Phi^*: JF\to JE$ discussed in Section~\ref{section pullback of jets}. This in turn induces a 
notion of pullback of admissible realisations.
\begin{definition}\label{Pushforward of admissible realisations}
 Let $\Phi:E\to F$ be a bundle isomorphism  and
$R^k$ be an admissible realisation of $JF$, then $\Phi^* R^k$ is defined as the unique map $JE\to \mathcal{C}^\infty(E)$ such that the following diagram commutes
\[ \begin{tikzcd}
J^kF  \arrow{r}{\Phi^*} \arrow[swap]{d}{R^k} &J^kE \arrow{d}{\Phi^* R^k} \\%
\mathcal{C}^\infty (F) \arrow{r}{\Phi^*}& \mathcal{C}^\infty (E)
\end{tikzcd}
\]
\end{definition}
Note that this diagram defines the map $\Phi^* E^k$ uniquely and that the smoothness and linearity conditions in Definition~\ref{def admissible realisation} are trivially satisfied. To conclude that $\Phi^* R^k$ is an admissible realisation, it suffices to observe that for $p\in M$, $v_p\in J_pE$ one has $j_p^k (\Phi^* R^k(v_p))=v_p$ . Indeed, writing $\bar{p}= \phi(p)$, one has
$$j_p^k ((\Phi^* R^k)(v_p)) = j_p^k (\Phi^* (R^k ((\Phi^{-1})^* v_p)))= \Phi^*j_{\bar{p}}^k R^k((\Phi^{-1})^* v_p)=\Phi^* (\Phi^{-1})^* v_p=v_p\ .$$

\begin{remark}
One can see that the notion of pullback of admissible realisations also makes sense if $\Phi=(\phi, G)$ is a bundle morphism restricting to isomorphisms on fibres and such that $\phi$ is an immersion.
\end{remark}

\begin{prop}\label{existence of admissible realisations}
For any vector bundle $E\to M$ and any $k\in \mathbb{N}$ there exists an admissible realisation of $J^k E$. 
\end{prop}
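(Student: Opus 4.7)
The plan is to build admissible realisations locally from the chart/trivialisation constructions of Section~\ref{bundle valued jets in local coordinates} and then glue with a partition of unity. First I would pick a locally finite open cover $\{U_\alpha\}_{\alpha\in A}$ of $M$ such that each $U_\alpha$ supports an $\fraks$-chart $\phi_\alpha\colon U_\alpha\to V_\alpha\subset\RR^d$ and a local trivialisation $\Phi_\alpha$ of $E|_{U_\alpha}$. The induced jet chart $\theta^k_\alpha=\theta^k_{(\pi^{-1}U_\alpha,\Phi_\alpha)}\colon J^kE|_{U_\alpha}\to V_\alpha\times\Poly^k(\RR^n)$ from \eqref{bundle jet chart} is a diffeomorphism, linear on fibres. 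For $(p,v_p)\in J^kE|_{U_\alpha}$ with $\theta^k_\alpha(p,v_p)=(\phi_\alpha(p),P_{p,v_p})$, the polynomial $P_{p,v_p}$ defines a smooth $\RR^n$-valued function on all of $V_\alpha$, and I define the local realisation by pulling it back through the trivialisation,
\begin{equation*}
R^k_\alpha(v_p):=\Phi_\alpha^{-1}\circ\bigl(P_{p,v_p}\circ\phi_\alpha\bigr)\in\cC^\infty(E|_{U_\alpha}).
\end{equation*}
Linearity in $v_p$ is immediate, smoothness of the map $(p,v_p,q)\mapsto R^k_\alpha(v_p)(q)$ on $J^kE|_{U_\alpha}\times U_\alpha$ follows from the smoothness of $\theta^k_\alpha$ jointly with polynomial evaluation, and by construction of $\theta^k_\alpha$ one has $j_p^kR^k_\alpha(v_p)=v_p$.

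Next, I would pick a partition of unity $\{\chi_\alpha\}$ subordinate to $\{U_\alpha\}$ together with auxiliary cutoffs $\tilde\chi_\alpha\in\cC^\infty_c(U_\alpha)$ satisfying $\tilde\chi_\alpha\equiv 1$ on an open neighbourhood of $\supp\chi_\alpha$. Define
\begin{equation*}
R^k(v_p)(q)\;:=\;\sum_{\alpha\in A}\chi_\alpha(p)\,\tilde\chi_\alpha(q)\,R^k_\alpha(v_p)(q),
\end{equation*}
where each summand is extended by $0$ outside $U_\alpha$. Local finiteness of $\{U_\alpha\}$ makes the sum locally finite, so smoothness of the map in Definition~\ref{def admissible realisation} and linearity on fibres of $J^kE$ are inherited from the $R^k_\alpha$.

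To check the right-inverse property, fix $(p,v_p)\in J^kE$ and note that only indices $\alpha$ with $p\in\supp\chi_\alpha$ contribute. For each such $\alpha$ one has $\tilde\chi_\alpha\equiv 1$ in a neighbourhood of $p$, so $j_p^k\bigl(\tilde\chi_\alpha R^k_\alpha(v_p)\bigr)=j_p^kR^k_\alpha(v_p)=v_p$. Using $\sum_\alpha\chi_\alpha\equiv 1$ and the $J^k_pM$-module structure of $J^k_pE$ (Remark~\ref{alternative definition remark}), this yields
\begin{equation*}
j_p^kR^k(v_p)=\sum_\alpha\chi_\alpha(p)\,v_p=v_p,
\end{equation*}
as required. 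No step looks genuinely difficult; the only point requiring a little care is the simultaneous smoothness in $(p,v_p,q)$ of the local realisations, which reduces to the smoothness of the transition maps already recorded in \eqref{equationn}.
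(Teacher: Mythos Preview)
Your proposal is correct and follows essentially the same strategy as the paper: build local realisations through the chart/trivialisation identifications of Section~\ref{bundle valued jets in local coordinates} and glue with a partition of unity in the base point $p$. The only cosmetic difference is how the target variable $q$ is localised: the paper multiplies the concrete polynomial in the chart by a bump $\psi_r(\cdot-\phi_\alpha(p))$ centred at the image of $p$, whereas you use a fixed cutoff $\tilde\chi_\alpha$ equal to $1$ near $\supp\chi_\alpha$; both devices ensure the extension by zero is smooth. (Your invocation of the $J^k_pM$-module structure in the last step is harmless but unnecessary---since $\chi_\alpha(p)$ is a scalar, linearity of $j_p^k$ already gives $j_p^kR^k(v_p)=\sum_\alpha\chi_\alpha(p)\,v_p=v_p$.)
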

\begin{proof}
Let us fix some function $\psi: \mathcal{C}^\infty_c (B_1(0))$, such that $\psi|_{B_{1/2}(0)} = 1$. For $r>0$ we denote by $\psi_r$ the map given by $\psi_r(x)= \psi(\frac{x}{r})$ and extended to be vanish outside of $B_r(0)$. 

First assume that $V \subset W\subset \mathbb{R}^d$ are such that $\text{dist}(V,W^c)>r$ for some $r>0$. 
Then we define the map,
$$ R_{V,W,r}: V\times \Poly^k(\mathbb{R}^m)\to \mathcal{C}^\infty(W)\otimes\mathbb{R}^n,\quad (p, v_p)\mapsto \psi_r (\cdot - p)(\Pi_p v_p)(\cdot)\ ,$$
where $\Pi_p$ is the map that maps an abstract polynomial in $\Poly^k(\mathbb{R}^n)$ to the corresponding concrete $\mathbb{R}^n$ valued polynomial function centred at $p$.
Note that 
\begin{enumerate}
\item $j_p^k R_{V,W,r}(v_p) = v_p$ for all $p\in V$,
\item\label{1} the map $R_{V,W,r}$ is $\mathcal{C}^\infty(V)$-linear,
\item\label{2} the map $J^k(V\times \mathbb{R}^m)\times W \to \mathbb{R}^m,\ ((p,v_p), q)\mapsto R_{V,W,r}(v_p)(q)$ is smooth.
\end{enumerate}

Now we construct an admissible realisation on an arbitrary vector bundle $E\to M$. Let $\{(U_i,\phi_i)\}_{i\in I}$ be a locally finite atlas of $M$ such that there exist $V_i\subset \phi (U_i)$,  satisfying $\text{dist}(V_i,\phi(U_i)^c):=r_i>0$ and $M=\bigcup_i \phi_i^{-1}(V_i)$ and such that there exists a partition of unity $\mathbb{1}_i: M\to [0,1]$ subordinate to the cover $\{V_i\}$ as well as trivialisations $\Phi_i= (\phi_i, G_i)$ of $\pi^{-1}(U_i)$. Then we define $R^k: JE \to C_c^\infty(E)$ by the following formula:
\begin{equation}\label{formula for admissible realisation}
R^k(p,v_p)(q):= \sum_i  \mathbb{1}_i(p)\, \Phi_i^*\big(R_{V_i,\phi_i(U_i),r_i}(\theta^k_i v_p )\big)(q) \,
\end{equation}
where $\theta^k_i:= (\Phi_i^{-1})^* : J^k(\pi^{-1}(V))\to V\times \Poly^k(\mathbb{R}^m)$  .

Next we check that this is indeed an admissible realisation. The second and third condition in Definition~\ref{def admissible realisation} follow directly from (\ref{1}) and (\ref{2}) above.
For the first condition note that one has 
$$j^k_p\big(\Phi_i^*R_{V_i,\phi_i(U_i),r_i}((\Phi_i^{-1})^* v_p )\big) = v_p ,$$
by definition. 
Thus it suffices to show that, if $f , f_1,\ldots,f_n$ are smooth
sections of a vector bundle defined in a neighbourhood of $p\in M$ satisfying $j^k_p f = j^k_p f_1 =\ldots= j^k_p f_n$ and $\eta_1,\ldots,\eta_n$ are smooth functions satisfying $\sum_{i=1}^n \eta=1$ in a neighbourhood of 
$p$, then $$j_p^k \big(\sum_{i=1}^n \eta_i f_i\big) = j^k_p f\ .$$ But since this holds on the trivial bundle $\mathbb{R}^d\times \mathbb{R}^n$ it follows in the general case.
\end{proof}
The next lemma contains bounds which are used for the construction of the analogue of the polynomial model. Assume that $M$ is Riemannian and $(E, \nabla, \langle \cdot , \cdot\rangle)$ is as in Section~\ref{section bundles and metric} and recall for $k>l$ the map $Q^k_l:  J^k E\to (J^k E)_l$ from (\ref{def of proj}).
\begin{lemma}\label{easy crucial bound}
Let $R^k$ be an admissible realisation of $J^k E$. For every compact set $K\subset M$ the bound
$$|Q^k_m j_q^k R(v_p)|_m\lesssim_K d_\fraks(p,q)^{(n-m)\vee 0}|v_p|_n$$
holds uniformly over $p,q\in K$,  $m,n\leq k$ and $v_p\in (J^k_pE)_n$.
\end{lemma}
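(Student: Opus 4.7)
By Remark~\ref{norm remark} applied in an exponential $\fraks$-chart centred at $q$ with coordinate vector fields $\partial_1, \ldots, \partial_d$, one has
\[
|Q^k_m j_q^k R(v_p)|_m \lesssim \sup_{\deg_\fraks(\partial_{i_1}, \ldots, \partial_{i_l}) = m} \bigl|\nabla_{\partial_{i_1}}\cdots \nabla_{\partial_{i_l}} R(v_p)\bigr|(q),
\]
so it is enough to bound each such $\fraks$-degree-$m$ iterated covariant derivative of $R(v_p)$ at $q$. The plan is to compare $R(v_p)$ to an explicit polynomial representative of $v_p$ built out of the exponential chart at $p$ and to invoke Lemma~\ref{pointwise bound}.

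Fix $p \in K$, let $\phi_p$ denote the exponential $\fraks$-chart at $p$, and let $\Phi_p$ be the trivialisation of $E$ obtained by parallel-transporting an orthonormal frame of $E|_p$ along radial geodesics, as in the proof of Lemma~\ref{i well defined}. By Corollary~\ref{lemma gradedness exponential chart}, the chart map $\theta_p := (\Phi_p^{-1})^*$ is graded, so $v_p \in (J^k_p E)_n$ corresponds to a polynomial $P_n$ homogeneous of scaled degree exactly $n$ whose natural norm is comparable to $|v_p|_n$. Let $\chi$ be a fixed smooth cutoff equal to $1$ on a neighbourhood of $0$ and supported in the image of $\phi_p$, and set
\[
g_p := \Phi_p^*(\chi \cdot P_n) \in \mathcal{C}^\infty(E).
\]
Then $j_p^k g_p = v_p$; moreover, $\Phi_p$ depends smoothly on $p$, so $g_p$ is a smooth function of $p$ which depends linearly on $v_p$, and for any fixed $\bar k$ one has $\bigl\||j_\cdot^{\bar k} g_p|\bigr\|_{L^\infty(B_1(p))} \lesssim |v_p|_n$ uniformly in $p \in K$.

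Applying Lemma~\ref{pointwise bound} to $R(v_p)$ and $g_p$, which share the $k$-jet $v_p$ at $p$, gives
\[
\bigl|\nabla_{\partial_{i_1}}\cdots \nabla_{\partial_{i_l}}\bigl(R(v_p) - g_p\bigr)\bigr|(q) \lesssim d_\fraks(p,q)^{(k+1-m) \vee 0}\, |v_p|_n,
\]
with a constant uniform in $p \in K$ thanks to the joint smoothness of $R$, its fibrewise linearity in $v_p$, and the uniform bound on $g_p$. A direct computation of the iterated covariant derivatives of $g_p$ in the chart at $p$, using that the Levi-Civita Christoffel symbols and the connection coefficients of $\nabla^E$ in the parallel-transport frame vanish at the origin of the chart (so their contributions are of strictly higher scaled order than the leading polynomial term), yields
\[
\bigl|\nabla_{\partial_{i_1}}\cdots \nabla_{\partial_{i_l}} g_p\bigr|(q) \lesssim d_\fraks(p,q)^{(n-m)\vee 0}\, |v_p|_n
\]
whenever $q$ lies in the region where $\chi \equiv 1$. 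Since $n \leq k$ implies $(n-m)\vee 0 \leq (k+1-m)\vee 0$, the triangle inequality yields the desired estimate for $d_\fraks(p,q)$ small; the complementary regime $d_\fraks(p,q) \geq \eta$ for a fixed $\eta > 0$ follows from joint smoothness of $R$, fibrewise linearity, and compactness of $K$. The main technical obstacle is the direct bound on $\nabla^{(l)} g_p$: one must carefully verify that the Christoffel-type correction terms are genuinely of higher scaled order rather than competing with the leading polynomial term of scaled order $(n - m) \vee 0$.
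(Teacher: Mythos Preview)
Your approach works but is considerably more involved than the paper's. The paper fixes \emph{any} chart and trivialisation $(U,\Phi)$ around $p$ and simply observes that $(\Phi^{-1})^*R(v_p)$ is a smooth $\RR^m$-valued function vanishing to scaled order $n$ at $\phi(p)$, since $v_p\in (J_p^kE)_n$ forces $j_p^{n-1}R(v_p)=0$. Elementary Taylor theory in $\RR^d$ then gives that its degree-$m$ coefficients at $\phi(q)$ are $O(d_\fraks(p,q)^{(n-m)\vee 0})$, and since $\Phi^*$ is upper triangular (it preserves the jet filtration), $Q_m^k j_q^k R(v_p)$ is a bounded linear combination of these coordinate-Taylor coefficients for degrees $\le m$, each already satisfying the desired bound.

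The key simplification you miss is that no auxiliary polynomial $g_p$ is needed: the section $R(v_p)$ \emph{itself} vanishes to scaled order $n$ at $p$, so one may analyse it directly in coordinates rather than comparing it to a hand-built representative via Lemma~\ref{pointwise bound}. Your detour is valid but it is what creates the ``main technical obstacle'' you flag. That obstacle is in fact surmountable---expanding $\nabla^{(l)}g_p$ in the $p$-chart, every Christoffel-type correction involves some $\partial^\alpha P_n$ with $|\alpha|_\fraks\le m$ (this uses that on a Riemannian product $\nabla_{\partial_i}\partial_j$ stays in the same $TM^a$ factor), and $|\partial^\alpha P_n(x)|\lesssim |x|_\fraks^{(n-|\alpha|_\fraks)\vee 0}\le |x|_\fraks^{(n-m)\vee 0}$. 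Mere boundedness of the connection coefficients on compacts suffices; their vanishing at the origin of the $p$-chart is not the right mechanism, since you are evaluating at $q\neq p$. Relatedly, there is a minor chart mismatch in your write-up: you open with the exponential chart at $q$ (for Remark~\ref{norm remark}) but then reason about connection coefficients in the chart at $p$, which requires a short argument to reconcile.
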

\begin{proof}
Let $(U,\phi)$ be a chart around $p$ and $\Phi$ a compatible trivialisation of $\pi^{-1}(U)$. Note that it suffices to show the claim for $q\in U$, by continuity and compactness if follows for $q$ further away from $p$. Since $(\Phi^{-1})^*R(v_p) $ is a smooth function on $\phi(U)$ vanishing of order $n$ at $\phi(p)$, it follows that the coefficients in front of monomials of homogeneity $m$ of the abstract polynomial $j_{\phi(q)}^k (\Phi^{-1})^*R(v_p)$ vanish of order $n-m$. Since the map $\Phi^*$ is upper triangular, the same is true for $ j^k_q R ( v_p)$.
\end{proof}

\begin{definition}
We call a family of admissible realisations $\{R^k\}_{k\in\mathbb{N}}$ for ${J^k E}$ consistent, if for each $k\geq l$ and each map $p_{k,l}:J^k E \to J^lE$ introduced at the beginning of Section~\ref{section grading on jets} one has the identity
$$R^l= R^k \circ p_{k,l} \ .$$
We call a map $R: JE\to \mathcal{C}^\infty(E)$ an admissible realisation of $JE$ if there exists a family of admissible realisations  $\{R^k\}_{k\in\mathbb{N}}$ of $J^k E$ such that for each $k\geq l$ one has the identity 
\begin{equation}\label{admissible re}
R|_{(JE)_l} = R^k\circ p_{\infty,k}|_{(JE)_l}\ .
\end{equation}
\end{definition}
\begin{remark}
Note that there is an obvious one to one correspondence between consistent families of  admissible realisations of $\{J^kE\}_k$ and admissible realisations of $JE$. In particular every consistent family of  admissible realisations gives rise to an admissible realisation of $JE$.
\end{remark}

\begin{prop}
For any vector bundle $E\to M$ there exists a family of admissible realisations $\{R^k\}_k$ which is consistent and therefore there exists an admissible realisation of $JE$. 
\end{prop}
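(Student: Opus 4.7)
The plan is to revisit the construction of Proposition~\ref{existence of admissible realisations} with a specific choice of charts and trivialisations which makes the resulting family automatically consistent. The key input is Corollary~\ref{lemma gradedness exponential chart}: if one works in $\fraks$-exponential normal coordinates centred at the basepoint and uses the parallel-transport trivialisation $\Phi_p$ of $E$ induced by an orthonormal frame at $p$ (as in the proof of Lemma~\ref{i well defined}), then the local identification $\theta^k_p := (\Phi_p^{-1})^*|_{J^k_p E}: J^k_p E \to \Poly^k(\mathbb{R}^m)$ is a \emph{graded} isomorphism. This is exactly the feature missing from the proof of Proposition~\ref{existence of admissible realisations}, where an arbitrary atlas was used and $\theta^k_i$ was only upper-triangular with respect to the grading.

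Concretely, fix a smooth cutoff $\chi: M \times M \to [0,1]$ supported in a neighbourhood of the diagonal on which $\exp_p$ is injective and $\Phi_p$ is defined, with $\chi \equiv 1$ on some smaller such neighbourhood. For $v_p \in J^k_p E$ define
$$R^k(v_p)(q) := \chi(p,q)\, \Phi_p^*\bigl(\Pi_0(\theta^k_p v_p)\bigr)(q),$$
where $\Pi_0$ sends an abstract polynomial in $\Poly^k(\mathbb{R}^m)$ to the corresponding concrete polynomial function on $\mathbb{R}^m$ centred at the origin. That each $R^k$ is an admissible realisation of $J^k E$ is verified exactly as in Proposition~\ref{existence of admissible realisations}: linearity on fibres is manifest, the identity $j^k_p R^k(v_p) = v_p$ follows since $R^k(v_p)$ agrees with $\Phi_p^*(\Pi_0(\theta^k_p v_p))$ in a neighbourhood of $p$, and joint smoothness in $(p,q)$ was essentially already established in the smoothness part of the proof of Lemma~\ref{i well defined}.

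Consistency is then immediate. For $v \in (J^k_p E)_l$ with $l \leq k$, gradedness of $\theta^k_p$ ensures that $\theta^k_p v \in \Poly^k(\mathbb{R}^m)$ is homogeneous of degree $l$, so the concrete function $\Pi_0(\theta^k_p v)$ is the same polynomial on $\mathbb{R}^m$ regardless of the ambient jet order $k \geq l$. Thus $R^k(v)$ depends only on the pure $l$-th grade content of $v$, which is precisely the compatibility required for the family to be consistent. Setting $R|_{(JE)_l}(w) := R^k(p_{\infty,k} w)$ for any $k \geq l$ then unambiguously defines an admissible realisation $R: JE \to \mathcal{C}^\infty(E)$ satisfying \eqref{admissible re}. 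The only delicate point is the global smoothness of $p \mapsto \Phi_p$ (and hence of $\theta^k_p$) near the diagonal, but this is the same issue that was already handled in the proof of Lemma~\ref{i well defined}, so no new work is needed.
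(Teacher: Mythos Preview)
Your approach is correct and takes a genuinely different route from the paper's. The paper sticks with the arbitrary atlas and trivialisations from Proposition~\ref{existence of admissible realisations}; since $\theta_i^k$ is only upper-triangular there, embedding $v\in (J^kE)_l$ into $J^\infty E$ and pushing through $\theta_i^\infty$ produces an \emph{infinite} formal series, and the paper handles the resulting convergence by making the cutoff radius depend on the homogeneous degree rather than on the jet order. Your observation that working in $\fraks$-exponential charts with the parallel trivialisation forces $\theta_p^k$ to be \emph{graded} (Corollary~\ref{lemma gradedness exponential chart}) short-circuits this entirely: the image of a pure grade-$l$ element is a single homogeneous polynomial independent of $k$, so a fixed cutoff suffices and consistency is immediate. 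This is cleaner and more conceptual.

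There is one point you gloss over. The trivialisation $\Phi_p$ depends on a choice of orthonormal frames of $T_pM$ and $E_p$, and in general no global smooth choice exists. Your appeal to Lemma~\ref{i well defined} does not quite cover this: there the map $i_{k-1,k}$ was already intrinsically defined and only its \emph{smoothness} needed local frames, whereas here the very definition of $R^k$ appears to depend on them. The fix is easy but should be stated: either check directly that $R^k(v_p)$ is frame-independent (a short computation --- changing frames by $A\in O(d)$ and $O\in O(m)$ transforms $\phi_p$, $G_p$, and $\theta_p^k v_p$ compatibly, and the effects cancel in $G_p(q)^{-1}\bigl(\Pi_0(\theta_p^k v_p)\bigr)(\phi_p(q))$), or average local versions over a partition of unity in $p$ and note that consistency survives convex combinations. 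Once this is addressed, the argument is complete.
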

\begin{proof}
In view of the previous remark it suffices to show that there exists a consistent family of admissible realisations.
Note that the admissible realisations constructed for each $k\in \mathbb{N}$ in the proof of Proposition~\ref{existence of admissible realisations} would form a consistent admissible realisation, if we could replace each $\theta_i^k$ by $\theta_i^\infty$ in equation (\ref{formula for admissible realisation}). 
To this end, define for $V,W$ as in the proof of Proposition~\ref{existence of admissible realisations} and any sequence $\{r_n\}_{n\in \mathbb{N}}$ such that $d(V,W^c)>r_n$ for all $n$ the following partial map

$$R_{V,W,\{r^n\}_{n\in \mathbb{N}}}: J^\infty(V\times \mathbb{R}^m) \nrightarrow \mathcal{C}^\infty(W)\otimes \mathbb{R}^m$$ 
which on $J (V\times \mathbb{R}^m)_n\subset V\times P(\mathbb{R}^m)$ agrees with $R_{V,W,r_n}$.

Now the point is, that depending on the chart maps and trivialisation $(U_i,\Phi_i)$ one can choose sequences $\{r_{n,i}\}_{n\in \mathbb{N}}$ such that no convergence issues arise when setting $k=\infty$ and replacing $R_{V_i,\phi_i(U_i),r_i}$ by $R_{V_i,\phi_i(U_i),\{r_{n,i}\}_{n\in \mathbb{N}}}$ in  (\ref{formula for admissible realisation}).
\end{proof}

Note that the bounds obtained for admissible realisations of $J^k E$ in Lemma~\ref{easy crucial bound} translate directly into bounds for admissible realisations $R$ of $JE$.
\begin{lemma}\label{bound on admissable realisations}
Let $R$ be an admissible realisation of $JE$.
For every $k\in \mathbb{N}$ and $K\subset M$ compact, there exists a constant $C_{k,K}$ such that for all $n,m\leq k$ the bound
\begin{equation}\label{norm on admissible }
|j_q R(v_p)|_m \lesssim C_{k,K} d_\fraks (p,q)^{(n-m)\vee 0} |v_p|
\end{equation}
holds uniformly over $p,q\in K$ and
$v_p\in (T_p E)_n$ .
\end{lemma}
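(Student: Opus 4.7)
The plan is to reduce the statement directly to Lemma~\ref{easy crucial bound} via the consistency relation \eqref{admissible re}.

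First, fix $k\in\NN$ with $k\geq\max(m,n)$, and let $\{R^l\}_{l\in\NN}$ be the consistent family of admissible realisations of $J^l E$ associated to $R$. Writing $w_p := p_{\infty,k}(v_p)\in (J^k_pE)_n$, the defining relation \eqref{admissible re} gives $R(v_p) = R^k(w_p)$ as an element of $\cC^\infty(E)$. Since the grading and inner product on $(JE)_n$ are inherited from $(J^kE)_n$ via the bundle isomorphism $p_{\infty,k}|_{(JE)_n}$ (see the discussion following \eqref{def of proj} and Remark~\ref{rem isometry}), we have the isometry $|w_p|_n = |v_p|$.

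Next, I would verify that the graded components commute with passage to the finite jet: using the compatibility $p_{\infty,l}=p_{k,l}\circ p_{\infty,k}$ and $i_{l,\infty}=i_{k,\infty}\circ i_{l,k}$ (valid for $l\leq k$ as noted in Section~\ref{section infinite jet bundle}), the explicit formulas for $Q_m$ in \eqref{def of proj} and $Q^k_m$ in \eqref{explicit formula for jet projection} yield
\begin{equation*}
Q_m\, j_q R(v_p) \;=\; i_{k,\infty}\bigl(Q^k_m\, j^k_q R^k(w_p)\bigr),
\end{equation*}
and the map $i_{k,\infty}$ is an isometry from $(J^kE)_m$ onto $(JE)_m$ by construction of the norm on $JE$. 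Hence $|j_q R(v_p)|_m = |Q^k_m j^k_q R^k(w_p)|_m$.

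Finally, Lemma~\ref{easy crucial bound} applied to the admissible realisation $R^k$ and the homogeneous element $w_p\in (J^k_pE)_n$ gives
\begin{equation*}
|Q^k_m j^k_q R^k(w_p)|_m \;\lesssim_K\; d_\fraks(p,q)^{(n-m)\vee 0}\,|w_p|_n \;=\; d_\fraks(p,q)^{(n-m)\vee 0}\,|v_p|,
\end{equation*}
which is exactly the claimed bound. The constant $C_{k,K}$ absorbs the implicit constant from Lemma~\ref{easy crucial bound} for this fixed choice of $k$; since the bound is required only for $n,m\leq k$, a single such $k$ suffices. The main (mild) obstacle is purely bookkeeping: confirming that all projections, injections, and norms on the graded pieces of $JE$ and $J^kE$ are compatible, which follows from the perfect inverse system structure recalled at the start of Section~\ref{section infinite jet bundle}.
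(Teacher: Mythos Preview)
Your proposal is correct and is precisely the argument the paper has in mind: the paper does not give a standalone proof but simply remarks that the bounds of Lemma~\ref{easy crucial bound} ``translate directly'' to admissible realisations of $JE$, and your proof is exactly that translation, with the bookkeeping (compatibility of $p_{\infty,k}$, $i_{k,\infty}$, $Q_m$ versus $Q^k_m$, and the isometry of graded components) spelled out.
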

We point out the following direct consequence of Lemma~\ref{pointwise bound} .
\begin{lemma}\label{comparing admissible realisations}
Let $R, \tilde R$ be two admissible realisations of $J E$, then for every compact set $K$ and $n\in \mathbb{N}$
$$|R(\tau_p)(q)-\tilde R(\tau_p) (q)|\lesssim_{R, \tilde R, K} |\tau | d_{\fraks}(p,q)^{k+n}$$
uniformly over $p,q \in K$, $\tau_p\in (J_p E)_{<k}$ .
\end{lemma}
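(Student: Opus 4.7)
The key observation is that both $R$ and $\tilde R$ come from consistent families $\{R^K\}_{K \in \mathbb{N}}$ and $\{\tilde R^K\}_{K \in \mathbb{N}}$ of admissible realisations of $\{J^K E\}_K$. Hence for any $\tau_p \in (J_pE)_l$ with $l < k$ and any $K \geq l$, the defining identity $R|_{(JE)_l} = R^K \circ p_{\infty,K}|_{(JE)_l}$ together with the fact that $R^K$ is an admissible realisation of $J^K E$ gives $j_p^K R(\tau_p) = p_{\infty,K}\tau_p = j_p^K \tilde R(\tau_p)$. By linearity on the fibre $J_pE$, this extends from each graded piece to all of $(J_pE)_{<k}$, so that $j_p^K\bigl(R(\tau_p) - \tilde R(\tau_p)\bigr) = 0$ for every $K \in \mathbb{N}$. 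In other words, admissibility combined with consistency forces the difference $R(\tau_p) - \tilde R(\tau_p)$ to vanish to infinite order at $p$.

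Given $n \in \mathbb{N}$, I would then invoke Lemma~\ref{pointwise bound} applied to $f = R(\tau_p)$, $g = \tilde R(\tau_p)$, with no vector fields (so $\deg_\fraks(W) = 0$) and with the parameter playing the role of ``$k$'' taken to be $K' := k+n-1$. Since $j_p^{K'}(f-g) = 0$ by the previous step, this yields
\[
|R(\tau_p)(q) - \tilde R(\tau_p)(q)| \lesssim d_\fraks(p,q)^{k+n}\, \bigl\| |j_\cdot^{K' + \max_i \fraks_i}(R(\tau_p) - \tilde R(\tau_p))|\bigr\|_{L^\infty(B_1(p))},
\]
with an implicit constant depending only on a fixed enlargement of $K$.

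The remaining task is to bound the $L^\infty$ norm linearly in $|\tau_p|$, and here Lemma~\ref{bound on admissable realisations} does the job. Fix a compact set $K''$ containing $B_1(p)$ for every $p \in K$. For any $q' \in B_1(p)$ and any graded index $m \leq K' + \max_i \fraks_i$, the lemma gives $|j_{q'} R(\tau_p)|_m \lesssim_{K', K''} d_\fraks(p,q')^{(l-m)\vee 0}|\tau_p| \lesssim |\tau_p|$, uniformly in $p, q' \in K''$, and analogously for $\tilde R$. Summing these finitely many graded contributions, the $L^\infty$ bound is at most a constant (depending on $R$, $\tilde R$, $K$, $k$, $n$) times $|\tau_p|$. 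Combining with the previous display completes the proof. The only mildly subtle point is the initial jet-matching observation; after that the estimate is a mechanical combination of the two earlier lemmas.
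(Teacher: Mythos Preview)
Your proof is correct and follows precisely the approach the paper indicates: the lemma is stated there as a direct consequence of Lemma~\ref{pointwise bound}, and you have spelled out exactly this argument, first establishing $j_p^{K'}\bigl(R(\tau_p)-\tilde R(\tau_p)\bigr)=0$ from admissibility and consistency, then invoking Lemma~\ref{pointwise bound} with $K'=k+n-1$, and finally controlling the $L^\infty$ factor via Lemma~\ref{bound on admissable realisations}. One cosmetic remark: when you apply Lemma~\ref{bound on admissable realisations} you should really decompose $\tau_p$ into its graded pieces $\tau_p^{(l)}\in (J_pE)_l$ first (the lemma is stated for homogeneous elements), but this is implicit in your phrase ``summing these finitely many graded contributions'' and does not affect the argument.
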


\subsubsection{Further conditions on admissible realisations}\label{remark algebraic structure}
We point out that the notion of admissible realisation is rather general and depending on the context one might want to impose further conditions such as the following.
\begin{enumerate}
\item\label{remark algebraic structure JM} Recall, that $JM$ is has a natural algebra-bundle structure. Thus one might want impose that $R$ is an algebra-bundle morphism. Indeed, the existence of such admissible realisation is clear: since given any admissible realisation $\tilde{R}$ of $JM$ one can simply define $R$ fibre wise as the unique algebra morphism satisfying $R(\mathbf{1}_p)=1$ where $\mathbf{1}_p:=j_p 1 \in (JM_p)_0$ and $R|_{(JM)_1}= \tilde{R}_{(JM)_1}$.
\item\label{remark algebraic structure JE} Recall that $JE$ is naturally a $JM$ module, i.e there is a fibre wise multiplication. Suppose one is given an admissible realisation $R$ of $JM$ which is a bundle algebra morphism. Then, one could restrict attention to admissible realisations $\bar{R}$ of $JE$ which are compatible with this multiplicative structure, i.e.\ such that the following diagram commutes
\[ \begin{tikzcd}
JM\otimes JE  \arrow{r}{m} \arrow[swap]{d}{R\otimes \bar{R}} & J E \arrow{d}{\bar{R}} \\%
\mathcal{C}^\infty (M)\otimes \mathcal{C}^\infty(E) \arrow{r}{m} & \mathcal{C}^\infty (E)\ .
\end{tikzcd}
\]

\item\label{tensor products on jets} More generally, given two vector bundles $E$ and $F$, recall that in general there is a canonical product 
$$m: JE\otimes JF\to J(E\otimes F) \ ,$$
specified by the fact that for $e\in \mathcal{C}^\infty(E)$, $f\in \mathcal{C}^\infty(F)$ one has $$Q_k j(e\otimes g) =\sum_{k=m+n}  m(Q_{n} je, Q_m jf)\ .$$ Similarly to above one might restrict attention to admissible realisations $R, R'$ and $R''$ such that the following diagram commutes
\[ \begin{tikzcd}
JE\otimes JF  \arrow{r}{m} \arrow[swap]{d}{R\otimes {R'}} & J (E\otimes F) \arrow{d}{{R''}} \\%
\mathcal{C}^\infty (E)\otimes \mathcal{C}^\infty(F) \arrow{r}{} & \mathcal{C}^\infty (E\otimes F)\ ,
\end{tikzcd}
\]
where the lower arrow denotes the pointwise tensor product.
\item\label{compatibility with trace} 
Observe that for any vector bundles $E, F$, the canonical trace map $\tr: E\otimes E^* \otimes F\to F$ extends to a trace map 
$\trr  : J(E\otimes E^* \otimes F)\to JF$ on jets, where the latter is characterised by the fact that for $g\in \mathcal{C}^\infty(E\otimes E^* \otimes F)$ one has $ \trr j g = j \tr g $ .
\end{enumerate}
For a fixed manifold $M$, we give a universal construction of an admissible realisation for any vector bundle (with a connection) such that the above properties are always satisfied.
First, for a vector bundle $E\to M$, recall the isomorphism $JE\simeq JM\otimes E$.
\begin{enumerate}
\item Fix a multiplicative admissible realisation $\tilde{R}$ as in Point~\ref{remark algebraic structure JM} of $JM$ with the property that $\supp(v_p)\subset B_{r_p}$ for all $v_p$ in $J_pM$ where $r_p$ denotes the exponential radius at $p$. 
\item For $e_p\in E|_p$ denote by $e_p(\cdot)$ the vector field obtained by parallel transport as in (\ref{paralell}). 
\item Define $R: JE\simeq  JM\otimes E \to \mathcal{C}^\infty (E)$ by
$$JM\times E\to \mathcal{C}^\infty (E), \qquad  (v_p, e_p)\mapsto R(v_p)(\cdot)e_p(\cdot)\ .$$
\end{enumerate}
It is straightforward to check that the above properties are satisfied.


\section{Regularity Structures and Models on Vector Bundles}\label{section Regularity structures and models}
As already seen in the introduction, when solving equations such as \eqref{equation to solve}, we need to work with several vector bundles simultaneously. In order to do so we first introduce some convenient notion.
\subsection{Maps between families of spaces}\label{sec:maps_between_famlillies_of_spaces}
For a family of vector spaces or vector bundles $V=\{V_i\}_{i\in I}$ we shall write $v\in V$ to mean $v\in \bigcup_i V_i$. 
Given a second family $W= \{W_j\}_{j\in J}$ and a map $\sigma: I\to J$ we define 
\begin{equ}[e:defLVW]
L_\sigma (V,W):=  \prod_{i \in I} L(V_i,W_{\sigma(i)}) = \prod_{i \in I}  W_{\sigma(i)}\otimes V_i^*, \quad \hat{L}_\sigma (V,W):= \prod_{i \in I}  W_{\sigma(i)}\hotimes V_i^*\ .
\end{equ}
Given $A\in L_\sigma(V,W)$ and $v \in V$, we write $Av$ to denote $A_{i}v$, where $i\in I$ is the index such 
that $v\in V_i$. (In particular $v \in V_i$ implies that $Av \in W_{\sigma(i)}$.)
For a third family $U=\{U_k\}_{k\in K}$ and $\sigma': K\to I$ and $B\in L_{\sigma'}(U,V)$ the composition 
$AB\in L_{\sigma\circ \sigma'}(V,W)$ is defined by component-wise composition, i.e.\ $(AB)_{k}= A_{\sigma'(k)} \circ B_{k}$.

Furthermore, if the index sets agree, i.e.\ $I=J$ and $\sigma=\id$, we simply write $L(V,W)$ and $\hat{L} (V,W)$ instead of $L_{\id}(V,W)$ and $\hat{L}_{\id} (V,W)$.
We use a similar notation for spaces of sections, such as $\mathcal{D}(V)= \{ \mathcal{D} (V_i) \}_{i\in I}$ and $\mathcal{D}'(V)= \{ \mathcal{D}' (V_i) \}_{i\in I}$ for vector bundle valued test functions and distributions.

\begin{remark}\label{remark useful notation}
We shall sometimes also use the notation $L(V,W)$ where $V$ is a family of vector bundles, while $W$ is only a family of vector spaces. In this case we implicitly identify $W_j$ with the trivial bundle over one point and mean $L(V,W):= \hat{L}(V,W)$.
\end{remark} 

\begin{remark}
In this article, when working with the notions introduced above, we shall often work with finite families of finite dimensional vector bundles. 
In this case one has the canonical isomorphisms 
$$L_\sigma (V,W) \simeq \bigoplus_{i \in I}  W_{\sigma(i)}\otimes V_i^*, \quad \hat{L}_\sigma (V,W)\simeq \bigoplus_{i \in I}  W_{\sigma(i)}\hotimes V_i^*\ . $$
\end{remark}

%

\subsection{Regularity structure ensemble}
We henceforth assume that the manifold $M$ is equipped with a Riemannian structure and any vector bundle $E\to M$ is equipped with a metric and compatible connection as in Section~\ref{section bundles and metric}. 

The appearance of several vector bundles in equations motivates the definition of a regularity structure ensemble.
\begin{definition}\label{definition regularity structure}
Let $M$ be a smooth manifold, a regularity structure ensemble consists of an indexing set $\mathfrak{L}$ and a triple $\mathcal{T}=(\{{E}^\mathfrak{t}\}_{\mathfrak{t}\in \mathfrak{L}},\{{T}^\mathfrak{t}\}_{\mathfrak{t}\in \mathfrak{L}}, L)$ consisting of the following elements:
\begin{enumerate}
\item For each $\frakt\in \mathfrak{L}$, ${E}^\mathfrak{t}$ is a finite dimensional vector bundle over $M$ equipped with a metric and a compatible connection.
\item\label{condition on T} For each $\frakt\in \mathfrak{L}$, $T^{\frakt}= \bigoplus_{(\alpha,\delta)\in A\times \triangle} T^{\frakt}_{\alpha,\delta}$ is a finite dimensional vector bundle over $M$ bi-graded by $A\subset \RR$ and $\triangle\subset \RR\cup \{+\infty\}$ which are bounded from below, discrete sets.
\item\label{condition on T_0} The bundle $T^{\frakt}_{0,:}:= \bigoplus_{\delta\in \triangle} T^{\frakt}_{0,\delta}$ comes with a canonical isomorphism $T^{\frakt}_0\simeq E^{\frakt}$.
\item For every $\alpha\in A$ the bundle $T^\frakt_{\alpha,\infty}$ is trivial.
\item\label{condition on L} $L$ is a sub-bundle of $\hat{L}(T,T)$ such that for every $p,q,r\in M$ composition maps $L_{p,q}\times L_{q, r}$ to $L_{p,r}$.
\end{enumerate}
\end{definition}

\begin{remark}
Let us elaborate on the definition of a regularity structure ensemble:
\begin{itemize}
\item The spaces $T^\frakt$ and the $A$-grading play the same role as in \cite{Hai14}. A model, c.f.\ Definition~\ref{def model}, shall turn elements of $T^\frakt$ into $E^\frakt$-valued distributions. The second grading $\triangle$ plays the role of ``precision'', c.f.\ again Definition~\ref{def model} and the definition of a model in \cite{DDD19}. In contrast to \cite{DDD19} we allow different values of $\delta$ for different elements of $T^\frakt$, this is natural when introducing products on regularity structures, c.f.\ Section~\ref{section Local operations}, and when renormalising models, c.f.\ Section~\ref{section construction of models}.
\item The isomorphism Point~(\ref{condition on T_0}) is crucial when extending non-linear functions on the vector bundle $E^\frakt$ to functions on $T^\frakt$, c.f.\ Section~\ref{section nonlinearities}. (Using the vocabulary introduced below, $T^\frakt_0$ will always be part of a ``jet sector''.)
\item The reason that the action of $L$ on $T^\frakt$ is in general not lower triangular in $A$ stems from the fact that when $\delta<+\infty$ an additional ``higher order'' term appears when lifting singular kernels, c.f.\ Section~\ref{Section Singular Kernels} and in particular Remark~\ref{philosophical remark}.
\end{itemize}
\end{remark}

\begin{remark}
In order to lighten notation, whenever we write $\tau_p\in T^\frakt_{\alpha,\delta}$ the subscript $p$ signifies that $\tau_p$ belongs to the fibre of $T^\frakt_{\alpha,\delta}$ above $p\in M$.
\end{remark}

\begin{remark}
We set $T^{\frakt}_{\alpha,:}:=  \bigoplus_{\delta \in \triangle} T_{\alpha,\delta}^\frakt$ and $T_{:,\delta}^\frakt=\bigoplus_{\alpha\in A} T^\frakt_{\alpha,\delta}$. 
For $\gamma\in \mathbb{R}$ we introduce 
$T^{\frakt,\gamma}:=\bigoplus_{\alpha<\gamma< \delta} T^\frakt_{\alpha,\delta}\ .$
Lastly, for $\alpha\in A$ we define the canonical projection maps $Q_{\alpha}: T^\frakt\to T_{\alpha, :}^\frakt$.

\end{remark}

Analogously to \cite[Def 2.5]{Hai14}, we can define what a sector of a regularity structure ensemble is.
\begin{definition}
Given a regularity structure ensemble $\mathcal{T}=(\{E^{\mathfrak{t}}\}_{t\in \mathfrak{L}},\{T^{\mathfrak{t}}\}_{t\in \mathfrak{L}}, L)$, a sector ensemble of regularity $\underline{\alpha}\in \mathbb{R}^{\mathfrak{L}}$ and precision $\delta\in \mathbb{R}\cup \{+\infty\}$ is a family $\{V^\frakt\}_{\frakt \in \mathfrak{L}}$ of bi-graded subbundles $V^\frakt=\bigoplus_{\alpha'\in A, \delta' \in \triangle} V^\frakt_{\alpha', \delta'}\subset T^\frakt$ with $V^\frakt_{\alpha', \delta'}\subset T^\frakt_{\alpha', \delta'}$ satisfying the following properties for each $\frakt \in \mathfrak{L}$
\begin{itemize}
\item $V^{\frakt}_{\alpha', \delta'} = M\times \{0\}\subset T^{\frakt}_{\alpha', \delta'}$ whenever $\alpha'<\underline{\alpha}_\mathfrak{t}$ or $\delta'<\delta$.
\item For all $p,q\in M$ and all $\Gamma\in L_{p,q}$ one has $\Gamma_{p,q}V^{\frakt}_q\subset V^{\frakt}_p$ .
\item For every $\alpha'\in A$, $\delta'\in \triangle$,
there exists a subbundle $\bar{V}^{\frakt}_{\alpha', \delta'}\subset T^{\frakt}_{\alpha', \delta'}$, such that
$$T^{\frakt}_{\alpha', \delta'} = V^{\frakt}_{\alpha', \delta'}\oplus  \bar{V}^{\frakt}_{\alpha', \delta'}\ .$$
\end{itemize}
As usual in the theory of regularity structures, we call the bundles $V^\frakt$ sectors of 
regularity $\underline{\alpha}_\frakt$. If $\underline{\alpha}=0$, then the sector (or 
sector ensemble) is called function-like.
\end{definition}
\begin{definition}
We say that a regularity structure ensemble 
$\mathcal{T}=(\{E^{\mathfrak{t}}\}_{t\in \mathfrak{L}},\{T^{\mathfrak{t}}\}_{t\in \mathfrak{L}}, L)$
 is contained in a regularity structure ensemble
 $\hat{\mathcal{T}}=(\{E^{\mathfrak{t}}\}_{t\in \hat{\mathfrak{L}}},\{\hat{T}^{\mathfrak{t}}\}_{t\in \hat{\mathfrak{L}}},\hat L)$
  if
\begin{itemize}
\item one has $\mathfrak{L}\subset \hat{\mathfrak{L}}$ ,
\item for each $\frakt\in \mathfrak{L}$ there is a graded injection $\iota: T^{\frakt} \to \hat T^{\frakt}$ ,
\item the spaces $\iota T^\frakt\subset \hat{T}^\frakt$ are sectors and $\iota^{-1} \circ \hat L \circ \iota = L$.
\end{itemize}
\end{definition}

Before we turn to the definition of a model, we introduce the following useful space of test functions. Let $V\to M$ be a vector bundle (equipped with the usual structure), for $r\in \mathbb{N}, \lambda\in (0,1), p\in M$ let 
\begin{equation}\label{good textfunctions}
\mathcal{B}_p^{r,\lambda}(V) := \Big \{\phi \in \cC_c^\infty\big(V \big)\  \Big| \ \supp( \phi) \subset B_\lambda(p) \ , \ 
 \| |j^r_\cdot \phi|_k \|_{L^\infty}<\frac{1}{\lambda^{|\fraks|+k}} \text{ for all } k\leq r\Big\}\ .
\end{equation}
We shall often write $\mathcal{B}_p^{r,\lambda}$ or $\mathcal{B}_p^{\lambda}$ instead of $\mathcal{B}_p^{r,\lambda}(V)$ if $r$ resp. $V$ is fixed. 
Furthermore, instead of just writing $\phi\in\mathcal{B}_p^{r,\lambda}$, we shall sometimes use the notation $\phi_p^\lambda$ to \textit{emphasise} $\lambda\in (0,1]$ and $p\in M$.
Lastly, we use the usual notational convention for families of vector bundles.
\begin{remark}\label{decomposing test functions}
Instead of requiring bounds on the jet of a section, one could require bounds on the (covariant) derivatives in (\ref{good textfunctions}). This is equivalent, as can be seen from the discussion around Proposition~\ref{prop relation to cotangent}.
\end{remark}

\begin{remark}
We point out the following useful property of the spaces $\mathcal{B}_p^{r,\lambda}$. For every $c\in (0,1)$ there exists $N\in \mathbb{N}$, such that the following property holds: For each $p$ there exist $N$ points $p_1,\ldots,p_N$, such that 
each $\phi^\lambda_p \in \mathcal{B}_p^{r,\lambda}$ can be written as
$$\phi^\lambda_p= \sum_{i=1}^N \phi^{c\lambda}_{p_i}\ ,$$
where $\phi^{c\lambda}_{p_i}\in \mathcal{B}_{p_i}^{r,c\lambda}$. The points $p_i\in M$ can furthermore be chosen to satisfy $d_\fraks (p, p_i)\leq \lambda $.
\end{remark}
\begin{remark}\label{translated test function remark}
Fix some constant $D>1$ and $K\subset M$ compact, observe that there exists a constant $C=C(D,K)$ such that uniformly over $\lambda <\frac{1}{D}$, $p\in K$ and $d(p,\bar{p})<D$ one has $$\phi^\lambda_{\bar{p}} \in C\mathcal{B}_p^{r,D\lambda} \ .$$
\end{remark}

\begin{remark}\label{rem frame testfunctions}
Let $\{e_i\}_i$ be a frame of $\pi^{-1}(U)\subset V$. Then there exist $C, C'$ depending on this frame, such that 
$$\phi^\lambda_p= \sum \phi^{\lambda,i}_p e_i  \in \mathcal{B}_p^{r,\lambda}(V)\quad \Rightarrow\quad  \phi^{\lambda,i}_p\in C\mathcal{B}_{p}^{r,\lambda}(M) \text{ for all } i$$
and 
$$ \phi^{\lambda,i}_p\in \mathcal{B}_{p}^{r,\lambda}(M)\text{ for all } i\quad \Rightarrow\quad \phi^\lambda_p= \sum \phi^{\lambda,i}_p e_i  \in C'\mathcal{B}_p^{r,\lambda}(E) \ . $$
\end{remark}

\begin{definition}\label{def model}
A model for a regularity structure ensemble $\mathcal{T}$ consists of a pair $Z = (\Pi, \Gamma)$, where
\begin{itemize}
\item $\Pi$ is an element of $L(T, \mathcal{D}'(E)\big)$ (in the notation of Remark~\ref{remark useful notation}),
\item $\Gamma$ is a continuous section of $L$.
\end{itemize}
Furthermore, a model is required to satisfy the following analytic conditions for every $\frakt \in \mathfrak{L}$, compact subset $K\in M$ and for all $p,q\in K$, $\alpha \in A$, $\delta \in \triangle$ and $\tau_p\in (T_p)^\frakt_{\alpha,\delta}$:
\begin{enumerate}
\item $|\Pi_p \tau_p (\phi^\lambda_p) | \lesssim_{K,\alpha} \lambda^\alpha |\tau_p|_\alpha$,
\item $|\Gamma_{p,q} \tau_q |_{\beta} \lesssim_{K,\alpha} d_\fraks(p,q)^{(\alpha-\beta)\vee 0}|\tau_q|_\alpha$ for all $\beta\in A$ and $p,q\in K$,
\item $|(\Pi_p \Gamma_{p,q} \tau_q - \Pi_q \tau_q) (\phi_p^\lambda) | \lesssim_{K,\alpha} \lambda^\delta |\tau_q|_\alpha$ for $p,q\in K$,
\end{enumerate}
uniformly\footnote{Here it is understood that $\lambda^{+\infty}=0$ for all $\lambda\in (0,1 )$.} over $\lambda\in (0,1 )$ and $\phi_p^\lambda \in \mathcal{B}_p^{r,\lambda}(E^\frakt)$ for fixed $r>|{\min A}|$. 
We denote by $\mathcal{M}_{\mathcal{T}}$ the space of all models for $\mathcal{T}$. 
\end{definition}
From now on, whenever we are working with a regularity structure or regularity structure ensemble we implicitly fix some integer $r>|{\min A}|$.
\begin{remark}
In practice models usually satisfy for $\tau_p \in T^\frakt_0$
$$\Pi_p \tau_p\in \mathcal{C}^\infty (E^\frakt), \qquad \Pi_p \tau_p (p)=\tau_p\ , $$ where the second equality makes use of the canonical isomorphism $ T^\frakt_0\simeq E^\frakt$ in Definition~\ref{definition regularity structure}.  
\end{remark}
\begin{remark}\label{remark homogenious/trivial bundle}
Given a regularity structure $\mathcal{T}=(E, T, L)$ such that the bundles 
$T_{\alpha, \delta}$ are all trivial, one can reformulate the notions of a regularity structure and model in a way closer to the definition in \cite{Hai14}.
\begin{enumerate}
\item For each $\alpha\in A$, $\delta\in \triangle$, let $N_{\alpha,\delta}\in \mathbb{N}$ and $\{\tilde{\tau}^i_{\alpha,\delta}\}_{i\in\{1,\ldots,N_{_{\alpha,\delta}}\} }$ be a global frame of $T_{\alpha, \delta}$.
Define the vector spaces $\tilde{T}_{\alpha,\delta}=\text{span}\{  \tilde{\tau}^i_{\alpha,\delta} \ : i=1,\ldots, N_{\alpha,\delta}\ \}$ and 
$$\tilde{T}= \bigoplus_{\alpha,\delta}\tilde{T}_{\alpha,\delta}\ .$$
\item Then, let $\tilde{L}$ consist of all linear maps $\tilde{\Gamma}: \tilde{T}\to \tilde{T}$ such that there exist $p,q\in M$ and $\Gamma_{p,q}\in L$ such that for every $\tilde{\tau}\in \tilde{T}$ one has
\begin{equation}\label{eq:local rhs}
\big(\tilde{\Gamma} \tilde{\tau}\big)(p)= \Gamma_{p,q} (\tilde{\tau}(q)) \ . 
\end{equation}
\item Given a model $Z=(\Pi, \Gamma)$ for $\mathcal{T}$ this induces the maps
\begin{enumerate}
\item $\tilde{\Pi}: M\to L(\tilde{T}, \mathcal{D}'(E))$ given by 
$$
\tilde{\Pi}_p \tilde{\tau}: = \Pi_p \left(\tilde{\tau}(p)\right)\ ,
$$
\item $\tilde{\Gamma}: M\times M\to \tilde{L}$ is the unique map such that each $p,q\in M$ and $\tilde{\tau}\in \tilde{T}$ the term $\left(\tilde{\Gamma}_{p,q}\tilde{\tau}\right)(p)$ agrees with the right-hand side of \eqref{eq:local rhs}.
\end{enumerate}
\end{enumerate}
The bundles $T^\frakt$ appearing when treating singular SPDEs will in general only be trivial if the jet bundles $JE^\frakt$ are trivial.

Whenever $M$ is a compact manifold, it is possible to artificially put oneself in the setting of Remark~\ref{remark homogenious/trivial bundle} by enlarging $T_{\alpha,\delta}$ since every vector bundle over a compact manifold can be seen as a summand \dash with respect to the direct sum of vector bundles\dash  of a trivial vector bundle.
\end{remark}
Next we introduce semi-norms on models.
\begin{definition}\label{def:norm on models}
Given a regularity structure ensemble  $\mathcal{T}$, we introduce the following ``seminorms'' 
on models $Z = (\Pi, \Gamma)$:
\begin{itemize}
\item For $c>0$ and a compact set $K\subset M$, let 
$$\| \Pi\|_{c,K}:= \sup_{\lambda<c}\sup_{\frakt\in \mathfrak{L}} \sup_{p\in K} \sup_{\alpha\in A}\sup_{\tau_p\in T^\frakt_{\alpha,:}}   \sup_{\phi\in \mathcal{B}^{r,\lambda}_p} \frac{|\Pi_p \tau_p (\phi) |}{\lambda^\alpha |\tau_p|} \ .$$
We also write $\| \Pi\|_{ K}: = \| \Pi\|_{ 1,K}$.
\item Define $$\|\Gamma\|_{K}:=\sup_{\frakt\in \mathfrak{L}}\sup_{p,q\in K: d_\fraks(p,q)<1} \sup_{\alpha,\beta\in A}\sup_{\tau_q\in T^\frakt_{\alpha,:}} \frac{|\Gamma_{p,q}\tau_q|_\beta}{d_\fraks(p,q)^{(\alpha-\beta)\vee 0}|\tau_q|} \ . $$
\item Set $$\llbracket Z \rrbracket_{ c,K}:= \sup_{\lambda<c}\sup_{\frakt\in \mathfrak{L}} \sup_{p,q\in K} \sup_{\alpha\in A} \sup_{\delta\in\triangle}\sup_{\tau_q\in T^\frakt_{\alpha,\delta}}  \sup_{\phi\in \mathcal{B}^{r,\lambda}_p} \frac{|(\Pi_p \Gamma_{p,q} \tau_q - \Pi_q \tau_q) (\phi) |}{\lambda^\delta |\tau_q|} $$
and, as before, $\llbracket Z  \rrbracket_{ K}:=\llbracket Z  \rrbracket_{ 1,K}$.
\end{itemize}
We  set $\|Z\|_{ c,K}:=\| \Pi\|_{ c,K}+\|\Gamma\|_{ K}+\llbracket Z \rrbracket_{ c,K}$.
%
%
%
%
Similarly, we define for a second model $\tilde{Z}=(\tilde{\Pi},\tilde{\Gamma})$ 
$$\llbracket Z ;\tilde{Z} \rrbracket_{ c,K}:= \sup_{\lambda<c}\sup_{p,q\in K} \sup_{\alpha\in A}\sup_{\delta \in \triangle}\sup_{\tau_q\in T_{\alpha,\delta}}  \sup_{\phi\in \mathcal{B}^{r,\lambda}_p} \frac{|(\Pi_p \Gamma_{p,q} \tau_q - \Pi_q \tau_q- \tilde{\Pi}_p \tilde{\Gamma}_{p,q} \tau_q + \tilde{\Pi}_q \tau_q) (\phi) |}{\lambda^\delta |\tau_q|} $$
and set $\|Z;\tilde{Z}\|_{ c,K}=\| \Pi-\tilde{\Pi}\|_{ c,K}+\|\Gamma-\tilde{\Gamma}\|_{ K}+\llbracket Z ;\tilde{Z} \rrbracket_{ c,K}$.
\end{definition}

The following technical lemma will be useful in the sequel.
\begin{lemma}\label{technical lemma}
Let $\tilde Z= (\tilde{\Pi}, \tilde{\Gamma})$ be such that for each compact set $K\subset M$, there exists a constant $c_K$, such that properties (1), (2), and (3) of Definition~\ref{def model} are satisfied for $\lambda\leq c_K$, or equivalently, for each $K$ compact there exists $c_K>0$, such that $\|\tilde Z\|_{c_K,K}< \infty$. Then $\tilde Z$ is a model. 
Furthermore for every $C>0$ the bounds 
$$\|\tilde Z\|_{ 1,K}\lesssim_{C,c_{\bar{K}}} \|\tilde Z\|_{ c_{\bar{K}},\bar K}\;,\qquad
\|Z; \tilde Z\|_{ 1,K}\lesssim_{C,c_{\bar{K}}}  \|Z;\tilde Z\|_{ c_{\bar{K}},\bar K}\;, 
$$
hold uniformly over models $Z$, $\tilde Z$ satisfying $\|Z\|_{c_{\bar{K}},\bar K} \vee \|\tilde Z\|_{c_{\bar{K}},\bar K} < C$, where $\bar{K}$ is the 2-fattening of $K$.
\end{lemma}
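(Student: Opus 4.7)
The plan is to extend conditions (1), (2) and (3) of Definition~\ref{def model} from the restricted range $\lambda\le c_{\bar K}$ to all $\lambda\in(0,1)$, using a fixed finite decomposition of test functions. Note that condition~(2) does not involve $\lambda$ and is already assumed globally via $\|\tilde\Gamma\|_{\bar K}<\infty$; only (1) and (3) need extension. The key tool is the remark following~\eqref{good textfunctions}: for every $c\in(0,1)$ there exists $N\in\mathbb{N}$ such that any $\phi^\lambda_p\in\mathcal{B}^{r,\lambda}_p$ admits a representation $\phi^\lambda_p=\sum_{i=1}^N\phi^{c\lambda}_{p_i}$ with $d_\fraks(p,p_i)\le\lambda$. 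I fix $c\in(0,1)$ small enough so that both $c<c_{\bar K}$ and $Dc<c_{\bar K}$, where $D>1$ is the constant from Remark~\ref{translated test function remark}, and so that $c\lambda<c_{\bar K}$ and $Dc\lambda<c_{\bar K}$ for all $\lambda<1$.

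To extend (1) to $\lambda\in[c_{\bar K},1)$, I decompose $\phi^\lambda_p=\sum_{i=1}^N\phi^{c\lambda}_{p_i}$ and, for $\tau_p\in T^\frakt_{\alpha,\delta}$ with $p\in K$ (so that each $p_i\in\bar K$), use the identity
\begin{equation*}
\tilde\Pi_p\tau_p(\phi^{c\lambda}_{p_i})=\tilde\Pi_{p_i}\tilde\Gamma_{p_i,p}\tau_p(\phi^{c\lambda}_{p_i})-\bigl(\tilde\Pi_{p_i}\tilde\Gamma_{p_i,p}\tau_p-\tilde\Pi_p\tau_p\bigr)(\phi^{c\lambda}_{p_i}).
\end{equation*}
The second term is bounded by $\lesssim(c\lambda)^\delta|\tau_p|$ via the assumed (3) at $p_i$ and scale $c\lambda<c_{\bar K}$. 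For the first term I decompose $\tilde\Gamma_{p_i,p}\tau_p=\sum_\beta Q_\beta\tilde\Gamma_{p_i,p}\tau_p$ in the $A$-grading and apply assumed (1) at scale $c\lambda$ together with (2) to obtain $\lesssim(c\lambda)^\beta d_\fraks(p_i,p)^{(\alpha-\beta)\vee 0}|\tau_p|\lesssim c^\beta\lambda^\alpha|\tau_p|$, using $d_\fraks(p_i,p)\le\lambda$ and $c\lambda<1$. Discreteness and lower-boundedness of $A$ ensure that the sum over $\beta$ converges and yields a bound of order $\lambda^\alpha|\tau_p|$, as required.

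To extend (3), I use that $\triangle$ is bounded below, so $\lambda^\delta$ has a positive lower bound on $[c_{\bar K},1)$ for each finite $\delta$; it thus suffices to show $|(\tilde\Pi_p\tilde\Gamma_{p,q}\tau_q-\tilde\Pi_q\tau_q)(\phi^\lambda_p)|\lesssim|\tau_q|$ uniformly on that range. Decomposing as before, I bound $|\tilde\Pi_p\tilde\Gamma_{p,q}\tau_q(\phi^{c\lambda}_{p_i})|$ by splitting $\tilde\Gamma_{p,q}\tau_q$ in the $A$-grading, invoking Remark~\ref{translated test function remark} to view $\phi^{c\lambda}_{p_i}$ as a test function centred at $p$ at scale $Dc\lambda<c_{\bar K}$, and applying the extended (1) together with (2). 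Similarly, $|\tilde\Pi_q\tau_q(\phi^{c\lambda}_{p_i})|$ is bounded by recentering at $q$ via Remark~\ref{translated test function remark} and applying the extended (1). Both contributions are uniformly bounded by a constant multiple of $|\tau_q|$. The case $\delta=+\infty$ is handled by the same argument, noting that the bound $(c\lambda)^\delta=0$ at small scale forces the corresponding expressions to vanish on the relevant test functions.

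The quantitative bounds $\|\tilde Z\|_{1,K}\lesssim\|\tilde Z\|_{c_{\bar K},\bar K}$ follow by tracking constants through the argument above; the bilinear version $\|Z;\tilde Z\|_{1,K}\lesssim\|Z;\tilde Z\|_{c_{\bar K},\bar K}$ is obtained by applying the same decomposition and identities to the differences $\Pi-\tilde\Pi$, $\Gamma-\tilde\Gamma$, using the uniform hypothesis $\|Z\|_{c_{\bar K},\bar K}\vee\|\tilde Z\|_{c_{\bar K},\bar K}<C$ to control the multiplicative factors from the $\Gamma$-coefficients appearing in the telescoping identity. The principal obstacle is the bookkeeping across the bi-grading $A\times\triangle$ and ensuring uniform convergence of the sums over the discrete, bounded-below index set $A$.
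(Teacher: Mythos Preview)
Your extension of condition~(1) is correct and essentially matches the paper's argument: decompose $\phi^\lambda_p$ into pieces at scale $c\lambda$, then recenter each piece at $p_i$ via the identity $\tilde\Pi_p = \tilde\Pi_{p_i}\tilde\Gamma_{p_i,p} - (\tilde\Pi_{p_i}\tilde\Gamma_{p_i,p}-\tilde\Pi_p)$ and invoke conditions~(1)--(3) at the small scale.

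Your extension of condition~(3), however, has a genuine gap. You propose to bound $|\tilde\Pi_q\tau_q(\phi^{c\lambda}_{p_i})|$ by ``recentering at $q$ via Remark~\ref{translated test function remark}''. That remark lets you view a test function $\phi^{c\lambda}_{p_i}$ as an element of $C\mathcal B^{r,\mu}_q$ only when $d_\fraks(q,p_i)$ is comparable to the scale $c\lambda$; concretely one needs $\mu \ge d_\fraks(q,p_i)+c\lambda$, and your extended~(1) only applies for $\mu<1$. Since condition~(3) must be verified for \emph{all} $p,q\in K$ with no proximity constraint, $d_\fraks(q,p_i)$ can be as large as $\operatorname{diam}(\bar K)$, and the recentering fails whenever $\operatorname{diam}(\bar K)\ge 1$. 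The same objection applies to your treatment of the $\delta=+\infty$ case.

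The fix, which is exactly what the paper does, is to recenter at $p_i$ using $\tilde\Gamma$ rather than at $q$ using Remark~\ref{translated test function remark}. One writes
\[
(\tilde\Pi_q-\tilde\Pi_p\tilde\Gamma_{p,q})\tau_q(\phi^{c\lambda}_{p_i})
=(\tilde\Pi_q-\tilde\Pi_{p_i}\tilde\Gamma_{p_i,q})\tau_q
+(\tilde\Pi_{p_i}\tilde\Gamma_{p_i,p}\tilde\Gamma_{p,q}-\tilde\Pi_p\tilde\Gamma_{p,q})\tau_q
+(\tilde\Pi_{p_i}\tilde\Gamma_{p_i,q}-\tilde\Pi_{p_i}\tilde\Gamma_{p_i,p}\tilde\Gamma_{p,q})\tau_q,
\]
tested against $\phi^{c\lambda}_{p_i}$. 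The first two terms are controlled by condition~(3) at scale $c\lambda$ centred at $p_i$ (which imposes no constraint on the distance to $q$ or $p$). The third term is bounded crudely via condition~(1) at $p_i$ together with the $\tilde\Gamma$-bounds, and one then exploits $\lambda\ge c_{\bar K}$ to convert the resulting $\lambda^{\min A}$ into the required $\lambda^\delta$.
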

\begin{proof}
We check the required bounds separately:
\begin{enumerate}
\item Note that the bound on $\Gamma_{p,q}\tau$ holds by assumption.
\item Next we check the bound on $\Pi$. We use Remark~\ref{decomposing test functions} with $c=c_{\bar{K}}$ and decompose $\phi^\lambda_p \in \mathcal{B}^{r,\lambda}_{p}$ as $\phi^\lambda_p=\sum_{i=1}^N \phi^{c_{\bar{K}}\lambda}_{p_i}$. Thus we can write for $\tau_p \in T^\frakt_{\alpha,\delta}$, $p,q\in K$, $\lambda\in (0,1)$ 
\begin{align*}
|\Pi_p \tau_p (\phi_p^\lambda)| &\leq \sum_{i=1}^N |\Pi_p \tau_p (\phi^{c_{\bar{K}}\lambda}_{p_i})|\\
& \leq \sum_{i=1}^N |(\Pi_p \tau_p - \Pi_i \Gamma_{p_i, p}\tau_p)  (\phi^{c_{\bar{K}}\lambda}_{p_i}) |+| \Pi_{p_i} \Gamma_{p_i, p}\tau_p (\phi^{c_{\bar{K}}\lambda} _{p_i}) | \\
&\leq \llbracket Z \rrbracket_{ c_{\bar{K}},\bar K} |\tau_p| N \cdot (c_k\lambda)^\delta + \|\Pi\|_{ c_{\bar{K}},\bar K} \|\Gamma\|_{ \bar K} |\tau_p|\sum_i \sum_{\beta\in A} d_\fraks (p_i,p)^{(\alpha-\beta)\vee 0}(c_{\bar{K}}\lambda)^\beta\\
&\lesssim_{c_{\bar{K}}} (\llbracket Z \rrbracket_{ c_{\bar{K}},\bar K} + \|\Pi\|_{ c_{\bar{K}},\bar K} \|\Gamma\|_{ \bar K})|\tau_p|\lambda^{\alpha}
\end{align*}
where we used the fact that $d_\fraks (p_i,p) \leq \lambda$. Observe that the obtained bound is uniform over $p\in K$, $\alpha\in A $, $\tau_p\in T^\frakt_{\alpha,\delta}$ and $\lambda\in (0,1)$, $\phi_p^\lambda\in \mathcal{B}^{r,\lambda}_p$ .
Thus $$\|\Pi\|_{K}\lesssim_{c_{\bar{K}}} \llbracket Z \rrbracket_{ c_{\bar{K}},\bar K} + \|\Pi\|_{ c_{\bar{K}},\bar K} \|\Gamma\|_{ \bar K} \ .$$
Similarly one finds for a second model $\tilde Z=(\tilde{\Pi},\tilde{\Gamma})$
$$\|\Pi-\tilde{\Pi}\|_{K}\lesssim\llbracket Z ;\tilde{Z} \rrbracket_{ c_{\bar{K}},\bar K} + \|\Pi-\tilde{\Pi}\|_{ c_{\bar{K}},\bar K} \|\Gamma\|_{ \bar K}+\|\tilde{\Pi}\|_{ c_{\bar{K}},\bar K} \|\Gamma-\tilde{\Gamma}\|_{ \bar K} \ .$$
\item We proceed similarly to obtain the desired bound on $\Pi_p\tau_p-\Pi_q\Gamma_{q,p}\tau_p$. We treat only the case $\lambda\geq c_{\bar{K}}$, since for $\lambda< c_{\bar{K}}$ the desired bound holds by assumption. W.l.o.g assume $\tau_p\in T_{\alpha,\delta}$ for $\delta<+\infty$, then
\begin{align*}
(\Pi_q \tau_q -\Pi_p\Gamma_{p,q}\tau_q)(\phi_p^\lambda)& = \sum_{i=1}^N  (\Pi_q \tau_q -\Pi_p\Gamma_{p,q}\tau_q)(\phi_{p_i}^{c_{\bar{K}}\lambda})\\
&= \sum_{i=1}^N (\Pi_q \tau_q - \Pi_{p_i}\Gamma_{p_i, q}\tau_q)(\phi_{p_i}^{c_{\bar{K}}\lambda})\\
&+ \sum_{i=1}^N (\Pi_{p_i}\Gamma_{p_i, p}\Gamma_{p,q}\tau_q -\Pi_p\Gamma_{p,q}\tau_q )(\phi_{p_i}^{c_{\bar{K}}\lambda})\\
&+\sum_{i=1}^N (\Pi_{p_i}\Gamma_{p_i, q} \tau_q- \Pi_{p_i}\Gamma_{p_i, p}\Gamma_{p,q}\tau_q)(\phi_{p_i}^{c_{\bar{K}}\lambda})
\end{align*}
\begin{enumerate}
\item The summands of the first sum can bounded as follows:
\begin{align*}
|(\Pi_q \tau_q - \Pi_{p_i}\Gamma_{p_i, q}\tau_q)(\phi_{p_i}^{c_{\bar{K}}\lambda})|&\leq \llbracket Z \rrbracket_{ c_{\bar{K}},\bar K} |\tau| (c_{\bar{K}}\lambda)^ \delta
\end{align*}
\item For the summands in the second sum one has
\begin{align*}
|(\Pi_{p_i}\Gamma_{p_i, p}\Gamma_{p,q}\tau_q -\Pi_p\Gamma_{p,q}\tau_q )(\phi_{p_i}^{c_{\bar{K}}\lambda})|
&\leq \sum_{\beta\in A} 
|(\Pi_{p_i}\Gamma_{p_i, p} Q_{\beta} \Gamma_{p,q}\tau_q -\Pi_p Q_{\beta} \Gamma_{p,q}\tau_q )(\phi_{p_i}^{c_{\bar{K}}\lambda})|\\
&\leq \llbracket Z  \rrbracket_{ c_{\bar{K}},\bar K} \sum_\beta |Q_{\beta} \Gamma_{p,q}\tau_q| (c_{\bar{K}}\lambda)^\delta\\
&\lesssim_K \llbracket Z  \rrbracket_{ c_{\bar{K}},\bar K} \|\Gamma\|_{  K} |\tau_q| (c_{\bar{K}}\lambda)^ \delta
\end{align*}
\item For the summands in the last sum we use the following very crude bounds:
\begin{align*}
|\Pi_{p_i}\Gamma_{p_i, q} \tau_q (\phi_{p_i}^{c_{\bar{K}}\lambda})| &\leq \|\Pi\|_{ 1,K}  \sum_{\beta}|Q_\beta \Gamma_{p_i, q}\tau_q| (c_{\bar{K}}\lambda )^\beta\\
 &\lesssim_K  \|\Pi\|_{ c_{\bar{K}},\bar K} \|\Gamma\|_{  K}|\tau_q| \lambda^{\min A}\\
 &\lesssim_K \|\Pi\|_{ c_{\bar{K}},\bar K} \|\Gamma\|_{  K}|\tau_q| c_{\bar{K}}^{\min A -\delta} \lambda^\delta\\
 &\lesssim_{K,{c_{\bar{K}}}}\|\Pi\|_{ c_{\bar{K}},\bar K} \|\Gamma\|_{  K}|\tau_q|\lambda^\delta
\end{align*}
where in the last line we used that $\lambda^{\min A}\leq c_{\bar{K}}^{\min A -\delta} \lambda^\delta$ for $\lambda\geq c_{\bar{K}}$.
Very similarly one obtains 
\begin{align*}
|\Pi_{p_i}\Gamma_{p_i, p}\Gamma_{p,q}\tau_q (\phi_{p_i}^{c_{\bar{K}}\lambda}) |&\lesssim_{K,{c_{\bar{K}}}} \|\Pi\|_{ c_{\bar{K}},\bar K} \|\Gamma\|^2_{  \bar K}|\tau_q|\lambda^\delta
\end{align*}
for $\lambda\geq c_{\bar{K}}$.
\end{enumerate}
Putting this together,
$$|(\Pi_q \tau_q -\Pi_p\Gamma_{p,q}\tau_q)(\phi_p^\lambda)|\lesssim_{ c_{\bar{K}},\bar K} (1+\|\Gamma\|_{  \bar K}) (\llbracket Z  \rrbracket_{ c_{\bar{K}},\bar K}  +\|\Pi\|_{ c_{\bar{K}},\bar K} \|\Gamma\|_{  \bar K} )    |\tau_q| \lambda^ \delta $$
holds uniformly over $p,q\in K$, $\tau_q\in (T_q)_{\alpha,\delta}^\frakt,$ and $\lambda\in (0,1)$, $\phi_p^\lambda\in \mathcal{B}^{r,\lambda}_p$ .

For the last claim, if $\tilde Z=(\tilde{\Pi},\tilde{\Gamma})$ is a second model, one similarly finds 
\begin{align*}
\llbracket 
Z;\tilde{Z}  \rrbracket_{{ c_{\bar{K}},\bar K}} \lesssim & \|\Gamma-\tilde{\Gamma}\|_{  \bar K} \left(\llbracket Z\rrbracket_{ c_{\bar{K}},\bar K}  +\|\Pi\|_{ c_{\bar{K}},\bar K} \|\Gamma\|_{  \bar K} \right)  \\
&+(1+\|\tilde\Gamma\|_{  \bar K}) \left(\llbracket Z ;\tilde{Z} \rrbracket_{ c_{\bar{K}},\bar K}  +\|\Pi-\tilde{\Pi}\|_{ c_{\bar{K}},\bar K} \|\Gamma\|_{  \bar K} +\|\tilde \Pi\|_{ c_{\bar{K}},\bar K} \|\Gamma-\tilde{\Gamma}\|_{  \bar K} \right)  \ .
\end{align*}
\end{enumerate}
\end{proof}

Next, we make the following useful observation, which holds in the flat setting as well, though is less important there. 
\begin{lemma}\label{dislocated testfunction lemma}
Fix a regularity structure and a model. For $D>0$, denote by $K^D$, the $D$-enlargement of any set $K$. Then, for every compact set $K\subset M$ uniformly in
$p\in K$ and $\tilde{p}\in K$ satisfying $d(p,\tilde{p})<D\lambda$ as well as $\tau_p\in T_{\alpha,:}$ and $\phi^\lambda_{\tilde{p}}\in B^{r,\lambda}_{\tilde{p}}$
$$|\Pi_p \tau_p (\phi^\lambda_{\tilde{p}})| \lesssim_{D}\big(\|\Pi\|_{K^D}\|\Gamma\|_{K^D}+\llbracket Z  \rrbracket_{K^D } \big) |\tau_p| \lambda^{\alpha}$$
for $\lambda \in (0,1)$.
For any second model $\tilde{Z}=(\tilde{\Pi},\tilde{\Gamma})$ the analogous bound 
$$|\big(\Pi_p-\tilde{\Pi}_p\big) \tau_p (\phi^\lambda_{\tilde{p}})| \lesssim_{D}\big(\|\Pi-\tilde\Pi\|_{K_{D+1}}\|\tilde\Gamma\|_{K_{D+1}}+\|\Pi\|_{K^D}\|\Gamma-\tilde{\Gamma}\|_{K^D }+\llbracket Z  ;\tilde Z  \rrbracket_{K^D } \big) |\tau_p| \lambda^{\alpha} \ ,$$
holds.
\end{lemma}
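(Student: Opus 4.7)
The plan is to split
\[
\Pi_p\tau_p(\phi_{\tilde p}^\lambda) = \bigl(\Pi_p\tau_p - \Pi_{\tilde p}\Gamma_{\tilde p,p}\tau_p\bigr)(\phi_{\tilde p}^\lambda) + \Pi_{\tilde p}\Gamma_{\tilde p,p}\tau_p(\phi_{\tilde p}^\lambda),
\]
which reinterprets $\phi_{\tilde p}^\lambda$ as a test function centred at $\tilde p$ rather than at $p$. Since $\phi_{\tilde p}^\lambda \in \mathcal{B}^{r,\lambda}_{\tilde p}$ and both $p,\tilde p \in K \subset K^D$, condition~(3) of Definition~\ref{def model}, applied with base-points $\tilde p,p$ playing the roles of $p,q$, bounds the first summand by $\llbracket Z\rrbracket_{K^D}|\tau_p|\lambda^\delta$ for each homogeneous component $\tau_p \in T^\frakt_{\alpha,\delta}$. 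Summing over the finitely many relevant $\delta$'s (and using $\lambda^\delta \leq \lambda^\alpha$ for the informative values of $\delta$) this contributes the $\llbracket Z\rrbracket_{K^D}|\tau_p|\lambda^\alpha$ piece.

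For the second summand I decompose $\Gamma_{\tilde p,p}\tau_p = \sum_\beta Q_\beta\Gamma_{\tilde p,p}\tau_p$ and invoke condition~(1) of Definition~\ref{def model} at base-point $\tilde p$, which gives $|\Pi_{\tilde p}Q_\beta\Gamma_{\tilde p,p}\tau_p(\phi_{\tilde p}^\lambda)| \leq \|\Pi\|_{K^D}\lambda^\beta|Q_\beta\Gamma_{\tilde p,p}\tau_p|$. In the close-point regime $\lambda < 1/D$ one has $d(\tilde p, p) < 1$, so the $\|\Gamma\|$-bound yields $|Q_\beta\Gamma_{\tilde p,p}\tau_p| \lesssim_D \|\Gamma\|_{K^D}|\tau_p|\lambda^{(\alpha-\beta)\vee 0}$, and combining this with the pointwise inequality $\beta + (\alpha-\beta)\vee 0 \geq \alpha$ produces the desired $\|\Pi\|_{K^D}\|\Gamma\|_{K^D}|\tau_p|\lambda^\alpha$ contribution. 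In the complementary range $\lambda \in [1/D,1)$ the power $\lambda^\alpha$ is bounded below in terms of $D$, and I would handle this by invoking the decomposition-of-test-functions remark to write $\phi_{\tilde p}^\lambda = \sum_{i=1}^N \phi_{\tilde p_i}^{c\lambda}$ with $c=c(D)$ small enough that $(D+1)c\lambda < 1$; each piece then has centre $\tilde p_i$ satisfying $d(p,\tilde p_i) < D+1$ and scale $c\lambda$ back in the close-point regime, so the previous argument applies piece by piece (with $K^D$ replaced by $K^{D+1}$, whence the mild enlargement appearing in the statement).

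The second bound is proved analogously, starting from the telescoping
\[
(\Pi_p - \tilde\Pi_p)\tau_p = \bigl[(\Pi_p\tau_p - \Pi_{\tilde p}\Gamma_{\tilde p,p}\tau_p) - (\tilde\Pi_p\tau_p - \tilde\Pi_{\tilde p}\tilde\Gamma_{\tilde p,p}\tau_p)\bigr] + (\Pi_{\tilde p} - \tilde\Pi_{\tilde p})\tilde\Gamma_{\tilde p,p}\tau_p + \Pi_{\tilde p}(\Gamma_{\tilde p,p} - \tilde\Gamma_{\tilde p,p})\tau_p,
\]
whose three summands, when paired with $\phi_{\tilde p}^\lambda$, are controlled by $\llbracket Z;\tilde Z\rrbracket_{K^D}|\tau_p|\lambda^\alpha$, $\|\Pi-\tilde\Pi\|_{K^{D+1}}\|\tilde\Gamma\|_{K^{D+1}}|\tau_p|\lambda^\alpha$ and $\|\Pi\|_{K^D}\|\Gamma-\tilde\Gamma\|_{K^D}|\tau_p|\lambda^\alpha$ respectively, by repeating the above case analysis verbatim. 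The main obstacle throughout is the regime $\lambda \geq 1/D$, in which $d(\tilde p,p)$ may exceed the unit distance at which the $\|\Gamma\|$-norm is sensitive; the role of the zoom-in decomposition is precisely to reduce this ``far'' situation to a controlled sum of ``close'' ones.
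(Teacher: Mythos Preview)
Your proposal is correct and follows essentially the same approach as the paper: the same splitting $\Pi_p\tau_p = (\Pi_p\tau_p - \Pi_{\tilde p}\Gamma_{\tilde p,p}\tau_p) + \Pi_{\tilde p}\Gamma_{\tilde p,p}\tau_p$ for the regime $\lambda < 1/D$, and for $\lambda \geq 1/D$ the paper simply says ``the general case follows along the lines of the proof of Lemma~\ref{technical lemma}'', which is exactly the decomposition-of-test-functions reduction you spell out. Your telescoping for the two-model bound is likewise the standard one.
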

\begin{proof}
First assume $\lambda< \frac{1}{D}$, then
\begin{align*}
\Pi_p\tau_p(\phi^\lambda_{\tilde{p}})&=(\Pi_p\tau_p-\Pi_{\tilde{p}}\Gamma_{\tilde{p},p}\tau_p)(\phi^\lambda_{\tilde{p}}) + (\Pi_{\tilde{p}}\Gamma_{{\tilde{p}},p}\tau_p)(\phi_{\tilde{p}}^\lambda)\\
&=(\Pi_p\tau_p-\Pi_{\tilde{p}}\Gamma_{\tilde{p},p}\tau_p)(\phi^\lambda_{\tilde{p}}) + \sum_\beta(\Pi_{\tilde{p}}Q_\beta \Gamma_{{\tilde{p}},p}\tau_p)(\phi_{\tilde{p}}^\lambda)\\
&\lesssim \llbracket Z  \rrbracket_{K^D }|\tau_p|\lambda^ \delta + \|\Pi\|_{K^D }\sum_{\beta }|Q_\beta \Gamma_{{\tilde{p}},p}\tau_p|\lambda^{\beta}\\
&\lesssim \llbracket Z  \rrbracket_{K^D }|\tau_p|\lambda^ \delta + \|\Pi\|_{K^D }\|\Gamma\|_{K^D }|\tau_p|\sum_{\beta } (D\lambda)^{(\alpha-\beta)\vee 0}\lambda^{\beta} \ .
\end{align*}
A similar calculation works the second part of the claim for $\lambda< \frac{1}{D}$.
The general case follows along the lines of the proof of Lemma~\ref{technical lemma}.
\end{proof}

\subsubsection{Jet regularity structures}
We define jet regularity structures in the case $|\mathfrak{L}|=1$ and use the obvious adaptation for cases $|\mathfrak{L}|>1$.
\begin{definition}\label{definition polynomial reg}
For a vector bundle $E\to M$, an $E$-valued jet regularity structure $\bar{\mathcal{T}}:=(E,\bar T, L)$ of precision $\delta_0$ consists of a bi-graded finite dimensional subbundle \linebreak
$\bar{T}= \bigoplus_{\alpha\in A, \delta \in \triangle} \bar{T}_{\alpha,\delta} \subset JE$ where 
\begin{itemize}
\item $A\subset \mathbb{N}$ and for every $\alpha\in A$ one has $\bar{T}_{\alpha,:}=\bigoplus_{\delta \in \triangle} \bar{T}_{\alpha,\delta}=(JE)_\alpha$, where the grading on $JE$ was introduced in (\ref{grading of J^k}).
\item The isomorphism $\bar{T}_{0}\simeq E$ is the canonical isomorphism $\bar{T}_{0}=(JE)_0 \simeq E$.
\item $L$ consists of all maps satisfying the properties in Definition~\ref{definition regularity structure} Point \eqref{condition on L}.
\end{itemize}
An $E$-valued jet regularity structure is said to be of \textit{precision} $\delta_0$ if it satisfies 
$A= \mathbb{N}\cap [0, \delta_0)$ and $\triangle\subset [ \delta_0, + \infty ]$. It is called the \textit{standard} $E$-valued jet regularity structure if $\triangle=\{\delta_0\} .$
\end{definition}
Now we define the analogue of the polynomial model.
\begin{definition}\label{definition polynomial model}
Let $\bar{\mathcal{T}}:=(E,\bar T, L)$ be an $E$-valued jet regularity structure. An admissible realisation $R$ of $JE$ is said to induce a model for $\bar{\mathcal{T}}$ if the maps $Z=(\Pi, \Gamma)$ defined as
\begin{itemize}
\item  $\Pi_p v_p = R (v_p)$ for $v_p\in \bar{T}_{\alpha,\delta}$,
\item  $\Gamma_{q,p}v_p= {Q}_{<\delta} j_q R (v_p)$ for $ v_p \in \bar{T}_{\alpha,\delta}$,
where ${Q}_{<\delta}: JE\to (JE)_{<\delta}$ denotes the canonical projection defined in (\ref{def of proj}) ,
\end{itemize}
satisfy the properties of a model. Then $Z$ is called a jet model for $\bar{\mathcal{T}}$ .
\end{definition} 

\begin{prop}\label{proposition on polynomial model}
For any $\delta_0>0$, every admissible realisation $R$ of $JE$ induces a model for the standard jet regularity structure of precision $\delta_0$. 
\end{prop}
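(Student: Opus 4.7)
The plan is to verify the three model bounds in Definition~\ref{def model} directly, first for $\lambda$ small enough, and then to invoke Lemma~\ref{technical lemma} to extend the bounds to all $\lambda \in (0,1)$. Continuity (indeed smoothness) of $\Gamma$ in $(p,q)$ follows from the smoothness requirement in Definition~\ref{def admissible realisation} together with the smoothness of the jet and projection maps, and membership $\Gamma \in L$ is automatic in the present setting since $L = \hat{L}(T,T)$. The cornerstone observation for the analytic estimates is the identity $j_p \circ R = \id_{JE}$: indeed, for $v_p \in (JE)_l$ and any $k \geq l$, the consistency relation \eqref{admissible re} gives $R(v_p) = R^k(p_{\infty,k} v_p)$, hence $p_{\infty,k}(j_p R(v_p)) = j^k_p R^k(p_{\infty,k} v_p) = p_{\infty,k}(v_p)$ for all such $k$.

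For bound (1), take $\tau_p \in \bar{T}_{\alpha,\delta_0} = (JE)_\alpha$. Applying Lemma~\ref{bound on admissable realisations} with $m=0$ yields $|R(\tau_p)(q)| \lesssim_K d_{\fraks}(p,q)^{\alpha}|\tau_p|$ uniformly over a compact $K$. Combining with $\|\phi^\lambda_p\|_{L^\infty} \leq \lambda^{-|\fraks|}$ and $\Vol(B_\lambda(p)) \lesssim \lambda^{|\fraks|}$ gives $|\Pi_p \tau_p(\phi^\lambda_p)| \lesssim_K \lambda^\alpha |\tau_p|$. Bound (2) is immediate from Lemma~\ref{bound on admissable realisations} applied with $m = \beta$:
\[
|Q_\beta \Gamma_{p,q}\tau_q|_\beta = |Q_\beta j_p R(\tau_q)|_\beta \lesssim_K d_\fraks(p,q)^{(\alpha-\beta)\vee 0}|\tau_q|.
\]

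The substance lies in bound (3). Set $N \in \mathbb{N}$ to be the largest integer strictly less than $\delta_0$, so that $N+1 \geq \delta_0$. Telescoping the definition \eqref{def of proj} of $Q_l$ gives $Q_{<\delta_0} = \sum_{l=0}^{N} Q_l = i_{N,\infty}\circ p_{\infty,N}$, so that
\[
\sigma := Q_{<\delta_0} j_p R(\tau_q) = i_{N,\infty}\bigl(j^N_p R(\tau_q)\bigr),
\]
and consistency of the admissible realisation then yields $R(\sigma) = R^N(j^N_p R(\tau_q))$. Since $R^N$ is a right-inverse of $j^N_p$, this shows that $R(\sigma)$ and $R(\tau_q)$ share the same $N$-jet at $p$ in the scaled sense. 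Lemma~\ref{pointwise bound} (with $n=0$, $k=N$) therefore gives
\[
|R(\sigma)(q') - R(\tau_q)(q')| \lesssim_K d_\fraks(p,q')^{N+1}\,\bigl\| |j^{N+\max_i \fraks_i}_\cdot(R(\sigma)-R(\tau_q))|\bigr\|_{L^\infty(B_1(p))}.
\]
The $L^\infty$ norm on the right is controlled by a constant times $|\tau_q|$ on the compact set $K$, using Lemma~\ref{bound on admissable realisations} applied to both $R(\tau_q)$ and $R(\sigma)$ together with $|\sigma| \lesssim_K |\tau_q|$. Integration against $\phi^\lambda_p$ then gives
\[
\bigl|(\Pi_p \Gamma_{p,q}\tau_q - \Pi_q \tau_q)(\phi^\lambda_p)\bigr| \lesssim_K \lambda^{N+1}|\tau_q| \leq \lambda^{\delta_0}|\tau_q|,
\]
since $\lambda \leq 1$ and $N+1 \geq \delta_0$.

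The three bounds are thus established for $\lambda$ in a neighbourhood of $0$, and a final appeal to Lemma~\ref{technical lemma} extends them to all $\lambda \in (0,1)$, completing the proof. The main (and essentially only) delicate point is the correct identification of the truncation operator $Q_{<\delta_0}$ as $i_{N,\infty}\circ p_{\infty,N}$ together with the consistency relation among the $R^k$; this is what bridges the graded structure on $JE$ used to define $\Gamma$ with the classical $J^N E$-level jet agreement required to apply Lemma~\ref{pointwise bound}.
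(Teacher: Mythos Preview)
Your proof is correct and follows essentially the same approach as the paper: pointwise bounds via Lemma~\ref{bound on admissable realisations} for conditions (1) and (2), and Lemma~\ref{pointwise bound} applied to $f = R(\tau_q)$ and $g = R(\sigma)$ (which share the same $N$-jet at $p$) for condition (3). The appeal to Lemma~\ref{technical lemma} at the end is unnecessary, since your bounds already hold for all $\lambda \in (0,1)$; but this is harmless, and your explicit identification $Q_{<\delta_0} = i_{N,\infty}\circ p_{\infty,N}$ via telescoping is a nice clarification of what the paper leaves implicit.
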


\begin{proof}
Set $k= \max\{n\in \mathbb{N}\ : \ n<\delta_0\}$ and define $(\Pi, \Gamma)$ as in Definition~\ref{definition polynomial model}.
Recall the canonical identification $(JE)_0\simeq E$ induced by $Q_0 j_pf= f(p)$ for all $f\in \mathcal{C}^\infty (E)$ and 
that $\Pi_p v_p (p)=0$ for $v_p\in (T_p)_{>0}$ since for all $\tau_p\in \bar T_p$
 $$j_p \Pi_p \tau_p = j_p R(\tau_p) = \tau_p \ .$$  Thus 
\begin{equation}\label{good for bounds bellow}
|\Pi_p\tau_p (q)|= |R (Q_{<k} j_q \Pi_p\tau_p)(q)| = |R (Q_{<k} j_q R \tau_p)(q)| =|Q_0 j_q R \tau_p | \ 
\end{equation}
and
\begin{equation}\label{good for bound below 2}
|\Gamma_{p,q} \tau_q |_m = |j_p R\tau_q |_m
\end{equation}

We check that the properties of a model are satisfied.
\begin{itemize}
\item Note that (\ref{good for bounds bellow}) combined with Lemma~\ref{bound on admissable realisations} implies that for $v_p\in (T_p)_n $
$$ |\Pi_p\tau_p (q)|\lesssim d_\fraks (p,q)^n \ .$$
\item The same Lemma~\ref{bound on admissable realisations}  combined with (\ref{good for bound below 2})  implies for $v_p\in (T_p)_n $
\begin{equation}
|\Gamma_{p,q} \tau_q |_m = |Q_{\leq k}j_p R\tau_q |_m \lesssim d_\fraks (p,q)^{(n-m)\vee 0} \ .
\end{equation}
\item To check the last condition, denote by $\bar K:= \{p\in M \ : \ d(p,K)\leq 1\}$ the $1$-fattening of $K$. Then, since
$$\Pi_p \Gamma_{p,q} \tau_q - \Pi_q \tau_q =  R Q_{\leq k} j_{p} R\tau_q - R \tau_q $$
it follows by taking  $f= R\tau_q$ and $g= R (Q_{\leq k} j_{p} f)$ in Lemma~\ref{pointwise bound} that 
$$|(\Pi_p \Gamma_{p,q} \tau_q - \Pi_q \tau_q) (r) | \lesssim_{R,\delta} d_\fraks (p,r)^{\delta_0} |\tau_q|_\alpha \ .$$
\end{itemize}
\end{proof}

\begin{remark}
When $E=\mathbb{R}^d \times \RR^n$ is the trivial bundle over $M=\RR^d$ one can identify jets with polynomials as in Section~\ref{bundle valued jets in local coordinates}. Then the polynomial model in the sense of \cite{Hai14} induces a model for the any $E$ valued-jet regularity structure with $\triangle=\{+\infty\}$ (up to an appropriate identification of the different fibres of $\bar{T}$, c.f.\ Remark~\ref{remark homogenious/trivial bundle}, and a truncation to make the space finite dimensional).
\end{remark}

\begin{remark}\label{rem:nonstandard_polynomial}
Note that the definition of ``non-standard'' jet regularity structures allows some flexibility on the analytic conditions one imposes on the corresponding model. These conditions are particularly interesting in the case that $\infty\in \triangle$ and $\bar{T}_\infty\neq \{0\}$. 
\begin{itemize}
\item For example, when $E=M \times \RR$ is the trivial bundle one can always set $\bar{T}_{0,\infty}= (JM)_0$ and obtain a model for it by working with admissible realisations $R$ mapping $j_p 1\mapsto 1$ for every $p\in M$.
\item More generally, if there exists a global parallel section $e$ of $E$, it it can be useful to restrict attention to admissible realisations satisfying 
$Ej_p e = e$. An important example is the metric on some bundle $F$, which is a parallel section of $E=F^*\oplus F^*$.
\end{itemize}
Given an arbitrary jet regularity structure, in general it is a non-trivial geometric question whether there exists an admissible realisation inducing a model for it.
\end{remark}
\begin{remark}
Note that in this section there was no mention of the algebra structure of jet bundles, c.f.\ Remark~\ref{alternative definition remark}, which will be used again in Section~\ref{section Local operations}. There it will be crucial that flexible notion of admissible realisations allows one to impose further conditions with respect to this structure, c.f.\ Section~\ref{remark algebraic structure}. 
\end{remark}

\subsection{Operations on models}
\begin{definition}\label{definition push forward}
Let $E\to M$, $F\to N$ be vector bundles and $\Phi=(\phi, G): E\to F$ a bundle isomorphism. There is a natural notion of push-forward of a regularity structure  $\mathcal{T}_E=(E,T,L)$ on $M$ to a regularity structure $\Phi_*\mathcal{T}_E = (F,\phi_*T,\phi_*L)$ on $N$, where $\phi_*T,\phi_*L$ are the pullback-vector bundles under $\phi^{-1}$, resp. $\phi^{-1}\times \phi^{-1}$ and the isomorphism $(\phi_*T)_0\simeq F$ is given by composing the isomorphism $T_0\simeq E$ with $\Phi$.

The corresponding concept of push-forward of models 
$$\mathcal{M}_{\mathcal{T}_E}\to\mathcal{M}_{\Phi_*\mathcal{T}_M}, \qquad Z \mapsto \Phi_*Z \ ,$$
c.f.\ \cite[Lemma 19]{DDD19} for the scalar case, is given as follows: Let $Z=(\Pi,\Gamma)$ be a model for $\mathcal{T}_E$, then for $\bar{p}, \bar{q}\in N$
\begin{equs}
(\Phi_*\Gamma)_{\bar{p},\bar{q}}&:=\Gamma_{\phi^{-1}(\bar{p}),\phi^{-1}(\bar{p})}\\
(\Phi_*\Pi)_{\bar{q}}&:= \Phi_* \circ \Pi_{\phi^{-1}\bar{p}} \ ,
\end{equs}
where the last $\Phi_*$ denotes the push-forward of distributions $\Phi_*: \mathcal{D}'(E)\to \mathcal{D}'(F)$.
\end{definition}
\begin{remark}
In the special case of the jet regularity structure, this simply corresponds to the pushforward of admissible realisations, see Remark~\ref{Pushforward of admissible realisations}.
\end{remark}

%
\begin{remark}\label{pushing testfunctions}
Note that in the setting of Definition~\ref{definition push forward}, for every compact set $K\subset N$, there exist $C=C(K,\Phi)>0$, $c=c(K,\Phi)\in (0,\infty)$ such that uniformly over $\bar p \in K$ and $\lambda\in (0,1)$
 $$\mathcal{C}^\infty_c (F)\supset\Phi^*( \mathcal{B}^\lambda_{\bar{p}}(F) )\subset C\mathcal{B}^{c\lambda}_{p}(E) \subset \mathcal{C}^\infty_c (E)\ ,$$
where we write $p=\phi^{-1}(\bar{p})$.
\end{remark}

Since our definitions differ slightly from the ones in \cite{DDD19}, we state and prove the following lemma, which is close to \cite[Lemma 19]{DDD19}. 
\begin{lemma}\label{lemma push forward}
The pushforward map of Definition~\ref{definition push forward} is continuous. For every compact set $K\subset N$ there exists a constant $c_K$, such that
\begin{align}
&\|\Phi_* \Pi \|_{K,c_K}\lesssim_{\phi} \| \Pi \|_{\phi^{-1}( K)}\label{first}\\
&\llbracket \Phi_* Z  \rrbracket _{K} \lesssim_{\phi} \llbracket Z  \rrbracket_{\phi^{-1}( K)}\label{second}\\
&\|\Phi_*\Gamma \|_{c_K,K} \lesssim_{\phi} \|\Gamma \|_{\phi^{-1}( K)} \label{third}
\end{align}
and the analogous bound holds for the difference of two models.
Therefore, by Lemma~\ref{technical lemma}, the bound 
$$\|\Phi_* Z \|_{K}\lesssim_{C,K} \|Z \|_{\phi^{-1}(\bar K)}$$
holds uniformly over models $Z$ satisfying $\|Z \|_{\phi^{-1}(\bar K)}<C$, as well as the similar bound for two models $Z, \bar{Z}$
$$\|\Phi_* Z;\Phi_* \bar{Z} \|_{K, \delta}\lesssim_{C,K} \|Z;\bar{Z} \|_{\phi^{-1}(\bar K)} \ .$$
\end{lemma}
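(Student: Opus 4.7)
The strategy is to verify each of the three bounds \eqref{first}--\eqref{third} directly from the definition of $\Phi_*Z$ by pulling back test functions and points via $\Phi$ and $\phi$, and then invoke Lemma~\ref{technical lemma} to upgrade these ``small scale'' bounds to the full model seminorm. The key input is Remark~\ref{pushing testfunctions}: for every compact $K\subset N$ there exist $C=C(K,\Phi)$ and $c=c(K,\Phi)$ such that $\Phi^*\bar\phi\in C\mathcal{B}_{p}^{r,c\lambda}(E)$ whenever $\bar\phi\in \mathcal{B}_{\bar p}^{r,\lambda}(F)$ with $p=\phi^{-1}(\bar p)$; we set $c_K:=c^{-1}\wedge 1$ so that $c\lambda<1$ for $\lambda<c_K$.

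For \eqref{first}, pick $\bar p\in K$, $\tau_{\bar p}\in (\phi_*T)^{\frakt}_{\alpha,:}|_{\bar p}$ and $\bar\phi\in\mathcal{B}_{\bar p}^{r,\lambda}(F)$ with $\lambda<c_K$. Unfolding the definition, $(\Phi_*\Pi)_{\bar p}\tau_{\bar p}(\bar\phi) = \Pi_p\tau_{\bar p}(\Phi^*\bar\phi)$, and since $p\in\phi^{-1}(K)$, the bound on $\Pi$ in Definition~\ref{def model} together with the above inclusion yields $|\Pi_p\tau_{\bar p}(\Phi^*\bar\phi)|\le C\|\Pi\|_{\phi^{-1}(K)}(c\lambda)^\alpha|\tau_{\bar p}|$, which is \eqref{first} up to the constant $Cc^{\alpha}$. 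Bound \eqref{second} follows by exactly the same argument applied to $\Pi_p\Gamma_{p,q}\tau_q-\Pi_q\tau_q$ tested against $\Phi^*\bar\phi$, exploiting that $(\Phi_*\Pi)_{\bar p}(\Phi_*\Gamma)_{\bar p,\bar q}-(\Phi_*\Pi)_{\bar q}$ equals $\Pi_p\Gamma_{p,q}-\Pi_q$ pushed forward as a distribution.

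For \eqref{third}, note that $(\Phi_*\Gamma)_{\bar p,\bar q}\tau_{\bar q}=\Gamma_{p,q}\tau_{\bar q}$ with $p=\phi^{-1}(\bar p)$, $q=\phi^{-1}(\bar q)$, so we only need to compare the two $\fraks$-distances on compact sets. Since $\phi$ is a diffeomorphism respecting the product structure and $\fraks$-distances are equivalent to $\sup_i d^i(\cdot,\cdot)^{1/\fraks_i}$, smoothness of the component maps on compact sets gives $d_\fraks(p,q)\lesssim_\phi d_\fraks(\bar p,\bar q)$ whenever $d_\fraks(\bar p,\bar q)$ is smaller than some threshold $c_K'$; shrinking $c_K$ if necessary absorbs this into the same constant, and the bound on $\Gamma$ in Definition~\ref{def model} gives \eqref{third}. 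The analogous estimates for differences $Z-\tilde Z$ of two models are identical, as the pushforward is linear in the model.

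Having established \eqref{first}--\eqref{third} at all scales $\lambda<c_K$ (and for $d_\fraks$-pairs below a uniform threshold), we conclude that $\|\Phi_*Z\|_{c_K,K}\lesssim_\phi \|Z\|_{\phi^{-1}(K)}$, and similarly for two models. Applying Lemma~\ref{technical lemma} to the $2$-fattening $\bar K$ of $K$ then upgrades this to the claimed bound $\|\Phi_*Z\|_K\lesssim_{C,K}\|Z\|_{\phi^{-1}(\bar K)}$ and its two-model analogue, uniformly over models with $\|Z\|_{\phi^{-1}(\bar K)}<C$. The main bookkeeping effort is simply tracking the two rescalings ($c$ from Remark~\ref{pushing testfunctions} and the threshold needed for the distance comparison), and no conceptual difficulty arises beyond this.
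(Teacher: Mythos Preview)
Your proof is correct and follows essentially the same approach as the paper's: both arguments pull back test functions via Remark~\ref{pushing testfunctions}, set $c_K=c^{-1}\wedge 1$, unfold the definitions to reduce \eqref{first} and \eqref{second} to the corresponding bounds for $Z$ on $\phi^{-1}(K)$, handle \eqref{third} via Lipschitz continuity of $\phi^{-1}$, and then invoke Lemma~\ref{technical lemma} for the upgrade. Your treatment of \eqref{third} is slightly more explicit about the $\fraks$-distance comparison and the need for a threshold, but this is exactly what the paper means by ``Lipschitz continuity of $\phi^{-1}$.''
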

\begin{proof}
Recall that by Remark~\ref{pushing testfunctions} there exist $C=C(K,\phi)>0$, $c=c(K,\phi)\in (0,\infty)$ such that uniformly over $\bar p \in K$ and $\lambda\in (0,1)$
 $$\mathcal{C}^\infty_c (F)\supset\Phi^*( \mathcal{B}^\lambda_{\bar{p}}(F) )\subset C\mathcal{B}^{c\lambda}_{p}(E) \subset \mathcal{C}^\infty_c (E)\ ,$$
where we write $p=\phi^{-1}(\bar{p})$.
For $\tau_{\bar{p}}\in (\phi_*T)_{\bar p}$, denote by $\tau_p$ the corresponding element of $T_p$. We have
$$(\Phi_* \Pi)_{\bar p} \tau_{\bar p} \rho^\lambda_{\bar p} =\Pi_p \tau_p ( \Phi^*\rho_{\bar p}^\lambda) \ .$$
Similarly writing $q=\phi^{-1}(\bar{q})$, we have 
$$\big((\Phi_* \Pi)_{\bar q} (\phi_*\Gamma_{\bar q, \bar p})\tau_{\bar{q}} - (\Phi_* \Pi)_{\bar p} \tau_{\bar{q}} \big)\rho^\lambda_{\bar q}=
(\Pi_{ q} \Gamma_{q, p}\tau_{q} -  \Pi_{ p} \tau_{q}) (\Phi^*\rho_{\bar q}^\lambda) \ .$$
Thus, if we set $c_K:= \frac{1}{c}\wedge 1$ the bounds (\ref{first}) and (\ref{second}) follow.  The bound (\ref{third}) follows directly from the Lipschitz continuity of $\phi^{-1}$.
\end{proof}
\begin{remark}\label{remark localisation}
In the sequel we will want to pushforward a regularity structure and model, using chart maps\slash trivialisations. To do so, we need to give a meaning to a model $Z$ on $M$ restricted to an open set $V$. Let $U\subset M$ be such that $V\Subset U\subset M$ and $\mathbb{1}_V\in C_c^\infty (U)$ such that $ \mathbb{1}_V|_V=1$. 
Define $\mathcal{T}_M|_U:= (A, T|_U, L|_{U\times U})$. Then, we define
$$Z|_{V}:=(\tilde{\Pi},\tilde{\Gamma}) \ ,$$
where 
$$\tilde{\Pi}_p\tau (\cdot):= \mathbb{1}_V(\cdot)\Pi_p \tau (\cdot)$$ and
$$\tilde \Gamma_{p,q}=  \Gamma_{p,q} \ .$$
We say that $\bar Z$ is a restriction of $Z$ to $V$ subordinate to $U$. Note that this definition \textit{does} depend on the choice of $\mathbb{1}_V\in C_c^\infty (U)$.  It is clear how to pushforward such restricted models.
In particular it will be useful to pushforward restricted models using bundle trivialisations.

It follows from the definitions that restriction is a continuous operation, i.e.\ one has for each compact set $K\subset U$
$$\|\tilde Z\|_{K}\lesssim \|Z\|_{K} \ ,$$
where the implicit constant depends on the function $\mathbb{1}_V$ . 
\end{remark}
\begin{remark}
Given a regularity structure on $M$, assume $U\subset M$ is open. We call a model on $Z=(\Pi, \Gamma)$ on $U$ compactly supported, if for all $p\in U$ and $\tau_p\in T_p$, $\Pi_p\tau_p$ is compactly supported in $U$. 
If $U\subset \RR^d$ is convex, then there exists  extension of the regularity structure and model $Z$ to a model on all of $\RR^d$. The first claim follows from the fact that any bundle over a convex base space is trivial and a corresponding model is easily constructed as well.
\end{remark}

\section{Modelled Distributions and Reconstruction}\label{section modelled distributions and reconstruction}
Now we define modelled distributions analogously to \cite{Hai14}.
\begin{definition}
Given a regularity structure ensemble $\mathcal{T}=(\{E^{\mathfrak{t}}\}_{t\in \mathfrak{L}},\{T^{\mathfrak{t}}\}_{t\in \mathfrak{L}}, L)$ and a model $Z = (\Pi, \Gamma)$, we define for $\frakt\in \mathfrak{L}$, $\gamma\in \mathbb{R}$ the space $\mathcal{D}_Z^\gamma(T^\frakt)$ to consist of all continuous sections $f$ of $T^{\frakt,\gamma}= \bigoplus_{\alpha<\gamma, \delta>\gamma } T^\frakt_{\alpha,\delta}$ satisfying for every compact set $K\subset M$
\begin{equation}\label{norm on modelled distr.}
\interleave f \interleave_{\gamma ; K}:=\| f\|_{\gamma ; K} +\sup_{\substack{p,q\in K,\\ d_\fraks(p,q)<1}} \sup_{\beta <\gamma} \frac{|f(q)-Q_{<\gamma}\Gamma_{q,p} f (p)|_{\beta}}{d_\fraks(p,q)^{\gamma-\beta}} <+\infty\ ,
\end{equation}
where
$$\| f\|_{\gamma ; K} := \sup_{p\in K} \sup_{\alpha<\gamma} |f(p)|_\alpha \ .$$
For a second model $\tilde Z = (\tilde \Pi, \tilde{\Gamma})$ and $\tilde{f}\in \mathcal{D}_{\tilde{Z}}^\gamma(T^\frakt)$, we define
$$ \interleave f ; \tilde{f} \interleave_{\gamma ; K} := \| f-\bar{f}\|_{\gamma ; K}+\sup_{\substack{p,q\in K,\\ d_\fraks(p,q)<1}} \sup_{\beta <\gamma} \frac{|f(q)-Q_{<\gamma}\Gamma_{q,p} f(p)-(\tilde f (q)-Q_{<\gamma}\tilde{\Gamma}_{q,p}\tilde{f}(p))|_{\beta}}{d_\fraks(p,q)^{\gamma-\beta}} \ .$$
\end{definition}
%
%
\begin{remark}\label{pushforward of modelled distributions}
Note that in the setting of Definition~\ref{definition push forward}, one has the obvious notion of the push-forward of a modelled distributions 
$$\mathcal{D}_Z^\gamma(T^\frakt)\to \mathcal{D}_{(\Phi_*Z)}^\gamma(\Phi_* T^\frakt),\qquad f(\cdot) \mapsto \Phi_*f(\cdot)\ . $$
It is clear that this map is continuous.
\end{remark}

\begin{theorem}\label{reconstruction theorem}
Given a regularity structure ensemble $\mathcal{T}=(\{E^{\mathfrak{t}}\}_{t\in \mathfrak{L}},\{T^{\mathfrak{t}}\}_{t\in \mathfrak{L}}, L)$ and a model $Z = (\Pi, \Gamma)$, there exists for every $\gamma\in (0, +\infty)$ a unique operator
$\mathcal{R}_Z\in L( \mathcal{D}_Z^\gamma, \mathcal{D}'(E))$, called reconstruction operator associated to $Z$, such that
\begin{equation}\label{reconstruction cond}
\sup_{p\in K} \sup_{\lambda<1} \sup_{\phi_p^\lambda\in \mathcal{B}^{r,\lambda}_p} \frac{|\langle \mathcal{R}_Z f-\Pi_p f(p),\phi_p^\lambda
\rangle|}{\lambda^\gamma}  \lesssim_{c,K} \interleave f\interleave_{\gamma ; K} \|Z\|_{\bar K}
\end{equation}
uniformly over all $\frakt\in \mathfrak{L}$, $f\in \mathcal{D}^\gamma_Z(T^\frakt)$ and $\phi^\lambda_p \in \mathcal{B}_p^{r,\lambda}(E^\frakt)$ as well as all models $Z=(\Pi,\Gamma)$ satisfying $\|Z\|_{\bar{K}}\leq c$. 
Furthermore, the map $Z\mapsto \mathcal{R}_Z$ is continuous and if we denote by $\bar{Z}=(\bar{\Pi}, \bar{\Gamma})$ a second model for $\mathcal{T}$, $\mathcal{R}_{\bar{Z}}$ the associated reconstruction operator, and $\bar{\mathcal{D}}^{{\gamma}}_{\tilde{Z}}$ the corresponding space of modelled distributions, then the inequality
\begin{align}
\sup_{p\in K} \sup_{\lambda<1} \sup_{\eta\in \mathcal{B}_p^{r,\lambda}} &\frac{|\langle \mathcal{R}_Z f-\Pi_p \bar{f}(p)-\mathcal{R}_{\bar{Z}}\bar{f}-\bar{\Pi}_p f(p),\eta^\lambda\rangle|}{\lambda^\gamma}  \nonumber\\
\lesssim_{c,K} & \interleave f,\bar{f} \interleave_{\gamma ; K}  \|Z\|_{\bar{K}}
+\interleave \bar{f}\interleave_{\gamma ; K}  \|Z;\bar{Z}\|_{\bar{K}}
\end{align}
holds uniformly over models $Z,\bar{Z}\in\mathcal{M}_\mathcal{T}$, satisfying $\|\tilde Z\|_{\bar{K}},\|\tilde Z\|_{\bar{K}} \leq c$ and ${f}\in \mathcal{D}^{{\gamma}}_Z,\  \bar{f}\in \bar{\mathcal{D}}^{\gamma}_{\bar{Z}}$.
\end{theorem}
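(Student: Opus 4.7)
The plan is to adapt the wavelet proof of \cite[Thm.~3.10]{Hai14} to the present manifold setting with the bi-graded regularity structure ensemble. The argument decomposes into uniqueness, local existence via wavelet analysis in charts, and a gluing step.

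Uniqueness is immediate from $\gamma>0$: if $\mathcal{R},\mathcal{R}'$ both satisfy \eqref{reconstruction cond}, then their difference $\xi := \mathcal{R}f - \mathcal{R}'f$ obeys $|\langle \xi, \phi_p^\lambda\rangle|\lesssim \lambda^\gamma$ uniformly in $\phi_p^\lambda\in\mathcal{B}_p^{r,\lambda}$. Any test section in $\mathcal{D}(E^\frakt)$ can be decomposed into finitely many rescaled bumps in $C\mathcal{B}_{p_i}^{r,\lambda}$, so testing against $\xi$ gives a bound of order $\lambda^\gamma\to 0$, forcing $\xi = 0$.

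For existence I first localise, covering $M$ by precompact open sets $V_i\Subset U_i$ where each $U_i$ carries both a chart and simultaneous trivialisations of every bundle $E^\frakt$ appearing in $\mathcal{T}$. Using Definition~\ref{definition push forward}, Remark~\ref{pushforward of modelled distributions} and Lemma~\ref{lemma push forward}, together with the restriction construction of Remark~\ref{remark localisation}, the problem reduces to constructing the reconstruction in the pushed-forward picture on a ball in $\RR^d$ equipped with the scaling $\fraks$. In this Euclidean setting one fixes a compactly supported $\fraks$-adapted multi-resolution analysis $\{\phi_{n,k}\}$ with centres $p_{n,k}$ at dyadic scale $2^{-n}$, sets
\[
\mathcal{R}_Z^{(n)} f := \sum_k (\Pi_{p_{n,k}} f(p_{n,k}))(\phi_{n,k})\,\phi_{n,k}\,,
\]
and shows that $\mathcal{R}_Z^{(n)} f$ converges in $\mathcal{D}'(E^\frakt)$ with the required bound. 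To globalise, take a partition of unity $\{\eta_i\}$ subordinate to $\{V_i\}$ and set $\mathcal{R}_Z f := \sum_i \eta_i\,\mathcal{R}_Z^{(i)} f$; agreement on overlaps is automatic from uniqueness, and multiplication by $\eta_i$ preserves \eqref{reconstruction cond} up to constants depending on $\eta_i$.

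The core estimate, and the main obstacle, is the coherence bound
\[
|(\Pi_q f(q) - \Pi_p f(p))(\phi_p^\lambda)| \lesssim \lambda^\gamma \interleave f\interleave_{\gamma;\bar K}\|Z\|_{\bar K}
\]
for nearby $p,q$ with $d_\fraks(p,q)\lesssim \lambda$. Inserting $\pm \Pi_q Q_{<\gamma}\Gamma_{q,p}f(p)$ yields two pieces. The first, $\Pi_q(f(q)-Q_{<\gamma}\Gamma_{q,p}f(p))$, is controlled by the defining property of $\interleave f\interleave_{\gamma;\bar K}$ combined with the analytic bound on $\Pi$. The second, $(\Pi_q\Gamma_{q,p}-\Pi_p)Q_{<\gamma}f(p)$, invokes the third model axiom and produces a factor $\lambda^\delta$ for each component of $Q_{<\gamma}f(p)$ in $T^\frakt_{\alpha,\delta}$. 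Here the bi-grading is crucial: since $f$ is a section of $T^{\frakt,\gamma}=\bigoplus_{\alpha<\gamma<\delta}T^\frakt_{\alpha,\delta}$, every $\delta$ occurring satisfies $\delta>\gamma$, so $\lambda^\delta\lesssim\lambda^\gamma$ on $\lambda\in(0,1)$. The fact that $\Gamma$ is not lower triangular (see Section~\ref{section Regularity structures and models}) requires some care when inserting the projection $Q_{<\gamma}$, but Lemma~\ref{dislocated testfunction lemma} absorbs dislocation of the test function centre, and only terms with admissible $(\alpha,\delta)$ contribute. The continuity statement for pairs of models is obtained by running the same argument bilinearly: each wavelet-coefficient difference $(\Pi_{p_{n,k}}f(p_{n,k}) - \bar\Pi_{p_{n,k}}\bar f(p_{n,k}))(\phi_{n,k})$ splits into one term controlled by $\interleave f;\bar f\interleave_{\gamma;K}\|Z\|_{\bar K}$ and one by $\interleave\bar f\interleave_{\gamma;K}\|Z;\bar Z\|_{\bar K}$, and summing over the multi-resolution gives the desired inequality. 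No new idea beyond the single-model coherence bound is needed for this last step.
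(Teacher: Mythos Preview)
Your overall strategy---localise via charts and trivialisations, push forward to $\RR^d$, invoke the flat reconstruction theorem, then glue---is the same as the paper's, and your identification of the key new ingredient (the third model axiom giving $\lambda^\delta$ with $\delta>\gamma$ because $f$ takes values in $T^{\frakt,\gamma}$) is exactly right.

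Two points are worth flagging. First, your gluing differs from the paper's: you set $\mathcal{R}_Z f = \sum_i \eta_i\,\mathcal{R}_Z^{(i)} f$, whereas the paper instead localises the \emph{model} by $Z_i := (\mathbb{1}_i(\cdot)\Pi,\Gamma)$ and sets $\mathcal{R}_Z := \sum_i \mathcal{R}_{Z_i}$, using the identity $\mathcal{R}_Z f - \Pi_p f(p) = \sum_i(\mathcal{R}_{Z_i}f - \mathbb{1}_i(\cdot)\Pi_p f(p))$. Your route works too, but requires checking that $\eta_i\phi_p^\lambda \in C\mathcal{B}_p^{r,\lambda}$ and that $p$ actually lies in the chart whenever the test function meets $\supp\eta_i$; the latter only holds for $\lambda$ below some $c_K$, so you implicitly need the paper's preliminary reduction (its Claim) from $\lambda<1$ to $\lambda<c_K$ via the test-function decomposition of Remark~\ref{decomposing test functions}.

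Second, in your coherence bound the algebra is slightly off. After inserting $\pm\Pi_q Q_{<\gamma}\Gamma_{q,p}f(p)$ the second piece is $\Pi_q Q_{<\gamma}\Gamma_{q,p}f(p) - \Pi_p f(p)$, which is \emph{not} $(\Pi_q\Gamma_{q,p}-\Pi_p)f(p)$ because $\Gamma$ is not lower triangular. You pick up an extra term $-\Pi_q Q_{\geq\gamma}\Gamma_{q,p}f(p)$; this is harmless since each component sits at homogeneity $\beta\geq\gamma$ with $|Q_\beta\Gamma_{q,p}f(p)|\lesssim |f(p)|$, so $|\Pi_q Q_\beta\Gamma_{q,p}f(p)(\phi_p^\lambda)|\lesssim\lambda^\beta\leq\lambda^\gamma$, but it should be written out explicitly rather than absorbed into the vague remark about ``care when inserting $Q_{<\gamma}$''.
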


\begin{proof}
We divide the proof into several steps.
First, we argue that it suffices to show the bound for $\lambda$ small enough, instead of $\lambda\leq 1$, i.e.\ the following claim.
\begin{claim}\label{claimmm}
Assume that there exists an operator $\mathcal{R}_Z: \mathcal{D}_Z^\gamma\to \mathcal{D}'(E)$, such that for every $K$ there exists $c_K$ for which
\begin{equation}\label{easier to prove reconstruction cond}
\sup_{p\in K} \sup_{\lambda<c_K} \sup_{\eta^{\lambda}_{p}\in \mathcal{B}^{r,\lambda}_p} \frac{|\langle \mathcal{R}_Z f-\Pi_p f(p),\eta_p^\lambda
\rangle|}{\lambda^\gamma}  \lesssim_{c,K} \interleave f\interleave_{\gamma ; \bar K} \|Z\|_{ \bar K}
\end{equation}
Then $\mathcal{R}_Z f$ satisfies (\ref{reconstruction cond}).
\end{claim}

\begin{proof}[Proof of Claim~\ref{claimmm}]
We argue similarly to Lemma~\ref{technical lemma} and use Remark~\ref{decomposing test functions} to write 
\begin{align*}
\langle\mathcal{R}_Z f-\Pi_p f(p),\eta_p^\lambda \rangle &= \sum_{i=1}^N \Big(\mathcal{R}_Z f-\Pi_p f(p)\Big) (\eta_{p_i}^{c_{\bar K}\lambda})\\
&= \sum_i \big(\mathcal{R}_Z f-\Pi_{p_i} f(p_i)\big) (\eta_{p_i}^{c_{\bar K}\lambda})\\
&+\sum_i \big(\Pi_{p_i} f(p_i) - \Pi_{p_i}\Gamma_{p_i,p} f(p)\big)(\eta_{p_i}^{c_{\bar K}\lambda})\\
&+\sum_i \big(\Pi_{p_i}\Gamma_{p_i,p} f(p)-\Pi_p f(p)\big) (\eta_{p_i}^{c_{\bar K}\lambda})
\end{align*}
\begin{enumerate}
\item For summands of the first type, we can use the assumption to obtain
$$\Big|\big(\mathcal{R}_Z f-\Pi_{p_i} f(p_i)\big) (\eta_{p_i}^{c_{\bar K}\lambda})\Big|\lesssim_{c,K}\lambda^\gamma \interleave f \interleave_{\gamma ; \bar K}\|Z\|_{\bar K} \ .$$
\item For the terms in the second sum one has 
$$\Big|\big(\Pi_{p_i} f(p_i) - \Pi_{p_i}\Gamma_{p_i,p} f(p)\big)(\eta_{p_i}^{c_{\bar K}\lambda})\Big|\leq \lambda^\gamma \|\Pi\|_{ \bar{K}} \interleave f\interleave_{\gamma ; \bar K} \ .$$
\item The summands in the last line can be bounded by 
$$\Big | \big(\Pi_{p_i}\Gamma_{p_i,p} -\Pi_p \big)f(p) (\eta_{p_i}^{c_{\bar K}\lambda}) \Big |\leq \lambda^{\gamma} \llbracket Z  \rrbracket_{\bar K} \interleave f\interleave_{\gamma ; \bar K} .$$
\end{enumerate}
\end{proof}
\item The rest of the proof essentially adapts from \cite{DDD19}. The argument goes as follows.
\begin{enumerate}
\item\label{flat} First recall the theorem on the trivial bundle $\mathbb{R}^d\times \mathbb{R}^m$, c.f.\ \cite{Hai14}, \cite{OW19}, \cite{MW20}, \cite{ST18} or \cite{FH20}, all proofs adapt easily to our slightly more general notion of models.

Next prove reconstruction for models compactly supported on the domain $U\subset M$ of a trivialisation $\Phi: \pi^{-1}(U)\to \mathbb{R}^d\times \RR^m$. This can be done by push-forward:
Assume $Z=(\Pi,\Gamma)$ is such a model, and denote by $\bar Z= (\bar \Pi, \bar \Gamma ):= \Phi_*Z$ the model pushed forward to $\mathbb{R}^d$. Let $f\in \mathcal{D}^\gamma_{Z}$ and denote by $\bar f \in \mathcal{D}^\gamma_{\bar Z} $ the pushed forward modelled distribution. Set 
$$\mathcal{R}_Z f := (\Phi^{-1})_* \mathcal{R}_{\bar Z} \bar f \ .$$
We check that this satisfies (\ref{easier to prove reconstruction cond}). Let $x= \phi (p)$ , then
$$\big(\mathcal{R}_Z-\Pi_p f(p)\big)(\phi_p^\lambda)= \big(\mathcal{R}_{\bar Z} \bar f - \bar{\Pi}_x \bar f(x) \big) ((\Phi^{-1})^*\phi^\lambda_p) \ .$$
Thus the claim follows by combining the previous step (\ref{flat}) with Remark~\ref{pushing testfunctions} and the bounds on the pushforward of models and modelled distributions in Lemma~\ref{lemma push forward} and Remark~\ref{pushforward of modelled distributions}. 
\item Fix a locally finite partition $\mathbb{1}_i$ of unity subordinate to trivialisations $\Phi_i$. For a general model $Z=(\Pi,\Gamma)$, define the compactly supported models
$Z_i:=(\mathbb{1}_i(\cdot)\Pi, \Gamma )$ 
and set
$$\mathcal{R}_Z:=\sum_i \mathcal{R}_{Z_i} \ .$$
Observe, that $\mathcal{D}^\gamma_Z\subset \mathcal{D}^\gamma_{Z_i} $. 
Then the estimate (\ref{easier to prove reconstruction cond}) follows from the identity 
$$\mathcal{R}_Zf -\Pi_pf(p)= \sum_i \big(\mathcal{R}_{Z_i}f - \mathbb{1}_i(\cdot) \Pi_pf(p)\big) $$
combined with the previous step.
\end{enumerate}
\end{proof}
%
By a straightforward adaptation of the proof in \cite{Hai14} and by going to $\fraks$-charts on $M$ one finds the following.
\begin{prop}[Hai14, Prop. 7.2]\label{local reconstruction theorem}
Under the assumptions of Theorem~\ref{reconstruction theorem} one has the bound
$$ |(\mathcal{R}f -\Pi_p f(p) )(\phi^\lambda_p)|\lesssim \lambda^\gamma \sup_{p,q\in B_{\lambda}(p)} \sup_{\alpha<\gamma} \frac{|f(p)-\Gamma_{p,q}f(q)|_\alpha}{d_\fraks(p,q)^{\gamma-\alpha}} \ ,$$
where the implicit constant is of order $1+\|Z\|_{B_p(2\lambda)}$ . Furthermore, the analogue bound holds for the difference of two models.
\end{prop}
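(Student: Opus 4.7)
The plan is to run the same pushforward reduction as in the proof of Theorem~\ref{reconstruction theorem}, but to track support information all the way through, and then invoke the flat-space local reconstruction theorem \cite[Prop.~7.2]{Hai14}.

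First, I would apply the bootstrap argument from Claim~\ref{claimmm} (in the proof of Theorem~\ref{reconstruction theorem}) to reduce, at the cost of enlarging the ball on the right-hand side by a fixed factor, to the case where $\lambda$ is small enough that $B_{2\lambda}(p)$ is contained in the domain of a single trivialisation $\Phi$. I then choose a restriction $Z'$ of $Z$ to $B_{2\lambda}(p)$ in the sense of Remark~\ref{remark localisation}, with the cutoff function equal to $1$ on $B_\lambda(p)$, so that $\Pi'_q\tau$ and $\Pi_q\tau$ agree as distributions paired against functions supported in $B_\lambda(p)$. The uniqueness part of Theorem~\ref{reconstruction theorem} then forces $\mathcal{R}_{Z'}f(\phi^\lambda_p)=\mathcal{R}_Zf(\phi^\lambda_p)$, while by Remark~\ref{remark localisation} one has $\|Z'\|\lesssim \|Z\|_{B_{2\lambda}(p)}$.

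Second, I would push $Z'$ and $f$ forward through $\Phi$ to a model $\bar Z$ and modelled distribution $\bar f$ on $\mathbb{R}^d$. By Lemma~\ref{lemma push forward} and Remark~\ref{pushforward of modelled distributions} the pushed-forward norms are controlled by $\|Z\|_{B_{2\lambda}(p)}$ and $\interleave f\interleave$ restricted to $B_{2\lambda}(p)$ respectively. Invoking the flat-space analogue \cite[Prop.~7.2]{Hai14} (whose proof adapts to our slightly relaxed coherence condition, as already exploited in step~(a) of the proof of Theorem~\ref{reconstruction theorem}) one obtains
\begin{equation*}
|(\mathcal{R}\bar f-\bar\Pi_{\phi(p)}\bar f(\phi(p)))((\Phi^{-1})^*\phi^\lambda_p)|\lesssim \lambda^\gamma \sup_{x,y\in B_{c\lambda}(\phi(p))}\sup_{\alpha<\gamma}\frac{|\bar f(y)-\bar\Gamma_{y,x}\bar f(x)|_\alpha}{|x-y|_\fraks^{\gamma-\alpha}}\ ,
\end{equation*}
with implicit constant of order $1+\|\bar Z\|\lesssim 1+\|Z\|_{B_{2\lambda}(p)}$.

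Finally, pulling back through $\Phi$, Remark~\ref{pushing testfunctions} handles the test-function classes and the bi-Lipschitz equivalence of $\Phi$ with respect to the scaled distance on the compact trivialisation domain translates the flat-space supremum into the manifold supremum appearing in the statement. The two-model version follows by the same route using the bounds on $\|\Phi_* Z;\Phi_*\tilde Z\|$ from Lemma~\ref{lemma push forward}. The main obstacle is entirely concentrated in the flat-space statement \cite[Prop.~7.2]{Hai14}, whose proof uses in an essential way the locality of a wavelet-based construction of $\mathcal{R}$; every other step is a careful localisation of the pushforward strategy already developed in Theorem~\ref{reconstruction theorem}.
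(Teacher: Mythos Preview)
Your approach is correct and is essentially what the paper does: the paper's proof simply reads ``By a straightforward adaptation of the proof in \cite{Hai14} and by going to $\fraks$-charts on $M$ one finds the following'', and your localisation-then-pushforward argument is exactly such an adaptation, carried out with more care than the paper bothers to record. The only place where you add genuine content is in making explicit the use of uniqueness to identify $\mathcal{R}_{Z'}f(\phi^\lambda_p)$ with $\mathcal{R}_Zf(\phi^\lambda_p)$, which is correct since both satisfy the characterising $\lambda^\gamma$ bound against the same $\Pi_q f(q)$ on $B_\lambda(p)$.
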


\begin{corollary}\label{prop:reconstruction for continous models}
Suppose, in the setting of Theorem~\ref{reconstruction theorem}, the model $Z=(\Pi,\Gamma)$ has the property that that there exists an $\epsilon>0$ such that for every $\tau_p\in T^{\frakt}$
$\Pi_p \tau_p \in \mathcal{C}^\epsilon (E^\frakt)$, then for every $f\in \mathcal{D}^\gamma_Z$ one has the identity
$$\mathcal{R}_Z f (p)= \left(\Pi_p f(p)\right) (p)\ .$$
\end{corollary}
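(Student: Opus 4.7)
The plan is to construct a candidate continuous section $g$ of $E^\frakt$ defined pointwise by $g(p) := (\Pi_p f(p))(p)$, and to show that the distribution $T_g$ induced by $g$ via the Riemannian density coincides with $\mathcal{R}_Z f$; the corollary then follows by evaluating at $p$. The candidate is well-defined since $f(p)$ lies in a finite-dimensional subspace of $T^\frakt$ and each $\Pi_p Q_\alpha f(p) \in \mathcal{C}^\epsilon(E^\frakt)$ by hypothesis, so $\Pi_p f(p) = \sum_\alpha \Pi_p Q_\alpha f(p)$ is continuous and may be evaluated at $p$.

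To identify $T_g$ with $\mathcal{R}_Z f$, I would invoke the uniqueness clause of Theorem~\ref{reconstruction theorem}: if $g$ is continuous and $T_g$ satisfies the bound $|(T_g - \Pi_p f(p))(\phi^\lambda_p)| \lesssim \lambda^\gamma$ uniformly over $p$ in a compact set, $\lambda < 1$, and $\phi^\lambda_p \in \mathcal{B}^{r,\lambda}_p$, then $T_g - \mathcal{R}_Z f$ satisfies the same bound without the $\Pi_p f(p)$ term, which forces it to vanish as a distribution. Since $T_g - \Pi_p f(p)$ is represented by the continuous function $h_p(x) := g(x) - (\Pi_p f(p))(x)$ (vanishing at $x = p$), and since $\|\phi^\lambda_p\|_{L^1}$ is bounded, the required estimate reduces to the pointwise H\"older-type bound $\sup_{x \in B_\lambda(p)} |h_p(x)| \lesssim \lambda^\gamma$.

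The heart of the proof will be this H\"older bound, together with the continuity of $g$, established by an analogous decomposition of $g(p) - g(q)$ for $p,q$ close. I would start from the splitting
\[
h_p(x) = \bigl(\Pi_x(f(x) - \Gamma_{x,p} f(p))\bigr)(x) + \bigl((\Pi_x \Gamma_{x,p} - \Pi_p) f(p)\bigr)(x),
\]
decomposing $f(x) - \Gamma_{x,p} f(p)$ grade by grade: for $\alpha < \gamma$ the piece $Q_\alpha(f(x) - \Gamma_{x,p} f(p))$ is controlled by \eqref{norm on modelled distr.}, while for $\alpha \geq \gamma$ (where non-triangularity of $\Gamma$ in the $A$-grading produces contributions that do not vanish as $d_\fraks(x,p)\to 0$) it is bounded by $|f(p)|$ using condition~(2) of Definition~\ref{def model}. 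The main obstacle, and the place where the continuity hypothesis is essential, will be that Definition~\ref{def model} only supplies distributional bounds whereas $h_p$ requires pointwise evaluation of $\Pi_x(\cdot)$ at $x$. The remedy is to test each summand against a scalar approximate identity $\rho^\mu_x$ at $x$ and pass to the limit $\mu \to 0$: continuity of $\Pi_x \tau$ ensures $(\Pi_x \tau)(\rho^\mu_x) \to (\Pi_x \tau)(x)$, while condition~(1) of Definition~\ref{def model} forces $(\Pi_x \tau_x)(x) = 0$ for every $\tau_x \in T^\frakt_{\alpha,:}$ with $\alpha > 0$; consequently the $\alpha \geq \gamma$ contributions to the first summand drop out and only the $\alpha = 0$ piece survives with the expected bound, while condition~(3) handles the second summand with precision $\delta > \gamma$. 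Combining these ingredients yields $|h_p(x)| \lesssim d_\fraks(x,p)^\gamma$, hence the desired distributional bound on $T_g - \Pi_p f(p)$.
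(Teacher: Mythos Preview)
Your strategy is correct and essentially the standard argument. The handling of the second summand via condition~(3) and the approximate-identity limit is right. However, in the first summand you assert that ``only the $\alpha=0$ piece survives'' after pointwise evaluation, and this is where there is a gap: condition~(1) forces $(\Pi_x\tau_x)(x)=0$ only for $\alpha>0$, whereas the corollary does not restrict to function-like sectors. For $\alpha<0$ the model bound $|(\Pi_x\tau_x)(\rho^\mu_x)|\lesssim\mu^\alpha|\tau_x|$ diverges as $\mu\to 0$, so your limit argument gives no control on $(\Pi_x Q_\alpha(f(x)-\Gamma_{x,p}f(p)))(x)$. The repair is straightforward: $\tau_x\mapsto(\Pi_x\tau_x)(x)$ is a linear map on the finite-dimensional fibre and is therefore bounded by some $C(x)|\tau_x|$, and local uniformity of $C(x)$ follows from the (implicit) local uniformity of the $\mathcal{C}^\epsilon$ hypothesis. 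With this in hand, the $\alpha<0$ contributions are controlled by $C\,d_\fraks(x,p)^{\gamma-\alpha}\le C\,d_\fraks(x,p)^{\gamma}$ and your argument goes through.

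The paper states the corollary without proof, intending it as an immediate consequence of the reconstruction bound: since $\Pi_pf(p)$ is continuous, one combines $|(\mathcal{R}_Zf-\Pi_pf(p))(\phi^\lambda_p)|\lesssim\lambda^\gamma$ with the $\mathcal{C}^\epsilon$ estimate on $\Pi_pf(p)$ near $p$ to deduce that $\mathcal{R}_Zf$ is itself a $\mathcal{C}^{\gamma\wedge\epsilon}$ section with pointwise value $(\Pi_pf(p))(p)$. This route never decomposes by homogeneity and hence sidesteps the $\alpha<0$ issue entirely.
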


\subsection{More on jet models} 
The following observation is trivial for polynomials in the flat setting, but for admissible realisations on manifolds worth pointing out. 
\begin{lemma}\label{relating}
Let $R$ be an admissible realisation of $JE$. Then, for every compact set $K$ and $k\in \mathbb{N}$, there exist constants $c,C>0$, such that uniformly over $p\in K$, $\tau_p\in (JE)_{<k}$ and $\lambda\in (0,1)$
$$c \sum_{l=1}^k \lambda^l |Q_l \tau_p | \leq \sup_{\phi^\lambda_p \in \mathcal{B}^\lambda_p } |\langle R\tau_p, \phi^\lambda_p\rangle | \leq C\sum_{l=1}^k \lambda^l |Q_l \tau_p |  $$
\end{lemma}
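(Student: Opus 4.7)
The plan is to reduce the statement to a finite-dimensional linear algebra computation in local coordinates where $R\tau_p$ is approximated by a polynomial, and then handle the non-polynomial remainder using the decay estimates of the previous subsection.

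The upper bound is essentially immediate. By Lemma~\ref{bound on admissable realisations} applied with $m=0$, for $\tau_p \in (JE)_l$ one has $|R\tau_p(q)| = |Q_0 j_q R(\tau_p)| \lesssim_K d_\fraks(p,q)^l |\tau_p|$. Any $\phi^\lambda_p \in \mathcal{B}^\lambda_p$ has $L^1$-norm bounded independently of $\lambda$ and support in $B_\lambda(p)$, so $|\langle R\tau_p, \phi^\lambda_p\rangle| \lesssim \lambda^l |\tau_p|$. Decomposing a general $\tau_p$ via $\tau_p = \sum_l Q_l\tau_p$ and applying the triangle inequality yields the upper bound.

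For the lower bound, by compactness of $K$ we may work near a fixed $p \in K$. Pass to an $\fraks$-exponential chart at $p$ together with the orthonormal trivialisation of $E$ obtained by parallel transport along radial geodesics, as in the proof of Lemma~\ref{i well defined}. By Corollary~\ref{lemma gradedness exponential chart}, this chart identifies $(JE_p)_{<k}$ with vector-valued polynomials of scaled degree $<k$ in a grade-preserving way; write $\tau_p \leftrightarrow P_{\tau_p}(x) = \sum_{|\beta|_\fraks < k} c_\beta x^\beta e_{i(\beta)}$, so that $|Q_l \tau_p| \asymp \max_{|\beta|_\fraks = l} |c_\beta|$. Lemma~\ref{pointwise bound} applied to $f = R\tau_p$ and $g$ being (the pullback of) $P_{\tau_p}$ gives the uniform remainder estimate $|R\tau_p(q) - P_{\tau_p}(q)| \lesssim_K d_\fraks(p,q)^k |\tau_p|$ in the chart.

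Pick, by finite-dimensional linear algebra, a family $\{\psi_\alpha\}_{|\alpha|_\fraks<k} \subset C_c^\infty(B_1(0))$ satisfying $\int x^\beta \psi_\alpha(x)\,dx = \delta_{\alpha\beta}$, and let $\psi^\lambda_{\alpha,i}$ denote their $\fraks$-rescalings multiplied by the $i$-th frame vector; up to a uniform constant these belong to $\mathcal{B}^\lambda_p(E)$ (by Remark~\ref{rem frame testfunctions}). A direct change of variables yields
\begin{equation*}
\langle R\tau_p, \psi^\lambda_{\alpha,i}\rangle = c_{\alpha,i}\,\lambda^{|\alpha|_\fraks} + O\!\big(\lambda^k |\tau_p|\big).
\end{equation*}
Since every $l$ appearing non-trivially satisfies $l < k$, one has $\lambda^k |\tau_p| \leq \lambda \sum_l \lambda^l |Q_l\tau_p|$ for $\lambda \in (0,1)$, so this error is subleading. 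Taking the supremum over $\alpha, i$ and using the finite-dimensional equivalence $\max_{\alpha,i} \lambda^{|\alpha|_\fraks} |c_{\alpha,i}| \gtrsim \sum_l \lambda^l |Q_l\tau_p|$ (with constant depending only on $k$ and $\dim E$), one obtains the desired bound for all $\lambda \leq \lambda_0$ with $\lambda_0$ small enough to absorb the remainder. The complementary range $\lambda \in [\lambda_0, 1)$ is handled by a compactness argument: the two norms $\tau_p \mapsto \sum_l \lambda^l |Q_l\tau_p|$ and $\tau_p \mapsto \sup_\phi |\langle R\tau_p,\phi\rangle|$ depend continuously on $(p,\lambda) \in K \times [\lambda_0,1]$, the latter is non-degenerate because $j_p R\tau_p = \tau_p$, and both are continuous in $\tau_p$, so a minimum over the unit sphere times $K \times [\lambda_0,1]$ gives a uniform constant. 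The principal technical obstacle is ensuring uniformity of constants in $p \in K$ in the polynomial-coordinate step, which follows from the smoothness of $R$ and of the chart construction together with the compactness of $K$.
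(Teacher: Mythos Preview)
Your proof is correct and follows the same overall architecture as the paper's: the upper bound comes from the model-type estimate on $R\tau_p$, the lower bound is obtained for small $\lambda$ by extracting polynomial coefficients via suitable test functions in an exponential chart, and the remaining range of $\lambda$ is handled by a soft argument. The paper uses derivatives of approximate identities $(\partial^l)^*\phi^\lambda_p$ to recover the coefficients and then appeals to ``interpolation'' for $\lambda$ away from $0$, whereas you use a moment-dual family $\{\psi_\alpha\}$ with $\int x^\beta\psi_\alpha\,dx=\delta_{\alpha\beta}$ and a compactness argument on $K\times[\lambda_0,1]$; these are equivalent in spirit, and your handling of the remainder via Lemma~\ref{pointwise bound} and the absorption inequality $\lambda^k|\tau_p|\le \lambda\sum_l\lambda^l|Q_l\tau_p|$ is arguably cleaner than the paper's somewhat terse reduction to homogeneous $\tau_p$.

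One point you should make explicit: the pairing $\langle R\tau_p,\psi^\lambda_{\alpha,i}\rangle$ on $M$ is taken with respect to $\dVol$, not Lebesgue measure, so the change of variables introduces a factor $\sqrt{|g|}=1+O(|x|^2)$ in normal coordinates. This gives an additional error of order $\lambda^2\sum_l\lambda^l|Q_l\tau_p|$ beyond the $O(\lambda^k|\tau_p|)$ you wrote; it is of course still absorbable for small $\lambda$ by exactly the same mechanism, but your displayed identity as stated is only literally correct with this extra term included.
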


\begin{proof}
First we proof the lemma in the scalar valued case.
The upper bound follows directly from the fact that the jet model is a model. The lower bound can be obtained as follows:
\begin{enumerate}
\item It follows from Remark~\ref{norm remark}, that if $(\phi^\lambda_p)_{\lambda\in (0,1), p\in M}$ is a family of Dirac sequences, such that for some $c>0$,  $c\phi_p^\lambda\in \mathcal{B}^{r+k,\lambda}_p$ and we denote by $\partial_i$ the coordinate vector fields of a $\fraks$-exponential chart at $p$, then 
\begin{equation}\label{convergence}
|Q_n \tau_p|\leq \max_{|l|_\fraks=n} \lim_{\lambda\to 0} |\langle R\tau_p, (\partial^l)^* \phi^\lambda_p\rangle |\ .
\end{equation}
Fix $(\phi^\lambda_p)_{\lambda\in \mathbb{N}, p\in M}$ such that for $\lambda\to 0$, 
$\phi^\lambda_p\to \delta_p$ converges uniformly in $p$ over compact sets (in some distribution space), then the same also holds for $(\partial^l)^* \phi^\lambda_p\to (\partial^l)^*\delta_p$ . Therefore the limit in (\ref{convergence}) can  be taken to be uniform over $p$ in compacts. 
\item Thus by the previous step, for every $K$ compact, there exists $\lambda_K>0$, such that for all $\lambda \leq \lambda_{K,k}$, $p\in K$ 
\begin{align*}
\sum_{n\leq k} \lambda^n  |Q_n \tau_p| &
< 2\sum_{n\leq k} \lambda^n \max_{|l|_\fraks=n}  |\langle R\tau_p, \partial^l \phi^\lambda_p\rangle |\\
&\lesssim_k \sup_{n}\big( \lambda^n \max_{|l|_\fraks=n} |\langle R\tau_p, \partial^l \phi^\lambda_p\rangle |\big)\\
&=\max_{\psi^\lambda\in \tilde{\mathcal{B}}^\lambda_{p}} |\langle R\tau_p, \psi^\lambda_p\rangle | \ ,
\end{align*}
where we set $\tilde{\mathcal{B}}^\lambda_{p}= \big\{ \lambda^{|l|_\fraks} \partial^l \phi^\lambda_p \ | \ |l_\fraks|\leq k\big\}\subset\mathcal{ B}^{r,\lambda}_p.$
To see the first inequality one first observe it for the case $\tau_p\in T_n$.
\item The claim for the case $\lambda=1$ is straight forward, thus we conclude the general case by interpolation.
\end{enumerate}
This completes the proof in the scalar valued case. The bundle valued case follows by combining the lemma in this case with Remark~\ref{rem isometry} and Remark~\ref{rem frame testfunctions}.
\end{proof}
The following is a direct application of Lemma~\ref{relating} .
\begin{lemma}\label{bounding abstract pol}
Let $\delta\in (0,+\infty)$ and $\bar{\mathcal{T}}:=(E, \bar T, L)$ be a jet regularity structure such that $\triangle \subset (\delta,+\infty]$ and $Z=(\Pi,\Gamma)$ a jet model. Let $\gamma\notin \NN$ s.t. $\gamma<\delta$ and suppose $f$ is a continuous section of $\bar T_{<\gamma,;}$ .
\begin{enumerate}
\item If $f$ satisfies for every $K$ compact a bound of the form
$$|\big(\Pi_p f(p) - \Pi_p \Gamma_{p, q} f(q) \big)(\phi^\lambda_p) |\lesssim_K \lambda^{\gamma}$$ uniformly over $p,q\in K$ and $\lambda\in [2^{-1}d_\fraks(p,q), d_\fraks(p,q)]$, then the following bound holds
$$|f(p)-\Gamma_{p, q} f(q)|_m\lesssim_K d_\fraks(p,q)^{\gamma-m} \ $$
for $m<\gamma$
and thus $f\in \mathcal{D}^\gamma_Z$ .
\item If $f \in \mathcal{D}^\gamma_Z(\bar{T})$ satisfies uniformly over $K$ compact and $p\in K$ a bound of the form
$$|\Pi_p f(p) (\phi_p^\lambda)|\lesssim \lambda^\gamma$$ 
then $f=0$.
\end{enumerate}

\end{lemma}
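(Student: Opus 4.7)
The plan is to reduce both parts to direct applications of Lemma~\ref{relating}, exploiting the fact that for a jet model the map $\Pi_p$ acts on fibres of $\bar T$ simply as the admissible realisation $R$, so that $R(\tau_p)$ is already a concrete section to which the equivalence of Lemma~\ref{relating} applies.

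For Part 1, I set $\tau_p := f(p) - \Gamma_{p,q} f(q) \in \bar T_p \subset (JE)_{<k}$ for any integer $k$ exceeding $\delta_0$. Linearity of $R$ gives $\Pi_p f(p) - \Pi_p \Gamma_{p,q} f(q) = R(\tau_p)$, so the hypothesis rewrites as $|R(\tau_p)(\phi^\lambda_p)| \lesssim_K \lambda^\gamma$ uniformly over $\phi^\lambda_p \in \mathcal{B}^\lambda_p$ and $\lambda \in [d_\fraks(p,q)/2, d_\fraks(p,q)]$. Choosing $\lambda = d_\fraks(p,q)$ and taking the supremum over $\phi^\lambda_p$, the lower bound in Lemma~\ref{relating} yields
\[
\sum_{l} \lambda^l |Q_l \tau_p| \lesssim \lambda^\gamma.
\]
Nonnegativity of each summand gives $|Q_m \tau_p| \lesssim d_\fraks(p,q)^{\gamma - m}$ for each $m < \gamma$. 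Since the $\alpha$-grading on $\bar T$ coincides with the grading $(JE)_\alpha$ by Definition~\ref{definition polynomial reg}, this is exactly $|f(p) - \Gamma_{p,q} f(q)|_m \lesssim d_\fraks(p,q)^{\gamma - m}$. Combined with $\|f\|_{\gamma; K} < \infty$, which is immediate from continuity on a compact, this places $f$ in $\mathcal{D}^\gamma_Z(\bar T)$.

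For Part 2, the identity $\Pi_p f(p) = R(f(p))$ likewise converts the hypothesis into $|R(f(p))(\phi^\lambda_p)| \lesssim_K \lambda^\gamma$ uniformly in $p \in K$ and $\lambda \in (0,1)$. Lemma~\ref{relating} now gives $\lambda^l |Q_l f(p)| \lesssim \lambda^\gamma$ for every integer $l < \gamma$ and every $\lambda \in (0,1)$. Since $\gamma \notin \mathbb{N}$, we have $\gamma - l > 0$ for every such $l$; sending $\lambda \to 0$ forces $Q_l f(p) = 0$. As $f(p)$ lives in $\bar T_{<\gamma, :} = \bigoplus_{l \in \mathbb{N},\, l < \gamma} (JE)_l$, this implies $f(p) = 0$, and since $p$ was arbitrary, $f \equiv 0$.

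The argument is essentially bookkeeping once Lemma~\ref{relating} is in hand; the substantive content sits there. The only mild subtlety I anticipate is confirming that each $|Q_m \tau_p|$ can be extracted individually from the single combined bound on the sum, which is immediate from nonnegativity of the summands, and that the additional $\delta$-grading in $\bar T_{\alpha, \delta}$ plays no role here because the modelled-distribution seminorms only read off the $\alpha$-component.
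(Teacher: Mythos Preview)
Your proof is correct and matches the paper's approach exactly: the paper states this lemma as a direct application of Lemma~\ref{relating}, and you have correctly spelled out the details of that application. Your observation that Part~2 does not actually use the hypothesis $f \in \mathcal{D}^\gamma_Z(\bar T)$ is also right---the pointwise bound alone suffices.
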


\subsection{H\"older functions}
A function $f: \RR^d\mapsto \mathbb{R}$ is said to belong to the H\"older space\footnote{We point out that for integer $\alpha\in \mathbb{N}$ this space is different from the usual notion of $C^\alpha$ spaces.} $\mathcal{C}^\alpha_{\mathfrak{s}}(\RR^d)$ for some $\alpha>0$, if for every compact set $K\subset \RR$ the following semi-norm is finite
$$\| f\|_{{c}^\alpha_K} := \sup_{x\in K} \inf_{P\in \mathcal{P}^{\lfloor \alpha \rfloor} } \sup_{y\in K : |x-y|_\fraks \leq 1} \frac{|f(y)-\Pi_x P(y)|}{|x-y|_\fraks^{\alpha}} \ ,$$ 
where $\mathcal{P}^{\lfloor \alpha \rfloor}$ is as in Section~\ref{jets in local coordinates} and $\Pi_x: \mathcal{P}^{\lfloor \alpha \rfloor}\to \mathcal{C}^\infty(\RR)$ is the linear map specified by $\Pi_x X^k(y) =  (y-x)^k$.  It is a classical fact that for $f\in \mathcal{C}^\alpha_{\mathfrak{s}}$, there exists for each $x\in \RR$ a unique $P_x(f)\in \mathcal{P}^{\lfloor \alpha \rfloor}$ such that
$$\sup_{y : |x-y|_\fraks \leq 1} \frac{|f(y)-\Pi_xP_x(f) (y)|}{|x-y|_\fraks^{\alpha}} <+\infty \ .$$ 
Fix a norm $| \cdot |$ on the finite dimensional vector space $\mathcal{P}^{\lfloor \alpha \rfloor}$ and define 
$$\| f\|_{\mathcal{C}^\alpha(K)}:= \sup_{x\in K} | P_x(f) |  + \| f\|_{{c}^\alpha_K} \ .$$
(Note that $\| f\|_{\mathcal{C}^\alpha_K}$ is a norm on functions in ${\mathcal{C}_\mathfrak{s}^\alpha}$ which are supported in $K$. )
We extend the definition of $\mathcal{C}^\alpha_{\mathfrak{s}}$ to manifolds, resp.\ vector bundles in the usual manner.
\begin{definition}\label{definition Holder section}
Let $E\to M$ be an $n$-dimensional vector bundle. Fix a locally finite partition of unity $\{\mathbb{1}_i\}_{i\in \NN}$ of $M$ subordinate to an atlas of $\fraks$-charts $\{(U_i,\phi_i)\}_{i\in \mathbb{N}}$ and let $\{\Phi_i\}_{i\in \mathbb{N}}$ be subordinate trivialisations, i.e.\ $\Phi_i= (G_i,\phi_i)$. 
We say that $f\in \mathcal{C}^\alpha (E)$, if for any compact set $K \subset M$
$$\|f\|_{\mathcal{C}^\alpha_{\mathfrak{s}}(K)}:=\sum_{j=1}^n \sum_{i\in \mathbb{N}:  \supp \mathbb{1}_i\cap K \neq \emptyset} \left\|\big((\Phi_i^{-1})^* f_i\big)_j\right\|_{\mathcal{C}^\alpha_{\mathfrak{s}}(\phi^{-1}(K\cap \supp \mathbb{1}_i) )} <\infty \ , $$
where $f_i:= \mathbb{1}_i \cdot f$.
\end{definition}
\begin{remark}
It is straightforward to see that the semi-norms obtained by using a different partition of unity and local trivialisations are equivalent.
\end{remark}

\begin{lemma}\label{lemma equivalent characterisations of holder}
Fix $\alpha \in \mathbb{R}_+\setminus \mathbb{N}$. For a continuous section $f\in \mathcal{C}(E)$ the following properties are equivalent:
\begin{enumerate}
\item\label{111} $f\in \mathcal{C}^\alpha_{\mathfrak{s}}(E)$
\item\label{222} There exists a unique continuous section $\{F_p\}_{p\in M}$ of $(JE)_{<\alpha}$, such that for any admissible realisation $R$ 
\begin{equation}\label{equation jet characterisation}
\sup_{p\in K} \inf_{F_p\in (J_p E)_{<\alpha} } \sup_{q\in M : d_\fraks(p,q) \leq 1} \frac{|f(q)-RF_p(q)|}{d_\fraks(p,q)^{\alpha}}<\infty 
\end{equation}
for every compact set $K\subset M$.
\item\label{444} There exists a unique section $F$ of $JE_{<\alpha}$ such that for any jet regularity structure $\bar{\mathcal{T}}=(E,\bar{T},L)$ such that $\triangle\subset(\alpha, +\infty]$ and $JE_{<\alpha}\subset \bar{T}$ one has $F\in \mathcal{D}_Z^\alpha(\bar{T})$ for any jet model $Z$ and furthermore 
$$\mathcal{R}_ZF=f\ .$$
\end{enumerate}
\end{lemma}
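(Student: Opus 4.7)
The strategy is to establish (2) $\Leftrightarrow$ (3) using the structure of jet models and the reconstruction theorem, and (1) $\Leftrightarrow$ (2) by passing to local coordinates where both conditions reduce to the classical Hölder characterisation on $\mathbb{R}^d$.

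\emph{Equivalence of (2) and (3).} Assume (2) and set $F(p) := F_p$. To verify $F \in \mathcal{D}_Z^\alpha$ for any jet model $Z = (\Pi, \Gamma)$ induced by an admissible realisation $R$, write $G_{p,q} := F(q) - Q_{<\alpha}\Gamma_{q,p}F(p) = F(q) - Q_{<\alpha} j_q RF(p)$ and decompose
\begin{equation*}
RG_{p,q}(r) = \bigl(RF(q)(r) - f(r)\bigr) + \bigl(f(r) - RF(p)(r)\bigr) + \bigl(RF(p)(r) - R(Q_{<\alpha} j_q RF(p))(r)\bigr).
\end{equation*}
The first two brackets are $O(d_\fraks(q,r)^\alpha + d_\fraks(p,r)^\alpha)$ by \eqref{equation jet characterisation}, and the third is $O(d_\fraks(q,r)^{\lfloor\alpha\rfloor+1})$ by Lemma~\ref{pointwise bound}, since the two smooth functions inside share jets of order $\lfloor\alpha\rfloor$ at $q$. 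For $r \in B_{d_\fraks(p,q)}(q)$ this yields $|RG_{p,q}(r)| \lesssim d_\fraks(p,q)^\alpha$; testing against $\phi_q^\lambda$ at scale $\lambda = d_\fraks(p,q)$ and applying Lemma~\ref{relating} separates the graded components and gives $|G_{p,q}|_\beta \lesssim d_\fraks(p,q)^{\alpha-\beta}$ for each $\beta < \alpha$. The identity $\mathcal{R}_Z F = f$ follows from Corollary~\ref{prop:reconstruction for continous models} together with $RF(p)(p) = f(p)$ (the $q=p$ case of \eqref{equation jet characterisation}). Conversely, assume (3) and set $F_p := F(p)$: Corollary~\ref{prop:reconstruction for continous models} yields $f(r) = Q_0 F(r)$, while by the jet model definition $Q_0 \Gamma_{r,p} F(p) = RF(p)(r)$; hence $f(r) - RF_p(r) = Q_0\bigl(F(r) - \Gamma_{r,p}F(p)\bigr)$, and the $\beta = 0$ case of the modelled distribution bound produces \eqref{equation jet characterisation}. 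Uniqueness reduces to the assertion that any $G \in (J_pE)_{<\alpha}$ with $RG(r) = O(d_\fraks(p,r)^\alpha)$ vanishes, which follows from Lemma~\ref{relating} by letting $\lambda \to 0$.

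\emph{Equivalence of (1) and (2).} Both conditions are local, so after fixing the partition of unity $\{\mathbb{1}_i\}$ and trivialisations $\Phi_i$ from Definition~\ref{definition Holder section} it suffices to compare the restrictions $(\Phi_i^{-1})^*(\mathbb{1}_i f)$ on $\phi_i(U_i)$. Under the identification of Section~\ref{bundle valued jets in local coordinates}, $(J_p E)_{<\alpha}$ corresponds to $\mathbb{R}^n$-valued polynomials of scaled degree $<\alpha$ at $\phi_i(p)$, and by Lemma~\ref{comparing admissible realisations} any admissible realisation differs from the canonical ``polynomial'' one built from the trivialisation by $O(d_\fraks(p,r)^{\lfloor\alpha\rfloor+1})$, which is absorbed into the Hölder error since $\lfloor\alpha\rfloor+1 > \alpha$. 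Consequently \eqref{equation jet characterisation} is independent of $R$ and coincides with the existence of Taylor polynomials approximating $(\Phi_i^{-1})^*(\mathbb{1}_i f)$ to order $\alpha$, i.e.\ with Definition~\ref{definition Holder section}.

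\emph{Main obstacle.} The central technical point is upgrading the pointwise approximation \eqref{equation jet characterisation} to a graded component-wise bound during (2) $\Rightarrow$ (3). This combines the two approximations at $p$ and at $q$ with Lemma~\ref{pointwise bound} for the remainder of $RF(p)$ at $q$, and uses Lemma~\ref{relating} at the critical scale $\lambda = d_\fraks(p,q)$ to separate contributions of different homogeneities.
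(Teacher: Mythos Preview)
Your proposal is correct and follows essentially the same route as the paper: the key step (2)$\Rightarrow$(3) uses the same three-term decomposition of $R(F(q)-\Gamma_{q,p}F(p))$, and your extraction of graded bounds via Lemma~\ref{relating} is precisely what the paper packages as an application of Lemma~\ref{bounding abstract pol}. For (3)$\Rightarrow$(2) you spell out the $\beta=0$ case of the modelled-distribution bound explicitly, which is what the paper's terse ``by the reconstruction theorem'' amounts to; your treatment of (1)$\Leftrightarrow$(2) via charts and Lemma~\ref{comparing admissible realisations} is likewise the content of the paper's ``one easily sees''.
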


\begin{proof}
One easily sees that  (\ref{111}) and (\ref{222}) are equivalent.
To show (\ref{222}) implies (\ref{444}) we check that $F$ as defined in (\ref{222}) is a modelled distribution and $\mathcal{R}F=f$ .
Let $p,q\in M$ and $\lambda\in [2^{-1}d_\fraks(p,q), 2 d_\fraks(p,q)]$, since
\begin{align*}
\Pi_p(F(p)- \Gamma_{p,q}F(q))(\phi^\lambda_p)=& (\Pi_pF(p)-f)(\phi^\lambda_p)+  (f-\Pi_q F(q))((\phi^\lambda_p))\\
&+
(\Pi_q F(q)- \Gamma_{p,q}F(q))(\phi^\lambda_p)\ ,
\end{align*}
one obtains
$$|\Pi_p(F(p)- \Gamma_{p,q}F(q))(\phi^\lambda_p)|\lesssim \lambda^{\alpha}$$
by using (\ref{222}) for the first term, (\ref{222}) combined with Remark~\ref{translated test function remark} for the second term and the bound in the definition of a model for the third term. 
Thus the claim follows from Lemma~\ref{bounding abstract pol}. 

Clearly (\ref{444}) implies (\ref{222}) by the reconstruction theorem, Theorem~\ref{reconstruction theorem}.
\end{proof}

\begin{remark}\label{Lifting smooth functions}
Let $E\to M$ be a vector bundle over a compact manifold $M$ and $\alpha\notin \NN$ as well as $\bar{\mathcal{T}}=(E,\bar{T},L)$ be a jet regularity structure such that $\triangle\subset(\alpha, +\infty]$ and $JE_{<\alpha}\subset \bar{T}$. It follows from the previous proposition that the reconstruction operator $\mathcal{R}_Z: \mathcal{D}^\alpha \to \mathcal{C}^\alpha$ is a bijective bounded map. 
Therefore by the open mapping theorem of functional analysis it has a bounded inverse. It follows from the proof of Lemma~\ref{lemma equivalent characterisations of holder} that this inverse is explicitly given by the map 
$$\mathcal{C}^\alpha\to \mathcal{D}^\alpha, \quad f\mapsto (i_{[\alpha],\infty}\circ j^{[\alpha]}) f \ .$$
\end{remark}

\section{Local Operations on Modelled Distributions}\label{section Local operations}
In this section we introduce linear and non-linear operations on modelled distributions.

\subsection{Products and traces}
\begin{definition}\label{definition product}
Given a regularity structure ensemble $\mathcal{T}=(\{E^\mathfrak{t}\}_{\mathfrak{t}\in \mathfrak{L}},\{T^\mathfrak{t}\}_{\mathfrak{t}\in \mathfrak{L}}, L )$, $\mathfrak{t}_3\in \mathfrak{L}$ and two sectors $V^{\mathfrak{t}_1}\subset T^{{\mathfrak{t}_1}}, {V}^{\mathfrak{t}_2}\subset T^{\mathfrak{t}_2}$ of regularity $\underline{\alpha}_{1}$ resp. $\underline{\alpha}_{2}$, 
a bilinear bundle morphism ${\star: V^{\mathfrak{t}_1}\times V^{\mathfrak{t}_2}\to T^{\mathfrak{t}_3}}$ is called an abstract  (tensor) product if the following properties are satisfied.
\begin{enumerate}
\item\label{asdfasdf} $E^{\frakt_1} \otimes E^{\frakt_2}=E^{\frakt_3}$ .
\item For any $\tau_p^1 \in V^{\mathfrak{t}_1}_{\alpha}$, $\tau_p^2\in V^{\mathfrak{t}_2}_{\alpha'}$ ones has $\tau_p^1\star  \tau_p^2\in T^{\mathfrak{t}_3}_{\alpha+\alpha'}$ .
\end{enumerate}
The product $\star$ is called $\gamma$-precise, if furthermore the sectors $V^{\mathfrak{t}_1}_{\alpha}$, $V^{\mathfrak{t}_2}_{\alpha'}$ are $\gamma$-precise and for every $\tau_p^1 \in V^{\mathfrak{t}_1}_{\alpha}$, $\tau_p^2\in V^{\mathfrak{t}_2}_{\alpha'}$ such that $\alpha+\alpha'<\gamma$ one has
\begin{enumerate}
\item $\tau_p^1\star  \tau_p^2\in T^{\mathfrak{t}_3}_{\alpha+\alpha', >\gamma}\ $,
\item\label{asdff} 
for all $q\in M$ and $\Gamma_{q,p}\in L_{q,p}$ $$Q_{<\gamma,:}(\Gamma_{q,p}\tau_p^1\star \Gamma_{q,p}\tau_p^2)= Q_{<\gamma,:}\Gamma_{q,p}(\tau_p^1\star \tau_p^2)\ .$$
\end{enumerate}
Natural extensions of this definition are given as follows.
\begin{itemize}
\item Products between more than two sectors $V^{\mathfrak{t}_1}\times\ldots\times V^{\mathfrak{t}_n}$ are defined in the analogous manner.
\item When $E^\frakt_1=E^\frakt_2$ above, one defines symmetric tensor products by replacing the tensor product in \eqref{asdfasdf} by a symmetric tensor product and further enforcing the product $\star$ be commutative.
\end{itemize}
We shall often use the notation $\tau_p^1\star_\gamma  \tau_p^2:=Q_{<\gamma} (\tau_p^1\star  \tau_p^2)$.
\end{definition}

\begin{remark}
This definition is analogous to the definition of a product in \cite{Hai14} except for the following differences. On the one hand we need to keep track of the vector bundles, which is straightforward and is done using sector ensembles, secondly, we need to introduce the notion of the product being $\gamma$-precise.\footnote{In \cite{Hai14} the notion of being $\gamma$-regular is introduced. We prefer to to use different nomenclature, since this notion though similar, is distinct from being $\gamma$-precise.}
At first glance it would seem natural to impose that $\star$ maps $V^{\mathfrak{t}_1}_{\alpha,\delta}\times V^{\mathfrak{t}_2}_{\alpha',\delta'}$ into $T^{\mathfrak{t}_3}_{\alpha+\alpha', \bar{\delta}}$, where ${\bar{\delta}=\big(\delta+(\underline{\alpha}_{2}\wedge 0)\big)\wedge\big( \delta'+(\underline{\alpha}_{1}\wedge 0)\big)}$. But the additional freedom gained by not enforcing this choice is crucial when discussing aspects of renormalisation, c.f.\ Section~\ref{section regularity structure, models,...}.
\end{remark}
\begin{remark}
Given a regularity structure ensemble $\mathcal{T}=(\{E^\mathfrak{t}\}_{\mathfrak{t}\in \mathfrak{L}},\{T^\mathfrak{t}\}_{\mathfrak{t}\in \mathfrak{L}}, L )$ and two sectors $V^{\mathfrak{t}_1}\subset T^{{\mathfrak{t}_1}}, {V}^{\mathfrak{t}_2}\subset T^{{\mathfrak{t}_2}}$ of regularity $\underline{\alpha}_1$ resp.\ $\underline{\alpha}_2$, one can always define a tensor product between $V^{\mathfrak{t}_1}$ and ${V}^{\mathfrak{t}_2}$ as follows.
\begin{itemize}
\item Extend the set $\mathfrak{L}$ by a new element $\mathfrak{t}_3$, i.e.\ let $\bar{\mathfrak{L}}= \mathfrak{L}\cup \{\mathfrak{t}_3\}$,
\item set $E^{\frakt_3}:=E^{\frakt_1} \otimes E^{\frakt_2}$,
\item set for example 
\begin{equation}\label{eq:example_prod_construction}
T^{\frakt_3}_{\alpha,\delta}:=\bigoplus_{\substack{\alpha'+\alpha''=\alpha,\\ \big(\delta'+(\underline{\alpha}_{2}\wedge 0)\big)\wedge\big( \delta''+(\underline{\alpha}_{1}\wedge 0)\big)=\delta }} V^{\frakt_1}_{\alpha',\delta'} \otimes V^{\frakt_2}_{\alpha'',\delta''}\ ,
\end{equation}
\item extend $ L_{p,q}$ to act on $T^{\frakt_3}$ multiplicatively,
\item set $\bar{\mathcal{T}}:=(\{E^\mathfrak{t}\}_{\mathfrak{t}\in \bar{\mathfrak{L}}},\{T^\mathfrak{t}\}_{\mathfrak{t}\in \bar{\mathfrak{L}}}, L )$.
\end{itemize}

Observe that in general there is no canonical extension of a non smooth model $Z$ on $\mathcal{T}$ to a model on $\bar{\mathcal{T}}$. An important exception occurs when $V^{\mft_1}$ or $V^{\mft_2}$ is a jet sector and $Z$ is a model that acts on that sector as a jet model.
\end{remark}

\begin{remark}
Note that while the construction in the previous remark is rather canonical, in certain situations there is more structure present which makes other constructions more natural. A very important example is the case of jet regularity structures.
Assume in the setting of the previous remark that
$V^{\mathfrak{t}_1}\subset J{E^{\mathfrak{t}_1}}$ and ${V}^{\mathfrak{t}_2}\subset J{E^{\mathfrak{t}_2}}$, then it is natural define the product to be the restriction of the canonical product 
$$ m: J{E^{\mathfrak{t}_1}}\otimes  J{E^{\mathfrak{t}_2}} \to  J ( {E^{\mathfrak{t}_1}}\otimes{E^{\mathfrak{t}_2}})$$
introduced in Section~\ref{remark algebraic structure}.
In order to satisfy Property \eqref{asdff} of being a $\gamma$-regular product for some $\gamma>0$, needs to restrict the space $L$ accordingly.
This in turn makes the space of jet models smaller, but by Point~\ref{tensor products on jets} in Section~\ref{remark algebraic structure} it is non-empty.
We shall call the product $m$ on jet sectors the \textit{canonical product on jets} and write $\star$ for it. 
\end{remark}

The following can be proved exactly as \cite[Theorem 4.7 and Prop. 4.10]{Hai14}.
\begin{lemma}\label{lemma product}
Let $V^{\mathfrak{t}_1}\subset T^{\mathfrak{t}_1}, {V}^{\frakt_2}\subset T^{\frakt_2}$ be two sectors of regularity $\alpha_1$ resp.\ $\alpha_2$ and let $\star: V^{\mathfrak{t}_1}\times  V^{\mathfrak{t}_2}\to T^{\mathfrak{t}_3}$ be an abstract tensor product. Let $Z$ be a model and $\gamma_1,\gamma_2>0$. Assuming the product $\star$ is $\gamma$-precise for $\gamma= (\gamma_1+\alpha_2) \wedge (\gamma_2+\alpha_1)$, the map
$$
\star_\gamma:\ \mathcal{D}^{\gamma_1}(V^{\frakt_1})\times \mathcal{D}^{\gamma_2}(V^{\frakt_2}) \to \mathcal{D}^{(\gamma_2+\alpha_1) \wedge (\gamma_1+\alpha_2) }(T^{\frakt_3}), \qquad
(f_1,f_2)\mapsto f_1\star_\gamma f_2:= Q_{\gamma} f_1\star f_2
$$
%
is well defined, continuous and the bound 
$$\interleave f_1\star_\gamma f_2  \interleave_{\gamma ; K}\lesssim \interleave f_1\interleave_{\gamma_1 ; K}\interleave f_2 \interleave_{\gamma_2 ; K}(1+ \|\Gamma\|_{ K})$$
holds.
If furthermore $\tilde Z = (\tilde \Pi, \tilde{\Gamma})$ is a second model and $\tilde{f}_1\in \mathcal{D}_{\tilde{Z}}^{\gamma_1}(V^{\frakt_1})$, $\tilde{f}_2\in \mathcal{D}_{\tilde{Z}}^{\gamma_2}(V^{\frakt_2})$ one has the bound 
$$\interleave f_1\star_\gamma f_2 ; \tilde{f}_1\star_\gamma \tilde{f}_2 \interleave_{\gamma ; K}\lesssim_C \interleave f_1 ; \tilde{f}_1 \interleave_{\gamma_1 ; K}+\interleave f_2 ;  \tilde{f}_2 \interleave_{\gamma_2 ; K}+ \|\Gamma-\tilde{\Gamma}\|_{ K}$$
uniformly over  $f_i, \tilde{f}_i$ satisfying $\interleave f_i \interleave_{\gamma_i ; K}, \interleave  \tilde{f}_i \interleave_{\gamma_i ; K}<C$ and models satisfying 
$\|\Gamma\|_{ K}, \|\tilde{\Gamma}\|_{K}<C$.
\end{lemma}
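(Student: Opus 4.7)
The strategy is to verify the two defining bounds of $\mathcal{D}^{(\gamma_2+\alpha_1)\wedge(\gamma_1+\alpha_2)}$ for $f_1 \star_\gamma f_2$, following the flat-space argument of \cite[Thm.~4.7]{Hai14} with only minor bookkeeping changes needed to handle (a) the fact that $\star$ is a bundle morphism rather than a fixed bilinear map on a vector space, and (b) the bi-grading parameter $\delta$ present in the manifold setup. For the pointwise bound on a compact set $K$, I expand
\[
Q_\alpha\bigl(f_1(p)\star f_2(p)\bigr) = \sum_{\alpha_1+\alpha_2=\alpha} Q_{\alpha_1}f_1(p)\star Q_{\alpha_2}f_2(p)
\]
(using Property~(\ref{asdfasdf}) of the abstract product). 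Since $\star$ is a continuous bundle morphism and $K$ is compact, its fibrewise norm is bounded, so each summand is controlled by $|f_1(p)|_{\alpha_1}|f_2(p)|_{\alpha_2}$; the sum is finite because $\alpha_i\geq \underline{\alpha}_i$ and $\alpha<\gamma$, giving the desired pointwise estimate in terms of $\interleave f_1\interleave_{\gamma_1;K}\interleave f_2\interleave_{\gamma_2;K}$.

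For the translation bound, the standard splitting is
\begin{align*}
(f_1\star_\gamma f_2)(q) &- Q_{<\gamma}\Gamma_{q,p}(f_1\star_\gamma f_2)(p)\\
&= Q_{<\gamma}\bigl(f_1(q)\star f_2(q) - \Gamma_{q,p}f_1(p)\star \Gamma_{q,p}f_2(p)\bigr)\\
&\quad + Q_{<\gamma}\bigl(\Gamma_{q,p}f_1(p)\star \Gamma_{q,p}f_2(p) - \Gamma_{q,p}(f_1(p)\star f_2(p))\bigr).
\end{align*}
The second line vanishes after applying $Q_{<\gamma,:}$ on components of total $A$-degree $<\gamma$ by Property~(\ref{asdff}) (the $\gamma$-preciseness of $\star$); the ``leftover'' pieces come from components $Q_{\alpha_1}f_1(p)\star Q_{\alpha_2}f_2(p)$ with $\alpha_1+\alpha_2\geq \gamma$, for which the bound on $\|\Gamma\|_K$ and the pointwise bounds on the $f_i$ give a contribution of order $d_\fraks(p,q)^{\gamma-\beta}$ at level $\beta$. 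In the first line I use the telescoping identity
\[
f_1(q)\star f_2(q) - \Gamma_{q,p}f_1(p)\star \Gamma_{q,p}f_2(p) = \bigl(f_1(q)-\Gamma_{q,p}f_1(p)\bigr)\star f_2(q) + \Gamma_{q,p}f_1(p) \star \bigl(f_2(q)-\Gamma_{q,p}f_2(p)\bigr),
\]
insert $Q_{<\gamma_i}$ where appropriate, and bound each factor using the defining norms of $f_i\in\mathcal{D}^{\gamma_i}$ together with the bounds on $\Gamma$ from Definition~\ref{def model}.

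The Lipschitz estimate in the models $Z,\tilde Z$ follows by a standard telescoping: write each difference above as
\[
A\star B - \tilde A\star\tilde B = (A-\tilde A)\star B + \tilde A\star (B-\tilde B),
\]
with $A,\tilde A$ (resp.\ $B,\tilde B$) the two terms whose difference we must control, and apply the previous bounds together with the assumption $\interleave f_i\interleave_{\gamma_i;K},\interleave \tilde f_i\interleave_{\gamma_i;K}<C$ and $\|\Gamma\|_K,\|\tilde\Gamma\|_K<C$. The main obstacle is purely bookkeeping: ensuring that when the $\gamma$-preciseness hypothesis is invoked it covers exactly the components whose bi-degree projection is non-trivial, and keeping track of the fact that some ``correction terms'' arising from $\alpha_1+\alpha_2\geq \gamma$ live in sectors of higher precision $\delta>\gamma$ and therefore contribute to the required H\"older-type estimate rather than to the pointwise bound. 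Everything else is the verbatim argument of \cite[Thm.~4.7 \& Prop.~4.10]{Hai14}, with implicit constants made to depend on $K$ through the bundle-operator norm of $\star$.
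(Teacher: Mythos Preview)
Your proposal is correct and follows the same route as the paper, which simply defers to \cite[Theorem~4.7 \& Prop.~4.10]{Hai14}. One minor presentational point: your displayed splitting is not literally an equality, since $(f_1\star_\gamma f_2)(p)=Q_{<\gamma}(f_1(p)\star f_2(p))$ rather than $f_1(p)\star f_2(p)$, so there is an additional term $Q_{<\gamma}\Gamma_{q,p}Q_{\geq\gamma}\bigl(f_1(p)\star f_2(p)\bigr)$ to account for; but this is precisely one of the ``leftover'' pieces with total degree $\geq\gamma$ that you already bound via $\|\Gamma\|_K$ and the model estimate $|\Gamma_{q,p}\tau|_\beta\lesssim d_\fraks(p,q)^{(\alpha-\beta)\vee 0}|\tau|$, so nothing changes.
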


\subsubsection{Trace operator}
Let $E, F$ be vector bundles, recall the trace operator $\tr: E^*\otimes E\otimes F\to F$  acting on elementary tensors as $e_p^*\otimes e_p \otimes f_p\mapsto (e_p^*,e_p) f$. Denote by $I$ the $E\otimes E^*$ tensor field, such that $(I_p,e_p^*\otimes e_p)= (e_p^*, e_p)$ and observe that the dual map of $\tr$ is given by $$F\to E^*\otimes E\otimes F \quad f\mapsto I\otimes f\ . $$
Thus there is a natural extension of the trace operator 
$$\tr:\mathcal{D}'(E^*\otimes E\otimes F)\to \mathcal{D}'(F) $$
characterised by the fact that for each $\xi\in \mathcal{D}'(E^*\otimes E\otimes F)$ and $\phi \in \mathcal{D}(F) $ one has
$$\langle \tr \xi, \phi \rangle =  \langle \xi, I \otimes \phi \rangle \ .$$

\begin{definition}\label{def abstract trace}
Let $E^{\frakt_1}= E^*\otimes E\otimes F$, $E^{\frakt_2}= F$. An abstract trace operator is a bi-graded bundle morphism $\mathfrak{\trr}: T^{\mathfrak{t}_1}\mapsto T^{\mathfrak{t}_2}$ satisfying 
\begin{itemize}
\item for any $\tau_p\in T^{\frakt_1}_{\alpha,\delta}$ one has $\trr \tau_p \in T^{\frakt_2}_{\alpha,\delta}$
\item writing $\iota_1: T^{\frakt_1}_0\to E^{\frakt_1}$, $\iota_2: T^{\frakt_2}_0\to E^{\frakt_2}$ for the canonical isomorphisms one has 
$$\iota_2\circ \trr = \tr \circ \iota_1  ,$$
\item for each $\Gamma_{p,q}\in L_{p,q}$ one has $\Gamma_{p,q} \trr =\trr\Gamma_{p,q} $
\end{itemize}
A model realizes an abstract trace operator if $$\Pi_p(\trr \tau_p) = \tr (\Pi_p\tau_p) \ .$$
\end{definition}
\begin{remark}\label{remark trace}
It is obvious from the definition that for $f\in \mathcal{D}^\gamma (V^{\mathfrak{t}_1})$ one has $\trr f\in \mathcal{D}^\gamma(T^{\mathfrak{t}_2})$ and that
$$\tr \mathcal{R}f= \mathcal{R}\trr f\ .$$
\end{remark}
\begin{remark}
In the rest of this section we shall denote by $\tr$ trace operators on vector bundles contracting different indices and it will be clear from the context which indices are contracted. In this case the corresponding abstract trace $\trr$ is defined analogously.
When working with SPDEs, the vector bundles and indexing sets will be treated more systematically but in order to not distract from the main analytic content of this section this is postponed to Section~\ref{section sets of trees for...}.
\end{remark}

\begin{remark}
In the setting where $E^\frakt= E^*\otimes E\otimes F$ but the regularity structure ensemble does not carry an abstract trace, there is an obvious extension of the regularity structure ensemble which carries a trace operator as in Definition~\ref{def abstract trace}. Furthermore, in this case models have a canonical extension.
In the case of jet regularity structures ensembles, there is a canonical trace described in Point~\ref{compatibility with trace} of Section~\ref{remark algebraic structure}. (In which case one needs to restrict the choice of admissible realisation accordingly.)
\end{remark}

\subsubsection{Linear functionals}\label{section linear}
Let $\mathcal{T}=(\{E^\mathfrak{t}\}_{\mathfrak{t}\in \mathfrak{L}},\{T^\mathfrak{t}\}_{\mathfrak{t}\in \mathfrak{L}}, L )$ be a regularity structure ensemble and $Z$ a model for it. For $A: E^{\frakt_1} \to F$, a vector bundle morphism and $V^{\mathfrak{t}_1}$ be a sector of regularity $\alpha$, we aim to define for $f\in \mathcal{D}^\gamma (V^{\mathfrak{t}_1})$ a modelled distribution $A(f) \in \mathcal{D}^\gamma\ . $
First identify $A$ with a section of $E^{\frakt_2}:=(E^{\frakt_1})^*\otimes F$ and assume the regularity structure ensemble contains $(JE^{\frakt_2})_{<\gamma+|\alpha|}$ on which $Z$ acts as a jet model. Set $\bar{A}= Q_{< \gamma+|\alpha|} j A\in \mathcal{D}^{\gamma+|\alpha|} (E^{\frakt_2})$.
Assuming the regularity structure ensemble carries a $\gamma$-regular tensor product in the sense of Definition~\ref{definition product} on $J E^{\frakt_2}\times V^{\mathfrak{t}_1}$ and an abstract trace extending the canonical trace
$$ \tr:  E^{\frakt_2}\otimes E^{\frakt_1} \to F \ ,$$ we define
$$A(f):=\trr\big( \bar{A} \star_\gamma f\big) \ .$$
\begin{lemma}\label{lem lin map}
In the above setting, $A(f) \in \mathcal{D}^\gamma $ and the following continuity bound holds
$$\interleave A(f) \interleave_{\gamma ; K}\lesssim_A \interleave f \interleave_{\gamma_1 ; K}(1+ \|\Gamma\|_{ K})$$
as well as the analogue to the second bound of Lemma~\ref{lemma product}.
If one further assumes that the model $Z$ realises $\tr$ for $\trr$ and that for any $\tau^2_p\in (JE^{\frakt_2})_{<\gamma+|\alpha|}$ and $\tau^1_p\in V^{\frakt_1}$ one has 
$\Pi_p(\tau_p^2\star\tau^1_p)= \Pi_p\tau_p^2 \otimes\Pi_p\tau^1_p,$
then 
$$\mathcal{R}_Z A(f)= A(\mathcal{R}_Zf) \ .$$
\end{lemma}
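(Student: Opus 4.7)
The approach is to express $A(f)$ as the composition of three operations already handled by earlier results in this section: jet-lifting the smooth section $A$, multiplication by $f$ using the abstract product of Lemma~\ref{lemma product}, and application of the abstract trace as in Remark~\ref{remark trace}. The combination of continuity statements from these ingredients yields the asserted bounds, while the reconstruction identity reduces, via Remark~\ref{remark trace}, to showing that the reconstruction of the product factorises as an actual product of reconstructions.

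Since $A$ is a smooth section of $E^{\frakt_2}$, Lemma~\ref{lemma equivalent characterisations of holder} (or Remark~\ref{Lifting smooth functions}) gives $\bar A = Q_{<\gamma+|\alpha|}\,jA \in \mathcal{D}^{\gamma+|\alpha|}\bigl((JE^{\frakt_2})_{<\gamma+|\alpha|}\bigr)$, with $\interleave \bar A \interleave_{\gamma+|\alpha|;K}$ depending only on $A$ and $K$. Applying Lemma~\ref{lemma product} with parameters $\gamma_1 = \gamma+|\alpha|,\ \alpha_1 = 0$ (the jet sector is function-like) and $\gamma_2 = \gamma,\ \alpha_2 = \alpha$, the target regularity is
\begin{equation*}
(\gamma_2+\alpha_1)\wedge(\gamma_1+\alpha_2) \;=\; \gamma \wedge \bigl(\gamma+|\alpha|+\alpha\bigr) \;=\; \gamma,
\end{equation*}
using $|\alpha|+\alpha \geq 0$. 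This yields $\bar A \star_\gamma f \in \mathcal{D}^\gamma(T^{\frakt_3})$ with the continuity bound from Lemma~\ref{lemma product}. Since by Remark~\ref{remark trace} the abstract trace $\trr$ preserves each $\mathcal{D}^\gamma$ space continuously, we obtain $A(f) \in \mathcal{D}^\gamma$ with the claimed one- and two-model continuity estimates.

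For the reconstruction identity, Remark~\ref{remark trace} yields $\mathcal{R}_Z A(f) = \tr\,\mathcal{R}_Z(\bar A \star_\gamma f)$, so it suffices to prove $\mathcal{R}_Z(\bar A \star_\gamma f) = A \otimes \mathcal{R}_Z f$, since applying $\tr$ to this equals $A(\mathcal{R}_Z f)$ by the identification of $A$ with a section of $(E^{\frakt_1})^*\otimes F$. By the uniqueness part of Theorem~\ref{reconstruction theorem}, this reduces to the bound
\begin{equation*}
\bigl|\langle A \otimes \mathcal{R}_Z f - \Pi_p\bigl(\bar A(p)\star_\gamma f(p)\bigr),\phi^\lambda_p\rangle\bigr| \lesssim \lambda^\gamma
\end{equation*}
uniformly over $p\in K$, $\lambda\in(0,1)$ and $\phi^\lambda_p \in \mathcal{B}^{r,\lambda}_p$. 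The factorisation hypothesis $\Pi_p(\tau^2_p \star \tau^1_p) = \Pi_p\tau^2_p \otimes \Pi_p\tau^1_p$ together with the model bound applied to the discarded piece $Q_{\geq\gamma}(\bar A(p)\star f(p))$ replaces $\Pi_p(\bar A(p)\star_\gamma f(p))$ by $\Pi_p\bar A(p)\otimes \Pi_p f(p)$ up to an $O(\lambda^\gamma)$ error, and one then splits
\begin{equation*}
A \otimes \mathcal{R}_Z f - \Pi_p\bar A(p)\otimes \Pi_p f(p) \;=\; (A-\Pi_p\bar A(p))\otimes \mathcal{R}_Z f \;+\; \Pi_p\bar A(p)\otimes(\mathcal{R}_Z f-\Pi_p f(p)).
\end{equation*}
The second term is bounded by $O(\lambda^\gamma)$ from the reconstruction bound for $f$, since $\Pi_p\bar A(p)$ has controlled $\mathcal{C}^r$-norm on $B_\lambda(p)$ by the jet-model property; the first term, using $A = \mathcal{R}_Z\bar A$ and the reconstruction bound for $\bar A$, together with the distributional scale $\lambda^\alpha$ of $\mathcal{R}_Z f$, gives $O(\lambda^{\gamma+|\alpha|+\alpha}) = O(\lambda^\gamma)$.

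The only step requiring care is the first term of the splitting, where one must show that $(A-\Pi_p\bar A(p))\,\phi^\lambda_p$ is, up to a prefactor of size $\lambda^{\gamma+|\alpha|}$, an admissible test function in $\mathcal{B}^{r,\lambda}_p$; this follows from Lemma~\ref{pointwise bound} and makes transparent why the truncation order $\gamma+|\alpha|$ in the definition of $\bar A$ is precisely the correct one.
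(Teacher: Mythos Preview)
Your proof is correct and follows essentially the same approach as the paper, which simply cites Lemma~\ref{lemma equivalent characterisations of holder}, Lemma~\ref{lemma product} and Remark~\ref{remark trace} for the first part and invokes uniqueness of reconstruction for the second. You have filled in the details of the latter step carefully and correctly, in particular the verification that $(A-\Pi_p\bar A(p))\phi^\lambda_p$ rescales as a test function of size $\lambda^{\gamma+|\alpha|}$ via Lemma~\ref{pointwise bound}.
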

\begin{proof}
The first part of the claim follows directly from Lemma~\ref{lemma equivalent characterisations of holder} , Lemma~\ref{lemma product} and Remark~\ref{remark trace}. The second part follows easily using the uniqueness of the reconstruction operator.
\end{proof}

\begin{remark}
Note that starting with $f\in \mathcal{D}^\gamma (V^\mathfrak{t})$ one can always extend the regularity structure and model canonically to carry all the structure needed to define $A(f)\in \mathcal{D}^\gamma$.
\end{remark}

\begin{remark}\label{rem:multilinear map} Note that a multilinear map $B:E^{\times n}\to F$ can be seen as a linear map $B: E^{\otimes n}\to F$ and thus as a section $B\in \mathcal{C}^\infty((E^*)^{\otimes n}\otimes F)$. Therefore, assuming the appropriate structure on the regularity structure ensemble, one can define for $f \in \mathcal{D}^\gamma (V) $
\begin{equation}\label{multilinear}
B(f):=\trr Q_{<(\gamma+(n-1)\alpha )\wedge \gamma} \big(\bar{ B} \star f^{\star n} \big) \ ,
\end{equation}
where  $\bar{B}=Q_{<\gamma + n| \alpha |} j B$. One finds similarly as above that if $V$ has regularity $\alpha$, then map 
$$B: \mathcal{D}(V^\mathfrak{t})\to \mathcal{D}^{(\gamma+(n-1)\alpha )\wedge \gamma}\ , \quad f \mapsto B(f)(\cdot):=Q_{<(\gamma+(n-1)\alpha )\wedge \gamma} \trr \big( (j_\cdot B) \star f^{\star n} (\cdot)\big)  $$
is well defined and continuous. Though this time, under the assumption for the second part of Lemma~\ref{lem lin map} one can only conclude the identity 
$$ \mathcal{R} B(f)= B (\mathcal R (Q_{<\gamma}f^{\star n})) \ .$$
\end{remark}

\subsubsection{Differential operators}
In this section we lift $k$-th order differential operators\footnote{See Section~\ref{section differential operators} for the definition of a $k$-th order differential operator in the setting of a non-trivial scaling $\mathfrak{s}$.} to the regularity structure. 
\begin{definition}\label{lift of differential}
Given a sector $V^{\mathfrak{t}_1}\subset T^{\frakt_1}$, a bundle morphism $\D: V^{{\frakt_1}}\to T^{{\mathfrak{t}_2}}$ is called an abstract $k$-th order differential operator if
\begin{itemize}
\item $\D\tau_p \in T^{{\frakt_2}}_{\alpha-k,\delta-k}$ for every $\tau_p \in V^{\mathfrak{t}_1}_{\alpha,\delta}$,
\item $\D \Gamma_{p,q}=\Gamma_{p,q}\D$ for each $\Gamma_{p,q}\in L_{p,q}\ . $ 
\end{itemize}
A model $Z$ is said to realise a $k$-th order differential operator $\mathcal{A}: \mathcal{C}^\infty(E^{\frakt_1})\to \mathcal{C}^\infty (E^{\frakt_2})$ for $\D$, if 
\begin{itemize}
\item $\D$ is an abstract $k$-th order differential operator,
\item $\Pi_p \D \tau_p= \partial \Pi_p \tau_p$ .
\end{itemize}
\end{definition}
The proof of the following Lemma adapts ad verbatim from the flat setting, c.f.\ \cite[Prop.~5.28]{Hai14}
\begin{lemma}
Let $V^{\mathfrak{t}_1}\subset T^{\frakt_1}$ be a sector and $\D: V^{{\frakt_1}}\to T^{{\mathfrak{t}_2}}$ an abstract $k$-th order differential operator. 
If $f\in \mathcal{D}^\gamma (V^{\mathfrak{t}_1})$, then $\D f\in\mathcal{D}^{\gamma-k} (V^{\mathfrak{t}_2})$ and if furthermore $\gamma>k$ and a model $Z$ realises $\mathcal{A} : \mathcal{C}^\infty(E^{\frakt_1})\to \mathcal{C}^\infty (E^{\frakt_2})$ for $\D$, then 
$$\mathcal{R}_Z\D f= \partial \mathcal{R}_Z f\ .$$
\end{lemma}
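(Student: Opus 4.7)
The plan is to adapt the flat-space argument of \cite[Prop.~5.28]{Hai14} to our setting, with the only real modifications being bookkeeping for the bi-grading and the use of the norms $|\cdot|_\beta$ coming from Proposition~\ref{prop relation to cotangent}. The hypothesis that $\D$ commutes with $L$ and lowers both gradings by $k$ is exactly what makes the first half essentially mechanical, while the second half reduces to the duality between rescaling and differentiation of test functions.

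First I would verify that $\D f \in \mathcal{D}^{\gamma-k}(T^{\frakt_2})$. The pointwise bound $\|\D f\|_{\gamma-k;K} \lesssim \|f\|_{\gamma;K}$ is immediate from the fact that $\D$ is a bounded bundle morphism lowering $A$-degree by $k$. For the translation bound, observe that since $\D$ commutes with each $\Gamma_{q,p}\in L_{q,p}$ and lowers the $A$-grading by $k$, one has $Q_{<\gamma-k}\,\D = \D\,Q_{<\gamma}$ as maps on $T^{\frakt_1}$, hence
\begin{equation*}
\D f(q) - Q_{<\gamma-k}\Gamma_{q,p}\D f(p) \;=\; \D\bigl(f(q)-Q_{<\gamma}\Gamma_{q,p}f(p)\bigr)\ .
\end{equation*}
Applied in the norm $|\cdot|_\beta$ with $\beta<\gamma-k$, the boundedness of $\D$ gives
\begin{equation*}
\bigl|\D f(q)-Q_{<\gamma-k}\Gamma_{q,p}\D f(p)\bigr|_\beta \;\lesssim\; \bigl|f(q)-Q_{<\gamma}\Gamma_{q,p}f(p)\bigr|_{\beta+k} \;\lesssim\; d_\fraks(p,q)^{\gamma-k-\beta}\,\interleave f\interleave_{\gamma;K},
\end{equation*}
which is exactly the required estimate.

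For the second statement, assuming $\gamma>k$ so that $\mathcal{R}_Z\D f$ is well defined, I would prove the identity via the uniqueness part of Theorem~\ref{reconstruction theorem}. It suffices to check that $\partial \mathcal{R}_Z f$ satisfies the reconstruction bound (\ref{reconstruction cond}) for $\D f$ at scale $\gamma-k$. Writing $T_\partial$ for the bundle morphism such that $\partial = T_\partial\circ j^k$ (Definition~\ref{definition:differential_operator}), and using that $Z$ realises $\partial$ for $\D$, i.e.\ $\Pi_p \D\tau_p = \partial \Pi_p\tau_p$, one computes, for a test function $\phi^\lambda_p \in \mathcal{B}^{r+k,\lambda}_p(E^{\frakt_2})$,
\begin{equation*}
\bigl\langle \partial\mathcal{R}_Z f - \Pi_p \D f(p),\;\phi^\lambda_p\bigr\rangle
\;=\;\bigl\langle \mathcal{R}_Z f - \Pi_p f(p),\;\partial^\ast \phi^\lambda_p\bigr\rangle,
\end{equation*}
where $\partial^\ast$ denotes the formal adjoint of $\partial$, a differential operator of order $k$ from $\mathcal{D}(E^{\frakt_2})$ to $\mathcal{D}(E^{\frakt_1})$.

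The main (but routine) step is the scaling estimate $\partial^\ast \phi^\lambda_p \in C\,\lambda^{-k}\,\mathcal{B}^{r,\lambda}_{p}(E^{\frakt_1})$ uniformly over $p$ in compacts and $\lambda\in(0,1]$. Given the defining bounds of $\mathcal{B}^{r+k,\lambda}_p$ (which involve $k$ extra derivatives), this follows from Remark~\ref{norm remark}, Remark~\ref{decomposing test functions}, and the Leibniz rule for $\partial^\ast$, with the $\lambda^{-k}$ arising from the $k$ lost derivatives. Plugging this into the reconstruction bound for $f$ at scale $\gamma$ yields
\begin{equation*}
\bigl|\bigl\langle \partial\mathcal{R}_Z f - \Pi_p \D f(p),\phi^\lambda_p\bigr\rangle\bigr|
\;\lesssim\; \lambda^{-k}\cdot \lambda^\gamma\,\interleave f\interleave_{\gamma;K}\|Z\|_{\bar K}
\;=\;\lambda^{\gamma-k}\,\interleave f\interleave_{\gamma;K}\|Z\|_{\bar K}\ .
\end{equation*}
(To handle the fact that the reconstruction statement uses $\mathcal{B}^{r,\lambda}_p$ rather than $\mathcal{B}^{r+k,\lambda}_p$, one simply runs the proof of Theorem~\ref{reconstruction theorem} with the larger $r$; alternatively, since the estimate above already characterises the reconstruction by its uniqueness clause, no further regularity of the test function is needed.) This is the reconstruction bound for $\D f$ at regularity $\gamma-k$, and uniqueness in Theorem~\ref{reconstruction theorem} forces $\partial\mathcal{R}_Z f = \mathcal{R}_Z \D f$.

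The only point requiring care is the scaling lemma for $\partial^\ast$ on a manifold with metric-connection data; in the flat case this is immediate from the chain rule, and here it follows from the same argument in $\fraks$-exponential charts together with the compatibility of the norms on jet bundles established in Section~\ref{basic facts on jets}.
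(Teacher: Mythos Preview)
Your proposal is correct and is precisely the ad verbatim adaptation of \cite[Prop.~5.28]{Hai14} that the paper itself invokes in lieu of a proof. The only additions beyond the flat case are the bi-grading bookkeeping (which you handle via $Q_{<\gamma-k}\D=\D Q_{<\gamma}$) and the scaling of $\partial^\ast\phi^\lambda_p$ in $\fraks$-exponential charts, both of which you treat correctly.
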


While this definition allows to lift any differential operator, the next remark describes a way to lift all $k$-th order operators ``at once''. 
\begin{remark}
Suppose the operators $\{D^l\}_{|l|_\fraks\leq k}$ from Section~\ref{section higher order derivatives} have been lifted to abstract differential operators $\{\mathcal{D}^l \}_{|l|_\fraks\leq k}$ on the regularity structure ensemble and for every $l\in \mathbb{N}^d$ such that $ |l|_\fraks\leq k$ the model realises $D^l$ for $\mathcal{D}^l$. Recall, that any $k$-th order differential operator on $E$, by definition factors through the jet bundle $J^k(E)\simeq \bigoplus_{m=0}^k (T^*M)^m\otimes E $, i.e for each $k$-th order differential operator $\mathcal{A}: \mathcal{C}^\infty(E)\mapsto \mathcal{C}^\infty(F)$ there exists a unique bundle morphism $T_\mathcal{A}\in L\big(\bigoplus_{m=0}^k (T^*M)^m\otimes E, F\big)$ such that $\mathcal{A}= T_\mathcal{A}\circ (\oplus_{|l|_\fraks\leq k} {D}^l)$ up to canonical isomorphism.
Thus, it is natural to set
$$\D f(p)=T_\mathcal{A} (\oplus_{|l|_\fraks\leq k} \mathcal{D}^l f)(p), $$
where the linear functional $T_\mathcal{A}$ is lifted as in Section~\ref{section linear}.
\end{remark}

\subsection{General nonlinearities}\label{section nonlinearities}
Next we shall define the composition of a modelled distribution with a smooth nonlinearity $G: E\to F$ with the property that $G(e_p)\in F|_p$ for each $e_p\in E|_p$. Let $\pi_E: E\to M $ be the projection map and let $E\ltimes F:= \pi_E^*F$ be the pull-back bundle of $F$ through $\pi_E$. The starting point for our construction will be the observation that $G$ as above can naturally be seen as a section of $E\ltimes F$ and an investigation of the space $J^\infty(E\ltimes F)$.
\subsubsection{Properties of $J^\infty (E\ltimes F)$}
First we introduce the following useful operator.
\begin{definition}\label{def alternative jet}
For a vector bundle $\tilde{F}\to M$ define the operator $$j^E_\cdot: \mathcal{C}^\infty(E\ltimes \tilde F)\to \mathcal{C}^\infty (E\ltimes J^\infty\tilde F) $$ as follows. Recall the construction associating to each $e_p\in E|_p$ a vector field $e_p(\cdot)$ by parallel translation in (\ref{paralell}). For $H\in \mathcal{C}^\infty(E\ltimes \tilde F)$ let 
$j_\cdot^E(H)(\cdot)\in \mathcal{C}^\infty (E\ltimes J^\infty_p\tilde F) $ be given by $j_p^E(H)(e_p)= j_p H( e_p(\cdot))$. This is well defined since $H( e_p(\cdot))\in \mathcal{C}^\infty(\tilde{F})$.
\end{definition}
Let $P(E):= \bigoplus_n (E^*)^{\otimes_s n}$, then one has the following lemma.
\begin{lemma}\label{lemma: polynomial special case for nonlinearity}
Denote by $i_F: \cC^{\infty}(P(E)\otimes F)\to \cC^{\infty}(E\ltimes F)$ and $i_{JF}: \cC^{\infty}(P(E)\otimes J^\infty F)\to C^{\infty}(E\ltimes J^\infty F)$ the canonical injections. Then the following diagram commutes
\[ \begin{tikzcd}
\mathcal{C}^\infty(P(E)\otimes F)  \arrow{r}{i_F} \arrow[swap]{d}{j_\cdot} &C^{\infty}(E\ltimes F)\arrow[swap]{d}{j_\cdot^E}  \\%
\mathcal{C}^\infty (J^\infty(P(E)\otimes F))\simeq \mathcal{C}^\infty (P(E)\otimes J^\infty F) \arrow{r}{i_{JF}}& \mathcal{C}^\infty (E \ltimes (J^\infty F)) \ .
\end{tikzcd}
\]
\end{lemma}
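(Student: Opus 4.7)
The plan is to check the commutativity pointwise, after reducing by linearity to elementary sections. Since all four maps in the diagram are $\mathbb{R}$-linear in the section $H \in \mathcal{C}^\infty(P(E) \otimes F)$, and since every such $H$ is a locally finite sum of sections of the form $\Pi \otimes V$ with $\Pi \in \mathcal{C}^\infty((E^*)^{\otimes_s n})$ and $V \in \mathcal{C}^\infty(F)$, it suffices to check the diagram on such an elementary tensor. Moreover, for any fixed $p \in M$ and $e_p \in E|_p$, both sides produce an element of $J^\infty F|_p$ determined entirely by the jet of the input at $p$, so we can work in an arbitrarily small neighbourhood of $p$.

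First I would fix an $\fraks$-exponential chart centred at $p$ together with the orthonormal parallel frames $\{e_{i,p}(\cdot)\}_i$ of $E$ and $\{f_{j,p}(\cdot)\}_j$ of $F$ obtained by radial parallel transport from orthonormal bases of $E|_p$ and $F|_p$ (as in the construction preceding Lemma~\ref{i well defined}); the dual frame $\{e^*_{i,p}(\cdot)\}_i$ is then again parallel. Writing $e_p = \sum_i c^i e_{i,p}(p)$, the parallel transport satisfies $e_p(q) = \sum_i c^i e_{i,p}(q)$ with $c^i$ constant in $q$, so the key point is that the evaluation bracket $e^*_{\alpha,p}(q)\bigl(e_{p}(q),\ldots,e_{p}(q)\bigr)$ is a constant polynomial in $c$, independent of $q$. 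Consequently, expanding $\Pi = \sum_\alpha \Pi^\alpha e^*_{\alpha,p}$ and $V = \sum_j V^j f_{j,p}$, one obtains
\[
 i_F(\Pi \otimes V)(q, e_p(q)) = \sum_j \Bigl(\sum_\alpha \Pi^\alpha(q) \, P_\alpha(c)\Bigr) V^j(q)\, f_{j,p}(q),
\]
where $P_\alpha(c)$ is a polynomial in the $c^i$'s. Then $j^E_\cdot \circ i_F$ evaluated at $(p,e_p)$ is the infinite jet of this scalar-coefficient expression, and by Corollary~\ref{lemma gradedness exponential chart} (gradedness of parallel trivialisations) this jet is represented simply by applying $j_p$ to the scalar functions $\Pi^\alpha(\cdot) V^j(\cdot)$ and tensoring with $f_{j,p}(p)$.

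Going the other way, Corollary~\ref{lemma gradedness exponential chart} applied to the bundle $P(E) \otimes F$ shows that $j_p(\Pi \otimes V)$ is represented in the same trivialisation by $\sum_{\alpha,j} j_p(\Pi^\alpha V^j) \otimes e^*_{\alpha,p}(p) \otimes f_{j,p}(p)$. The identification $\mathcal{C}^\infty(J^\infty(P(E) \otimes F)) \simeq \mathcal{C}^\infty(P(E) \otimes J^\infty F)$ in the statement is the one obtained from Remark~\ref{rem isometry} and Proposition~\ref{prop relation to cotangent}, which factor both sides through $JM \otimes P(E) \otimes F$ via a tensor swap; applying $i_{JF}$ then pairs the $P(E)$-factor with $e_p$, contributing the same polynomial $P_\alpha(c)$, and produces the jet of $\sum_j (\sum_\alpha \Pi^\alpha P_\alpha(c)) V^j f_{j,p}(\cdot)$ at $p$. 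This matches the first computation. The main subtlety \dash and the only non-bookkeeping step \dash is that the canonical graded decomposition of jets is defined via covariant derivatives and therefore depends on the connection; what makes the argument go through is precisely Corollary~\ref{lemma gradedness exponential chart}, which guarantees that in a parallel trivialisation the graded splitting of $J^\infty$ is compatible with pointwise evaluation of the polynomial factor $P(E)$.
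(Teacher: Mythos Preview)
Your approach is essentially the same as the paper's: both reduce to local computations in a parallel frame of $E$ and use the key observation that the pairings $\langle e^*_{i,p}(q), e_p(q)\rangle$ are constant in $q$ (since both factors are parallel transports), so that taking the jet of $i_F(A)(\cdot, e_p(\cdot))$ amounts to taking the jet of the $F$-valued coefficients and multiplying by constant polynomials in the components of $e_p$. The paper's proof is slightly leaner: it absorbs your $V^j f_{j,p}$ into $F$-valued coefficients $\alpha_I$ without introducing a separate frame for $F$, and it does not invoke Corollary~\ref{lemma gradedness exponential chart} or Remark~\ref{rem isometry}. Those references are not really needed here, since the lemma concerns $J^\infty F$ rather than the graded space $JF$, and the identification $J^\infty(P(E)\otimes F)\simeq P(E)\otimes J^\infty F$ follows directly from the parallel trivialisation of $P(E)$ without any appeal to the graded splitting. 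Your extra bookkeeping is harmless, but the essential content of both arguments is identical.
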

\begin{proof}
We show that $ j^E_p (i_F(A))= i_{JF} (j_p A)  $
for every for $A\in C^{\infty}(P(E)\otimes F)$.
Indeed for each $p$ we can locally write $$A(q) = \sum_I \alpha_I(q) (e^*_{1,p}(q))^{\otimes i_1}\otimes\ldots\otimes (e^*_{n,p}(q))^{\otimes i_n}\ ,$$
where $I$ runs over a finite subset of $\mathbb{N}^{\dim E}$, $\{e_{i,p}\}_{i=1}^{\dim E}$ is an orthonormal basis of $E_p$, $e_{i,p}(\cdot)$ are the corresponding local sections as in Definition~\ref{def alternative jet} and $e^*_{i,p}(\cdot)$ are the dual sections.
Thus 
$$ A(e_p(q))=\sum_{I } \alpha_I(q) \langle e^*_{1,p}(q), e_p(q)\rangle^{i_1}\cdots\langle e^*_{n,p}(q), e_p(q)\rangle^{i_n} \ ,$$
which implies that
$$j^E_p (i_F(A))(e_p)= \sum_{I }  \langle e^*_{1,p}(p), e_p(p)\rangle^{i_1}\cdots\langle e^*_{n,p}(p), e_p(p)\rangle^{i_n} j_p\alpha_I, $$
since each $\langle e^*_{j,p}(q), e_p(q)\rangle$ is constant.
On the other hand we have 
$$j_p A\simeq \sum_I  (e^*_{1,p}(q))^{\otimes i_1}\otimes\ldots\otimes (e^*_{n,p}(q))^{\otimes i_n}\otimes j_p\alpha_I$$
under the canonical isomorphism $ J^\infty(P(E)\otimes F)\simeq P(E)\otimes J^\infty F$ which implies the claim.
\end{proof}

\begin{remark}\label{translation replaced by admissible realisation}
Observe that for any $H\in \mathcal{C}^\infty(E\ltimes \tilde F)$ 
$$j^E_p(H)(e_p)=j_p H( Re_p (\cdot))\ ,$$ where $R$ is an admissible realisation of $JE$ and $e_p\in E|_p$ is interpreted as an element of $(J_p E)_0 \subset J_p E$ using the canonical isomorphism.
\end{remark}

\begin{remark}\label{remark interchanging derivatives}
Clearly, for any $H\in \mathcal{C}^\infty(E\ltimes \tilde F)$ and $g\in \mathcal{C}^\infty(E)$
$$D^n j_p H( g(\cdot))= j_p D^n H( g(\cdot)) \ $$
up to canonical isomorphism.
\end{remark}

The next Lemma provides the intuition on which the rest of this section is based. 
\begin{lemma}\label{lemma:canonical isom}
There is a canonical isomorphism 
$J (E\ltimes F)\simeq E \ltimes \big( P (E) \otimes JF \big)$ .
\end{lemma}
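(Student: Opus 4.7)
The plan is to construct explicit maps in both directions and verify they are mutual inverses, with Lemma~\ref{lemma: polynomial special case for nonlinearity} doing most of the heavy lifting. The underlying idea is that a jet at $e_p\in E|_p$ of a section $H\in\mathcal{C}^\infty(E\ltimes F)$ splits into its fibrewise Taylor coefficients\dash symmetric $n$-linear maps on $E|_p$ with values in $F|_p$, i.e.\ elements of $(E^*|_p)^{\otimes_s n}\otimes F|_p\subset P(E)\otimes F$\dash together with the horizontal jets of these coefficients as $p$ varies. Denoting by $D_v^n H(e_q)\in(E^*|_q)^{\otimes_s n}\otimes F|_q$ the $n$-th fibrewise derivative of $H$ at $e_q\in E|_q$, and extending $e_p$ to a local section $e_p(\cdot)$ of $E$ via parallel transport as in~\eqref{paralell}, the map $q\mapsto D_v^n H(e_p(q))$ is a local section of $(E^*)^{\otimes_s n}\otimes F$ near $p$. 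Since this frame of $(E^*)^{\otimes_s n}$ is $\nabla$-parallel at $p$, the jet at $p$ of this section canonically lies in $(E^*|_p)^{\otimes_s n}\otimes J_pF$ by the grading of Proposition~\ref{prop relation to cotangent}. Setting
\[
\Psi(j_{e_p}H)\;:=\;\sum_{n\geq 0}\frac{1}{n!}\,j_p\bigl(D_v^n H(e_p(\cdot))\bigr)\;\in\;(P(E)\otimes JF)|_p
\]
defines the candidate isomorphism.

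For the inverse $\Phi$, given $\tau=\sum_n\alpha_n\otimes\phi_n\in (P(E)\otimes JF)|_p$, choose an admissible realisation $R$ of $JF$ and parallel-transport extensions $\alpha_n(\cdot)$ of $\alpha_n$ to form $A(q):=\sum_n\alpha_n(q)\otimes R(\phi_n)(q)\in\mathcal{C}^\infty(P(E)\otimes F)$; then set $\Phi(\tau):=j_{e_p}\,i_F(A)$. The identity $\Psi\circ\Phi=\id$ follows directly from the commutative diagram of Lemma~\ref{lemma: polynomial special case for nonlinearity} combined with Remark~\ref{translation replaced by admissible realisation}, since for polynomial sections the fibrewise Taylor expansion is exact and the horizontal jet coincides with the classical~$j$.

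The harder direction, $\Phi\circ\Psi=\id$, says that every jet class $j_{e_p}H$ is reconstructed from its fibrewise Taylor coefficients and their horizontal jets. I would verify this in a local trivialisation of $E$ coming from an $\fraks$-exponential chart at $p$ together with a parallel-transported orthonormal frame (as in the proof of Lemma~\ref{i well defined}), where $H$ becomes a smooth function of base and fibre coordinates and the claim reduces to the elementary fact that the joint jet of such a function at $(0,e_p)$ is determined by its fibrewise Taylor series with coefficients that are themselves jets in the base. The main obstacle is well-definedness: one must show that $\Psi$ depends only on the jet equivalence class of $H$ (rather than on $H$ itself), that $\Phi$ is independent of the choice of admissible realisation, and that the entire construction respects the bigrading. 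All three checks reduce, via Remark~\ref{remark interchanging derivatives} and Lemma~\ref{lemma: polynomial special case for nonlinearity}, to the compatibility of parallel transport with the metric connections established in Section~\ref{section bundles and metric} and to the corresponding grading isomorphism of Proposition~\ref{prop relation to cotangent}.
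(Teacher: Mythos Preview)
There is a genuine gap: your $\Psi$ and $\Phi$ are not mutual inverses. Your $\Psi$ computes the fibrewise Taylor expansion of $H$ \emph{centred at $e_p$}, whereas your $\Phi$ reads $\tau=\sum_n\alpha_n\otimes\phi_n$ as the coefficients of a polynomial \emph{centred at the zero section} of $E$ (that is what the canonical inclusion $i_F$ does). The composition $\Psi\circ\Phi$ is therefore the re-centring automorphism of $P(E|_p)\otimes J_pF$ sending a polynomial $p(\,\cdot\,)$ to $p(\,\cdot\,+e_p)$, not the identity. Concretely, for $E=F=M\times\mathbb{R}$ and $\tau=X^2\otimes\phi$ one obtains $\Psi(\Phi(\tau))=(X+e_p)^2\otimes\phi$. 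Your appeal to Lemma~\ref{lemma: polynomial special case for nonlinearity} does not close this: that lemma identifies $j^E_p(i_F(A))(e_p)$, which is only the $k=0$ component of $\Psi$; for $k\geq 1$ the term $\frac{1}{k!}j^E_p\bigl(D_v^k i_F(A)\bigr)(e_p)$ contains cross-terms involving contractions of $\alpha_n$ (for $n>k$) with powers of $e_p$.

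The fix is to centre $\Phi$ at $e_p$: set $H(e_q)=\sum_n\langle\alpha_n(q),(e_q-e_p(q))^{\otimes n}\rangle R(\phi_n)(q)$ and $\Phi(\tau)=j_{e_p}H$. Then $D_v^kH(e_p(q))=k!\,\alpha_k(q)\,R(\phi_k)(q)$ exactly, and both composites become the identity. Your $\Psi$ itself is the correct isomorphism; only the proposed inverse is wrong.

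For comparison, the paper's proof avoids all of this by a short algebraic chain: the connection on $E$ splits $T^*E\simeq E\ltimes(T^*M\oplus E^*)$, and combining this with the identification $J(E\ltimes F)\simeq\bigl(\bigoplus_k(T^*E)^{\otimes_s k}\bigr)\otimes(E\ltimes F)$ from Proposition~\ref{prop relation to cotangent} and the binomial decomposition of $(T^*M\oplus E^*)^{\otimes_s k}$ yields the result without constructing an explicit inverse or touching local coordinates.
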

\begin{proof}
Not that the connection on $E$ identifies a canonical horizontal direction complementing the vertical subspace of the tangent bundle of $E$ with the property that 
$T_{\text{hori}}E \simeq E\ltimes TM$ and $T_{\text{vert}}E \simeq E\ltimes E$.
Therefore we can decompose the cotangent space as $T^* E= T_{\text{hori}}^*E\oplus T_{\text{vert}}^*E$, note the canonical isomorphisms
$$J(E\ltimes F)\simeq \big(\bigoplus_k (T^*E)^{\otimes_s k}\big)\otimes (E\ltimes F) \simeq (E\ltimes F)\otimes \bigoplus_k (T_{\text{hori}}^*E\oplus T_{\text{vert}}^*E)^{\otimes_s k}\ ,$$
where the fist isomorphism was given in Section~\ref{section infinite jet bundle}.
Since $$(T_{\text{hori}}^*E\oplus T_{\text{vert}}^*E)^{\otimes_s k}\simeq \bigoplus_{m+n=k} (T_{\text{hori}}^*E)^{ \otimes_s n}\otimes (T_{\text{vert}}^*E)^{\otimes_s m}\ ,$$ 
and since
$T_{\text{hori}}^*E \simeq E\ltimes T^*M$ and $T_{\text{vert}}^*E \simeq E\ltimes E^*$
we obtain 
\begin{align*}
J(E\ltimes F) & \simeq  E\ltimes \big(\bigoplus_{m,n} (T^*M)^{ \otimes_s n}\otimes F\otimes  (E^*)^{\otimes_s m}\big)\\
&\simeq (E\ltimes JF)\otimes \big( E\ltimes\big(\bigoplus_{n} (E^*)^{\otimes_s n} \big)  \big) \\
& \simeq\big( E\ltimes\big( \bigoplus_{n} (E^*)^{\otimes_s n}\big) \big)\otimes (E\ltimes JF) \\
& \simeq E \ltimes \big(\bigoplus_{n} (E^*)^{\otimes_s n}\otimes JF  \big) \\
& = E \ltimes \big( P(E)\otimes JF \big) \ .
\end{align*}
\end{proof}

\begin{remark}
In view of of the isomorphism $J(E\ltimes F)\simeq E\ltimes (JF\otimes P(E))$, one sees that the following diagram commutes for any $\gamma>0$
\[\begin{tikzcd}
C^{\infty}(E\ltimes F) \arrow{r}{Q_{<\gamma}j_{\cdot}} \arrow[swap]{dr}{Q_{<\gamma}\sum_n\frac{1}{n} D^n j^E_\cdot}  &  \mathcal{C}^\infty (J(E\ltimes F))\arrow{d}{\sim} \\
& \cC^{\infty} (E \ltimes (JF\otimes P(E))) \ .
\end{tikzcd}  
\] 
We define the grading on $J(E\ltimes F)$ as the grading that turns the isomorphism $\sim$ into a graded isomorphism.
\end{remark}

\subsubsection{Lifting general nonlinearites}\label{section:lifting general nonlinearities}
We have seen in Lemma~\ref{lemma:canonical isom} that for every $p\in M$ there is a canonical isomorphism 
\begin{equation}\label{isomorphism for composeition}
J_{p,e_p}(E\ltimes F)\simeq J_p(F)\otimes P(E)_p \ .
\end{equation}
We shall decompose a modelled distribution $f$ with values in $\bar V$, a function-like sector, as
$f(p)=\bar{f}_p+ \tilde{f}_p$ where $\bar{f}= Q_0 f\in \bar{V}_0\simeq E$ and $\tilde{f}= f-\bar{f}\in \bar{V}_{>0}$ and formally write
\begin{equation}\label{wished for}
G(f)(p)= \big(j_{p, \bar{f}_p}G\big) (\tilde{f}_p)\ .
\end{equation}
We define the right-hand side of (\ref{wished for}) in two steps, for now ignoring truncations and the appropriate assumptions on the sector ensemble. The rigorous definition under the appropriate assumptions is given below.
\begin{enumerate}
\item First observe that since $E|_p$ is a vector space for each $p\in M$ and $e_p\in E_p$,
for any section $G\in \mathcal{C}^\infty(E\ltimes F)$ there is a canonical definition of $D^n G(p, e_p)\in (E|_p^*)^{\otimes_s n}\otimes F|_p$ as the total derivative in the ``vertical direction''. 
\item We identify $j_{p,e_p} G= \sum_n \frac{1}{n!} j^E_p (D^n G)(e_p)$, where each $j^E_\cdot (D^n G)$ is interpreted as a section of $E \ltimes \big((E^*)^{\otimes_s n}\otimes JF \big)$. Thus for $f= \bar{f}+\tilde f$, we formally set
\begin{equation}\label{composition}
G(f)(p)= \big(j_{p, \bar{f}_p}G\big) (\tilde{f}_p):= \sum_n \frac{1}{n!}\trr\big(j_p^E(D^nG)(\bar{f}_p ) \star \tilde{f}_p^{\star n})\big) \ ,
\end{equation}
with the convention that $\tilde{f}_p^{\star 0}=1\in \RR$. 
\end{enumerate}

We make the following assumption in order to make \eqref{composition} rigorous for modelled distributions in $\mathcal{D}^\gamma(\bar{V})$ for some $\gamma>0$.
\begin{assumption}\label{Assumption Nonlinearites}
Let $\mathcal{T}=(\{E^\mathfrak{l}\}_{\mathfrak{l}\in \mathfrak{L}}, \{T^\mathfrak{l}\}_{\mathfrak{l}\in \mathfrak{L}},L)$ be a regularity structure ensemble and $\{V^\mathfrak{t}\}_{\mathfrak{t}\in{\mathfrak{L}}}$ a sector ensemble such that the following properties are satisfied.
\begin{enumerate}
\item Let $\bar{V}=V^{\bar{\frakt}}\subset T^{\bar{\frakt}}$ be a function-like sector and let $\zeta= \min \big\{\alpha \in A \cap (0,\infty) \ : \ \bar{V}_{\alpha} \neq \{ 0\} \big\}$. 
Set $\bar{V}^{0}= M\times \mathbb{R}$ to be the trivial sector and $\bar{V}^1= \bar{V}$. Assume that for every $n\in\mathbb{N}$ such that $n-1\leq \frac{\gamma}{\zeta}$
\begin{enumerate}
\item $V^n$ is a sector containing $J(E^{\otimes n})_{<\gamma}$
\item for every $k,l\in \mathbb{N}$ such that $k+l<n$ there is a $\gamma$-precise abstract tensor product 
$$\star: \bar{V}^k \times \bar{V}^l\to \bar{V}^n$$ 
which agrees with the canonical product on jets.
\end{enumerate}

\item For every $k\in \mathbb{N}$ such that $k-1\leq \frac{\gamma}{\zeta}$ the sector ensemble contains sectors $W^k$ and $U^k$ with the property that 
\begin{enumerate}
\item $\big(J((E^*)^{\otimes_s k}\otimes F)\big)_{<\gamma}\simeq \big((E^*)^{\otimes_s k}\otimes JF)\big)_{<\gamma} \subset W^k$ and there is a $\gamma$-precise abstract tensor product 
$$\star: W^k \times \bar{V}^k\to U^k$$
which again agrees with the canonical product on jets.
\item For each $k$ there is an abstract trace $\trr: U^k \to W^0$ which agrees with the canonical trace on jets.
\end{enumerate}
\item the products $\star$ are associative in the obvious sense and for $n\in \mathbb{N}$ and $\tau\in \bar{V}$ we write 
$$\tau^{\star n} :=\tau \star \tau^{\star n-1}\quad \text{and} \quad \tau^{\star_\gamma n} :=\tau \star_\gamma \tau^{\star_\gamma n-1}$$
whenever the expressions make sense.
\end{enumerate}
\end{assumption}
%

\begin{theorem}\label{theorem composition}
Let $\mathcal{T}$ be a regularity structure ensemble, $\{V^{\mathfrak{l}}\}_{\frakl \in \bar{\mathfrak{L}}}$ as in Assumption~\ref{Assumption Nonlinearites}, $Z=(\Pi,\Gamma)$ a model for $\mathcal{T}$ and
 $G\in \mathcal{C}^\infty(E\ltimes F)$. For $f\in \mathcal{D}^\gamma(\bar{V})$ define\footnote{Observe that this sum is finite due to the projection in the definition of $\star_\gamma$ . More precisely only terms such that $n\leq [\gamma/\zeta]$ contribute.}
$$G_\gamma(f)(p):= \sum_n \frac{1}{n!}\trr\big(j_p^E(D^nG)(\bar f_p ) \star_\gamma \tilde{f}_p^{\star_\gamma n}\big)\in \mathcal{D}^\gamma(U^0) \ ,$$ 
the map
$f\mapsto G_\gamma(f)$ is locally Lipschitz continuous in the sense that for every $C>0$,
\begin{equation}\label{nonlinear difference bound1}
\interleave G_\gamma(f)-G_\gamma(g)\interleave_{\gamma ; K}\lesssim \interleave f-g\interleave_{\gamma ; K}
\end{equation}
uniformly over $f,g$ satisfying $\interleave f\interleave_{\gamma ; K} , \interleave g\interleave_{\gamma ; K}\leq C$.
\end{theorem}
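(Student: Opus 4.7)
The plan is to adapt the proof of the scalar composition theorem (\cite[Thm.~4.16]{Hai14}) to this geometric/bundle-valued setting, with the two new ingredients being (i) that the ``variable part'' $\bar f_p\in E^{\bar\mathfrak{t}}|_p$ lives in a fibre of a vector bundle rather than in $\mathbb{R}^n$, and (ii) that the jet $j^E_p$ of Definition~\ref{def alternative jet} replaces the naive Taylor jet. The target spaces are controlled via the canonical isomorphism $J(E\ltimes F)\simeq E\ltimes(JF\otimes P(E))$ from Lemma~\ref{lemma:canonical isom}, which is precisely what identifies the summand $\trr\bigl(j_p^E(D^nG)(\bar f_p)\star_\gamma \tilde f_p^{\star_\gamma n}\bigr)$ as a section of $U^0$.

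First I would verify that the defining sum is finite and that $G_\gamma(f)$ lies pointwise in $U^0$. Writing $\bar f=Q_0 f\in\bar V_{0,:}\simeq E$ and $\tilde f=f-\bar f\in \bar V_{>0}$, the product $\tilde f_p^{\star_\gamma n}$ lives in homogeneities $\geq n\zeta$, so by the projection implicit in $\star_\gamma$ only the terms with $n\zeta<\gamma$ survive. For such $n$, the section $j^E_\cdot(D^nG)(\bar f_\cdot(\cdot))$ is a smooth section of $\bigl(JF\otimes (E^*)^{\otimes_s n}\bigr)_{<\gamma}$ (since $G$ is smooth and the parallel-transport construction in Definition~\ref{def alternative jet} is smooth in $e_p$) whose $|\cdot|$-norm on a compact $K$ is bounded in terms of $\sup_p|\bar f_p|$, hence in terms of $\interleave f\interleave_{\gamma;K}$. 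Combining this with Lemma~\ref{lemma product} (for each factor $\star_\gamma$) and Lemma~\ref{lem lin map} (for the final $\trr$) yields the pointwise bound $\|G_\gamma(f)(p)\|_\alpha\lesssim 1$ uniformly on $K$.

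The core of the theorem is the translation bound
\begin{equation*}
\bigl|G_\gamma(f)(q)-Q_{<\gamma}\Gamma_{q,p}G_\gamma(f)(p)\bigr|_\beta\lesssim d_\fraks(p,q)^{\gamma-\beta},\qquad \beta<\gamma.
\end{equation*}
The strategy is to insert a two-variable Taylor expansion of $G$ around $\bar f_p$ in its fibre variable:
\begin{equation*}
D^nG(\cdot,\bar f_q)=\sum_{m\geq 0}\tfrac1{m!}D^{n+m}G(\cdot,\bar f_p)(\bar f_q-\bar f_p)^{\otimes m}.
\end{equation*}
Plugging this into the definition of $G_\gamma(f)(q)$ and applying the $j^E_q$ operation (using Remark~\ref{remark interchanging derivatives} to commute $D$ with $j^E_\cdot$), one expresses $G_\gamma(f)(q)$ as a double sum over $n,m$ in which the factor $(\bar f_q-\bar f_p)^{\otimes m}$ can be combined with $\tilde f_q^{\star_\gamma n}$ using associativity of $\star_\gamma$. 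On the other hand, the bi-grading is respected by $\Gamma$ on jet sectors and $\star_\gamma$ is $\gamma$-precise, so $Q_{<\gamma}\Gamma_{q,p}$ distributes over the products, and the action of $\Gamma_{q,p}$ on $j^E_p(D^{n+m}G)(\bar f_p)$ equals $j^E_q(D^{n+m}G)(\bar f_p)$ up to an error of order $d_\fraks(p,q)^\gamma$ (this is the defining property of a jet model from Definition~\ref{definition polynomial model} applied to sections of $E\ltimes(P(E)\otimes JF)$, combined with Lemma~\ref{relating}). Similarly, $Q_{<\gamma}\Gamma_{q,p}\tilde f_p-\tilde f_q$ is controlled by $d_\fraks(p,q)^{\gamma-\cdot}\interleave f\interleave_{\gamma;K}$ since $f\in\mathcal D^\gamma$. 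Matching up the two expansions term by term using the combinatorial identity $\sum_{n+m=k}\binom{k}{m}\bullet=\bullet$ shows that everything cancels up to the required remainder; the main obstacle here is the careful bookkeeping that the two independent shifts (change of base point $p\mapsto q$ in $j^E$, and change of expansion point $\bar f_p\mapsto\bar f_q$) combine correctly without producing spurious terms, which is precisely where Lemma~\ref{lemma: polynomial special case for nonlinearity} is needed to identify the two viewpoints on $J(E\ltimes F)$.

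Finally, the Lipschitz bound \eqref{nonlinear difference bound1} follows from a telescoping argument: writing $f-g=\bigl((\bar f-\bar g)+(\tilde f-\tilde g)\bigr)$ and expanding $G(\bar f_p)-G(\bar g_p)=\int_0^1 DG(t\bar f_p+(1-t)\bar g_p)(\bar f_p-\bar g_p)\,dt$, each factor in the product $j^E_p(D^nG)(\bar f_p)\star_\gamma\tilde f_p^{\star_\gamma n}-j^E_p(D^nG)(\bar g_p)\star_\gamma\tilde g_p^{\star_\gamma n}$ can be replaced one at a time, and the Lipschitz bounds for $\star_\gamma$ and $\trr$ (Lemma~\ref{lemma product}, Remark~\ref{remark trace}) together with smoothness of $G$ on the compact ball $\{|v|\leq C\}\subset E|_p$ give the result, with implicit constant depending only on $C$, $K$ and $G$.
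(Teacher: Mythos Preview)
Your outline follows the same strategy as the paper, but there are two concrete gaps that matter.

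\textbf{Fibre-crossing.} You write expressions like $\bar f_q-\bar f_p$ and $j^E_q(D^{n+m}G)(\bar f_p)$. These are ill-typed: $\bar f_p\in E|_p$ and $\bar f_q\in E|_q$ live in different fibres, and $j^E_q$ only accepts elements of $E|_q$. The paper resolves this by passing through the admissible realisation, rewriting $j^E_p(D^nG)(\bar f_p)=j_p D^nG(R\bar f_p(\cdot))$ via Remark~\ref{translation replaced by admissible realisation}, so that everything becomes an ordinary jet of a concrete smooth section. The relevant ``difference'' is then $\bar f_q-\Gamma_{q,p}\bar f_p\in E|_q$ (equivalently $\bar f_q-R\bar f_p(q)$), not the naive subtraction across fibres.

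\textbf{Jet-level Taylor remainder.} Your sentence ``matching up the two expansions term by term \dots\ everything cancels up to the required remainder'' hides the hardest step. After the vertical Taylor expansion around $R\bar f_p(\cdot)$ one is left with terms of the form
\[
j_q\Bigl(D^nG\bigl(R\bar f_q(\cdot)\bigr)-\sum_{k\le m}\tfrac1{k!}D^{n+k}G\bigl(R\bar f_p(\cdot)\bigr)\bigl(R\bar f_q(\cdot)-R\bar f_p(\cdot)\bigr)^{\otimes k}\Bigr),
\]
and one needs the bound $|\cdot|_l\lesssim |\bar f_q-R\bar f_p(q)|^{(m+1-l)\vee 0}$ in \emph{each} jet grade $l$, not just pointwise. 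This is Lemma~\ref{em version Taylor}, proved separately in the paper; it is not a consequence of Lemma~\ref{lemma: polynomial special case for nonlinearity} or Lemma~\ref{relating}. Once this is in hand, the paper converts $|\bar f_q-\Gamma_{q,p}\bar f_p|_0^{[\gamma/\zeta]-n+1-\alpha}$ into $d_\fraks(p,q)^{\gamma-\kappa}$ via the elementary but crucial inequality $[\gamma/\zeta]-n+1-\alpha\ge(\gamma-\kappa)/\zeta$, using $\beta\ge n\zeta$ and $\alpha+\beta=\kappa$; this power-counting is what closes the estimate and is absent from your sketch.

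Your treatment of the Lipschitz bound \eqref{nonlinear difference bound1} via $\int_0^1\partial_t G_\gamma(g+th)\,dt$ matches the paper exactly.
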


\begin{remark}\label{rem:upgrade_trick}
In the setting of Theorem~\ref{theorem composition} it can be shown along the lines of \cite[Proposition 3.11]{HP15} that if $\tilde Z = (\tilde \Pi, \tilde{\Gamma})$ is a second model that agrees with $Z$ on jets and the manifold $M$ is compact, then
\begin{equation}\label{nonlinear difference bound2}
\interleave G_\gamma(f);G_\gamma(\tilde{g}) \interleave_{\gamma ; K} \lesssim \interleave f;\tilde{g}\interleave_{\gamma ; K}+ \|Z;\tilde{Z}\| \ ,
\end{equation}
uniformly over $f,\tilde{g}$ satisfying $\interleave f\interleave_{\gamma ; K}, \interleave \tilde{g} \interleave_{\gamma ; K} \leq C$
and models satisfying $\|Z\|_K$, $\|\tilde{Z}\|_K\leq C$. 
Indeed, let us sketch the only difference in the set-up when applying the same trick as in the proof of \cite[Proposition 3.11]{HP15}, which is the fact that we identify jets, instead of only constants and that we work with regularity structure ensembles.
Thus, we construct a new regularity structure ensemble $\hat{\mathcal{T}}=(\{E^\mathfrak{l}\}_{\mathfrak{l}\in \mathfrak{L}}, \{\hat{T}^\mathfrak{l}\}_{\mathfrak{l}\in \mathfrak{L}},\hat{L})$ where
$$\hat{T}^\mathfrak{l}= \hat{T}^\mathfrak{l} \oplus \hat{T}^\mathfrak{l}/{\sim}$$
where $(\tau_1, \tau_2)\sim (\bar{\tau}_1, \bar{\tau}_2)$ if $\tau_1=\bar{\tau}_1 + \sigma$ and $ \tau_2=\bar{\tau}_2 -\sigma $ for some jet $\sigma\in JE^\mathfrak{l}$.
and $\hat{L}\subset L\oplus L$ consists of all elements $(\Gamma_1, \Gamma_2)\in L\oplus L$ such that  $\Gamma_1|_{JE^\mathfrak{t}}= \Gamma_2|_{JE^\mathfrak{t}}$.
From here onwards the proof of \cite[Proposition 3.11]{HP15} adapts mutatis mutandis.
\end{remark}

\begin{remark}
Note that the non-linearity \eqref{equation to solve} fits into the setting of this section by taking $E$ to be a direct sum of vector bundles.
\end{remark}
\begin{remark}
Note that in the case $G: \mathbb{R}\to \mathbb{R}$, the definition above agrees up to canonical isomorphisms with the one in \cite{Hai14}. Furthermore, in view or Lemma~\ref{lemma: polynomial special case for nonlinearity}, in the special case of polynomial nonlinearites, this construction is consistent with the one in Remark~\ref{rem:multilinear map}.
\end{remark}
\begin{proof}
The theorem is easily checked for $\gamma\leq \zeta$, thus we assume $\gamma>\zeta$. To check that $G_\gamma (f)\in \mathcal{D}^\gamma$,
we focus on bounding $|G_\gamma (f)(q)-\Gamma_{q,p} G_\gamma (f)|_\kappa$ as the bound on $|G_\gamma (f)(q)|_\kappa$ follows straightforwardly.
Denote by $R$ an admissible realisation of $JE$, using Remark~\ref{translation replaced by admissible realisation} we find that
$$G_\gamma(f)(p)= \sum_{n=0}^{[\gamma/\zeta]} \trr \frac{1}{n!} j_p D^n G( R\bar f_p(\cdot)) \star_\gamma \tilde{f}_p^{\star_\gamma n}$$
Note that for $\kappa<\gamma$
$$Q_\kappa\left(\Gamma_{q,p}G_\gamma(f)(p)- \sum_{n=0}^{[\gamma/\zeta]}\trr \frac{1}{n!} \Gamma_{q,p} j_p D^n G( R\bar{f}_p (\cdot)) \star_\gamma \Gamma_{q,p}\tilde{f}_p^{\star_\gamma n}\right)=0$$ and that
\begin{align*}
&\left| \sum_{n=0}^{[\gamma/\zeta]} \trr \frac{1}{n!} j_q D^n G( R\bar{f}_p(\cdot)) \star_\gamma \Gamma_{q,p}\tilde{f}_p^{\star_\gamma n}   - \sum_{n=0}^{[\gamma/\zeta]} \trr \frac{1}{n!} \Gamma_{q,p} j_p D^n G( R\bar{f}_p (\cdot)) \star_\gamma \Gamma_{q,p}\tilde{f}_p^{\star_\gamma n} \right|_\kappa \\
&\lesssim \sum_{n=0}^{[\gamma/\zeta]}  \sum_{\alpha+\beta= \kappa} \left| Q_{<\gamma-\beta} \big(j_q D^n G( R\bar{f}_p(\cdot))  - \Gamma_{q,p} j_p D^n G( R\bar{f}_p (\cdot)\big) \right|_\alpha |\Gamma_{q,p}\tilde{f}_p^{\star_\gamma n}  |_\beta \\
&\leq \sum_{n=0}^{[\gamma/\zeta]}  \sum_{\alpha+\beta= \kappa} d_\fraks (p,q)^{\gamma-\beta-\alpha} |\Gamma_{q,p}\tilde{f}_p^{\star_\gamma n}  |_\beta \\
&\lesssim d_\fraks (p,q)^{\gamma-\kappa} \ ,
\end{align*}
where we used Lemma~\ref{lemma equivalent characterisations of holder} together with the fact that $D^n G( R\bar{f}_p(\cdot)) $ is a smooth function in the third inequality. Thus,
it suffices to estimate
$$G(f)(q)-\sum_{n=0}^{[\gamma/\zeta]} \trr \frac{1}{n!} j_q D^n G( R\bar{f}_p(\cdot)) \star_\gamma \Gamma_{q,p}\tilde{f}_p^{\star_\gamma n} \ .$$
By Remark~\ref{remark interchanging derivatives}
$$j_q D^n G( R\bar{f}_q(\cdot))= D^n j_q G( R\bar{f}_q(\cdot))\ ,$$
therefore 
\begin{align*}
 &\Big|G(f)(q) -\sum_{n=0}^{[\gamma/\zeta]}\trr \frac{1}{n!} j_q D^n G( R\bar{f}_p(\cdot)) \star_\gamma \Gamma_{q,p}\tilde{f}_p^{\star_\gamma n}\Big|_\kappa\\
= & \Big|\sum_{n=0}^{[\gamma/\zeta]} \trr \frac{1}{n!} j_q D^n G( R\bar{f}_q(\cdot)) \star_\gamma \tilde{f}_q^{\star_\gamma n } 
-\sum_{n=0}^{[\gamma/\zeta]} \trr \frac{1}{n!} j_q D^n G( R\bar{f}_p(\cdot)) \star_\gamma \Gamma_{q,p}\tilde{f}_p^{\star_\gamma n}\Big|_\kappa \\
\leq & \Big|\sum_{n=0}^{[\gamma/\zeta]} \trr \frac{1}{n!} j_q D^n G( R\bar{f}_q(\cdot)) \star \tilde{f}_q^{\star_\gamma n}\\
&\qquad -\sum_{n=0}^{[\gamma/\zeta]}\sum_{k=0}^{[\gamma/\zeta]-n} \trr \frac{1}{n!} \frac{1}{k!} D^{n+k} j_qG( R\bar{f}_p(\cdot)) \star_\gamma j_q(R\bar{f}_q(\cdot)-R\bar{f}_p(\cdot))^{\otimes k} \star \tilde{f}_q^{\star_\gamma n}\Big|_\kappa \\
+&  \Big| \sum_{n=0}^{[\gamma/\zeta]}\sum_{k=0}^{[\gamma/\zeta]-n}  \trr \frac{1}{n!} \frac{1}{k!} D^{n+k} j_qG( R\bar{f}_p(\cdot)) \star_\gamma j_q(R\bar{f}_q(\cdot)-R\bar{f}_p(\cdot))^{\otimes k} \star \tilde{f}_q^{\star_\gamma n}\\
&\qquad -\sum_{n=0}^{[\gamma/\zeta]} \trr \frac{1}{n!} j_q D^n G( R\bar{f}_p(\cdot)) \star_\gamma \Gamma_{q,p}\tilde{f}_p^{\star_\gamma n}\Big|_\kappa 
\end{align*}
First we bound the former term
\begin{align*}
&\big|\sum_{n=0}^{[\gamma/\zeta]} \trr \frac{1}{n!} j_q D^n G( R\bar{f}_q(\cdot)) \star \tilde{f}_q^{\star_\gamma n}\\
&\qquad
-\sum_{n=0}^{[\gamma/\zeta]}\sum_{k=0}^{[\gamma/\zeta]-n} \trr \frac{1}{n!} \frac{1}{k!} D^{n+k} j_qG(R\bar{f}_p(\cdot)) \star_\gamma j_q(R\bar{f}_q(\cdot)-R\bar{f}_p(\cdot))^{\otimes k} \star_\gamma \tilde{f}_q^{\star_\gamma n}\big|_\kappa .
\end{align*}
Since the abstract tensor products agree with the canonical product on jets, it can be rewritten and bounded as follows
\begin{align*}
&\Big|\trr \sum_{n=0}^{[\gamma/\zeta]} \frac{1}{n!} j_q \left( D^n G( R\bar{f}_q(\cdot)) - \sum_{k=0}^{[\gamma/\zeta]-n}   \frac{1}{k!} D^{n+k}G( R\bar{f}_p(\cdot)) (R\bar{f}_p(\cdot)-R\bar{f}_q(\cdot))^{\otimes k} \right) \star_\gamma \tilde{f}_q^{\star_\gamma n}\Big|_\kappa\\
&\lesssim\sum_{\alpha+\beta= \kappa} \sum_{n=0}^{[\gamma/\zeta]}\left|\frac{1}{n!} j_q \left( D^n G( R\bar{f}_q(\cdot)) - \sum_{k=0}^{[\gamma/\zeta]-n} \frac{1}{k!} D^{n+k}G( R\bar{f}_p(\cdot)) (R\bar{f}_p(\cdot)-R\bar{f}_q(\cdot))^{\otimes k} \right) \right|_\alpha | \tilde{f}_q^{\star_\gamma n}|_\beta\\
&\lesssim \sum_{n=0}^{[\gamma/\zeta]}  \sum_{\alpha+\beta= \kappa} | \bar{f}_q-R\bar{f}_p(q) |^{([\gamma/\zeta]-n+1- \alpha)\vee 0} | \tilde{f}_q^{\star_\gamma n}|_\beta  \\
&= \sum_{n=0}^{[\gamma/\zeta]}  \sum_{\alpha+\beta= \kappa} | \bar{f}_q-\Gamma_{q,p}\bar{f}_p |_0^{([\gamma/\zeta]-n+1- \alpha)\vee 0} | \tilde{f}_q^{\star_\gamma n}|_\beta  \ .
\end{align*}
where we used that
$$\Big| j_q\Big( D^n G \big( R\bar{f}_q(\cdot)\big)- \sum_{k\leq m} \frac{1}{k!} D^{n+k} G\big( R\bar{f}_p(\cdot)\big)(R\bar{f}_q(\cdot)-R\bar{f}_p(\cdot))^{\otimes k}\Big) \Big|_l \lesssim | \bar{f}_q-R\bar{f}_p(q) |^{(m+1-l)\vee 0}  \ ,$$
for $l\leq m+1$ in the second inequality, which follows from  Lemma~\ref{em version Taylor}.
Next we note for any non-vanishing summand above we have 
\begin{align*}
[\gamma/\zeta]-n +1 -\alpha &
= \frac{1}{\zeta}\left(\zeta([\gamma/\zeta] +1) -\zeta n - \zeta \kappa+ \zeta\beta)\right) \\
&\geq \frac{1}{\zeta}\left(\gamma -\beta - \zeta \kappa-\zeta\beta \right)\\
&\geq \frac{1}{\zeta}\left(\gamma - \kappa    +\kappa -\beta - \zeta \kappa+\zeta\beta \right)\\
&=\frac{1}{\zeta}\left(\gamma - \kappa    +(1-\zeta)(\kappa-\beta) \right)\\
&\geq\frac{\gamma - \kappa  }{\zeta} 
\end{align*}
where in the first line we used $\alpha+\beta= \kappa$, in the second line we used $[\gamma/\zeta] +1\geq \gamma/\zeta$ and $\beta \geq \zeta n$ and in the last line we used that $\zeta\leq 1$ and $\kappa\geq\beta$.
Thus, we conclude
\begin{align*}
&  \sum_{n=0}^{[\gamma/\zeta]}  \sum_{\alpha+\beta= \kappa} | \bar{f}_q-\Gamma_{q,p}\bar{f}_p |_0^{([\gamma/\zeta]-n+1- \alpha)\vee 0} | \tilde{f}_q^{\star_\gamma n}|_\beta  \\
& \lesssim \sum_{n=0}^{[\gamma/\zeta]}  \sum_{\alpha+\beta= \kappa} d_\fraks(p,q)^{\zeta( [\gamma/\zeta]-n+1- \alpha)} \\
 & \lesssim  d_\fraks(p,q)^{\gamma- \kappa } \ ,
\end{align*}
which concludes the estimate on the first term.
Next we bound the second term
\begin{align*}
&\big| \sum_{n=0}^{[\gamma/\zeta]}\sum_{k=0}^{[\gamma/\zeta]-n}  \trr \frac{1}{n!} \frac{1}{k!} D^{n+k} j_qG( R\bar{f}_p(\cdot)) \star_\gamma j_q(R\bar{f}_q(\cdot)-R\bar{f}_p(\cdot))^{\otimes k} \star_\gamma \tilde{f}_q^{\star_\gamma n} \\
&\qquad-\sum_{n=0}^{[\gamma/\zeta]} \trr \frac{1}{n!} j_q D^n G( R\bar{f}_p(\cdot)) \star_\gamma \Gamma_{q,p}\tilde{f}_p^{\star_\gamma n}\big|_\kappa \\
&=\big|  \sum_{n=0}^{[\gamma/\zeta]}\sum_{k=0}^{[\gamma/\zeta]-n} \trr \frac{1}{n!}\frac{1}{k!}\big(  D^{n+k} j_qG( R\bar{f}_p(\cdot)) \star_\gamma (\bar{f}_q-\Gamma_{q,p}\bar{f}_p)^{\star_\gamma k} \star_\gamma \tilde{f}_q^{\star_\gamma n}\big) \\
&\qquad-\sum_{n=0}^{[\gamma/\zeta]} \trr \frac{1}{n!} j_q D^n G( R\bar{f}_p(\cdot)) \star_\gamma \Gamma_{q,p}\tilde{f}_p^{\star_\gamma n}\big|_\kappa \\
&=:R_{\kappa,p,q}
\end{align*}
We set $D_{n,pq}= Q_{<\gamma}\Gamma_{q,p}\tilde{f}_p^{\star_\gamma n}- (f_q -\Gamma_{q,p}\bar{f}_p)^{\star_\gamma n}$ and find
\begin{align*}
&\sum_{n=0}^{[\gamma/\zeta]}  \trr \frac{1}{n!} j_q D^nG( R\bar{f}_p(\cdot)) \star_\gamma \Gamma_{q,p}\tilde{f}_p^{\star_\gamma n} \\
=& \sum_{n=0}^{[\gamma/\zeta]}  \trr \frac{1}{n!} j_q D^nG( R\bar{f}_p(\cdot)) \star_\gamma (\bar{f}_q -\Gamma_{q,p}\bar{f}_p +\tilde{f}_q)^{\star_\gamma n} \\
&+ \sum_{n=0}^{[\gamma/\zeta]}  \trr \frac{1}{n!} j_q D^n G( R\bar{f}_p(\cdot)) \star_\gamma D_{n,pq} \\
=&\sum_{n=0}^{[\gamma/\zeta]}  \sum_{k\leq n}\frac{1}{n!}  {n\choose k}\trr  j_q D^n G( R\bar{f}_p(\cdot)) \star_\gamma (\bar{f}_q -\Gamma_{q,p}\bar{f}_p)^{\otimes (n-k)} \star_\gamma \tilde{f}_q^{\star_\gamma k}\\
&+ \sum_{n=0}^{[\gamma/\zeta]}  \trr \frac{1}{n!} j_q D^nG( R\bar{f}_p(\cdot)) \star_\gamma D_{n,pq} \\
=&\sum_{n=0}^{[\gamma/\zeta]}  \sum_{k\leq n}\frac{1}{(n-k)!}  \frac{1}{k!} \trr \ j_q D^nG( R\bar{f}_p(\cdot))\star_\gamma (\bar{f}_q -\Gamma_{q,p}\bar{f}_p)^{\otimes (n-k)} \star_\gamma \tilde{f}_q^{\star_\gamma k} \\
&+ \sum_{n=0}^{[\gamma/\zeta]}  \trr \frac{1}{n!} j_q D^n G( R\bar{f}_p(\cdot)) \star_\gamma D_{n,pq} \ .
\end{align*}
Thus, we find 
\begin{align*}
R_{\kappa,p,q}&= \left| \sum_{n=0}^{[\gamma/\zeta]}  \trr \frac{1}{n!} j_q D^n G(R\bar{f}_p(\cdot)) \star_\gamma D_{n,pq}  \right|_\kappa \leq  \sum_{n=0}^{[\gamma/\zeta]}  \sum_{\alpha+\beta= \kappa} | j_q D^n G( R\bar{f}_p(\cdot))|_\alpha | D_{n,pq} |_\beta\\
&\lesssim  \sum_{n=0}^{[\gamma/\zeta]}  \sum_{\alpha+\beta= \kappa}  | D_{n,pq} |_\beta\lesssim  d_\fraks (p,q)^{\gamma-\kappa}
\end{align*}
since 
$$ |D_{n,pq}|_\beta  = |(\Gamma_{q,p}f_p-\Gamma_{q,p}\bar{f}_p)^{\star_\gamma n} - (f_q- \Gamma_{q,p}\bar{f}_p)^{\star_\gamma n}|_\beta \lesssim  d_\fraks (p,q)^{\gamma-\beta}\ , $$
where we used the general algebraic identity 
$a^n-b^n= (a-b)(a^{n-1}+ a^{n-2}b + \ldots+ ab^{n-2} + b^{n-1})$ for the last inequality,
This concludes the estimate on the second term.
Lastly, to see \eqref{nonlinear difference bound1} one proceeds as in \cite{Hai14} and sets $h= f-g$ such that 
\begin{align*}
G_\gamma(f)-G_\gamma (g) &= \int_0^1 \partial_t G_\gamma(g+th) dt\\
&=\int_0^1 
\sum_n \frac{1}{n!}\trr\big(j_p^E(D^{n+1}G)(\bar{g}_p+t\bar{h}_p ) \star_\gamma (\tilde {g}_p+t\tilde{h}_p)^{\star_\gamma n} \star_\gamma h \big) dt
\end{align*}
which can be bounded along the same argument as above.
\end{proof}

\begin{lemma}\label{em version Taylor}
Given $H\in \mathcal{C}^\infty(E\ltimes F)$, for $l,m\in \mathbb{N}$ such that $l\leq m+1$ the bound
$$\Big| j_q\Big( H \big( R e_q(\cdot)\big)- \sum_{k\leq m} \frac{1}{k!} D^k H\big( R e_p(\cdot)\big)(R e_q(\cdot)-R e_p(\cdot))^{\otimes k}\Big) \Big|_l \lesssim | e_q-Re_p(q) |^{m+1-l} \ $$
holds uniformly over compacts $K\subset E$ and $e_p\in E_p\cap K$ and $e_q\in E_q\cap K$. 
\end{lemma}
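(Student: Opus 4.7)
The key observation is that, at each base point $\tilde q$ near $q$, the bracketed expression is precisely the $m$-th order Taylor remainder of $H(\cdot)$ in the \emph{vertical} (fibre) direction of $E$, expanded at the point $Re_p(\tilde q) \in E_{\tilde q}$ and evaluated at $Re_q(\tilde q) \in E_{\tilde q}$. Writing $v(\tilde q) := Re_q(\tilde q) - Re_p(\tilde q) \in E_{\tilde q}$, the classical integral remainder formula gives
\begin{equation*}
g(\tilde q) := H(Re_q(\tilde q)) - \sum_{k=0}^{m} \tfrac{1}{k!} D^k H\bigl(Re_p(\tilde q)\bigr) (v(\tilde q))^{\otimes k} = \tfrac{1}{m!}\int_0^1 (1-s)^m D^{m+1}H\bigl(Re_p(\tilde q) + s\, v(\tilde q)\bigr)(v(\tilde q))^{\otimes (m+1)} \, ds.
\end{equation*}
The goal is to estimate $|j_q g|_l$.

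The plan is to reduce $|j_q g|_l$ to a bound on ordinary (scaled) partial derivatives of order $l$ of $g$ at $q$, by going to an $\mathfrak{s}$-exponential chart at $q$ together with a compatible trivialisation of $E$ in the spirit of Remark~\ref{norm remark} and Lemma~\ref{pointwise bound}. In such coordinates, jets become polynomials and $|\cdot|_l$ controls the coefficients of scaled degree $l$, which themselves are estimated by derivatives of total scaled order $l$. Once we have done this, one applies Fa\`a di Bruno / a Leibniz expansion under the $s$-integral: differentiating the integrand $l$ times in the base variable produces a finite linear combination of terms of the shape
\begin{equation*}
\int_0^1 (1-s)^m \, D^{m+1+j}H\bigl(Re_p(q) + s\,v(q)\bigr) \cdot \prod_{i=1}^{j} \partial^{a_i}\!\bigl(Re_p + sv\bigr)(q) \cdot \prod_{i=1}^{m+1} \partial^{b_i} v(q) \, ds,
\end{equation*}
with $j\ge 0$, $a_i\ge 1$, $b_i\ge 0$, and $\sum a_i + \sum b_i = l$ (scaled multi-indices).

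The decisive point is a counting argument: let $N$ denote the number of indices $i \in \{1,\ldots,m+1\}$ with $b_i = 0$. Since the differentiated factors satisfy $\sum_{b_i>0} b_i \leq l$ and each nonzero $b_i$ is at least one, one has $N \geq m+1-l \geq 0$, using the assumption $l\leq m+1$. Each of these $N$ undifferentiated factors contributes $v(q) = e_q - Re_p(q)$, giving a factor of $|e_q - Re_p(q)|^{N}$ in norm. The remaining factors are harmless: by the smoothness of admissible realisations (Lemma~\ref{bound on admissable realisations}) together with Lemma~\ref{comparing admissible realisations}, the derivatives $\partial^{a_i}(Re_p + sv)(q)$ and $\partial^{b_i} v(q)$ (for $b_i > 0$) are bounded locally uniformly in $p,q$, uniformly in $s\in[0,1]$; and $D^{m+1+j}H$ is smooth on the compact region of $E$ swept out by $Re_p(q)+sv(q)$ for $e_p,e_q$ in a compact of $E$. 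Summing over the finitely many Leibniz terms yields $|j_q g|_l \lesssim |e_q - Re_p(q)|^{m+1-l}$, as required.

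The only mild obstacle is keeping track of \emph{scaled} orders throughout the Faà di Bruno expansion and transferring between covariant derivatives and partial derivatives in normal coordinates (via Remark~\ref{norm remark} and Proposition~\ref{prop relation to cotangent}); once this is done, the combinatorial bookkeeping in Step~4 is routine because every differentiation on a $v$-factor ``costs'' one unit of the base derivative budget $l$, so at least $m+1-l$ copies of $v(q)$ must survive.
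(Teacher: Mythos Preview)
Your approach is correct. Both you and the paper recognise that the bracketed expression is the $m$-th order Taylor remainder in the fibre direction and then differentiate it $l$ times in the base, but the execution differs. The paper differentiates the explicit Taylor sum once: using the product rule on $D^kH(Re_p(\cdot))(Re_q(\cdot)-Re_p(\cdot))^{\otimes k}$ and evaluating at $q$, the $\nabla_\partial Re_p$-terms telescope across $k$, leaving the Taylor remainder of $\partial_1 H$ at the same order plus a single boundary term $\tfrac{1}{m!}D^{m+1}H(\cdot)(e_q-Re_p(q))^{\otimes m}\otimes\nabla_\partial Re_p(q)$, both of which are visibly $O(|e_q-Re_p(q)|^m)$; iterating this gives the general $l$. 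Your route via the integral remainder and a Leibniz/Fa\`a di Bruno count avoids the telescoping trick entirely and is arguably more robust (no inductive structure to track), at the cost of heavier combinatorial bookkeeping. One minor point: the bounds you need on $\partial^{a_i}(Re_p+sv)(q)$ and $\partial^{b_i}v(q)$ follow directly from the smoothness clause in Definition~\ref{def admissible realisation} and compactness, rather than from Lemma~\ref{comparing admissible realisations}, which compares two realisations.
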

\begin{proof}
Let $F(\cdot):= H \big(R e_q(\cdot)\big)- \sum_{k\leq m} \frac{1}{k!} D^k H\big(R e_p(\cdot)\big)(R e_q(\cdot)-R e_p(\cdot))^{\otimes k}$.
It follows from the usual Taylor formula that $|F(q)|\lesssim |R e_q(q)-R e_p(q)|^{m+1} $. Let $\partial$ be a basis vector field in an exponential chart at $q$. 
We find by slight abuse of notation (suppressing the pushforward onto the chart)
\begin{align*}
(\partial F)(q)=& (\partial_1 H) (q,  e_q)- \sum_{k\leq m} \frac{1}{k!} D^k (\partial_1H)\big(q, R e_p(q)\big)(e_q-R e_p(q))^{\otimes k}\\
&+\sum_{k\leq m} \frac{1}{k!} D^{k+1} H\big(q, R e_p(q)\big)(e_q-R e_p(q))^{\otimes k}\otimes \nabla_\partial R e_p(q)\\
&-\sum_{0<k\leq m} \frac{1}{(k-1)!} D^{k} H\big(q, R e_p(q)\big)(e_q-R e_p(q))^{\otimes k-1}\otimes \nabla_\partial R e_p(q)\\
=& (\partial_1 H) (q,  e_q)- \sum_{k\leq m} \frac{1}{k!} D^k (\partial_1H)\big(q, R e_p(q)\big)(e_q-R e_p(q))^{\otimes k}\\
&+\frac{1}{m!} D^{m+1} H\big(q, R e_p(q)\big)(e_q-R e_p(q))^{\otimes m}\otimes \nabla_\partial R e_p(q)
\end{align*}
and thus $| (\partial F)(q)|\lesssim |R e_q(q)-R e_p(q)|^{m}$.
Differentiating further one obtains the general claim.
\end{proof}

\section{Integration Against Singular Kernels}\label{Section Singular Kernels}
In this section, which is modelled after \cite[Section 5]{Hai14}, we show how to integrate modelled distributions against singular kernels. The construction, though similar, comes with some twists, which in particular explain the necessity to give up the lower triangular structure present in \cite{Hai14}. Furthermore, in contrast to previous works, we drop the assumption that the kernels annihilate ``polynomials'' under the ``mild'' condition that $\beta\notin \mathbb{N}$. The reason for doing so is twofold. First, in the setting of general manifolds, it is not obvious how to canonically change an integral kernel, say the one associated to the inverse of the Laplace operator,  to satisfy this assumption. Second, even if one is able to do so, this only simplifies the theory in very minor ways. A drawback of this setting is, that it introduces some redundancy in the ``information'' contained in the model.\footnote{Let us give two related ways to interpret the somewhat cryptic comment about `` information''.  \begin{itemize}
\item Fix the minimal regularity structure to solve an SPDE, say the $\phi^4_3$ equation, in the setting of \cite{Hai14}, i.e.\ when $K$ annihilates polynomials. Let $Z$ be the BPHZ model, for the (rough!) noise. One typically expects the reconstruction operator $\mathcal{R}_Z$ to be injective.
This does not hold, if $K$ does not annihilate polynomials.
\item In the setting of \cite{Hai14}, one expects the analogue of Lemma~\ref{relating}, to hold for the model corresponding to the $\phi^4_3$ equation with rough noise. One does not expect this here.
\end{itemize}  }
Lastly, some modifications arise due to the fact that we work over general vector bundles and do not limit ourselves to trivial bundles.
\subsection{Singular kernels}
Throughout this section fix two vector bundles  $(E,\nabla^E, \langle\cdot, \cdot\rangle_E)$ and  $(F,\nabla^F, \langle\cdot, \cdot\rangle_F)$ over $M$. Assume that $K\in C^\infty(F\hotimes E^* \setminus \pi^{-1}\triangle)$, where $\triangle\subset M\times M$ denotes the diagonal, is a locally integrable kernel. Throughout this article we shall use the following notation: If $T\in \mathcal{D}'(E)$, we write 
$K(T)\in \mathcal{D}'(F)$ for the distribution given by 
\begin{equation}\label{eq:kernel_acting_on_distribution}
\phi\mapsto T\left(\int K^{\star}(p,\cdot)\phi(p) \, \dVol_p\right) \ , 
\end{equation}
where $K^{\star}(p,q)\in (E\hotimes F^*)|_{q,p}$ is the adjoint of $K(p,q)\in (F\hotimes E^*)|_{p,q} $ and $K^{\star}(p,q)\phi(p)\in E|_q$ denotes the natural pairing between $(E\hotimes F^*)|_{q,p}$ and $F|_p$.
We assume that the kernel satisfies the following assumption.

\begin{assumption}\label{Assumption on Kernel}
One can write $K= \sum_{n\in \mathbb{N}} K_n$ where
\begin{enumerate}
\item $K_n\in \cC^\infty (F\hotimes E^*)$ is supported on $\{(p,q)\  | \ d_\fraks(p,q)<2^{-n}\}$.
\item\label{item:support_in_exp_chart} For all $p\in M$ and $n\in \mathbb{N}$ such that $\min_{d_\fraks(p,q)\leq 1} r_q  \leq 2^{-n} $, one has
$$K_n(p,\cdot)=0 \text{ and } K_n(\cdot,p)=0.$$
(Recall that $r_p>0$ denotes the injectivity radius of the exponential map $\exp_p$.)
\item\label{item:upper_bound}
 $|j_{p,q} K_n|_m\lesssim_m 2^{n(|\mathfrak{s}|-\beta + m)}$ uniformly over $(p,q)\in M\times M$ (equipped with the canonical scaling) and $n\in \mathbb{N}$.
\item\label{item:integragration_against_polynomials} Lastly, assume that for any fixed admissible realisation $R$ of $JF$ and for any two multi-indices $m,l\in \mathbb{N}$, we have
$$ \left|\int_M  Q_l j_q ( K^\star_n (p,\cdot) R(v_q)(p) ) \, \dVol_p\right|_l   \lesssim C |v_q|_m 2^{-\beta n}$$
uniformly over $p,q\in M$ such that $d(p,q)<1$, $v_q\in (JF^*)_m$ and $n\in \mathbb{N}$, where $(K^\star_n (p,\cdot) R(v_q)(p) )$ used the canonical pairing $(E\otimes F^*) \otimes F\to E^*$. 
\end{enumerate}
\end{assumption}

Furthermore we make the following standing assumption throughout this section.
\begin{assumption}\label{Assumption on reg. and model}
Assume that we are working with a regularity structure ensemble $\mathcal{T}=(\{E^\mathfrak{t}\}_{\mathfrak{t}\in \mathfrak{L}},\{{T}^\mathfrak{t}\}_{\mathfrak{t}\in \mathfrak{L}}, L)$ such that 
\begin{itemize}
\item $\mathfrak{L}$ contains two distinguished elements $\mathfrak{e}, \mathfrak{f}$ such that $E^\mathfrak{e}= E$ and $E^\mathfrak{f}= F$.
\item for some $\delta_0\in (0,+\infty)\setminus \mathbb{N}$, $\bar T^\mathfrak{f}\subset T^\mathfrak{f}$ is an $F$-valued jet regularity structure of precision $\delta_0$.
\item All models $Z$ for $\mathcal{T}$ act on $\bar T^\mathfrak{f}$ as a jet model.
\end{itemize}
\end{assumption}


\begin{definition}\label{def of abstract integration map}
Given a sector $V\subset T^\mathfrak{e}$, a bundle morphism ${I}: V \to T^\mathfrak{f}$  is called abstract integration map of order $\beta$ and precision $\delta_0$, if the following properties are satisfied:
\begin{enumerate}
\item For each $\alpha\in A,$ $\delta \in \triangle$ such that $V_{\alpha,\delta}\neq \{0\}$ one has $\alpha+\beta\notin \mathbb{N}$ and $\delta+\beta\notin \mathbb{N}$.
\item For $(\alpha,\delta)\in \left((A\cap [-\beta,\infty))\times \triangle \right)\cup \big(A \times (\triangle\cap (0,\infty) ) \big)$, we have
$$I_p: (V_p)_{\alpha,\delta} \mapsto (T^{\mathfrak{f}}_p)_{{(\alpha+\beta),(\delta+\beta)\wedge\delta_0}}\cap (T^{\mathfrak{f}}_p)_{<\delta_0,:}\ , $$
while for $\alpha\in A\cap (-\infty,-\beta)$ 
$$I_p: (V_p)_{\alpha,+\infty} \mapsto (T^{\mathfrak{f}}_p)_{{\alpha+\beta, + \infty}}\ . $$
\item\label{Commutation cond} $\Gamma_{q,p} I_{p} \tau_p - I_q \Gamma_{q,p}\tau_p \in \bar{T}^{\mathfrak{f}}_q$ for each $\tau_p\in V_{<\delta_0-\beta,:}$ and $\Gamma_{q,p}\in L_{q,p}$.
\end{enumerate}
\end{definition}

\begin{remark}\label{rem:complicated grading}
The separate condition for $\alpha\in A\cap (-\infty,-\beta)$, $\delta= +\infty$ encodes additional constraints on models. These can be enforced, since the two operators $J$ and $E$ introduced below then satisfy for $\tau_p\in T^\frakt_{\alpha,\delta}$ that $J_p\tau_p= E_q\tau_p=0$ for any $p,q\in M$.
This is advantageous (but not necessary) in order to establish convergence of renormalised models in Section~\ref{sec:concrete applications}. 
Furthermore, the second property also implies that $I_p \tau_p= 0$ for $\tau_p\in T^{\mathfrak{e}}_{>\delta-\beta, :}$. 
\end{remark}
\begin{remark}
On first sight, the conditions $\alpha+\beta\notin \mathbb{N}$ and $\delta+\beta\notin \mathbb{N}$ might seem restrictive, but
note that if a kernel $G$ satisfies Assumption~\ref{Assumption on Kernel} for some $\beta>0$, then it automatically satisfies the same assumption for any $\bar{\beta}\in (0,\beta)$.
\end{remark}

Given a model $Z=(\Pi,\Gamma)$, we introduce as in the flat setting for $p\in M$ the linear map $J_p: T^{\mathfrak{e}}_p \to \bar{T}^{\mathfrak{f}}_p$ which for $\tau_p\in T^{\mathfrak{e}}_p$  is given by
$$ J_p \tau_p = \sum_{n} Q_{< (\alpha+\beta)\wedge \delta_0} j_p K_n\big(\Pi_p Q_{<\delta_0-\beta} \tau_p \big)\ . $$
\begin{lemma}\label{J lemma}
The map $J_p\tau$ is well defined. Furthermore, whenever $\Pi_p\tau\in \cC^\infty(E)$ the identity $J_p\tau_p=Q_{<\alpha+\beta} j_p \int K(\cdot,z)\Pi_p\tau (z) \, \dVol_z$ holds.
\end{lemma}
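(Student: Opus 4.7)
The plan is to establish well-definedness of $J_p\tau_p$ by reducing the convergence of $\sum_n Q_m j_p K_n(\Pi_p Q_{<\delta_0-\beta}\tau_p)$ to a summable geometric series in $n$, and then obtain the identity by exchanging $j_p$ with the series when $\Pi_p\tau_p$ is smooth.

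For well-definedness, I would first observe that by linearity and the presence of the projection $Q_{<\delta_0-\beta}$ in the definition of $J_p$, it suffices to treat a homogeneous $\tau_p\in T^{\mathfrak{e}}_{p,\alpha,\delta}$ with $\alpha<\delta_0-\beta$, and, for each $m<(\alpha+\beta)\wedge \delta_0$, bound $|Q_m j_p K_n(\Pi_p\tau_p)|_m$ by a summable function of $n$. By Assumption~\ref{Assumption on Kernel}\eqref{item:support_in_exp_chart}, the smooth section $f_n:=K_n(\Pi_p\tau_p)$ is supported in the domain of an exponential $\fraks$-chart around $p$, so the graded part of its jet at $p$ is unambiguously defined. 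Remark~\ref{norm remark} lets me control $|Q_m j_p f_n|_m$ by suprema of iterated covariant $\fraks$-derivatives of $f_n$ at $p$, which in turn (by commuting differentiation with the distributional pairing) equal $\Pi_p\tau_p$ applied to the corresponding derivatives in the first variable of $K_n(p,\cdot)$, interpreted as $F|_p$-valued $E$-test sections via the metric identification $E^*\simeq E$.

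The heart of the estimate is that, for any multi-index with $\deg_\fraks=m$, the test section $(\nabla^m)_1 K_n(p,\cdot)$ is, up to a multiplicative factor of order $2^{n(m-\beta)}$, an element of $\mathcal{B}_p^{r,2^{-n}}(E)$. This comes from matching the support-scale $2^{-n}$ given by Assumption~\ref{Assumption on Kernel}(1) against the jet bound $|j_{p,q}K_n|_k\lesssim 2^{n(|\fraks|-\beta+k)}$ from Assumption~\ref{Assumption on Kernel}\eqref{item:upper_bound}. Combining with the defining estimate $|\Pi_p\tau_p(\phi^\lambda_p)|\lesssim \lambda^\alpha|\tau_p|$ of a model (at the point $p$, so no translation is needed) yields
$$|Q_m j_p f_n|_m \lesssim 2^{n(m-\beta)}\cdot 2^{-n\alpha}|\tau_p| = 2^{-n(\alpha+\beta-m)}|\tau_p|,$$
which is summable in $n$ precisely because $m<\alpha+\beta$. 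Since only finitely many values of $m$ contribute in view of the truncation $Q_{<(\alpha+\beta)\wedge\delta_0}$, this proves $J_p\tau_p\in \bar T^{\mathfrak{f}}_p$.

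For the identity, the smoothness of $\Pi_p\tau_p$ means that each $f_n$ can be written as $f_n(q)=\int K_n(q,z)\,\Pi_p\tau_p(z)\,\dVol_z$, and the partial sums $\sum_{n\leq N} f_n(q)$ converge to $\int K(q,z)\,\Pi_p\tau_p(z)\,\dVol_z$ uniformly on compacts by the kernel bounds. Repeating the estimate above uniformly in a neighbourhood of $p$ (rather than only at $p$) upgrades this convergence to convergence in $C^k$ for every $k<\alpha+\beta$, which allows me to interchange $j_p$ with the sum over $n$; applying $Q_{<\alpha+\beta}$ then yields the stated identity. The main technical obstacle I anticipate is the verification that $(\nabla^m)_1K_n(p,\cdot)$ genuinely satisfies the $\mathcal{B}_p^{r,2^{-n}}(E)$-type bounds once one differentiates in only the first slot; this reduces to a careful bookkeeping between the bigraded jet of $K_n$ on $M\times M$ (equipped with the product $\fraks$-scaling) and the ordinary jet of the restricted section $K_n(p,\cdot)$, which is ultimately routine.
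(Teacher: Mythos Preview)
Your proposal is correct and follows essentially the same route as the paper: both reduce the well-definedness to bounding $|Q_m j_p K_n(\Pi_p\tau_p)|_m$ via Remark~\ref{norm remark}, recognise that the differentiated kernel $(\nabla^m)_1 K_n^\star(p,\cdot)$ is a rescaled element of $\mathcal{B}_p^{r,2^{-n}}$ up to a factor $2^{n(m-\beta)}$, and apply the model bound to obtain the summable $2^{-n(\alpha+\beta-m)}$. Your discussion of the identity (interchanging $j_p$ with the sum via $C^k$-convergence) is more explicit than the paper's, which simply asserts it.
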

\begin{proof}
This follows by going to an exponential $\fraks$-chart at $p$ and arguing similarly to the flat case. Then by Remark~\ref{norm remark}
$$\big|j_p K_n\big(\Pi_p \tau_p \big)\big|_k \leq \sup_{\deg(\partial_{i_1},\ldots,\partial_{i_m})=k} | (\Pi_p \tau_p \big(\nabla^m_{\partial_{i_1},\ldots,\partial_{i_m}, 1} K^\star_n(p,\cdot)\big) |\lesssim \|\Pi\|_{B_1(p)}2^{-n(\beta +\alpha-k) }\ ,$$
which is summable for $k< \alpha+\beta $ . 
\end{proof}

As in \cite{Hai14}, we define what it means for a model to realise an abstract integration map.
\begin{definition}
Let $I:V\to T^{\mathfrak{f}}$ be an abstract integration map of precision $\delta_0$. We say a model $Z$ realises $K$ for $I$, if for all $\alpha<\delta_0-\beta$ and $\tau_p\in V_\alpha$ 
\begin{equation}\label{above}
\Pi_p I_p\tau_p = K(\Pi_p \tau_p) - \Pi_pJ_p\tau_p \ .
\end{equation}
\end{definition}

Recall that in the flat case, for admissible models one has the identity 
\begin{equation}\label{flat identity}
(I +J_x)\Gamma_{x,y}\tau= \Gamma_{x,y}(I +J_y)\tau \ .
\end{equation}
Unfortunately, this fails in our setting since the re-expansion maps $\Gamma$ are not precise enough on large scales (i.e.\ we only impose that $\Pi_p\Gamma_{p,q}-\Pi_q$ is small for very localised test functions, but not when testing against an element $\phi_p^1\in \mathfrak{B}_p^{r,1}$). Even the natural analogue 
$$|(I +J_x)\Gamma_{x,y}\tau-\Gamma_{x,y}(I +J_y)\tau|_\alpha\lesssim d(x,y)^{\delta-\alpha}$$ 
fails in our setting.
The next proposition makes this precise.

\begin{prop}\label{prop errorterm}
In the setting above, assuming that the model $Z$ realises the abstract integration map $I$, for every compact set $K$ 
\begin{align*}
\Pi_q \big(\Gamma_{q,p}(I_p+J_p)\tau_p-(I_q+J_q)\Gamma_{q,p}\tau_p\big)(\phi_q^\lambda) =& K(\Pi_p\tau_p-\Pi_qQ_{<\delta_0-\beta} \Gamma_{q,p}\tau_p\big)(\phi_q^\lambda)\\
&+\mathcal{O}_Z(\lambda^{(\delta+\beta)\wedge \delta_0})|\tau|\ ,
\end{align*}
holds uniformly over $p,q\in K$, and $\tau_p\in V_{\alpha,\delta}$ for $\alpha<\delta_0-\beta$ and $\phi_q^\lambda\in \mathfrak{B}_p^{r,\lambda}$.
\end{prop}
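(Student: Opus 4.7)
The plan is to split the expression inside $\Pi_q$ as the difference
\[
A \;=\; \Pi_q\Gamma_{q,p}(I_p+J_p)\tau_p(\phi_q^\lambda),\qquad B \;=\; \Pi_q(I_q+J_q)\Gamma_{q,p}\tau_p(\phi_q^\lambda),
\]
and to analyse $A$ and $B$ separately using (i) the realisation identity \eqref{above} and (ii) the standard model bound $|(\Pi_q\Gamma_{q,p}\sigma_p-\Pi_p\sigma_p)(\phi_q^\lambda)|\lesssim \lambda^{\delta'}|\sigma_p|$ for $\sigma_p\in T_{\alpha',\delta'}$.

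For $A$, I would first decompose $(I_p+J_p)\tau_p$ according to the bi-grading. By Definition~\ref{def of abstract integration map}, $I_p\tau_p$ lies in $T^{\mathfrak{f}}_{\alpha+\beta,(\delta+\beta)\wedge\delta_0}\cap T^{\mathfrak{f}}_{<\delta_0,:}$, which contributes an error of order $\lambda^{(\delta+\beta)\wedge\delta_0}|\tau_p|$ when comparing $\Pi_q\Gamma_{q,p}$ with $\Pi_p$; meanwhile $J_p\tau_p$ lies in the jet sector $\bar T^{\mathfrak{f}}$, on which $Z$ acts as a jet model and for which the precision is $\delta_0\geq (\delta+\beta)\wedge\delta_0$, giving an error $\lambda^{\delta_0}|J_p\tau_p|\lesssim \lambda^{\delta_0}|\tau_p|$ by Lemma~\ref{J lemma}. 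Combining yields
\[
A \;=\; \Pi_p(I_p+J_p)\tau_p(\phi_q^\lambda) \;+\; \mathcal{O}_Z\big(\lambda^{(\delta+\beta)\wedge\delta_0}\big)|\tau_p|,
\]
and since $\alpha<\delta_0-\beta$, the realisation identity \eqref{above} reduces $\Pi_p(I_p+J_p)\tau_p$ to $K(\Pi_p\tau_p)$.

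For $B$, the key observation is that both $I_q$ and $J_q$ vanish on $T^{\mathfrak{e}}_{\geq \delta_0-\beta,:}$: for $J_q$ this is built into its definition via the projector $Q_{<\delta_0-\beta}$, while for $I_q$ it follows from the image restriction in Definition~\ref{def of abstract integration map} together with $\delta_0\notin \mathbb{N}$ (so no borderline $\alpha'=\delta_0-\beta$ component can arise under Point~1 of that definition), as noted in Remark~\ref{rem:complicated grading}. Consequently $(I_q+J_q)\Gamma_{q,p}\tau_p=(I_q+J_q)Q_{<\delta_0-\beta}\Gamma_{q,p}\tau_p$, and applying the realisation identity componentwise to this truncated element gives $B=K(\Pi_q Q_{<\delta_0-\beta}\Gamma_{q,p}\tau_p)(\phi_q^\lambda)$. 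Subtracting, $A-B$ is exactly the claimed identity.

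The main technical obstacle in this scheme is the bookkeeping of precisions in step~$A$: one must check that every bi-graded component of $(I_p+J_p)\tau_p$ has precision at least $(\delta+\beta)\wedge\delta_0$ and that the corresponding norms are controlled uniformly by $|\tau_p|$. For the $I$-part this is immediate from the definition of an abstract integration map; for the $J$-part it rests on the geometric decay estimate of Lemma~\ref{J lemma} (itself a consequence of parts \eqref{item:upper_bound}--\eqref{item:integragration_against_polynomials} of Assumption~\ref{Assumption on Kernel}) together with the fact that the jet sector has precision exactly $\delta_0$. Once these bounds are in hand, the rest of the argument is a straightforward combination of the model axiom and the realisation identity.
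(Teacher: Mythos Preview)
Your approach is correct and essentially the same as the paper's: both add and subtract $\Pi_p(I_p+J_p)\tau_p(\phi_q^\lambda)$, bound the resulting $\Pi_q\Gamma_{q,p}-\Pi_p$ term by the third model axiom, and reduce the remaining term to $K(\Pi_p\tau_p)-K(\Pi_q Q_{<\delta_0-\beta}\Gamma_{q,p}\tau_p)$ via the realisation identity \eqref{above} together with the observation that $I_q$ and $J_q$ vanish on $T^{\mathfrak{e}}_{\geq\delta_0-\beta,:}$. Your version is simply more explicit about the precision bookkeeping for $(I_p+J_p)\tau_p$, which the paper leaves implicit in the phrase ``by definition of a model''.
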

\begin{proof}
In the identity
\begin{align*}
\Pi_q \big(\Gamma_{q,p}(I_p+J_p)\tau_p-(I_q+J_q)\Gamma_{q,p}\tau_p\big)(\phi_q^\lambda)  &= 
\big(\Pi_q \big(\Gamma_{q,p}(I_p+J_p)\tau_p-\Pi_p (I_p+J_p)\tau_p\big)(\phi_q^\lambda)\\
&+ \big(\Pi_p (I_p+J_p)\tau_p -\Pi_q (I_q+J_q)\Gamma_{q,p}\tau_p\big)(\phi_q^\lambda)
\end{align*}
the first term is bounded by $\lambda^{(\delta+\beta)\wedge \delta_0}$ by definition of a model, while the second term is equal to
$K(\Pi_p\tau_p-\Pi_qQ_{<\delta_0-\beta} \Gamma_{q,p}\tau_p\big)(\phi_q^\lambda)\ .$
\end{proof}
\begin{remark}
Note that in the case $\alpha<-\beta$ and $\delta=+\infty$ one actually has $$\Pi_q \big(\Gamma_{q,p}(I+J_p)\tau_p-(I+J_q)\Gamma_{q,p}\tau_p\big)(\phi_q^\lambda)=0\ .$$
\end{remark}

At this point, it is natural to introduce the operator\footnote{{Note that there is no ambiguity in notation, we use $E$ and $E|_p$ to denote the vector bundle and its fiber and $E_p$ for the operator.}}
$E_q: T^{\mathfrak{e}}\to \bar{T}^{\mathfrak{f}}_q$ which for $\tau_p\in T^{\mathfrak{e}}_{\alpha,\delta}$ and $\alpha<\delta_0-\beta$ is defined as
\begin{equation}\label{def E}
{E}_q \tau_p := \sum_n Q_{<(\delta+\beta)\wedge \delta_0 } j_q K_n(\Pi_p \tau_p -\Pi_q Q_{<\delta_0-\beta } \Gamma_{q,p} \tau_p) \ .
\end{equation}
and extended to $\tau_p\in T^{\mathfrak{e}}_{\geq,:}$ as ${E}_q \tau_p=0$.
To see that this is well defined, one notes that the sum in (\ref{def E}) converges absolutely by the same argument as in the proof of Lemma~\ref{J lemma}.
Recall, that we have fixed an admissible realisation $R$ of $JF$ in Assumption~\ref{Assumption on Kernel}. We introduce for $\alpha \in \RR$ the following quantity
$$ K^{\alpha}_{n,p,q}(z):= K^\star_n(q,z)- R(Q_{<\alpha+\beta }j_p K^\star_n(\cdot,z) )(q) \in (E\hotimes F^*)_{p,q} \simeq E_q\hotimes F^*_{p}.$$
where we also use $R: JF^* \to \cC^\infty (F^*) $ to denote the dual admissible realisation of $R: JF \to \cC^\infty (F) $ defined using the metric induced isomorphism $F^*\simeq F$ and Definition~\ref{Pushforward of admissible realisations}, as well as its canonical extension to an admissible realisation of 
$JF^*\otimes E_z$
\begin{lemma}\label{lemma compensating error term}
For any admissible model and $\tau_p\in V_{<\delta_0-\beta;\delta}$
$$|\Gamma_{q,p}(I_{p} +J_{p})\tau_p-(I_q +J_q)\Gamma_{q,p}\tau_p- E_q \tau_p|_k\lesssim |\tau| d_\fraks(p,q)^{(\delta+\beta)\wedge \delta_0 -k}\   $$
locally uniformly in $p,q$. 
\end{lemma}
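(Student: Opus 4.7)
The overarching strategy is to exploit the fact that $\tau' := \Gamma_{q,p}(I_p+J_p)\tau_p - (I_q+J_q)\Gamma_{q,p}\tau_p - E_q\tau_p$ lives in the jet sector $\bar T^{\mathfrak f}_q$, so that its graded norms can be read off by testing $\Pi_q \tau' = R(\tau')$ against scaled test functions. First I would verify this sector-membership claim: the two copies of $J$ clearly take values in $\bar T^{\mathfrak f}$, $E_q\tau_p$ does so by construction \eqref{def E}, while the commutation property~\eqref{Commutation cond} in Definition~\ref{def of abstract integration map} ensures that $\Gamma_{q,p}I_p\tau_p - I_q\Gamma_{q,p}\tau_p$ lies in the jet sector as well. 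Since the model acts on this sector as a jet model, Lemma~\ref{relating} then gives, for any $k$ and any $\lambda$,
\[
\sum_{l\leq k} \lambda^l |Q_l\tau'| \;\lesssim\; \sup_{\phi_q^\lambda\in\mathcal{B}^{r,\lambda}_q}\bigl|\Pi_q\tau'(\phi_q^\lambda)\bigr|\,.
\]
Choosing $\lambda = d_\fraks(p,q)$ (and $k$ larger than $(\delta+\beta)\wedge\delta_0$) will then reduce the lemma to showing that the right-hand side is of order $|\tau|\,\lambda^{(\delta+\beta)\wedge\delta_0}$.

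Second, I would use Proposition~\ref{prop errorterm} to rewrite
\[
\Pi_q\tau'(\phi_q^\lambda) = K(\sigma)(\phi_q^\lambda) - \Pi_q E_q\tau_p(\phi_q^\lambda) + \mathcal{O}_Z\bigl(\lambda^{(\delta+\beta)\wedge\delta_0}\bigr)|\tau|\,,
\]
where $\sigma := \Pi_p\tau_p - \Pi_q Q_{<\delta_0-\beta}\Gamma_{q,p}\tau_p$. Unpacking the definition of $E_q$ together with the fact that $\Pi_q$ acts on the jet sector via the admissible realisation $R$, the remaining difference decomposes as
\[
\sum_n \Bigl(K_n(\sigma) - R\bigl(Q_{<(\delta+\beta)\wedge\delta_0}\,j_q K_n(\sigma)\bigr)\Bigr)(\phi_q^\lambda)\,.
\]
Thus the task is reduced to a scale-by-scale estimate on the $n$th summand $f_n := K_n(\sigma) - RQ_{<(\delta+\beta)\wedge\delta_0} j_q K_n(\sigma)$ paired against $\phi_q^\lambda$.

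The technical heart of the argument is then to split the sum over $n$ at the threshold $2^{-n} \sim \lambda$. For the large-scale regime $2^{-n} \gtrsim \lambda$, the jet truncation makes $j_q f_n$ vanish through order $\lfloor(\delta+\beta)\wedge\delta_0\rfloor$, so Lemma~\ref{pointwise bound} (applied with a sufficiently high-order jet control) yields a pointwise bound of the form $|f_n(z)| \lesssim d_\fraks(q,z)^{(\delta+\beta)\wedge\delta_0}$ times supremum norms of higher jets of $K_n(\sigma)$, which are in turn controlled by the kernel bounds of Assumption~\ref{Assumption on Kernel}\eqref{item:upper_bound} and an appropriate bound on $\sigma$ tested at scales $\lesssim 2^{-n}$. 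For the small-scale regime $2^{-n} \lesssim \lambda$ one bounds the two pieces of $f_n(\phi_q^\lambda)$ separately: Assumption~\ref{Assumption on Kernel}\eqref{item:upper_bound} and the size of $\sigma$ give $|K_n(\sigma)(\phi_q^\lambda)| \lesssim 2^{-n\beta}\cdot(\text{size of }\sigma)$, while the $R$-term is bounded by Lemma~\ref{easy crucial bound} in terms of the jet norms of $j_qK_n(\sigma)$, which are again controlled through Assumption~\ref{Assumption on Kernel} combined with an analytical estimate on $\sigma$. The essential input for bounding $|\sigma(\phi)|$ at scale $\mu$ is the model estimate $|(\Pi_p\tau_p - \Pi_q\Gamma_{q,p}\tau_p)(\phi)|\lesssim \mu^\delta|\tau|$ together with the triangular bound $|Q_{\geq\delta_0-\beta}\Gamma_{q,p}\tau_p|_{\beta'}\lesssim d_\fraks(p,q)^{(\alpha-\beta')\vee 0}|\tau|$; these are plugged in at the appropriate truncation level to balance $d_\fraks(p,q)$, $2^{-n}$ and $\lambda$.

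Assembling the two regimes, the geometric series in $n$ sums to $\lambda^{(\delta+\beta)\wedge\delta_0}|\tau|$, and combining with the error from Proposition~\ref{prop errorterm} yields
\[
\bigl|\Pi_q\tau'(\phi_q^\lambda)\bigr|\;\lesssim\;|\tau|\,\lambda^{(\delta+\beta)\wedge\delta_0}\,,
\]
locally uniformly in $p,q$ and uniformly in $\phi_q^\lambda \in \mathcal{B}^{r,\lambda}_q$. Feeding this into the inequality from Lemma~\ref{relating} with $\lambda = d_\fraks(p,q)$ produces the claimed bound $|Q_k\tau'|\lesssim |\tau|\,d_\fraks(p,q)^{(\delta+\beta)\wedge\delta_0 - k}$. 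The main obstacle I anticipate is the careful bookkeeping in the small-scale regime, where the fact that the kernel no longer annihilates polynomials (a deviation from \cite{Hai14}) forces one to track the $R$-correction term explicitly and to verify that Assumption~\ref{Assumption on Kernel}\eqref{item:integragration_against_polynomials} (the integration-against-polynomials bound) is precisely what compensates the missing annihilation condition so that the bounds on $\sigma$ propagate correctly through the projection $Q_{<(\delta+\beta)\wedge\delta_0}$.
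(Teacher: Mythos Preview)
Your approach is correct and follows the same overall strategy as the paper: reduce to the jet sector via the commutation property of $I$, convert the graded bound into a test-function estimate, then use Proposition~\ref{prop errorterm} to isolate $K(\sigma) - \Pi_q E_q\tau_p$ and bound this scale by scale.

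The paper organises this slightly differently. After reaching the expression $\sum_n\bigl(K_n(\sigma) - R(Q_{<(\delta+\beta)\wedge\delta_0}j_q K_n(\sigma))\bigr)(\phi_q^\lambda)$, it observes that commuting $\sigma$ through the jet and realisation maps yields
\[
\int_M \sum_n \sigma\bigl(K^{\delta\wedge(\delta_0-\beta)}_{n,q,r}\bigr)\,\phi_q^\lambda(r)\,\dVol(r),
\]
and then invokes the separate Lemma~\ref{lemma bound on error} (a forward reference) as a black box for the required $\lambda^{(\delta+\beta)\wedge\delta_0}$ bound. That lemma, together with Lemma~\ref{technical Lemma for Kernels}, packages exactly the large/small-scale dichotomy you describe. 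The paper also cites Lemma~\ref{bounding abstract pol} rather than Lemma~\ref{relating} for the reduction step, but these are equivalent here since $\tau'$ is a single jet-sector element at $q$.

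One small caution: your large-scale argument via Lemma~\ref{pointwise bound} applied directly to $f_n$ requires controlling higher jets of $K_n(\sigma)$ over a neighbourhood of $q$, which in turn requires bounding $\sigma$ against test functions centred at points possibly far from $q$. The paper (inside Lemma~\ref{technical Lemma for Kernels}) sidesteps this by Taylor-expanding the kernel $K_n^*(\cdot,z)$ in the first variable via \cite[Prop.~A.1]{Hai14} and applying $\sigma$ to the remainder, which keeps all test functions at scale $2^{-n}$ near $p$ or $q$ where the model bounds apply directly. This is the same Taylor expansion, just performed on the kernel before pairing with $\sigma$ rather than after, and it is the cleaner route.
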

\begin{proof}
By Condition~\ref{Commutation cond} in the definition of an abstract integration map, Definition~\ref{def of abstract integration map},
$$\Gamma_{q,p}(I_{p} +J_{p})\tau_p-(I_q +J_q)\Gamma_{q,p}\tau_p- E_q \tau_p\in \bar{T}^{\mathfrak{f}}_{<\delta_0 ,:} \ .$$
Thus, by Lemma~\ref{bounding abstract pol}, it suffices to obtain the bound 
$$|\Pi_q \big(\Gamma_{q,p}(I_{p} +J_{p})\tau_p-(I_q +J_q)\Gamma_{q,p}\tau_p- E_q \tau_p)(\phi_q^\lambda)|\lesssim \lambda^{(\delta+\beta)\wedge \delta_0} $$
for $d_\fraks(p,q)\leq \lambda$. Indeed
\begin{align*}
\Pi_q& \big(\Gamma_{q,p}(I_{p} +J_{p})\tau_p-(I_q +J_q)\Gamma_{q,p}\tau_p- E_q \tau_p)(\phi_q^\lambda) 
\\
&= K(\Pi_p\tau_p-\Pi_q Q_{<\delta_0-\beta}\Gamma_{q,p}\tau_p)(\phi_q^\lambda) - \Pi_q E_q \tau_p(\phi_q^\lambda)+\mathcal{O}_Z(\lambda^{(\delta+\beta)\wedge\delta_0}) \\
&= \sum_n K_n(\Pi_p\tau_p-\Pi_q Q_{<\delta_0-\beta}\Gamma_{q,p}\tau_p)(\phi_q^\lambda) \\
&\quad - \Pi_q Q_{<(\delta+\beta)\wedge \delta_0}\big(j_q K_n(\Pi_p\tau_p -\Pi_q Q_{<\delta_0-\beta}\Gamma_{q,p} \tau_p)\big)(\phi_q^\lambda) +\mathcal{O}_Z(\lambda^{(\delta+\beta)\wedge\delta_0}) \\
&=\int_M \sum_n  \big(\Pi_p\tau_p-\Pi_q Q_{<\delta_0-\beta}\Gamma_{q,p}\tau_p\big)\big(K^{\delta\wedge (\delta_0-\beta)}_{n,q,r}\big)  \phi_q^\lambda(r) \, \dVol_g(r)  +\mathcal{O}(\lambda^\delta)\\
&\lesssim \|Z\|_{K} |\tau| (\lambda^{(\delta+\beta)\wedge\delta_0}+\mathcal{O}_Z(\lambda^{(\delta+\beta)\wedge\delta_0})\ ,
\end{align*}
where to go from the first to the second line we used Proposition~\ref{prop errorterm} and  the last inequality follows from Lemma~\ref{lemma bound on error}.
\begin{remark}
Note that in Lemma~\ref{lemma compensating error term} the condition $\alpha<\delta_0-\beta$ for $\tau_p\in V_{\alpha;\delta}$ is actually necessary.
\end{remark}

\end{proof}
\subsection{Analytic bounds involving $K^{\alpha}_{n,p,q}$}

In this section we collect bounds involving the function
$$ K^{\alpha}_{n,p,q}(z):=  K^\star_n(q,z)- R(Q_{\alpha+\beta}j_p  K^\star_n(\cdot,z) )(q)\  ,$$
and for given $\fraks$-vector fields $W=(W_1,\ldots,W_k)$,
$$K^{W, \alpha}_{n,p,q}(z)= (\nabla_{W_1}...\nabla_{W_k})_1K^\star_n(q,z)- \nabla_{W_1}...\nabla_{W_k}R(Q_{\alpha+\beta}j_p  K^\star_n(\cdot,z) )(q)\ \ ,$$
where the subscript $1$ in $(\nabla_{W_1}...\nabla_{W_k})_1K_n(q,z)$ signifies that the differential operators acts on the first variable and it is understood that
$(\nabla_{W_1}...\nabla_{W_k})f= \nabla_{W_1}\Big(...\big(\nabla_{W_{k-1}}(\nabla_{W_k}f)\big)...\Big)$
 for $f\in \cC^\infty(E)$.

\begin{remark}
Note that, if $W$ consists of coordinate vector fields, these functions are natural analogues of the functions $K^{k, \alpha}_{n,xy}$ in \cite{Hai14}.
\end{remark}
\begin{remark}
Note that $K^{W, \alpha}_{n,p,q}$ is supported on $\left(B_{2^{-n}}(q)\cup B_{2^{-n}}(p)\right)\cap B_{r_q}(q)\cap B_{r_p}(p)$ by Point~\ref{item:support_in_exp_chart} of Assumption~\ref{Assumption kernel decomposition}.
\end{remark}

\begin{lemma}[Hai14, Lemma 5.18]\label{technical Lemma for Kernels}
Let $W=(W_1,\ldots,W_k)$ be a tuple of $\fraks$-vector fields. For $\tau_q\in T_{\alpha,\delta}$, for $\xi, \rho >0$ such that $\xi+\beta<\rho$ and $p,q,n$ such that $d_{\fraks}(p,q)<2^{-n}$ the bound
\begin{align*}
|\Pi_q\tau_q (K^{W,\xi}_{n,p,q})|&\lesssim_{W,C}  \|Z\|_{B_1(p)}|\tau_q|  \\
\end{align*}
holds uniformly over $n$ and models $Z$, satisfying $\|Z\|_{B_1(p)}\leq C$. The finite set $\partial A_{\xi}\subset\mathbb{N}^d$ has the property that $|l|_\fraks \geq \xi + \beta$ for every $l\in \partial A_\xi$.
For a second model, $\tilde{Z}=(\tilde{\Pi},\tilde{\Gamma})$ satisfying $\|\tilde Z\|_{B_1(p)}\leq C$, we have the following bound
\begin{align*}
|\big(\Pi_q-&\tilde{\Pi}_q\tau_q\big) (K^{W,\xi}_{n,p,q})| \lesssim_{W,C} \|Z;\tilde{Z}\|_{B_1(p)}|\tau_q|\\
&\times \Big( \sum_{l\in \delta A_{\xi}} 2^{(|l|_\fraks+\deg_{\fraks}(W)-\alpha-\beta)n } d(p,q)^{|l|_\fraks} +   \sum_{m<\xi+\beta} d(p,q)^{\rho-\deg_{\fraks}(W)} 2^{-n(\alpha+\beta-m)}\Big) \ .
\end{align*}
Furthermore, the same bounds hold if $K^{W,\xi}_{n,p,q}$ is replaced by $K^{W,\xi}_{n,q,p}$. 
\end{lemma}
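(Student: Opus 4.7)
My plan is to adapt the strategy of \cite[Lemma~5.18]{Hai14} to the bundle-valued manifold setting. The starting point is the identity
\begin{equation*}
K^{W,\xi}_{n,p,q}(z) = (\nabla_{W_1}\cdots\nabla_{W_k})_1 K_n^\star(q,z) - (\nabla_{W_1}\cdots\nabla_{W_k})R\!\left(Q_{<\xi+\beta}\,j_p K_n^\star(\cdot,z)\right)(q),
\end{equation*}
and I would expand the second summand in an exponential $\fraks$-chart at $p$ into its monomials in $(q-p)$. By Corollary~\ref{lemma gradedness exponential chart} this decomposition respects the grading implicit in the truncation $Q_{<\xi+\beta}$.

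The first analytic ingredient is that, by Assumption~\ref{Assumption on Kernel}(\ref{item:upper_bound}), $z\mapsto(\nabla_W)_1K_n^\star(q,z)$ is supported in $B_{2^{-n}}(q)$ and, after rescaling by $C\cdot 2^{n(\deg_\fraks(W)-\beta)}$, lies in $\mathcal{B}_q^{r,2^{-n}}(F^*)$. Similarly, for any multi-index $l$ with $|l|_\fraks<\xi+\beta$, the function $z\mapsto \nabla_W D^l_1K_n^\star(p,z)$ is, up to the factor $C\cdot 2^{n(\deg_\fraks(W)+|l|_\fraks-\beta)}$, a test function centred at $p$ at scale $2^{-n}$. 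Pairing $\Pi_q\tau_q$ against each such rescaled test function, using the defining bound of a model together with Lemma~\ref{dislocated testfunction lemma} (which applies because $d_\fraks(p,q)<2^{-n}$), produces contributions of the form
\begin{equation*}
\|Z\|_{B_1(p)}\,|\tau_q|\cdot d_\fraks(p,q)^{|l|_\fraks}\cdot 2^{n(|l|_\fraks+\deg_\fraks(W)-\alpha-\beta)},
\end{equation*}
which after restriction to the boundary indices $l\in \partial A_\xi$ yields the first sum in the claim.

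The remaining low-order contributions (those with $|l|_\fraks$ well below $\xi+\beta$) are too large to retain individually; I would regroup them via Lemma~\ref{pointwise bound}, applied to $r\mapsto (\nabla_W)_1K_n^\star(r,z)$ and its truncated jet at $p$. This yields a pointwise Taylor estimate on the full remainder
\begin{equation*}
|K^{W,\xi}_{n,p,q}(z)|\lesssim d_\fraks(p,q)^{\xi+\beta-\deg_\fraks(W)}\cdot 2^{n(|\fraks|+\xi)},
\end{equation*}
together with analogous bounds on its $z$-derivatives. This identifies $K^{W,\xi}_{n,p,q}$, up to a prefactor behaving like $d_\fraks(p,q)^{\xi+\beta-\deg_\fraks(W)}\cdot 2^{n\xi}$, as an element of $\mathcal{B}_q^{r,2^{-n}}(F^*)$. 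Pairing with $\Pi_q\tau_q$ and absorbing the slack $d_\fraks(p,q)\leq 2^{-n}$ into the strict inequality $\xi+\beta<\rho$ to convert $d_\fraks(p,q)^{\xi+\beta-\deg_\fraks(W)}$ into $d_\fraks(p,q)^{\rho-\deg_\fraks(W)}\cdot 2^{-n(\rho-(\xi+\beta))}$, one recovers the second sum in the statement, of the form $d_\fraks(p,q)^{\rho-\deg_\fraks(W)}\cdot 2^{-n(\alpha+\beta-m)}$ for $m<\xi+\beta$. The bound for the difference $\Pi_q-\tilde\Pi_q$ and the case $K^{W,\xi}_{n,q,p}$ both follow by replicating the argument with $\|Z\|$ replaced by $\|Z;\tilde Z\|$ (using the difference form of the model bound and of Lemma~\ref{dislocated testfunction lemma}) and with the roles of $p$ and $q$ interchanged, respectively; in both cases $d_\fraks(p,q)<2^{-n}$ ensures that Lemma~\ref{dislocated testfunction lemma} remains applicable.

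The main obstacle I expect is the combinatorial bookkeeping needed to guarantee that, after combining the two families of estimates, only the boundary multi-indices $\partial A_\xi$ survive in the first sum of the claim while all low-order terms are absorbed into the second sum. A subordinate subtlety, specific to our setting, is that the admissible realisation $R$ is only graded in an exponential $\fraks$-chart at the chosen base point (Corollary~\ref{lemma gradedness exponential chart}); hence the polynomial expansion of $R(Q_{<\xi+\beta}j_p K_n^\star(\cdot,z))(q)$ must be carried out in such a chart at $p$, so that the truncation $Q_{<\xi+\beta}$ genuinely matches the polynomial degree bookkeeping used throughout the argument.
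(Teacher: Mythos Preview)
Your strategy has two genuine gaps. First, the set $\partial A_\xi$ does not arise by ``restricting to boundary indices'' after bounding individual monomials: the expansion of $R(Q_{<\xi+\beta}j_pK_n^\star(\cdot,z))(q)$ in an exponential chart only contains monomials with $|l|_\fraks<\xi+\beta$, so none of them lie in $\partial A_\xi$, and the leading term $(\nabla_W)_1K_n^\star(q,z)$ is left unaccounted for. The boundary set is produced by the \emph{integral representation of the Taylor remainder} \cite[Prop.~A.1]{Hai14}: in the chart one writes
\[
\tilde K_n(y_q,z)-\sum_{|l|_\fraks<\xi+\beta}\frac{(y_q-x_p)^l}{l!}D_1^l\tilde K_n(x_p,z)
=\sum_{l\in\partial A_\xi}\int D_1^l\tilde K_n(y_q+h,z)\,Q^l(x_p-y_q,dh),
\]
and it is this identity that combines the leading term with all the low-order monomials, leaving only boundary indices. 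Each integrand is (up to rescaling) a test function at scale $2^{-n}$ centred within $d_\fraks(p,q)$ of $y_q$, so Lemma~\ref{dislocated testfunction lemma} applies; the measure $Q^l$ supplies the factor $d_\fraks(p,q)^{|l|_\fraks}$. The ``combinatorial bookkeeping'' you anticipate is not a bookkeeping issue at all but the absence of this structural ingredient.

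Second, your conversion $d^{\xi+\beta-\deg_\fraks W}\to d^{\rho-\deg_\fraks W}\cdot 2^{-n(\rho-\xi-\beta)}$ goes the wrong way: since $d_\fraks(p,q)<2^{-n}$ and $\rho>\xi+\beta$, one has $d^{\xi+\beta}=d^\rho\cdot d^{-(\rho-\xi-\beta)}\geq d^\rho\cdot 2^{n(\rho-\xi-\beta)}$, not $\leq d^\rho\cdot 2^{-n(\rho-\xi-\beta)}$. Lemma~\ref{pointwise bound} can never produce vanishing beyond the truncation degree $\xi+\beta$, so it cannot yield the $d^\rho$ factor. That factor comes from a different mechanism: after subtracting off the flat Taylor polynomial in the chart, what remains is the difference between the pushed-forward realisation $\bar R$ and the standard polynomial realisation, both applied to $Q_m j_{x_p}\tilde K_n(\cdot,z)$. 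By Lemma~\ref{comparing admissible realisations} any two admissible realisations agree to \emph{arbitrary} order at the base point, giving $\mathcal{O}(d^\rho)$ for free; the jet itself contributes $2^{n(m-\beta)}$ times a test function at scale $2^{-n}$ centred at $x_p$. Thus the correct decomposition is ``(flat Taylor remainder) $+$ ($\bar R-$polynomial)'', the first piece handled via the integral remainder formula and the second via Lemma~\ref{comparing admissible realisations}. Your identification of the subordinate subtlety about working in an exponential $\fraks$-chart is spot on and is precisely what makes this decomposition well-defined.
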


\begin{proof}
By linearity we can assume $|\tau|=1$. First observe that $\supp (K^{W,\xi}_{n,p,q})\subset B_{r_p}(p) \ , $
by Point~\ref{item:support_in_exp_chart} of Assumption~\ref{Assumption on Kernel}. Thus, let $(B_{r_p},\phi)$ be an $\fraks$-exponential chart around $p$, and denote by $\tilde{Z}= (\tilde{\Pi},\tilde{\Gamma})$ the pushforward regularity structure and denote by $\tilde{K}$ the pulled back kernel given by $\tilde{K} (x,y)= K(\phi^{-1}(x),\phi^{-1}(y))$. Writing $x_p = \phi(p)$ and $y_q = \phi(q)$, we have (using Lemma~\ref{lemma gradedness exponential chart}), that
\begin{align*}
K^\xi_{n,p,q}(\phi^{-1}(z))&= K_n(q,\phi^{-1}(z))- R(Q_{<\xi+\beta}j_p K_n(\cdot,\phi^{-1}(z)) )(q) \\
&= \tilde{K}_n(y_q,z)- \bar R (Q_{<\xi+\beta}j_{x_p} \tilde{K_n}(\cdot,z) )(y_q)\ ,
\end{align*}
where $\bar R$ is the admissible realisation obtained by pushforward under the exponential map from $R$, see Remark~\ref{Pushforward of admissible realisations}. Similarly, denoting by $\tilde{W}=(\tilde W_1,\ldots,\tilde{W}_k)$ the pushforward of $W=(W_1,\ldots,W_k)$ and by $\tilde{\nabla}$ the pullback of $\nabla$, as well as setting $\tilde{K}^\xi_{n,x_p,y_q}(z):= K^\xi_{n,p,q}(\phi^{-1}(z))$ one finds that
$$\widetilde {K^{W,\xi}_{n,x_p,y_q}}:= K^{W,\xi}_{n,p,q}(\phi^{-1}(z))= \tilde{\nabla}_{\tilde W_1}\ldots \tilde{\nabla}_{\tilde W_k}\tilde{K}^\xi_{n,x_p,y_q}(z)$$

Therefore, denoting by $\bar{Z}=(\bar \Pi, \bar \Gamma)$ the push-forward of $Z$, 
one has
\begin{align}\label{rtseq}
\Pi_q\tau_q (K^{W,\xi}_{n,p,q})&=\bar \Pi_{y_q}\tau_{q}(\widetilde {K^{W,\xi}_{n,x_p,y_q}}) 
\end{align}
and we shall bound the right-hand side of (\ref{rtseq}). 

We start with the case when there are no vector fields $W$ present, i.e $k=0$. By the definition of an admissible realisation we have
\begin{align}
\tilde{K}^\xi_{n,x_p,y_q}(z):=&
\tilde{K}_n(y_q,z)- \bar R (Q_{<\xi+\beta}j_{x_p} \tilde{K}_n(\cdot,z) )(y_q)\\
=& \tilde{K}_n(y_q,z)- \sum_{|l|_\fraks<\xi+\beta} \frac{(y_q-x_p)^l}{l!} D_1^{l} \tilde{K}_n (x_p,z) \nonumber \\
&+ \sum_{m < \xi+\beta}  \Big(\sum_{|l|_{\fraks}=m}\frac{(y_q-x_p)^l }{l!} D_1^{l} \tilde{K}_n (x_p,z) - \bar R (Q_{m}j_{x_p} \tilde{K}_n(\cdot,z) )(y_q)\Big)\nonumber \\
=& \tilde{K}_n(y_q,z)- \sum_{|l|<\xi+\beta} \frac{(y_q-x_p)^l}{l!} D_1^{l} \tilde{K}_n (x_p,z) \label{fist summand} \\
&+ \sum_{m<\xi+\beta} \mathcal{O}(|x_p-y_q|^\rho )2^{n(m- \beta)}\phi^{m,2^{-n}}_{x_p}(z)\ , \label{second summand}
\end{align}
where 
$\phi^{m,2^{-n}}_{x_p}\in B^{r,2^{-n}}_{x_p}$ due to Assumption~\ref{Assumption on Kernel} and $\rho>0$ is arbitrary.
We treat the two summands above separately. 

For the term (\ref{fist summand}), we shall argue similarly to \cite[Lemma 5.18]{Hai14}, but additionally using Lemma~\ref{dislocated testfunction lemma}. From the formula \cite[Proposition A.1]{Hai14}, we have
$$ \tilde K_n(y,z)- \sum_{|l|_{\fraks}<\xi+\beta} \frac{(y-x)^l}{l!} D_1^{l}  \tilde K_n (x,z) = \sum_{l\in \delta A_{\xi} } \int D_1^{l}  \tilde K_n(y+h,z)Q^l (x-y, dh)\ $$
and thus we can write 
\begin{align*}
\bar \Pi_y &\tau_y \Big(\tilde K_n(y,\cdot)- \sum_{|l|_{\fraks}<\xi+\beta} \frac{(y-x)^l}{l!} D_1^{l} \tilde K_n (x,\cdot)\Big) = \sum_{l\in \delta A_{\xi}} \int \bar \Pi_y \tau_y \big(D_1^{l}\tilde K_n  (y+h,\cdot)\big)Q^l(x-y, dh) \ .
\end{align*}
Since the integral in $h$ only varies over points satisfying $|h|_{\fraks}<|x-y|_{\fraks}=d_{\fraks}(p,q)$, and since by assumption $d(p,q)< 2^{-n}$, we obtain by Lemma~\ref{dislocated testfunction lemma}
\begin{align*}
&\big|\bar \Pi_y \tau_y \big(\tilde K_n (y,\cdot)- \sum_{|l |_{\fraks}<\xi+\beta} \frac{(y-x)^l}{l!} D_1^{l} \tilde K_n (x,\cdot)\big)\big| \\
&\lesssim \big(\|\bar \Pi\|_{B_{2^{-n}}(q)}\|\bar \Gamma\|_{B_{2^{-n}}(q)}+\llbracket \bar{Z}  \rrbracket_{B_{2^{-n}}(q)} \big)|\tau_q|
\sum_{l\in \delta A_{\xi}}
2^{(|l|_\fraks-\alpha-\beta)n }
\int Q^l(x-y, dh)\\
&\lesssim \big(\|\bar \Pi\|_{B_{2^{-n+1}}(p)}\|\bar \Gamma\|_{B_{2^{-n+1}}(p)}+ \llbracket \bar Z  \rrbracket_{B_{2^{-n+1}}(p)} \big) |\tau_q|\sum_{l\in \delta A_{\xi}} 2^{(|l|_\fraks-\alpha-\beta)n } |x-y|_{\fraks}^{|l|_{\fraks}} \\
&\lesssim_C \|Z\|_{B_1(p)} \sum_{l\in \delta A_{\xi}} 2^{(|l|_\fraks-\alpha-\beta)n } d_\fraks (p,q)^{|l|_\fraks} 
\end{align*}
where in the first inequality we have used the fact that there exists $c>0$ independent of $n$ and $y$ such that
$c\frac{2^{n\beta}}{2^{|l|_{\fraks}n}}D^l \tilde K_n (y,z) \in B_y^{r,2^{-n}}$ and in the last inequality we used Lemma~\ref{lemma push forward}.
To treat the term (\ref{second summand}), simply note that
\begin{align*}
2^{n( m- \beta)}|\bar \Pi_y\tau_q (\phi^{m,2^{-n}}_{x_p})|&\lesssim_C
2^{n( m- \beta)} 2^{-n\alpha}\|Z\|_{B_1(p)} \ ,
\end{align*}
where we used that 
\begin{align*}
 |\bar \Pi_y\tau_q (\phi^{m,2^{-n}}_{x_p})| &\leq |\bar \Pi_{x_p}\bar{\Gamma}_{x_p,y_q}\tau_q (\phi^{m,2^{-n}}_{x_p})|
 +|\bar \Pi_y\tau_q-\bar \Pi_{x_p}\bar \Gamma_{x_p,y_q}\tau_q (\phi^{m,2^{-n}}_{x_p})|\\
 &\leq \sum_{\zeta\in A} |x_p-y_q|^{(\alpha-\zeta)\vee 0} 2^{-\zeta n} \|\bar \Pi\|_{B_{2^{-n+1}(y)}}\|\bar \Gamma\|_{B_{2^{-n+1}(y)}}\\
 &\quad +2^{-\delta n} \llbracket \bar{Z}  \rrbracket_{B_{2^{-n+1}(y)}}\\
 &\lesssim_C  \|Z\|_{B_1(p)} \big( 2^{-\delta n} +\sum_{\zeta\in A} d(p,q)^{(\alpha-\zeta)\vee 0} 2^{-\zeta n}\big)\\
& \lesssim  \|Z\|_{B_1(p)}  2^{-\alpha n} ,
\end{align*}
where we applied Lemma~\ref{lemma push forward} in the second last inequality and used that $d_\fraks(p,q)\leq 2^{-n}$ in the last one. This completes the case in the absence of vector fields $W$.

We turn to the case $W=(W_1,\ldots,W_k)$ with $k\geq 1$ . In this case one finds a similar decomposition as given by (\ref{fist summand}) and (\ref{second summand}), namely 
\begin{align*}
\widetilde {K^{W,\xi}_{n,x_p,y_q}}(z) =& \tilde{\nabla}_{\tilde{W}_1}...\tilde{\nabla}_{\tilde{W}_k}  \left(\tilde{K_n}(\cdot,z)- \sum_{|l|<\xi+\beta} \frac{(\ \cdot\ -x_p)^l}{l!} D_1^{l} \tilde{K_n} (x_p,z)\right)(y_q)  \\
&+ \sum_{m<\xi+\beta} \mathcal{O}(|x_p-y_q|^{\rho-\deg_\fraks W} )2^{n(m- \beta)}\phi^{m,2^{-n}}_{x_p}(z)\ ,
\end{align*}
where again $\phi^{m,2^{-n}}_{x_p}\in B^{r,2^{-n}}_{x_p}$. Proceeding as above one obtains the claim.
\end{proof}

\begin{lemma}[Hai14, Lemma 5.19]\label{lemma well definednes of I}
For $\tau_q\in T^{\mathfrak{e}}_{\alpha,:}$ satisfying $\alpha+\beta\notin \mathbb{N}$ one has the bound 
\begin{equation}
\sum_{n\in\mathbb{N}} |\int\Pi_q\tau_q (K^{\alpha}_{n,p,q})\phi^\lambda_{q}(p) d \Vol_p |\lesssim_C \|Z\|_{B_1(p)} |\tau| \lambda^{\alpha+\beta} 
\end{equation}
uniformly over $q$ and $\phi_q^{\lambda}\in B^{r,\lambda}_q$, $\lambda\leq 1$ and models $Z$ satisfying $\|Z\|_{B_1(p)}<C$.
For a second model $\tilde{Z}=(\tilde{\Pi},\tilde{\Gamma})$
one has 
\begin{align*}
\sum_{n\in\mathbb{N}}& |\int(\Pi_q-\tilde{\Pi}_q)\tau_q (K^{\alpha}_{n,p,q})\phi^\lambda_{q}(p) d \Vol_p | \lesssim_C  \|Z;\tilde{Z}\|_{B_1(p)} |\tau| \lambda^{\alpha+\beta} \ .
\end{align*}
\end{lemma}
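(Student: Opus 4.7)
The proof is a geometric adaptation of [Hai14, Lemma~5.19]. Fix $n_0\in\mathbb{N}$ with $2^{-n_0}\simeq\lambda$ and split $\sum_n = \sum_{n\leq n_0}+\sum_{n>n_0}$, corresponding to the kernel scale $2^{-n}$ being respectively larger or smaller than the test-function scale $\lambda$.

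In the \emph{near regime} $n \leq n_0$, every $p \in \operatorname{supp}\phi_q^\lambda$ satisfies $d_\fraks(p,q) \leq \lambda \leq 2^{-n}$, so Lemma~\ref{technical Lemma for Kernels} applies directly with $W=\emptyset$, $\xi = \alpha$, and $\rho \in (\alpha+\beta,\alpha+\beta+\eta)$ for some small $\eta>0$. Such a $\rho$ exists precisely because $\alpha+\beta\notin\mathbb{N}$ leaves room in the discrete grading. Integrating the pointwise bound against $\phi_q^\lambda$ replaces each $d_\fraks(p,q)^k$ by $\lambda^k$ (using $\|\phi_q^\lambda\|_{L^1}\lesssim 1$), and the $n$-sums become geometric: the ``upper'' sum yields $2^{(|l|_\fraks-\alpha-\beta)n_0}\lambda^{|l|_\fraks}\simeq \lambda^{\alpha+\beta}$ for each $l\in\partial A_{\alpha}$ (using $|l|_\fraks > \alpha+\beta$, since $l\in\partial A_\alpha$ and $\alpha+\beta\notin\mathbb{N}$), and the ``lower'' sum is $\lesssim \lambda^\rho \leq \lambda^{\alpha+\beta}$.

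In the \emph{far regime} $n > n_0$, I would further split the $p$-integration into the annulus $\{d_\fraks(p,q) < 2^{-n+1}\}$ and its complement. On the annulus, Lemma~\ref{technical Lemma for Kernels} still applies and one gains a volume factor $(2^{-n}/\lambda)^{|\fraks|}$ from $\int_{B_{2^{-n+1}}(q)}|\phi_q^\lambda|\lesssim (2^{-n}/\lambda)^{|\fraks|}$; the resulting sum over $n > n_0$ is a geometric series with ratio $2^{-\beta}$ dominated by $n=n_0$, giving $\lesssim\lambda^{\alpha+\beta}$. Outside the annulus ($d_\fraks(p,q) \geq 2^{-n+1}$), the $z$-supports of $K_n^{*}(q,\cdot)$ (inside $B_{2^{-n}}(q)$) and of $R(Q_{<\alpha+\beta}j_p K_n^{*}(\cdot,z))(q)$ (inside $B_{2^{-n}}(p)$) are disjoint, so one bounds $\Pi_q\tau_q$ against each piece separately: the first as a rescaled test function at $q$ of scale $2^{-n}$ and norm $\lesssim 2^{-n\beta}$ (by Assumption~\ref{Assumption on Kernel}), giving $\lesssim 2^{-n(\alpha+\beta)}$; the second using Lemma~\ref{dislocated testfunction lemma} together with the re-expansion $\Pi_q\tau_q = \Pi_p\Gamma_{p,q}\tau_q + O(\|Z\||\tau|\lambda^\alpha)$ evaluated against a test function supported near $p$. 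Summing over $n > n_0$ once more telescopes to $\lesssim\lambda^{\alpha+\beta}$.

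\emph{Main obstacle.} The delicate point is the outer part of the far regime, where the Taylor-correction term is supported near $p$ at distance $\sim\lambda \gg 2^{-n}$ from $q$. Controlling $\Pi_q\tau_q$ against such a displaced test function requires combining Lemma~\ref{dislocated testfunction lemma}, the jet estimate on $K_n$ from Assumption~\ref{Assumption on Kernel}(\ref{item:upper_bound}), and the boundedness of admissible realisations (Lemma~\ref{bound on admissable realisations}), with careful bookkeeping to ensure that the resulting sum over $n$ telescopes cleanly to $\lambda^{\alpha+\beta}$ without picking up logarithmic losses. The two-model bound follows from the same two-regime split by replacing the individual model bounds with their difference counterparts: the second assertion of Lemma~\ref{technical Lemma for Kernels} in the near regime, and linearity of the model action in the far regime, after which the geometric summations are identical.
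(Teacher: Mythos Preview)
Your near-regime argument ($n\leq n_0$) matches the paper's exactly: apply Lemma~\ref{technical Lemma for Kernels} with $\xi=\alpha$ and sum the resulting geometric series.

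Your far-regime argument differs from the paper and has a genuine gap in the outside-annulus Taylor piece. The function $z\mapsto R(Q_m j_p K_n^\star(\cdot,z))(q)$ is, as you note, a test function at scale $2^{-n}$ centred at $p$, but displaced from $q$ by up to $\lambda\gg 2^{-n}$. Lemma~\ref{dislocated testfunction lemma} does not apply here: it requires the displacement to be a \emph{bounded} multiple $D$ of the test-function scale, with implicit constant depending on $D$; here $D=\lambda/2^{-n}$ is unbounded. If instead you re-expand via $\Pi_q\tau_q=\Pi_p\Gamma_{p,q}\tau_q+(\Pi_q-\Pi_p\Gamma_{p,q})\tau_q$, the first term produces contributions $|Q_\zeta\Gamma_{p,q}\tau_q|\cdot(2^{-n})^\zeta\cdot\lambda^m 2^{n(m-\beta)}$ for every $\zeta\in A$ (recall $\Gamma$ is not lower-triangular in this framework). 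For $\zeta\leq m-\beta$ the exponent of $2^{-n}$ is nonpositive and the sum over $n>n_0$ diverges. Since one only has $m-\beta<\alpha$, and $A$ typically contains homogeneities well below $-\beta$ (e.g.\ the noise homogeneity), such $\zeta$ do occur.

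The paper instead observes that the $n>n_\lambda$ argument from \cite[Lemma~5.19]{Hai14} carries over verbatim \emph{because it does not involve $\Gamma$ at all}: one swaps the $p$-integral with the pairing against $\Pi_q\tau_q$ and shows the resulting function of $z$ satisfies bounds of the form $\lambda^{\alpha+\beta-\kappa}2^{-\kappa n}$ for finitely many $\kappa>0$. This step is precisely where Assumption~\ref{Assumption on Kernel}(\ref{item:integragration_against_polynomials}) is used (as the remark following the lemma states); it replaces the polynomial-annihilation hypothesis of \cite{Hai14}. Your proposal never invokes that assumption, which is a sign that the mechanism controlling the Taylor-subtraction piece at small scales is missing.
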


\begin{proof}
By linearity assume $|\tau|=1$. For fixed $\lambda>0$ , we define $n_\lambda\in \mathbb{N}$ as the number satisfying $\lambda\in (2^{-n_\lambda-1},2^{-n_\lambda}]$ and 
argue separately in the two cases, $n<n_\lambda$ and $n \geq n_\lambda$, starting
 with the former.
 
Since on $n<n_\lambda$ we can use Lemma~\ref{technical Lemma for Kernels} and that $d_{\fraks}(p,q)<\lambda< 2^{-n}$ to bound
\begin{align*}
\sum_{n\leq n_\lambda} &|\int\Pi_q\tau_q (K^{\alpha}_{n,p,q})\phi^\lambda_{q}(p) d \Vol_p | \\
&\lesssim \| Z \|_{B_1(p)}
 \sum_{n\leq n_\lambda}\Big( \sum_{l\in \delta A_{\alpha}} 2^{(|l|_{\fraks}-\alpha-\beta)n }  \int |\phi^\lambda_{q}(p)| d(p,q)^{|l|_{\fraks}} \, \dVol_p  \\ 
 &\qquad \qquad \qquad \qquad +\sum_{m<\alpha+\beta}2^{-n(\alpha+\beta-m)} \int |\phi^\lambda_{q}(p)| d(p,q)^\rho d\Vol_p )\Big)\\
&\lesssim \| Z \|_{B_1(p)}
\Big( \sum_{l\in \delta A_{\alpha}}   \lambda^{|l|_{\fraks}}\sum_{n\leq n_\lambda}2^{(|l|_{\fraks}-\alpha-\beta)n }    + 
   \sum_{|l|_{\fraks}<\alpha+\beta} \lambda^\rho  \sum_{n\leq n_\lambda}2^{-n(\alpha+\beta-m)}  \Big)\\
&\lesssim \| Z \|_{B_1(p)}
\Big( \sum_{l\in \delta A_{\alpha}}   \lambda^{|l|_{\fraks}} 2^{(|l|_{\fraks}-\alpha-\beta)n_\lambda }    + 
 \lambda^\rho  \Big)\\
 			&\lesssim \| Z \|_{B_1(p)}
(  \lambda^{(\alpha+\beta)}    + 
 \lambda^\delta )\ ,
\end{align*}
where we used that $|l|_{\fraks}>\alpha+\beta$ for all $l\in A_{\alpha}$ .

For the case $n>n_\lambda$ one can argue exactly as in \cite[Lemma 5.19]{Hai14}, since this argument does not involve the maps $\Gamma$. One finds 
$$ |\int\Pi_q\tau_q (K^{\alpha}_{n,p,q})\phi^\lambda_{q}(p) d \Vol_p |\lesssim \|\Pi \|_{B_1(p)} \sum_{\delta>0} \lambda^{\alpha+\beta -\kappa} 2^{-\kappa n}\ ,$$
where the sum runs over some finite set of strictly positive indices $\kappa$ and therefore the desired bound follows after summing over $n>n_\lambda$.
\end{proof}

\begin{remark}
Note that the second part of the proof of Lemma~\ref{lemma well definednes of I} is where the bounds in Point~\ref{item:integragration_against_polynomials} of Assumption~\ref{Assumption on Kernel} are used. Furthermore, the assumption that $\alpha+\beta\notin \mathbb{N}$ was used several times in order to rule out the appearance of logarithmic terms.
\end{remark}
Very similarly to Lemma~\ref{lemma well definednes of I} one obtains the following.
\begin{lemma}\label{lemma bound on error}
If $\delta+\beta \notin \mathbb{N}$, then for $\tau_q\in T^{\mathfrak{e}}_{<\delta_0-\beta, \delta}$ one has for every $C>0$ the bound 
\begin{equation*}
\sum_{n\in\mathbb{N}} |\int(\big(\Pi_p\tau_p-\Pi_q Q_{<\delta_0-\beta}\Gamma_{q,p}\tau_p\big)\big(K^{\delta\wedge (\delta_0-\beta)}_{n,q,r}\big)\phi^\lambda_{q}(p) d Vol_p |\lesssim_C \|Z\|_{B_1(p)}|\tau_p| \lambda^{(\delta+\beta)\wedge\delta_0} 
\end{equation*}
uniformly over $\phi_q^{\lambda}\in B_q^{r,\lambda}$ with $\lambda\leq 1$ and models satisfying $\|Z\|_{B_1(p)}<C$ .
A similar bound holds for difference of models.
\end{lemma}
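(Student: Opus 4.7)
The plan is to mirror the proof of Lemma~\ref{lemma well definednes of I}, with $\mu := \Pi_p\tau_p - \Pi_q Q_{<\delta_0-\beta}\Gamma_{q,p}\tau_p$ playing the role previously played by $\Pi_q\tau_q$. The key observation is that $\mu$ has ``effective regularity'' $\xi := \delta \wedge (\delta_0-\beta)$, in the sense that $|\mu(\phi^\lambda_q)| \lesssim \|Z\|_{B_1(p)} |\tau_p| \lambda^\xi$ uniformly over $\phi^\lambda_q\in \mathcal{B}^{r,\lambda}_q$ and $p,q$ with $d_\fraks(p,q)\leq\lambda$. This bound follows by writing $\mu = \mu_1 + \mu_2$ with $\mu_1 := \Pi_p\tau_p - \Pi_q\Gamma_{q,p}\tau_p$ and $\mu_2 := \Pi_q Q_{\geq \delta_0-\beta}\Gamma_{q,p}\tau_p$, applying Condition~(3) of Definition~\ref{def model} to $\mu_1$ to obtain $\lambda^\delta|\tau_p|$ and Conditions~(1) and~(2) to each homogeneous component of $\mu_2$ to obtain $\lambda^{\delta_0-\beta}\|\Gamma\| \, |\tau_p|$. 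Since $\xi+\beta = (\delta+\beta)\wedge\delta_0$, this produces exactly the exponent required.

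The next step will be to prove an analogue of Lemma~\ref{technical Lemma for Kernels} for $\mu$, namely that for $n$ such that $d_\fraks(q,r)\leq 2^{-n}$ one has
\[|\mu(K^\xi_{n,q,r})| \lesssim \|Z\|_{B_1(p)}|\tau_p|\Big(\sum_{l\in \partial A_\xi} 2^{(|l|_\fraks - \xi-\beta)n} d_\fraks(q,r)^{|l|_\fraks} + d_\fraks(q,r)^\rho \sum_{m<\xi+\beta} 2^{-n(\xi+\beta - m)}\Big)\;.\]
I would prove this by working in an $\fraks$-exponential chart around $p$, Taylor-expanding $\tilde K_n(\ \cdot\ ,z)$ in its first argument around $x_q$ to order $<\xi+\beta$, and then expressing the remainder via the mixed Taylor measure from \cite[Prop.~A.1]{Hai14} together with the error term coming from comparing the admissible realisation to the honest Taylor polynomial. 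One then tests $\mu$ against each piece: the scale-$2^{-n}$ derivatives $D^l \tilde K_n(y_r+h, z)$ that appear in the remainder are handled via the localisation bound from the first step, combined with Lemma~\ref{dislocated testfunction lemma} to transfer $\mu$ to the auxiliary base points arising in the integral representation.

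With these two ingredients, the proof concludes exactly as in Lemma~\ref{lemma well definednes of I}. Defining $n_\lambda$ by $\lambda \in (2^{-n_\lambda-1}, 2^{-n_\lambda}]$, for $n\leq n_\lambda$ one uses the preceding estimate together with $d_\fraks(q,r)\leq \lambda < 2^{-n}$ on the support of $\phi^\lambda_q$; since every $l \in \partial A_\xi$ satisfies $|l|_\fraks > \xi+\beta$ and since $\xi+\beta \notin \mathbb{N}$ by hypothesis, the resulting geometric sums are dominated by their endpoint $n=n_\lambda$ contribution and yield $\lambda^{\xi+\beta}$. For $n>n_\lambda$ the maps $\Gamma$ play no role and the argument of \cite[Lemma~5.19]{Hai14} adapts verbatim, with $\Pi_q\tau_q$ replaced by $\mu$ and $\alpha$ by $\xi$, producing finitely many summable contributions of the form $\lambda^{\xi+\beta-\kappa}2^{-\kappa n}$ with $\kappa>0$. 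The corresponding bound for the difference of two models follows from the same scheme, using the second half of Lemma~\ref{technical Lemma for Kernels}. The main obstacle lies in the second step, where one must carefully verify that the localisation estimate for $\mu$ applies uniformly across all the auxiliary base points arising inside the Taylor remainder.
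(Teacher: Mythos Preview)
Your proposal is correct and matches the paper's approach exactly: the paper's proof consists solely of the remark ``Very similarly to Lemma~\ref{lemma well definednes of I} one obtains the following,'' and you have correctly fleshed out what that entails by identifying the effective regularity $\xi=\delta\wedge(\delta_0-\beta)$ of $\mu$ via the splitting $\mu=\mu_1+\mu_2$, establishing the corresponding analogue of Lemma~\ref{technical Lemma for Kernels}, and then running the large-scale/small-scale dichotomy exactly as in Lemma~\ref{lemma well definednes of I}. Your caveat at the end about carefully tracking the auxiliary base points in the Taylor remainder is well placed, but the argument goes through as you describe.
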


Note that in our setting the map $J_{q,p}:T^{\mathfrak{e}}_{<\delta_0-\beta,:}\to \bar{T}^{\mathfrak{f}}$ defined by 
$$J_{q,p}\tau_p:= \big(J_q\Gamma_{q,p}\tau_p+ E_q \tau_p -\Gamma_{q,p}J_{p}\big)\tau_p=\big(J_qQ_{<\delta-\beta}\Gamma_{q,p}\tau_p+ E_q \tau_p -\Gamma_{q,p}J_{p}\big)\tau_p $$ plays the role of $\big(J(y)\Gamma_{y,x}- \Gamma_{y,x}J(x)\big) \tau$ in \cite{Hai14}. Next we derive a bound analogous to \cite[Lemma 5.21]{Hai14}. 

\begin{lemma}\label{lemma J_qp}
Let $\alpha<\delta_0-\beta$ such that $\alpha +\beta \notin \mathbb{N}$, then, in the setting above for every $\tau\in T^{\mathfrak{e}}_{\alpha,\delta}$ and every integer $m<\delta_0$ one has the bound 
$$|J_{p,q} \tau_q|_m \lesssim_C {\|Z\|_{B_1(p)}} |\tau| d(p,q)^{(\alpha+\beta - m)\vee 0},$$ 
uniformly over $p,q\in M$ such that with $d(p,q)<1$ and models $Z$ satisfying $\|Z\|_{B_1(p)}<C$. Furthermore, for two models the analogue bound difference bound holds.
\end{lemma}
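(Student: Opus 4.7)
The proof is the analogue in our setting of \cite[Lemma~5.21]{Hai14}, with modifications to absorb the presence of the extra operator $E_q$ which appears because in our framework the identity \eqref{flat identity} is replaced by the weaker statement of Lemma~\ref{lemma compensating error term}.

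The first step is a reduction. Since each of $J_q\Gamma_{q,p}\tau_p$, $E_q\tau_p$ and $\Gamma_{q,p}J_p\tau_p$ lies in $\bar T^{\mathfrak{f}}_{<\delta_0,:}$, so does $J_{p,q}\tau_p$, and because $Z$ acts on $\bar T^{\mathfrak{f}}$ as a jet model we have $\Pi_q J_{p,q}\tau_p = R(J_{p,q}\tau_p)$ for the chosen admissible realisation $R$. By Remark~\ref{norm remark} it is therefore enough to bound, for each tuple $W=(W_1,\ldots,W_m)$ of $\fraks$-vector fields with $\deg_\fraks(W)=m$, the quantity $|\nabla_{W_1}\cdots\nabla_{W_m}\bigl(\Pi_q J_{p,q}\tau_p\bigr)(q)|$ by a multiple of $|\tau|\,d_\fraks(p,q)^{(\alpha+\beta-m)\vee 0}$.

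Next comes the explicit expansion. Using Lemma~\ref{J lemma}, the jet-model identity $\Pi_q\Gamma_{q,p}\sigma = R(Q_{<\delta_0}j_q R\sigma)$ on jets $\sigma\in\bar T^{\mathfrak{f}}_p$, and the realisation identity \eqref{above}, one writes
\begin{equs}
\nabla_W \Pi_q J_{p,q}\tau_p(q) = \sum_n \Bigl[&\nabla_W\! R\bigl(Q_{<\cdot\wedge\delta_0}\, j_q K_n(\Pi_q Q_{<\delta_0-\beta}\Gamma_{q,p}\tau_p)\bigr)(q) \\
&+\nabla_W\! R\bigl(Q_{<(\delta+\beta)\wedge\delta_0}\, j_q K_n(\Pi_p\tau_p-\Pi_q Q_{<\delta_0-\beta}\Gamma_{q,p}\tau_p)\bigr)(q) \\
&-\nabla_W\! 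R\bigl(Q_{<\delta_0}\, j_q R(Q_{<(\alpha+\beta)\wedge\delta_0}j_p K_n(\Pi_p\tau_p))\bigr)(q)\Bigr].
\end{equs}
Regrouping and pulling the derivatives inside the integrals turns the first and third terms, after cancellation of matching Taylor coefficients, into an integral of $\Pi_p\tau_p$ (and of $\Pi_q Q_{\beta'}\Gamma_{q,p}\tau_p$ for the various $\beta'<\delta_0-\beta$ occurring in $\Gamma_{q,p}\tau_p$) paired with the kernel difference objects $K^{W,\alpha}_{n,p,q}$ and $K^{W,\alpha}_{n,q,p}$ introduced before Lemma~\ref{technical Lemma for Kernels}; the second term is precisely of the type bounded in Lemma~\ref{lemma bound on error}.

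The summation is then carried out as in the proofs of Lemma~\ref{lemma well definednes of I} and Lemma~\ref{lemma bound on error}, splitting at the index $n_\star$ with $2^{-n_\star}\sim d_\fraks(p,q)$. For $n\leq n_\star$ one has $d_\fraks(p,q)<2^{-n}$, so Lemma~\ref{technical Lemma for Kernels} applies with $\xi=\alpha$ and yields contributions summable to $|\tau|\,d_\fraks(p,q)^{(\alpha+\beta-m)\vee 0}$; for $n>n_\star$ the three terms decouple and are bounded individually using the pointwise bounds of Assumption~\ref{Assumption on Kernel}(\ref{item:upper_bound}) together with the definition of a model, giving a geometric tail dominated by its value at $n_\star$. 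The Lemma~\ref{lemma bound on error}-type contributions produce a bound of order $d_\fraks(p,q)^{(\delta+\beta)\wedge\delta_0-m}$, which dominates $d_\fraks(p,q)^{(\alpha+\beta-m)\vee 0}$ because $\delta\geq\alpha$. The two-model bound is obtained by repeating each step with the corresponding difference versions of Lemma~\ref{technical Lemma for Kernels} and Lemma~\ref{lemma bound on error}, using linearity of $J_q,E_q,J_p$ in $(\Pi,\Gamma)$.

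The main technical obstacle is the careful bookkeeping of the three different truncation levels $Q_{<(\alpha+\beta)\wedge\delta_0}$ (inside $J_p$), $Q_{<(\beta'+\beta)\wedge\delta_0}$ for the various $\beta'$-components of $\Gamma_{q,p}\tau_p$ (inside $J_q\Gamma_{q,p}$), $Q_{<(\delta+\beta)\wedge\delta_0}$ (inside $E_q$), and $Q_{<\delta_0}$ (inside $\Gamma_{q,p}J_p$): these do \emph{not} match on the nose, and it is only after identifying them all as truncations of Taylor jets of the single function $K(\Pi_p\tau_p)$ (using the jet-model structure of $Z$ on $\bar T^{\mathfrak{f}}$) that the desired algebraic cancellation producing the $K^{W,\alpha}_{n,p,q}$ objects of Lemma~\ref{technical Lemma for Kernels} becomes visible.
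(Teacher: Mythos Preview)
Your overall strategy is right and matches the paper's, but the explicit regrouping you describe is incorrect and would not close.

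You propose to combine the first and third terms ($J_q\Gamma_{q,p}$ and $\Gamma_{q,p}J_p$) into the Taylor-remainder objects $K^{W,\alpha}_{n,\cdot,\cdot}$, and to treat the second term $E_q$ separately via Lemma~\ref{lemma bound on error}. Neither half of this works. The third term $\Gamma_{q,p}J_p\tau_p$ supplies the subtracted Taylor polynomial $\nabla_W R(Q_{<\alpha+\beta}j_p K_n(\Pi_p\tau_p))(q)$, but to form $\Pi_p\tau_p(K^{W,\alpha}_{n,p,q})$ you also need the unexpanded term $\Pi_p\tau_p(\nabla_{W,1}K_n(q,\cdot))$, and that lives in $E_q$, not in the first term (which only involves $\Pi_q$ applied to $\Gamma_{q,p}\tau_p$, never $\Pi_p\tau_p$ directly). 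So first$+$third alone cannot produce the $K^{W,\alpha}$ object. As for treating $E_q$ separately: Lemma~\ref{lemma bound on error} bounds an integral against a test function $\phi^\lambda_q$, not a pointwise jet component, so it does not apply; and a direct estimate of $|E_q\tau_p|_m$ using the model axiom gives only $\sum_n 2^{-n((\delta+\beta)\wedge\delta_0-m)}=O(1)$, with no $d_\fraks(p,q)$ decay, which is insufficient when $m<\alpha+\beta$.

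The paper's proof groups differently. In the small-$n$ regime $2^{-n}>d_\fraks(p,q)$, the $m$-components of $J_p\Gamma_{p,q}$ and $E_p$ are first added, which cancels the $\Pi_pQ_{<\delta_0-\beta}\Gamma_{p,q}\tau_q$ contributions and leaves $\Pi_q\tau_q(\nabla_{\boldsymbol\partial,1}K_n(p,\cdot))$ minus the low-$\zeta$ components $\sum_{\zeta<m-\beta}\Pi_pQ_\zeta\Gamma_{p,q}\tau_q(\nabla_{\boldsymbol\partial,1}K_n(p,\cdot))$; only then does subtracting $\Gamma_{p,q}J_q$ yield the single remainder $\Pi_q\tau_q(K^{\alpha,\boldsymbol\partial}_{n,q,p})$ to which Lemma~\ref{technical Lemma for Kernels} applies. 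In the large-$n$ regime $2^{-n}\le d_\fraks(p,q)$ the three terms \emph{are} bounded separately, and there the $E_p$ contribution does sum to $d_\fraks(p,q)^{(\delta+\beta)\wedge\delta_0-m}$. Your final paragraph gestures at the correct picture (``identifying them all as truncations of Taylor jets of the single function''), but the concrete scheme you outline earlier does not implement it.
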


\begin{proof}

Throughout this proof we fix an exponential $\fraks$-chart at $p$ and denote by $\partial_i$ the coordinate vector fields associated to it. 
We shall denote by $\boldsymbol{\partial}$ a tuple $(\partial_{i_1},\ldots,\partial_{i_k})$ and write $\nabla_{\boldsymbol{\partial}}$ instead of $\nabla_{\partial_{i_1}}...\nabla_{\partial_{i_k}}$. Recall that by definition
\begin{align*}
J_{p,q}\tau_q&= J_p\Gamma_{p,q}\tau_q+ E_p \tau_q -\Gamma_{p,q}J_{q}\tau_q\;.
\end{align*}
First we treat the simpler case $\alpha+\beta\leq m <(\delta+\beta)\wedge \delta_0$ by bounding each of the terms $|J_p\Gamma_{p,q}\tau_q|_m$, $|\Gamma_{p,q}J_{q}\tau_q|_m$ and 
$|E_p \tau_q|$ separately.
The bounds on the former two follow directly from Lemma~\ref{J lemma} the estimate on $E_q$ follows from Remark~\ref{norm remark} combined with the estimate
\begin{align*}
|E_p\tau_q|_m &\leq \sum_n \sup_{\deg(\boldsymbol{\partial}) =m} |(\Pi_q\tau_q -\Pi_p \Gamma_{p,q} \tau_q+ \Pi_p Q_{\geq\delta_0-\beta}\Gamma_{p,q})\big( \nabla_{\boldsymbol{\partial},1} K_n(p,\cdot) \big) |\\
&\lesssim  |\tau_p|\sum_n \big(2^{-n(\delta+\beta-m)}+2^{-n(\delta_0-m)}\big)\\
&\lesssim  |\tau_p|\sum_n 2^{-n((\delta+\beta)\wedge\delta_0-m)}
\end{align*}
Now we turn to the case $m<\alpha+\beta$. 
For $p,q\in M$ such that $d_\fraks(p,q)<1$, we set $n_{p,q}\in \mathbb{N}$ as the number satisfying $d_\fraks(p,q)\in (2^{-n_{p,q}-1},2^{-n_{p,q}}]$ and 
argue separately in the two cases $n<n_{p,q}$ and $n \geq n_{p,q}$, starting with the former.
Observe that for $\tau_p\in T^{\mathfrak{e}}_p$ one has $$Q_m J^n_p \tau_p= Q_m\sum_{\zeta<\delta_0-\beta}  J^n_p Q_\zeta \tau_p = Q_m \sum_{m-\beta <\zeta <\delta_0-\beta} J^n_p Q_\zeta \tau_p \ ,$$
and therefore 
\begin{align*}
&Q_m J^n_{p,q}\tau_q \\
&= Q_m \big(J^n_p\Gamma_{p,q}\tau_q+ E^n_p \tau_q -\Gamma_{p,q}J^n_{q}\tau_q\big) \\
&=  Q_m \big(\sum_{{m-\beta <\zeta <\delta_0-\beta}}J^n_p Q_\zeta \Gamma_{p,q}\tau_q+ E^n_p \tau_q -\Gamma_{p,q}J^n_{q}\tau_q\big)\\
&=Q_{m}\big(\sum_{m-\beta <\zeta< \delta_0-\beta}  j_p K_n\big(\Pi_p Q_\zeta  \Gamma_{p,q}\tau_q) 
+j_p K_n(\Pi_q\tau_q -\Pi_pQ_{\delta_0-\beta} \Gamma_{p,q} \tau_q)
-j_p R J^n_{q}\tau_q\big) \ .
\end{align*}  
It follows from Remark~\ref{norm remark} that it suffices to bound 
\begin{align*}
|J^n_{p,q}\tau_q|_m 
& \lesssim \sup_{\deg(\boldsymbol{\partial}) =m} \big| A_{\boldsymbol{\partial}} -\nabla_{\boldsymbol{\partial}} R (J^n_{q}\tau_q)(p) \big|
\end{align*}
where
$$A_{\boldsymbol{\partial}}:= \sum_{m-\beta<\zeta< \delta_0-\beta} \Pi_p Q_\zeta  \Gamma_{p,q}\tau_q(\nabla_{\boldsymbol{\partial},1}K_n(p,\cdot))
+ (\Pi_q\tau_q -\Pi_pQ_{<\delta_0-\beta} \Gamma_{p,q} \tau_q)(\nabla_{\boldsymbol{\partial},1} K_n(p,\cdot))\ .$$
%
Noting that 
\begin{align*}
A_{\boldsymbol{\partial}}=& - \sum_{\zeta\leq m-\beta} \Pi_p Q_\zeta  \Gamma_{p,q}\tau_q(\nabla_{\boldsymbol{\partial},1} K_n(p,\cdot))
+\Pi_pQ_{<\delta_0-\beta} \Gamma_{p,q}\tau_q(\nabla_{\boldsymbol{\partial},1} K_n(p,\cdot))\\
& + (\Pi_q\tau_q -\Pi_p Q_{<\delta_0-\beta} \Gamma_{p,q} \tau_q)(\nabla_{\boldsymbol{\partial},1} K_n(p,\cdot))\\
=&
- \sum_{\zeta\leq m-\beta} \Pi_p Q_\zeta  \Gamma_{p,q}\tau_q(\nabla_{\boldsymbol{\partial},1} K_n(p,\cdot))
+ \Pi_q\tau_q (\nabla_{\boldsymbol{\partial},1} K_n(p,\cdot)) \\
=&
- \sum_{\zeta< m-\beta} \Pi_p Q_\zeta  \Gamma_{p,q}\tau_q(\nabla_{\boldsymbol{\partial},1} K_n(p,\cdot))
+ \Pi_q\tau_q (\nabla_{\boldsymbol{\partial},1} K_n(p,\cdot)) \ ,
\end{align*}
where we used that $\zeta+\beta\notin \mathbb{N}$ in the last line, we find 
\begin{align*}\label{eq}
|J^n_{p,q}\tau_q|_m &\lesssim  \sup_{\deg(\boldsymbol{\partial}) =m} \big|A_{\boldsymbol{\partial}}- \nabla_{\boldsymbol{\partial},1} R (J^n_{q}\tau_q)(p) \big|\nonumber\\
&\lesssim  \sup_{\deg{\boldsymbol{\partial}} = m}\sum_{\zeta< m-\beta} |\Pi_p Q_\zeta  \Gamma_{p,q}\tau_q(\nabla_{\boldsymbol{\partial},1} K_n(p,\cdot))| +
|\Pi_q\tau_q( K^{\alpha, {\boldsymbol{\partial}}}_{n,q,p})| \ .
\end{align*}
%
%
%
%
%
To bound $\sum_{n=0}^{n_{d(p,q)}} |J^n_{p,q}\tau_q|_m$, we thus estimate using Lemma~\ref{technical Lemma for Kernels} with $\rho> \alpha+\beta$
\begin{align*}
&\sum_{n=0}^{n_{d(p,q)}} \sup_{\deg(\boldsymbol{\partial}) =m}|\Pi_q\tau_q(K^{\alpha, {\boldsymbol{\partial}}}_{n,q,p})|\\
&\lesssim \sum_{n=0}^{n_{d(p,q)}}  \Big( \sum_{l\in \delta A_{\alpha}} 2^{(|l|_\fraks+m-\alpha-\beta)n } d_\fraks(p,q)^{|l|_\fraks} 
+   \sum_{m<\alpha+\beta} d_\fraks(p,q)^{\rho-m} 2^{-n(\alpha+\beta-m)}\Big)\\
&\lesssim d_\fraks(p,q)^{\alpha+\beta - m}+ d_\fraks(p,q)^{\rho-m} \\
&\lesssim d_\fraks(p,q)^{\alpha+\beta - m} \ .
\end{align*}
Next, since $|\Pi_p Q_\zeta  \Gamma_{p,q}\tau_q(\nabla_{\boldsymbol{\partial},1} K_n(p, \cdot)|\lesssim d_\fraks(p,q)^{(\alpha-\zeta)\vee 0}2^{n(m-\beta-\zeta)}$ when $\deg(\boldsymbol{\partial})=m$, we bound 
\begin{align*}
\sup_{\deg(\boldsymbol{\partial}) =m} \sum_{n=0}^{n_{d(p,q)}}\sum_{\zeta< m-\beta}|\Pi_p Q_\zeta  \Gamma_{p,q}\tau_q(\nabla_{\boldsymbol{\partial},1} K_n(p, \cdot)| 
& \lesssim \sum_{n=0}^{n_{d(p,q)}}\sum_{\zeta<m-\beta} d_{\fraks}(p,q)^{(\alpha-\zeta)\vee 0}2^{n(m-\beta-\zeta)} \\
& \lesssim \sum_{\zeta<m-\beta} d_{\fraks}(p,q)^{(\alpha-\zeta)\vee 0}2^{n_{d(p,q)}(m-\beta-\zeta)} \\
&\lesssim \sum_{\zeta<m-\beta}  d_{\fraks}(p,q)^{(\alpha-\zeta)\vee 0}d_{\fraks}(p,q)^{\beta+\zeta-m}\\
&\lesssim d_{\fraks}(p,q)^{\alpha+\beta-m}\ .
\end{align*}
%
This concludes the bound on $\sum_{n=0}^{n_{d(p,q)}} |J^n_{p,q}\tau_q|_m$. 

We turn to $n \geq n_{d(p,q)}$. We have
\begin{align*}
\big | J^n_{p,q}\tau_q \big|_m &\leq\sup_{\deg(\boldsymbol{\partial}) = m} \big| \sum_{m-\beta<\zeta< \delta-\beta} \Pi_p Q_\zeta  \Gamma_{p,q}\tau_q\nabla_{\boldsymbol{\partial},1}K_n(p,\cdot))\big|\\
&\quad +\sup_{\deg(\boldsymbol{\partial}) = m} \big |(\Pi_q\tau_q -\Pi_p \Gamma_{p,q} Q_{<\delta-\beta}\tau_q)(\nabla_{\boldsymbol{\partial},1} K_n(p,\cdot)\big| \\
&\quad+ \big|\Gamma_{pq}J_q^n \tau_q \big| \ ,
\end{align*}
where we shall bound each term  separately.
\begin{enumerate}
\item For the first summand we bound for $m-\beta<\zeta <\delta-\beta$
\begin{align*}
\sum_{n\geq n_{d(p,q)}} |\Pi_p Q_\zeta  \Gamma_{p,q}\tau_q(\nabla_{\boldsymbol{\partial},1} K_n(p,\cdot))|&\lesssim \sum_{n\geq n_{d(p,q)}} d_{\fraks}(p,q)^{(\alpha-\zeta)\vee 0} 2^{n(m-\beta-\zeta)}\\
& \lesssim d_{\fraks}(p,q)^{(\alpha-\zeta)\vee 0} 2^{n_{d(p,q)}(m-\beta-\zeta)}\\
&\lesssim d_{\fraks}(p,q)^{{\alpha-\zeta}\vee 0} d(p,q)^{\beta+\zeta-m}\\
&\lesssim d_{\fraks}(p,q)^{(\alpha+\beta-m)\vee 0} \ .
\end{align*}
\item
We bound the second term using the fact that $2^{-n}\lesssim d_{\fraks}(p,q)$ and $m\leq \alpha+\beta$
\begin{align*}
\sum_{n\geq n_{d(p,q)}}| (\Pi_q\tau_q -\Pi_p Q_{<\delta_0-\beta}\Gamma_{p,q} \tau_q)(\nabla_{\boldsymbol{\partial},1} K_n(p,\cdot)| &\lesssim \sum_{n\geq n_{d(p,q)}}  2^{n(m-\beta)}2^{-n((\delta_0-\beta)\wedge \delta)}\\
&\lesssim 
d_{\fraks}(p,q)^{(\delta+\beta)\wedge\delta_0-m}
\end{align*}
which is obviously bounded by $d_{\fraks}(p,q)^{\alpha+\beta-m}$ .
\item For the last term we have 
\begin{align*}
\sum_{n\geq n_{d(p,q)}}  \big|\Gamma_{pq}J_q^n \tau_q \big|_m& 
\lesssim \sum_{n\geq n_{d(p,q)}} \sum_{l\in \mathbb{N}, l< \alpha+\beta} d_{\fraks}(p,q)^{(l-m)\vee 0} |J_q^n \tau_q|_l \\
&\lesssim \sum_{n\geq n_{d(p,q)}} \sum_{l\in \mathbb{N}, l< \alpha+\beta} d_{\fraks}(p,q)^{(l-m)\vee 0} 2^{-n(\alpha+\beta -l)} \\
&\lesssim  \sum_{l\in \mathbb{N}, l< \alpha+\beta} d_{\fraks}(p,q)^{(l-m)\vee 0}\sum_{n\geq n_{d(p,q)}} 2^{-n(\alpha+\beta -l)} \\
&\lesssim  \sum_{l\in \mathbb{N}, l< \alpha+\beta} d_{\fraks}(p,q)^{(l-m)\vee 0} d_{\fraks}(p,q)^{\alpha+\beta -l} \\
& \lesssim d_{\fraks}(p,q)^{\alpha+\beta-m} \ .
\end{align*}
\end{enumerate}
\end{proof}

The following theorem is almost a word for word adaptation of \cite[Theorem 5.14]{Hai14}, but the proof is slightly easier, since our definitions are more relaxed.
\begin{theorem}\label{thm:extension theorem}
Let $\mathcal{T}=(\{E^{\mathfrak{t}}\}_{t\in \mathfrak{L}},\{T^{\mathfrak{t}}\}_{t\in \mathfrak{L}}, L)$ be a regularity structure ensemble and
let $Z = (\Pi, \Gamma)$ be a model, such that Assumption~\ref{Assumption on reg. and model} is satisfied.
Let $\beta\geq 0$ and let $V\subset T$ be a sector with the property that for every $(\alpha,\delta)\in A\times \triangle$ with $V_{(\alpha,\delta)}\neq \{0\}$ one has $\alpha+\beta \notin \mathbb{N}$ and $(\delta+\beta)\wedge \delta_0 \notin \mathbb{N}$. Furthermore, let $W\subset V$ be a  subsector of $V$ and let $K$ be a kernel satisfying Assumption~\ref{Assumption on Kernel}. 
Let $I: W\to T^{\mathfrak{f}}$ be an abstract integration map of order $\beta$ and precision $\delta_0$ realising $K$ for $I$. 
Then, there exists a regularity structure ensemble $\hat{\mathcal{T}}$ containing $\mathcal{T}$, a model $\hat{Z}=(\hat{\Pi},\hat{\Gamma})$ for $\hat{\mathcal{T}}_\mathfrak{L}$ extending $(\Pi, \Gamma)$, and an abstract integration map $\hat{I}$ of order $\beta$ and precision $\delta$ acting on $\hat{V}= \iota V$ such that:
\begin{itemize}
\item The model $\hat{Z}$ realizes $K$ for $\hat{I}$.
\item The map $\hat{I}$ extends $I$, i.e.\ $\hat{I} \iota \tau = \iota I \tau$ for all $\tau\in W$ .
\end{itemize}
Furthermore, the map $Z\to \hat{Z}$ is locally bounded and Lipschitz continuous as in \cite{Hai14}.
\end{theorem}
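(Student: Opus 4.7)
The strategy is a direct adaptation of \cite[Theorem~5.14]{Hai14}, using the technical Lemmas~\ref{J lemma}, \ref{lemma compensating error term}, \ref{lemma well definednes of I}, and \ref{lemma J_qp} already established above, with the one new twist that the operator $E$ must be built into the definition of $\hat{\Gamma}$ in order to compensate for the failure of the identity \eqref{flat identity}.

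First I would enlarge the regularity structure. For every bi-homogeneous subbundle $V_{\alpha,\delta}\subset V$ not already contained in $W$, I would adjoin a new finite-dimensional sub-bundle ${I}(V_{\alpha,\delta})\subset \hat T^\mathfrak{f}_{\alpha+\beta,\,(\delta+\beta)\wedge\delta_0}$ (with abstract generators $\hat I\tau$ indexed by a frame of $V_{\alpha,\delta}$), declared to be disjoint from $\bar T^\mathfrak{f}$. On the new generators I would set
\begin{equation*}
\hat\Pi_p\hat I_p\tau_p := K(\Pi_p\tau_p)-\Pi_p J_p\tau_p \,,
\end{equation*}
with $J_p$ as in Lemma~\ref{J lemma}, and, motivated by Lemma~\ref{lemma compensating error term}, define the new re-expansion map by
\begin{equation*}
\hat\Gamma_{q,p}\hat I_p\tau_p := \hat I_q\hat\Gamma_{q,p}\tau_p + J_q\Gamma_{q,p}\tau_p + E_q\tau_p - \Gamma_{q,p}J_p\tau_p \,,
\end{equation*}
extended linearly, with $E_q$ from \eqref{def E}; the jet-valued summand lies in $\bar T^\mathfrak{f}_q$ so Condition~\eqref{Commutation cond} of Definition~\ref{def of abstract integration map} is built in by construction. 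The new $\hat L$ is the sub-bundle of $\hat L(\hat T,\hat T)$ generated by the previous $L$ together with these prescriptions, closed under composition.

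Next I would verify the three analytic conditions of Definition~\ref{def model} for $\hat Z$. The bound on $\hat\Pi_p\hat I_p\tau_p(\phi^\lambda_p)$ reduces, after writing $K=\sum_n K_n$ and using that the degree-$<\alpha+\beta$ part of $j_p K_n(\Pi_p\tau_p)$ is exactly subtracted by $\Pi_pJ_p\tau_p$, to the integrals controlled by Lemma~\ref{lemma well definednes of I}; note that the hypothesis $\alpha+\beta\notin\mathbb{N}$ is crucial here. The $\Gamma$-bound $|\hat\Gamma_{q,p}\hat I_p\tau_p|_m$ amounts, after subtracting the defining identity, to bounding $|J_q\Gamma_{q,p}\tau_p+E_q\tau_p-\Gamma_{q,p}J_p\tau_p|_m=|J_{q,p}\tau_p|_m$, which is exactly the content of Lemma~\ref{lemma J_qp}. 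Finally, the precision bound on $\hat\Pi_q(\hat\Gamma_{q,p}\hat I_p\tau_p - \hat I_q\hat\Gamma_{q,p}\tau_p)$ follows from Lemma~\ref{lemma compensating error term} combined with Lemma~\ref{lemma bound on error} once one reconstructs that quantity as a $K$-integral of the difference $\Pi_p\tau_p-\Pi_qQ_{<\delta_0-\beta}\Gamma_{q,p}\tau_p$ against the shifted kernel $K^{\delta\wedge(\delta_0-\beta)}_{n,q,\cdot}$.

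Consistency of $\hat\Gamma$ (i.e.\ the cocycle $\hat\Gamma_{p,q}\hat\Gamma_{q,r}=\hat\Gamma_{p,r}$ on the new generators) follows from the identities $J_{p,q}+\Gamma_{p,q}J_{q,r}=J_{p,r}$ modulo the error $E$-contributions, which telescope by a direct algebraic computation mirroring \cite[Theorem~5.14]{Hai14}; the only input needed is that $\Gamma$ itself is a cocycle and that the definitions of $J$ and $E$ are local. The local boundedness and Lipschitz continuity $Z\mapsto \hat Z$ are then immediate, since each of Lemmas~\ref{J lemma}, \ref{lemma compensating error term}, \ref{lemma well definednes of I}, and \ref{lemma J_qp} already supplies a difference estimate of the analogous form.

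The main obstacle I expect is purely bookkeeping: one must choose the bi-grading of $\hat I\tau$ so that both the precision index $\delta+\beta$ and the $\delta_0$-truncation coming from the jet sector interact consistently with the truncation already implicit in the definitions of $J_p$ and $E_q$ (see Remark~\ref{rem:complicated grading}). In particular, the condition $(\delta+\beta)\wedge\delta_0\notin\mathbb{N}$ is used to avoid logarithmic resonances exactly where Lemma~\ref{lemma bound on error} is applied, and the case distinction between $\alpha<-\beta$ with $\delta=+\infty$ and the generic case must be handled separately (in the former, $J_p$, $E_q$ and $J_{q,p}$ all vanish and the extension is trivial). Once this bookkeeping is in place, the estimates assemble exactly as in the flat case.
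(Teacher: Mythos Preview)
Your overall approach matches the paper's: adjoin formal symbols $\hat I\tau$ for $\tau$ in a complement $\bar W$ of $W$ in $V$, define $\hat\Pi_p\hat I_p\tau_p = K(\Pi_p\tau_p)-\Pi_pJ_p\tau_p$, and set $\hat\Gamma_{q,p}\hat I_p\tau_p = \hat I_q\Gamma_{q,p}\tau_p + J_{q,p}\tau_p$ with $J_{q,p}=J_q\Gamma_{q,p}+E_q-\Gamma_{q,p}J_p$. The lemmas you invoke (Lemmas~\ref{lemma well definednes of I} and~\ref{lemma J_qp}) are exactly the ones the paper uses for the $\hat\Pi$ and $\hat\Gamma$ bounds.

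There are, however, two genuine errors. First, you claim that the cocycle identity $\hat\Gamma_{p,q}\hat\Gamma_{q,r}=\hat\Gamma_{p,r}$ must be verified and ``follows from \dots\ telescoping''. In this paper's framework, Definition~\ref{def model} does \emph{not} require $\Gamma$ to satisfy a cocycle identity; only that $\Gamma$ be a section of $L$, and $L$ be closed under composition. Indeed, the paper later remarks explicitly that $\Gamma_{p,q}\circ\Gamma_{q,r}\neq\Gamma_{p,r}$ in general. What must be checked is only the structural property $\hat L_{p,q}\circ\hat L_{q,r}\subset\hat L_{p,r}$, which is a statement about the enlarged structure bundle, not the specific section $\hat\Gamma$; the paper verifies this as in \cite[Eq.~(5.22)]{Hai14}. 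If you tried to verify the cocycle identity you wrote, you would fail.

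Second, the precision bound you formulate is the wrong quantity. You write $\hat\Pi_q(\hat\Gamma_{q,p}\hat I_p\tau_p-\hat I_q\hat\Gamma_{q,p}\tau_p)$, but by your own definition this equals $\Pi_q J_{q,p}\tau_p$, a jet evaluation, not a $K$-integral. The actual precision bound is on $(\hat\Pi_p\hat\Gamma_{p,q}\hat I_q\tau_q-\hat\Pi_q\hat I_q\tau_q)(\phi_p^\lambda)$. The paper computes this explicitly and obtains a sum $T_1+T_2$ where $T_1=K(\Pi_p\Gamma_{p,q}\tau_q-\Pi_q\tau_q)-\Pi_pE_p\tau_q$ is bounded via Lemma~\ref{lemma bound on error}, and $T_2=\Pi_qJ_q\tau_q-\Pi_p\Gamma_{p,q}J_q\tau_q$ is bounded by the jet model's precision property (Proposition~\ref{proposition on polynomial model}). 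You miss the $T_2$ contribution entirely.
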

\begin{remark}
Note that the extension of the regularity structure ensemble really only affects the $\mathfrak{f}$ component of the ensemble, i.e.\
$\hat{T}^\mathfrak{t}=T^\mathfrak{t}$ for all $\mathfrak{t}\in \mathfrak{L}\setminus \{\mathfrak{f} \}$.
.\end{remark}
\begin{proof}
For $\alpha \in A \cap (-\infty,\delta_0- \beta), \ \delta \in \triangle$ denote by $\bar{W}_{\alpha, \delta}$ a complement, such that that
$$V_{\alpha,\delta}= W_{\alpha, \delta} \oplus \bar{W}_{\alpha, \delta}$$ and set
$\bar W= \bigoplus_{\alpha, \delta} \bar{W}_{\alpha, \delta}$. We further denote by $\bar{W}^+$ the complement such that 
$V= W \oplus \bar{W} \oplus \bar{W}^+$ and observe that $\bar{W}^+= Q_{\geq \delta_0-\beta, :}\bar{W}^+$ .

To construct the regularity structure $\hat{\mathcal{T}}$, we set
\begin{align*}
\hat A &= A \cup \left\{\alpha +\beta\in \mathbb{R}\ :\ \alpha<\delta_0-\beta,\  \bar{W}_{\alpha, :}\neq \{0\} \right\} , \\
\hat \triangle &= \triangle \cup \left\{(\delta +\beta)\wedge \delta_0\in \mathbb{R}\ :\ \bar{W}_{ :\delta}\neq \{0\} \right\}
\end{align*}
and for $(\alpha,\delta)\in \hat{A}\times \hat{\triangle}$ set
\begin{align*}
\hat{T}^{\mathfrak{f}}_{\alpha,\delta}= 
\begin{cases}
T^{\mathfrak{f}}_{\alpha,\delta}\oplus \bar W_{\alpha-\beta, \delta-\beta} &\text{if } \alpha,\delta< \delta_0, \\
T^{\mathfrak{f}}_{\alpha,\delta} \oplus \bigoplus_{\delta_0-\beta\leq\delta'} \bar W_{\alpha-\beta, \delta'} &\text{if } 0\leq\alpha< \delta_0, \ \delta=\delta_0, \\
T^{\mathfrak{f}}_{\alpha,\delta} \oplus \bigoplus_{\delta_0-\beta\leq\delta'<+\infty} \bar W_{\alpha-\beta, \delta'} &\text{if } \alpha<0, \ \delta=\delta_0, \\
T^{\mathfrak{f}}_{\alpha,\delta} \oplus \bar W_{\alpha-\beta, \delta} &\text{if } \alpha< 0, \ \delta=\infty, \\
T^{\mathfrak{f}}_{\alpha,\delta} \oplus 0 &\text{if } \alpha\geq \delta_0-\beta \text{ or } \delta>  \delta_0-\beta \ , \\
\end{cases}
\end{align*}

which is motivated by Definition~\ref{def of abstract integration map}, see also Remark~\ref{rem:complicated grading}.
Accordingly we write elements $\tau_p\in \hat{T}^{\mathfrak{f}}_{\alpha,\delta}$ as pairs $\tau_p= (\tau_p^0, \tau_p^1)$, where $\tau_p^0\in T^{\mathfrak{f}}$ and $\tau_p^1\in \bar{W}$.
Next, we extend the space $L$ appropriately. Note that due to the second condition in the definition of an abstract integration map, Definition~\ref{def of abstract integration map}, we can not simply take the space of all linear maps satisfying the conditions of a regularity structure. Instead we define, similar to \cite{Hai14},
$$\hat L_{p,q}:= L_{p,q}\times M_{p,q}^{\delta_0}$$ where $M_{p,q}^{\delta_0}$ consists of all linear maps from $\bar W_p$ to $(\bar{T}^{\mathfrak{f}}_q)_{<\delta_0}$. 
An element $(\Gamma_{p,q}, M_{p,q})\in \hat L_{p,q}$  acts on $\tau_q=(\tau_q^1,\  \tau^2_q)\in \hat{T}^{\mathfrak{f}}_{\alpha,\delta}$ as 
$$
(\Gamma_{p,q}, M_{p,q})\tau_q = (\Gamma_{p,q}\tau_q^1 + I_p(Q_{W} \Gamma_{p,q} \tau_q^2) + M_{p,q} \tau_q^2, Q_{\bar W}\Gamma_{p,q}\tau_q^2)
$$ 
Similar to \cite[Equation~(5.22)]{Hai14} one can check that $L_{p,q}\circ L_{q,r}\subset L_{p,r}$. 

To extend the map $I$ we note that for $\tau_p \in V_p $ we have a unique decomposition $\tau_p = \tau_p^0 + \tau_p^1+ \tau_p^2$, where $\tau_p^0 \in W$, $\tau_p^1\in \bar W_p$ and $\tau_p^2\in W^+$. We set
$$\hat I_p(\tau_p, 0):= (I_p \tau_p^0, \tau_p^1)\ .$$
We have to check that the conditions in Definition~\ref{def of abstract integration map} of an abstract integration map are satisfied. The only non-trivial fact to check is Condition~\ref{Commutation cond} for which by linearity it suffices to check it on $\hat{I}_q \tau_q = (0,\tau_q)\in \hat T^{\mathfrak{f}}_q$ for $\tau_q \in \bar{W}_q$. Indeed, by definition of $\hat \Gamma_{p,q}= (\Gamma_{p,q}, M_{p,q})\in \hat L_{p,q}$
\begin{align*}
\hat I_p\hat \Gamma_{p.q}\tau_p -\hat \Gamma_{p,q} \hat I_{q} \tau_q &= \hat I_p (\Gamma_{p,q}, M_{p,q})\big(\tau_q, 0\big) -\hat \Gamma_{p,q}\big(0,\tau_q\big)\\
&= \hat I_p \big(Q_W \Gamma_{p.q}\tau_p +Q_{\bar W} \Gamma_{p.q}\tau_p , \  0\big) \\
&\qquad- \big( I_pQ_{W} \Gamma_{p,q} \tau_q + M_{p,q} \tau_q,\  Q_{\bar W} \Gamma_{p,q}\tau_q\big )\\
&=\big(I_p Q_W \Gamma_{p.q}\tau_p  , \  Q_{\bar W} \Gamma_{p.q}\tau_p\big) \\
&\qquad - \big( I_pQ_{W} \Gamma_{p,q} \tau_q + M_{p,q} \tau_q,\  Q_{\bar W} \Gamma_{p,q}\tau_q\big )\\
&=\big( - M_{p,q} \tau_q, 0\big)\ .
\end{align*}
 
Next, we turn to the extension of the model $Z=(\Pi, \Gamma)$ for $\mathcal{T}$ to a model $\hat Z=(\hat \Pi, \hat \Gamma )$ for $\hat{\mathcal{T}}$. For $\tau_q=(\tau_q^1,\  \tau^2_q)$ we define
$$\hat \Pi_q \tau_q = \Pi_q \tau_q^1 + K(\Pi_q \tau_q^2) - \Pi_q J_q \tau_q^2\ .$$
This is well defined due to Lemma~\ref{lemma well definednes of I}.
Next, we define 
$$\hat \Gamma_{p,q} = (\Gamma_{p,q}, M_{p,q})\ ,  \text{ where } M_{p,q}\tau_q^2 := J_p \Gamma_{p,q}\tau_q^2 - \Gamma_{p,q}J_q \tau_q^2 + E_p \tau_q\ =  J_{pq}\tau_q^2\ .$$
We check the second analytic condition in the definition of a model. By linearity and the fact that $\Gamma$ satisfies the this bound, we only need to check it on elements of the form $\tau_q= (0, \tau_q^2)\in \hat T^{\mathfrak{f}}_{\alpha_n+\beta, \delta_n}$.
Indeed, 
\begin{align*}
\hat \Gamma_{p,q}\tau_q &= \big( I_p Q_W\Gamma_{p,q} \tau_q^2 + M_{p,q}\tau_q^2 , \ Q_{\bar{W}} \Gamma_{p,q} \tau_q^2\big)
\end{align*}
and we decompose $\hat \Gamma_{p,q}\tau_q = Q_{\bar{T}^\mathfrak{f}} \hat \Gamma_{p,q}\tau_q +
(\id -Q_{\bar{T}^\mathfrak{f}}) \hat \Gamma_{p,q}\tau_q$. Thus
\begin{align*}
|(\id -Q_{\bar{T}^\mathfrak{f}})\hat \Gamma_{p,q}\tau_q|_\zeta &\leq |I_p Q_W\Gamma_{p,q} \tau^2_q|_\zeta+ |Q_{\bar{W}} \Gamma_{p,q} \tau_q^2|_{\zeta-\beta} \\
&= |Q_W\Gamma_{p,q} \tau^2_q|_{\zeta-\beta}+ |Q_{\bar{W}} \Gamma_{p,q} \tau_q^2|_{\zeta-\beta} \\
&\lesssim |\Gamma_{p,q} \tau^2_q|_{\zeta-\beta} \\
&\lesssim d(p,q)^{(\alpha_n+ \beta-\zeta)\vee 0 }\ .
\end{align*}
The bound on $Q_{\bar{T}^\mathfrak{f}} \hat \Gamma_{p,q}\tau_q = M_{p,q}\tau_q^2 =J_{p,q}\tau_q^2$ follows from Lemma~\ref{lemma J_qp}. 

Lastly, we check the third condition in the definition of a model. Again suffices by linearity to check it on elements of the form $\tau_q= (0, \tau_q^2)\in \hat T^{\mathfrak{f}}_{\alpha, \delta}$ . We assume $\delta<+\infty$, the case $\delta=+\infty$ being simpler.
Indeed, we have
\begin{align*}
\hat\Pi_p & \hat \Gamma_{p,q}\tau_q -\hat \Pi_q\tau_q = \hat\Pi_p \big( I_p Q_W\Gamma_{p,q} \tau_q^2 + M_{p,q}\tau_q^2 , \ Q_{\bar{W}} \Gamma_{p,q} \tau_q^2\big)-\hat \Pi_q \big(0, \tau^2_q)\\
=& \Pi_p I_p Q_W\Gamma_{p,q}\tau^2_q+ \Pi_p (J_p \Gamma_{p,q}\tau_q^2 - \Gamma_{p,q}J_q \tau_q^2 + E_p \tau_q^2) + K(\Pi_p Q_{\bar W}\Gamma_{p,q} \tau_q^2) \\
&- \Pi_p J_p Q_{\bar W}\Gamma_{p,q} \tau_q^2
 - (K(\Pi_q \tau_q^2) - \Pi_q J_q \tau_q^2)\\
=&K(\Pi_p \Gamma_{p,q} \tau_q^2) - \Pi_p J_p \Gamma_{p,q} \tau_q^2+\Pi_p (J_p \Gamma_{p,q}\tau_q^2 - \Gamma_{p,q}J_q \tau_q^2 + E_p \tau_q^2)\\
&-(K(\Pi_q \tau_q^2) - \Pi_q J_q \tau_q^2)\\
=&K(\Pi_p \Gamma_{p,q} \tau_q^2) -\Pi_p \Gamma_{p,q}J_q \tau_q^2 + \Pi_pE_p \tau_q^2\\
&-K(\Pi_q \tau_q^2) + \Pi_q J_q \tau_q^2\\
=&\underbrace{ K(\Pi_p \Gamma_{p,q} \tau_q^2-\Pi_q \tau_q^2) - \Pi_pE_p \tau_q^2}_{T_1}
 + \underbrace{\Pi_q J_q \tau_q^2- \Pi_p\Gamma_{p,q}J_q \tau_q^2}_{T_2}\ .
\end{align*}
Thus, by the same computation as in the proof of Lemma~\ref{lemma compensating error term}
\begin{align*}
|T_1(\phi^\lambda_p)|= \left|\int_M \sum_n (\Pi_p\tau_p-\Pi_q \Gamma_{q,p}\tau_p\big)\big(K^{n,\delta\wedge(\delta_0-\beta)}_{r,s}\big)(\phi_q^\lambda)(s) ds \right|\lesssim\lambda^{(\delta+\beta)\wedge \delta_0}  \,
\end{align*}
where the last inequality follows from Lemma~\ref{lemma bound on error} if $\delta<+\infty$ and even vanishes if $\delta=\infty$.
The bound 
$|T_2(\phi^\lambda_p)|\lesssim \lambda^{\delta_0}$ follows from the fact that the jet model is indeed a model, Proposition~\ref{proposition on polynomial model} and if $\alpha+\beta<0$ vanishes.
\end{proof}

\begin{remark}
Note that if a model is constructed using the Extension theorem above, the term bounded in Lemma~\ref{lemma compensating error term}  actually vanishes.
\end{remark}

\subsection{Schauder estimates}

We introduce for $\gamma<\delta_0-\beta$ the operator $\mathcal{N}_\gamma$ which maps $f \in \mathcal{D}^\gamma(T^\mathfrak{e})$ to the continuous section of $\bar T^{\mathfrak{f}}$
$$(\mathcal{N}_\gamma f)(p) = \sum_{n} Q_{< \gamma+\beta} j_p K\big(\mathcal{R}f-\Pi_p \tau_p\big)  \ .$$
It follows from the Reconstruction Theorem~\ref{reconstruction theorem} and the scaling properties of $K_n$ in Assumption~\ref{Assumption on Kernel} that this sum converges absolutely by a similar argument as in the proof of Lemma~\ref{J lemma}.

Finally, we define $\mathcal{K}_\gamma$ the lift of $K$ to $\mathcal{D}^\gamma(V)$
$$\mathcal{K}_\gamma f(p) = I_pf(p) + J_pf(p) + (\mathcal N_\gamma f)(p)\ .$$

\begin{theorem}\label{schauder theorem}
Let $\mathcal{T}=(\{E^{\mathfrak{t}}\}_{t\in \mathfrak{L}},\{T^{\mathfrak{t}}\}_{t\in \mathfrak{L}}, L)$ be a regularity structure ensemble and $Z=(\Pi, \Gamma)$ a model such that Assumption~\ref{Assumption on reg. and model} is satisfied. Let $V\subset T^\mathfrak{e}$ be a sector, $K$ be $\beta$ regularising kernel satisfying Assumption~\ref{Assumption on Kernel} and $I:V\to T^\mathfrak{f}$ an abstract integration map such that $Z$ realises $K$. Then, for any $\gamma\in \mathbb{R}$ such that $\gamma+\beta\notin \mathbb{N}$ and $0<\gamma<\delta_0-\beta$ it holds that $\mathcal{K}_\gamma$ maps $\mathcal{D}^\gamma (V)$ to $\mathcal{D}^{\gamma+\beta}(T^\mathfrak{f})$ and the following identity holds
\begin{equation}\label{Konvolution Identity}
K \mathcal{R} f = \mathcal{R}\mathcal K_\gamma f \ 
\end{equation}
for all $f\in\mathcal{D}^\gamma (V)$. 
Furthermore, the following bound holds
$$ \interleave\mathcal K_\gamma f ; \mathcal{K}_\gamma \bar{f}\interleave_{\gamma +\beta ; K}\lesssim_C \interleave f;\bar{f}\interleave_{\gamma ; \bar K } + \|Z;\bar{Z}\|_{\bar{K}}
$$
uniformly over models $Z,\bar{Z}$ and modelled distributions $f\in\mathcal{D}_Z^\gamma (V), \ \bar{f}\in \mathcal{D}_{\bar{Z}}^\gamma (V)$ satisfying $\|Z\|_{\bar{K}},$ $\|\bar{Z}\|_{\bar{K}}, \ \|f\|_{\mathcal{D}^\gamma(\bar{K})}$ and $\|\bar{f}\|_{\mathcal{D}^\gamma(\bar{K})}  <C$ .
\end{theorem}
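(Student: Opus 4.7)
\medskip

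\noindent\textbf{Proof proposal.} My plan is to follow the classical Schauder argument in \cite[Thm.~5.12]{Hai14}, keeping track of the twists required by our relaxed definition of model (in particular the fact that $(I+J)\Gamma \ne \Gamma(I+J)$) and of the bi-grading. Writing $\hat{f}(p):=\mathcal{K}_\gamma f(p) = I_p f(p)+J_p f(p)+(\mathcal{N}_\gamma f)(p)$, I would first verify that $\hat{f}(p)$ lies in $\hat{T}^{\mathfrak{f},\gamma+\beta}$: the $I_p$ summand carries $V_{\alpha,\delta}$ into $T^{\mathfrak{f}}_{\alpha+\beta,(\delta+\beta)\wedge\delta_0}$ by Definition~\ref{def of abstract integration map}, while $J_p$ and $\mathcal{N}_\gamma$ take values in the jet sector $\bar T^{\mathfrak{f}}_{<\gamma+\beta,:}$ whose second grading is $\delta_0>\gamma+\beta$ by Assumption~\ref{Assumption on reg. and model}.

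Next I would establish the two analytic bounds in \eqref{norm on modelled distr.} for $\hat{f}$. The pointwise bound $|\hat{f}(p)|_\zeta$ follows from Lemma~\ref{J lemma} applied to each homogeneous component of $f(p)$, and from the absolute summability in the definition of $\mathcal{N}_\gamma$, which is a consequence of the reconstruction theorem~\ref{reconstruction theorem} combined with Point~\ref{item:upper_bound} of Assumption~\ref{Assumption on Kernel}. For the translation bound, decompose
\begin{align*}
\hat{f}(p)-Q_{<\gamma+\beta}\Gamma_{p,q}\hat{f}(q)
= & \;(I_p+J_p)\bigl(f(p)-Q_{<\gamma}\Gamma_{p,q}f(q)\bigr) \\
& \; +\bigl[(I_p+J_p)Q_{<\gamma}\Gamma_{p,q}f(q)-Q_{<\gamma+\beta}\Gamma_{p,q}(I_q+J_q)f(q)\bigr]\\
& \; +\bigl[\mathcal{N}_\gamma f(p)-Q_{<\gamma+\beta}\Gamma_{p,q}\mathcal{N}_\gamma f(q)\bigr].
\end{align*}
The first bracket is controlled on components of positive order by the modelled-distribution property of $f$ together with Lemma~\ref{J lemma}. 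The crucial middle bracket is where the departure from the flat theory shows up: rewriting it as $-J_{p,q}f(q)+E_p f(q)$ modulo projections, it is estimated by Lemma~\ref{lemma J_qp}, which gives exactly the desired $d_\mathfrak{s}(p,q)^{\gamma+\beta-\zeta}$ behaviour, and the residual error is absorbed via Lemma~\ref{lemma compensating error term}. The third bracket involves the smooth remainder $\mathcal{R}f-\Pi_q f(q)$ integrated against derivatives of $K_n$; it is handled exactly as in \cite{Hai14}, breaking the sum at $n$ such that $2^{-n}\sim d_\mathfrak{s}(p,q)$ and using the reconstruction bound~\eqref{reconstruction cond} on each scale.

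Once $\hat{f}\in\mathcal{D}^{\gamma+\beta}(T^\mathfrak{f})$ is established, the identity $K\mathcal{R}f=\mathcal{R}\mathcal{K}_\gamma f$ follows from the uniqueness part of Theorem~\ref{reconstruction theorem}: one only needs to verify that $K\mathcal{R}f$ satisfies the defining inequality~\eqref{reconstruction cond} against $\hat{\Pi}_p\hat{f}(p)$, and by the definition of $\hat{\Pi}$ on the extended regularity structure provided by Theorem~\ref{thm:extension theorem} and the realisation relation~\eqref{above}, one can rewrite $K\mathcal{R}f-\hat{\Pi}_p\hat{f}(p)=K(\mathcal{R}f-\Pi_p f(p))-\hat{\Pi}_p\mathcal{N}_\gamma f(p)$; both pieces are estimated by splitting $K=\sum_n K_n$ and summing via the reconstruction estimate. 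Finally, the Lipschitz bound follows from repeating every estimate above for differences, using the two-model variants of Lemma~\ref{J lemma}, Lemma~\ref{lemma J_qp}, Lemma~\ref{lemma bound on error} and Theorem~\ref{reconstruction theorem}; these have essentially the same proofs, only with an extra term $\|Z;\bar Z\|_{\bar K}$.

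The main obstacle I anticipate is the middle bracket in the translation bound, because in our setting the naive identity $(I+J)\Gamma=\Gamma(I+J)$ fails and the compensating operator $E$ is only $d_\mathfrak{s}(p,q)^{(\delta+\beta)\wedge\delta_0-\zeta}$-small rather than vanishing; one must therefore carefully check that the bi-graded choice of precision in Definition~\ref{definition polynomial reg} and the constraint $\gamma+\beta\notin\mathbb{N}$ are enough for the exponent $(\delta+\beta)\wedge\delta_0$ produced by Lemma~\ref{lemma J_qp} to dominate the required $\gamma+\beta$. This is precisely why the theorem restricts to $\gamma<\delta_0-\beta$.
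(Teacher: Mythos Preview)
Your three-term decomposition of $\hat f(p)-Q_{<\gamma+\beta}\Gamma_{p,q}\hat f(q)$ is the natural first attempt, but it does not close at integer homogeneities $k<\gamma+\beta$. The claim that Lemma~\ref{lemma J_qp} ``gives exactly the desired $d_\mathfrak{s}(p,q)^{\gamma+\beta-\zeta}$'' for the middle bracket is the problem: that lemma bounds $|J_{p,q}\tau_q|_k$ by $d_\mathfrak{s}(p,q)^{(\alpha+\beta-k)\vee 0}$ for $\tau_q\in T_{\alpha,:}$, so applied to the components of $f(q)$ it only yields $d_\mathfrak{s}(p,q)^{(\alpha_{\min}+\beta-k)\vee 0}$, where $\alpha_{\min}$ is the regularity of the sector---not $\gamma+\beta-k$. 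Likewise the third bracket $\mathcal{N}_\gamma f(p)-\Gamma_{p,q}\mathcal{N}_\gamma f(q)$ cannot be bounded on its own at large scales $2^{-n}\gtrsim d_\mathfrak{s}(p,q)$: each scale contributes $2^{-n(\gamma+\beta-k)}$, which is $O(1)$ for small $n$, and there is no obvious cancellation between the $p$- and $q$-terms without bringing in the $J$ and $E$ contributions.

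The paper's proof proceeds differently for integer $k$: after using Lemma~\ref{lemma compensating error term} to replace the $I$-terms by an error of acceptable size, it keeps the remaining pieces $J_p^{(n)}(f(p)-\Gamma_{p,q}f(q))$, $(\mathcal{N}_\gamma^{(n)}f)(p)$, $\Gamma_{p,q}(\mathcal{N}_\gamma^{(n)}f)(q)$, $E_p^{(n)}f(q)$ and $J_p^{(n)}Q_{\ge\gamma}\Gamma_{p,q}f(q)$ \emph{together} and, at large scales, regroups them into $\mathcal{T}_1^k+\mathcal{T}_2^k$. An algebraic computation then collapses this combination to $(\Pi_q f(q)-\mathcal{R}f)(K^{\boldsymbol{\partial},\gamma}_{n,p,q})$ plus a term of the form $\Pi_p Q_\zeta(f(p)-\Gamma_{p,q}f(q))(\nabla_{\boldsymbol{\partial}}K_n(p,\cdot))$, each of which \emph{does} sum to $d_\mathfrak{s}(p,q)^{\gamma+\beta-k}$ via Lemma~\ref{technical Lemma for Kernels} and the reconstruction bound. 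The remark immediately following the proof in the paper stresses that this cancellation is the one place where the full (untruncated) $E_p$ is indispensable; separating $E_p$ out into its own bracket, as you do, destroys exactly this mechanism.
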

\begin{proof}
First we note that 
\begin{align*}
\|\mathcal{K}_\gamma f\|_{\gamma+\beta, K} & \lesssim \|f\|_{\gamma, K},
& \|\mathcal{K}_\gamma f-\bar{ \mathcal{K}}_\gamma \bar f\|_{\gamma+\beta, K}&\lesssim \|f-\bar{f} \|_{\gamma, K} + \|\Pi -\bar \Pi\|_{\delta, K},
\end{align*}
follows easily as in \cite{Hai14}. 
Similarly,
for $\zeta\in A\setminus \mathbb{N}$ such that $\zeta<\gamma+\beta$ it follows by direct inspection that
\begin{equation}\label{eq:quantitative_shauder}
 |\mathcal{K}_\gamma f(p) - \Gamma_{p,q}\mathcal{K}_\gamma f(q)|_\zeta \lesssim d(p,q)^{\gamma+\beta-\zeta} 
 \end{equation}
and 
\begin{align*}
|\mathcal{K}_\gamma f(p) - \Gamma_{p,q}\mathcal{K}_\gamma f(q)-\big( \bar{K}_\gamma \bar f (p) - &\bar{\Gamma}_{p,q}\bar{ \mathcal{K}}_\gamma \bar f(q)\big)|_\zeta\\
& \lesssim d(p,q)^{\gamma+\beta-\zeta}\big(\| f;\bar{f}\|_{\gamma,\bar K} + \|Z-\bar{Z}\|_{\gamma+\beta, \bar{K}} \big) \ .
\end{align*}

Next, we establish \eqref{eq:quantitative_shauder} for $\zeta\in \mathbb{N}$. Let $k:=\zeta<\gamma+\beta$. It follows from Lemma~\ref{lemma compensating error term} that
\begin{align*}
&\big(\mathcal{K}_\gamma f(p)\big)_k  - \big(\Gamma_{p,q}\mathcal{K}_\gamma f(q)\big)_k \\
&= \big(J_p f(p) + (\mathcal{N}_\gamma f)(p)\big)_k \\
&\qquad-\big(J_p\Gamma_{p,q}f(q)+ E_p f(q)+ \Gamma_{p,q}(\mathcal{N}_\gamma f)(q)\big)_k + \mathcal{O}(d(p,q)^{\gamma+\beta-k})\\
&= \big(J_p (f(p)-Q_{<\gamma}\Gamma_{p,q}f(q))\big)_k + \big(\mathcal{N}_\gamma f)(p)\big)_k - \big(\Gamma_{p,q}(\mathcal{N}_\gamma f)(q)\big)_k \\
&\qquad - \big(E_p f(q)\big)_k-\big(J_p Q_{\geq\gamma}\Gamma_{p,q}f(q))_k +\mathcal{O}(d(p,q)^{\gamma+\beta-k})\\
\end{align*}

We decompose $J_p=\sum_{n=1}^\infty J_p^{(n)}$ , $ E_p=\sum_{n=1}^\infty E_p^{(n)}$ , $ \mathcal{N}_p=\sum_{n=1}^\infty \mathcal{N}_p^{(n)}$ by replacing $K$ by $K_n$. As in previous proofs, we let $n_{d(p,q)}\in \mathbb{N}$ be such that $d(p,q)\in (2^{-n_{d(p,q)}-1},2^{-n_{d(p,q)}} ]$ and proceed by looking at the cases $n\geq n_{d(p,q)}$ and $n< n_{d(p,q)}$ separately. 

For $n> n_{d(p,q)}$ one proceeds as in \cite{Hai14} to obtain
\begin{enumerate}
\item $\sum_{n> n_{d(p,q)}} \big|\mathcal{N}^{n}_\gamma f(p)\big|_k \lesssim d_\fraks(p,q)^{\gamma+\beta -k}$
\item $\sum_{n> n_{d(p,q)}} \big|J^{n}_p \big(f(p)-Q_{<\gamma}\Gamma_{p,q}f(q)\big)\big|_k \lesssim d_\fraks(p,q)^{\gamma+\beta -k}$
\item $\sum_{n> n_{d(p,q)}} \big|\Gamma_{p,q}(\mathcal{N}_\gamma f)(q)\big|_k \lesssim d_\fraks(p,q)^{\gamma+\beta -k} \ .$ 
\end{enumerate}
The remaining terms to be checked are $ \big(E_p f(q)\big)_k \ \text{and} \ \big(J_p Q_{\geq\gamma}\Gamma_{p,q}f(q)\big)_k \ .$
For the former we have by Remark~\ref{norm remark} and using the notation $\boldsymbol{\partial}$  and  $\nabla_{\boldsymbol{\partial}}$ as in the proof of Lemma~\ref{lemma J_qp}
\begin{align*}
\sum_{n> n_{d(p,q)}}|E^{(n)}_p f(q)|_k& 
\leq \sum_{n> n_{d(p,q)}}\sup_{\deg (\boldsymbol{\partial})=k }|(\Pi_pf (p)-\Pi_p Q_{<\delta_0-\beta,:} \Gamma_{p,q}f(q))(\nabla_{\boldsymbol{\partial}, 1} K_n(p,\cdot))|\\
&\lesssim \sum_{n> n_{d(p,q)}} 2^{-n(\gamma+\beta-k)}\lesssim  d_\fraks(p,q)^{\gamma+\beta -k}\ . 
\end{align*}
and similarly
\begin{align*}
\sum_{n> n_{d(p,q)}}|J^{n}_p Q_{\geq\gamma}\Gamma_{p,q}f(q)|_k 
&\leq \sum_{n> n_{d(p,q)}}\sup_{\deg (\boldsymbol{\partial})=k } | \Pi_p Q_{\geq\gamma}\Gamma_{p,q}f(q)(\nabla_{\boldsymbol{\partial}, 1}K_n(p,\cdot)) |\\
&\lesssim \sum_{n> n_{d(p,q)}}2^{-n(\gamma + \beta -k)}\lesssim d_\fraks(p,q)^{\gamma+\beta -k} \ .
\end{align*}

Now we turn to the case $n\geq n_{d(p,q)}$.
Similarly to the large scale bound in the proof of \cite[Theorem~5.12]{Hai14}, we define
\begin{align*}
\mathcal{T}_1^k &:= - Q_{k} \big((\mathcal{N}^{(n)}_\gamma f)(p)+ J^{(n)}_p f(p)\big)\\
\mathcal{T}_2^k &:=   Q_{k}\big(J^{(n)}_p \Gamma_{p,q}f(q)+ E^{(n)}_pf(q)+\Gamma_{p,q}(\mathcal{N}^{(n)}_\gamma f)(q) \big)
\end{align*} 
and note that 
$$-\big(\mathcal{T}_1^k+ \mathcal{T}_2^k\big) = Q_k\big(J^{(n)}_p (f(p)-\Gamma_{p,q}f(q)) + \mathcal{N}^{(n)}_\gamma f)(p) - \Gamma_{p,q}(\mathcal{N}^{(n)}_\gamma f)(q) - E^{(n)}_p f(q) \big) \ .$$
Inspecting the definitions of the terms, we obtain the identities
\begin{align*}
\mathcal{T}_1^k =& Q_k j_p K_n\big( \sum_{\zeta\leq k-\beta} \Pi_p Q_\zeta f(p)-\mathcal{R}f\big)\ ,\\
\mathcal{T}_2^k =& Q_k j_p K_n\big(\sum_{k-\beta<\zeta<{\delta_0-\beta}}\big( \Pi_pQ_\zeta\Gamma_{p,q} f(q)\big)\\
+& Q_k j_p K_n\big(\Pi_q f(q) -\Pi_p Q_{<\delta-\beta}\Gamma_{p,q} f(q))\big)\\
-& Q_k j_pR\big(Q_{<\gamma+\beta} j_q K_n(\Pi_q f(q)-\mathcal{R}f)
\big)\ .
\end{align*}
We can rewrite 
\begin{align*}
\mathcal{T}_2^k &=
 Q_k j_p K_n\big( \Pi_p Q_{<\delta_0-\beta} \Gamma_{p,q} f(q)\big) -  Q_k j_p K_n\big( \sum_{\zeta\leq k-\beta} \Pi_pQ_\zeta\Gamma_{p,q} f(q)\big)\\
&+ Q_k j_p K_n\big(\Pi_q f(q) -\Pi_p Q_{<\delta-\beta} \Gamma_{p,q} f(q))\big)\\
&- Q_k j_pR\big(Q_{<\gamma+\beta} j_q K_n(\Pi_q f(q)-\mathcal{R}f)\\
&=  Q_k j_p K_n\big(\Pi_q f(q)\big)-  Q_k j_p K_n\big( \sum_{\zeta\leq k-\beta} \Pi_pQ_\zeta\Gamma_{p,q} f(q)\big)\\
&- Q_k j_pR\big(Q_{<\gamma+\beta} j_q K_n(\Pi_q f(q)-\mathcal{R}f)\big) \ . \\
\end{align*}
Thus
\begin{align*}
 \mathcal{T}_1^k+ \mathcal{T}_2^k&= Q_k j_p K_n\big(\Pi_q f(q)-\mathcal{R}f\big) -Q_k j_p R\big(Q_{<\gamma+\beta} j_q K_n(\Pi_q f(q)-\mathcal{R}f)\\
 &+Q_k j_p K\big( \sum_{\zeta\leq k-\beta} \Pi_p Q_\zeta (f(p)- \Gamma_{p,q} f(q))\big) 
\end{align*}
and therefore
\begin{align}\label{two terms to bound}
 |\mathcal{T}_1^k+ \mathcal{T}_2^k|_k &\leq \sup_{\deg (\boldsymbol{\partial}) =k} \big| \big(\Pi_q f(q)-\mathcal{R}f\big)(K^{\boldsymbol{\partial},\gamma}_{n,p,q})\big|\\
 &+\sup_{\deg (\boldsymbol{\partial}) =k} \sum_{\zeta\leq k-\beta} |\Pi_p Q_\zeta (f(p)-\Gamma_{p,q} f(q))(\nabla_{\boldsymbol{\partial}} K_n(p,\cdot))| \ . \nonumber
\end{align}
We now bound these two summands separately. 
\item Since $d(p,q)<2^{-n}$ it follows as in Lemma~\ref{technical Lemma for Kernels} that for $\rho>\gamma+\beta$,
\begin{align}\label{something to reuse}
|(\Pi_q f(q)-\mathcal{R}f\big)(K^{\boldsymbol{\partial},\gamma}_{n,p,q})|&\lesssim  \sum_{\tilde{l}\in \delta A_{\gamma}} 2^{(|\tilde{l}|_\fraks+\deg (\boldsymbol{\partial})-\gamma-\beta)n } d(p,q)^{|\tilde{l}|_\fraks} \\
&+   \sum_{m<\gamma+\beta} d_\fraks(p,q)^{\rho- \deg (\boldsymbol{\partial})} 2^{-n(\gamma+\beta-m}) \nonumber
\end{align}
and summing over $n<n_{d(p,q)}$ 
we obtain for $\deg (\boldsymbol{\partial}) =k$
\begin{align*}
\sum_{n<n_{d(p,q)}}|(\Pi_q f(q)-\mathcal{R}f\big)(K^{\boldsymbol{\partial},\gamma}_{n,p,q})|
\lesssim d(p,q)^{\gamma+\beta -\deg (\boldsymbol{\partial})} +d(p,q)^{\rho-\deg (\boldsymbol{\partial})}\lesssim d(p,q)^{\gamma+\beta -k} \ .
\end{align*}
\item For the second term in \eqref{two terms to bound}, note that $\zeta+\beta\leq k$ actually means $\zeta+\beta< k$ since $\zeta+\beta\notin \mathbb{N}$. This yields
\begin{align*}
\sum_{n<n_{d(p,q)}} |\Pi_p Q_\zeta (f(p)-\Gamma_{p,q} f(q))(\nabla_{\boldsymbol{\partial}}  K_n(p,\cdot))|&\lesssim \sum_{n<n_{d(p,q)}} d(p,q)^{(\gamma-\zeta)\vee 0}2^{(k-\beta)n}2^{-n\zeta}\\
&\lesssim d(p,q)^{\gamma-\zeta}2^{(k-\beta-\zeta)n_{d(p,q)}}\\
&\lesssim  d(p,q)^{\gamma+\beta-k} \ .
\end{align*}

It remains to show the the identity \eqref{Konvolution Identity}.
As in the last part of the proof of \cite[Theorem~5.12]{Hai14}, one sees mutatis mutandis that 
$$(\Pi_p \mathcal{K}f(p)-K(\mathcal{R}f))(\phi^\lambda_p)= \sum_n \int(\Pi_p f(p)-\mathcal{R}f)(K^{\gamma}_{n,q,p})\phi^\lambda_p(q) \, \dVol_q \ .$$
One bounds $$\sum_{n\leq n_\lambda} \int(\Pi_p f(p)-\mathcal{R}f)(K^{\gamma}_{n,q,p})\phi^\lambda_p(q) \, \dVol_q\lesssim \lambda^{\gamma+\beta}$$ by the same argument as in Lemma~\ref{lemma well definednes of I} .
To obtain 
$$\sum_{n > n_\lambda} \int(\Pi_p f(p)-\mathcal{R}f)(K^{\gamma}_{n,q,p})\phi^\lambda_p(q) \, \dVol_q\lesssim \lambda^{\gamma+\beta}$$
one decomposes
$$ K^{\gamma}_{n,p,q}(z):= K_n(q,z)- \sum_{k<\gamma+\beta} R(Q_k j_p K_n(\cdot,z) )(q)$$
and bounds each term separately.
\end{proof}

\begin{remark}
The identity 
\begin{align*}  
|\mathcal{T}_1^k+ \mathcal{T}_2^k|_k &\leq \sup_{\deg (\boldsymbol{\partial}) =k}  \Big| \big(\Pi_q f(q)-\mathcal{R}f\big)(K^{\boldsymbol{\partial},\gamma}_{n,p,q}) \\
 +&\sum_{\zeta\leq k-\beta} \Pi_p Q_\zeta (f(p)-\Gamma_{p,q} f(q))(\nabla_{\boldsymbol{\partial}} K_n(p,\cdot)\Big|
 \end{align*}
 is the only place
 where we crucially need the term $E_p\tau_q$ and where replacing $E_p\tau_q$ by $Q_{<\alpha+\beta}E_p\tau_q$ would be insufficient. If we were to replace 
$E_p\tau_q$ by $Q_{<\alpha+\beta}E_p\tau_q$ we would instead obtain
 \begin{align*}
 |\mathcal{T}_1^k+ \mathcal{T}_2^k|&\leq \sup_{\deg (\boldsymbol{\partial}) =k}  \Big| \big(\Pi_q f(q)-\mathcal{R}f\big)(K^{\boldsymbol{\partial},\gamma}_{n,p,q})\\
 +&\sum_{\zeta\leq |k|-\beta} \Pi_p Q_\zeta (f(p)-\Gamma_{p,q} f(q))(\nabla_{\boldsymbol{\partial}} K_n(p,\cdot))\\
 &-\sum_{\zeta\leq k-\beta} (\Pi_q -\Pi_p \Gamma_{p,q}) Q_\zeta f(q)(\nabla_{\boldsymbol{\partial}} K_n(p,\cdot))\Big|
\end{align*}
where the last term in general destroys the needed estimate (think of $n=1$). 

\end{remark}
\begin{remark}\label{philosophical remark}
The fact that we cannot truncate $E_p\tau_q$ is also consistent with the following interpretation: Let $\tau_p\in T^{\mathfrak{e}}$ and assume that $$q\mapsto  f(q)=\Gamma_{q,p}\tau_p \ $$
is the modelled distribution in $\mathcal{D}^\delta$. (Note that this is not necessarily the case with our definitions.) 
Then,
\begin{align*}
\mathcal{K}_\delta f(p)&= I_p f(p) + J_p f(p) + (\mathcal{N}_\delta  f)(p)\\
&= I_p \tau_p + J_p\tau_p\ ,
\end{align*}
while  
\begin{align*}
\mathcal{K}_\delta f(q)&= I_q f(q) + J_q f(q) + (\mathcal{N}_\delta  f)(q)\\
&= I_q \Gamma_{q,p}\tau_p + J_q \Gamma_{q,p}\tau_p+ E_q\tau_p\  .
\end{align*}
\end{remark}

\section{Singular Modelled Distributions}\label{section singular modelled distributions}
In this section we shall be working with modelled distributions defined on $\mathbb{R}\setminus \{0\} \times \bar{M}$, which belong locally to $\mathcal{D}^\gamma$ away from $\{t=0\}:=\{0\}\times \bar{M}$ and derive results analogous to those in \cite[Section 6]{Hai14}. 
We assume that $\bar{M}$ is equipped with a scaling $\bar{\fraks}$ as in Section~\ref{section geometric setting} and extend the scaling in the canonical way to a scaling $\fraks= (\fraks_0, \bar{\fraks})$ on $\mathbb{R} \times \bar{M}$, where $\fraks_0\in \mathbb{N}$.  We write for $z_i=(t_i,p_i)\in \mathbb{R}\times \bar{M}$ 
$$d_{ \fraks}(z_1,z_2):= |t_1-t_2|^{\frac{1}{\fraks_0}} + d_{\bar{\fraks}}(p_1,p_2) \ .$$
Then, we introduce 
for $z=(t,p)\in \mathbb{R}\times \bar{M}$,
$$
|t|_{\mathfrak{s}_0}:= |t|^{\frac{1}{\mathfrak{s}_0}}, \qquad |z|_{\mathfrak{s}_0}:=|t|_{\mathfrak{s}_0}\ , 
$$
and for any compact set ${K}\subset  \mathbb{R}\times M$
$$
{K}_\mathfrak{t}:= \left\{ (z_1, z_2)\in  ({K}\setminus \{t=0\})^2 \ | z_1\neq z_2 \text{ and } d_\fraks(z_1, z_0)\leq |z_1|_{\mathfrak{s}_0}\wedge |z_1|_{\mathfrak{s}_0} \right\} \ .
$$
\begin{definition}
Fix a regularity structure ensemble $\mathcal{T}=(\{E^{\mathfrak{t}}\}_{t\in \mathfrak{L}},\{T^{\mathfrak{t}}\}_{t\in \mathfrak{L}}, L)$ and a model $Z=(\Pi,\Gamma)$ on $\mathbb{R}\times M$. For $0\leq\eta<\gamma$
we define $\mathcal{D}^{\gamma,\eta}_\mathfrak{t}$ to consist of all sections $f$ of $T^{\frakt,\gamma}$ such that for each compact set ${K}\subset \mathbb{R}\times M$ the following semi-norm is finite $$\interleave f \interleave_{\gamma,\eta;\mathfrak{t}(K)}:= \| f\|_{\gamma, \eta; K}
+\sup_{(z_1,z_2)\in K_\mathfrak{t}, } \sup_{\alpha <\gamma} \frac{|f(z_1)-\Gamma_{z_1,z_2}f (z_2)|_{\alpha}}{d(z_1,z_2)^{\gamma-\alpha}(|z_1|_{\mathfrak{s}_0}\wedge|z_2|_{\mathfrak{s}_0})^{\eta- \gamma}  }\ ,$$
where 
$$ \| f\|_{\gamma, \eta; K}:= \sup_{(t,p)\in K\setminus \{t=0\}} \sup_{\alpha<\gamma} \frac{|f(t,p)|_\alpha}{|t|_{\mathfrak{s}_0}^{(\eta-\alpha) \wedge 0}} $$
For $f\in\mathcal{D}^{\gamma,\eta}_\mathfrak{t}$, we furthermore introduce the following quantity 
$$\talloblong f \talloblong_{\gamma,\eta ;K} :=\sup_{(t,p)\in K} \sup_{\alpha<\gamma}\frac{|f(t,p)|_\alpha}{|t|_{\mathfrak{t}}^{\eta-\alpha}} \ . $$
Lastly, for  a second model $\tilde Z = (\tilde \Pi, \tilde{\Gamma})$ and $\tilde{f}\in \mathcal{D}_{\tilde{Z}}^\gamma$ we define the distance
\begin{align*}
\interleave f ; \tilde{f} \interleave_{\gamma,\eta;K}:=& \|f-\tilde{f}\|_{\gamma, \eta ; K}+\sup_{(z_1,z_2)\in K_\mathfrak{t}, } \sup_{\alpha <\gamma} \frac{|f(z_1)-\Gamma_{z_1,z_2}f (z_2)-(\tilde{f}(z_1)-\tilde\Gamma_{z_1,z_2}\tilde{f} (z_2))|_{\alpha}}{d(z_1,z_2)^{\gamma-\alpha}(|z_1|_{\mathfrak{s}_0}\wedge|z_2|_{\mathfrak{s}_0})^{\eta- \gamma}  }.
\end{align*}
\end{definition}
The next lemma differs from \cite[Lemma 6.5]{Hai14} due to the fact that the maps $\Gamma$ are not lower triangular and since for $p\in M$, $t,t' \in \mathbb{R}$ we do not identify $T_{(t',p)}$ with $T_{(t,p)}$ (and therefore addition between elements in these spaces is not well defined). This forces us to impose a stronger continuity condition on $f$ at $\{t=0\}$ .
\begin{lemma}\label{two norms lemma exact case}
Let $\eta<\gamma$. Let $I:=[a,b]\subset[-1,1]$ be an interval containing zero, and $K\subset \bar{M}$ compact. Assume that $f\in \mathcal{D}^{\gamma,\eta}_\mathfrak{t}$ extends continuously to all of $I\times K$ and and that there exists $\epsilon>0$ such that for all $\alpha<\eta $ one has $ |f(t,p)|\lesssim |t|^\epsilon$ uniformly over $I\times K$ . Then one has the bounds
$$\talloblong f \talloblong_{\gamma,\eta;I\times K}\lesssim \interleave f \interleave_{\gamma,\eta;I\times K}$$
and
$$\talloblong f-\bar f \talloblong_{\gamma,\eta;I\times K}\lesssim \interleave f;\bar f \interleave_{\gamma,\eta;I\times K}+ \|\Gamma -\bar{\Gamma}\|_{I\times K} \big( \interleave f \interleave_{\gamma,\eta;I\times K}+\interleave f \interleave_{\gamma,\eta;I\times K} \big)\ ,$$
where the implicit constant only depends on $\|\Gamma\|_{I\times K}+\|\bar\Gamma \|_{I\times K}$ .
\end{lemma}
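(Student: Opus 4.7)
For $\alpha\geq\eta$ the claimed bound $|f(t,p)|_\alpha\lesssim|t|_{\mathfrak{s}_0}^{\eta-\alpha}$ is immediate: when $\alpha\geq\eta$ one has $(\eta-\alpha)\wedge 0=\eta-\alpha$, so the bound is contained in $\|f\|_{\gamma,\eta;I\times K}\leq\interleave f\interleave_{\gamma,\eta;I\times K}$. The content of the lemma is therefore the bound in the range $\alpha<\eta$, where the norm $\|f\|_{\gamma,\eta;I\times K}$ only provides $|f(t,p)|_\alpha\lesssim 1$. The classical argument of \cite[Lem.~6.5]{Hai14} exploits the lower-triangularity of $\Gamma$ to prevent contributions from grades $\beta<\alpha$ in the expansion $\Gamma_{z,z'}f(z')$; this structure is absent here, which is precisely what the continuity and polynomial-decay hypotheses are there to compensate for.

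Fix $(t,p)$ with $t>0$ (the case $t<0$ is symmetric) and $\alpha<\eta$, and consider the dyadic chain $z_k:=(t/2^k,p)$. Each pair $(z_k,z_{k+1})$ lies in $(I\times K)_{\mathfrak{t}}$ since $d_\fraks(z_k,z_{k+1})=(t/2^{k+1})^{1/\mathfrak{s}_0}=|z_{k+1}|_{\mathfrak{s}_0}$, so the modelled distribution bound gives $|R_{k+1}|_\beta\lesssim(t/2^{k+1})^{(\eta-\beta)/\mathfrak{s}_0}\interleave f\interleave_{\gamma,\eta;I\times K}$ uniformly over $\beta<\gamma$, where $R_{k+1}:=f(z_k)-\Gamma_{z_k,z_{k+1}}f(z_{k+1})$. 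Iterating yields, for every $N\geq 1$, the identity $f(z_0)=P_N f(z_N)+\sum_{k=0}^{N-1}P_k R_{k+1}$, where $P_0:=\id$ and $P_k:=\Gamma_{z_0,z_1}\circ\cdots\circ\Gamma_{z_{k-1},z_k}$. The strategy is to let $N\to\infty$: by continuity of $f$ on $I\times K$, $f(z_N)\to f(0,p)$, and the hypothesis $|f(t',p)|_\beta\lesssim|t'|^\epsilon$ for $\beta<\eta$ forces the grade-$<\eta$ components of $f(0,p)$ to vanish. Splitting $f(z_N)=f^{<\eta}(z_N)+f^{\geq\eta}(z_N)$, the high-grade part is controlled via $|f^{\geq\eta}(z_N)|_\beta\lesssim(t/2^N)^{(\eta-\beta)/\mathfrak{s}_0}$ coming from $\|f\|_{\gamma,\eta;I\times K}$, while the low-grade part satisfies $|f^{<\eta}(z_N)|_\beta\lesssim(t/2^N)^\epsilon\to 0$.

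The main obstacle is control of the iterated product $P_N$ in the absence of lower-triangularity of $\Gamma$. Iterating the pointwise bound $|\Gamma_{p,q}\sigma|_\beta\lesssim d(p,q)^{(\alpha-\beta)\vee 0}|\sigma|_\alpha$ writes $|P_N\tau|_\alpha$ for $\tau\in T_\beta$ as a sum over chains of intermediate grades $\alpha=\gamma_0,\gamma_1,\ldots,\gamma_N=\beta$, weighted by $\prod_{j=0}^{N-1}d(z_j,z_{j+1})^{(\gamma_{j+1}-\gamma_j)\vee 0}$. Using that $d(z_j,z_{j+1})\leq t^{1/\mathfrak{s}_0}$ for all $j$ and that the positive parts of the exponents $(\gamma_{j+1}-\gamma_j)\vee 0$ telescope to at least $(\beta-\alpha)\vee 0$, one extracts an effective factor $t^{((\beta-\alpha)\vee 0)/\mathfrak{s}_0}$ at the cost of a combinatorial constant growing at most polynomially in $N$ (the number of chains is bounded by $|A|^N$, but the contribution of chains that are not monotone decays further in $t$). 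Choosing $N\sim c\log_2(1/t)$ with $c$ large enough, the decay $(t/2^N)^\epsilon$ then dominates this polynomial factor and matches $t^{(\eta-\alpha)/\mathfrak{s}_0}$, yielding $|P_N f^{<\eta}(z_N)|_\alpha\lesssim t^{(\eta-\alpha)/\mathfrak{s}_0}\interleave f\interleave$; the same analysis applied to the remainder sum $\sum_k P_k R_{k+1}$ (where $|R_{k+1}|_\beta$ itself scales as $(t/2^{k+1})^{(\eta-\beta)/\mathfrak{s}_0}$) produces the matching bound with geometric summability in $k$.

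The difference bound follows by linearising the above: at each step in the chain one compares $\Gamma_{z_k,z_{k+1}}$ with $\bar\Gamma_{z_k,z_{k+1}}$, picking up $\|\Gamma-\bar\Gamma\|_{I\times K}$ multiplied by the (now-established) $\talloblong\cdot\talloblong_{\gamma,\eta;I\times K}$-bound for $f$ or $\bar f$, and $f(z_k)$ with $\bar f(z_k)$, controlled by $\interleave f;\bar f\interleave_{\gamma,\eta;I\times K}$. The continuity at $t=0$ and polynomial-decay properties of $f-\bar f$ needed to rerun the argument are inherited from those of $f$ and $\bar f$ individually, so the same iteration strategy applies and delivers the stated estimate.
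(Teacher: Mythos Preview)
Your telescoping setup along a geometric chain is the right idea and matches the paper's strategy, but there is a genuine gap in the control of the iterated operator $P_N$. You assert that the combinatorial cost of iterating the $\Gamma$-bound $N$ times is ``at most polynomial in $N$''. This is not correct: each application of $\Gamma$ contributes a factor $\|\Gamma\|_{I\times K}$ (the implicit constant in $|\Gamma\sigma|_\beta\lesssim d^{(\alpha-\beta)\vee 0}|\sigma|_\alpha$), so $|P_N\tau|_\alpha$ carries an unavoidable factor $\|\Gamma\|^N$. With $N\sim c\log_2(1/t)$ this gives $\|\Gamma\|^N = t^{-c\log_2\|\Gamma\|}$, a \emph{negative} power of $t$ that cannot be absorbed by $(t/2^N)^\epsilon = t^{(1+c)\epsilon}$ unless $\epsilon>\log_2\|\Gamma\|$ --- which you have no right to assume, since the hypothesis only provides \emph{some} $\epsilon>0$. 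The further claim that non-monotone chains ``decay further in $t$'' does not help: chains that stay constant or go down in grade contribute exponent $0$ per step, so their number (of order $|A|^N$) appears with no compensating decay.

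The paper's proof repairs this with two linked ideas. First, it replaces the dyadic factor $2$ by a large factor $N$ chosen so that $\|\Gamma\|<N^\epsilon$; then each step along the chain $t\mapsto N^{-1}t$ gains a net factor $N^{-\epsilon}\|\Gamma\|<1$, and the tail series is genuinely geometric. Second, and crucially, it runs a \emph{downward induction on the grade} $\beta$: assuming the bound $|f|_\zeta\lesssim|t|_{\fraks_0}^{\eta-\zeta}$ is already known for all $\zeta>\beta$, it proves it for $\zeta=\beta$ by iterating the \emph{projected} maps $\Gamma_{z_k,z_{k+1}}Q_{\leq\beta}$ rather than the full $\Gamma$. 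The projection confines the iteration to grades $\leq\beta$, where the $|t'|^\epsilon$ hypothesis applies and the operator norm stays bounded; the high-grade remainder $\Gamma Q_{>\beta}f$ that this truncation introduces is controlled by the inductive hypothesis. Without the projection, high-grade components of the increments $R_{k+1}$ blow up like $(tN^{-k})^{(\eta-\beta)/\fraks_0}$ for $\beta\geq\eta$ and, since $\Gamma$ is not lower-triangular, these feed back into low grades with no compensating factor. Your sketch would need both modifications --- the large dilation factor and the projection/induction --- to close.
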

\begin{proof}
For the purpose of the proof, it is useful to define the quantity 
$$\interleave \Gamma \interleave_{ I\times K}:= \sup_{\alpha\in A} \sup_{t, t'\in I} \sup_{p\in K} \sup_{\tau_{(t,p)}\in T^{\frakt}_{:,:}} \frac{|\Gamma_{(t',p),(t,p)} \tau_{p,t} |_\alpha}{|t-t'|_{s_0}}\ $$
and observe that there exists a constant $C>0$ such that $\interleave \Gamma \interleave_{ I\times K}\leq C \| \Gamma\|_{I\times K}$.
It is also useful to define $N\in \mathbb{R}$ be such that $\interleave \Gamma \interleave_{I\times K}< N^\epsilon\wedge \min_{\alpha,\beta \in A, \alpha\neq \beta} N^{|\alpha-\beta|}$.

Let $\beta\in A$ be such that the bound $|f (t,p)|_\zeta \lesssim |t|_{\mathfrak{s}_1}^{\eta-\zeta}$ holds for all $\zeta>\beta$. 
We argue that then the same bound holds for $\zeta = \beta$ as well. We can assume that $\beta<\eta$ since otherwise the inequality follows from the definition of $\interleave \cdot \interleave_{\mathcal{D}^{\gamma,\eta}_\mathfrak{t}(I\times K)}$ .

For notational convenience we write for fixed $p\in {M}$: $$\Gamma_{n}^{N,\beta}:=\Gamma_{(N^{-n}t,p),(N^{-(n+1)}t,p)}Q_{\leq \beta}  \ \text{ and } f(s):= f(s,p) \ ,$$
as well as 
$$\triangle f_n^{N,\beta} :=f(N^{-n}t)- \Gamma_{n}^{N,\beta}f(N^{-(n+1)}t) \ . $$
Note that then it follows from the H\"older type assumption of $f$ close to $\{t=0\}$ and the choice of $N$ that
$$Q_\beta \Gamma_{0}^{N,\beta} \circ\ldots\circ \Gamma_{n}^{N,\beta} f(N^{-(n+1)})\to 0 $$
as $n\to \infty$. Indeed we have for $t\leq 1$
$$
|\Gamma_{0}^{N,\beta} \circ\ldots\circ \Gamma_{n}^{N,\beta} f(tN^{-(n+1)})| \leq \interleave \Gamma \interleave_{ I\times K}^{n+1} N^{-\epsilon(n+1)}
$$
which is summable.
Using this and the convention that for $n=0$ $$\Gamma_{0}^{N,\beta} \circ\ldots\circ \Gamma_{n-1}^{N,\beta} :=\id \ ,$$  one finds
\begin{align*}
f(t) &= f(t) - \sum_{n=0}^\infty \Gamma_{0}^{N,\beta}\circ\ldots\circ \Gamma_{n}^{N,\beta} \big(f(N^{-(n+1)}t)-f(N^{-(n+1)}t)\big) \\
&= \sum_{n=0}^{\infty}  \Gamma_{0}^{N,\beta} \circ\ldots\circ \Gamma_{n-1}^{N,\beta} \big( f(N^{-n}t)- \Gamma_{n}^{N,\beta}f(N^{-(n+1)}t)\big) \\
&= \sum_{n=0}^{\infty}  \Gamma_{0}^{N,\beta} \circ\ldots\circ \Gamma_{n-1}^{N,\beta} (\triangle f_n^{N,\beta}) \ .
\end{align*}
Thus
\begin{align*}
|f(t,p)|_\beta & \leq \sum_{n=0}^{\infty} \big|  \Gamma_{0}^{N,\beta} \circ\ldots\circ \Gamma_{n-1}^{N,\beta} (\triangle f_n^{N,\beta}) \big|_\beta\\
&\leq \sum_{n=0}^{\infty} \sum_{\alpha\leq \beta} \interleave\Gamma \interleave_{ I\times K}^{n}|\triangle f_n^{N,\beta} |_\alpha
\end{align*}
Next observe that 
\begin{align*}
|\triangle f_n^{N,\beta} |_\alpha 
&\leq |f(N^{-n}t)- \Gamma_{(N^{-n}t,p),(N^{-(n+1)}t,p) }f(N^{-(n+1)}t) |_\alpha \\
&\qquad + |\Gamma_{(N^{-n}t,p),(N^{-(n+1)}t,p) } Q_{>\beta} f(N^{-(n+1)}t)|_\alpha\\
&\lesssim  (N^{-n}|t|_{\fraks_0})^{\eta-\alpha} + \sum_{\zeta>\beta} C(N^{-n}|t|)^{(\zeta-\alpha)\vee 0} |f(N^{-(n+1)}t)|_\zeta\\
&\lesssim (N^{-n}|t|_{\fraks_0})^{\eta-\alpha} + \sum_{\zeta>\beta} (N^{-n}|t|_{\fraks_0})^{\zeta-\alpha} (N^{-n}|t|_{\fraks_0})^{\eta-\zeta}\\
&\lesssim (N^{-n}|t|_{\fraks_0})^{\eta-\alpha}  \ ,
\end{align*}
where we used the fact that that $\alpha\leq \beta < \zeta$ and the assumption on $\beta$ in the second last line.
Therefore 
\begin{align*}
|f(t,p)|_\beta  & \lesssim |t|^{\eta-\beta} \sum_{n}\big( \sum_{\alpha\leq \beta} \interleave\Gamma \interleave_{ I\times K}^{n} N^{-n(\eta-\alpha)} \big) \\
& \lesssim |t|^{\eta-\beta} \sum_{n} \interleave\Gamma \interleave_{ I\times K}^{n} N^{-n(\eta-\beta)} \ .
\end{align*}
\end{proof}

\begin{remark}
It is not difficult to see, that argument of the proof generalises to setting of \cite[Remark 6.1]{Hai14}, i.e.\ where $\{t=0\}$ is replaced by a more complicated submanifold.
\end{remark}

Exactly as in \cite{Hai14}, one obtains the following interpolation inequality.
\begin{lemma}
Let $\gamma>0$ and $\kappa\in (0,1)$, as well as $f,\bar{f}$ as in Lemma~\ref{two norms lemma exact case}. Then for every compact set $K$ the following holds

$$
 \interleave f,\bar{f} \interleave_{(1-\kappa)\gamma,\eta;I\times K}\lesssim \talloblong f - \bar{f}\talloblong_{\gamma,\eta;I\times K}^\kappa\big(\talloblong f \talloblong_{\mathcal{D}^{\gamma,\eta}_\mathfrak{t}(I\times K)}+\talloblong \bar f \talloblong_{\gamma,\eta ;I\times K}\big)^{1-\kappa} \,
$$
where the implicit constant only depends on $\|\Gamma\|_{I\times K}+\|\bar\Gamma \|_{I\times K}$ .
\end{lemma}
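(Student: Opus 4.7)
The plan is to split the target seminorm $\interleave f,\bar f\interleave_{(1-\kappa)\gamma,\eta;I\times K}$ into its two defining pieces — the pointwise part $\|f-\bar f\|_{(1-\kappa)\gamma,\eta;I\times K}$ and the supremum over $(I\times K)_\mathfrak{t}$ involving the $\Gamma$-difference — and bound each by interpolating between a ``large-scale'' estimate in terms of $A:=\talloblong f-\bar f\talloblong_{\gamma,\eta;I\times K}$ and a ``small-scale'' estimate in terms of $B:=\talloblong f\talloblong_{\gamma,\eta}+\talloblong\bar f\talloblong_{\gamma,\eta}$. Throughout, the implicit constant will be allowed to absorb $\|\Gamma\|+\|\bar\Gamma\|$, $\|\Gamma-\bar\Gamma\|$, and the full norms $M:=\interleave f\interleave_{\gamma,\eta}+\interleave\bar f\interleave_{\gamma,\eta}$, all of which are finite by hypothesis. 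For the pointwise piece the argument is immediate: the direct bound $|f(z)-\bar f(z)|_\alpha\leq A|t|^{\eta-\alpha}$ together with the triangle-inequality bound $|f(z)-\bar f(z)|_\alpha\leq B|t|^{\eta-\alpha}$, the elementary inequality $\min(A,B)\leq A^\kappa B^{1-\kappa}$, and $|t|^{\eta-\alpha}\leq|t|^{(\eta-\alpha)\wedge 0}$ (valid since $|t|\leq 1$ and $\alpha<(1-\kappa)\gamma$), give $\|f-\bar f\|_{(1-\kappa)\gamma,\eta;I\times K}\leq A^\kappa B^{1-\kappa}$.

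For the supremum piece, fix $(z_1,z_2)\in(I\times K)_\mathfrak{t}$ and $\alpha<(1-\kappa)\gamma$; write $d:=d_\fraks(z_1,z_2)$, $|z|:=|z_1|_{\mathfrak{s}_0}\wedge|z_2|_{\mathfrak{s}_0}$, $\tau:=d/|z|\in(0,1]$ (note that $d\leq|z|$ by definition of $(I\times K)_\mathfrak{t}$, and $|z_1|_{\mathfrak{s}_0}\sim|z_2|_{\mathfrak{s}_0}\sim|z|$), and set
$$
X_\alpha:=|f(z_1)-\Gamma_{z_1,z_2}f(z_2)-\bar f(z_1)+\bar\Gamma_{z_1,z_2}\bar f(z_2)|_\alpha.
$$
A first (small-scale regularity) bound follows at once from the defining inequality of $\interleave\cdot\interleave_{\gamma,\eta}$ applied separately to $f$ and $\bar f$:
$$
X_\alpha\leq M\,d^{\gamma-\alpha}|z|^{\eta-\gamma}=M\,\tau^{\gamma-\alpha}|z|^{\eta-\alpha}.
$$
A second (large-scale pointwise) bound comes from the algebraic identity
$$
X=(f-\bar f)(z_1)-\Gamma_{z_1,z_2}(f-\bar f)(z_2)+(\bar\Gamma-\Gamma)_{z_1,z_2}\bar f(z_2),
$$
combined with the key pointwise estimate
$$
|\Gamma_{z_1,z_2}h(z_2)|_\alpha\leq\sum_{\beta<\gamma}\|\Gamma\|\,d^{(\beta-\alpha)\vee 0}|h(z_2)|_\beta\lesssim\|\Gamma\|\talloblong h\talloblong\,|z_2|^{\eta-\alpha},
$$
in which each summand is bounded by $|z_2|^{\eta-\alpha}$ by case analysis on the sign of $\beta-\alpha$ (for $\beta\geq\alpha$ use $d\leq|z_2|$; for $\beta<\alpha$ use $|z_2|\leq 1$). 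Applied to $h=f-\bar f$ and to $\bar f$ (with $\Gamma$ replaced by $\bar\Gamma-\Gamma$), this gives $X_\alpha\lesssim(A+\|\Gamma-\bar\Gamma\|B)|z|^{\eta-\alpha}$.

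Combining the two bounds via the elementary interpolation $\min(a,b\tau^s)\leq a^\theta b^{1-\theta}\tau^{s(1-\theta)}$ with $s=\gamma-\alpha$ and $\theta=\kappa\gamma/(\gamma-\alpha)\in[\kappa,1]$ — chosen so that $s(1-\theta)=(1-\kappa)\gamma-\alpha$ — yields
$$
X_\alpha\lesssim M^{1-\theta}(A+\|\Gamma-\bar\Gamma\|B)^\theta\,d^{(1-\kappa)\gamma-\alpha}|z|^{\eta-(1-\kappa)\gamma};
$$
absorbing $M^{1-\theta}$, $\|\Gamma-\bar\Gamma\|^\theta$, and the bounded conversion factors $A^\theta/A^\kappa$ and $B^{1-\theta}/B^{1-\kappa}$ into the implicit constant then produces the desired bound $X_\alpha\lesssim A^\kappa B^{1-\kappa}\,d^{(1-\kappa)\gamma-\alpha}|z|^{\eta-(1-\kappa)\gamma}$.

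The main obstacle is the key pointwise estimate on $|\Gamma_{z_1,z_2}h(z_2)|_\alpha$: in the flat setting of \cite{Hai14} the analogous bound would follow from lower-triangularity of $\Gamma$, but in the present framework $\Gamma$ is \emph{not} lower triangular and the potentially large off-diagonal components must be tamed by exploiting the geometric constraint $d\leq|z_1|\wedge|z_2|$ which holds throughout $(I\times K)_\mathfrak{t}$. A secondary subtlety is the exponent matching in the interpolation, where the natural parameter $\theta=\kappa\gamma/(\gamma-\alpha)$ does not coincide with $\kappa$ unless $\alpha=0$; this is handled by absorbing the resulting bounded $A$- and $B$-dependent factors into the implicit constant, consistent with the claimed dependence on $\|\Gamma\|+\|\bar\Gamma\|$ (together with the implicit dependence on the full norms $M$ of $f,\bar f$).
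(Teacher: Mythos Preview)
Your interpolation strategy matches the argument of \cite{Hai14} which the paper simply cites: split into the pointwise piece and the H\"older piece, and for the latter interpolate between the small-scale bound $X_\alpha\leq M\,d^{\gamma-\alpha}|z|^{\eta-\gamma}$ (from the full $\interleave\cdot\interleave_{\gamma,\eta}$ norms) and the large-scale bound $X_\alpha\lesssim(A+\|\Gamma-\bar\Gamma\|B)|z|^{\eta-\alpha}$. Your key estimate $|\Gamma_{z_1,z_2}h(z_2)|_\alpha\lesssim\|\Gamma\|\,\talloblong h\talloblong\,|z_2|^{\eta-\alpha}$, obtained by case-splitting on $\beta\gtrless\alpha$ and exploiting $d\leq|z|\leq 1$, is correct and is exactly the adaptation needed to handle non-triangular $\Gamma$.

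The final absorption step, however, has a genuine gap. After interpolation you are left with $M^{1-\theta}(A+\|\Gamma-\bar\Gamma\|B)^\theta$, and the contribution $M^{1-\theta}(\|\Gamma-\bar\Gamma\|B)^\theta$ coming from the $(\bar\Gamma-\Gamma)\bar f$ term does \emph{not} vanish as $A\to 0$; hence no finite constant --- whether depending only on $\|\Gamma\|+\|\bar\Gamma\|$ or also on $M$ --- can convert it into $A^\kappa B^{1-\kappa}$. To see this directly, take $f=\bar f$ with $\Gamma\neq\bar\Gamma$: then $A=0$, so the stated right-hand side vanishes, yet $X_\alpha=|(\Gamma-\bar\Gamma)f(z_2)|_\alpha$ is generically positive and the left-hand side is not zero. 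This shows the lemma as written is actually false; the display in the paper likely contains typos (note already the anomalous subscript $\mathcal{D}^{\gamma,\eta}_\mathfrak{t}$), and the intended inequality from \cite{Hai14} should carry $M=\interleave f\interleave_{\gamma,\eta}+\interleave\bar f\interleave_{\gamma,\eta}$ on the right in place of $\talloblong f\talloblong+\talloblong\bar f\talloblong$, together with an explicit $\|\Gamma-\bar\Gamma\|$-type correction in the two-model case (or else be stated for a single model). A secondary issue: $\theta=\kappa\gamma/(\gamma-\alpha)\in[\kappa,1]$ only when $\alpha\geq 0$; for negative $\alpha$, which the framework allows, one has $\theta<\kappa$ and your ``bounded conversion factors'' $A^{\theta-\kappa}$, $B^{\kappa-\theta}$ are in fact unbounded as $A\to 0$.
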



\begin{prop}[Hai14, Prop. 6.9]\label{reconstruction for singular modelled distribution}
Let $f\in \mathcal{D}^{\gamma,\eta}(V)$ for some sector $V\subset T^\frakt$ of regularity $\alpha\leq 0$, some $\gamma>0$ and $\eta\leq\gamma\leq \delta$. Then, provided that $\alpha\wedge \eta>-\mathfrak{s}_0$, there exists a unique distribution $\mathcal{R}f\in \mathcal{C}^{\alpha\wedge \eta}(E^\frakt)$ which agrees with the reconstruction operator away from $\{t=0\}$. 
Furthermore one has the bound
\begin{equation}\label{reconstruction for singular modelled dist}
\|\mathcal{R}f- \bar{ \mathcal{R}} \bar f \|_{\mathcal{C}^{\alpha\wedge\eta}( I \times K)}\lesssim \interleave f,\bar{f} \interleave_{\gamma,\eta; I\times \bar K} + \|Z, \bar{Z}\|_{\gamma, \bar K} \  .
\end{equation}
\end{prop}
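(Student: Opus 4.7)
The plan is to adapt the argument of \cite[Prop.~6.9]{Hai14} to our manifold / vector bundle setting. Away from the hyperplane $P:=\{t=0\}$ the usual reconstruction theorem (Theorem~\ref{reconstruction theorem}) already produces a distribution $\widehat{\mathcal R}f\in \mathcal{D}'(E^{\frakt}|_{(\RR\setminus\{0\})\times \bar M})$ satisfying the bound \eqref{reconstruction cond}. The task is to show that, under $\alpha\wedge\eta>-\fraks_0$, this distribution extends uniquely to an element of $\mathcal{C}^{\alpha\wedge\eta}(E^{\frakt})$ globally on $\RR\times \bar M$.

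For existence, first carry out the construction on a precompact cylinder $I\times K$ using a dyadic partition of unity in the time variable: fix a smooth function $\rho\in \cC^\infty_c((1/2,2))$ with $\sum_{n\in\ZZ}\rho(2^n t)=1$ on $(0,\infty)$ and set $\psi_n(t,p)=\rho(2^n|t|)$. For $\phi\in \mathcal{D}(E^\frakt)$ supported in $I\times K$, write $\phi=\sum_{n\geq n_0}\phi\,\psi_n$. On each annular strip $\supp\psi_n\subset\{|t|_{\fraks_0}\sim 2^{-n}\}$ the modelled distribution $f$ satisfies the ordinary $\mathcal{D}^\gamma$ bounds, so $\widehat{\mathcal R}f$ is well defined there. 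Define
\begin{equation*}
\langle \mathcal R f,\phi\rangle := \sum_{n\geq n_0} \langle \widehat{\mathcal R}f,\phi\,\psi_n\rangle.
\end{equation*}
The key estimate, following the proof in \cite{Hai14}, is that for a test function $\phi^\lambda_{z_0}\in \mathcal{B}^{r,\lambda}_{z_0}(E^\frakt)$ with $z_0=(t_0,p_0)$, only finitely many scales $n$ with $2^{-n}\gtrsim |t_0|_{\fraks_0}\vee\lambda$ contribute nontrivially, and on each such scale one has bounds of the form $|\langle \widehat{\mathcal R}f-\Pi_z f(z),\psi_n\phi^\lambda_{z_0}\rangle|\lesssim \lambda^{\gamma}(2^{-n})^{\eta-\gamma}$ together with $|\Pi_z f(z)(\psi_n\phi^\lambda_{z_0})|\lesssim (2^{-n})^{\alpha\wedge\eta}(\lambda/2^{-n})^{|\fraks|}\cdot (\text{product of model norms})$ where the latter uses the singular bounds on $|f(z)|_{\alpha}$ near $P$. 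Summing the geometric series in $n$ uses exactly $\alpha\wedge\eta>-\fraks_0$ to guarantee convergence and yields a bound of order $\lambda^{\alpha\wedge\eta}$, proving $\mathcal R f\in\cC^{\alpha\wedge\eta}$ on $I\times K$. Patching via a locally finite cover of $\RR\times\bar M$ subordinate to trivialisations gives the global statement, and the difference estimate \eqref{reconstruction for singular modelled dist} follows by running the same decomposition for $f-\bar f$ and $Z-\bar Z$ in parallel.

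Uniqueness is the clean part. Any two candidate extensions $\mathcal R f,\mathcal R' f$ agree on $(\RR\setminus\{0\})\times \bar M$, so their difference is a distribution in $\mathcal{D}'(E^\frakt)$ supported on $P=\{0\}\times\bar M$. A distribution supported on a codimension-$\fraks_0$ submanifold is a sum of transverse derivatives of Dirac distributions along $P$ and therefore has local regularity no better than $-\fraks_0$ (in the scaled sense). Since the difference lies in $\mathcal{C}^{\alpha\wedge\eta}$ with $\alpha\wedge\eta>-\fraks_0$, it must vanish.

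The main obstacle is the summability argument in the scale parameter $n$. The bookkeeping is more delicate than in \cite{Hai14} because the vector-bundle-valued test functions only make sense locally, so one has to combine the dyadic-in-time partition with trivialisations of $E^\frakt$ and exploit Remark~\ref{rem frame testfunctions} to reduce to scalar estimates, while keeping track of the constants produced by Lemma~\ref{lemma push forward}. Once this is set up, the estimates on $\Pi_z f(z)$ and on $\widehat{\mathcal R}f-\Pi_z f(z)$ from Theorem~\ref{reconstruction theorem} are inserted exactly as in the flat case, and the condition $\alpha\wedge \eta>-\fraks_0$ is what ensures the dyadic sums converge.
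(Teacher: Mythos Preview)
Your proposal is correct and follows essentially the same approach as the paper, which simply states that the result is obtained \textit{mutatis mutandis} from \cite[Prop.~6.9]{Hai14} by going to appropriate $\fraks$-charts. Your sketch makes explicit what that adaptation entails---the dyadic time decomposition, the summability controlled by $\alpha\wedge\eta>-\fraks_0$, and the uniqueness via support considerations---and correctly flags the additional bookkeeping with trivialisations, though the paper regards this as routine once one has passed to charts (as was already done for Theorem~\ref{reconstruction theorem}).
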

\begin{proof}
The following results can also be obtained mutatis mutandis as in \cite{Hai14} by going to appropriate $\fraks$-charts.
\end{proof}

\subsection{Products and nonlinearites}
Then next lemma is the analogue of Lemma~\ref{lemma product}, the proof of which is a straightforward adaptation of the proof of \cite[Prop.~6.12]{Hai14}. 
\begin{lemma}\label{lemma product singular}
Let $V^{\mathfrak{t}_1}\subset T^{\mathfrak{t}_1}, {V}^{\frakt_2}\subset T^{\frakt_2}$ be two sectors of regularity $\alpha_1$ resp. $\alpha_2$ and let $\star: V^{\mathfrak{t}_1}\times  V^{\mathfrak{t}_2}\to T^{\mathfrak{t}_3}$ be an abstract tensor product. Let $Z$ be a model and $\gamma_1,\gamma_2>0$ and $\eta_1<\gamma_1$, $\eta_2<\gamma_2$. Assuming the product $\star$ is $\gamma$-precise for $\gamma= (\gamma_1+\alpha_2) \wedge (\gamma_2+\alpha_1)$ and setting $\eta= (\eta_1+\alpha_2)\wedge(\eta_2+\alpha_1)\wedge(\eta_1+\eta_2)$, the map 
$$ \mathcal{D}^{\gamma_1,\eta_1}_{\mathfrak{t}}(V^{\frakt_1})\times \mathcal{D}^{\gamma_2\eta_2}_{\mathfrak{t}}(V^{\frakt_2}) \to \mathcal{D}^{\gamma,\eta}_{\mathfrak{t}}(T^{\frakt_3}), \quad (f_1,f_2)\mapsto f_1\star_\gamma f_2$$
is well defined, continuous and the bound 
$$\interleave f_1\star_\gamma f_2 \interleave_{\gamma,\eta; K}\lesssim  \interleave f_1 \interleave_{\gamma_1,\eta_1; K} \interleave f_2 \interleave_{\gamma_2,\eta_2; K}(1+ \|\Gamma\|_{K})$$
holds.
If furthermore $\tilde Z = (\tilde \Pi, \tilde{\Gamma})$ is a second model and $\tilde{f}_1\in \mathcal{D}_{\tilde{Z}}^{\gamma_1}(V^{\frakt_1})$, $\tilde{f}_2\in \mathcal{D}_{\tilde{Z}}^{\gamma_2}(V^{\frakt_2})$ one has the bound 
$$ \interleave  f_1\star_\gamma f_2 ; \tilde{f}_1\star_\gamma \tilde{f}_2  \interleave_{\gamma,\eta ;K} \lesssim_C \interleave f_1 ; \tilde{f}_1 \interleave_{\gamma_1,\eta_1 ;K} +\interleave f_2 ;  \tilde{f}_2 \interleave_{\gamma_2,\eta_2 ;K} + \|\Gamma-\tilde{\Gamma}\|_{ K})$$
uniformly over  $f_i, \tilde{f}_i$ satisfying $\interleave f_i \interleave_{\gamma_i,\eta_i; K} , \ \interleave \tilde{f}_i \interleave_{\gamma_i,\eta_i; K} <C$ and models satisfying \linebreak 
$\|\Gamma\|_{ K},\  \|\tilde{\Gamma}\|_{ K}<C$.
\end{lemma}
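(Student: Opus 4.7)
The plan is to mimic the proof of \cite[Prop.~6.12]{Hai14}, inserting the bookkeeping needed to accommodate the bi\dash graded setting and the fact that we work with sector ensembles over general vector bundles. All non\dash singular aspects of the statement are already contained in Lemma~\ref{lemma product}, which gives the required control on the coefficient bounds of $f_1\star_\gamma f_2$ away from $\{t=0\}$; the task here is therefore to upgrade those bounds uniformly in the distance to $\{t=0\}$, both pointwise and in the $\Gamma$\dash increment seminorm entering the definition of $\mathcal{D}^{\gamma,\eta}_\mathfrak{t}$.

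First I would establish the pointwise bound $\|f_1\star_\gamma f_2\|_{\gamma,\eta;K}\lesssim \interleave f_1\interleave_{\gamma_1,\eta_1;K}\interleave f_2\interleave_{\gamma_2,\eta_2;K}$. Decomposing
$$
Q_\alpha(f_1\star_\gamma f_2)(z)=\sum_{\substack{\beta_1+\beta_2=\alpha\\ \beta_i\geq \alpha_i}} Q_{\beta_1}f_1(z)\star Q_{\beta_2}f_2(z),
$$
using $|Q_{\beta_i}f_i(z)|_{\beta_i}\lesssim |z|_{\mathfrak{s}_0}^{(\eta_i-\beta_i)\wedge 0}$ and summing over the admissible pairs yields the exponent $(\eta-\alpha)\wedge 0$ precisely because $\eta=(\eta_1+\alpha_2)\wedge(\eta_2+\alpha_1)\wedge(\eta_1+\eta_2)$ is the minimum obtained by distinguishing whether each $\beta_i$ lies below or above $\eta_i$.

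The main step, as in \cite{Hai14}, is the $\Gamma$\dash increment bound on $K_\mathfrak{t}$. Fix $(z_1,z_2)\in K_\mathfrak{t}$ so that $d_\mathfrak{s}(z_1,z_2)\leq |z_1|_{\mathfrak{s}_0}\wedge |z_2|_{\mathfrak{s}_0}$. Since $\star$ is $\gamma$\dash precise, item~\eqref{asdff} of Definition~\ref{definition product} implies
$$
Q_{<\gamma}\bigl((f_1\star_\gamma f_2)(z_1)-\Gamma_{z_1,z_2}(f_1\star_\gamma f_2)(z_2)\bigr)
= Q_{<\gamma}\bigl( A_1\star f_2(z_1)+ \Gamma_{z_1,z_2}f_1(z_2)\star A_2\bigr),
$$
with $A_i:=f_i(z_1)-\Gamma_{z_1,z_2}f_i(z_2)$. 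The first term is handled by combining $|A_1|_{\beta_1}\lesssim d^{\gamma_1-\beta_1}(|z_1|\wedge|z_2|)_{\mathfrak{s}_0}^{\eta_1-\gamma_1}$ with the pointwise bound on $f_2$; the resulting exponent in $d_\mathfrak{s}$ is always at least $\gamma-\alpha$ (using $\beta_2\geq \alpha_2$), and the remaining powers of $d$ can be converted into powers of $|z_1|\wedge|z_2|_{\mathfrak{s}_0}$ at the cost of at most a multiplicative constant, exactly because $d_\mathfrak{s}(z_1,z_2)\leq |z_1|_{\mathfrak{s}_0}\wedge |z_2|_{\mathfrak{s}_0}$ on $K_\mathfrak{t}$. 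The second term is estimated analogously after bounding $|Q_{\beta_1}\Gamma_{z_1,z_2}f_1(z_2)|_{\beta_1}\lesssim \|\Gamma\|_K\sum_{\beta_1'\geq \beta_1} d^{(\beta_1'-\beta_1)\vee 0}|f_1(z_2)|_{\beta_1'}$. Using the same case analysis (whether each $\beta_i$ is above or below $\eta_i$) as in the pointwise step, all three cases collapse into the required exponent $(|z_1|\wedge|z_2|)_{\mathfrak{s}_0}^{\eta-\gamma}$, which is again dictated by the minimum defining $\eta$.

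The local Lipschitz bound for two models is obtained by the same telescoping: write $A_i-\tilde A_i$ (and $(f_2-\tilde f_2)(z_1)$, etc.) as differences, estimate each factor using either the $\interleave\cdot;\cdot\interleave_{\gamma_i,\eta_i;K}$ seminorm or $\|\Gamma-\tilde\Gamma\|_K$ together with the assumed uniform bounds on $\interleave f_i\interleave$, $\interleave\tilde f_i\interleave$, $\|\Gamma\|_K$, and $\|\tilde\Gamma\|_K$. The main obstacle is purely organisational: keeping careful track of the three minima entering $\eta$ and verifying in every case distinction that the power of $d_\mathfrak{s}(z_1,z_2)$ produced is at least $\gamma-\alpha$ while the residual power of $|z_1|_{\mathfrak{s}_0}\wedge|z_2|_{\mathfrak{s}_0}$ is at most $\eta-\gamma$; this is the identical combinatorial verification carried out in \cite[Prop.~6.12]{Hai14}, and no new geometric input is required.
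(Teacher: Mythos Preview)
Your sketch is correct and follows precisely the route the paper takes: the paper does not give an explicit proof but simply states that the argument ``is a straightforward adaptation of the proof of \cite[Prop.~6.12]{Hai14}'', and your telescoping via $\gamma$-preciseness together with the case distinction on whether each $\beta_i$ lies above or below $\eta_i$ is exactly that adaptation.
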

Since the lift of general non-linearities differs from the constructions in the flat setting, we provide a proof of the next Proposition, even though it is very close to the proof of Proposition~\ref{theorem composition}.
\begin{theorem}\label{theorem composition singular}
Let $\mathcal{T}$ be a regularity structure ensemble, $\{V^{\mathfrak{l}}\}_{\frakl \in \bar{\mathfrak{L}}}$ as in Assumption~\ref{Assumption Nonlinearites}, $Z=(\Pi,\Gamma)$ a model for $\mathcal{T}$ and
 $G\in \mathcal{C}^\infty(E\ltimes F)$. For $\eta\in[ 0,\gamma] $ one finds that for $f\in \mathcal{D}^{\gamma,\eta}(\bar{V})$ 
$$G_\gamma(f)(p):= \sum_n \frac{1}{n!}\trr\big(j_p^E(D^nG)(\bar f_p ) \star_\gamma \tilde{f}_p^{\star_\gamma n}\big)\in \mathcal{D}^{\gamma,\eta}(U^0) \ .$$ 
Furthermore, the map
$f\mapsto G_\gamma(f)$ is locally Lipschitz continuous in the sense that for every $C>0$,
\begin{equation}\label{nonlinear difference bound11}
\interleave G_\gamma(f)-G_\gamma(g)\interleave_{\gamma,\eta ; K}\lesssim \interleave f-g\interleave_{\gamma,\eta ; K}
\end{equation}
uniformly over $f,g$ satisfying $\interleave f \interleave_{\gamma,\eta ; K}, \interleave g\interleave_{\gamma,\eta ; K} \leq C$.
\end{theorem}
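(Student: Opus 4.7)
The plan is to adapt the proof of Theorem~\ref{theorem composition} essentially term by term to the singular setting, replacing Lemma~\ref{lemma product} by its singular counterpart Lemma~\ref{lemma product singular} wherever a product estimate is invoked. Two things must be verified: the weighted size bound encoded in $\|\cdot\|_{\gamma,\eta;K}$, and the translation bound for pairs $(p,q)\in K_\mathfrak{t}$. The size bound is immediate: since $\eta\geq 0$, the component $\bar f_p = Q_0 f(p)$ is uniformly bounded on $K$, so the jet sections $j_p^E(D^nG)(\bar f_p)$ are smooth with all graded components bounded uniformly in $p\in K$, with no $|t|_{\fraks_0}$-weight. The weighted behaviour of $G_\gamma(f)(p)$ then comes entirely from the factors $\tilde f_p^{\star_\gamma n}$, and Lemma~\ref{lemma product singular} applied iteratively delivers the bound $|G_\gamma(f)(p)|_\alpha \lesssim |t|_{\fraks_0}^{(\eta-\alpha)\wedge 0}$.

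For the translation bound, I would write, for $(p,q)\in K_\mathfrak{t}$,
\begin{equs}
G_\gamma(f)(q)&-\Gamma_{q,p}G_\gamma(f)(p)\\
&= \sum_{n=0}^{[\gamma/\zeta]} \trr \frac{1}{n!}\Bigl( j_q D^nG(R\bar f_q(\cdot))\star_\gamma \tilde f_q^{\star_\gamma n} - \Gamma_{q,p} j_p D^nG(R\bar f_p(\cdot))\star_\gamma \Gamma_{q,p}\tilde f_p^{\star_\gamma n}\Bigr),
\end{equs}
insert the Taylor expansion of $j_q D^nG(R\bar f_p(\cdot))$ around $\bar f_q$ via Lemma~\ref{em version Taylor}, and decompose the resulting expression into the same two groups of terms that appear in the proof of Theorem~\ref{theorem composition}. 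The combinatorial estimate $[\gamma/\zeta]-n+1-\alpha\geq (\gamma-\kappa)/\zeta$ that drives that proof is purely algebraic and is unchanged here, so the powers of $d_\fraks(p,q)$ come out with the correct exponent $\gamma-\kappa$. The new ingredient is the extra weight $(|p|_{\fraks_0}\wedge|q|_{\fraks_0})^{\eta-\gamma}$; this is produced automatically by Lemma~\ref{lemma product singular} applied to each of the products appearing in the estimates, using crucially that for $(p,q)\in K_\mathfrak{t}$ the time-weights at $p$ and $q$ are comparable (bounded by $|p|_{\fraks_0}\wedge|q|_{\fraks_0}$) and that the jet factors $j_p D^n G(R\bar f_p(\cdot))$ have no $|t|_{\fraks_0}$-singularity.

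The local Lipschitz estimate \eqref{nonlinear difference bound11} then follows, again as in Theorem~\ref{theorem composition}, from the integral representation
\begin{equation*}
G_\gamma(f)-G_\gamma(g) = \int_0^1 \sum_n \frac{1}{n!}\trr\Bigl(j_p^E(D^{n+1}G)(\bar g_p + t\bar h_p)\star_\gamma (\tilde g_p+t\tilde h_p)^{\star_\gamma n}\star_\gamma h\Bigr) dt,
\end{equation*}
where $h=f-g\in\mathcal{D}^{\gamma,\eta}$, by applying the bounds obtained above to the integrand. The main obstacle is therefore pure bookkeeping: one has to track that the various $|t|_{\fraks_0}$-weights generated at each step combine correctly under the singular product estimate. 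The restriction to $K_\mathfrak{t}$ is precisely what makes this accounting trivial, since it ensures that the weights at the two base points $p$ and $q$ are interchangeable up to a constant, allowing the weight factor $(|p|_{\fraks_0}\wedge|q|_{\fraks_0})^{\eta-\gamma}$ to factor out of every estimate.
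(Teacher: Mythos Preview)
Your strategy is exactly the paper's: rerun the proof of Theorem~\ref{theorem composition} while tracking the time-weight $|t|_{\fraks_0}$. The decomposition into the two main terms, the use of Lemma~\ref{em version Taylor}, the combinatorial inequality $[\gamma/\zeta]-n+1-\alpha\geq (\gamma-\kappa)/\zeta$, and the integral representation for the Lipschitz bound all carry over verbatim.

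Two points of caution. First, your plan to invoke Lemma~\ref{lemma product singular} iteratively on $\tilde f^{\star_\gamma n}$ does not work as stated: $\tilde f = f - Q_0 f$ is \emph{not} a modelled distribution in $\mathcal{D}^{\gamma,\eta}$, because $\Gamma_{q,p}\bar f_p$ has nonzero components at positive degrees (the jet model is not lower triangular on degree-$0$ elements), so the translation bound for $\tilde f$ fails. The paper sidesteps this by bounding $|\tilde f_p^{\star_\gamma n}|_\beta$ pointwise via the size bound on $f$ alone, and keeping the translation structure inside the full expression $G_\gamma(f)$ rather than trying to factor it through $\tilde f$.

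Second, the weight bookkeeping is not as automatic as you suggest. The paper splits into two regimes, $\eta\geq\zeta$ and $\eta<\zeta$. In the first, $|\bar f_q-\Gamma_{q,p}\bar f_p|_0\lesssim d_\fraks(p,q)^\zeta$ with no weight, and the weight comes only from the crude bound $|\tilde f_q^{\star_\gamma n}|_\beta\lesssim (|p|_{\fraks_0}\wedge|q|_{\fraks_0})^{\eta-\gamma}$. In the second regime the increment $|\bar f_q-\Gamma_{q,p}\bar f_p|_0$ itself carries a weight $(|p|_{\fraks_0}\wedge|q|_{\fraks_0})^{\eta-\zeta}$, and the key step is to rewrite the product of factors as a power of the ratio $d_\fraks(p,q)/(|p|_{\fraks_0}\wedge|q|_{\fraks_0})\leq 1$, lower its exponent to $\gamma-\kappa$ using the combinatorial inequality, and then separately control the residual weight on $|\tilde f_q^{\star_\gamma n}|_\beta$. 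This interpolation is the one genuinely new manipulation beyond Theorem~\ref{theorem composition}; your sketch does not capture it.
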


The next corollary follows as in Remark~\ref{rem:upgrade_trick}.
\begin{corollary}
In the setting of Theorem~\ref{theorem composition singular}, if $\tilde Z = (\tilde \Pi, \tilde{\Gamma})$ is a second model that agrees with $Z$ on jets and $M$ is compact then one has the bound
\begin{equation}
\interleave G_\gamma(f);G_\gamma(\tilde{g}) \interleave_{\gamma, \eta ; K} \lesssim \interleave f;\tilde{g}\interleave_{\gamma, \eta ; K}+ \|Z;\tilde{Z}\|_K \ ,
\end{equation}
uniformly over $f,\tilde{g}$ satisfying $\interleave f\interleave_{\gamma , \eta ; K}, \interleave \tilde{g} \interleave_{\gamma, \eta ; K} \leq C$
and models satisfying $\|Z\|_K$, $\|\tilde{Z}\|_K\leq C$, i.e.\ the map
$$\mathcal{D}^{\gamma,\eta}(\bar{V})f \to \mathcal{D}^{\gamma,\eta}(U^0), \qquad  f \mapsto G_\gamma(f)(p)$$
is strongly locally Lipschitz continuous.
\end{corollary}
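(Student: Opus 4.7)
The plan is to follow the ``model-doubling'' strategy outlined in Remark~\ref{rem:upgrade_trick}, adapted to the singular setting of Theorem~\ref{theorem composition singular}. First I construct an auxiliary regularity structure ensemble $\hat{\mathcal{T}}=(\{E^\frakt\}_{\frakt\in\mathfrak{L}},\{\hat T^\frakt\}_{\frakt\in\mathfrak{L}},\hat L)$ by setting $\hat T^\frakt=(T^\frakt\oplus T^\frakt)/{\sim}$, where $(\tau_1,\tau_2)\sim(\bar\tau_1,\bar\tau_2)$ iff there exists a jet $\sigma\in JE^\frakt$ with $\tau_1=\bar\tau_1+\sigma$ and $\tau_2=\bar\tau_2-\sigma$, and letting $\hat L\subset L\oplus L$ consist of pairs $(\Gamma,\tilde\Gamma)$ that agree on jets. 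Compactness of $M$ ensures that the fibres of $\hat T$ remain finite-dimensional. Since $Z$ and $\tilde Z$ agree on $JE^\frakt$, the pair $(Z,\tilde Z)$ descends to a well-posed model $\hat Z=(\hat\Pi,\hat\Gamma)$ on $\hat{\mathcal T}$, with $\hat\Gamma_{p,q}(\tau_1,\tau_2)=(\Gamma_{p,q}\tau_1,\tilde\Gamma_{p,q}\tau_2)$ and a correspondingly defined $\hat\Pi$; both are compatible with $\sim$ because jets act identically under the two models.

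\medskip

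Next, the pair $\hat f=(f,\tilde g)$ gives a well-defined section of $\hat T^{\bar\frakt,\gamma}$, and a direct check yields $\hat f\in\mathcal D^{\gamma,\eta}_{\mathfrak t,\hat Z}(\hat{\bar V})$ with
\[
\interleave \hat f\interleave_{\gamma,\eta;K}\simeq \interleave f\interleave_{\gamma,\eta;K}+\interleave \tilde g\interleave_{\gamma,\eta;K}+\interleave f;\tilde g\interleave_{\gamma,\eta;K}+\|Z;\tilde Z\|_K\,.
\]
The operations entering the definition \eqref{composition} of $G_\gamma$---the precise products $\star_\gamma$, the abstract trace $\trr$, and the jet $j_p^E(D^nG)(\bar f_p)$---all descend to $\hat{\mathcal{T}}$: each is built from jet-level constructions, the equivalence relation is generated by adding opposite-sign jets, and these act coherently because $Z$ and $\tilde Z$ agree on $JE^\frakt$. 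Hence $G_\gamma(\hat f)\in\mathcal D^{\gamma,\eta}_{\mathfrak t,\hat Z}(\hat U^0)$ is unambiguously defined and its canonical projections onto the two copies of $T$ recover $G_\gamma(f)$ and $G_\gamma(\tilde g)$ respectively.

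\medskip

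Finally, applying Theorem~\ref{theorem composition singular} to the single ensemble $\hat{\mathcal T}$ equipped with the single model $\hat Z$ yields
\[
\interleave G_\gamma(\hat f)-G_\gamma(\hat f_0)\interleave_{\gamma,\eta;K}\lesssim\interleave \hat f-\hat f_0\interleave_{\gamma,\eta;K}\,,
\]
uniformly over pairs of bounded norm. Specialising $\hat f_0$ to the diagonal pair $(f,f)$ (viewed over $\hat Z$), the left-hand side controls $\interleave G_\gamma(f);G_\gamma(\tilde g)\interleave_{\gamma,\eta;K}$ after projecting each component back to the original ensemble, while the right-hand side is bounded by $\interleave f;\tilde g\interleave_{\gamma,\eta;K}+\|Z;\tilde Z\|_K$ via the norm comparison of the preceding paragraph, delivering the claimed inequality. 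The main obstacle is the bookkeeping verification that the quotient defining $\hat T^\frakt$ is compatible with every operation appearing in \eqref{composition}; this follows because jets form an ideal with respect to $\star_\gamma$ and $\trr$, and because the hypothesis $Z|_{JE}=\tilde Z|_{JE}$ guarantees that the jet-dependence of $G_\gamma$ is unambiguous on $\hat{\mathcal T}$.
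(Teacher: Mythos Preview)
Your proposal is correct and follows exactly the approach the paper indicates: the paper's own proof is the single sentence ``follows as in Remark~\ref{rem:upgrade_trick}'', which defers all details to the model-doubling trick of \cite[Proposition~3.11]{HP15}, and you have fleshed out that sketch consistently.

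One technical caution on your implementation: the quotient by antidiagonal jets collapses $\hat T^\frakt_0$ to a \emph{single} copy of $E^\frakt$, so there are no literal ``canonical projections onto the two copies of $T$'', and the assertion that jets form an ideal for $\star_\gamma$ is not quite right (jet $\star$ non-jet is generally not a jet). What actually makes $G_\gamma$ descend to $\hat{\mathcal T}$ and lets you compare $\hat f=(f,\tilde g)$ to a suitable reference is the Taylor base-point shift identity used throughout the proof of Theorem~\ref{theorem composition}---this is precisely the mechanism in \cite[Proposition~3.11]{HP15}, and is the verification the paper itself omits.
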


\begin{proof}[Proof of Theorem~\ref{theorem composition singular}]

Let $\zeta$ be as in Assumption~\ref{Assumption Nonlinearites}, we again only treat the more involved case $\gamma>\zeta$.

In order to bound $\|G_\gamma(f)\|_{\gamma, \eta ; K}$ note that
 $$|G_\gamma(f)(p)|_\alpha \leq  \sum_n^{[\gamma/\zeta]} |j_p^E(D^nG)(\bar f_p )|_l  |\tilde{f}_p^{\star_\gamma n}|_{\alpha-l}\ .$$
Since  $|j_p^E(D^nG)(\bar f_p )|_l\lesssim |\bar{f}_p|\lesssim |t|_{\fraks_0}^{\eta\wedge 0}$ and 
$$|\tilde{f}_p^{\star_\gamma n}|_{\alpha-l}\lesssim \sum_{l_1+\ldots+l_n= \alpha-l}  |\tilde{f}_p|_{l_n}\lesssim |t|_{\fraks_0}^{(\eta-l_1)\wedge 0}\cdots|t|_{\fraks_0}^{(\eta-l_n)\wedge 0} \ ,$$
it thus suffices to check that 
$$ (\eta \wedge 0) + \left( (\eta-l_1)\wedge0 \right) +\ldots+\left( (\eta-l_n)\wedge 0\right)\geq (\eta-\alpha) \wedge 0 \ .$$
If all terms on the left vanish, we are done. If not, note that
$(\eta \wedge 0)=0$ and thus there must be a summand contributing $\eta-l_i$, for the rest of the summands note that
$\left( (\eta-l_j)\wedge 0\right)\geq -l_j$. Thus, we find that 
\begin{align*}
(\eta \wedge 0)  + \left( (\eta-l_1)\wedge 0 \right) +\ldots+\left( (\eta-l_n)\wedge 0\right)
\geq \eta -\alpha + l  
\geq (\eta-\alpha) \wedge 0 \ ,
\end{align*}
and conclude that $\|G_\gamma(f)\|_{\gamma, \eta ; K}\lesssim \|f\|_{\gamma, \eta ; K} $ .

In order to bound $ \interleave G_\gamma(f) \interleave_{\gamma, \eta ; K}$
we find as in the proof of Theorem~\ref{theorem composition} that for $\kappa<\gamma$
$$Q_\kappa\left(\Gamma_{q,p}G_\gamma(f)(p)- \sum_{n=0}^{[\gamma/\zeta]}\trr \frac{1}{n!} \Gamma_{q,p} j_p D^n G( R\bar{f}_p (\cdot)) \star_\gamma \Gamma_{q,p}\tilde{f}_p^{\star_\gamma n}\right)=0$$ and 
\begin{align*}
&\left| \sum_{n=0}^{[\gamma/\zeta]} \trr \frac{1}{n!} j_q D^n G( R\bar{f}_p(\cdot)) \star_\gamma \Gamma_{q,p}\tilde{f}_p^{\star_\gamma n}   - \sum_{n=0}^{[\gamma/\zeta]} \trr \frac{1}{n!} \Gamma_{q,p} j_p D^n G( R\bar{f}_p (\cdot)) \star_\gamma \Gamma_{q,p}\tilde{f}_p^{\star_\gamma n} \right|_\kappa \\
&\lesssim \sum_{n=0}^{[\gamma/\zeta]}  \sum_{\alpha+\beta= \kappa} \left| Q_{<\gamma-\beta} \big(j_q D^n G( R\bar{f}_p(\cdot))  - \Gamma_{q,p} j_p D^n G( R\bar{f}_p (\cdot)\big) \right|_\alpha |\Gamma_{q,p}\tilde{f}_p^{\star_\gamma n}  |_\beta \\
&\leq \sum_{n=0}^{[\gamma/\zeta]}  \sum_{\alpha+\beta= \kappa} d_\fraks (p,q)^{\gamma-\beta-\alpha} |\bar{f}_q|_0 
\sum_\chi |\tilde{f}_p^{\star_\gamma n}  |_\chi \\
&\lesssim \sum_{n=0}^{[\gamma/\zeta]}   d_\fraks (p,q)^{\gamma-\kappa} 
\sum_\zeta (|p|_{\mathfrak{s}_0}\wedge|q|_{\mathfrak{s}_0})^{\eta-\chi\wedge 0} \\
&\lesssim d_\fraks(p,q)^{\gamma-\kappa} (|p|_{\mathfrak{s}_0}\wedge|q|_{\mathfrak{s}_0})^{\eta- \gamma} 
\end{align*}
As previously we find
\begin{align*}
 &\big|G(f)(q) -\sum_{n=0}^{[\gamma/\zeta]}\trr \frac{1}{n!} j_q D^n G( R\bar{f}_p(\cdot)) \star_\gamma \Gamma_{q,p}\tilde{f}_p^{\star_\gamma n}\big|_\kappa\\
\leq & \big|\sum_{n=0}^{[\gamma/\zeta]} \trr \frac{1}{n!} j_q D^n G( R\bar{f}_q(\cdot)) \star \tilde{f}_q^{\star_\gamma n}\\
&\qquad-\sum_{n=0}^{[\gamma/\zeta]}\sum_{k=0}^{[\gamma/\zeta]-n} \trr \frac{1}{n!} \frac{1}{k!} D^{n+k} j_qG( R\bar{f}_p(\cdot)) \star_\gamma j_q(R\bar{f}_q(\cdot)-R\bar{f}_p(\cdot))^{\otimes k} \star \tilde{f}_q^{\star_\gamma n}\big|_\kappa \\
+& \big| \sum_{n=0}^{[\gamma/\zeta]}\sum_{k=0}^{[\gamma/\zeta]-n}  \trr \frac{1}{n!} \frac{1}{k!} D^{n+k} j_qG( R\bar{f}_p(\cdot)) \star_\gamma j_q(R\bar{f}_q(\cdot)-R\bar{f}_p(\cdot))^{\otimes k} \star \tilde{f}_q^{\star_\gamma n}\\
&\qquad -\sum_{n=0}^{[\gamma/\zeta]} \trr \frac{1}{n!} j_q D^n G( R\bar{f}_p(\cdot)) \star_\gamma \Gamma_{q,p}\tilde{f}_p^{\star_\gamma n}\big|_\kappa 
\end{align*}
As previously, the former summand can be bounded by
$$
\sum_{n=0}^{[\gamma/\zeta]}  \sum_{\alpha+\beta= \kappa} | \bar{f}_q-\Gamma_{q,p}\bar{f}_p |_0^{([\gamma/\zeta]-n+1- \alpha)} | \tilde{f}_q^{\star_\gamma n}|_\beta  \ ,
$$
where for each non-vanishing term
\begin{equation}\label{eq:citable1}
[\gamma/\zeta]-n+1- \alpha\geq \frac{\gamma - \kappa  }{\zeta}.
\end{equation} 
We note that
\begin{equation}\label{eq:citable2}
| \bar{f}_q-\Gamma_{q,p}\bar{f}_p |_0\lesssim d_{\fraks}(p,q)^\zeta (|p|_{\mathfrak{s}_0}\wedge|q|_{\mathfrak{s}_0})^{(\eta- \zeta)\wedge 0}
\end{equation}
We treat the two cases $\eta\geq\zeta$ and $\eta<\zeta$ separately, starting with the former.
Here we find using the crude bound $ | \tilde{f}_q^{\star_\gamma n}|_\beta \lesssim (|p|_{\mathfrak{s}_0}\wedge|q|_{\mathfrak{s}_0})^{(\eta- \beta)\wedge 0}\lesssim (|p|_{\mathfrak{s}_0}\wedge|q|_{\mathfrak{s}_0})^{\eta- \gamma} \ $ that
\begin{align*}
& \sum_{n=0}^{[\gamma/\zeta]}  \sum_{\alpha+\beta= \kappa} | \bar{f}_q-\Gamma_{q,p}\bar{f}_p |_0^{[\gamma/\zeta]-n+1- \alpha} | \tilde{f}_q^{\star_\gamma n}|_\beta  \\
& \lesssim \sum_{n=0}^{[\gamma/\zeta]}  \sum_{\alpha+\beta= \kappa} d_\fraks(p,q)^{\zeta( [\gamma/\zeta]-n+1- \alpha)}  
| \tilde{f}_q^{\star_\gamma n}|_\beta \\
& \lesssim \sum_{n=0}^{[\gamma/\zeta]}  \sum_{\alpha+\beta= \kappa} d_\fraks(p,q)^{\gamma- \kappa }
| \tilde{f}_q^{\star_\gamma n}|_\beta   \\
 & \lesssim  d_\fraks(p,q)^{\gamma- \kappa }(|p|_{\mathfrak{s}_0}\wedge|q|_{\mathfrak{s}_0})^{\eta-\gamma} ,
\end{align*}
where in the first inequality we used \eqref{eq:citable2} with the fact that $(\eta- \zeta)\wedge 0$ and in the second in \eqref{eq:citable1}.
We turn to the case $\eta<\zeta$ where using \eqref{eq:citable2} we find
\begin{align*}
& \sum_{n=0}^{[\gamma/\zeta]}  \sum_{\alpha+\beta= \kappa} | \bar{f}_q-\Gamma_{q,p}\bar{f}_p |_0^{[\gamma/\zeta]-n+1- \alpha} | \tilde{f}_q^{\star_\gamma n}|_\beta  \\
&\lesssim \sum_{n=0}^{[\gamma/\zeta]}  \sum_{\alpha+\beta= \kappa} d_\fraks(p,q)^{\zeta( [\gamma/\zeta]-n+1- \alpha)}  
(|p|_{\mathfrak{s}_0}\wedge|q|_{\mathfrak{s}_0})^{(\eta- \zeta)([\gamma/\zeta]-n+1- \alpha)} 
| \tilde{f}_q^{\star_\gamma n}|_\beta   \\
& =\sum_{n=0}^{[\gamma/\zeta]}  \sum_{\alpha+\beta= \kappa} \left( \frac{d_\fraks(p,q)}{|p|_{\mathfrak{s}_0}\wedge|q|_{\mathfrak{s}_0}}\right) ^{\zeta( [\gamma/\zeta]-n+1- \alpha)}  
(|p|_{\mathfrak{s}_0}\wedge|q|_{\mathfrak{s}_0})^{\eta([\gamma/\zeta]-n+1- \alpha)} 
| \tilde{f}_q^{\star_\gamma n}|_\beta   \\
&\leq \sum_{n=0}^{[\gamma/\zeta]}  \sum_{\alpha+\beta= \kappa} \left( \frac{d_\fraks(p,q)}{|p|_{\mathfrak{s}_0}\wedge|q|_{\mathfrak{s}_0}}\right) ^{\gamma- \kappa}  
(|p|_{\mathfrak{s}_0}\wedge|q|_{\mathfrak{s}_0})^{\eta([\gamma/\zeta]-n+1- \alpha)} 
| \tilde{f}_q^{\star_\gamma n}|_\beta   \\
& \leq\sum_{n=0}^{[\gamma/\zeta]}  \sum_{\alpha+\beta= \kappa} d_\fraks(p,q)^{\gamma- \kappa}  
(|p|_{\mathfrak{s}_0}\wedge|q|_{\mathfrak{s}_0})^{-\gamma+ \kappa} 
| \tilde{f}_q^{\star_\gamma n}|_\beta \ ,  \\
\end{align*}
where in the second last inequality we used \eqref{eq:citable1} together with the fact that $1\geq \frac{ d_\fraks(p,q)}{|p|_{\mathfrak{s}_0}\wedge|q|_{\mathfrak{s}_0}}$ and in the last inequality we again used \eqref{eq:citable1}. Next, note that 
$$ | \tilde{f}_q^{\star_\gamma n}|_\beta \lesssim (|p|_{\mathfrak{s}_0}\wedge|q|_{\mathfrak{s}_0})^{(\eta- \beta)\wedge 0}\ ,$$
thus we conclude 
\begin{align*}
&\sum_{n=0}^{[\gamma/\zeta]}  \sum_{\alpha+\beta= \kappa} d_\fraks(p,q)^{\gamma- \kappa}  
(|p|_{\mathfrak{s}_0}\wedge|q|_{\mathfrak{s}_0})^{-\gamma+ \kappa} 
| \tilde{f}_q^{\star_\gamma n}|_\beta   \\
& \leq\sum_{n=0}^{[\gamma/\zeta]}  \sum_{\alpha+\beta= \kappa} d_\fraks(p,q)^{\gamma- \kappa}  
(|p|_{\mathfrak{s}_0}\wedge|q|_{\mathfrak{s}_0})^{-\gamma+ \kappa} 
(|p|_{\mathfrak{s}_0}\wedge|q|_{\mathfrak{s}_0})^{\eta-\beta}    \\
&\lesssim d_\fraks(p,q)^{\gamma- \kappa}  
(|p|_{\mathfrak{s}_0}\wedge|q|_{\mathfrak{s}_0})^{\eta-\gamma}  \ .
\end{align*}
Next we bound the second term 
$R_{\kappa,p,q}$ for which we find exactly as in the proof of Theorem~\ref{theorem composition}
\begin{align*}
R_{\kappa,p,q}&= \left| \sum_{n=0}^{[\gamma/\zeta]}  \trr \frac{1}{n!} j_q D^n G(R\bar{f}_p(\cdot)) \star_\gamma D_{n,pq}  \right|_\kappa 
\leq  \sum_{n=0}^{[\gamma/\zeta]}  \sum_{\alpha+\beta= \kappa} | j_q D^n G( R\bar{f}_p(\cdot))|_\alpha | D_{n,pq} |_\beta\\
&\lesssim  \sum_{n=0}^{[\gamma/\zeta]}  \sum_{\alpha+\beta= \kappa}  | D_{n,pq} |_\beta
\end{align*}
where we recall that $D_{n,pq}:= Q_{<\gamma}\Gamma_{q,p}\tilde{f}_p^{\star_\gamma n}- (f_q -\Gamma_{q,p}\bar{f}_p)^{\star_\gamma n}$
and note that $$ |D_{n,pq}|_\beta \lesssim d_\fraks (p,q)^{\gamma-\beta} (|p|_{\mathfrak{s}_0}\wedge|q|_{\mathfrak{s}_0})^{\eta-\gamma} \ . $$
This concludes the estimate of the second term. Lastly, to see \eqref{nonlinear difference bound11} one again writes $h= f-g$ and bounds
\begin{align*}
G_\gamma(f)-G_\gamma (g) =\int_0^1 
\sum_n \frac{1}{n!}\trr\big(j_p^E(D^{n+1}G)(\bar{g}_p+t\bar{h}_p ) \star_\gamma (\tilde {g}_p+t\tilde{h}_p)^{\star_\gamma n} \star_\gamma h \big) dt \ .
\end{align*}
\end{proof}

\subsection{Schauder estimate}
The next proposition is the analogue of \cite[Prop. 6.16]{Hai14}. 
\begin{prop}\label{schauder for singular}
In the setting of Theorem~\ref{schauder theorem} let $f\in \mathcal{D}^{\gamma,\eta}(V)$ for $\eta<\gamma$, where $\eta\wedge\alpha>-\mathfrak{s}_0$ and $\eta+\beta\notin \mathbb{N}$, then
$$\mathcal{K}_\gamma f \in \mathcal{D}^{\gamma+\beta, (\eta\wedge\alpha)+\beta} .$$
Furthermore, let $I\subset [-1,1] $ be an interval, then the following bound holds
$$
 \interleave \mathcal{K}_\gamma f ; \bar{\mathcal{K}}_\gamma \bar{f} \interleave_{\gamma+\beta, (\eta\wedge\alpha)+\beta, I\times K} \lesssim \interleave f; \bar f \interleave_{\gamma, \eta, I\times K} + \|Z;\bar Z\|_{(I+[-1,1])\times \bar K} \ ,
$$
where the implicit constant depends on 
$\|Z\|_{(I+[-1,1])\times \bar K},$ $\|\bar{Z}\|_{(I+[-1,1])\times \bar K}, \ \interleave f\interleave_{\gamma, \eta, I\times K} $ and $\interleave\bar{f}\interleave_{\gamma, \eta, I\times K} $.
\end{prop}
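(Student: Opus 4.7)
The plan is to adapt the proof of the non-singular Schauder estimate in Theorem~\ref{schauder theorem} along the lines of Hairer's analogous argument, separating contributions according to the kernel scale $n$ relative to the distance to $\{t=0\}$. Write $\mathcal{K}_\gamma f = If + Jf + \mathcal{N}_\gamma f$ and decompose each piece scale-by-scale via $K = \sum_n K_n$. The overall strategy is: on scales where $2^{-n}$ is smaller than the distance to the singular set, the bounds from the proof of Theorem~\ref{schauder theorem} apply essentially unchanged; on larger scales, the kernel smears over a neighbourhood reaching $\{t=0\}$ and we must use cruder estimates that exploit the hypothesis $\eta \wedge \alpha > -\mathfrak{s}_0$ together with the improved regularity of $\mathcal{R}f$ from Proposition~\ref{reconstruction for singular modelled distribution}.

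For the pointwise bound on $|\mathcal{K}_\gamma f(z)|_\zeta$, split the sum at the critical scale $n_z$ with $2^{-n_z} \sim |z|_{\mathfrak{s}_0}$. For $2^{-n} \leq |z|_{\mathfrak{s}_0}$ the kernel $K_n(z,\cdot)$ is supported away from $\{t=0\}$, so the pointwise and scale bounds from Theorem~\ref{schauder theorem} apply locally with the replacement $|f(z)|_\alpha \lesssim |z|_{\mathfrak{s}_0}^{(\eta-\alpha)\wedge 0}$; direct summation produces the required weight $|z|_{\mathfrak{s}_0}^{((\eta\wedge\alpha)+\beta-\zeta)\wedge 0}$. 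For $2^{-n} > |z|_{\mathfrak{s}_0}$ one estimates $\mathcal{N}_\gamma^{(n)}f(z)$ and $J^{(n)}f(z)$ directly using $\mathcal{R}f \in \mathcal{C}^{\eta\wedge\alpha}$ (Proposition~\ref{reconstruction for singular modelled distribution}), the pointwise bounds on $|\Pi_z f(z)|$ via $\|f(z)\|_\alpha$, and the scaling estimates of Point~\ref{item:upper_bound} of Assumption~\ref{Assumption on Kernel}; summing the geometric series over $n \leq n_z$ gives the same weight without logarithmic losses thanks to $\eta+\beta \notin \mathbb{N}$.

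For the difference bound on $|\mathcal{K}_\gamma f(z_1) - \Gamma_{z_1,z_2}\mathcal{K}_\gamma f(z_2)|_\zeta$ with $(z_1,z_2) \in K_\mathfrak{t}$, the decomposition arising from Lemma~\ref{lemma compensating error term}, together with Lemma~\ref{technical Lemma for Kernels}, Lemma~\ref{lemma well definednes of I}, and Lemma~\ref{lemma J_qp}, transfers to the present setting after inserting singular weights. Since $(z_1,z_2) \in K_\mathfrak{t}$ entails $d_\fraks(z_1,z_2) \leq |z_1|_{\mathfrak{s}_0} \wedge |z_2|_{\mathfrak{s}_0} =: \rho$, the main contribution comes from scales $2^{-n} \leq \rho$, where one repeats the argument of Theorem~\ref{schauder theorem} with every occurrence of $|f|_\alpha$ or $\interleave f \interleave_{\gamma; K'}$ replaced by its $\rho$-weighted version, so the final bound acquires the extra factor $\rho^{\eta-\gamma}$. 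Scales $2^{-n} > \rho$ are handled by interpolating between the pointwise bound at $z_1$ and the analogous one at $z_2$, the gap being controlled by a factor $(d_\fraks(z_1,z_2)/\rho)^{\gamma+\beta-\zeta} \leq 1$.

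The main technical obstacle is to track the singular weights uniformly through the error operator $E_{z}$ and the compensation term $J_{p,q}$ of Lemma~\ref{lemma J_qp}, since in the singular regime the splitting between ``near'' and ``far'' scales interacts with the splitting at $2^{-n} \sim \rho$ giving rise to several case distinctions; however each case reduces after summation to a bound of the form $d_\fraks(z_1,z_2)^{\gamma+\beta-\zeta}\rho^{\eta-\gamma}$ as required. The Lipschitz dependence on the model pair $(Z, \bar Z)$ is obtained by the same argument, using the second halves of Lemma~\ref{technical Lemma for Kernels}, Lemma~\ref{lemma well definednes of I} and Lemma~\ref{lemma J_qp} at each step, together with the continuity of the reconstruction from Proposition~\ref{reconstruction for singular modelled distribution}. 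The assumption $\eta+\beta \notin \mathbb{N}$ is used exactly as in Lemma~\ref{lemma well definednes of I} to avoid logarithmic borderline cases when summing geometric series of exponents approaching an integer.
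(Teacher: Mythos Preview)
Your overall strategy --- scale decomposition and adaptation of Theorem~\ref{schauder theorem} --- is correct, but there are two concrete gaps in the treatment of the difference bound $|\mathcal{K}_\gamma f(z_1) - \Gamma_{z_1,z_2}\mathcal{K}_\gamma f(z_2)|_k$ for integer $k$.

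First, the paper uses a \emph{three}-regime split: $2^{-n} \leq d_\fraks(z_1,z_2)$, then $d_\fraks(z_1,z_2) \leq 2^{-n} \leq \tfrac{1}{2}\rho$, then $2^{-n} \geq \tfrac{1}{2}\rho$ (with $\rho = |z_1|_{\fraks_0}\wedge|z_2|_{\fraks_0}$). In the middle regime one cannot simply ``repeat Theorem~\ref{schauder theorem} with $\rho$-weighted seminorms'': the reconstruction estimate on $(\Pi_q f(q)-\mathcal{R}f)(K^{\boldsymbol{\partial},\gamma}_{n,p,q})$ is a priori global, and you need the \emph{local} reconstruction bound of Proposition~\ref{local reconstruction theorem} to confine the relevant seminorm of $f$ to a ball of radius $2^{-n}\leq\rho/2$, which stays away from $\{t=0\}$ and therefore picks up the weight $\rho^{\eta-\gamma}$. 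You do not invoke this ingredient.

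Second, your treatment of the large-scale regime $2^{-n}>\rho$ by ``interpolating between pointwise bounds at $z_1$ and $z_2$'' cannot recover the factor $d_\fraks(z_1,z_2)^{\gamma+\beta-k}$: each pointwise contribution is of order $\rho^{(\eta\wedge\alpha)+\beta-k}$, and since $d_\fraks(z_1,z_2)\leq\rho$ and $k<\gamma+\beta$, this is strictly weaker than the required $d_\fraks(z_1,z_2)^{\gamma+\beta-k}\rho^{(\eta\wedge\alpha)-\gamma}$. The paper instead continues the $\mathcal{T}_1^k+\mathcal{T}_2^k$ decomposition from the proof of Theorem~\ref{schauder theorem} and, for the problematic term $(\Pi_q f(q)-\mathcal{R}f)(K^{\boldsymbol{\partial},\gamma}_{n,p,q})$, splits it by the triangle inequality into $\Pi_q f(q)$ and $\mathcal{R}f$ separately, bounding each via Lemma~\ref{technical Lemma for Kernels} (respectively the analogous estimate using $\mathcal{R}f\in\mathcal{C}^{\eta\wedge\alpha}$). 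The needed factor $d_\fraks(z_1,z_2)^{|\tilde l|_\fraks}$ survives because it comes from the Taylor-remainder structure of $K^{\boldsymbol{\partial},\gamma}_{n,p,q}$ itself, not from any cancellation between $\Pi_q f(q)$ and $\mathcal{R}f$. The error operator $E_z$ requires no new work here beyond what was already absorbed into the decomposition in Theorem~\ref{schauder theorem}.
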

\begin{remark}
As in \cite{Hai14} we observe that the condition $\alpha\wedge\eta >-\mathfrak{s}_0$ is only required in order to have a unique reconstruction operator.  In situations where this fails but one needs to make a choice of reconstruction operator $\mathcal{R}$ satisfying the bound (\ref{reconstruction for singular modelled dist}).
\end{remark}
\begin{proof}
The following follows directly as in \cite{Hai14}.
\begin{itemize}
\item The section $\mathcal{N}f$ is well defined for $f\in \mathcal{D}^{\gamma,\eta}(V)$ (outside the set $\{0\} \times M$ ) .
\item The necessary estimates on  $|\mathcal{K}_\gamma f(p)|_\alpha$ , 
$|\mathcal{K}_\gamma f(p)-\bar{\mathcal{K}}_\gamma \bar{f} (p)|_\alpha$ , \linebreak
$|\mathcal{K}_\gamma f(p) -\Gamma_{p,q} \mathcal{K}_\gamma f(q)|_\alpha$, and 
$|\mathcal{K}_\gamma f(p)-\bar{\mathcal{K}}_\gamma \bar{f}   -\Gamma_{p,q} \mathcal{K}_\gamma f(q)+\bar{\Gamma}_{p,q} \bar{\mathcal{K}}_\gamma \bar f(q)|_\alpha$ for
 $\alpha\in A\setminus \mathbb{N}$ .
\item The necessary bound on $|\mathcal{K}_\gamma f(p)|_k$ and $|\mathcal{K}_\gamma f(p)-\bar{\mathcal{K}}_\gamma \bar{f}(p)|_k$ for $k\in \mathbb{N}$ .
\end{itemize}
The place where the proof in our setting varies from the one in \cite{Hai14} is in estimating $|\mathcal{K}_\gamma f (p)-\Gamma_{p,q} \mathcal{K}_\gamma f(q)|_k$  and $|\mathcal{K}_\gamma f(p)-\bar{\mathcal{K}}_\gamma \bar{f}(p)   -\Gamma_{p,q} \mathcal{K}_\gamma f(q)+\bar \Gamma_{p,q} \bar{\mathcal{K}}_\gamma \bar f(q)|_k$ for $k\in \mathbb{N}$. We only estimate the former, one proceeds analogously for the latter.

To estimate $|\mathcal{K}^{(n)}_\gamma f(p) -\Gamma_{p,q} \mathcal{K}^{(n)}_\gamma f(q)|_k$ for $p,q \in K_{\mathfrak{t}}$ we proceed differently in the three cases
\begin{enumerate}
\item $2^{-n} \leq d_\fraks(p,q)$,
\item $2^{-n}\in [d_\fraks(p,q) ,\frac{1}{2} |p|_{\fraks_0}\wedge |q|_{\fraks_0}]$,
\item $2^{-n}\geq \frac{1}{2} |p|_{\fraks_0}\wedge |q|_{\fraks_0}$.
\end{enumerate}

In the case $2^{-n} \leq d_\fraks(p,q)$ one proceeds exactly as in the proof of Theorem~\ref{schauder theorem} to obtain
$$\sum_{n:  2^{-n} \leq d_\fraks(p,q)} |\mathcal{K}^{(n)}_\gamma f(p) -\Gamma_{p,q} \mathcal{K}^{(n)}_\gamma f(q)|_k \lesssim d_\fraks(p,q)^{\gamma+\beta -k} (|p|_{\fraks_0}\wedge |q|_{\fraks_0})^{\eta-\gamma} \ .$$

In the case $2^{-n}\in [d_\fraks(p,q) ,\frac{1}{2} |p|_{\fraks_0}\wedge |q|_{\fraks_0}]$, one proceeds as in the second step of the proof of Theorem~\ref{schauder theorem} and it suffices to bound 
\begin{equation}\label{some term}
\sup_{\deg (\boldsymbol{\partial}) =k} \big| \big(\Pi_q f(q)-\mathcal{R}f\big)(K^{\boldsymbol{\partial},\gamma}_{n,p,q})\big|
\end{equation}
and
\begin{equation}\label{some term2}
\sup_{\deg (\boldsymbol{\partial}) =k} \sum_{\zeta\leq k-\beta} |\Pi_p Q_\zeta (f(p)-\Gamma_{p,q} f(q))(\nabla_{\boldsymbol{\partial}} K_n(p,\cdot))| \ ,
\end{equation}
see Equation~(\ref{two terms to bound}). 
To bound \eqref{some term}, we proceed as in (\ref{something to reuse}) but using Lemma~\ref{local reconstruction theorem} to obtain
$$\sum_{d(p,q)\leq 2^{-n} \leq \frac{|p|_{\fraks_0}\wedge |q|_{\fraks_0}}{2 }}|(\Pi_q f(q)-\mathcal{R}f\big)(K^{\boldsymbol{\partial},\gamma}_{n,p,q})|
\lesssim (|p|_{\fraks_0}\wedge |q|_{\fraks_0})^{\eta-\gamma} (d(p,q)^{\gamma+\beta -k} +d(p,q)^{\rho-k})\ $$
for $\rho>\gamma+\beta$. 
For the second term \eqref{some term2} one obtains the desired bound directly.
We turn to the last case $2^{-n}\geq \frac{1}{2} |p|_{\fraks_0}\wedge |q|_{\fraks_0}$. Here we proceed as in the previous step, except for the term (\ref{some term}), which we estimate as
$$\big| \big(\Pi_q f(q)-\mathcal{R}f\big)(K^{\boldsymbol{\partial},\gamma}_{n,p,q})\big|\leq \big| \Pi_q f(q)(K^{\boldsymbol{\partial},\gamma}_{n,p,q})\big|+\big| \mathcal{R}f (K^{\boldsymbol{\partial},\gamma}_{n,p,q})\big|\ .$$
Lemma~\ref{technical Lemma for Kernels} implies that
\begin{align*}
\big| \Pi_q f(q)(K^{\boldsymbol{\partial},\gamma}_{n,p,q})\big|&\lesssim \sum_{\tilde{l}\in \delta A_{\gamma}} 2^{(|\tilde{l}|_\fraks+\deg(\boldsymbol{\partial})-\eta\wedge\alpha-\beta)n } d(p,q)^{|\tilde{l}|_\fraks} \\
&\quad +   \sum_{m<\gamma+\beta} d_\fraks(p,q)^{\rho- \deg(\boldsymbol{\partial})} 2^{-n(\eta\wedge\alpha+\beta-m)} \ .
\end{align*}
For $\tilde{l}\in \delta A_{\gamma}$ and $\deg(\boldsymbol{\partial})=k$ one finds
\begin{align*}
\sum_{n\ :\  2^{-n}\geq \frac{1}{2} |p|_{\fraks_0}\wedge |q|_{\fraks_0}} &2^{(|\tilde{l}|_\fraks+k-\eta\wedge\alpha-\beta)n } d_\fraks(p,q)^{|\tilde{l}|_\fraks} \\
&\lesssim (|p|_{\fraks_0}\wedge |q|_{\fraks_0})^{-(|\tilde{l}|_\fraks+k-\eta\wedge\alpha-\beta)}d(p,q)^{|\tilde{l}|_\fraks} \\
&=(|p|_{\fraks_0}\wedge |q|_{\fraks_0})^{(\eta\wedge\alpha)+\beta-k} \big(\frac{d_\fraks(p,q)}{|p|_{\fraks_0}\wedge |q|_{\fraks_0}} \big)^{|\tilde{l}|_\fraks}\\
&\lesssim(|p|_{\fraks_0}\wedge |q|_{\fraks_0})^{(\eta\wedge\alpha)+\beta-k} \big(\frac{d_\fraks(p,q)}{|p|_{\fraks_0}\wedge |q|_{\fraks_0}} \big)^{\gamma+\beta-k}\\
&=(|p|_{\fraks_0}\wedge |q|_{\fraks_0})^{\eta\wedge\alpha-\gamma} d_\fraks(p,q)^{\gamma+\beta-k}\ ,
\end{align*}
where in the last inequality we used that $d_\fraks(p,q)\leq |p|_{\fraks_0}\wedge |q|_{\fraks_0}$ and $|\tilde{l}|_{\fraks}>\gamma+\beta-k$.
For $m<\gamma+\beta$ and $\rho> \gamma+\beta$ one has
\begin{align*}
\sum_{n: 2^{-n}\geq \frac{1}{2} |p|_{\fraks_0}\wedge |q|_{\fraks_0}} d_\fraks(p,q)^{\rho- k} 2^{-n(\eta\wedge\alpha+\beta-m)}
\lesssim d_\fraks(p,q)^{\gamma+\beta- k}(|p|_{\fraks_0}\wedge |q|_{\fraks_0})^{(\eta\wedge\alpha-\gamma)} 
\end{align*}
and thus for $\deg(\boldsymbol{\partial})=k$
$$\sum_{n: 2^{-n}\geq \frac{1}{2} |p|_{\fraks_0}\wedge |q|_{\fraks_0}}\big| \Pi_q f(q)(K^{\boldsymbol{\partial},\gamma}_{n,p,q})\big|\lesssim  (|p|_{\fraks_0}\wedge |q|_{\fraks_0})^{\eta\wedge\alpha-\gamma} d(p,q)^{\gamma+\beta-k} \ .$$
The same way one bounds $\big| \mathcal{R}f (K^{\boldsymbol{\partial},\gamma}_{n,p,q})\big|$ and therefore concludes that
$$\sum_{n: 2^{-n}\geq \frac{1}{2} |p|_{\fraks_0}\wedge |q|_{\fraks_0}} \big| \big(\Pi_q f(q)-\mathcal{R}f\big)(K^{l,\gamma}_{n,p,q})\big|\lesssim (|p|_{\fraks_0}\wedge |q|_{\fraks_0})^{\eta\wedge\alpha-\gamma} d(p,q)^{\gamma+\beta-k} \ .$$
\end{proof}

\section{Solutions to Semi-linear Equations on Manifolds}\label{section solution to semilinear...}
In this section we provide the remaining ingredients in order to generalise the fixed point theorem in \cite[Section~7.3]{Hai14} to the geometric setting of this article. From now on we work under the following assumption.
\begin{assumption}
We assume that the manifold $\bar{M}$ is compact.
\end{assumption} 
Since the equations we shall consider will always be in integral form, we proceed by working with kernels. A discussion about the existence of such kernels for heat type differential operators can be found in Section~\ref{section:excoursion}.

\begin{assumption}\label{Assumption kernel decomposition}
Let $G\in \cC^\infty(F\hotimes E^* \setminus \pi^{-1}\triangle)$ be an integral kernel and $\beta > 0$. We assume 
there exist 
$K\in \cC^\infty(F\hotimes E^* \setminus \pi^{-1}\triangle)$ and $K_{-1}\in \cC^\infty(F\hotimes E^*)$ such that
$G=K+ K_{-1}$ 
and the following properties are satisfied.
\begin{itemize}
\item $K$ satisfies Assumption~\ref{Assumption on Kernel} for $\beta$,
\item both, $K$ and $K_{-1}$ are non-anticipative, i.e.\ 
$$t<s \Rightarrow K((t,p), (s,q)) = 0 \ \text{ and } K_{-1}((t,p), (s,q)) = 0 \text{ for all } p,q\in M\ . $$
\end{itemize}
\end{assumption}

\subsection{Short-time behaviour}
We define $M_T= [-1, T]\times M$ and as in \cite[Section 7.1]{Hai14} the operator $$\mathbf{R}^+:\mathcal{D}^{\gamma,{\eta} }_\mathfrak{t}\to \mathcal{D}^{\gamma,{\eta} }_\mathfrak{t}$$
given by multiplication with the indicator function of $\mathbb{R}_+$. 
The following is the analogue of \cite[Theorem 7.1]{Hai14} in our setting.
\begin{theorem}\label{theorem short time}
Let $K$ be a Kernel satisfying Assumption~\ref{Assumption kernel decomposition} for some $\beta>0$. Assume furthermore that the regularity structure $\mathcal{T}$ comes with an integration map $I$ of order $\beta$ and precision $\delta_0$ acting on a sector $V$ of regularity $\alpha>-\mathfrak{s}_0$ and assume that the models $Z=(\Pi,\Gamma)$ and $\bar{Z}=(\bar\Pi,\bar\Gamma)$ realise $K$ for $I$ on $V$.
Let $\gamma<\delta_0-\beta$  

Then, for every $T\in (0,1]$ uniformly over models $Z, \bar{Z}$ realising $K$ for $I$ on $V$ and $f\in \mathcal{D}^{\gamma,\eta}(V)$, 
 $\bar{f}\in \bar{\mathcal{D}}^{\gamma,\eta}(V)$ satisfying 
 $\interleave  f\interleave_{\gamma,\eta ; M_T}, \interleave  \bar f\interleave_{\gamma,\eta; M_T} \leq C$ :
\begin{equation}\label{equatin short 1}
\interleave  \mathcal{K}_\gamma  \mathbf{R}^+ f\interleave_{{\gamma+\beta,\bar{\eta} } ;M_T }
\lesssim_C T^{\frac{\kappa}{\mathfrak{s}_0}} \interleave  f\interleave_{\gamma,\eta ; M_T}
\end{equation}
\begin{equation}\label{equation short 2}
\interleave  \mathcal{K}_\gamma \mathbf{R}^+ f; \bar{\mathcal{K}}_\gamma \mathbf{R}^+ \bar{f} \interleave_{\gamma+\beta,\bar{\eta} ;M_T}
\lesssim_C T^{\frac{\kappa}{\mathfrak{s}_0}} \big(\interleave  f; \bar{f}\interleave_{\gamma,\eta ; M_T}+\|Z, \bar Z\|_{M_{T+1}}\big),
\end{equation}
where $\bar{\eta}=(\eta\wedge\alpha) +\beta -\kappa$ and $C>0$ depends on $\|Z\|_{M_{T+1}}, \|\bar{Z}\|_{M_{T+1}}\leq C$ in the first inequality and additionally $ \interleave  f\interleave_{\gamma,\eta ; M_T}$, $ \interleave  \bar{f}\interleave_{\gamma,\eta ; M_T}\leq C$ in the second inequality.
\end{theorem}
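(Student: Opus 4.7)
\textbf{Step 1: Stability of $\mathbf{R}^+$ and the ``natural'' Schauder bound.} I will first show that multiplication by $\mathbf{R}^+$ maps $\mathcal{D}^{\gamma,\eta}_\mathfrak{t}(V)$ continuously into itself with norm~$1$: the defining condition $d_{\fraks}(z_1,z_2)\leq |z_1|_{\mathfrak{s}_0}\wedge |z_2|_{\mathfrak{s}_0}$ of $K_\mathfrak{t}$ forces both $t_i$ to have the same sign, so the seminorms defining $\mathcal{D}^{\gamma,\eta}_\mathfrak{t}$ are insensitive to the indicator of $\mathbb{R}_+$. Proposition~\ref{schauder for singular} applied to $\mathbf{R}^+ f$ then yields $g := \mathcal{K}_\gamma \mathbf{R}^+ f \in \mathcal{D}^{\gamma+\beta,A}_\mathfrak{t}$ with $A := (\eta\wedge\alpha)+\beta$, together with the corresponding continuity bounds, and similarly for $\bar g := \bar{\mathcal{K}}_\gamma \mathbf{R}^+ \bar f$ and the difference $g-\bar g$, picking up the $\|Z;\bar Z\|_{M_{T+1}}$ term needed on the right-hand side of~\eqref{equation short 2}.

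\textbf{Step 2: Reduction to a time-gain inequality.} By non-anticipativity of $K$ (Assumption~\ref{Assumption kernel decomposition}), both $g$ and $\bar g$ vanish for $t<0$. The theorem then reduces to the abstract estimate
$$\interleave h \interleave_{\gamma+\beta,A-\kappa; M_T} \lesssim T^{\kappa/\mathfrak{s}_0}\, \interleave h \interleave_{\gamma+\beta,A; M_T}$$
for any $h\in \mathcal{D}^{\gamma+\beta,A}_\mathfrak{t}(T^\mathfrak{f})$ supported in $\{t\geq 0\}$ and satisfying the continuity hypothesis of Lemma~\ref{two norms lemma exact case}, to be applied to $h = g$, $h = \bar g$, and $h = g-\bar g$ in order to deduce~\eqref{equatin short 1} and~\eqref{equation short 2}. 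The continuity hypothesis will be verified using that $h$ vanishes for $t<0$ together with the $\beta$-regularisation provided by $K$ via Assumption~\ref{Assumption on Kernel} -- together these force the extension of $h$ across $\{t=0\}$ to be continuous with $h|_{t=0}=0$ and an associated H\"older modulus $|h(t,p)|_\alpha \lesssim |t|^{\epsilon}$ for $\alpha<A$ and some $\epsilon>0$.

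\textbf{Step 3: Time-gain via Lemma~\ref{two norms lemma exact case}, and main obstacle.} The H\"older part of $\interleave h \interleave_{\gamma+\beta,A-\kappa; M_T}$ scales as $(|z_1|_{\mathfrak{s}_0}\wedge|z_2|_{\mathfrak{s}_0})^{A-\kappa-\gamma-\beta}$ against $(|z_1|_{\mathfrak{s}_0}\wedge|z_2|_{\mathfrak{s}_0})^{A-\gamma-\beta}$ for the $A$-norm, immediately producing the desired factor $T^{\kappa/\mathfrak{s}_0}$. The pointwise part of the seminorm is the main obstacle: the raw $\mathcal{D}^{\gamma+\beta,A}$ bound supplies only $|h(t,p)|_\zeta \lesssim |t|_{\mathfrak{s}_0}^{(A-\zeta)\wedge 0}$, which is trivial for $\zeta<A$ and yields no gain in that range. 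To overcome this I will invoke Lemma~\ref{two norms lemma exact case} in order to \emph{upgrade} the bound to $|h(t,p)|_\zeta \lesssim |t|_{\mathfrak{s}_0}^{A-\zeta}$ for \emph{all} $\zeta<\gamma+\beta$; a short case analysis on the sign of $A-\kappa-\zeta$ then shows $|t|_{\mathfrak{s}_0}^{A-\zeta}/|t|_{\mathfrak{s}_0}^{(A-\kappa-\zeta)\wedge 0} \leq |t|_{\mathfrak{s}_0}^{\kappa} \leq T^{\kappa/\mathfrak{s}_0}$ in every case. The principal technical subtlety -- and the place where non-anticipativity of $K$ is crucial -- is therefore the verification of the continuity hypothesis of Lemma~\ref{two norms lemma exact case} at Step~2, which is what unlocks the upgraded pointwise bound and thereby the entire short-time estimate.
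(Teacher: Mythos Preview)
Your proposal is correct and follows essentially the same route as the paper: apply the singular Schauder estimate (Proposition~\ref{schauder for singular}) to land in $\mathcal{D}^{\gamma+\beta,(\eta\wedge\alpha)+\beta}_\mathfrak{t}$, use non-anticipativity plus the $\beta$-regularising effect of $K$ to verify the continuity hypothesis of Lemma~\ref{two norms lemma exact case}, upgrade to the $\talloblong\cdot\talloblong$ bound, and read off the factor $T^{\kappa/\mathfrak{s}_0}$ by comparing the weights. One small slip: in Step~2 you write ``apply to $h=g-\bar g$'', but $g-\bar g$ is not a modelled distribution in a single model space, so your abstract inequality between $\interleave\cdot\interleave$ norms does not literally make sense for it; the difference case goes through the \emph{second} bound of Lemma~\ref{two norms lemma exact case} (which controls $\talloblong g-\bar g\talloblong$ by $\interleave g;\bar g\interleave+\|\Gamma-\bar\Gamma\|$) rather than by reducing to the single-function case.
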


\begin{proof}
For $\zeta\in A$ such that $\zeta\geq (\alpha\wedge \eta)+\beta$ the estimates on $Q_{\zeta}(\mathcal{K}_\gamma \mathbf{R}^+ f)$ follow as in the proof of \cite[Theorem 7.1]{Hai14} using Proposition~\ref{schauder for singular}, the local version of the Reconstruction theorem in Proposition~\ref{local reconstruction theorem} and non-anticipativity of the kernel, together with the definition of the semi-norm $\interleave\cdot \interleave_{\mathcal{D}^{\gamma,\eta}_\mathfrak{t}}$ .

Thus, it remains to show the estimate for $k\in \mathbb{N}$ such that $k<(\alpha\wedge \eta)+\beta$. Observes that
$$ Q_k (\mathcal{K}_\gamma  \mathbf{R}^+ f)(z)= \sum_n Q_k j_z K_n(\mathcal{R} \mathbf{R}^+ f) ,$$
which follows by unravelling the definition of $J_z (\mathbf{R}^+ f)$ and $\mathcal{N}(\mathbf{R}^+ f)(z)$.
Since the operator 
$$ K: \mathcal{C}^{\alpha\wedge\eta} \to \mathcal{C}^{(\alpha\wedge\eta)+\beta} , \qquad F\mapsto K F= \sum_n K_n(F)$$
 is well defined and since $\mathcal{K}_\gamma  \mathbf{R}^+ f(t,p)=0$ for $t<0$ (as $K$ is assumed non-anticipative), one observes that $\mathcal{K}_\gamma  \mathbf{R}^+ f$ satisfies the assumptions of Lemma~\ref{two norms lemma exact case} for $\epsilon \in (0,(\alpha\wedge\eta)+\beta )$. Therefore 
$$\talloblong \mathcal{K}_\gamma \mathbf{R}^+ f \talloblong_{{\gamma,(\eta\wedge\alpha) +\beta} ; M_T}\lesssim \interleave \mathcal{K}_\gamma \mathbf{R}^+ f \interleave_{{\gamma,(\eta\wedge\alpha) +\beta} ; M_T} ,$$
which together with Proposition~\ref{schauder for singular} implies (\ref{equatin short 1}) . One obtains (\ref{equation short 2}) in a similar way.
\end{proof}
In Assumption~\ref{Assumption kernel decomposition} we decomposed $G= K+K_{-1}$, where $K_{-1}$ is a smooth kernel. 
For any $\alpha,\gamma >0$ and $\eta>-\fraks_{0}$, we extend $K_{-1}$ to an operator on modelled distributions $$\mathcal{K}_{-1} :\mathcal{D}^{\alpha, \eta}\mapsto \mathcal{D}^{\gamma}(\bar{T}^{\mathfrak{f}}) \ \text{ as } \ f\mapsto Q_{<\gamma} j K_{-1}(\mathcal{R}f) \ .$$
The next lemma follows as \cite[Lemma 7.3]{Hai14}.
\begin{lemma}
Let $K_{-1}$ be as in Assumption~\ref{Assumption kernel decomposition}, then in the setting of Theorem~\ref{theorem short time} the following estimates hold
\begin{equation}
\interleave  \mathcal{K}_{-1}  \mathbf{R}^+ f\interleave_{\mathcal{D}^{\gamma+\beta,\bar{\eta} }_\mathfrak{t}(M_T)}
\lesssim_C T\interleave  f\interleave_{\mathcal{D}^{\gamma,\eta}_\mathfrak{t}(M_T)}
\end{equation}
\begin{equation}
\interleave  \mathcal{K}_{-1} \mathbf{R}^+ f; \mathcal{K}_{-1} \mathbf{R}^+ \bar{f} \interleave_{\mathcal{D}^{\gamma+\beta,\bar{\eta} }_\mathfrak{t}(M_T)}
\lesssim_C T \big(\interleave  f; \bar{f}\interleave_{\mathcal{D}^{\gamma,\eta}_\mathfrak{t}(M_T)} +\|Z, \bar Z\|_{M_{T+1}}\big) \ .
\end{equation}
\end{lemma}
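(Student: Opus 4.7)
The plan is to reduce everything to classical estimates on the smooth function $F := K_{-1}(\mathcal{R}\mathbf{R}^+ f)$. By definition $\mathcal{K}_{-1}\mathbf{R}^+ f = Q_{<\gamma+\beta}\, j F$ takes values in the jet sector $\bar T^{\mathfrak{f}}$ on which the model acts as a jet model, so by Lemma~\ref{bound on admissable realisations} together with the characterisation given in Lemma~\ref{lemma equivalent characterisations of holder}, all $\mathcal{D}^{\gamma+\beta,\bar\eta}_\mathfrak{t}(M_T)$-bounds on $\mathcal{K}_{-1}\mathbf{R}^+ f$ translate into classical $\mathcal{C}^{\gamma+\beta}$-bounds on $F$. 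The factor of $T$ will come from the time-support of $\mathbf{R}^+ f$.

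First I would invoke Proposition~\ref{reconstruction for singular modelled distribution} to obtain $\mathcal{R}\mathbf{R}^+ f \in \mathcal{C}^{\alpha\wedge\eta}$ with norm controlled by $\interleave f\interleave_{\gamma,\eta;M_T}$ and $\|Z\|_{M_{T+1}}$. Non-anticipativity of $K_{-1}$ combined with $\mathbf{R}^+$ implies
\begin{equation*}
F(z) = \int_{0}^{t\vee 0} \bigl\langle (\mathcal{R}\mathbf{R}^+ f)(s,\cdot),\, K_{-1}(z,(s,\cdot)) \bigr\rangle\, ds, \qquad z = (t,p) \in M_T,
\end{equation*}
which involves a time integral of length at most $T$. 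Since $K_{-1}$ is globally smooth, covariant differentiation in $z$ commutes with the integration, and for each fixed $s$ the test function $K_{-1}(z,(s,\cdot))$ and its $z$-derivatives have $\mathcal{C}^k$-seminorms bounded uniformly in $(z,s)$ on compacts of $M_T$; hence for any integer $k < \gamma+\beta$, in view of Remark~\ref{norm remark},
\begin{equation*}
|j_z F|_k \;\lesssim\; T\,\|\mathcal{R}\mathbf{R}^+ f\|_{\mathcal{C}^{\alpha\wedge\eta}(M_T)} \;\lesssim\; T\,\interleave f\interleave_{\gamma,\eta;M_T}.
\end{equation*}
The Taylor-remainder bound on $F$ at order $\gamma+\beta$ needed to control the Hölder part of $\interleave\cdot\interleave_{\gamma+\beta,\bar\eta;M_T}$ follows from Lemma~\ref{pointwise bound} applied to $F$ together with the analogous $T$-bound on its derivatives of order $\lfloor \gamma+\beta\rfloor + 1$, obtained in exactly the same way.

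For the weighted part of the singular semi-norm, the weight $(|t|_{\fraks_0})^{\bar\eta - k}$ for integer $k > \bar\eta$ is handled for free: $F$ vanishes identically on $\{t \leq 0\}$ and is smooth in $t$, so in fact $|j_z F|_k = O(t^n)$ for any $n$, which is far better than the weight demands. The two-model estimate is identical after replacing $F$ by $F - \bar F = K_{-1}(\mathcal{R}\mathbf{R}^+ f - \bar{\mathcal{R}}\mathbf{R}^+ \bar f)$, whose $\mathcal{C}^{\alpha\wedge\eta}$-norm is controlled by $\interleave f;\bar f\interleave_{\gamma,\eta;M_T} + \|Z;\bar Z\|_{M_{T+1}}$ via the difference bound~\eqref{reconstruction for singular modelled dist}. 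No serious obstacle is expected: the whole point of splitting $G = K + K_{-1}$ in Assumption~\ref{Assumption kernel decomposition} was that $K_{-1}$ is globally smooth, so this lemma is essentially bookkeeping around the reconstruction bound and the time-support argument.
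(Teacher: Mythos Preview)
Your approach is correct and matches what the paper intends (the paper simply says the result ``follows as \cite[Lemma~7.3]{Hai14}'' without giving details). You correctly identify that $\mathcal{K}_{-1}\mathbf{R}^+ f = Q_{<\gamma+\beta}\, j F$ lives in the jet sector, that $F$ is globally smooth, and that it vanishes on $\{t\leq 0\}$ by non-anticipativity.

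One point worth tightening: your displayed formula for $F$ as an iterated integral $\int_0^{t\vee 0}\langle (\mathcal{R}\mathbf{R}^+ f)(s,\cdot),\ldots\rangle\,ds$ implicitly treats the space-time distribution $\mathcal{R}\mathbf{R}^+ f\in\mathcal{C}^{\alpha\wedge\eta}$ as a function of $s$ with values in spatial distributions, which is not automatic for negative regularity. A cleaner way to extract the factor $T$ avoids this: you already observe that $F$ is smooth everywhere (since $K_{-1}(z,\cdot)$ depends smoothly on $z$) and vanishes identically for $t<0$; hence by continuity every derivative of $F$ vanishes at $t=0$, and the mean value theorem in the time variable gives $|j_{(t,p)} F|_k \leq t\cdot \sup_{M_T}|\partial_t D^k F|\lesssim T\,\|v\|_{\mathcal{C}^{\alpha\wedge\eta}}$ for any $k$. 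This is essentially your ``$O(t^n)$'' observation with $n=1$, and it delivers both the $T$ factor and the Taylor-remainder control for the H\"older part in one stroke, without needing to disintegrate the distribution in time.
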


\subsection{Initial condition}
In order to treat initial condition, we denote by $\bar{E}$ the vector bundle over $\bar{M}$ obtained by restriction $E|_{\{0\}\times \bar{M}}$. 

\begin{lemma}
For $\eta>0$ such that $\beta+\eta\notin \mathbb{N}$ and
 $u_0\in \mathcal{C}^\eta(\bar{E})$ and $G\in \cC^\infty(F\hotimes E^* \setminus \pi^{-1}\triangle)$ satisfying Assumption~\ref{Assumption kernel decomposition} 
We set for $t\neq 0$ 
$$G(u_0)(t,p)= \int G((t,p),(0,q)) u_0(q) \, \dVol_q\ ,$$
where we used suggestive but informal notation if $\eta\leq 0$.
Then, for any $\gamma>(\eta+\beta-\fraks_0) \vee 0$ 
\begin{equation}\label{equation inition}
Q_{<\gamma}j_{(\cdot )} G(u_0)\in \mathcal{D}_\mathfrak{t}^{\gamma, (\eta+\beta- \mathfrak{s}_0)\wedge 0 }\ .
\end{equation}
\end{lemma}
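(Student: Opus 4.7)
The proof follows the strategy of \cite[Lemma~7.5]{Hai14}, suitably adapted to the geometric setting of this article. First I would use the decomposition $G = K + K_{-1}$ from Assumption~\ref{Assumption kernel decomposition}. The contribution of the smooth part $K_{-1}$ is easy: since $K_{-1}\in \cC^\infty(F\hotimes E^*)$ and $\bar{M}$ is compact, the function $K_{-1}(u_0)$ is smooth and bounded on $M_T$, and Lemma~\ref{lemma equivalent characterisations of holder} together with Proposition~\ref{proposition on polynomial model} shows that its jet lift lies in $\mathcal{D}_\mathfrak{t}^{\gamma,0}\subset \mathcal{D}_\mathfrak{t}^{\gamma, (\eta+\beta- \mathfrak{s}_0)\wedge 0}$. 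Therefore I would focus entirely on the singular part $K=\sum_{n\geq 0}K_n$.

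Fix $(t,p)\in M_T$ with $t>0$ and define $f(t,p):= Q_{<\gamma}j_{(t,p)}K(u_0)=\sum_n Q_{<\gamma}j_{(t,p)}K_n(u_0)$. By the support condition (Point~1 of Assumption~\ref{Assumption on Kernel}), $K_n((t,p),(0,q))=0$ unless $d_\fraks((t,p),(0,q))<2^{-n}$, which forces $2^{-n}\gtrsim t^{1/\mathfrak{s}_0}$, so only scales $n\leq n_t$ with $2^{-n_t}\sim t^{1/\mathfrak{s}_0}$ contribute. To estimate $|Q_k j_{(t,p)} K_n(u_0)|$ for $k<\gamma$ one would expand $u_0$ around a base point $p_0$ projecting $(t,p)$ to $\bar M$: write $u_0(q) = R\big(Q_{<\eta} j_{p_0} u_0\big)(q) + r(q)$ where $r\in\mathcal{C}^\eta$ and vanishes of order $\eta$ at $p_0$ (by Lemma~\ref{lemma equivalent characterisations of holder}). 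For the polynomial part, Point~\ref{item:integragration_against_polynomials} of Assumption~\ref{Assumption on Kernel} gives
\begin{equation*}
\big|Q_k j_{(t,p)}K_n\big(R(Q_{<\eta}j_{p_0}u_0)\big)\big|\lesssim 2^{n(k-\beta)}\|u_0\|_{\mathcal{C}^\eta},
\end{equation*}
while for the remainder $r$, using Point~\ref{item:upper_bound} of Assumption~\ref{Assumption on Kernel} together with the Hölder vanishing $|r(q)|\lesssim d(p_0,q)^\eta\lesssim 2^{-n\eta}$ on the support of $K_n((t,p),\cdot)$, one obtains the matching bound $2^{n(k-\beta-\eta)}\|u_0\|_{\mathcal{C}^\eta}$. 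Summing over $n\leq n_t$ yields $|f(t,p)|_k\lesssim t^{(\eta+\beta-k)/\mathfrak{s}_0}$ whenever $k>\eta+\beta$, and the bound is uniform in $(t,p)$ when $k<\eta+\beta$; in either case this gives $|f(t,p)|_k\lesssim |t|_{\mathfrak{s}_0}^{((\eta+\beta-\mathfrak{s}_0)\wedge 0-k)\wedge 0}$ as required, since $(\eta+\beta-\mathfrak{s}_0)\wedge 0\leq \eta+\beta$.

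For the re-expansion bound, given $(z_1,z_2)\in (M_T)_\mathfrak{t}$ with $z_i=(t_i,p_i)$, I would apply the same scale decomposition, splitting the sum over $n$ at the cut-off $2^{-n}\sim d_\fraks(z_1,z_2)$. For large scales $2^{-n}\geq d_\fraks(z_1,z_2)$ one uses that, since the jet model satisfies $\Gamma_{z_1,z_2}Q_{<\gamma}j_{z_2}h = Q_{<\gamma} j_{z_1} h$ for $h\in\mathcal{C}^\infty$ (by the very definition of the jet regularity structure, Definition~\ref{definition polynomial model}, applied to each $K_n(u_0)$), the increment $f(z_1)-\Gamma_{z_1,z_2}f(z_2)$ restricted to scale $n$ can be bounded using the same polynomial/remainder split for $u_0$ together with Lemma~\ref{easy crucial bound} and Remark~\ref{norm remark}. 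For small scales $2^{-n}<d_\fraks(z_1,z_2)$, one bounds the two pieces $f_n(z_1)$ and $\Gamma_{z_1,z_2}f_n(z_2)$ separately using the estimate established in the previous paragraph. In both regimes, summation over $n$ yields the required bound of order $d_\fraks(z_1,z_2)^{\gamma-\alpha}(|z_1|_{\mathfrak{s}_0}\wedge|z_2|_{\mathfrak{s}_0})^{(\eta+\beta-\mathfrak{s}_0)\wedge 0-\gamma}$ at level $\alpha<\gamma$, where the exponent $\gamma>(\eta+\beta-\mathfrak{s}_0)\vee 0$ ensures convergence of the geometric series.

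The main technical obstacle is the careful book-keeping required to handle the non-trivial grading on $JE$ coming from the connection (Section~\ref{section grading on jets}) and the fact that, in the presence of curvature, covariant derivatives do not commute, so the estimates of Remark~\ref{norm remark} must be invoked with care. However, since only finitely many scales contribute near any $(t,p)$ with $t>0$ and the bounds are all uniform, these geometric subtleties do not affect the final powers of $t$ and $d_\fraks$, and the conclusion $Q_{<\gamma}j_{(\cdot)}G(u_0)\in \mathcal{D}_\mathfrak{t}^{\gamma,(\eta+\beta-\mathfrak{s}_0)\wedge 0}$ follows.
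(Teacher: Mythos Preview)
Your overall strategy matches the paper's: split $G = K + K_{-1}$, decompose $K = \sum_n K_n$, and exploit the support constraint to truncate the sum at the scale $n_t$ with $2^{-n_t}\sim|t|_\fraks$. However, two steps in your argument do not go through as written.

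The more serious issue is your use of Point~\ref{item:integragration_against_polynomials} of Assumption~\ref{Assumption on Kernel} for the ``polynomial part'' of $u_0$. That condition bounds the integral $\int_M K_n^\star(p,\cdot)R(v_q)(p)\,\dVol_p$ over the \emph{full} space-time manifold $M$, with the jet realisation sitting on the $F$-side of the kernel; what you need is a bound on $\int_{\bar M} K_n((t,p),(0,q))\,R(v_{p_0})(q)\,\dVol_q$, an integral over the \emph{spatial slice} with the jet on the $E$-side. These are not the same object, and the assumption does not yield the bound you claim. (Your displayed exponent $2^{n(k-\beta)}$ is also missing an $\fraks_0$: the crude slice-integral bound coming from Point~\ref{item:upper_bound} and the spatial support volume $\sim 2^{-n|\bar\fraks|}$ gives $2^{n(k+\fraks_0-\beta)}$.) The paper's proof avoids this decomposition of $u_0$ altogether and simply asserts the per-scale bound $|K_n(u_0)(t,p)|\lesssim 2^{n(\fraks_0-\beta-\eta)}$, which one obtains by viewing $K_n((t,p),(0,\cdot))$ as $2^{n(\fraks_0-\beta)}$ times an element of $\mathcal B^{r,2^{-n}}_{p}(\bar E)$ and pairing with $u_0\in\mathcal C^\eta(\bar E)$.

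For the re-expansion bound the paper takes a more direct route than your scale-splitting: having established the pointwise derivative bounds $|j_{(t,p)}K(u_0)|_m\lesssim|t|_\fraks^{(\eta+\beta-\fraks_0-m)\wedge 0}$, it passes to an exponential $\fraks$-chart and invokes the Taylor estimate \cite[Proposition~A.1]{Hai14} (equivalently \cite[Corollary~69]{DDD19}). Your scale-splitting argument is reasonable in spirit but contains a false claim: the identity $\Gamma_{z_1,z_2}Q_{<\gamma}j_{z_2}h = Q_{<\gamma}j_{z_1}h$ does \emph{not} hold for the jet model of Definition~\ref{definition polynomial model}, where one only has $\Gamma_{z_1,z_2}v_{z_2}=Q_{<\delta_0}j_{z_1}R(v_{z_2})$, which agrees with $Q_{<\gamma}j_{z_1}h$ only up to an error of order $d_\fraks(z_1,z_2)^{\delta_0}$. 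This defect is harmless since $\gamma<\delta_0$, but the identity as stated is incorrect.
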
 

\begin{proof}
Observe that by Assumption $G=K+K_{-1}$, where $K_{-1}$ is a smooth kernel and $K= \sum_n K_n$ satisfies the properties in Assumption~\ref{Assumption on Kernel}. Thus it is easy to see that $G(u_0)$ is smooth away from the origin and quantitative bounds on $K_{-1}(u_0)$ are obvious.
Thus we focus on the term $K(u_0)$.
For $t>0$ let $n_t\in \mathbb{N}$ be such that $|t|_\fraks\in (2^{-n_t-1}, 2^{-n_t})$, then due to the support constraints on $K_n$
$$ K(u_0) =\sum_{n=0}^\infty v (K_n((t,p)(0,\cdot)) = \sum_{n=0}^{n_t} v (K_n((t,p)(0,\cdot)) $$ and thus since $\eta+\beta \notin \mathbb{N}$ and $\fraks_0\in \mathbb{N}$
$$|K(u_0)(t,p) |\lesssim \sum_{n=0}^{n_t} 2^{-n\eta+n(\fraks_0-\beta)} = \sum_{n=0}^{n_t} 2^{n(\fraks_0-\beta-\eta)}\lesssim |t|_\fraks^{(\eta +\beta-\fraks_0)\wedge 0} \ .$$
The bound 
$|j_{(t,p)} K(u_0)|_m\lesssim |t|_\fraks^{(\eta +\beta-\fraks_0-m)\wedge 0}$ follows similarly.  
This establishes the bound $\| G(u_0) \|_{\gamma, \eta+\beta-\mathfrak{s}_0; M_T}<+\infty$.

The bound $\interleave G(u_0) \interleave _{\gamma, \eta+\beta-\mathfrak{s}_0; M_T}<+\infty$ follows by going to an exponential $\fraks$-chart and applying the Taylor estimate \cite[Proposition A.1]{Hai14} or equivalently applying \cite[Corollary~69]{DDD19} (or strictly speaking the slight extension thereof to the setting of non-trivial scalings.) 
\end{proof}
\begin{remark}
In applications kernels usually satisfy $\beta=\fraks_0$, see for example Section~\ref{section:excoursion} as well as \cite[Section~7.2]{Hai14}. 
\end{remark}

\subsection{Fixed point theorem}
Finally, we have generalised all necessary results of \cite{Hai14} in order to extend the general fixed point theorem \cite[Theorem 7.8]{Hai14} to our geometric setting. 
The proof therein adapts \textit{mutatis mutandis}. 

\begin{theorem}\label{thm:abstract_fixed point}
Let $V, \bar{V}$ be two sectors of regularity $\zeta, \bar{\zeta}$ and precision $\delta_0$ of a regularity structure ensemble $\mathcal{T}= (\{E^\mathfrak{t}\}_{\mathfrak{t}\in \mathfrak{L}},\{T^\mathfrak{t}\}_{\mathfrak{t}\in \mathfrak{L}}, L )$ and let 
$F: V \to \bar{V}$ be a local non-linearity such that for $f\in \mathcal{D}^{\gamma,\eta}(V)$ one has $F(f)\in\mathcal{D}^{\bar{\gamma},\bar{\eta}}(\bar{V})$.
Assume further that we are given a $\beta$ regularising abstract integration map $I:V\to \bar{V}$ of precision $\delta_0>0$.

If $\eta<(\bar{\eta}\wedge\bar{\zeta})+\beta$, $\gamma<\bar{\gamma}+\beta$, $(\bar{\eta}\wedge\bar{\zeta})>-\beta$ and $\delta_0>\gamma$ as well as $F$ is strongly locally Lipschitz, then for every $v\in \mathcal{D}^{\gamma,\eta}$ and every model $Z=(\Pi,\Gamma)$ for $\mathcal{T}$ that realises $K$ for $I$, there exists $T>0$ such that the fixed point equation
$$U= (\mathcal{K}+ \mathcal{K}_{-1} ) \mathbf{R}^+ F(U) + v ,$$
admits a unique solutions $U\in \mathcal{D}^{\gamma,\eta}$ on $M_T$. Furthermore the solution map 
$(v,Z)\mapsto U$ is locally jointly Lipschitz continuous. 
\end{theorem}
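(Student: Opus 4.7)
The plan is to set up a standard Banach fixed point argument on the ball of radius $R$ centred at $v$ in $\mathcal{D}^{\gamma,\eta}(V)$ restricted to $M_T$, exploiting the $T^{\kappa/\mathfrak{s}_0}$ gain provided by Theorem~\ref{theorem short time} to make the solution map into a strict contraction. Define
\[
\mathcal{M}_{T,v,Z}(U) := (\mathcal{K}_{\bar\gamma} + \mathcal{K}_{-1})\mathbf{R}^+ F(U) + v\;,
\]
where $\mathcal{K}_{\bar\gamma}$ is the lift of $K$ constructed in Section~\ref{Section Singular Kernels}. The conditions $\bar\gamma + \beta > \gamma$, $(\bar\eta\wedge\bar\zeta)+\beta > \eta$ and $(\bar\eta\wedge\bar\zeta) > -\beta$ ensure, via Proposition~\ref{schauder for singular} and the reconstruction theorem for singular modelled distributions (Proposition~\ref{reconstruction for singular modelled distribution}), that $\mathcal{K}_{\bar\gamma}\mathbf{R}^+ F(U)$ lies in $\mathcal{D}^{\bar\gamma+\beta,(\bar\eta\wedge\bar\zeta)+\beta}$, which embeds into $\mathcal{D}^{\gamma,\eta}$ on $M_T$. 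The hypothesis $\delta_0 > \gamma$ makes the abstract integration map $I$ usable at the target regularity.

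First I would verify self-mapping: for $U$ in the ball $B_R(v)$ of radius $R$ in $\mathcal{D}^{\gamma,\eta}(V)$ on $M_T$, write
\[
\interleave \mathcal{M}_{T,v,Z}(U) - v\interleave_{\gamma,\eta;M_T} \lesssim T^{\kappa/\mathfrak{s}_0}\interleave F(U)\interleave_{\bar\gamma,\bar\eta;M_T}
\]
using \eqref{equatin short 1} with the positive exponent $\kappa = \min\bigl(\bar\gamma+\beta-\gamma,\,(\bar\eta\wedge\bar\zeta)+\beta-\eta\bigr)$, together with the analogous (even better) bound for $\mathcal{K}_{-1}$. Strong local Lipschitz continuity of $F$ bounds $\interleave F(U)\interleave_{\bar\gamma,\bar\eta;M_T}$ in terms of $\|Z\|$ and $\interleave U\interleave_{\gamma,\eta;M_T}\le R+\interleave v\interleave_{\gamma,\eta;M_T}$. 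Choosing $R$ of order $\interleave v\interleave$ and then $T$ sufficiently small (depending on $R$, $\|Z\|$, and the local Lipschitz constants of $F$) makes $\mathcal{M}_{T,v,Z}$ map $B_R(v)$ into itself.

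Next I would establish the contraction property. For $U_1,U_2\in B_R(v)$, applying \eqref{equation short 2} to $F(U_1)-F(U_2)$ (with the zero-difference of models) together with the strong local Lipschitz estimate for $F$ yields
\[
\interleave \mathcal{M}_{T,v,Z}(U_1);\mathcal{M}_{T,v,Z}(U_2)\interleave_{\gamma,\eta;M_T}
\lesssim T^{\kappa/\mathfrak{s}_0}\,\interleave U_1;U_2\interleave_{\gamma,\eta;M_T}\;.
\]
Shrinking $T$ further makes the prefactor strictly less than $1$, so Banach's fixed point theorem gives a unique $U$ in the ball. A short separate argument, based on iterating the contraction estimate on a possibly shorter interval, promotes uniqueness in the ball to uniqueness in all of $\mathcal{D}^{\gamma,\eta}(V)$ on $M_T$.

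For the joint Lipschitz continuity of $(v,Z)\mapsto U$, feed the two-model versions of \eqref{equation short 2}, of the product and composition estimates from Lemma~\ref{lemma product singular} and Theorem~\ref{theorem composition singular}, and of the strong Lipschitz hypothesis on $F$ into the identity $U - \bar U = \mathcal{M}_{T,v,Z}(U) - \mathcal{M}_{T,\bar v,\bar Z}(\bar U)$ and solve; a second application of the contraction gain $T^{\kappa/\mathfrak{s}_0}<\tfrac12$ absorbs the $\interleave U;\bar U\interleave$ term into the left-hand side. The main subtlety (what I expect to be the only real obstacle, and the reason for invoking the strong Lipschitz hypothesis on $F$ and working with regularity structure ensembles whose $\Gamma$'s are not lower triangular) is that when comparing two models the composition of $F$ with $U$ introduces a $\|Z;\bar Z\|$ contribution not present in the one-model estimates; absorbing this cleanly requires pairing each modelled distribution with its model throughout the fixed point iteration, exactly as in \cite[Thm.~7.8]{Hai14}, whose bookkeeping carries over verbatim since Remark~\ref{rem:upgrade_trick} already ensures the requisite two-model bounds hold in the present geometric setting.
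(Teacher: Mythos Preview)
Your proposal is correct and takes precisely the approach the paper indicates: the paper does not give a proof at all, merely stating that ``the proof [in \cite{Hai14}, Theorem~7.8] adapts \textit{mutatis mutandis}'' once the geometric analogues of the Schauder estimate (Proposition~\ref{schauder for singular}), the short-time gain (Theorem~\ref{theorem short time}), and the smooth-kernel bound for $\mathcal{K}_{-1}$ have been established. Your sketch is exactly that adaptation, with the correct ingredients invoked at each step.
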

\begin{remark}
Note that the assumption $\bar{\eta}\wedge\bar{\zeta}>-\beta$ is imposed in order to apply Proposition~\ref{schauder for singular}. This can sometimes be relaxed by splitting the solution into two parts $U= \tilde{U}+(\mathcal{K}+ \mathcal{K}_{-1} )\tilde{v}$, where $\tilde{v}\in \mathcal{D}^\infty$ and studying the fixed point problem
$$\tilde{U}=  (\mathcal{K}+ \mathcal{K}_{-1} ) \mathbf{R}^+ F( \tilde{U}+(\mathcal{K}+ \mathcal{K}_{-1} )\tilde{v} ) +(v -(\mathcal{K}+ \mathcal{K}_{-1} )\tilde{v}) \ .$$
This can be seen as applying a lifted version of the ``Da Prato Debussche'' trick, c.f.\ \cite[Rem.~7.9]{Hai14} as well as \cite[Section~5.5]{BCCH20}.
\end{remark}

\section{Excursion on Differential Operators, Greens Functions and Heat Kernels}\label{section:excoursion}
Recall that in this article we are concerned with developing a solution theory for singular SPDEs, typically\footnote{Parabolic systems, c.f.\ \cite{Fri08}, also fall within the scope of the theory developed here.} of the form 
\begin{equation}
\partial_t u + \mathcal{L}u = \sum_{i=0}^m G_i(u, \nabla u,\ldots, \nabla^n u)\xi_i\ ,
\end{equation}
see \eqref{equation to solve}. In this section, we present a (non-exhaustive) class of differential operators $\mathcal{L}\in \mathfrak{Dif}_{2k}(E,E)$ to which the theory applies. 
In particular we recall what it means for $\mathcal{L}$ to be uniformly elliptic. 
\subsection{Trivial scaling}
First, let us assume that the scaling on $\bar{M}$ is the trivial scaling $\bar{s}=(1)$, c.f.\ Definition~\ref{def scaling on manifold}. 
We denote by $\pi_{T^*M}: T^*M\to M$ the canonical projection.
The \textit{symbol} of a $k$-th order differential operator $\mathcal{A}: \mathcal{C}^\infty (E)\to \mathcal{C}^\infty (F)$ is a section 
$ \sigma_k(\mathcal{A}) $ of the bundle $\pi_{T^*M}^*(L(E,F))$
determined as follows. For $v_p\in T_p^*M$ and $e_p\in E|_p$ let $g\in \mathcal{C}^\infty (M)$ be such that 
$f(p)=0$ and $v_p= dg(p)$ let $e\in  \mathcal{C}^\infty (E)$ be a section $e(p)=e_p$, then,  
$$\sigma_k(\mathcal{A})(v_p)(e_p):= \frac{1}{k!} \mathcal{A}(g^{k}e)(p) \in F|_p\ .$$
We denote by $\mathfrak{Sym}_k(E,F):= \left\{\sigma_k(\mathcal{A})\ : \mathcal{A}\in \mathfrak{Dif}_k(E,F)\right\}  $ and one can observe that 
$\sigma_k(\mathcal{A})$ is actually a polynomial on $T^*M$ and can thus canonically be identified with a section of $(T^*M)^{\otimes_s k}\otimes L(E,F)$.
\begin{remark}
The symbol of a differential operator captures the behaviour of the differential operator at leading order. This can for example be seen by observing that the following sequence of vector spaces is exact, c.f.\ \cite[Chapter~3, Section~IV]{Pal65},
\[ \begin{tikzcd}
0  \arrow{r} & \mathfrak{Dif}_{k-1}(E,F) \arrow{r} & \mathfrak{Dif}_{k}(E,F) \arrow{r}{\sigma} &\mathfrak{Sym}_{k}(E,F)  \arrow{r} & 0 \ ,
\end{tikzcd}
\]
where the second arrow denotes the inclusion map.
\end{remark}

Let us recall some important classes of differential operators characterised by their symbol:
\begin{enumerate}
\item A differential operator $\mathcal{A}\in \mathfrak{Dif}_{k}(E,F)$ is called \textit{elliptic} if for every $v_p\in T^*M\setminus\{0\}$ the map 
$\sigma_k(\mathcal{A})(v_p)\in L(E|_p,F|_p)$ is invertible. 
\item A differential operator $\mathfrak{L}\in \mathfrak{Dif}_{2k}(E,E)$ is called \textit{strongly elliptic}
if 
$(-1)^k \sigma_{2k}(\mathfrak{L})(v_p)$ is positive definite whenever $|v_p|\neq 0$. 
\item $\mathfrak{L}\in \mathfrak{Dif}_{2k}(E,E)$ is called \textit{uniformly elliptic}\footnote{Note that if $M$ is compact, every strongly elliptic operator is uniformly elliptic since then $\left\{v\in T^*M \ : \ |v|=1 \right\}$ is compact.}  if there exists $\epsilon>0$, such that 
$$(-1)^k \sigma_{2k}(\mathfrak{L})(v_p) \geq \epsilon  |v_p|^{2k} \id_{E_p}\ .$$
\end{enumerate}
\begin{remark}
Note that, since the symbol is a polynomial map on $T^*M$, the notion of strong ellipticity only makes sense for differential operators of even order.
\end{remark}
It is well understood that whenever 
$\mathcal{L}\in \mathfrak{Dif}_{2k}(E,E)$ is uniformly elliptic 
and $\bar{M}$ is compact, the operator 
$$
\partial_t + \mathcal{L}
$$
admits a unique fundamental solution $G$ satisfying Assumption~\ref{Assumption kernel decomposition} on $M= \mathbb{R}\times \bar{M}$ equipped with the scaling $\fraks=(2k,1)$ and $\beta= 2k$. We refer e.g.\ to \cite{Ejd94}, \cite{Fri08}, \cite{BG13} and \cite{Dav95} where this is proved in Euclidean space (for more general operators), but the construction can be generalised to compact manifolds along the lines of \cite{Kot16}, \cite{Alb17}, \cite{BGV03}.
In particular, since $\mathcal{L}$ does not depend on $t\in \mathbb{R}$ the fundamental solution is of the form 
$G((t,p),(s,q))= G_{t-s}(p,q)$.

Recall that our vector bundles are always equipped with a connection. In this case a particularly important uniformly elliptic operator is given by the connection Laplacian $\triangle^E\in \mathfrak{Dif}_{2}(E,E)$
\begin{equation}\label{eq:connection_laplacian}
\triangle^E: \cC^\infty(E)\to \cC^\infty(E), \qquad f\mapsto  - \tr_g (\nabla^{(T^*M\otimes E)} \nabla^E f)\ ,
\end{equation}
where $\tr_g: \cC^\infty(T^*M\otimes T^*M \otimes E)\to \cC^\infty(E)$ denotes the contraction with the metric on $T^*M$.
It is straightforward to check that the symbol of this operator is given by 
\begin{equation}\label{eq:laplace_symbol}
\sigma_2(\triangle^E)(v_p)= -|v_p|^2 \id_{E_p}.
\end{equation}

\begin{remark}\label{rem:generalised_laplacian}
An important class of second order uniformly elliptic operators on $E$ are given by generalised Laplacians, i.e.\ operators 
whose symbol symbol is given by
$ - |v_p|^2 \id_{E_p} \ .$
Thus, Equation~\ref{eq:laplace_symbol} states that the operator $\triangle^E$ is a generalised Laplacian. It turns out that for any generalised Laplacian $\triangle$ on $E$ there exists a connection $\nabla^E$ on $E$ such that $\triangle-\triangle^E\in \mathfrak{Dif}_0(E,E)\simeq L(E,E)$, c.f.\ \cite[Prop.~2.5]{BGV03}.
\end{remark}

\begin{remark}
The framework developed in this article also allows to study equations, where the differential operator is time dependent. This is for example of interest when the underlying manifold evolves in time, c.f.\ \cite{EHS12}. 
\end{remark}
While understanding the qualitative behaviour of a heat kernels is sufficient to apply the theory developed so far in this article, in order to construct the appropriate renormalisation procedure it is often necessary to have a detailed understanding of its behaviour near the diagonal. The next theorem, \cite[Theorem~2.30]{BGV03}, captures this for the Laplacian $\triangle^E$.
\begin{theorem}\label{thm:heat_kernel_expansion}
Let $\bar{M}$ be compact and the vector bundle $\bar{E}\to \bar{M}$ equipped with a metric and connection. Let $E = \pi^* \bar{E}$ be the pullback bundle under the submersion
$$ \pi: M= \mathbb{R}\times\bar{M} \to \bar{M}, \qquad (t,p)\mapsto p\ .$$
Let $G_t $ be the heat kernel associated to the Laplacian $\triangle^{\bar{E}}$, there exist smooth sections $\Phi_i\in \cC^\infty(\bar{E}\hotimes \bar{E}^*)$ supported on $\{(p,q)\in M\times M \ | d(p,q)<\frac{r_p}{2}  \}$ such that the heat kernel $G_t(p,q)$ satisfies for $t>0$
$$\big\|\partial^k_t \big(G_t(p,q) -  \frac{1}{(4\pi t)^{-d/2} }e^{-d(p,q)/4t} \sum_i^N t^i \Phi_i(p,q)\big)\big\|_{C^l(\bar M\times \bar M)} \lesssim t^{N-d/2-l/2-k} \ .$$
Furthermore, the leading term $\Phi_0$ is given by $\Phi_0(p,q)e_q= e_q (p)$, where for $e_q\in E_q$ the section $e_q(\cdot)$ is given by parallel transport along the geodesic from $q$.
\end{theorem}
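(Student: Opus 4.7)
The plan is to follow the classical Minakshisundaram--Pleijel--Hadamard parametrix construction as laid out in \cite[Chapter~2]{BGV03}. First I would work in a tubular neighbourhood of the diagonal $\triangle\subset \bar M\times \bar M$ of size $< \inf_{p\in \bar M} r_p/2$, using exponential normal coordinates centred at the second point $q$. In these coordinates the Euclidean heat kernel
$$
q_t(p,q) := (4\pi t)^{-d/2} e^{-d(p,q)^2/4t}
$$
is well defined, and one verifies by direct computation that
$$
(\partial_t + \triangle^{\bar{E}})\bigl( q_t(p,q) \Phi(p,q)\bigr) = q_t(p,q) \Bigl( \nabla_{\partial_r} \Phi + \tfrac{1}{2}\bigl(r^{-1}\partial_r \log \sqrt{\det g}\bigr) \Phi + t^{-1}\mathcal{O}(r) \Phi + \triangle^{\bar E} \Phi \Bigr),
$$
where $r=d(p,q)$ and $\partial_r$ is the radial vector field. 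This formula is the engine of the entire construction.

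Next I would make the ansatz $K^t_N(p,q) := \chi(p,q)\, q_t(p,q) \sum_{i=0}^N t^i \Phi_i(p,q)$ for a cut-off $\chi$ supported in the tubular neighbourhood and equal to $1$ near $\triangle$, and I would determine the $\Phi_i$ inductively by matching coefficients of powers of $t$ in $(\partial_t + \triangle^{\bar{E}}) K^t_N$. This produces a sequence of ODEs along radial geodesics: the leading order transport equation for $\Phi_0$ reduces (after absorbing the volume Jacobian) to $\nabla_{\partial_r} \Phi_0 = 0$ with initial condition $\Phi_0(q,q) = \id_{E_q}$, which yields precisely the parallel transport identity $\Phi_0(p,q) e_q = e_q(p)$ stated in the theorem. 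The higher $\Phi_i$ are then obtained by integrating the inhomogeneous first-order equations $\nabla_{\partial_r}(r^i \Phi_i) = -r^{i-1} \triangle^{\bar E} \Phi_{i-1}/i$ along radial geodesics, which is an elementary construction giving smooth solutions.

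By construction $R^t_N := (\partial_t + \triangle^{\bar{E}}) K^t_N$ concentrates near the boundary of the cut-off and picks up a factor $t^{N+1-d/2}$ from the leftover $t^{N+1}$ term; standard Gaussian estimates together with smoothness of the $\Phi_i$ give $\|R^t_N\|_{\mathcal{C}^l(\bar M\times \bar M)} \lesssim t^{N+1-d/2-l/2}$ and analogous bounds for $\partial_t^k R^t_N$. Then I would invoke Duhamel's principle in the form
$$
G_t(p,q) = K^t_N(p,q) + \int_0^t \int_{\bar M} G_{t-s}(p,r)\, R^s_N(r,q) \, \dVol_r\, ds
$$
(which is justified by uniqueness of the heat kernel, existing by e.g.\ \cite{BGV03,Kot16}) and bound the remainder using the already known short-time estimates on $G_{t-s}$, giving the claimed bound after increasing $N$ if necessary to absorb $l$ spatial and $k$ temporal derivatives.

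The main obstacle will be the uniform $\mathcal{C}^l$ bound on derivatives of the remainder $G_t - K^t_N$. Spatial derivatives applied to $K^t_N$ each bring down a factor $t^{-1/2}$ from $q_t$, so one cannot afford to simply differentiate under the Duhamel integral; instead one has to either raise $N$ and exploit the smoothing of $G_{t-s}$, or run a parametrix argument with increased $N$ and then use parabolic Schauder estimates to transfer regularity. The temporal derivatives are handled by repeatedly substituting $\partial_t G = -\triangle^{\bar E} G$ and reducing to higher-order spatial derivatives. None of these steps is conceptually novel, and I would reference \cite{BGV03} for the detailed bookkeeping.
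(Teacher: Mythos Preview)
The paper does not supply its own proof of this theorem: it is stated as a quotation of \cite[Theorem~2.30]{BGV03} and used as a black box. Your sketch is essentially the classical Minakshisundaram--Pleijel parametrix construction, which is precisely the argument given in that reference, so you are reproducing the cited proof rather than diverging from it.
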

\begin{remark}
The sections $\Phi_i\in \cC^\infty(\bar{E}\hotimes \bar{E}^*)$ can be explicitly calculated, c.f.\ the proof of \cite[Theorem~2.26]{BGV03}. In particular one finds that $\Phi_1(p,p)= \frac{1}{3} s_{\bar{M}}(p)\id_{E|_p}$, where $s_{\bar{M}}(p)$ denotes the scalar curvature of $\bar{M}$, see \cite[Example~2.27]{BGV03}.
\end{remark}
\begin{remark}
Results similar to Theorem~\ref{thm:heat_kernel_expansion} also hold for more general elliptic operators, c.f.\ \cite{Tin82} and \cite[Section~2.7]{BGV03}.
\end{remark}
\begin{remark}
Fractional Laplacians,\footnote{Note that we work with the sign convention that makes the Laplacian a positive operator.} say scalar valued, $\triangle^{s}$ for $s\in (0,1)$, fall slightly outside of the scope of Assumption~\ref{Assumption kernel decomposition} since the regularity condition in general fails on $\{t=0\}$. We refer to \cite{BH21} on how to accommodate such kernels.
\end{remark}

\begin{remark}\label{rem:change_scaling}
Note that if $\mathbb{R}\times M$ is equipped with a scaling $\fraks$ and $G$ satisfies Assumption~\ref{Assumption kernel decomposition} for $\beta$, then, for every $k\in \mathbb{N}$ $G$ satisfies the same assumption for ${\beta}':=k\beta$ if we equip $\mathbb{R}\times M$ with the scaling $\fraks' = k\fraks$ for the new decomposition given by the kernels $K'_n=\sum_{i=0}^{k-1} K_{kn+i}$.
Indeed, it is straightforward to check all assumptions.
\end{remark}

\subsection{Non-trivial scaling}\label{subsection:non-trivial_scalings}
Suppose $\bar{M}= M^1\times\ldots\times M^n$, $E^i\to M^i$ are vector bundles and let $E:=\hat{\bigotimes}_{i=1}^n E^i $. 
For $\mathcal{L}_i\in \mathfrak{Dif}_{2k_i}(E^i,E^i)$ let
$\mathcal{L} \in \mathfrak{Dif}(E,E)$ be the differential operator uniquely characterised by the fact that for all  $u_i\in \cC^\infty(E_i)$
\begin{equation}\label{class of differential operators}
 \mathcal{L} (u_1 \hotimes\ldots\hotimes u_n)= \sum_{i=1}^n  u_1\hotimes\ldots\hotimes\mathcal{L}_i u_i\hotimes\ldots \hotimes u_n\ . 
 \end{equation}
Note that it follows directly form Definition~\ref{definition:differential_operator} and Proposition~\ref{prop relation to cotangent} that $\mathcal{L} $ is well defined. 
Now, it is straightforward to check that if $G^i_t(\cdot,\cdot)$ is a fundamental solution of $\partial_t+\mathcal{L}_i$, then 
$$G_t(p,q)= \hat{\bigotimes}_{i=1}^n G^i_t(p_i, q_i)$$
is a fundamental solution of $\partial_t+\mathcal{L}$. 

Next, let $k$ be the least common multiple of $k_1,\ldots,k_n$ and set $\fraks_0= 2k$ and $\fraks_i: =k/k_i$.
If we choose the scaling ${\mathfrak{s}}=(\fraks_0,\fraks_1,\ldots,\fraks_n)$ on $M=\mathbb{R} \times M^1\times\ldots\times M^n$, then one sees that the differential operator $\mathcal{L}$ has scaled order $2k$. 
For what follows we work with the (equivalent) distance
$$d_\mathfrak{s}(z,\bar{z}):=\sup_{i\in \{1,\ldots,n\}} d^i(x_i,\bar{x}_i)^{\frac{1}{\mathfrak{s}_i}} \vee |t-\bar{t}|^{{\frac{1}{\mathfrak{s}_0}}}\ ,$$
on $M= \mathbb{R}\times  M^1\times\ldots\times M^n$, where $z=(t,x_1,\ldots,x_n)$ and $\bar{z}=(\bar t ,\bar{x}_1,\ldots,\bar{x}_n)$.
\begin{prop}
In the setting above, if each $G^i$ satisfies Assumption~\ref{Assumption kernel decomposition} for $\beta=2k_i$ on $\RR\times M^i$ equipped with the scaling $(2k_i,1)$, then 
 $G$ satisfies Assumption~\ref{Assumption kernel decomposition} with $\beta=2k$ on $\RR\times \bar{M}$ equipped with the scaling $\mathfrak{s}$.
\end{prop}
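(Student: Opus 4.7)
My plan is to first bring the decomposition of each $G^i$ to a common time-scale via Remark~\ref{rem:change_scaling}, and then to construct the decomposition of $G$ by a telescoping of the product decompositions of the factors.

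For the first step, apply Remark~\ref{rem:change_scaling} to each $G^i$ with multiplier $\fraks_i = k/k_i$. This promotes the hypothesis that $G^i$ satisfies Assumption~\ref{Assumption kernel decomposition} on $\RR\times M^i$ with scaling $(2k_i,1)$ and $\beta=2k_i$ to the same assumption for the scaling $(2k,\fraks_i)$ with common parameter $\beta=2k$, via the rescaled decomposition
\[
G^i = \tilde K^i + \tilde K^i_{-1},\quad \tilde K^i = \sum_{n\ge 0}\tilde K^i_n,\quad \tilde K^i_n := \sum_{j=0}^{\fraks_i-1}K^i_{\fraks_i n+j}.
\]
After this reduction every factor sees the common time-scale built into $\fraks$ and the common $\beta$.

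Setting $\tilde K^i_{\ge N}:=\sum_{n\ge N}\tilde K^i_n$, define for $N\ge 0$
\[
K_N := \prod_{i=1}^{n} \tilde K^i_{\ge N} - \prod_{i=1}^{n} \tilde K^i_{\ge N+1},
\]
and put $K:=\sum_{N\ge 0}K_N$ and $K_{-1}:=G-K$. In sup-form $d_\fraks = \max_i d_{(2k,\fraks_i)}$, so each factor of $\prod_i \tilde K^i_{\ge N}$ forces $d_{(2k,\fraks_i)}<2^{-N}$, whence $K_N$ is supported in $\{d_\fraks<2^{-N}\}$, giving Point~(1). Compactness of $\bar M$ gives a uniform positive lower bound on the injectivity radius, making Point~(2) vacuous for $N$ large and arrangeable for small $N$ by a finite adjustment into $K_{-1}$. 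Non-anticipativity is inherited factor-wise. After absorbing compactly supported smooth pieces of $\tilde K^i_{-1}$ into $\tilde K^i_0$ if needed, we may further assume each $\tilde K^i_{-1}$ is supported in $\{d_{(2k,\fraks_i)}>1/4\}$; the telescoping identity $K=\prod_i\tilde K^i$ then writes $K_{-1}=G-\prod_i\tilde K^i$ as a sum of cross terms, each containing at least one factor localised away from the relevant partial diagonal. Combined with the global smoothness of $G$ off the full diagonal as a parabolic fundamental solution, this forces $K_{-1}$ to be globally smooth.

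The central estimate is the jet bound $|j_{z,\bar z}K_N|_m\lesssim 2^{N(|\fraks|-\beta+m)}$, i.e.\ Point~(3). Expanding $\tilde K^i_{\ge N}=\tilde K^i_N+\tilde K^i_{\ge N+1}$ gives
\[
K_N=\sum_{S\neq\emptyset}\prod_{i\in S}\tilde K^i_N\prod_{i\notin S}\tilde K^i_{\ge N+1},
\]
and the Leibniz rule in scaling $\fraks$ together with $|j\tilde K^i_n|_{m_i}\lesssim 2^{n(\fraks_i\dim M^i+m_i)}$ and the identity $\sum_i\fraks_i\dim M^i = |\fraks|-\beta$ yields the claimed exponent after geometric summation in the admissible $n_j$'s at each point. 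Point~(4) is derived from the corresponding property of each $G^i$: any admissible realisation of $JF$ over $\bar M$ can be built as a tensor product of admissible realisations on the factors $M^i$, and a Fubini argument combined with the factor-wise estimates $\lesssim 2^{-2k_i(\fraks_i n)}=2^{-2kn}$ gives the required $\lesssim 2^{-\beta n}$.

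\emph{Main obstacle.} The delicate point in the jet bound is the ``off-diagonal'' regime where some $d_{(2k,\fraks_j)}$ is strictly smaller than $d_\fraks\approx 2^{-N}$. A naive factor-wise application of the polynomial bound on $\tilde K^j_{\ge N+1}$ would overshoot, because the geometric series for that factor extends up to scale $d_{(2k,\fraks_j)}^{-1}\gg 2^N$. The effective truncation at scale $2^{-N}$ that restores the bound relies on the sub-Gaussian decay of the parabolic fundamental solutions $G^i$ away from the time-scale $|t|^{1/2k}$: the presence of at least one $\tilde K^i_N$ in each non-trivial summand pins $|t|^{1/2k}\lesssim 2^{-N}$, after which the Gaussian damping of $G^j$ in the direction $d_j^{1/\fraks_j}\gg|t|^{1/2k}$ compensates the apparent polynomial blow-up. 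Carefully tracking this truncation through the telescoping is the main work of the proof.
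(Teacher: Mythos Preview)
Your telescoping decomposition is essentially identical to the paper's: for $n=2$ your $K_N = \tilde K^1_N\tilde K^2_N + \tilde K^1_N\tilde K^2_{\ge N+1} + \tilde K^1_{\ge N+1}\tilde K^2_N$ is exactly the paper's $K_m = K^1_mK^2_m + K_m^{<} + K_m^{>}$, and the reduction via Remark~\ref{rem:change_scaling} is the same first step. You also correctly pinpoint the obstacle: in a cross term such as $\tilde K^1_{\ge N+1}\tilde K^2_N$, the naive geometric-series bound on the tail $\tilde K^1_{\ge N+1}$ sees the scale $d_{(2k,\fraks_1)}$, which may be much smaller than $2^{-N}$, and overshoots.

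The gap is in your proposed resolution. You invoke ``sub-Gaussian decay of the parabolic fundamental solutions $G^j$'' to damp the tail. But the stated hypothesis is only Assumption~\ref{Assumption kernel decomposition}: each $G^i$ admits a decomposition $K^i+K^i_{-1}$ with $K^i$ satisfying the polynomial jet bounds of Assumption~\ref{Assumption on Kernel} and both pieces non-anticipative. No Gaussian estimates are assumed, and the individual pieces $K^i_n$ certainly need not inherit any from $G^i$; they are arbitrary smooth compactly supported functions satisfying $|j K^i_n|_m\lesssim 2^{n(|\fraks^i|-\beta+m)}$. So your mechanism is not available from the hypotheses.

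The paper's mechanism is different and uses only what is assumed. In $K_m^{>}=\sum_{n>m}K^1_nK^2_m$, the factor $K^2_m$ is a single smooth piece that is \emph{non-anticipative}, hence vanishes identically on $\{t\le\bar t\}$. Smoothness then forces all time-derivatives to vanish at $t=\bar t$, so Taylor's theorem combined with the jet bound gives
\[
|K^2_m(z,\bar z)|\lesssim_L |t-\bar t|^L\,2^{m(\fraks_0+\fraks_2\dim M^2-\beta+L\fraks_0)}
\]
for every $L\ge 0$. Meanwhile the support of $K^1_n$ forces $|t-\bar t|<2^{-n\fraks_0}$, so the factor $|t-\bar t|^L$ contributes $2^{-nL\fraks_0}$. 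Choosing $L$ large enough to overcome the $2^{n(\fraks_0+\fraks_1\dim M^1-\beta)}$ from $K^1_n$ makes the sum over $n>m$ geometric and yields $|K_m^{>}|\lesssim 2^{m(|\fraks|-\beta)}$. The trade is: rather than seek extra decay in the \emph{tail} factor, one extracts arbitrary powers of $|t-\bar t|$ from the \emph{fixed-scale} factor via non-anticipativity, and then uses the tail's time-support to cash those powers in. This stays entirely within Assumption~\ref{Assumption kernel decomposition} and is the missing idea in your plan.
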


\begin{proof}
First, in view of Remark~\ref{rem:change_scaling} we assume that each $G^i$ satisfies the Assumption~\ref{Assumption kernel decomposition} on 
$\mathbb{R}\times M^i$ for the scaling $(2k, k/k_i)$ and $\beta= 2k$. 
We show the claim for $n=2$, the general case then follows by repeated application of the same argument. Furthermore, since the non-anticipativity property follows directly, it only remains to check Assumption~\ref{Assumption on Kernel}.
For $K^1 = \sum_{k\geq 0} K^1_k$ and $K^2 = \sum_{k\geq 0} K^2_k$ as in Assumption~\ref{Assumption on Kernel}, we write
\begin{equation}\label{three terms}
K_m = K^1_m K^2_m + K_m^{>}+ K_m^{<}\  ,
\end{equation}
where 
$K_m^{>}(z,\bar{z}) = \sum_{n>m} K^1_n(z,\bar{z}) K^2_m(z,\bar{z}) $ and $K_m^{<}(z,\bar{z})=\sum_{n>m} K^1_m(z,\bar{z}) K^2_n(z,\bar{z})$.
We shall freely use the notation $z=(t,x_1,x_2)$ and $\bar{z}=(\bar{t}, \bar{x}_1, \bar{x}_2)$, where $x_1, \bar{x}_1\in M^1$ and $x_2, \bar{x}_2\in M^2$.
Clearly the first summand in \eqref{three terms} satisfies the desired bounds uniformly in $m$.
We turn to $K_m^{>}(z,\bar{z})$ and observe that for each $n\geq m$ the support of $K^1_n K^2_m$ is contained in
$$\left\{ \left( (t,x_1,x_2), (\bar{t}, \bar{x}_1,\bar{x}_2 ) \right)\in M\times M\ : \max_{i\in \{1,2\}} d(x_i, \bar{x}_i)^{1/\fraks_i}  < 2^{-m} \ , |t-\bar{t}|^{1/\fraks_0}<2^{-n}   \right\} \ .$$
Note that furthermore, since $K_n$ is non-anticipative for any $L\geq 0$ we have 
$|K_m^2 (z,\bar{z})|\lesssim_L |t-\bar{t}|^L 2^{m (\fraks_0+\fraks_2-\beta_2 + L\fraks_0)}$ and thus 
\begin{align*}
|K_m^{>}(z,\bar{z})| &\lesssim \sum_{n>m} |K_{n}^1(z, \bar{z})| |t-\bar{t}|^L 2^{m (\fraks_0+\fraks_2-\beta_2 + L\fraks_0)} \\
 &\lesssim \sum_{n>m} 2^{n (\fraks_0+\fraks_1-\beta_1 )} 2^{-nL\fraks_0} 2^{m (\fraks_0+\fraks_2-\beta_2 + L\fraks_0)} 
 \end{align*}
and therefore choosing $L\fraks_0>\fraks_0 + \fraks_1-\beta_1$ we find  $$|K_m^{>}(z,\bar{z})|\lesssim 2^{m(2\fraks_0+\fraks_1 + \fraks_2 -\beta_1 -\beta_2)}= 2^{m(|\fraks| -\beta)} \ , $$
since $\fraks_0=\beta= \beta_1=\beta_2$ and $|\fraks|= \fraks_0+\fraks_1+\fraks_2$.
After taking derivatives in exponential charts, one obtains along the same lines that
$$ |j_{z,\bar{z}} K_m^{>}|_l\lesssim 2^{m(|\fraks| -\beta-l)}\ ,$$
which is Item~\ref{item:upper_bound} of Assumption~\ref{Assumption on Kernel}.
By symmetry the same estimate holds for $K_m^{<}(z,\bar{z})$. 

Lastly, Item~\ref{item:integragration_against_polynomials} of Assumption~\ref{Assumption on Kernel} is checked easily and since we are working on compact manifolds, it is easy to accommodate Item~\ref{item:support_in_exp_chart} of Assumption~\ref{Assumption on Kernel} by removing some additional smooth part of the kernel $K$ and adding it to $K_{-1}$.
\end{proof}

\section{Symmetric Sets and Vector Bundles}\label{section symmetric sets and Vector bundles}

The notion of symmetric set introduced in \cite{CCHS22} allows one to circumvent choosing a particular basis when constructing a vector valued regularity structure. This was used in \cite{CCHS22} to be able to work with Lie algebra-valued noises while still working with scalar valued models
 without fixing an (arbitrary) basis of the Lie algebra. Here, symmetric sets allow us to replace ``polynomial decorations'' by jets which do not come with a canonical basis (c.f.\ Section~\ref{basic facts on jets}) as well as to encode symmetries of differential operators in terms of symmetries of certain trees. In this section we recall the relevant notions from \cite{CCHS22}.

For this section, we fix an underlying set of types $\mathfrak{S}$. For two finite typed sets $T^1$ and $T^2$ denote by $\Iso(T^1, T^2)$ the set of all type preserving bijections $T^1\to T^2$ and by $\Vec(\Iso (T^1, T^2))$ the free vector space generated by $\Iso(T^1, T^2)$.
First, we recall that symmetric sets are connected groupoids in the category of typed sets. 
Explicitly, they and the corresponding notion of morphism can be described as follows.
\begin{definition}\cite[Def~5.3]{CCHS22}
A symetric set $\symset$ consists of an index set $A_\symset$ and a triple 
$$\symset = \big(\{T^a_{\symset}\}_{a\in A_{s}}, \ \{\mathfrak{t}_\symset^a\}_{a\in A_\symset},\ \{\Gamma^{a,b}_\symset\}_{a,b\in A_\symset} \big)\ ,$$
where $(T^a_\symset, t^a_\symset)$ are finite typed sets and $\Gamma^{a,b}_\symset\subset \Iso(T^b_\symset, T^a_\symset)$ a non-empty set satisfying for $a,b,c\in A_\symset$
\begin{equs}
\gamma \in \Gamma_\symset^{a,b} \quad&\Rightarrow \quad \gamma^{-1} \in \Gamma_\symset^{b,a}\;,\\
\gamma \in \Gamma_\symset^{a,b}\;,\quad
\bar \gamma \in \Gamma_\symset^{b,c}\quad&\Rightarrow \quad
\gamma \circ \bar \gamma \in \Gamma_\symset^{a,c}\;.
\end{equs}
\end{definition}

\begin{definition}\cite[Def~5.6 \& Rem~5.11]{CCHS22}
A \textit{morphism} $\Phi \in \Hom(\symset, \bar{\symset})$ between two symmetric sets $\symset$ and $\bar{\symset}$ is a two parameter family
$$
A_\symset \times A_{\bar \symset} \ni (a,\bar a) \mapsto \Phi_{\bar{a}, a} \in \Vec \big( \Iso(T_\symset^a, T_{\bar \symset}^{\bar a})\big)\;,
$$
which is invariant in the sense that, for any 
$\gamma_{a,b} \in \Gamma_\symset^{a,b}$ and
$\bar \gamma_{\bar a,\bar b} \in \Gamma_{\bar \symset}^{\bar a,\bar b}$, one 
has the identity
\begin{equ}[e:Phi-aa-gamma-ab]
\Phi_{\bar a,a}\circ \gamma_{a,b} = \bar \gamma_{\bar a,\bar b} \circ \Phi_{\bar b,b}\;.
\end{equ}
Composition is given by 
\begin{equ}
(\bar \Phi\circ \Phi)_{\bbar a, a} = \bar \Phi_{\bbar a, \bar a} \circ \Phi_{\bar a, a}\;,
\end{equ}
for any \textit{fixed} choice of $\bar a$ (no summation).
\end{definition}
Note that the above definition of morphisms is equivalent to \cite[Def 5.6]{CCHS22} by \cite[Rem 5.11]{CCHS22}.

\begin{definition}\cite[Def 5.22]{CCHS22}\label{def:typed_struct}
Denote by $\SSet$ the category of symmetric sets and define $\TStruc$ to be the category obtained by freely adjoining 
countable products to $\SSet$.
We write $\SSet_\Lab$ and $\TStruc_\Lab$ when we want to emphasise the dependence of this category on 
the underlying label set $\Lab$.
For a description of morphisms in $\TStruc_\Lab$ see \cite[Rem.~5.23]{CCHS22}.
\end{definition}
\begin{remark}\label{rem:disj_union_symsets}
Fix some symmetric sets $\{\symset_{i}\}_{i\in I}$ for some finite index set $I$, where \linebreak
$\symset_i=\big(\{T^a_{i}\}_{a\in A_{i}}, \ \{\mathfrak{t}_{i}^a\}_{a\in A_{i}},\ \{\Gamma^{a,b}_{i}\}_{a,b\in A_{i}} \big) $.
 We then define the symmetric set $\symset= \otimes_{i\in I} \symset_i $ where 
\begin{itemize}
\item $A_\symset= \prod_{i\in I} A_{i}$
\item for each $a\in A_\symset$, $T^a_{\symset}= \bigsqcup_{i\in I} T^a_{i}$ is given by the disjoint union,
\item $\mathfrak{t}_{\symset}^a: T^a_{\symset}\to \Lab$ is the unique map such that $\mathfrak{t}_{\symset}^a|_{T^a_{i}} = \mathfrak{t}_{i}^{a_i}$
\item for each $a,b\in A_{\symset}$ 
$$ \Gamma^{a,b}_{\symset}= \prod_{i\in I} \Gamma^{a_i,b_i}_{i}\;,$$
elements of which act on $T^a_{\symset}$ in the canonical way.
\end{itemize} 
We call $\symset= \otimes_{i\in I} \symset_i $ the \textit{tensor product} of the symmetric sets $\{\symset_{i}\}_{i\in I}$. This product turns $\SSet$ into a monoidal category, c.f.\ \cite[Remark~5.12]{CCHS22}.
\end{remark}

%

For manifold $M$, let $\VecB_M$ be the category of finite dimensional smooth vector bundles over $M$.
A vector bundle assignment $W=(W^{\frakt})_{\frakt\in \mathfrak{S}}$ allows to define for a symmetric set $\symset$ a vector bundle $W^{\symset}$ in exactly the same way as it is done in \cite{CCHS22} for vector spaces:
\begin{enumerate}
\item For $T$ a typed set, let $$W^{\otimes T}:= \bigotimes_{x\in T} W^{\mathfrak{t}(x)}\ $$
and interpret any $\psi\in \Iso (T,\bar{T})$ as the vector bundle morphism $W^{\otimes T}\to W^{\otimes \bar{T}}$ characterised by 
\begin{equation}\label{eq:iso acting on vect}
W_p^{\otimes T} \ni w_p=\otimes_{x\in T} w^x_p\ \mapsto \psi \cdot w_p = \otimes_{y\in \bar T } w^{\psi^{-1}(y)}_p \ .
\end{equation}
\item Define \begin{equ}[e:def-V-tensor-symset]
W^{\otimes \symset} = \Big\{w \in \prod_{a\in A_\symset} W^{\otimes T_\symset^a}
\,:\, w^{(a)} = \gamma_{a,b} \cdot w^{(b)}\quad \forall a,b\in A_\symset\;,\; \forall \gamma_{a,b} \in \Gamma_{\symset}^{a,b}\Big\}\;.
\end{equ}
For $a \in A_\symset$, denote by $\pi_{\symset,a}$ the symmetrisation map $W^{\otimes T_\symset^a} \to W^{\otimes \symset}$ given by
\begin{equ}\label{eq:translating_on_section}
(\pi_{\symset,a} w)^{(b)} = {\frac{1}{|\Gamma_\symset^{b,a}|}} \sum_{\gamma \in \Gamma_\symset^{b,a}} \gamma \cdot w\;,
\end{equ}
These maps $(\pi_{\symset,a})_{a\in A_{\symset}}$ have the property that
\begin{equ}\label{e:covpi}
\pi_{\symset,a} \circ \gamma_{a,b} = \pi_{\symset,b}\;,\quad \forall a,b \in A_\symset\;,\quad \forall \gamma_{a,b} \in \Gamma_\symset^{a,b}\;.
\end{equ}
and they are left inverses to the natural inclusions 
\begin{equ}\label{eq iota}
 \iota_{\symset,a} \colon W^{\otimes \symset} \to W^{\otimes T_\symset^a}\ , \quad (w^{(b)})_{b\in A_\symset} \mapsto w^{(a)}\ .
 \end{equ}
\end{enumerate}

We now fix two symmetric sets $\symset$ and $\bar \symset$.
Given $\Phi \in \Hom(\symset,\bar{\symset})$, it 
naturally defines a linear map $F^{a,\bar{a}}_\Phi\colon W^{\otimes T_\symset^a} \to W^{{\otimes T_{\bar{\symset}}^{\bar{a}}}}$ by\footnote{Here we  use the following extension of \eqref{eq:iso acting on vect}: For $i\in \{1,\ldots,n\}$, let $\lambda_{i}\in \mathbb{R}$ and $\psi_i\in \Iso (T,\bar{T})$, then
 $\bar{\psi}=\sum_i \lambda_{i}\psi_i\in \Vec(\Iso (T,\bar{T}))$ induces the map 
$$
W^{\otimes T}\to W^{\otimes \bar{T}}, \qquad w_p\ \mapsto\bar{\psi}\cdot w_p = \sum_i \lambda_{i}(\psi_i \cdot w_p) \  .
$$}

\begin{equ}[e:defFPhi]
F^{a,\bar{a}}_\Phi w = \Phi_{\bar{a}, a} \cdot w \;,
\end{equ}
where $\Phi_{\bar{a}, a}$ denotes the representative of $\Phi$ in $\Vec(\Iso(T_\symset^a, T_{\bar \symset}^{\bar a}))$. 
One sees that
for $w = (w^{(a)})_{a \in A_\symset} \in W^{\otimes \symset}$, $\Phi = (\Phi_a)_{a \in A_\symset} \in \Hom(\symset,\bar \symset)$, 
as well as $\gamma_{a,b} \in \Gamma_{\symset}^{a,b}$ and $\bar{\gamma}_{\bar b,\bar a} \in \Gamma_{\bar \symset}^{\bar a , \bar b }$, we have the identity
\begin{equ}
F_{\Phi}^{b,\bar b}  w^{(b)} =\Phi_{\bar b, b} \cdot w^{(b)}  = \bar{\gamma}_{\bar b,\bar a} \Phi_{\bar a, a} \gamma_{a,b}  \cdot w^{(b)} = \bar{\gamma}_{\bar b,\bar a} \Phi_{\bar a, a} \cdot w^{(a)}\;,
\end{equ}
and thus 
\begin{equ}[e:def-F_Phi]
F_\Phi: W^{\otimes \symset} \to W^{\otimes \bar \symset}, \qquad F_\Phi w = \pi_{\bar \symset,\bar a} F_{\Phi}^{a,\bar a} \iota_{\symset,a} w\;,
\end{equ}
is well defined and independent of the choices of $a$ and $\bar a$.


\begin{remark}\label{rem:can_morph}
Note that in the setting of Remark~\ref{rem:disj_union_symsets} writing $\symset =\otimes_{i\in I} \symset_i$, there is a canonical isomorphism 
\begin{equation}\label{loceq}
 \bigotimes_{i\in I} \Func_W(\symset_i)= \bigotimes_{i\in I} W^{\otimes \symset_i} \to W^{\otimes \symset} =\Func_W(\symset)\ .
\end{equation}
Observe the different meanings of the $\otimes$ in $\symset =\otimes_{i\in I} \symset_i$ and in \eqref{loceq}.

In particular, as in \cite[Lemma 5.18]{CCHS22} this defines a monoidal functor $\Func_W$ mapping $\symset$ to $W^{\otimes \symset}$ and  
$\Phi$ to $F_\Phi$ between the category $\TStruc$ of symmetric sets and the category $\VecB_M$.
\end{remark}

\subsection{Some canonical morphisms}
In this section we introduce certain canonical morphisms of symmetric sets which will be useful in the sequel.
The following is a special case of \cite[Rem.~5.13]{CCHS22}.
\begin{definition}\label{def:morphisms_for_colouring}
Suppose
for the same index set $A$ we are given two symmetric sets
 $\symset=\big(\{T^a\}_{a\in A}, \ \{\mathfrak{t}_\symset^a\}_{a\in A},\ \{\Gamma^{a,b}_\symset\}_{a,b\in A} \big)\ $ 
and $\bar{\symset}=\big(\{T^a \}_{a\in A}, \ \{\mathfrak{t}_{\bar{\symset}}^a\}_{a\in A},\ \{\Gamma^{a,b}_{\bar \symset }\}_{a,b\in A} \big)\ $,
such that furthermore $\mathfrak{t}^a_{\symset}=\mathfrak{t}^a_{\bar{\symset}}$ for all $a\in A$.

We define $S\in \Hom (\symset, \bar{\symset})$ as 
$$S_{\bar{a},a}
:= \frac{1}{|\Gamma_{\bar{\symset}}^{\bar{a},\bar{a}}| |\Gamma_{\symset}^{\bar{a},a}| }
\sum_{ \bar{\gamma}_{\bar{a},\bar{a}}\in \Gamma_{\bar{\symset}}^{\bar{a},\bar{a}}, \gamma_{\bar{a},a}\in \Gamma_{\symset}^{\bar{a},a} }
 \bar{\gamma}_{\bar{a},\bar{a}}\ \circ \gamma_{\bar{a},a}  
= \frac{1}{|\Gamma_{\bar{\symset}}^{\bar{a}, a}| |\Gamma_{\symset}^{a,a}| }
\sum_{ \bar{\gamma}_{\bar{a},a}\in \Gamma_{\bar{\symset}}^{\bar{a},a}, \gamma_{a,a}\in \Gamma_{\symset}^{a,a}}
 \bar{\gamma}_{\bar{a},a}\ \circ \gamma_{a,a}  \ . 
 $$
 Sometimes we shall write $S^{\symset, \bar{\symset}}$ for emphasis.
\end{definition}
In applications we shall refer to this morphism when speaking about a construction \textit{extending} to symmetric sets, see for example the discussion after \eqref{eq:colouring}.
The proof of the next lemma is a simple exercise.
\begin{lemma}\label{lem:symset_composition}
Let $\mathring{\symset}$, $\bar \symset$, $\symset_1$ and $\symset_2$ be symmetric sets over the same index set as in Definition~\ref{def:morphisms_for_colouring} and furthermore
suppose $\mathring{\Gamma }\subset \Gamma^{(1)}\subset \bar{\Gamma}$ and $\mathring{\Gamma }\subset \Gamma^{(2)} \subset \bar{\Gamma}$. Then, 
the following diagrams commute
\[\begin{tikzcd}
\symset_1 \arrow{r}{S^{\symset_1,\symset_2}} \arrow[swap]{dr}{ S^{\symset_1 ,\bar{\symset}}}  &\symset_2 \arrow{d}{S^{\symset_2 ,\bar{\symset}}}\\
&\bar{\symset}\ ,
\end{tikzcd} \qquad
\begin{tikzcd}
\symset_2 \arrow{r}{S^{\symset_2,\mathring{\symset}}} \arrow[swap]{dr}{S^{{\symset}_2,\symset_1}}  &\mathring{\symset}\arrow{d}{{S^{\mathring{\symset},\symset_1}}}\\
&\symset_1 \ .
\end{tikzcd}  
\] 
\end{lemma}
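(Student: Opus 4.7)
The proof is essentially a combinatorial exercise in unwinding the two equivalent forms of Definition~\ref{def:morphisms_for_colouring}. The key preliminary simplification is the following: whenever $\Gamma_\symset^{a,b}\subset \Gamma_{\bar\symset}^{a,b}$ for every pair $(a,b)$, the component
$$
(S^{\symset,\bar\symset})_{\bar a, a} \;=\; \frac{1}{|\Gamma_{\bar\symset}^{\bar a, \bar a}|}\sum_{\mu \in \Gamma_{\bar\symset}^{\bar a, a}} \mu\;,
$$
depends only on $\bar\symset$. This follows because each $\mu\in \Gamma_{\bar\symset}^{\bar a, a}$ has exactly $|\Gamma_\symset^{\bar a, a}|$ factorisations as $\bar\gamma\circ\gamma$ with $\bar\gamma\in \Gamma_{\bar\symset}^{\bar a, \bar a}$ and $\gamma\in \Gamma_\symset^{\bar a, a}$ (choose $\gamma$ freely, then $\bar\gamma=\mu\circ\gamma^{-1}$ is forced and lands in $\Gamma_{\bar\symset}^{\bar a, \bar a}$ thanks to $\Gamma_\symset\subset\Gamma_{\bar\symset}$). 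An entirely analogous argument, starting from the second representation of $S$, shows that when $\Gamma_{\bar\symset}\subset \Gamma_\symset$ one has a dual formula involving only $\Gamma_\symset$. Under the hypotheses of the lemma, these apply respectively to $S^{\symset_i,\bar\symset}$ and $S^{\mathring\symset,\symset_i}$.

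For the first diagram, I would expand $(S^{\symset_2,\bar\symset}\circ S^{\symset_1,\symset_2})_{\bar a,a}$ (for any chosen intermediate index $a_2$) as a quadruple sum over
$\Gamma_{\bar\symset}^{\bar a,\bar a}\times \Gamma_{\symset_2}^{\bar a,a_2}\times \Gamma_{\symset_2}^{a_2,a_2}\times \Gamma_{\symset_1}^{a_2,a}$, and successively collapse the inner summations. First, post-composition by $\Gamma_{\symset_2}^{a_2,a_2}$ merges the two $\symset_2$-factors into a single sum over $\Gamma_{\symset_2}^{\bar a,a_2}$. Second, for every target $\mu\in\bar\Gamma^{\bar a,a}$ and every $(\eta,\gamma^{(1)})\in \Gamma_{\symset_2}^{\bar a,a_2}\times \Gamma_{\symset_1}^{a_2,a}$ there is a unique $\bar\gamma\in\Gamma_{\bar\symset}^{\bar a,\bar a}$ with $\bar\gamma\circ\eta\circ\gamma^{(1)}=\mu$ (all ingredients lie in $\bar\Gamma$, which is closed under composition). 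Hence each $\mu$ is hit exactly $|\Gamma_{\symset_2}^{\bar a,a_2}|\cdot|\Gamma_{\symset_1}^{a_2,a}|$ times, and these factors precisely cancel the normalisation, yielding the uniform average $\frac{1}{|\bar\Gamma^{\bar a,\bar a}|}\sum_{\mu\in\bar\Gamma^{\bar a,a}}\mu$, which by the preliminary simplification equals $(S^{\symset_1,\bar\symset})_{\bar a,a}$.

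The second diagram is handled by the mirror-image calculation, which I would present using the second form of $S$ from Definition~\ref{def:morphisms_for_colouring}. Expanding $(S^{\mathring\symset,\symset_1}\circ S^{\symset_2,\mathring\symset})_{a_1,a_2}$ produces a quadruple sum that collapses in two steps: first, pre-composing $\Gamma_{\mathring\symset}^{a_1,\mathring a}$ with $\Gamma_{\mathring\symset}^{\mathring a,\mathring a}$ yields a single sum over $\mathring\Gamma^{a_1,\mathring a}$; second, since $\mathring\Gamma\subset \Gamma^{(2)}$, the composition $\eta\circ\gamma^{(2)}$ (with $\eta\in\mathring\Gamma^{a_1,\mathring a}$, $\gamma^{(2)}\in\Gamma^{(2),\mathring a,a_2}$) surjects onto $\Gamma^{(2),a_1,a_2}$ with uniform fibres of size $|\mathring\Gamma^{a_1,\mathring a}|$, by an elementary torsor argument identical to the one above. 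The resulting expression is exactly $(S^{\symset_2,\symset_1})_{a_1,a_2}$. The only real work lies in the careful bookkeeping of normalisations and in verifying uniformity of the fibre counts at each step; both ultimately rest on the observation that every set $\Gamma^{a,b}$ is a torsor over the group $\Gamma^{a,a}$ under composition, so that all relevant cardinalities $|\Gamma^{a,b}|$ only depend on the symmetric set, not on the particular pair of indices.
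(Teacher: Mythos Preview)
Your proof is correct and is exactly the kind of direct computation the paper has in mind when it says ``The proof of the next lemma is a simple exercise''; there is no alternative argument in the paper to compare against. One small expository slip: for $S^{\mathring\symset,\symset_i}$ the inclusion $\mathring\Gamma\subset\Gamma^{(i)}$ is a source-in-target inclusion, so it is your \emph{first} simplification (not the dual one) that applies there---but since your actual computation for the second diagram proceeds by the full quadruple-sum collapse and lands directly on the first-form expression for $(S^{\symset_2,\symset_1})_{a_1,a_2}$, this does not affect the argument.
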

\begin{remark}\label{smaller symmetry group}
The following special cases of Definition~\ref{def:morphisms_for_colouring} are of particular interest.
\begin{enumerate}
\item In the case $\Gamma^{a,b}_\symset\supset \Gamma^{a,b}_{\bar{\symset}}$,
we write $\mathcal{I}:=S^{\symset, \bar \symset}\in \Hom (\symset, \bar{\symset})$ for this morphism and find the identity 
$$\mathcal{I}_{\bar{a},a}= \frac{1}{|\Gamma_{\symset}^{\bar{a},a}|}\sum_{\gamma_{\bar{a},a}\in \Gamma_{\symset}^{\bar{a},a}} \gamma_{\bar{a},a} \ .$$
\item If $\Gamma^{a,b}_\symset\subset \Gamma^{a,b}_{\bar{\symset}}$, let 
$\Pi:= S^{\symset, \bar \symset}\in \Hom (\symset, \bar{\symset})$. Then one can write $$\Pi_{\bar{a},a} = \frac{1}{|\Gamma_{\bar{\symset}}^{\bar{a}, a}|}\sum_{\bar{\gamma}_{\bar{a},a}\in \Gamma_{\bar{\symset}}^{\bar{a},a}} \bar{\gamma}_{\bar{a},a} \ .$$
\end{enumerate}
\end{remark}

\begin{remark}\label{rem:symmetrisation_injection inv}
In the setting of Definition~\ref{def:morphisms_for_colouring}, if $\Gamma^{a,b}_\symset\supset \Gamma^{a,b}_{\bar{\symset}}$ and using the notation  
$\Pi\in \Hom(\bar{\symset},\symset)$ and $\mathcal{I}\in \Hom({\symset, \bar{\symset}} )$ from the previous remark, one finds that
$$\Func_W \Pi \circ \Func_W \mathcal{I} =\Func_W S^{\symset,\symset}=  \id_{W^{\otimes {\symset}}} \ ,$$
where the first equality follows from Lemma~\ref{lem:symset_composition}.
\end{remark}

\begin{remark}
For a symmetric set $\symset=\big(T, \ \mathfrak{t},\ \Gamma_{\symset} \big)$ where $A=A_\symset$ consists of one element, we write $\mathring{\symset}$ for the symmetric set $\mathring{\symset}=\big(T, \ \mathfrak{t},\ \bar{\Gamma} \big)$, 
where $\bar{\Gamma}$ consists only of the identity map.
Under the canonical isomorphism $V^{\otimes T}\simeq V^{\otimes \mathring{\symset}}$ one sees for $\Pi\in \Hom (\mathring{\symset},\symset)$ and $\mathcal{I}\in \Hom(\symset, \mathring{\symset})$ as in Remark~\ref{smaller symmetry group} that $\Func_W \Pi$ agrees with the symmetrisation map in \eqref{eq:translating_on_section} and $\Func_W \mathcal{I}$ agrees with the inclusion \eqref{eq iota}.
\end{remark}


\subsection{Direct sum decomposition}
Next we recall the notion of type decomposition, see \cite[Def 5.25]{CCHS22}, with the only slight change that we allow for countable decompositions of vector bundles.
\begin{definition}\label{def:label-decompose}
Let $\mathcal{P}(A)$ denote the powerset of a set $A$.\label{powerset page ref}
Given two distinct countable sets of labels $\Lab$ and $\bar \Lab$ as well as a 
map $\proj \colon \Lab \to \mathcal{P}(\bar \Lab) \setminus \{\emptyset\}$,
such that $\{ \proj(\mft): \mft \in \Lab\}$ is a partition of $\bar\Lab$, we call $\bar{\Lab}$ a type decomposition of $\Lab$ under $\proj$.

If we are also given space assignments $(W_{\mft})_{\mft \in \Lab}$ and $(\bar{W}_{\mfl})_{\mfl \in \bar{\Lab}}$ with the property that
\begin{equ}\label{eq: type vector space decomp}
W_\mft = \bigoplus_{\mfl \in \proj(\mft)} \bar W_\mfl \qquad\textnormal{ for every }\mft \in \mfL\;,
\end{equ} 
then we say that $(\bar{W}_{\mfl})_{\mfl \in \bar{\Lab}}$ is a decomposition of $(W_{\mft})_{\mft \in \Lab}$. 
For $\mfl \in \proj(\mft)$, we write $\PP_{\mfl} \colon W_{\mft} \to \bar W_{\mfl}$
for the projection induced by \eqref{eq: type vector space decomp}.
\end{definition}

Now we recall how a type decompositions allow us to decompose symmetric sets and associated vector bundles:
\begin{enumerate}
\item Fix $\Lab$, $\bar{\Lab}$, $\proj$ as well as  $(W_{\mft})_{\mft \in \Lab}$, $(\bar{W}_{\mfl})_{\mfl \in \bar{\Lab}}$ as in Definition~\ref{def:label-decompose} and, given a set $B$ and functions $\mft \colon B \to \Lab$,  $\mfl \colon B \to \bar\Lab$, use the shorthand notation $\mfl \tto \mft$ to signify that $\mfl(p) \in \proj(\mft(p))$ for every $p \in B$.
\item For a symmetric set
$\symset$ with label set $\Lab$ and $a \in A_\symset$
we write $\hat L_\symset^a = \{\mfl\colon T_\symset^a \to \bar \Lab\,:\, \mfl \tto \mft_\symset^a\}$
and we consider on $\hat L_\symset = \bigcup_{a\in A_\symset} \hat L_\symset^a$ the equivalence 
relation $\sim$ given by
\begin{equ}[e:defsim]
\hat L_\symset^a\ni \mfl \sim \bar \mfl \in \hat L_\symset^b \quad\Leftrightarrow\quad \exists \gamma_{b,a} \in \Gamma_\symset^{b,a}\,:\,  
\mfl = \bar\mfl \circ \gamma_{b,a}\;.
\end{equ}
Note that each equivalence class contains finitely many elements, but there might be a countable number of equivalence classes which we denote by 
$$L_\symset = \hat L_\symset / {\sim}\; .$$
\item Given an equivalence class $Y \in L_\symset$ and $a \in A_{\symset}$, we define $Y_a = Y \cap \hat L_\symset^a$ (which we note is non-empty due to the connectedness of $\Gamma_\symset$).
We then define a symmetric set
$\symset_Y$ by
\begin{equs}[decomp]
A_{\symset_Y} &= \{(a, \mfl)\,:\, a \in A_\symset\;,\quad \mfl \in Y_a\}\;,\qquad
T_{\symset_Y}^{(a,\mfl)} = T_\symset^a\;,\qquad
\mft_{\symset_Y}^{(a,\mfl)} = \mfl\;,\\
\Gamma_{\symset_Y}^{(a,\mfl),(b,\bar \mfl)} &= \{\gamma \in \Gamma_{\symset}^{a,b}\,:\, \bar \mfl = \mfl \circ \gamma\}\;.
\end{equs}
\item With these notations at hand, we can define a functor 
$\proj^*$ from $\SSet_\Lab$ to $\TStruc_{\bar\Lab}$ as follows. 
Given any $\symset\in \ob(\SSet_\Lab)$, we define
\begin{equ}[e:linkpi3]
\proj^* \symset = \bigoplus_{Y \in L_\symset} \symset_Y \in \ob\big(\TStruc_{\bar \Lab} \big)\;.
\end{equ}
We refer the reader to \cite[Eq.~5.17]{CCHS22} to see how $\proj^*$ acts on morphisms.
\end{enumerate}

One of our main interests in the functor $\proj^*$ is that it performs the direct sum decompositions
at the level of partially symmetric tensor products of the spaces $V_\mft$, which is formulated in the following proposition.
Here, when working with infinite dimensional vector bundles, the grading is crucially needed.
\begin{prop}\cite[Prop 5.29]{CCHS22}\label{prop:nat_transform}
One has $\Func_{\bar W} \circ \proj^* = \Func_{W}$, modulo natural transformation.
\end{prop}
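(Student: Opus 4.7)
The plan is to construct an explicit natural isomorphism of functors $\eta: \Func_W \Rightarrow \Func_{\bar W} \circ \proj^*$ on $\SSet_\Lab$ and then extend to $\TStruc_\Lab$ using universal properties of countable products. Since all operations involved are fibrewise on $M$ and smoothness is automatic from the tensor constructions, it suffices to work at a single point, which I suppress.

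Fix $\symset \in \SSet_\Lab$ and $a \in A_\symset$. The decomposition $W_\mft = \bigoplus_{\mfl \in \proj(\mft)} \bar W_\mfl$ yields
\begin{equation*}
W^{\otimes T_\symset^a} \;=\; \bigoplus_{\mfl \in \hat L_\symset^a} \bar W^{\otimes_\mfl T_\symset^a}\;,\qquad \bar W^{\otimes_\mfl T_\symset^a} := \bigotimes_{x \in T_\symset^a} \bar W_{\mfl(x)}\;,
\end{equation*}
with associated projections $P_\mfl^a$. The key observation is that, since any $\gamma_{a,b} \in \Gamma_\symset^{a,b}$ is type-preserving and acts by permutation of tensor factors as in \eqref{eq:iso acting on vect}, it sends $\bar W^{\otimes_\mfl T_\symset^b}$ isomorphically onto $\bar W^{\otimes_{\mfl \circ \gamma_{a,b}^{-1}} T_\symset^a}$. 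Hence the $\Gamma_\symset$-action permutes summands, and its orbits on $\hat L_\symset$ coincide with the equivalence classes $L_\symset$ of \eqref{e:defsim}. For $w = (w^{(a)})_{a \in A_\symset} \in W^{\otimes \symset}$ and $Y \in L_\symset$, set $\eta_\symset(w)_Y := \big(P_\mfl^a w^{(a)}\big)_{(a,\mfl) \in A_{\symset_Y}}$. For $\gamma \in \Gamma_{\symset_Y}^{(a,\mfl),(b,\bar\mfl)}$ one has $\mfl = \bar\mfl \circ \gamma$ by \eqref{decomp}, so combining with the $\Gamma_\symset$-invariance $w^{(a)} = \gamma \cdot w^{(b)}$ gives $P_\mfl^a w^{(a)} = \gamma \cdot P_{\bar\mfl}^b w^{(b)}$; thus $\eta_\symset(w)_Y \in \bar W^{\otimes \symset_Y}$. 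Assembling over $Y$ produces
\begin{equation*}
\eta_\symset: W^{\otimes \symset} \longrightarrow \prod_{Y \in L_\symset} \bar W^{\otimes \symset_Y}\;.
\end{equation*}
Its two-sided inverse sends $(v_Y)_{Y \in L_\symset}$ to $w$ with $w^{(a)} := \sum_Y \sum_{\mfl \in Y_a} \iota_{\symset_Y,(a,\mfl)}(v_Y)$, where $\Gamma_\symset$-invariance of $w$ follows from the invariance of each $v_Y$ together with the description of $\Gamma_{\symset_Y}$.

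The remaining step is naturality: for $\Phi \in \Hom(\symset, \bar \symset)$, one must check commutativity of the square with arrows $F_\Phi$, $\Func_{\bar W}(\proj^*\Phi)$, $\eta_\symset$ and $\eta_{\bar \symset}$. Unravelling \eqref{e:def-F_Phi} together with the definition of $\proj^*\Phi$ from \cite[Eq.~5.17]{CCHS22}, this reduces to the assertion that each bijection $\psi \in \Iso(T_\symset^a, T_{\bar \symset}^{\bar a})$ appearing in $\Phi_{\bar a,a}$ maps the summand $\bar W^{\otimes_\mfl T_\symset^a}$ into $\bar W^{\otimes_{\mfl \circ \psi^{-1}} T_{\bar \symset}^{\bar a}}$, which is immediate because $\psi$ preserves $\Lab$-types and hence also $\bar\Lab$-decorations along the relabelling. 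The extension to $\TStruc_\Lab$ is automatic from the universal property of countable products, since both sides of $\Func_{\bar W} \circ \proj^* \cong \Func_W$ distribute over products by construction. The main obstacle is the careful bookkeeping in this naturality check: one must track how $\Gamma_\symset$-invariance on the coarse side is replaced by $\Gamma_{\symset_Y}$-invariance on the fine side for each orbit $Y$ simultaneously, which is precisely the combinatorial content packaged into the equivalence relation \eqref{e:defsim}.
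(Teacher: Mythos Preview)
The paper does not give its own proof of this statement; it merely cites \cite[Prop.~5.29]{CCHS22}. Your argument is correct and is essentially the standard proof one expects (and the one in \cite{CCHS22}): decompose $W^{\otimes T_\symset^a}$ via the direct sum decomposition of each factor, observe that the $\Gamma_\symset$-action permutes summands along the orbits $L_\symset$ of \eqref{e:defsim}, and verify naturality by tracking how type-preserving bijections respect the $\bar\Lab$-summand structure. One minor remark: you write the target as $\prod_{Y\in L_\symset}\bar W^{\otimes \symset_Y}$, but since $W^{\otimes T_\symset^a}$ is an algebraic tensor product of direct sums, any $w^{(a)}$ has only finitely many nonzero components $P_\mfl^a w^{(a)}$, so $\eta_\symset$ actually lands in the direct sum $\bigoplus_Y$; in the applications of this paper the decompositions are finite anyway, so the distinction is moot.
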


\begin{remark}\label{remark tensor product}
Note that the whole discussion on symmetric sets adapts ad verbatim, when using the tensor product $\hotimes$ introduced in Section~\ref{section hat tensorproduct} instead of $\otimes$ on vector bundles, one only has to replace the category $\VecB_M$ of vector bundles over $M$ by the ``larger" category $\VecB$ of vector bundles over arbitrary base manifolds.
\end{remark}

\section{Trees for Regularity Structures}\label{section sets of trees for...}
\subsection{Indexing vector bundles}\label{section indexing vector bundels}
We shall construct a regularity structure ensemble associated to a subcritical rule. For this we start by fixing an index set of types $\mathfrak{L}$.\footnote{The set $\mathfrak{L}$ plays a different role in this section than it did in the definition of a regularity structure ensemble, see Definition~\ref{definition regularity structure}.}
We shall assume that $\mathfrak{L}$ is equipped with an involution without fixed points
$$*:\mathfrak{L}\to \mathfrak{L}, \ \mathfrak{l}\mapsto \mathfrak{l}^* \ ,$$
which we extend to act on $\mathbb{N}^{\mathfrak{L}}$ in the canonical way by mapping $\sigma\in \NN^\mathfrak{L}$ to $\sigma^*\in \NN^\mathfrak{L}$ characterized by $\sigma^*(\mathfrak{l})= \sigma (\mathfrak{l}^*)$. We define the map $$\text{red}_*: \mathbb{N}^{\mathfrak{L}}\to \mathbb{N}^{\mathfrak{L}}\ ,$$ where 
$\text{red}_*(\sigma)$ is given by the minimal element of $\mathbb{N}^{\mathfrak{L}}$ such that $[\sigma]=[\text{red}_*(\sigma)]$ in $\frac{\mathbb{Z}[\mathfrak{L}]}{\ker(\id-*)}$,
or equivalently $\big(\text{red}_*(\sigma)\big)_\mfl = \sigma_\mfl - (\sigma_\mfl \wedge \sigma_{\mfl^*})$.

From now on we assume that all vector bundle assignments $\{ V^\mathfrak{l}\}_{\mathfrak{l}\in \mathfrak{L}}$ satisfy 
\begin{equation}\label{eq:assigment_dual}
(V^\mathfrak{l})^*= V^{\mathfrak{l}^*}\ ,
\end{equation}
 where $(V^\mathfrak{l})^*$ denotes the dual bundle of $V^\mfl$.
We define
for $\sigma\in \mathbb{N}^\mathfrak{L}$ 
$$V^{\sigma}:= \bigotimes_{\mfl \in \mathfrak{L}} (V^\mfl)^{\otimes_s \sigma_{\mfl}} \ $$
and denote by 
$$\odot_s: V^{\sigma}\times V^{\bar \sigma} \to V^{\sigma+\bar \sigma}$$
the symmetrized tensor product induced by the canonical symmetric tensor product $W^{\otimes_s n}\otimes W^{\otimes_s m}\to W^{\otimes_s n+m}$ for any vector bundle $W$ and $n,m\in \mathbb{N}$.
Observe that $\odot$ endows $\bigoplus_{\sigma\in \mathbb{N}^{\mathfrak{L}}} V^{\sigma}$ with the structure of a commutative algebra bundle.\footnote{This space can canonically be equipped with the structure of a Hopf algebra bundle (in the same way as the tensor algebra over a vector space), but we shall not use this fact.}
Note that for any $\sigma\in \mathbb{N}^\mathfrak{L}$ one then has a natural (and unique) 
partial trace operator 
$$\Tr: V^{\sigma}\to V^{\text{red}_*\sigma} $$
which by polarisation is characterised by its action on elementary tensors 
$$\bigotimes_{l\in \mathfrak{L}} v_{\mathfrak{l}}^{\otimes \sigma_{\mathfrak{l}}} \mapsto 
\bigotimes_{l\in \mathfrak{L} } (v_{\mathfrak{l}}, v_{\mathfrak{l}^*})^{\frac{\sigma_\mathfrak{l}\wedge \sigma_{\mathfrak{l}^*}}{2}} v_{\mathfrak{l}}^{\otimes \text{red}(\sigma)_{\mathfrak{l}}} \ ,$$
which is well defined since each factor $(v_{\mathfrak{l}}, v_{\mathfrak{l}^*})^{\frac{\sigma_\mathfrak{l}\wedge \sigma_{\mathfrak{l}^*}}{2}}$ appears exactly twice in this tensor product.
\begin{remark}\label{rem canoncial product on sections}
Recall the basic fact that for any $\sigma, \bar \sigma\in \mathbb{N}^\mathfrak{L}$ the product $\odot$ induces a canonical pointwise product 
$$\odot: \cC^\infty(V^\sigma)\times \cC^\infty(V^{\bar{\sigma}} )\to \cC^\infty(V^{\sigma+ \bar{\sigma}})\ .$$
\end{remark}
\begin{remark}\label{remark incompatibilit product}
The partial trace is not compatible with the algebra structure introduced above,
in the sense that in general $\Tr (v_1 \odot \Tr (v_2 \odot v_3)) \neq \Tr (\Tr (v_1 \odot v_2) \odot v_3)$ for $v_i\in V^{\sigma_i}$.
\end{remark}

\begin{remark}
Let us emphasise the advantage of allowing for possible redundancy in the vector bundle assignment for models, i.e.\ $\mathfrak{e},\mathfrak{f}\in \mathfrak{L}$ such that $V^\mathfrak{e}=V^\mathfrak{f}$. It is such redundancies that allow one to assume that all operations in the same index are symmetric, 
allowing us to work with combinatorial rather than ordered trees\footnote{This in turn has the advantage that one can reuse notions and ideas from existing works such as \cite{BHZ19} and \cite{BCCH20}.} and encode traces unambiguously.
Let us illustrate this with two examples: Let $u,v$ be $TM$-valued solutions to a system of equations and let $A$ be a section of $(TM^*)^{\otimes 2}$, then the trees appearing to describe $A(u,v)$ can be taken to be 
combinatorial, as we are indexing the two ``components'' of $(TM^*)^{\otimes 2}=T^*M\otimes T^*M$ differently and thus contractions are unique. 
Now suppose $B\in (TM^*)^{\otimes 3}$ and the right-hand side of the equation contains the term $B(u,v,v)$. Then we can assume that $B$ is symmetric in the last two components without changing the equation and the same logic holds. 
See also Section~\ref{section:phi43} where this is implemented for a concrete example.
\end{remark}

\subsection{Construction of underlying trees}\label{section typed rooted trees}
We work with typed rooted trees $(T,\mathfrak{e})$, where $T=(N_T,E_T)$ is a rooted tree and $\mathfrak{e}:  E_T \to \mathcal{E}$  is a type map.\footnote{Here we shall take set $\mathcal{E}$ as given. In Section~\ref{section application to spdes} it is explained how it can be related to $\mathfrak{L}$ when working with SPDEs.} We shall simply write $T$ instead of $(T,\mathfrak{e})$. We assume that the set of types $\mathcal{E}$ can be disjointly decomposed as
$$\mathcal{E}=\mathcal{E}_+\cup \mathcal{E}_0 \cup \mathcal{E}_- \ .$$ 
The set $\mathcal{E}_+$ will encode kernels, $\mathcal{E}_0$ will be used to encode jets and one can think of the edges as encoding kernels given by Dirac distributions on vector bundles. The set $\mathcal{E}_-$ will encode noises. We assume we are given a map
\begin{equation}\label{eq iota E}
\iota: \mathcal{E}_+\cup \mathcal{E}_0\to \mathcal{E}_0 \ 
\end{equation}
such that $\iota|_{\mathcal{E}_+}$ is an injection and $\iota|_{\mathcal{E}_0}= \id_{{\mathcal{E}_0}}$.
%
%
We also fix indexing maps
\begin{itemize}
\item $\ind_+ : \mathcal{E}_+\to \mathfrak{L}\times\mathfrak{L},\  \varepsilon \mapsto (\varepsilon_+, \varepsilon_-)$
\item $\ind_- : \mathcal{E}_-\to \mathfrak{L},\  \varepsilon \mapsto \varepsilon_-$
\item $\ind_0 : \mathcal{E}_0\to \mathbb{N}^\mathfrak{L},\  \varepsilon \mapsto \varepsilon_- \ ,$
\end{itemize}
which we assume, after identifying the elements of $\mathfrak{L}$ with the canonical generators of $\mathbb{N}^\mathfrak{L}$, to satisfy the identity $\ind_+(\varepsilon)_-= \ind_0(\iota (\varepsilon))$ for any $\varepsilon\in \mathcal{E}_+$. 
We shall omit the subscript and write $\ind$ for all three maps, or even just $\varepsilon_-$, resp. $\varepsilon_+$ for $\varepsilon\in \mathcal{E}$. 
We also assume we are given a map $|\cdot |: \mathcal{E}\to \mathbb{R}$
with the property that $\eps \in \mathcal{E}_{\sign |\eps|}$ (with the convention that $\sign 0 = 0$). 
We denote by $\bar{\mathfrak{U}}$ the set of all typed rooted trees and to each $T\in \bar{ \mathfrak{U}}$ we assign the homogeneity 
$$|T|:= \sum_{e \in E_T} |\mathfrak{e}(e)|\ .$$
We also call $\mathcal{E}$ the set of edge types and introduce
node types
$$\mathcal{N}= \hat{\mathcal{P}}\mathcal{E} =\bigcup_{k\geq 0} [\mathcal{E}]^k$$
i.e.\ $\mathcal{N}$ consists of all finite (unordered) multi-sets%
\footnote{There is a canonical identification $\hat{\mathcal{P}}\mathcal{E}\simeq \mathbb{N}^\mathcal{E}$ under which $[\mathcal{E}]^k$ consists of those $\nu\in \mathbb{N}^\mathcal{E}$ satisfying $$\sum_{\varepsilon\in \mathcal{E}} \nu(\varepsilon)=k \ .$$ }
whose elements belong to $\mathcal{E}$.
We define for $\nu\in \mathcal{N}$
$$\ind (\nu)= \re \big(\sum_{\varepsilon\in \nu} \varepsilon_-\big)\in \mathbb{N}^\mathfrak{L} \ .$$
For any set $A$ we denote by $\mathcal{P}(A)$ its powerset.
\begin{definition}\label{def normal rule}
A rule is a map $R: \mathcal{E}\to \mathcal{P}(\mathcal{N})\setminus \{\emptyset\}$ satisfying the following conditions 
\begin{enumerate}
\item $R(\varepsilon)= \{()\}$ for every $\varepsilon\in \mathcal{E}_-\cup \mathcal{E}_0$
\item for each $\mathfrak{k}\in \mathcal{E}_+$ and $\nu\in R(\mathfrak{k})$ one has
$$\ind (\nu)= \mathfrak{k}_+ \ .$$
\end{enumerate}
The rule is said to be normal if it additionally satisfies the following property
\begin{itemize}
\item if $A\in R(\varepsilon)$ and $A= B\sqcup C$ for some non-empty multi-subset $B\sqsubset A$ such that $B\in \hat{\mathcal{P}}(\mathcal{E}_+)$,  one has $B^0 \sqcup C\in R(\varepsilon)$, where $B^0$ is obtained by replacing each edge of type $\tilde{\mathfrak{k}}\in \mathcal{E}_+$ by one of type $\iota(\tilde{\mathfrak{k}}) \in\mathcal{E}_0$.
\end{itemize}
Given any rule $\mathring{R}$, the normalisation $\bar{R}$ of $\mathring{R}$ is the minimal normal rule containing $\mathring{R}$ in the sense that
$$\mathring{R}(\varepsilon)\subset \bar{R}(\varepsilon)$$
for all $\varepsilon\in \mathcal{E}$.

We call a rule $R$ \textit{equation-like} if for each $\mathfrak{e}\in \mathcal{E}$ the following holds: For every $A\in R(\mathfrak{e})$
there exists $\tilde{A}= B\sqcup C$, where $B\in \hat{\mathcal{P}}(\mathcal{E}_+)$ and $C$ contains at most one edge of type $\mathcal{E}_0$, such that $A= B^0\sqcup C$.\footnote{Where as previously $B^0$ is as in Definition~\ref{def normal rule}, i.e.\ obtained by replacing each edge of type $\varepsilon\in \mathcal{E}_+$ in $B$ by one of type $\iota(\varepsilon) \in\mathcal{E}_0$.}
\end{definition}
\begin{remark} Let us make the following comments.
\begin{itemize}
\item The first condition in the definition of a rule is motivated by the fact that only the $\mathcal{E}_+$ type edges encode operators, while $\mathcal{E}_-$ and $\mathcal{E}_0$ encode functions\slash distributions.
\item The second condition in the definition of a rule is a consistency condition needed to obtain meaningful expressions later. It guarantees distributions take values in the correct vector bundles and traces are well defined.
\item Our notion of a rule being normal replaces the second condition of being normal in \cite {BHZ19}.\footnote{Namely, for every $\varepsilon\in \mathcal{E}$ and multi-sets $A,B$ such that $A\subset B\in R(\varepsilon)$ it holds that $A\in R(\varepsilon)$.} In both cases this property is needed to close the algebraic structure with respect to the operations needed for positive renormalisation. Our setting will have the property that the only products that arise are the ones that are already present in the equation.
\item Remark~\ref{rem:eq_like_rules} below explains why the notion of equation-like rules is useful. If a rule $R$ is equation-like, its normalisation is equation-like as well.
\end{itemize}
\end{remark}

Now define subcriticality similarly to \cite[Definition 5.13]{BHZ19}.
\begin{definition}
A rule $R$ is called subcritical if there exists a map $\reg: \mathcal{E} \to \RR$ satisfying $\reg|_{\mathcal{E}_0}=0$, such that for each $\mathfrak{k}\in \mathcal{E}_+$
$$\reg(\mathfrak{k})< |\mathfrak{k}| + \inf_{N\in R(\mathfrak{k})} \sum_{\varepsilon\in \nu}\reg (\varepsilon) \ .$$
\end{definition}

\begin{remark}\label{rem:eq_like_rules}
Subcriticality does not translate into the fact that for each $r\in \mathbb{R}$ the set $\mathfrak{T}_{<r}:=\{ T \in \mathfrak{T} | |T|<r \}$ is finite, since it imposes no restriction on the possible number of $\mathcal{E}_0$ type edges attached to each node. For equation-like rules however this holds by a straightforward adaptation of the proof of \cite[Proposition 5.15]{BHZ19}.
\end{remark}

\begin{remark}\label{remark partial order}
Recall that for any rooted tree $T$, the set $N_T\cup E_T$ has a natural partial order in which the root is the unique minimal element. For an edge $e\in E_T$ we write $e_+,e_- \in N_T$ for the nodes such that $\{e_+,e_-\}=e$ and $e_-<e_+$ \ . 
\end{remark}

From now on fix a subcritical rule $R$. This defines ${\mathfrak{T}}\subset \mathfrak{U}$, the collection of trees which (in the language of \cite{BHZ19}) \textit{conform} to $R$, that is ${\mathfrak{T}}$ consists of all trees $T$, such that for each edge $e\in E_T$ the set $E^{\mathtt{in}}_e=\{\bar{e}\in E_T \ | \ \bar{e}_-= e_+ \text{ for some } e\in E_T\}$ satisfies 
$$\bigsqcup_{\bar{e}\in E^{\mathtt{in}}_e} \mathfrak{e}(e_i) \in  R(\mathfrak{e}(e)) \ . $$
For a node type $\nu\in \mathcal{N}$, let $\mathfrak{T}^\nu\subset \mathfrak{T}$ consist of all trees where the root has node type $\nu$ and for $\sigma\in \mathbb{N}^\mathfrak{L}$ set 
$$\mathfrak{T}^\sigma= \bigcup_{ \ind(\nu)= \sigma} \mathfrak{T}^\nu \ .$$

\begin{remark}
The set ${\mathfrak{T}}$ will play the role of the universe of trees, which we use for essentially all our constructions, while $\mathfrak{T}^\sigma$ will be used to construct objects related to distributions with values in $V^\sigma$.
\end{remark}
\begin{remark}\label{rem recursive construction}
We make the observation that all trees in $\mathfrak{T}$ can be constructed recursively. The base case is given by trees consisting of only one edge which is necessarily of type in $\mathcal{E}_0\cup \mathcal{E}_-$ by the definition of $\mathfrak{T}$. From now on we denote the tree consisting of one edge by $\Xi^\varepsilon$ if $\varepsilon \in \mathcal{E}_-$ and by $\delta_{\varepsilon}$ if $\varepsilon \in \mathcal{E}_0$. 
The rest of the trees are obtained by applying the following two operations repeatedly.
\begin{itemize}
\item The \textit{tree product}: For $T=(N_T,E_T), T'=(N_{T'}, E_{T'})$ let 
$$T \cdot T':= (N_T\cup N_{T'}\big/ {\sim} \ , E_T \cup E_{T'} ) $$
where $\sim$ identifies the root nodes of $T$ and $T'$.
One easily checks that the tree product is associative and commutative and we simply write $T_1\cdots T_n$ for the tree product of $n\in \NN$ trees. 
\item Attaching an edge of type $\mathfrak{k}\in \mathcal{E}$ to the root of a tree $T=(N_T,E_T)$: Let $\mathcal{I}_\mathfrak{k}=(\{e_+,e_-\}, \{e\})$ by the tree consisting of only one edge which is of type $\mathfrak{k}$, then
$$\mathcal{I}_\mathfrak{k} (T):= (N_T\cup e \big/ {\sim} \ , E_T\cup \{e\})$$
where $\sim$ identifies the root of $T$ with the node $e_+$.
\end{itemize}

\end{remark}

\subsection{Negative subtrees}
For $T\in \mathfrak{T}$ and $E\subset E_T$ define $T_{E}$ to be the minimal subtree of $T$ containing these edges. 
Let  $p$ be a partition of $E$. We say $p$ \textit{generates a forest in }$T$, if the trees $\{T_A : A\in p\}$ are mutually vertex disjoint and denote by $F_{T,p}$ the corresponding subforest of $T$. We write $P(T,E)$ for all partitions of $E$ which generate a forest. 

For a fixed tree $T$ we define 
\begin{itemize}
\item $E^-_T:=\{ e\in E_T\ | \mathfrak{e}(e)\in \mathcal{E}_- \}$,
\item $\mathfrak{T}^-_T:= \{ T_{E}\subset T\  |  \ E\subset E^-_T \}  $,
\item $\mathfrak{T}^-_{T,\rho}= \{ T_{E}\subset T\  |  \ E\subset E^-_T ,\ \rho_T\in T_{E} \}  $,
\item $\mathfrak{F}^-_{T}:= \{ F_{T,p}\subset T\  |  \ E\subset E^-_T, p\in P(T,E)  \}  $.
\end{itemize}
as well as\begin{equation}\label{eq def neg trees}
\mathfrak{T}_-:= \bigcup_{T\in \mathfrak{T}} \mathfrak{T}^-_T \Big/ {\sim} \, 
\end{equation}
where we write $A_1\sim A_2$ for subtrees $A_1\subset T_1\in \mathfrak{T}$ and $A_2\subset T_2\in \mathfrak{T}$, if $A_1$ and $A_2$ are isomorphic as trees.
Denote by $\mathfrak{F}_-$ the free monoid generated by $\mathfrak{T}_-$ under the forest product. 
\begin{remark}
Note that the maximal edges of every tree in $\mathfrak{T}_-$ are necessarily of type in $\mathcal{E}_-$ and thus by definition of a normal rule, a tree $T\in \mathfrak{T}_-$ never contains an edge of type in $\mathcal{E}_0$. 
\end{remark}

Note that elements of $\mathfrak{T}_-$ in general do \textit{not} conform to the rule used to define $\mathfrak{T}$. 
Indeed they usually do not conform to any normal rule, since the second point of Definition~\ref{def normal rule} is not satisfied. We quantify this by introducing for $T=(N_T,E_T)$ 
$$\dif_T:\ N_T\to \mathbb{N}^\mathfrak{L}$$ as 
\begin{equation}\label{eq def dif}
\dif_T (n)=\begin{cases}
\re\big(\sum_{e\in E_T : \, e=\{n,e_-\}} \mathfrak{e}(e)_+- \sum_{e\in E_T : \, e=\{e_+,n\}} \mathfrak{e}(e)_-  \big) &\text{if }n\text{ is not a leaf,}\\
0 & \text{else.} 
\end{cases}
\end{equation}

\begin{remark}
Observe that if $n$ is the root of $T$, the first sum in (\ref{eq def dif}) is empty. If $n$ is neither the root nor a leaf it contains exactly one summand. We also observe that the second case in (\ref{eq def dif}) will not play a major role.
\end{remark}
\begin{remark}
Note that for any tree  $T\in {\mathfrak{T}}$ and any node $n\in N_T$ which is not the root, one has $\dif_T(n)=0$. 
\end{remark}
\subsection{Operators associated to trees}
Let us recall the notion of a multi-linear differential operator. Let $S$ be a finite set and let $\{W^s\}_{s\in S}$ and $W$ be vector bundles over $M$. A map $$\mathcal{A}: \prod_{s\in S} \mathcal{C}^\infty (W^s) \to \mathcal{C}^\infty(W)$$ is called \textit{multi-linear differential operator} of order $k$ if it factors through the $k$-jet bundle via a multi-linear bundle morphism, i.e.\ there exists
$T_\mathcal{A}\in L(\otimes_{s\in S} J^k W^s, W)$, acting on $\prod_{s\in S}J^k W^s$ in the canonical way, such that the following diagram commutes
\[\begin{tikzcd}
\prod_{s\in S} \mathcal{C}^\infty (W^s)  \arrow{r}{\mathcal{A}} \arrow[swap]{d}{j^k_{\cdot}\times\ldots\times j^k_{\cdot}}  &\mathcal{C}^\infty (W) \\
 \prod_{s\in S} \mathcal{C}^\infty( J^k W^s) \arrow{ur}{T_\mathcal{A}} &  \ .
\end{tikzcd}  
\] 
Let $\gamma$ be a permutation of the set $S$ such that $W^s= W^{\gamma(s)}$ for each $s\in S$. The operator $\mathcal{A}$ is called $\gamma$-\textit{invariant} if for all $f_s\in \mathcal{C}(W^s)$ one has
$$\mathcal{A} (\prod_{s\in S} f_s)=\mathcal{A} (\prod_{s\in S} f_{\gamma(s)}) \ .$$
This is of course equivalent to the tensor $T_\mathcal{A}$ being $\gamma$-symmetric.

Later we shall renormalise equations by associating to each $T\in \mathfrak{T}_-$ such a differential operator, but for now we introduce only the relevant spaces of operators. 
For $\sigma\in \mathbb{N}^\mathfrak{L}$, $\alpha\in \RR$ and a symmetric set $\symset= (S, i, \Gamma)$ where the type set is given by $\mathbb{N}^\mathfrak{L}$, i.e.\ $i: S\to \mathbb{N}^\mathfrak{L}$, and the index set $A_\symset$ consists only of one element, c.f.\ Remark~\ref{rem trivial index set},
we define 
\begin{equation}\label{space Dif}
\mathfrak{Dif}_\alpha\big(\symset,\sigma\big) 
\end{equation}
as the set of all multilinear differential operators of order $\max\{n\in \mathbb{N}\cup \{-\infty\} \ : n\leq-\alpha\}$ mapping $\prod_{s\in S} \mathcal{C}^\infty (V^{i(s)})$ to $\mathcal{C}^\infty(V^\sigma)$  which are $\gamma$ invariant for all $\gamma \in \Gamma$.
We use the convention that $f\mapsto 0$ is the only differential operator of order $-\infty$.

\begin{remark}\label{rem pairing differential operator}
For any operator $\mathcal{A}\in \mathfrak{Dif}_\alpha\big(\symset,\sigma\big)$ mapping $\prod_{s\in S} \mathcal{C}^\infty (V^{i(s)}) \to \mathcal{C}^\infty(V^\sigma)$ and any $\kappa \in \mathbb{N}^{\mathfrak{L}}$ we define a new multi-linear differential operator 
$$\mathcal{A}^\kappa: \prod_{s\in S} \mathcal{C}^\infty (V^{i(s)})\times \mathcal{C}^\infty (V^\kappa) \to \mathcal{C}^\infty(V^{\red_*(\sigma+\kappa)})$$
given on $f_s\in \mathcal{C}(V^s)$, $g \in \mathcal{C}(V^\kappa)$ by 
$${\mathcal{A}}^\kappa (\prod_{s\in S} f_s, g)= \Tr \big({\mathcal{A}} (\prod_{s\in S} f_s) \odot g )\big) \ .$$
We shall from now on abuse notation and simply write $\mathcal{A}$ for the operator ${\mathcal{A}}^\kappa$ as well as
$$\langle (\prod_{s\in S} f_s) \times g, \mathcal{A} \rangle := {\mathcal{A}}^\kappa (\prod_{s\in S} f_s, g).$$
\end{remark}

\begin{remark}\label{rem trivial index set}
The fact that in (\ref{space Dif}) we restricted ourselves to the case when the index set $A_\symset$ consists only of one element was purely for notational convenience and there is an obvious extension to the case $|A_\symset|>0$. 
Indeed, arguing as in \cite[Remark 5.30]{CCHS22}, one can usually work with concrete trees instead of an equivalence class of trees. We shall do so in the next definition and often throughout the text.
\end{remark}

\begin{definition}
For a tree $T$, we set 
\begin{equation}\label{def DifT}
\mathfrak{Dif}_T:= \mathfrak{Dif}_{|T|}\big( \symset_T, \ind(\rho_T)\big) \  
\end{equation}
where
$\symset_T=(S_T, i_T, \Gamma_T)$ is given by
\begin{itemize}
\item $S_T= N_T\setminus (\{\rho_T\}\cup L_T)$ ,
\item $i_T= \dif_T$ ,
\item $\Gamma_T$ consists of all tree symmetries restricted to $S_T$.
\end{itemize}
\end{definition}

Defining the type set $\mathfrak{S}=\big\{ (|T|,\symset_T, \ind \rho_T) : T\in \mathfrak{T}_-\big\}$ consisting of triples composed of the degree of a tree $T$, the isomorphism class of the symmetric set $\symset_T$ and the index of the root $\rho_T$, we introduce the vector space assignment
$$\mathfrak{D}=  \big\{\mathfrak{Dif}_{\alpha}\big( \symset,\frakt\big) \big\}_{(\alpha,\symset, \frakt)\in \mathfrak{S}} \ .$$
\begin{definition}
For a forest  $F\in \mathfrak{F}_-$ define a new symmetric set with type set $\mathfrak{S}$
$$\lll F \rrr=(\Con (F), \langle \cdot \rangle, \Per_F ) $$
where 
\begin{itemize}
\item $\Con(F)$ denotes the connected components of $F$,
\item the type map is given by restricting the map 
$$\langle \cdot \rangle: \mathfrak{T}_-\to \mathfrak{S},\qquad  T\mapsto (|T|,\symset_T, \ind \rho_T) $$
 to $\Con(F)$.
\item $\Per_F$ consists of the permutations of isomorphic elements of $\Con(F)$.
\end{itemize}
We define 
\begin{equation}\label{def DifF}
\mathfrak{Dif}_F:= \mathfrak{D}^{{\otimes} \lll F \rrr}
\end{equation}
where ${\otimes}$ is the algebraic\footnote{
The choice of tensor product is arbitrary, since we shall only work with finite products and linear combinations.
}  tensor product of vector spaces.
For later use we also define the family of vector spaces $$\mathfrak{Dif}:=\{\mathfrak{Dif}_F \}_{F\in\mathfrak{F}_-}\ .$$
\end{definition}
\begin{remark}
Observe that for a tree $T$ (which is a forest with only one connected component) the canonical isomorphism 
$$\mathfrak{D}^{{\otimes} \lll T \rrr}\simeq \mathfrak{Dif}_{|T|}\big( \symset_T, \ind(\rho_T)\big)$$
makes the notation $\mathfrak{Dif}_T$ in (\ref{def DifT}) and (\ref{def DifF}) unambiguous.
\end{remark}

\begin{remark}
These definitions imply the following.
\begin{itemize}
\item If a forest $F$ has a connected component $T$ such that $|T|> 0$ , then $\mathfrak{Dif}_F\simeq \{0\}$ .
\item If $F=\emptyset$, then $\mathfrak{Dif}_{F}\simeq \mathbb{R}$ (acting by scalar multiplication). This follows from the fact that $\mathfrak{Dif}_{F}$ is given by the image of the empty symmetric set under the functor $\Func_{\mathfrak{D}}$.
\end{itemize}
\end{remark}

\begin{remark}{\label{remark symmetrisation}}
The forest product turns $\mathfrak{F}_-$ into a commutative unital monoid, with unit given by the empty forest.

Using the notions of Remark~\ref{smaller symmetry group} 
this product induces the canonical morphism $$\Pi \in \Hom(\lll F_1 \rrr \otimes \lll F_2 \rrr , \lll F_1 F_2 \rrr )\ ,$$
which in turn induces the symmetrisation 
\begin{equation}\label{symmetrisation dif}
\Func_\mathfrak{D}\Pi:\mathfrak{Dif}_{F_1}\otimes \mathfrak{Dif}_{F_2} \to \mathfrak{Dif}_{F_1F_2} \ ,
\end{equation}
where we implicitly used the isomorphism $\mathfrak{Dif}_{F_1}\otimes \mathfrak{Dif}_{F_2} \simeq \mathfrak{Dif}^{ \lll {F_1} \rrr \otimes \lll {F_2}\rrr }$,  see Remark~\ref{rem:can_morph}.
Similarly, the canonical morphism $\mathcal{I} \in \Hom(\lll F_1 F_2 \rrr , \lll F_1 \rrr \otimes \lll F_2 \rrr )$ induces the canonical injection
\begin{equation}\label{inj dif}
\Func_\mathfrak{D}\mathcal{I}: \mathfrak{Dif}_{F_1F_2}\to \mathfrak{Dif}_{F_1}\otimes \mathfrak{Dif}_{F_2} .
\end{equation}

It is straightforward to check that \eqref{symmetrisation dif} is associative and \eqref{inj dif} is coassociative, since the underlying morphisms of symmetric sets are.
Lastly, it follows from Remark~\ref{rem:symmetrisation_injection inv} that the former is a left inverse of the latter.

\end{remark}

\begin{remark}\label{rem gluing construction first version}
The following construction will be very useful in the sequel. Let $\tau\in \mathfrak{T}_-$, and $G=\{G_n\}_{n\in N_\tau}$ be a family of trees where each $G_n \in  {\mathfrak{T}}\cup\{\bullet\}$, here $\bullet$ denotes the tree consisting of only one node. We define a new tree $\mathring{\tau}(G)$ as 
$$\mathring{\tau}(G)= \tau \cup \bigcup_{n\in N_\tau} G_n \Big / {\sim}$$ where $\sim$ identifies the root of each $G_n$ with the node $n\in\tau$. 
Sometimes a similar construction, but working with planted trees\footnote{A planted tree is a rooted tree whose root has exactly one incident edge.} exclusively is useful. In this case we write $G=\{G_{n,i}\}_{n\in N_\tau\times \mathbb{N}},$ where for each $n\in N_\tau$, for finitely many $i\in \mathbb{N}$ the tree $G_{n,i}$ is a planted tree and $G_{n,i}= \bullet$ for all other $i\in \mathbb{N}$. In this case set
$$\mathring{\tau}(G)= \tau \cup \bigcup_{n\in N_\tau, i\in \NN} G_{n,i} \Big / {\sim},$$
where $\sim$ identifies the root of every $G_{n,i}$ with the node $n\in\tau$ for each $n\in N_\tau$. 
\end{remark}

\subsection{Positive subtrees}\label{section positive subtrees}
For a tree $T$ and any edge $e\in E_T$, we denote by $T_{\geq e}\subset T$ the subtree $T_{\geq e}=(N_{\geq e}, E_{\geq e})$, where\footnote{Recall that we work with the canonial partial order on $N_T\cup E_T$, which has the root as minimal element.} $N_{\geq e}=\{ n\in N_T \ | \ n > e\}\cup e $ and $E_{\geq e}:=\{e'\in E_T \ | \ e'\geq e \}$ . Similarly, for a set of edges $E\subset E_T$ with the property that no two edges in $E$ are comparable, let $T_{\geq E}$ be the subforest of $T$ with connected components 
$$\Con(T_{\geq E})=\{T_{\geq e} \ : \ {e\in E} \}\ .$$ 
Let $\mathfrak{T}^+_T:=   \{T_{\geq e} \ : { e\in E_T: \mathfrak{e}(e)\in \mathcal{E}_+\cup\mathcal{E}_0}\} $ and write $C_+(T)$ for the set of all subsets $E\subset E_T$ satisfying the following properties: 
\begin{itemize}
\item no two edges in $E$ are comparable,
\item $\mathfrak{e}(e)\in \mathcal{E}_+\cup\mathcal{E}_0 $ for all $e\in E$,
\item $E$ contains all edges in $E_T$ of type $\mathcal{E}_0$ 
\end{itemize}
and define the set of subforests 
$$\mathfrak{F}^+_{T}:= \{F_{\geq E}\subset T \ | \ E\in C_+(T) \}\  $$
as well as
$$\mathfrak{T}_+:= \bigcup_{T\in \mathfrak{T}} \mathfrak{T}^+_T\ \Big/{\sim} \ ,$$
where $\sim$ is as in (\ref{eq def neg trees}).
For $\varepsilon\in \mathcal{E}_+\cup\mathcal{E}_0$ we set $\mathfrak{T}_+^{\varepsilon}\subset\mathfrak{T}_+$ to consist of those trees, for which the edge incident to the root is of type $\varepsilon$. 
\begin{remark}
The set $\mathfrak{T}_+$ will in general contain trees of negative degree, the plus sign is meant to signify that these trees will encode positive renormalisation. The set $C_+(T)\subset E_T$ consists of the subsets of edges of $T$ that can be ``cut''.
\end{remark}
The following definition will be useful in the sequel.
\begin{definition}\label{def sector regularity}
For $T\in \mathfrak{T}$ define
$$\underline{\alpha}(T):= \min \{ |T\setminus F| \ : F\in\mathfrak{F}_T^+ \}$$
where $T\setminus F$ denotes the tree obtained as follows. For $E\in C_+(T)$ such that $F=T_{\geq E}$ define $T\setminus F$ to be the tree obtained by replacing for each $e\in E$ the subtree $T_{\geq e}$ by a new edge of type $\iota(\mathfrak{e}(e))\in \mathcal{E}_0$, where $\iota$ is the injection from (\ref{eq iota E}).
\end{definition}

\subsection{Second homogeneity of trees}
\label{sec:homTree}

We introduce a``homogeneity''  of trees $\| T\|$.
Given $\delta_0>0$, we define a map $\| \cdot \|:\mathfrak{T}\to \mathbb{R}\cup \{+\infty\}$ recursively as follows
\begin{enumerate}
\item\label{fir} If $T$ consists of one edge of type $\mathcal{E}_0$, set $\|T\|= \delta_0$. 
\item\label{sec} If $T$ consists of one edge of type $\mathcal{E}_-$, set $\|T\|= +\infty$.
\item\label{thi} For $\mathcal{I}_{\frakk}T\in \mathfrak{T}$, set 
$\|\mathcal{I}_{\frakk}T\|=\begin{cases}
+\infty &\text{ if } \|T\|=\infty,\  |\frakk | + |T|<0\ , \\
 (|\frakk | + \|T\|)\wedge \delta_0 & \text{ else.}
\end{cases} 
$
\item\label{fou} Lastly, with the convention $\|\bullet\|=\underline{\alpha}(\bullet)= 0$, define
$$
\|T\|= \min_{\tau\ :\ \mathring{\tau}(G)=T} \big( |\tau| + \|G_{m,j}\| + \sum_{(n,i)\neq (m,j)}\underline{\alpha}(G_{m,j})  \big)$$
where the minimum is taken over $\tau\in \mathfrak{T}_-\cup \{\bullet\}$ and $G=\{G_{n,i}\}_{(n,i)\in N_\tau\times \mathbb{N}}$ as in the second half of Remark~\ref{rem gluing construction first version} satisfying $\mathring{\tau}(G)=T$, as well as over $(m,j)\in N_\tau\times \mathbb{N}$.
\end{enumerate}
Sometimes, to emphasise the dependence on $\delta_0$, we shall write $\| \cdot \|_{\delta_0}$ instead of $\| \cdot \|$.

\begin{lemma}
The function $\| \cdot \|_{\delta_0}: \mathfrak{T}\to \mathbb{R}\cup \{\infty\}$ is well defined and uniquely determined. Furthermore, for any fixed $T\in \mathfrak{T}$
\begin{equation}\label{localeq}
\lim_{\delta_0\to \infty} \|T\|_{\delta_0}= +\infty \ .
\end{equation}
\end{lemma}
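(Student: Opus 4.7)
The proof proceeds by induction on $|E_T|$. For well-definedness, the base case $|E_T|=1$ is unambiguous by rules (1) and (2). When $|E_T|\geq 2$, exactly one of two situations occurs: either the root of $T$ has degree one, so that $T=\mathcal{I}_\mathfrak{k}T'$ for a unique pair $(\mathfrak{k},T')$ with $|E_{T'}|=|E_T|-1$, and rule (3) expresses $\|T\|$ in terms of $\|T'\|$; or the root has degree at least two, so that rule (4) applies. In the latter case the set of decompositions $\mathring{\tau}(G)=T$ (up to relabelling of the indices $i\in\NN$) is finite, so the minimum runs over a finite set. I will interpret rule (4) with the implicit restriction that the distinguished pair $(m,j)$ satisfies $G_{m,j}\neq\bullet$: this is the only reading compatible with the second assertion, as otherwise the trivial decomposition $\tau=T$, $G\equiv\bullet$ (available whenever $T\in\mathfrak{T}_-$), together with the convention $\|\bullet\|=\underline{\alpha}(\bullet)=0$, would force $\|T\|_{\delta_0}\leq|T|$ independently of $\delta_0$. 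With this reading, every admissible $G_{m,j}$ is a non-trivial planted tree with $|E_{G_{m,j}}|<|E_T|$ (since either $\tau$ contributes at least one edge, or $\tau=\bullet$ but the root degree being at least two forces at least two non-trivial planted siblings), so the inductive hypothesis determines $\|G_{m,j}\|$ unambiguously.

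For the limit, the base cases give $\|T\|_{\delta_0}\in\{\delta_0,+\infty\}$, both tending to $+\infty$. In the inductive step, if $T=\mathcal{I}_\mathfrak{k}T'$ then either $\|T\|_{\delta_0}=+\infty$ or $\|T\|_{\delta_0}=(|\mathfrak{k}|+\|T'\|_{\delta_0})\wedge\delta_0$; since $\|T'\|_{\delta_0}\to+\infty$ by the inductive hypothesis, so does $\|T\|_{\delta_0}$. In the rule (4) case, the finite minimum defining $\|T\|_{\delta_0}$ consists of values of the form
\[|\tau|+\|G_{m,j}\|_{\delta_0}+\sum_{(n,i)\neq(m,j)}\underline{\alpha}(G_{n,i}),\]
in which only the middle summand depends on $\delta_0$. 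By the inductive hypothesis applied to the non-trivial $G_{m,j}$, $\|G_{m,j}\|_{\delta_0}\to+\infty$, so every term of the minimum diverges, and so does the minimum itself.

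The main subtlety is thus fixing the correct admissibility condition for $(m,j)$ in rule (4); once this is settled, both assertions reduce to routine inductions exploiting finiteness of decompositions and the $\delta_0$-independence of the terms $|\tau|$ and $\underline{\alpha}(G_{n,i})$.
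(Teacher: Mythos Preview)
Your proof is correct and follows the same inductive approach as the paper (induction on tree size, case split into single edge / planted / non-planted, finiteness of the minimum in rule (4), and passing the limit through term by term). The one point you treat more carefully than the paper is the restriction $G_{m,j}\neq\bullet$: the paper's definition does not state it explicitly, but its proof tacitly relies on it when asserting that the limit ``holds for each term over which the minimum is taken''---a term with $G_{m,j}=\bullet$ equals $|\tau|+\sum_{(n,i)\neq(m,j)}\underline{\alpha}(G_{n,i})$, which is $\delta_0$-independent, so without your restriction the second assertion would indeed fail (for instance whenever a non-planted $T\in\mathfrak{T}$ also lies in $\mathfrak{T}_-$, via $\tau=T$, $G\equiv\bullet$). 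Your reading is the intended one and is consistent with how $\|T\|$ is later used to bound $(\Pi_p-\Pi_q\Gamma_{q,p})$ via the telescopic sum, in which selecting $\bullet$ as the ``difference factor'' yields zero and hence contributes nothing.
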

\begin{proof}
The proof proceeds inductively on trees using the recursive construction from Remark~\ref{rem recursive construction}.
\begin{itemize}
\item The first two points (\ref{fir}) and (\ref{sec}) determine the value for the induction hypothesis, namely the base case when the tree consists of exactly one edge, and (\ref{localeq}) is clearly satisfied. 
\item Note that if a tree is planted $\mathcal{I}_{\frakk}T$, the third point determines the value of the function and it satisfies (\ref{localeq}) assuming $\|T\|_{\delta_0}$ is already determined and satisfies (\ref{localeq}) . Note that in this case (\ref{fou}) is an empty condition, since the only viable decomposition such that $\mathcal{I}_{\frakk}T= \mathring{\tau}(\{G_{n,i}\})$ is one where $\tau=\bullet$.
\item Lastly, if the tree is not planted, (\ref{fou}) determines its value uniquely. 
While the minimum looks like it is taken over an infinite set, for each tree $T$ there exists an $M_T\in \mathbb{N}$ (for example take $M$ to be the maximal degree of its nodes), such that it suffices to take the minimum over $\tau\in \mathfrak{T}_-\cup \{\bullet\}$ and $G=\{G_{n,i}\}_{(n,i)\in N_\tau\times \{1,\ldots,M_T\} }$. 
Thus, $(\ref{localeq})$ is satisfied since it holds for each term over which the minimum is taken.
\end{itemize}
\end{proof}

\begin{remark}
This second notion of homogeneity of trees will be used to construct the set $\triangle$ appearing in the definition of a regularity structure and to determine the precision of elements in the regularity structure. 
Condition~\ref{fir} enforces that jets always have precision $\delta_0$. Sometimes it is convenient and possible to relax this assumption, c.f.\ Remark~\ref{rem:nonstandard_polynomial}. 
\end{remark}
\subsection{Coloured trees and forests}\label{section notation for coloured trees}
In this section we first introduce a general notion of (multi) coloured trees. 
\begin{definition}
Let $C$ denote any
set, whose elements we call colours.\footnote{Recall that $ \hat{\mathcal{P}}(C)$ denotes the set of multisets of $C$ and  that we use $\sqsubset$ for the inclusion of multi-sets.} Given a tree $T$, a colouring is a map ${\hat{T}}: N_T\cup E_T\to \hat{\mathcal{P}}(C)$ such that, for each $c\in \hat{\mathcal{P}}(C)$ the set $\hat{T}_{c}:=\{x\in N_T\cup E_T\,:\, c\sqsubset \hat T(x) \}$ is a subforest of $T$.
A colouring of a forest is defined analogously.

A pair $(T,\hat{T})$, where $T$ is a (possibly typed) tree and $\hat{T}$ is a colouring on it, is called a coloured tree and similarly for forests.
\end{definition}

For a finite index set $I$ and $\{c_i\}_{i\in I}\in \hat{\mathcal{P}}(C)$, we denote by $\bigvee_{i\in I} c_i$ minimal element (with respect to the partial order $\sqsubset$ ) of $\hat{\mathcal{P}}(C)$ such that 
$c_i\sqsubset \bigvee_{i\in I} c_i$ for all $i\in I$.\footnote{Note that under the canonical bijection $\hat{\mathcal{P}}(C)\sim\mathbb{N}^C$ this agrees with the component-wise maximum function.}
 
\begin{remark}\label{trees construction}
Suppose $\bar{\mathtt{T}}$ is a family of coloured trees with some colour set $C$ and underlying set of trees $\mathtt{T}$. 
We define
\begin{equ}
\dep(T,\hat T) = \bigvee_{x\in N_T\cup E_T} \hat T(x)\;,
\end{equ}
and set for $c \in \hat{\mathcal{P}}(C)$
$$\mathtt{T}^{(c)}=\big\{ (T,\hat{T})\in \bar{\mathtt{T}} \ : \  \dep(T,\hat T)\sqsubset c \big\}\ .$$
Furthermore, for $c \in C$, we shall use the convention
$\mathtt{T}^{(c)} = \mathtt{T}^{([c])}$.
\end{remark}

\begin{remark}\label{colouring operations}
For a coloured tree $(T,\hat{T})$ and $c\in \hat{\mathcal{P}}(C)$ we introduce the following notation for readability
$$(T_c,\hat{T}):=(\hat{T}_c, \hat{T}|_{\hat{T}_c})\ .$$
Furthermore, given some coloured tree $T=( T, {\hat T})$ and a subforest $F\subset T$ such that $\hat{T}|_{F}\sqsubset c$,
 For the former
 we define a new colouring 
\[
    \hat T \sqcup_c F = \begin{cases}
    c & \text{on } F, \\
   \hat T & \text{else. } \\
\end{cases}
   	\]
   	In the particular case that $\hat{T}|_{T\setminus F}$ is the empty colouring, we simply write 
   	$\sqcup_c F$ instead of $\hat{T}\sqcup_c F$ for this new colouring.
\end{remark}
Lastly, we extend everything in this section in the canonical way to forests.
If we were to omit the condition $\hat{T}|_{F}\sqsubset c$, above, 
it might happen that ${\hat T \sqcup_c F}: N_T\cup E_T\to \hat{\mathcal{P}}(C)$ fails to satisfy for each $c'\in \hat{\mathcal{P}}(C)$ that the set $\hat{T}_{c'}:=\{x\in N_T\cup E_T\,:\, c'\sqsubset \hat T(x) \}$ is a subforest of $T$. The latter is part of the definition of a colouring.

Consider the following example: The tree $T$ consists of one edge and two nodes and has constant non-empty colour. If $F$ consist of the two nodes and $c= \emptyset$, then the new colouring $\hat{T}|_{F}\sqsubset c$ would have the property that only one edge and no nodes have non-empty colour.

%

\subsection{Coloured trees for regularity structures}\label{construction and notation for coloured trees}
In this section we apply the colouring constructions of the previous section with the set of colours $\{-,+\}$ to the trees and forests $\mathfrak{T}_-$, $\mathfrak{T}$, $\mathfrak{T}_+$ and $\mathfrak{F}_-$, $\mathfrak{F}_+$. 

We denote by $\bar{\mathfrak{T}}$, resp. $\bar{\mathfrak{T}}_+$ the set of coloured trees $(T,\hat{T})$ satisfying the following properties:
\begin{enumerate}
\item $T\in \mathfrak{T}$, resp. $T\in \mathfrak{T}_+$
\item For every $n\geq 1$, each connected component of $\hat T_{[-]^n}$ is an element of $\mathfrak{F}^-_T$
\item For every $n\geq 1$, each connected component of $\hat T_{[+]^n}$ is an element of $\mathfrak{F}^+_T$
\item If $\hat T_{[+]}\neq \emptyset$, it contains all edges of type $\mathcal{E}_0$. 
\item\label{last condition above} For every $n\in \mathbb{N}$, each connected component $S$ of $\hat T_{[-]}$ satisfies either $S\subset\hat{T}_{[+]^n}$ or $S\cap\hat{T}_{[+]^n}=\emptyset$. 
\end{enumerate}

We denote by $\bar{\mathfrak{T}}_-$ the set coloured trees $(T,\hat{T})$ satisfying the following properties:
\begin{enumerate}
\item $T\in \mathfrak{T}_-$,
\item $\hat T_{[+]}= \emptyset$,
\item For every $n\in \mathbb{N}$, each connected component of $\hat T_{[-]^n}$ is an element of $\mathfrak{T}_-$ .
\end{enumerate}

%

\begin{remark}
Here, we shall actually never use the full sets $\bar{\mathfrak{T}}$ and $\bar{\mathfrak{T}}_+$ but only specific subsets thereof.
The reason we still introduce them, is that they can be used to write down explicit (non-recursive) formulae for renormalised models similarly to \cite{CH16}.
\end{remark}

\section{Models and the Renormalisation Group}\label{section regularity structure, models,...}
Finally, we turn to the task of constructing in the following order:
\begin{itemize}
\item structure spaces $\mathcal{T}^\sigma$ for the regularity structure ensemble,
\item a space of pre-models $\pmb{\Pi}$,
\item a renormalisation group $\mathfrak{G}_-$ acting on pre-models,
\item finally, a regularity structure ensemble and model $Z=(\Pi, \Gamma)$ built from certain pre-models $\pmb{\Pi}$.
\end{itemize}
The sets of trees $\mathfrak{T}$, $\mathfrak{T}_+$, $\mathfrak{T}_-$ and their coloured variants will serve as building blocks. 
\begin{assumption}\label{assumption:truncation}
We are working with a finite edge set $\mathcal{E}$. The rule $R$ from which the trees are constructed is subcritical, normal and equation-like. Furthermore, we fix a $\delta_0>0$.
\end{assumption}

\begin{remark}
Let us though mention the following features of this assumption:
\begin{itemize}
\item
Sub-criticality is needed for the definition of the renormalisation group below.
\item The rule being equation like has the advantage that one obtains finite families of vector bundles. This is strictly speaking not necessary, but  simplifies exposition.
\item Normality and a choice of $\delta_0>0$ is only needed in order to define the maps $\Gamma_{p,q}$ later. The role of $\delta_0$ will be the same as in Section~\ref{sec:homTree}.
\end{itemize}
\end{remark}

\subsection{Symmetric sets and Vector bundles from trees}\label{sec:symmetrics_sets and vect. from trees}
Recall from Section~\ref{section indexing vector bundels} that we extend any vector bundle assignment $(V^\frakt)_{\frakt\in \mfL}$ to a vector bundle assignment $(V^\sigma)_{\sigma\in \mathbb{N}^\mfL}$. For now we work with the set of types $\Lab=\NN^{\mfL}$ as labels for symmetric sets. Throughout this section we shall work with the induced vector bundle assignment $(W^\sigma)_{\sigma\in \mathfrak{S}}$ given by 
\begin{equation}\label{eq:jet_assignment1}
W^\sigma=(JV^\sigma)_{<\delta_0}\ .
\end{equation}
We assign to each $T\in \mathfrak{U}$ (equivalence class of a (coloured) labelled rooted tree) a symmetric set $\symset=\langle T \rangle$, where 
the index set $A_\symset$ consists of the elements of the equivalence class $T$ itself,\footnote{As in \cite{CCHS22} the set of nodes can be fixed to be a subset of the natural numbers, to make this well defined.} For each $t\in T$, the set 
$$ T^t_\symset:= \{e\in E_t \ : \ \mathfrak{e}(e)\in \mathcal{E}_0 \}  $$ 
is given by edges of type in $\mathcal{E}_0$, the maps $\Gamma_{\symset}$ in the definition of symmetric sets consist of tree isomorphisms restricted to the sets $T^t_\symset$ and the type map is given by $\mathfrak{t}^t_\symset (e):= \mathfrak{e}(e)_-$ . 

\begin{definition}
For for a set of trees $\mathfrak{U} \subset \bar{\mathfrak{T}}$ or $\mathfrak{U}  \subset \bar{\mathfrak{T}}_+$ 
let 
$$(\mathfrak{U})_{<\delta_0}:= \left\{ T\in \mathfrak{U} \ : \ |T'|<\delta_0 \text{ for every subtree } T'\subset T \right\}$$
and set
$$\pmb{\mathcal{U}}= \bigoplus_{T\in (\mathfrak{U})_{<\delta_0}} \langle T \rangle\ , \quad
{\mathcal{U}}= \Func_{ W} \pmb{\mathcal{U}} = \bigoplus_{T\in (\mathfrak{U})_{<\delta_0}} \Func_{ W}\langle T \rangle \ .$$
\end{definition}
For example for $\nu \in \mathcal{N}$, $\varepsilon\in {{\mathcal{E}_+\cup \mathcal{E}_0}}$ and $\mathfrak{U}\in \{ \mathfrak{T}^{\nu},\mathfrak{T}^{\nu,(-) }, \mathfrak{T}^\varepsilon_+\}$, we 
denote the thus constructed spaces by ${\mathcal{T}}^\nu, {\mathcal{T}}^{\nu,(-) }, {\mathcal{T}}^\varepsilon_+$.
With these definitions note that for
 $\sigma \in \NN^\mathfrak{L}$, 
$$ \mathcal{T}^{\sigma}= \bigoplus_{\nu\ : \ \ind(\nu)=\sigma}\mathcal{T}^\nu \ , \qquad
 \mathcal{T}^{\sigma, (-)}= \bigoplus_{\nu\ : \ \ind(\nu)=\sigma}\mathcal{T}^{\nu, (-)} \ .
 $$
 In particular all these spaces are finite dimensional by Assumption~\ref{assumption:truncation}.
 \begin{definition}
Finally, set
$$\mathcal{T}:= \{ \mathcal{T}^{\sigma}\}_{\sigma\in\Lab} \ ,\quad \mathcal{T}^{(-)}:= \{ \mathcal{T}^{\sigma,(-)}\}_{\sigma\in \Lab}\ , \quad \text{ and } \quad\mathcal{T}_+:= \{ \mathcal{T}_+^{\varepsilon}\}_{{\varepsilon\in \mathcal{E}_+\cup \mathcal{E}_0}}\ .$$
\end{definition}

\begin{remark}
If a trees $T$ contains no edges of type $\mathcal{E}_0$, then $\Func_{W}\langle T \rangle $ is a trivial one dimensional vector bundle and we often single out a non-vanishing section which we declare to be canonical. 
This will be most frequently done for trees of the form $T=\Xi^{\varepsilon}$ for some $\varepsilon \in \mathcal{E}_-$. We shall use a subscript to denote $\Xi^{\varepsilon}_p \in \Func_{W}\langle \Xi^{\varepsilon} \rangle$, this canonical section evaluated at $p\in M$.
\end{remark}
\begin{remark}
For $\varepsilon \in \mathcal{E}_0$ we write $\delta^{\varepsilon}$ for the tree consisting of exactly one edge of type $\varepsilon$. We set ${J}^{\varepsilon}=\Func_{W} \langle \delta^{\varepsilon} \rangle $ and note it is canonically isomorphic to $(JV^{\varepsilon_-})_{<\delta_0}$. Furthermore, we write
 $  J:=\{ J^{\varepsilon} \}_{\varepsilon \in \mathcal{E}_0}$.
\end{remark}

\begin{definition}\label{def prod and adding edge}
The tree product $\mathcal{M}$ as well as the operation $\mathcal{I}_{\mathfrak{k}}$ extend to morphisms of symmetric sets and thus define corresponding bundle morphisms via the map $\Func_W$. We keep the convention of abbreviating $\Func_W\mathcal{M}$ by ``$\;\cdot\,$'', as well as simply writing $\mathcal{I}_\frakk$ for the bundle morphism
$$\Func_W\mathcal{I}_\mathfrak{k}: \Func_W \langle T \rangle \to {\mathcal{T}}^\frakk \ , $$
whenever $T\in \mathfrak{T}^{(-)}$ is such that $\mathcal{I}_{\mathfrak{k}} ({T})\in ( \mathfrak{T}^{(-)})_{<\delta_0}$.
\end{definition}
Note that we implicitly used the morphism from Definition~\ref{def:morphisms_for_colouring} in the above definition.
\subsection{Grading} 
 In this section we introduce the appropriate bi-grading on the vector bundles ${\mathcal{T}}^\nu$ for $\nu\in \mathcal{N}$.
First, define the new set of types $\bar \Lab= \Lab\times \NN_{<\delta}$ and the type decomposition 
\begin{equation}\label{eq:jet_type_decomposition}
\bar \Lab=\Lab\times \NN_{<\delta_0}  \ni(\sigma,n) \tto \sigma \in \Lab 
\end{equation}
 as well as the space decomposition 
$$(JV^\sigma)_{<\delta_0}= \bigoplus_{n=0}^{\lfloor \delta_0 \rfloor} (JV^\sigma)_{n} \quad \text{ i.e.\ } \quad \bar W^{(\sigma,n)}=  (JV^\sigma)_{n} \ .$$
For $\pmb{\tau} \in L_{\langle T \rangle}$, where $L_{\langle T \rangle}$ was introduced above (\ref{decomp}),  and $\symset_{\pmb{\tau}}$ the corresponding symmetric set, let
$$|\pmb{\tau}|=|T|+ \sum_{e\in T^a_{\symset_{\pmb{\tau}}}} n(e),$$
which is clearly independent of $a\in A_{\symset_{\pmb{\tau}}}$ .
Thus for $\mathfrak{U}\subset \bar{\mathfrak{T}}$ one can set $$A_{\mathfrak{U}, \delta_0}= \{ (|\pmb{\tau} |, \| T\|_{\delta_0}) \colon \pmb{\tau}\in Y_{\langle T \rangle}, T\in \mathfrak{U} \} \ .$$ 
Since
$$\Func_{W}(\langle T \rangle)=  \Func_{\bar W} \circ \proj^*\langle T \rangle=\Func_{\bar W} \bigoplus_{\pmb{\tau}\in Y_{\langle T \rangle}} \symset_{\pmb{\tau}}$$
we find that $\mathcal{U}= \bigoplus_{(\alpha,\delta)\in A_{\mathfrak{U},\delta_0 }} \mathcal{U}_{\alpha,\delta}$ is a bi-graded vector bundle.

Finally, we set $A$ to consist of all $\alpha \in \mathbb{R}$ such that there exists a pair $(\alpha, \delta)\in A_{\mathfrak{T}, \delta_0}$ and similarly $\triangle$ to consist of $\delta \in \mathbb{R}$ such that there exists $(\alpha, \delta)\in A_{\mathfrak{T},\delta_0}$.

%

\begin{remark}
Note that for the induced decomposition $J^\varepsilon = \bigoplus_{\alpha,\delta} (J^\varepsilon)_{\alpha,\delta}$ one has $\delta=\delta_0$ for all summands. As already mentioned, sometimes one can ``by hand'' increase this value to $+\infty$ for some subspace, c.f.\ Remark~\ref{rem:nonstandard_polynomial}.
\end{remark}

\subsection{Construction of the maps $\pmb{\Pi}$}

We fix kernel assignments $ \{K_{\mathfrak{k}}\}_{\mathfrak{k}\in \mathcal{E}_+}$, where $K_{\mathfrak{k}}$ is a kernel from $V^{\mathfrak{k}_+}$ to $V^{\mathfrak{k}_-}$ that satisfies Assumption~\ref{Assumption on Kernel} with $\beta = |\mathfrak{k}|$ for each $\mathfrak{k}\in \mathcal{E}_+$ . Furthermore, fix an admissible realisation $R$ of $J$, i.e.\ admissible realisations of $JV^{{\sigma}}$ for each $\sigma\in \mathbb{N}^{\mathfrak{L}}$.

\begin{definition}\label{def admissible bfPi}
We call a map $\pmb{\Pi} : \mathcal{T}^{(-)}\to L(\mathfrak{Dif}, \mathcal{C}^\infty (V))$
admissible, if the following properties are satisfied:
\begin{enumerate}
\item For each $\sigma\in \Lab$ the restriction of $\pmb \Pi$ yields a linear map 
$$ \mathcal{T}^{\sigma,(-)} \to L(\mathfrak{Dif}, \cC^\infty(V^{\sigma}))$$
and for each $\tau\in \Func_{W}(\langle (T,\hat{T}) \rangle)\subset \mathcal{T}^{(-)}$ one has $\pmb{\Pi}\tau\in  L\big(\mathfrak{Dif}_{T_-} ,\  \cC^\infty(V^{\ind(\rho_T)})\big)$.
\item For each $\tau\in J$,  $\pmb{\Pi}\tau= R\tau$.
\item For each $\mathfrak{k}\in \mathcal{E}_+$, $\pmb{\Pi}\mathcal{I}_\mathfrak{k} \tau = K_{\mathfrak{k}}(\pmb{\Pi} \tau)$.
\end{enumerate}
\end{definition}
\begin{remark}
Note that since $\mathcal{T}^{(-)}$ is a finite family of vector bundles, we can write\linebreak $\pmb{\Pi}\in L\left(\mathcal{T}^{(-)}, \bar{L}(\mathfrak{Dif}, \cC^{\infty}(V) )\right)$ where $\mathcal{T}^{(-)}=\{\Func_{ W}\langle T \rangle \}_{T\in \mathfrak{T}^{(-)}}  $ and 
$\bar{L}(\mathfrak{Dif}, \cC^{\infty}(V))=\linebreak \{L(\mathfrak{Dif}_{T_-} ,\  \cC^\infty(V^{\ind(\rho_T)})\}_{T\in \mathfrak{T}^{(-)}} $.
\end{remark}

\begin{remark} 
Due to the canonical isomorphisms $L(\mathbb{R}, \cC^\infty(V^\sigma))\simeq \cC^\infty(V^\sigma)$ and $\mathfrak{Dif}_{\emptyset}\simeq\mathbb{R}$ combined with the fact that $\hat{T}_-=\emptyset$ for each $T\in \mathfrak{T}$, we can naturally see $\pmb{\Pi} |_{\mathcal{T}}$ as an element of $L(\mathcal{T},\mathcal{C}^\infty
(V))$, i.e.\ for each $\sigma \in \Lab$
$$\pmb{\Pi} |_{\mathcal{T}^\sigma}: \mathcal{T}^\sigma \to  \cC^\infty(V^\sigma) \ .$$ 
It turns out that $\pmb{\Pi} |_{\mathcal{T}}$ encodes all the information required to define a model. The reason we defined $\pmb{\Pi}$ on $\mathcal{T}^{(-)}$ and not just on $\mathcal{T}$ is to be able to define the action of the renormalisation group. 
This is related to what is done in \cite{BHZ19} by taking the completion of the rule. 
\end{remark}

\begin{remark}\label{remark attaching trees to negative trees}
We describe a construction of coloured trees, building on the one in Remark~\ref{rem gluing construction first version} for uncoloured trees. Denote by $\mathring{\mathfrak{T}}^{(-)}\subset \mathfrak{T}^{(-)}$ trees with uncoloured roots.
\begin{enumerate}
\item Let $\tau\in \mathfrak{T}_-$, and $G=\{G_n\}_{n\in N_\tau}$ be a family of trees where each $G_n \in  \mathring{\mathfrak{T}}^{(-)}\cup\{\bullet\}$, here $\bullet$ denotes the tree consisting of only one (uncoloured) node. We define a new coloured tree $\tau(G)=(\mathring{\tau}(G), \hat{\tau}(G))$ as follows:
\begin{itemize}
\item The tree $\mathring{\tau}(G)$ is as in Remark~\ref{rem gluing construction first version}, i.e.
$$\mathring{\tau}(G)= \tau \cup \bigcup_{n\in N_\tau} G_n \Big / {\sim}$$
where $\sim$ identifies the root of each $G_n$ with the node $n\in N_\tau$. 
\item The colouring is given by
\[
    \hat{\tau}(G) = \left\{ \begin{array}{llr}
   		[-] , & \text{on } & \tau\subset \mathring{\tau}(G) \\
        \hat G_n, & \text{on } & G_n\setminus \rho_{G_n}\subset \mathring{\tau}(G_n)\\
        \end{array}\right\} \ .
  \]
\end{itemize} 
\item We define for each $\tau\in \mathfrak{T}_-$ a partial map $$\tau: (\mathring{\mathfrak{T}}^{(-)}\cup \{\bullet\})^{N_\tau}\nrightarrow \mathfrak{T}^{(-)}\ $$ where $G=\{G_n\}_{n\in N_\tau}$ is in the domain $\mathfrak{U}_\tau$ of this map if $\tau (G)\in (\mathfrak{T}^{(-)})_{<\delta_0}$. 
Note that this partial map extends canonically to symmetric sets by first considering the tensor product
 symmetric set $\symset_G= \otimes_{n\in N_\tau}\langle G_n\rangle $ as in Remark~\ref{rem:disj_union_symsets} and considering the canonical extension of the the map $\tau$ to $\symset_G$ as in Remark~\ref{smaller symmetry group}.
\item 
We define a map
 \begin{equation}\label{eqeq}
\tau\in L\Big(  \Big\{\bigotimes_{n\in N_\tau} \Func_W \langle G_n\rangle\Big\}_{G\in \mathfrak{U}_\tau}, \mathcal{T}^{(-)} \Big) \ ,
 \end{equation}
which by abuse of notation is denoted by the same symbol. 
It acts on the space $\bigotimes_{n\in N_\tau} \Func_W \langle G_n\rangle$
as the following composition
 \[ \begin{tikzcd}
\bigotimes_{n\in N_\tau} \Func_W \langle G_n\rangle
\arrow{r}
& \Func_W \symset_G  \arrow{r}{\Func_W\tau} 
& \mathcal{T}^{\ind(\mathfrak{n}(\rho_\tau)), (-)}\ ,
\end{tikzcd}
\] 
where the first arrow denotes the canonical isomorphism, c.f.\ Remark~\ref{rem:can_morph}.
\end{enumerate}
Given a tree with coloured root we can decompose it.
\begin{enumerate}
\item Let $T\in \mathfrak{T}^{(-)}\setminus \mathring{\mathfrak{T}}^{(-)}$, observe that the connected component of the root in $T_{-}$ is the unique
$\tau\in \mathfrak{T}_-$ such that for some $G=\{G_n\}_{n\in N_\tau}\in \mathring{\mathfrak{T}}^{(-)}\cup \{\bullet\}$ one has 
$$T= \tau(G) \ .$$ We write $T\setminus\tau$ for the sub forest $\bigcup_n G_n \subset T$ and also use the notation $(T\setminus\tau)_{\geq n} := G_n$.
Here $\tau$ is the connected component of the root in $T_{-}$.
\item\label{last item} Note that for any $\eta\in \Func_{ W}\langle T \rangle $  , where $T\in \mathfrak{T}^{(-)}\setminus \mathring{\mathfrak{T}}^{(-)}$ there exist $\tau\in \mathfrak{T}_-$ and $G=\{G_n\}_n\in \mathfrak{U}_\tau$, as well as $\eta^n\in \Func_{W} \langle G_n\rangle$ such that 
$$ \eta = \tau (\mathcal{H} ) \ ,$$
where we write $\mathcal{H}=\{\eta^n\}_{n\in N_{\tau}}$. (Note that this decomposition is far from unique.)
\end{enumerate}
\end{remark}
%

\begin{definition}
We call $\pmb{\Pi}$ an admissible lift\footnote{Note that $\varepsilon\in \mathcal{E}_-$ is \textit{not} a regularisation scale.} of $\{\xi^{\varepsilon}\}_{\varepsilon\in \mathcal{E}_-}$,
where each $\xi^{\varepsilon}\in \mathcal{C}^\infty(V^{{\varepsilon}_{-}})$, if the map $\pmb{\Pi}$ is admissible and furthermore satisfies for all $ \varepsilon\in \mathcal{E}_-$, $$\pmb{\Pi}(\Xi^{\varepsilon})= \xi^{\varepsilon}\ .$$
We call the map $\pmb{\Pi}$ the canonical lift of $\{\xi^{\varepsilon}\}_{\varepsilon\in \mathcal{E}_-}$,
if additionally the following conditions are satisfied.
\begin{itemize}
\item For $\eta^{1},\ldots,\eta^n \in \mathring{\mathcal{T}}^{(-)}$ such that $\eta^{i}\in \Func_{W}(T^i)$ for planted $T^i\in \mathring{\mathfrak{T}}^{(-)}$ and writing $T=T^1\cdots T^n\in \mathfrak{T}^{(-)}$, 
one has
\begin{equation}\label{eq:canonical_bfPi1}
\pmb{\Pi}\big(\prod_{i=1}^n \eta^i \big) = \Tr\big( {\bigodot_{i=1}^n }  \pmb{\Pi}(\eta^i) \big) \  ,
\end{equation}
where the former product $\prod$ was introduced in Definition~\ref{def prod and adding edge} and the latter is the product canonically induced by $\bigodot$ from Remark~\ref{rem canoncial product on sections} 
$$\bigotimes_{i=1}^n L\big(\mathfrak{Dif}_{T^i_-} ,\  \cC^\infty(V^{\ind (\rho_{T^i})} )\big)\to L\big(\mathfrak{Dif}_{{T}_-} ,\  \cC^\infty(V^{\sum_i\ind (\rho_{T^i})})\big) \ ,$$
which is well defined using the canonical identification $T_- \simeq \bigcup_i (T^i)_- $.
\item 
 For every $\eta\in \mathcal{T}^{(-)}$ of the form $\eta = \tau_0 (\mathcal{H}) $ as in Point~\ref{last item} of Remark~\ref{remark attaching trees to negative trees} and $\mathcal{A}\in \mathfrak{Dif}_{T_-}$ of the form $\mathcal{A}= \otimes_{\tau \in \Con(T_-)} \mathcal{A}_\tau $, c.f.\ Remark~\ref{remark symmetrisation}, one has 
\begin{equation}\label{coloured root canonical bfpi}
\pmb{\Pi} \eta (\mathcal{A})= \mathcal{A}_{\tau_0}\Big( \prod_{n\in N_{\tau_0}} \pmb{\Pi}\eta_n \big(\otimes_{\tau_j \in \Con(T_n)_-} \mathcal{A}_{\tau_j}\big)(\cdot_{n})\Big) \ ,  
\end{equation}
where we canonically set $\pmb{\Pi}{\bullet}=1$ and use $\cdot_n$ to emphasise the variable corresponding to $n\in N_{\tau_0}$ and where 
$\mathcal{A}_{\tau_0}\in \mathfrak{Dif}_T$ acts on
$$\prod_{N_{\tau_0}\setminus (\rho_{\tau_0}\cup L_{\tau_0})} \cC^\infty(V^{\dif_{\tau_0}(n)} )\times \cC^\infty\left(V^ \sigma\right)$$ for 
$\sigma=\ind (\rho_{G_{\rho_{\tau_0}}}) $ as described in Remark~\ref{rem pairing differential operator}. 
\end{itemize}
\end{definition}

\begin{remark}
Note that the restriction of the canonical lift $\pmb{\Pi}$ of smooth noises to an element of $L(\mathcal{T}, \cC^\infty(V))$ is not quite multiplicative with respect to the products on the domain and target, c.f.\ Definition~\ref{def prod and adding edge} and Remark~\ref{rem canoncial product on sections} due to the operator $\Tr$ in \eqref{eq:canonical_bfPi1}, c.f.\ Remark~\ref{remark incompatibilit product}.
\end{remark}

The following definition, while not crucial for the arguments, will significantly simplify notation and is closely related to preparation maps and recursive renormalisation as introduced in \cite{Bru18} and used in the recent work \cite{BB21} for related purposes.
\begin{definition}\label{def:mathringbfPi}
For any admissible map $\pmb{\Pi} : \mathcal{T}^{(-)}\to L(\mathfrak{Dif}, \mathcal{C}^\infty (V))$, we define an induced map 
$\mathring{\pmb{\Pi}} : \mathcal{T}\to \mathcal{C}^\infty (V)$ as the unique map satisfying the following properties:
\begin{itemize}
\item for each planted $T \in \mathfrak{T}$, one has $$\mathring{\pmb{\Pi}}|_{\Func_{ W}\langle T \rangle} ={\pmb{\Pi}}|_{\Func_{ W}\langle T \rangle}$$
\item For $\eta^{1},\ldots,\eta^n \in \mathring{\mathcal{T}}$ such that $\eta^{i}\in \Func_{V}(T^i)$ where each $T^i$ is planted,
one has
$$ \mathring{\pmb{\Pi}}\big(\prod_{i=1}^n \eta^i \big) = \Tr\big( \bigodot_{i=1}^n  \mathring{ \pmb{\Pi}}(\eta^i) \big) \ .$$
\end{itemize}
\end{definition}

\subsection{A renormalisation group}\label{sec:renormalisation group}

In this section we shall construct a renormalisation group.
We define $\mathfrak{G}_{-}$ to consist of all maps $g:  {\mathfrak{F}}_-^{(-)} \to L (\mathfrak{Dif},\mathfrak{Dif})$ satisfying the following properties:
\begin{itemize}
\item for each $(F,\hat{F})\in{\mathfrak{F}}_-^{(-)}$,  $g(F,\hat{F}) \in L(\mathfrak{Dif}_{F_-}, \mathfrak{Dif}_F) ,$
\item $g(\emptyset)=1$,
\item if $(T,\hat{T})\in \mathfrak{T}_-^{(-)}$ is such that $\hat{T}_-= T$, then $g(T)= \text{id}$
\item $g$ is multiplicative with respect to the forest product on ${\mathfrak{F}}_-^{(-)}$ in the sense that the following diagram commutes for any $(F^1,\hat{F}^1), (F^2,\hat{F}^2)\in \mathfrak{F}_-^{(-)}$ and $(F,\hat{F})=(F^1,\hat{F}^1)\cdot (F^2,\hat{F}^2)$

\begin{equation}\label{diagram multiplicativity of ren group}
\begin{tikzcd} 
\mathfrak{Dif}_{\hat{F}_-^1}\otimes \mathfrak{Dif}_{\hat{F}_-^2} \arrow{r}{g(F^1,\hat{F}^1)\otimes g(F^2,\hat{F}^2)} &[6em] \mathfrak{Dif}_{\hat{F}^1}\otimes\mathfrak{Dif}_{\hat{F}^2} \arrow{d}\\
\mathfrak{Dif}_{\hat{F}_-}\arrow{u}   \arrow{r}{g(F,\hat{F})}  & \mathfrak{Dif}_{{F}}  \ ,
\end{tikzcd}  
\end{equation}
where the upwards pointing arrow is the canonical injection $\Func_\mathfrak{D} \mathcal{I}$ and the downwards pointing arrows is the canonical symmetrisation map $\Func_\mathfrak{D}\Pi$ from Remark~\ref{remark symmetrisation}.
\end{itemize}

\begin{remark}
For $(F,\hat{F}) \in \mathfrak{F}^{(-)}$ write $\Vec(F,\hat{F})$ for the one dimensional vector space\linebreak generated by $(F,\hat{F})$ and let 
$\mathcal{F}^{(-)}= \{ \Vec(F,\hat{F})\}_{(F,\hat{F}) \in \mathfrak{F}^{(-)}}$ as well as
$\bar{L}(\mathfrak{Dif}, \mathfrak{Dif}) =\linebreak \{ L(\mathfrak{Dif}_{F_-}, \mathfrak{Dif}_F ) \}_{(F,\hat{F}) \in \mathfrak{F}^{(-)}}$.
Then, we can interpret $\mathfrak{G}_- \subset L\big(\mathcal{F}^{(-)}, \bar{L}(\mathfrak{Dif}, \mathfrak{Dif} ) \big)$ . 
\end{remark}
Next we define the group structure on $\mathfrak{G}_-$.
\begin{definition}
For $f,g \in \mathfrak{G}_-$ and $(T,\hat{T})\in {\mathfrak{T}}_-^{(-)}$, set
\begin{align*}
(f\star g) (T,\hat{T})&= \sum_{F\in \mathfrak{F}^-_T, \hat{T}_-\subset F}  f(T,\sqcup_- F)\circ g(F, \hat{T}) ,
\end{align*}
where $\sqcup_- F$ was introduced in Remark~\ref{colouring operations}.
\end{definition}

%

\begin{prop}\label{Prop Assiciativity}
Let $f,g, h \in \mathfrak{G}_-$, then $f\star (g\star h)= (f \star g)\star h$, 
\end{prop}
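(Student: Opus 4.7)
The plan is to evaluate both sides of the claimed identity on an arbitrary coloured tree $(T,\hat T)\in \mathfrak{T}_-^{(-)}$, expand using the definition of $\star$ twice in each case, and check that the resulting double sums coincide term by term. Associativity in $L(\mathfrak{Dif},\mathfrak{Dif})$ then takes care of the individual summands.

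First I would handle the right-hand side, which is the easier of the two. Applying the definition of $\star$ twice and using that the ``minus-coloured'' subforest of the coloured tree $(T,\sqcup_- F)$ is simply $F$, I obtain
$$\bigl((f\star g)\star h\bigr)(T,\hat T) \;=\; \sum_{\substack{F\in \mathfrak{F}_T^-\\ \hat T_-\subset F}} \ \sum_{\substack{F'\in \mathfrak{F}_T^-\\ F\subset F'}} f(T,\sqcup_- F')\circ g(F',\sqcup_- F)\circ h(F,\hat T).$$

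For the left-hand side, one application of the definition yields
$$\bigl(f\star(g\star h)\bigr)(T,\hat T)\;=\;\sum_{\substack{F'\in \mathfrak{F}_T^-\\ \hat T_-\subset F'}} f(T,\sqcup_- F')\circ (g\star h)(F',\hat T),$$
and the main step is to re-express $(g\star h)(F',\hat T)$ as a single sum over subforests of $F'$. Since $F'$ is a \emph{forest} (not necessarily a tree), both $g$ and $h$ are by definition multiplicative over connected components via the commuting diagram \eqref{diagram multiplicativity of ren group}. Applying the tree-level definition of $\star$ on each $\tau\in \Con(F')$, reassembling the resulting tensor products through the canonical symmetrisation/injection pair $\Func_\mathfrak{D}\Pi,\ \Func_\mathfrak{D}\mathcal I$ from Remark~\ref{remark symmetrisation}, and using that the set of subforests of $F'$ factors as the product over $\tau\in\Con(F')$ of subforests of $\tau$, I would arrive at
$$(g\star h)(F',\hat T)\;=\;\sum_{\substack{F\in \mathfrak{F}_{F'}^-\\ \hat T_-\subset F}} g(F',\sqcup_- F)\circ h(F,\hat T).$$
Substituting this back and noting that subforests of $F'$ in $\mathfrak{F}_{F'}^-$ are exactly subforests of $T$ in $\mathfrak{F}_T^-$ contained in $F'$, both sides are seen to be indexed by the same set of pairs $\{(F,F')\,:\,\hat T_-\subset F\subset F',\ F,F'\in \mathfrak{F}_T^-\}$, with identical summands up to the order of composition.

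The main technical obstacle will be the multiplicative factorisation of $(g\star h)(F',\hat T)$: one must verify that combining the component-wise sums through the symmetrisation map $\Func_\mathfrak{D}\Pi$ produces no spurious combinatorial factors and agrees with the direct definition of $\star$ applied to the forest $F'$. This is essentially a naturality check: the forest product of coloured forests decomposes the sum over subforests of a product into a product of sums, and Remark~\ref{rem:symmetrisation_injection inv} guarantees that $\Func_\mathfrak{D}\Pi\circ\Func_\mathfrak{D}\mathcal I=\mathrm{id}$ on each connected component, so the composition $g(F',\sqcup_- F)\circ h(F,\hat T)$ assembled component-wise via diagram \eqref{diagram multiplicativity of ren group} coincides with the composition of the fully symmetrised morphisms in $L(\mathfrak{Dif}_{\hat T_-},\mathfrak{Dif}_{F'})$. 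Once this bookkeeping is carried out, associativity of composition in each $L(\mathfrak{Dif}_{F_-},\mathfrak{Dif}_{F'})$ finishes the proof.
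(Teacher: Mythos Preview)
Your proposal is correct and follows essentially the same route as the paper's proof: both expand the left- and right-hand sides via the definition of $\star$ into a double sum over nested subforests $\hat T_-\subset F\subset F'$ of $T$ and match term by term. The paper's proof is terser and silently passes through the multiplicative extension of $g\star h$ to forests that you rightly flag as the main technical point; your explicit discussion of how the commuting diagram \eqref{diagram multiplicativity of ren group} and Remark~\ref{rem:symmetrisation_injection inv} ensure $(g\star h)(F',\hat T)=\sum_{F\subset F',\,\hat T_-\subset F} g(F',\sqcup_- F)\circ h(F,\hat T)$ fills in exactly what the paper leaves implicit.
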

\begin{proof}
Indeed, we find for $f,g,h\in \mathfrak{G}_-$ and $(T,\hat{T})\in \mathfrak{T}_-^{(-)}$
\begin{align*}
\big(f\star (g\star h)\big) (T, \hat{T}) &= \sum_{\tilde{F}\in \mathfrak{F}_T^{-}, \hat{T}_-\subset \tilde{F}}
f (T, \sqcup_- \tilde{F})\circ  (g\star h)(\tilde{F},\hat{T}) \\
&= \sum_{F,\ \tilde{F}\in \mathfrak{F}_T^{-}: \hat{T}_-\subset F\subset \tilde{F}} f(T, \sqcup_- \tilde{F}) \circ g(\tilde{F}, \sqcup_-{F})\circ h(F,\hat{T})\\
&=  \sum_{{F}\in \mathfrak{F}_T^{-}:\hat{T}\subset {F}} (f\star g) (T, \sqcup_- {F}) \circ h( {F}, \hat{T})\\
&=\big((f\star g)\star h\big) (T, \hat{T})
\end{align*}
\end{proof}

\begin{prop}\label{prop group structure}
Let $e\in \mathfrak{G}_-$ be the unique element satisfying $e( \emptyset)=1$ and
$$ e(T,\hat{T})=\begin{cases} 
\id_{\mathfrak{Dif}_T} & \text{if } T_-=T \ ,\\
0 & \text{else.} 
\end{cases}$$
Then $(\mathfrak{G}_-,\ \star, e)$ forms a group with $e$ as the unit.

\end{prop}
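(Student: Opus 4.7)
The proof will proceed in three steps. First, I will verify that $e$ as defined indeed lies in $\mathfrak{G}_-$ and is a two-sided identity. Second, I will construct right inverses by a recursive procedure. Third, I will invoke Proposition~\ref{Prop Assiciativity} to upgrade right inverses to two-sided inverses.

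\emph{Step 1 (identity element).} The first three defining conditions for $\mathfrak{G}_-$ are immediate from the formula for $e$. For the multiplicativity condition, i.e.\ the commutativity of \eqref{diagram multiplicativity of ren group} applied to $g = e$, the two sides of the diagram vanish unless the relevant trees satisfy $T^i_- = T^i$, in which case both sides reduce to the canonical identity under $\Func_\mathfrak{D}\Pi \circ \Func_\mathfrak{D}\mathcal{I} = \id$ of Remark~\ref{rem:symmetrisation_injection inv}. For the identity property, fix $g \in \mathfrak{G}_-$ and $(T,\hat T) \in \mathfrak{T}_-^{(-)}$. In $(e \star g)(T,\hat T) = \sum_{F \in \mathfrak{F}^-_T,\; \hat T_- \subset F} e(T,\sqcup_- F)\circ g(F,\hat T)$, the factor $e(T, \sqcup_- F)$ is zero unless $F = T$, so only the term $F = T$ survives and $e(T,\sqcup_- T) = \id$, giving $(e \star g)(T,\hat T) = g(T,\hat T)$. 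Dually, in $(g \star e)(T,\hat T)$, the factor $e(F, \hat T)$ vanishes unless $\hat T$ restricted to $F$ makes $F$ fully negatively coloured, which combined with $\hat T_- \subset F$ forces $F = \hat T_-$; since then $\sqcup_- \hat T_- = \hat T$ (given that $\hat T$ already has colour $[-]$ on $\hat T_-$ and empty colour elsewhere since $\hat T_+ = \emptyset$), we get $(g \star e)(T,\hat T) = g(T,\hat T) \circ \id = g(T,\hat T)$.

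\emph{Step 2 (right inverses on trees).} Given $g \in \mathfrak{G}_-$, I will define $\bar g \in \mathfrak{G}_-$ satisfying $g \star \bar g = e$ by induction on $|E_T|$. The inductive definition is dictated by the equation
\begin{equ}
e(T,\hat T) = g(T,\sqcup_- T)\circ \bar g(T,\hat T) + \sum_{\substack{F \in \mathfrak{F}^-_T,\; \hat T_- \subset F \\ F \subsetneq T}} g(T,\sqcup_- F)\circ \bar g(F,\hat T)\;,
\end{equ}
which, using $g(T,\sqcup_- T) = \id$, yields
\begin{equ}
\bar g(T,\hat T) := e(T,\hat T) - \sum_{\substack{F \in \mathfrak{F}^-_T,\; \hat T_- \subset F \\ F \subsetneq T}} g(T,\sqcup_- F)\circ \bar g(F,\hat T)\;.
\end{equ}
The right-hand side only involves $\bar g$ on forests with strictly fewer edges than $T$, so this recursion terminates. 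For $(T,\hat T)$ with $\hat T_- = T$ the sum is empty and the formula gives $\bar g(T,\sqcup_- T) = \id$, consistent with the third axiom. We then extend $\bar g$ to forests by multiplicativity, i.e.\ by declaring that the diagram \eqref{diagram multiplicativity of ren group} commutes with $g$ replaced by $\bar g$. The verification that $g \star \bar g = e$ on forests (not just trees) follows because $\star$ interacts well with the forest product: both $g$ and $\bar g$ are multiplicative, and the sum defining $\star$ factorises along connected components of the forest, so multiplicativity of $g \star \bar g$ reduces to multiplicativity of $e$. This is the step that requires the most bookkeeping, since one has to chase the canonical injection and symmetrisation maps from Remark~\ref{remark symmetrisation} through the sum over sub-forests.

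\emph{Step 3 (two-sided inverses).} Applying Step~2 to $\bar g$ produces $\bar{\bar g} \in \mathfrak{G}_-$ with $\bar g \star \bar{\bar g} = e$. Using associativity (Proposition~\ref{Prop Assiciativity}) and Step~1,
\begin{equ}
g = g \star e = g \star (\bar g \star \bar{\bar g}) = (g \star \bar g) \star \bar{\bar g} = e \star \bar{\bar g} = \bar{\bar g}\;,
\end{equ}
so $\bar g \star g = \bar g \star \bar{\bar g} = e$, and $\bar g$ is a two-sided inverse of $g$. This finishes the proof.

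\emph{Main obstacle.} The only genuinely delicate point is verifying in Step~2 that the recursively defined $\bar g$ on trees extends to a multiplicative map on $\mathfrak{F}_-^{(-)}$ in such a way that $g \star \bar g = e$ continues to hold on forests. This amounts to checking compatibility of the sum over sub-forests in the definition of $\star$ with the canonical factorisation morphisms $\Func_\mathfrak{D}\mathcal{I}$ and $\Func_\mathfrak{D}\Pi$ from Remark~\ref{remark symmetrisation}; the routine but slightly tedious verification is a direct consequence of the functoriality of $\Func_\mathfrak{D}$ together with the fact that, for a forest $F = F^1 \cdot F^2$, the sub-forests of $F$ respecting the negative colouring are exactly products of such sub-forests of $F^1$ and $F^2$.
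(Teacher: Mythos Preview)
Your proof is correct and takes a genuinely different route from the paper's. Both arguments verify the unit property and invoke associativity to pass from a one-sided to a two-sided inverse, but they diverge in how the inverse is constructed.

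The paper introduces an auxiliary notion of \emph{potential depth} for coloured trees $(T,\hat T)$ and a strict partial order $\ll_{(T,\hat T)}$ on subforests, then defines a \emph{left} inverse $\mathcal{A}g$ recursively by
\[
\mathcal{A}g(T,\hat T) = -g(T,\hat T) - \sum_{\hat T_- \ll F \ll T} \mathcal{A}g(T,\sqcup_- F)\circ g(F,\hat T)\;,
\]
where the recursion is over the \emph{same} tree $T$ with progressively richer negative colourings (hence lower potential depth). Your recursion, by contrast, solves for a \emph{right} inverse and moves to strictly smaller subforests $F\subsetneq T$, inducting on edge count rather than on the colouring. The paper's recursion never leaves the tree $T$, so it avoids having to evaluate the inverse on non-connected forests inside the recursive step; the price is the extra machinery of potential depth and $\ll$. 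Your approach is more elementary in that respect, but you pay by having to interleave the definition on trees with the multiplicative extension to forests (which you correctly identify as the main bookkeeping point). Both extend to forests by the same multiplicativity diagram~\eqref{diagram multiplicativity of ren group}, and both close the argument via the standard monoid trick using Proposition~\ref{Prop Assiciativity}.
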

Before proceeding with the proof, we introduce for a coloured tree $(T,\hat{T})\in \mathfrak{T}_-^{(-)}$ and subforests $F_1,F_2\subset T$ such that
$\hat{T}_-\subset F_1$ and $\hat{T}_-\subset F_2$ the notation 
$$F_1\ll_{(T,\hat{T})} F_2$$ to mean 
\begin{itemize}
\item $F_1\subset F_2$ and $F_1\neq F_2$
\item for each connected component of $A$ of $F_1$ there exists a connected component $B$ of $F_2$ such that
$A\subset B$ and $A=B$ only if $B$ is a connected component of $\hat{T}_-$.
\end{itemize}

We also introduce the notion of \textit{potential depth} $m$ of a tree $(T,\hat{T})\in \mathfrak{T}_-^{(-)}$.
If $\hat T_{[-]}= T$ we set $m=0$. Else $m$ is the maximal number, such that there exists a  colouring ${\tilde{T}}$ of $T$ with colour set $C=\{-\}$, such that $(T, \tilde{T})\in \bar{\mathfrak{T}}_- $ satisfies the following properties:
\begin{itemize}
\item $\hat T_{-}=\tilde T_{[-]^m}$ (which could be the empty forest),
\item for each $n\in \mathbb{N}$, $\tilde T_{[-]^{n}}\in \mathfrak{F}^-_T$ ,
\item for each $0<n< m$ it holds that $\tilde{T}_{[-]^n}\neq \emptyset$, 
\item for each $0<n\leq m$ it holds that $\tilde T_{[-]^{n}} \ll_{(T,\hat{T})} \tilde{T}_{[-]^{n-1}}$ .
\end{itemize}

\begin{remark}
While we will not need this here, the notion of potential depth naturally extends to forests. Then it is natural to set the potential depth of the empty forest to be $0$ and the potential depth of a non-empty forest to be the maximal potential depth of its connected components. 
\end{remark}

\begin{remark}
Let $(T, \hat{T})\in  \mathfrak{T}_-^{(-)}$ be a coloured tree s.t.\ $\hat{T}_{[-]}\neq \emptyset$ and which has potential depth $k$. Then $(T, 0)$ has potential depth $>k$.
\end{remark}

\begin{remark}\label{remark potential depth forest interval}
Note that for any coloured $(T, \hat{T})\in \mathfrak{T}_-^{(-)}$ tree such that $\hat{T}_-\neq T$, we have the following disjoint union
$$ \{F\in \mathcal{F}_T^- : \hat{T}_-\subset F\} = \{\hat{T}_-\} \cup \{T\} \cup \{F\in \mathcal{F}_T^- : \hat{T}_-\ll_{(T,\hat{T})} F\ll_{(T,\hat{T})} T\}$$ 
where the last term is empty if and only if $(T, \hat{T})$ has potential depth $1$. 
\end{remark}

\begin{remark}\label{remark potential depth}
Let $(T,\hat T)$ have potential depth $n>1$ and $F\in \mathfrak{F}^-_T$ be such that $\hat{T}_-\ll_{(T,\hat{T})} F\ll T$ (which implies that $F\neq\hat{T}_-$ and $F\neq T$), then the tree $(T,\sqcup_- F)$ has potential less than $n$. 
\end{remark}

\begin{proof}[Proof of Proposition~\ref{prop group structure}]
It is clear that the map $e$ is a unit, as one checks directly. Associativity of $\star$ was observed in the previous proposition. 
Now we define for any $g\in \mathfrak{G}_-$ a new element $\mathcal{A}g\in\mathfrak{G}_-$.
\begin{itemize}
\item Define $\mathcal{A}g$ for the empty forest, as well as trees of potential depth $0$ as imposed in the definition of $\mathfrak{G}_-$.
\item If $(T,\hat T)$ has potential depth $n\geq 1$, define recursively
$$\mathcal{A}g(T,\hat T)= - g(T,\hat T) - \sum_{F\in \mathfrak{F}^-_T, T_-\ll_{(T,\hat{T})} F\ll_{(T,\hat{T})} T} \mathcal{A}g (T,\sqcup_- F)\circ g(F,\hat{T}) \ $$
which is well defined since
\begin{enumerate}
\item if $n=1$ the sum is empty by Remark~\ref{remark potential depth forest interval},
\item if $n>1$ by Remark~\ref{remark potential depth} each coloured tree $(T,\sqcup_- F)$ in the sum above has potential less than $n$.
\end{enumerate}
\item Extend this definition multiplicatively in the sense of (\ref{diagram multiplicativity of ren group}), which fully defines $\mathcal{A}g\in \mathfrak{G}_-$.
\end{itemize}
Next we check that $\mathcal{A}g$ defined as above is indeed an inverse, clearly we have $(\mathcal{A}g \star g) (T, [-])= e(T,[-])$ and for other trees 
\begin{align*}
\mathcal{A}g \star g (T, \hat{T}) &= \sum_{F\in \mathfrak{F}^-_T, \hat{T}_-\subset F}  \mathcal{A}g(T,\sqcup_- F)\circ g(F, \hat{T})\\
&=\mathcal{A}g (T, \hat{T})  + g (T, \hat{T}) +  \sum_{F\in \mathfrak{F}^-_{(T,\hat{T})}, \hat{T}_-\ll_{(T,\hat{T})} F\ll_T}  \mathcal{A}g(T,\sqcup_- F)\circ g(F, \hat{T})\\
&=0 \ 
\end{align*}
as required. Finally $\mathcal{A}g$ is a right inverse by standard arguments about groups.
\end{proof}
From now on for $g\in \mathfrak{G}_-$ and $(F,0)\in \mathfrak{F}_-^{(-)}$ we shall abbreviate $g(F,0)$ by $g(F)$.
\begin{remark}
In this remark we exhibit a subgroup $\mathring{\mathfrak{G}}_-$ of ${\mathfrak{G}}_-$, which is closer to the renormalisation group of \cite{BHZ19}, but still allows for ``renormalisation functions''.
We define $\mathring{\mathfrak{G}}_-$ to consist of all elements $g\in \mathfrak{G}_{-}$, such that $g(T,\hat{T})=0$ if $|\hat{T}_-|<|T|$ .
If we define $|(T,\hat{T})|_-= |T|-|\hat{T}_-|$, we see that this is equivalent to $g(T,\hat{T})=0$ whenever $|(T,\hat{T})|_->0$.
Since for any $F\in \mathfrak{F}^-_T$ such that $\hat{T}_- \subset F$
one has 
$$|(T,\hat{T})|_- = |(T,\sqcup_- F)|_- + |(F, \hat{T}) |_- \ ,$$
it easily follows that $\mathring{\mathfrak{G}}_-$ is a subgroup.
\end{remark}

\subsection{Action of the renormalisation group}
Recall the map $\hat{T}\mapsto \sqcup_- F$ from Remark~\ref{colouring operations}, it to acts on $[-]$-coloured trees $(T,\hat{T})$ such that $\hat{T}_-\subset F$
by 
\begin{equation}\label{eq:colouring}
 (T,\hat{T}) \mapsto (T,\hat{T})_{\sqcup_- F }:=  (T, \hat{T} \sqcup_- F) \ . 
 \end{equation}
The corresponding canonical morphism $$S_{\sqcup_- F}:=S^{\langle (T,\hat{T}) \rangle, \langle (T,\hat{T} \sqcup_- F) \rangle} \in \Hom(\langle (T,\hat{T}) \rangle, \langle (T,\hat{T} \sqcup_- F) \rangle)$$ from Definition~\ref{def:morphisms_for_colouring}, which we call the \textit{extension} of \eqref{eq:colouring} to symmetric sets, in turn induces the map $
\Func_W S_{\sqcup_- F} $.
For notational convenience we suppress the functor and for $\tau \in \Func_{W}\langle (T,\hat{T})\rangle$ simply write 
$\tau_{\sqcup_- F}$ for $\Func_W S_{\sqcup_- F}\, (\tau)$.

\begin{definition}
For $g\in \mathfrak{G}_-$ and $\tau \in \Func_{W}\langle (T,\hat{T})\rangle \subset \mathcal{T}^{(-)}$ set
$$ \pmb{\Pi}^g \tau:= \sum_{F\in \mathfrak{F}^-_T, \hat{T}_-\subset F} \pmb{\Pi}(\tau_{\sqcup_- F})\circ g(F,\hat{T})\ .$$
\end{definition}

\begin{prop}
Let $g\in {\mathfrak{G}}_-$.
\begin{enumerate}
\item If $\pmb{\Pi}$ is admissible, $\pmb{\Pi}^g$ is admissible.
\item If $\pmb{\Pi}$ is an admissible lift of smooth noises $\{\xi^{\varepsilon}\}_{\varepsilon\in \mathcal{E}_-}$, $\pmb{\Pi}^g$ is an admissible lift of the smooth noises $(\xi^\varepsilon)^g$, where
$$ (\xi^\varepsilon)^g= \xi^{\varepsilon}- g(\Xi^\varepsilon) \ .$$
\end{enumerate}
\end{prop}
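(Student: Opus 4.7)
My approach is to verify, in order, each of the three conditions in Definition~\ref{def admissible bfPi} for $\pmb{\Pi}^g$, and then to establish the noise-lift identity of part~(2) by a direct computation. Throughout, the essential input is the admissibility of $\pmb{\Pi}$ combined with the compatibility of the colour-extension morphism $S_{\sqcup_- F}$ (see Definition~\ref{def:morphisms_for_colouring} and Lemma~\ref{lem:symset_composition}) with the other operations on coloured trees. The strategy is to unpack the sum defining $\pmb{\Pi}^g \tau$ term by term and show that each summand lies in the required space, then use linearity and the multiplicativity of $g$ to close the argument.

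For the first admissibility condition, fix $\tau \in \Func_W\langle (T,\hat{T})\rangle$ and $F \in \mathfrak{F}^-_T$ with $\hat{T}_- \subset F$. Then $\tau_{\sqcup_- F}$ lies in $\Func_W\langle (T,\hat{T}\sqcup_- F)\rangle$, whose negative-coloured subforest is precisely $F$. Admissibility of $\pmb{\Pi}$ yields $\pmb{\Pi}(\tau_{\sqcup_- F}) \in L(\mathfrak{Dif}_F,\cC^\infty(V^{\ind\rho_T}))$; composing with $g(F,\hat{T}) \in L(\mathfrak{Dif}_{\hat{T}_-},\mathfrak{Dif}_F)$ produces an element of the required target $L(\mathfrak{Dif}_{\hat{T}_-},\cC^\infty(V^{\ind\rho_T}))$, and summing over $F$ preserves this space. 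For jets, $\tau \in J^\varepsilon = \Func_W\langle \delta^\varepsilon\rangle$ is supported on a tree with no $\mathcal{E}_-$-edges, so $\mathfrak{F}^-_{\delta^\varepsilon} = \{\emptyset\}$ and the sum collapses to $\pmb{\Pi}\tau = R\tau$. For the kernel property, given $\tau = \mathcal{I}_\mathfrak{k}(\sigma)$ on the tree $T = \mathcal{I}_\mathfrak{k}(S)$, the root edge of type $\mathfrak{k} \in \mathcal{E}_+$ is never contained in any $F \in \mathfrak{F}^-_T$, so $\mathfrak{F}^-_T \cong \mathfrak{F}^-_S$, the colour extension commutes with $\mathcal{I}_\mathfrak{k}$ (i.e.\ $\mathcal{I}_\mathfrak{k}(\sigma)_{\sqcup_- F} = \mathcal{I}_\mathfrak{k}(\sigma_{\sqcup_- F})$), and admissibility gives $\pmb{\Pi}(\mathcal{I}_\mathfrak{k}(\sigma_{\sqcup_- F})) = K_\mathfrak{k}(\pmb{\Pi}(\sigma_{\sqcup_- F}))$. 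Linearity of $K_\mathfrak{k}$ then yields $\pmb{\Pi}^g \mathcal{I}_\mathfrak{k}(\sigma) = K_\mathfrak{k}(\pmb{\Pi}^g \sigma)$, as required.

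For part~(2), the tree $\Xi^\varepsilon$ consists of a single edge of type $\varepsilon \in \mathcal{E}_-$, so $\mathfrak{F}^-_{\Xi^\varepsilon} = \{\emptyset,\Xi^\varepsilon\}$ and the sum defining $\pmb{\Pi}^g \Xi^\varepsilon$ has exactly two terms. The $F = \emptyset$ contribution is $\pmb{\Pi}\Xi^\varepsilon \circ g(\emptyset) = \xi^\varepsilon$ by hypothesis. For the $F = \Xi^\varepsilon$ term, the set $S_T = N_T \setminus (\{\rho_T\} \cup L_T)$ is empty, so $\mathfrak{Dif}_{\Xi^\varepsilon}$ can be canonically identified with $\cC^\infty(V^{\ind \rho_{\Xi^\varepsilon}})$; unravelling the admissibility convention for $\pmb{\Pi}$ applied to the fully $[-]$-coloured single-edge tree $\Xi^\varepsilon_{\sqcup_- \Xi^\varepsilon}$ (consistent with the recursive formula \eqref{coloured root canonical bfpi} for the canonical lift), this term reproduces the element $g(\Xi^\varepsilon) \in \mathfrak{Dif}_{\Xi^\varepsilon}$ up to the sign convention attached to extracted subtrees, giving the claimed $-g(\Xi^\varepsilon)$.

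The main obstacle, and the only genuinely non-trivial step, is the bookkeeping around the functor $\Func_W$: one has to verify that $S_{\sqcup_- F}$, $\mathcal{I}_\mathfrak{k}$, the tree product, and the forest morphisms appearing in \eqref{coloured root canonical bfpi} all intertwine correctly after applying $\Func_W$ or $\Func_\mathfrak{D}$. These compatibilities reduce to Lemma~\ref{lem:symset_composition} and Remark~\ref{rem:symmetrisation_injection inv}, together with the observation that each morphism is defined at the level of symmetric sets before being functorially transported to vector bundles, so the compositions automatically commute with the canonical isomorphisms of Remark~\ref{rem:can_morph}. Once this bookkeeping is in place, each verification above is a one-line consequence of the corresponding admissibility clause for $\pmb{\Pi}$.
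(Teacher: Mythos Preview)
Your approach is essentially the same as the paper's: verify the three admissibility conditions one at a time (the paper dispatches the first two as ``clearly still satisfied'' and gives the one-line observation $F\subset T\subset\mathcal{I}_{\mathfrak{k}}T$ for the third, while you spell each out in detail), then compute $\pmb{\Pi}^g(\Xi^\varepsilon)$ directly as the two-term sum over $F\in\{\emptyset,\Xi^\varepsilon\}$.

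One small wobble worth noting: in part~(2) you justify the second term by appealing to the canonical-lift formula \eqref{coloured root canonical bfpi}, but the proposition is stated for general \emph{admissible} lifts, not canonical ones, and your phrase ``up to the sign convention attached to extracted subtrees'' does not actually locate where the minus sign arises. That said, the paper's own proof is equally terse on this step --- it simply asserts $\pmb{\Pi}(\Xi^\varepsilon,[-])\circ g(\Xi^\varepsilon,0)=-g(\Xi^\varepsilon)$ without further comment --- so this is not a gap relative to the source.
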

\begin{proof}
For the first claim, the first two conditions of Definition~\ref{def admissible bfPi} are clearly still satisfied. The third follows from the fact that for any planted tree $\mathcal{I}_{\mathfrak{k}}T$, all elements  $F\in \mathfrak{F}^-_{\mathcal{I}_{\mathfrak{k}}T}$ satisfy $F\subset T\subset\mathcal{I}_{\mathfrak{k}}T$.
For the second claim, note that 
$$\pmb{\Pi}^g (\Xi^\varepsilon,0) = \pmb{\Pi}(\Xi^\varepsilon, 0)+\pmb{\Pi}(\Xi^\varepsilon, [-]) \circ g(\Xi^\varepsilon, 0)= \xi^{\varepsilon}- g(\Xi^\varepsilon)\ . $$
\end{proof}

One sees that the properties of a group action are satisfied.
\begin{prop}
For an admissible map $\pmb{\Pi}$ the following identities hold
\begin{itemize}
\item $(\pmb{\Pi}^{f})^{g}=\pmb{\Pi}^{f \star g}$ for each $f,g\in{\mathfrak{G}}_-$,
\item $\pmb{\Pi}^e= \pmb{\Pi}$ .
\end{itemize}
\end{prop}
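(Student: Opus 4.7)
The plan is to verify both identities by unfolding the definition of $\pmb{\Pi}^g$ and using the structural properties of the colouring operations $\sqcup_-$ established earlier in the section.

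For $\pmb{\Pi}^e = \pmb{\Pi}$: let $\tau\in \Func_W\langle(T,\hat{T})\rangle$. By definition
\[
\pmb{\Pi}^e\tau = \sum_{F\in \mathfrak{F}^-_T,\; \hat{T}_-\subset F} \pmb{\Pi}(\tau_{\sqcup_- F})\circ e(F,\hat{T})\;,
\]
and by the defining property of $e$ the summand vanishes unless $\hat T|_F$ colours all of $F$ with $[-]$, i.e.\ unless $F = \hat{T}_-$; in that case $\tau_{\sqcup_- \hat{T}_-} = \tau$ and $e(\hat T_-,\hat T) = \id$, so only one term survives and equals $\pmb{\Pi}\tau$.

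For $(\pmb{\Pi}^f)^g = \pmb{\Pi}^{f\star g}$ I would expand the left-hand side into a double sum indexed by pairs $(F,F')$ with $\hat{T}_- \subset F \subset F'$. To apply $\pmb{\Pi}^f$ to $\tau_{\sqcup_- F}$ one notes that the $[-]$-coloured part of the tree $(T,\hat T\sqcup_- F)$ is exactly $F$ (using $\hat T_-\subset F$), so the admissible $F'$ range over $\{F'\in \mathfrak{F}^-_T : F\subset F'\}$. Two compatibilities are then used: first, $(\tau_{\sqcup_- F})_{\sqcup_- F'} = \tau_{\sqcup_- F'}$ for $F\subset F'$, which is Lemma~\ref{lem:symset_composition} applied to the composition of symmetrisation morphisms $S_{\sqcup_- F}$ and $S_{\sqcup_- F'}$ followed by the functor $\Func_W$; and second, since elements of $\mathfrak{G}_-$ only depend on the $[-]$-colouring, $f(F',(\hat T\sqcup_- F)|_{F'}) = f(F',\sqcup_- F)$. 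After these identifications and swapping the order of summation one obtains
\[
(\pmb{\Pi}^f)^g\tau = \sum_{F'\in \mathfrak{F}^-_T,\; \hat{T}_-\subset F'} \pmb{\Pi}(\tau_{\sqcup_- F'}) \circ \Bigl(\sum_{F\in \mathfrak{F}^-_T,\; \hat{T}_-\subset F\subset F'} f(F',\sqcup_- F)\circ g(F,\hat{T})\Bigr)\;.
\]

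The last step is to identify the inner sum with $(f\star g)(F',\hat{T}|_{F'})$. Here the subtlety is that $\star$ was defined on $\mathfrak{T}_-^{(-)}$ whereas $F'$ is a forest, so one first extends $f\star g$ multiplicatively to $\mathfrak{F}_-^{(-)}$ using the commuting diagram~\eqref{diagram multiplicativity of ren group}, then checks that this extension coincides with the inner sum above using the identity $\mathfrak{F}^-_{F'} = \{F\in \mathfrak{F}^-_T : F\subset F'\}$ and the fact that a $[-]$-cut of a forest splits into independent cuts of its connected components. The main obstacle is bookkeeping: one must ensure that all of these identifications are coherent not just combinatorially but also at the level of $\Func_W$ and of the differential-operator spaces $\mathfrak{Dif}_F$, so that the composition on the right of the display above is literally composition of linear maps in $L(\mathfrak{Dif}, \cC^\infty(V))$ after contracting with the canonical injections and symmetrisations of Remark~\ref{remark symmetrisation}.
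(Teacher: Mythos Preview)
Your argument is correct and follows essentially the same route as the paper: expand $(\pmb{\Pi}^f)^g\tau$ into a double sum over nested subforests $\hat T_-\subset F\subset F'$, use Lemma~\ref{lem:symset_composition} to collapse $(\tau_{\sqcup_- F})_{\sqcup_- F'}=\tau_{\sqcup_- F'}$, swap the order of summation, and recognise the inner sum as $(f\star g)(F',\hat T)$. The paper carries this out more tersely, applying the defining formula for $\star$ directly to forests without pausing over the tree/forest distinction or the $\mathfrak{Dif}$-level bookkeeping you flag; these concerns are legitimate but are absorbed by the multiplicativity of elements of $\mathfrak{G}_-$ together with the fact that a subforest of $F'$ decomposes over connected components. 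One small remark: your justification that $f(F',(\hat T\sqcup_- F)|_{F'})=f(F',\sqcup_- F)$ is not so much that ``elements of $\mathfrak{G}_-$ only depend on the $[-]$-colouring'' as that, since $\hat T_-\subset F$, the coloured forests $(\hat T\sqcup_- F)|_{F'}$ and $\sqcup_- F$ on $F'$ are literally equal.
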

\begin{proof}
We check the first point, the second is seen easily as well. First note that it follows from the first identity in Lemma~\ref{lem:symset_composition}, that for 
$\tau\in \Func_W \langle T \rangle$ and $F,\tilde{F}\in \mathfrak{F}^-_T$ such that  $\hat{T}_-\subset F\subset \tilde{F} $ one has the identity
$$(\tau_{\sqcup_- F})_{\sqcup_- \tilde{F}} = \tau_{\sqcup_- \tilde{F}} \ .$$
Therefore, 
\begin{align*}
(\pmb{\Pi}^{f})^{g}\tau =& \sum_{F\in \mathfrak{F}^-_T, \hat{T}_-\subset F} \pmb{\Pi}^f(\tau_{\sqcup_- F})\circ g(F,\hat{T}) \\
=& \sum_{F\in \mathfrak{F}^-_T, \hat{T}_-\subset F}  \sum_{\tilde{F}\in \mathfrak{F}^-_T, F\subset \tilde{F}} 
\pmb{\Pi}\big( (\tau_{\sqcup_- F})_{\sqcup_- \tilde{F}} \big) \circ f(\tilde{F},\sqcup_- {F}) \circ g(F,\hat{T}) \\
=& \sum_{F,\tilde{F}\in \mathfrak{F}^-_T, \hat{T}_-\subset F\subset \tilde{F}}   \pmb{\Pi}(\tau_{\sqcup_- \tilde{F}}) \circ f(\tilde{F},\sqcup_- {F}) \circ g(F,\hat{T}) \\
=& \sum_{\tilde{F}\in \mathfrak{F}^-_T, \hat{T}_-\subset \tilde{F}}   \pmb{\Pi}( \tau_{\sqcup_- \tilde{F}}) \circ (f\star g)(\tilde{F},\hat{T}) \\
=&(\pmb{\Pi}^{f\star g})\tau \ .
\end{align*}
\end{proof}
\begin{prop}
Let $\pmb{\Pi}$ denote the canonical lift of smooth noises $\{\xi^{\varepsilon}\}_{\varepsilon\in \mathcal{E}_-}$ and $g\in \mathfrak{G}_-$ such that $g(\Xi^\varepsilon)=0$ for all $\varepsilon\in \mathcal{E}_-$.
For planted trees $T^1,\ldots,T^n\in \mathfrak{T}$ and $\eta^i \in \Func_W \langle T^i\rangle$, such that $\eta =\eta^1\cdots \eta^n\in \mathcal{T}$ one has
\begin{equation}\label{eq:bfPi_renormalisation_formula}
\pmb{\Pi}^g \eta= \mathring{\pmb{\Pi}}^g \eta+ \sum_{{\tau}\in \mathfrak{T}_T^- : \rho_T= \rho_\tau} 
g({\tau}) \Big(\prod_{n\in N_{{\tau}}} \mathring{\pmb{\Pi}}^g\eta^{\tau,n}(\cdot_{n})\Big) \,  
\end{equation}
where for each ${\tau}$ denote by $\mathcal{H}^{{\tau}}$ a family $\mathcal{H}^{{\tau}}=\{\eta^{{\tau},n}\}_{n\in N_{{\tau}}}$ such that 
$\eta= {\tau}(\mathcal{H}^{{\tau}}) $ c.f.\ Remark~\ref{remark attaching trees to negative trees}.

If we further decompose $\eta^{\tau,n}=\prod_i\eta^{\tau,n}_i$ where each $\eta^{\tau,n}_i$ is planted, one can rewrite \eqref{eq:bfPi_renormalisation_formula} as 
$$\pmb{\Pi}^g \eta= \Tr \bigodot_i (\pmb{\Pi}^g \eta^i)+ \sum_{{\tau}\in \mathfrak{T}_T^- : \rho_T\in \rho_\tau} 
g({\tau}) \Big(\prod_{n\in N_{{\tau}}} \Tr \bigodot_i \pmb{\Pi}^g\eta^{\tau,n}_i(\cdot_{n})\Big) \ .  $$
\end{prop}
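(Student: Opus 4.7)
The plan is to unfold the definition $\pmb{\Pi}^g \eta = \sum_{F \in \mathfrak{F}^-_T} \pmb{\Pi}(\eta_{\sqcup_- F}) \circ g(F)$ and partition the sum according to the connected component $\tau$ of $F$ containing the root $\rho_T$, where we allow $\tau$ to be the trivial tree $\bullet$ when no edge of $F$ is incident to $\rho_T$. Every $F \in \mathfrak{F}^-_T$ then decomposes uniquely as $F = \tau \cdot \tilde F$, and writing $\eta = \tau(\mathcal{H}^\tau)$ with $\mathcal{H}^\tau = \{\eta^{\tau,n}\}_{n \in N_\tau}$ and each $\eta^{\tau,n} = \prod_i \eta^{\tau,n}_i$ a product of planted trees, the remainder $\tilde F$ decomposes in turn as a disjoint union of sub-forests $\tilde F^{n,i} \in \mathfrak{F}^-_{\eta^{\tau,n}_i}$ attached at each node $n \in N_\tau$ (with the convention that, for $\tau = \bullet$, we take $\rho_\tau = \rho_T$, $N_\tau = \{\rho_T\}$ and $\eta^{\tau,\rho_T} = \eta$).

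Case $\tau = \bullet$: Here $\eta_{\sqcup_- F}$ has uncoloured root, so the canonical lift property \eqref{eq:canonical_bfPi1} factorises $\pmb{\Pi}(\eta_{\sqcup_- F}) = \Tr \bigodot_i \pmb{\Pi}(\eta^i_{\sqcup_- F^i})$ across the planted factors. Combined with the multiplicativity of $g$ expressed by diagram \eqref{diagram multiplicativity of ren group}, summing over $F$ yields
\begin{equation*}
\sum_{F : \tau = \bullet} \pmb{\Pi}(\eta_{\sqcup_- F}) \circ g(F) \;=\; \Tr \bigodot_{i=1}^n \pmb{\Pi}^g \eta^i \;=\; \mathring{\pmb{\Pi}}^g \eta\ ,
\end{equation*}
the last equality being Definition~\ref{def:mathringbfPi}.

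Case $\tau \neq \bullet$: For fixed $\tau \in \mathfrak{T}^-_T$ with $\rho_\tau = \rho_T$, the tree $\eta_{\sqcup_- F}$ has $[-]$-component at its root precisely equal to $\tau$. Formula \eqref{coloured root canonical bfpi} therefore gives $\pmb{\Pi}(\eta_{\sqcup_- F})(g(F)) = g(\tau)_{\tau}\bigl(\prod_{n \in N_\tau} \pmb{\Pi}(\eta^{\tau,n}_{\sqcup_- \tilde F^n})(\cdots)(\cdot_n)\bigr)$ once $g(F)$ is factored as $g(\tau) \otimes \bigotimes_{n,i} g(\tilde F^{n,i})$ via \eqref{diagram multiplicativity of ren group}. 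Summing over $\tilde F^{n,i}$ and then using the canonical lift property \eqref{eq:canonical_bfPi1} inside each node component to reassemble $\eta^{\tau,n} = \prod_i \eta^{\tau,n}_i$, one obtains
\begin{equation*}
\sum_{F : \rho_T \in \tau \subset F} \pmb{\Pi}(\eta_{\sqcup_- F}) \circ g(F) \;=\; g(\tau)\Big(\prod_{n \in N_\tau} \mathring{\pmb{\Pi}}^g \eta^{\tau, n}(\cdot_n)\Big)\ .
\end{equation*}
Adding the $\tau = \bullet$ contribution and summing over the remaining $\tau$ yields the stated identity; the final alternative formulation then follows by expanding $\mathring{\pmb{\Pi}}^g$ via Definition~\ref{def:mathringbfPi}.

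The main technical difficulty lies in the bookkeeping at the level of symmetric sets: one must verify that the extension of the colouring operation $\cdot_{\sqcup_- F}$ to $\Func_W \langle T \rangle$ is compatible with the tensor decomposition of $\langle T \rangle$ into the factors corresponding to $\tau$ and the $\tilde F^{n,i}$, so that the pairing $\pmb{\Pi}(\eta_{\sqcup_- F}) \circ g(F)$ genuinely splits as a product under \eqref{diagram multiplicativity of ren group}. This rests on the associativity/coassociativity remarked on in Remark~\ref{remark symmetrisation} together with the first commuting diagram in Lemma~\ref{lem:symset_composition}; once these compatibilities are in place the remaining manipulation is purely combinatorial.
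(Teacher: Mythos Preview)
Your proposal is correct and follows essentially the same approach as the paper's proof: both split the sum defining $\pmb{\Pi}^g\eta$ according to whether the root $\rho_T$ lies in $F$, identify the root-free part with $\mathring{\pmb{\Pi}}^g\eta$ via the multiplicativity properties \eqref{eq:canonical_bfPi1} and \eqref{diagram multiplicativity of ren group}, and for the root-containing part decompose by the connected component $\tau$ using \eqref{coloured root canonical bfpi}. The paper packages this as an induction over the recursive tree construction of Remark~\ref{rem recursive construction} (base case single edges, then $\mathcal{I}_\mathfrak{k}\tau$, then products), whereas you give a single direct computation; the content is the same. One small point worth making explicit in your write-up: when you sum over the residual forests $\tilde F^{n,i}$, you are summing only over those not touching the root of $\eta^{\tau,n}_i$ (since $\tau$ was the full root component of $F$), so identifying this with the full sum defining $\pmb{\Pi}^g\eta^{\tau,n}_i$ uses the hypothesis $g(\Xi^\varepsilon)=0$ whenever $\eta^{\tau,n}_i$ is a noise edge---this is exactly where the paper invokes that assumption as well.
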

\begin{remark}
A similar but slightly more involved formula holds, if we remove the condition $g(\Xi^\varepsilon)=0$. 
\end{remark}

\begin{proof}
Note that the second formula follows directly from \eqref{eq:bfPi_renormalisation_formula} and Definition~\ref{def:mathringbfPi}. 
To proof \eqref{eq:bfPi_renormalisation_formula} we proceed inductively, c.f.\ Remark~\ref{rem recursive construction}. Indeed the claim holds for the sectors containing only jets or only a noise.
\begin{itemize}
\item Suppose the claim holds for $\tau\in \Func_W \langle T\rangle\subset \mathcal{T}$, then it clearly holds for $\mathcal{I}_{\mathfrak{k}} (\tau)$ since $\big\{\tau\in \mathfrak{T}_{\mathcal{I}_{\mathfrak{k}} (T)}^-\ :\  \rho_{\mathcal{I}_{\mathfrak{k}} (T)}= \rho_\tau\big\} = \emptyset $ and thus
$$\sum_{F\in \mathfrak{F}^-_{\mathcal{I}_{\mathfrak{k}}(T)}}  (\mathcal{I}_{\mathfrak{k}}\tau)_{\sqcup_- F}= \mathcal{I}_{\mathfrak{k}} \big(\sum_{F\in \mathfrak{F}^-_T}  \tau_{\sqcup_- F}\big) \ $$
which implies that
\begin{align*}
\pmb{\Pi}^g  (\mathcal{I}_{\mathfrak{k}}\tau) 
&= \sum_{F\in \mathfrak{F}^-_{\mathcal{I}_{\mathfrak{k}}(T)}}  \pmb{\Pi}(\mathcal{I}_{\mathfrak{k}}\tau)_{\sqcup_- F} \circ g(F)\\
 &= K_\frakk\big(\sum_{F\in \mathfrak{F}^-_T} \pmb{\Pi} \big( \tau_{\sqcup_- F}\big) \circ g(F)\big)\\
  &= K_\frakk \big(\pmb{\Pi}^g \tau\big) \\
  &= \mathring{\pmb{\Pi}}^g \mathcal{I}_{\mathfrak{k}}\tau \ .
  \end{align*}
\item Suppose the claim holds for planted $\eta^1,\ldots,\eta^m$ as in the proposition and $\eta=\eta^1\cdots\eta^m$ . Then 
\begin{align*}
\sum_{F\in \mathfrak{F}^-_T}  \eta_{\sqcup_- F} &= \sum_{F\in \mathfrak{F}^-_T: \rho\notin F}  \eta_{\sqcup_- F} +\sum_{F\in \mathfrak{F}^-_T: \rho\in F}  \eta_{\sqcup_- F}  \\
&= \sum_{F_i\in \mathfrak{F}^-_{T^i}: \rho_i\notin F^i}  \eta^1_{\sqcup_- F^1}\cdots\eta^m_{\sqcup_- F^n} 
+\sum_{{\tau}\in \mathfrak{T}_T^- : \rho_T\in {\tau}} {\tau} \big( \big\{ \sum_{F^{n}\in \mathfrak{F}^-_{G^{{\tau},n}}: \rho_{G^{{\tau},n}}\notin G^{{\tau},n}}  \eta^{{\tau},n}_{\sqcup_- F^n}\big\}_{n\in N_{{\tau}}} \big)  \\
\end{align*}
where $\mathcal{H}^{{\tau}}=\{\eta^{{\tau},n}\}_{n\in N_{{\tau}}}$ and each $\eta^{{\tau,n}}\in \Func_W (G^{{\tau},n})$ is as in the proposition. 
Thus, we find
\begin{align*}
\pmb{\Pi}^g \eta
&=\sum_{F\in \mathfrak{F}^-_T } \pmb{\Pi}(\eta_{\sqcup_- F})\circ g(F)\\
&=\sum_{F^i\in \mathfrak{F}^-_{T^i}: \rho_i\notin F^i}  \pmb{\Pi}(\eta^1_{\sqcup_- F^1}\cdots \eta^m_{\sqcup_- F^m}) \circ g(F^1\cdots F^m )\\
 &+ \sum_{{\tau}\in \mathfrak{T}_T^- : \rho_T= \rho_\tau} 
g({\tau}) \Bigg(\prod_{n\in N_{{\tau}}} \Big(  \sum_{F^{n}\in \mathfrak{F}^-_{G^{{\tau},n}}: \rho_{G^{{\tau},n}}\notin G^{{\tau},n}}  \pmb{\Pi}(\eta^{{\tau},n}_{\sqcup_- F^n})\circ g(F^n) (\cdot_n)\Big)_{n\in N_{{\tau}}} \Bigg) \\
&=\mathring{\pmb{\Pi}}^g \eta + \sum_{{\tau}\in \mathfrak{T}_T^- : \rho_T= \rho_\tau} 
g({\tau}) \Big(\prod_{n\in N_{{\tau}}} \mathring{\pmb{\Pi}}^g\eta^{\tau,n}(\cdot_{n})\Big) \ , 
\end{align*}
where we used that $g(\Xi^\varepsilon)=0$ and the fact that by construction
$$ \mathring{\pmb{\Pi}}^g \eta:= \sum_{F\in \mathfrak{F}^-_T: \rho \notin F} \pmb{\Pi}(\eta_{\sqcup_- F})\circ g(F)\ .$$
\end{itemize}
\end{proof}

\subsection{Construction of models}\label{section construction of models}
In this section we investigate how to obtain a model $(\Pi, \Gamma)$ from an admissible map $\pmb{\Pi}$. The underlying spaces for this construction will be $\mathcal{T}_+:= \{\mathcal{T}_+^\varepsilon\}_{\varepsilon\in \mathcal{E}_+\cup \mathcal{E}_0}$ and $J= \{J^\varepsilon\}_{\varepsilon\in \mathcal{E}_0}$.

\begin{definition}
We define $\mathfrak{G}_{+}=\hat{L}_{\iota}(\mathcal{T}_+, J)$, the bundle whose fibre at $(p,q)\in M\times M$ consists of linear maps $\gamma_{p,q}\colon (\mathcal{T}_+^\varepsilon)_q \to J^{\iota(\varepsilon)}_p$ such that for each $\varepsilon\in \mathcal{E}_+\cup \mathcal{E}_0$, each tree $T\in \mathfrak{T}^{\varepsilon}_+$ and each $\tau_q \in \Func_W\langle T\rangle$ , 
$$\gamma_{p,q}(\tau_q)\in J^{\iota(\varepsilon)}_p\ .$$

We denote by $\mathfrak{G}_{+,\loc}$ the sub-bundle\footnote{Recall that $\diag:M\to M\times M$ is the diagonal map. } of $\diag^*(\mathfrak{G}_+)$, whose fibre at $p$ consists of all maps $\gamma_{p}$ satisfying
\begin{equation}\label{triviality on jets}
\gamma_{p}|_{J_p} = \id_{J_p} \;.
\end{equation}

\end{definition}

\begin{remark}
The following is useful in order to formulate positive renormalisation. Let $ T \in \mathfrak{T}$, we shall build for each $F\in\mathfrak{F}_T^+$ and $\gamma\in \mathfrak{G}_+$ a map 
$$\Func_V \langle T\rangle_q \to \mathcal{T}_p, \quad \tau_q \mapsto \tau\setminus_{\gamma_{p,q}} F \ . $$
\begin{itemize}
\item First we define for $T$ and $F$ as above a new tree $T\setminus F$. Recall from Section~\ref{section positive subtrees} that there is a set $E\in P_+(T)$ such that $F=T_{\geq E}$, and we define $T\setminus F$ to be the tree obtained by replacing for each $e\in E$ the subtree $T_{\geq e}$ by a new edge of type $\iota(\mathfrak{e}(e))$.
\item For a concrete realisation of the tree $T$ write $S\in \Hom (\langle T \rangle, \otimes_{e\in E} \langle T_{\geq e}\rangle)$ for the morphism in Definition~\ref{def:morphisms_for_colouring}. Then we
 define $\tau\setminus_{\gamma_{p,q}} F$ to be the image of $\tau$ under the following composition\footnote{The maps $\pi$ and $\iota$ were introduced in (\ref{eq:translating_on_section}) and (\ref{eq iota E}) respectively.}
\[ \begin{tikzcd}
W^{\otimes \langle T\rangle}  \arrow{r}{\Func_W S}
& \bigotimes_{e\in E} W^{\otimes \langle T_{\geq e}\rangle } \arrow{r}{\gamma^{\otimes |E|}}
& \bigotimes_{e\in E} W^{\iota(e)}\simeq W^{\otimes T\setminus F }   \arrow{r}{\pi_{T\setminus F,\langle T\setminus F\rangle }} 
&[2em] W^{\otimes \langle T\setminus F \rangle} \ .
\end{tikzcd}
\] 
\end{itemize}
\end{remark}

\begin{definition}
We define the action of $\mathfrak{G}_+$ on $\mathcal{T}$ as follows. For $T\in \mathfrak{T}^\sigma$,
$\tau_p \in \Func_W\langle T\rangle \subset \mathcal{T}^{\sigma}$
 and $\gamma\in \mathfrak{G}_{+}$ or $\gamma\in \mathfrak{G}_{+,\loc}$ set
$$ \Gamma_{\gamma} \tau:= \sum_{F\in \mathfrak{F}^+_T} \tau\setminus_{\gamma} F \ .$$
\end{definition}
\begin{remark}\label{remark on Gamma}
Note that for every $\gamma$ the map $\Gamma_\gamma$ is multiplicative by construction, i.e.\ $\Gamma_\gamma(\tau_1\tau_2)=(\Gamma_\gamma\tau_1)(\Gamma_\gamma\tau_2)$. Furthermore for a planted tree $\mathcal{I}_{\mathfrak{k}}\tau\in \Func_W\langle T\rangle $ one has
$$\Gamma_\gamma(\mathcal{I}_{\mathfrak{k}}\tau)= \mathcal{I}_{\mathfrak{k}}\tau\setminus_\gamma T + \mathcal{I}_{\mathfrak{k}}( \Gamma_\gamma \tau)= \gamma(\mathcal{I}_{\mathfrak{k}}\tau )+ \mathcal{I}_{\mathfrak{k}}( \Gamma_\gamma \tau) \ .$$
\end{remark}
\subsubsection{Construction of $\Pi_p$}
Given $\pmb{\Pi}$, we define $\Pi_p = \pmb{\Pi}\circ \Gamma_{\gamma_p}$ where the global section $\gamma=\gamma(\pmb{\Pi})$ of $\mathfrak{G}_{+,\loc}$ is define recursively as follows
\begin{itemize}
\item If $\tau\in \mathcal{T}^\varepsilon_{+,(\alpha, \delta)}$ is such that $\alpha< 0$, then $\gamma(\tau)=0$,
\item if $\tau\in J$, then $\gamma(\tau)=\tau$,
\item every other basis element is of the form $\mathcal{I}_{\frakk}\tau_p \in \mathcal{T}^\varepsilon_{+,(\alpha, \delta)}$, then 
$$\gamma_p(\mathcal{I}_\frakk\tau_p)=-Q_{<\delta_0} j_p (K_{\frakk}\Pi_p Q_{<\delta_0-|\frakk|}\tau)\ .$$
\end{itemize}
\begin{remark}
The definition of 
$\gamma_p(\mathcal{I}\tau)$ is chosen to agree with $- J_p\tau_p$ in the setting of Section~\ref{Section Singular Kernels}.
\end{remark}

\subsubsection{Construction of $\Gamma$}
Given $\pmb{\Pi}$, $\Pi_{(\cdot)}$ and $\gamma_{(\cdot)}$, we define $\Gamma_{p,q} = \Gamma_{\gamma_{p,q}} $ for an appropriately chosen global section $\gamma=\gamma(\pmb{\Pi})\in \mathfrak{G}_+$. This section is defined recursively as follows
\begin{itemize}
\item  for $\tau_q\in J_q$, let $\gamma_{p,q}(\tau_q)=Q_{<\delta_0} j_p(\pmb{\Pi}\tau_q)=Q_{<\delta_0}j_p(R\tau_q)$,
\item  for $\mathcal{I}_{\frakk} \tau_q\in \mathcal{T}_{+(\alpha,\delta)}$ set
\begin{align*}
\gamma_{p,q}(\mathcal{I}\tau_q)
=& -\gamma_p (\mathcal{I}_{\frakk}\Gamma_{p,q}\tau)+ \Gamma_{p,q} \gamma_q (\mathcal{I}_{\frakk}\tau_q) +Q_{<(\delta+\beta)\wedge \delta_0} j_q K_{\frakk}(\Pi_p-\Pi_qQ_{\delta_0-|\frakk|}\Gamma_{q,p})\tau_q \ .
\end{align*}
\end{itemize}

\begin{remark}
Note that $\gamma_{p,q}(\mathcal{I}\tau)$ above is defined so that it agrees with $$ J_{p,q}\tau= J_p\Gamma_{p,q}\tau_q- \Gamma_{p,q}J_q\tau_q +E_q\tau_q$$ in the setting of Section~\ref{Section Singular Kernels}.
\end{remark}
\begin{remark}
Note that, in contrast to the flat setting, the maps $\Gamma_{(\cdot),(\cdot)}$ do \textit{not} satisfy $\Gamma_{p,q}\circ\Gamma_{q,r}= \Gamma_{p,r}$, not even locally.
\end{remark}
\begin{remark}
Observe that $\Gamma_{\gamma_{p}}$ is invertable, while $\Gamma_{\gamma_{p,q}}$ is in general only invertible for $d_\fraks(p,q)$ small enough.
\end{remark}

\begin{prop}\label{prop canonical lift is a a model}
Let $(\Pi, \Gamma)$ be constructed as above from the canonical lift $\pmb{\Pi}$ of smooth noises $\{\xi_{\mathfrak{l}}\}_{l\in \mathfrak{L}_-}$. Then $(\Pi, \Gamma)$ is a model, we shall call it the canonical model for $\{\xi_{\mathfrak{l}}\}_{l\in \mathfrak{L}_-}$.
\end{prop}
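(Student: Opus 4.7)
The plan is to verify the three analytic conditions of Definition~\ref{def model} by induction on the recursive construction of trees from Remark~\ref{rem recursive construction}. Since $\{\xi^\varepsilon\}$ are smooth, every $\pmb{\Pi}\tau_p$ we produce will be smooth, so the only real content is the quantitative bounds being uniform in the scale $\lambda$ and in $p,q$.

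For the base cases, a jet edge $\delta^\varepsilon$ falls under Proposition~\ref{proposition on polynomial model}: by construction $\gamma_p|_J = \id_J$, $\gamma_{p,q}(\tau_q) = Q_{<\delta_0}j_p(R\tau_q)$, so $(\Pi,\Gamma)$ restricted to jet sectors coincides with the jet model induced by $R$. A noise edge $\Xi^\varepsilon$ sits in a canonical one-dimensional bundle with trivial $\Gamma$-action, $\Pi_p\Xi^\varepsilon = \xi^\varepsilon \in \cC^\infty$, and the homogeneity bound $\lambda^{|\varepsilon|}$ follows from smoothness of $\xi^\varepsilon$ combined with the fact that subcriticality forces $|\varepsilon|$ to lie below any testing scale we care about.

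For the inductive step with the tree product, suppose the bounds hold for planted $\eta^i \in \Func_W\langle T^i\rangle$. The map $\Gamma_{\gamma_{p,q}}$ is multiplicative by Remark~\ref{remark on Gamma}, and the canonical $\pmb{\Pi}$ satisfies \eqref{eq:canonical_bfPi1}, so on $\eta = \eta^1\cdots\eta^n$ the model takes the form $\Pi_p\eta = \Tr\,\bigodot_i \Pi_p \eta^i$. This matches the abstract product framework of Definition~\ref{definition product} once one identifies the $\gamma$-precise tensor product coming from the symmetric-set tensor product of $\langle T^i\rangle$; Lemma~\ref{lemma product} (applied to the section $\eta^1\odot\cdots\odot\eta^n$ of the relevant product sector, with a subsequent partial trace $\Tr$) then yields the three bounds. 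For the inductive step with an integration operation $\mathcal{I}_\mathfrak{k}$, the key observation is that the recursive definitions of $\gamma_p(\mathcal{I}_\mathfrak{k}\tau_p)$ and $\gamma_{p,q}(\mathcal{I}_\mathfrak{k}\tau_q)$ coincide exactly with the maps $-J_p\tau_p$ and $J_{p,q}\tau_q = J_p\Gamma_{p,q}\tau_q - \Gamma_{p,q}J_q\tau_q + E_q\tau_q$ from Section~\ref{Section Singular Kernels}. Hence the Extension Theorem~\ref{thm:extension theorem} applies verbatim: whenever $(\Pi,\Gamma)$ is a model on the sector containing $\tau$, the extension $\mathcal{I}_\mathfrak{k}\tau$ defined via $\pmb{\Pi}\mathcal{I}_\mathfrak{k}\tau = K_\mathfrak{k}(\pmb{\Pi}\tau)$ together with the prescribed $\gamma$'s gives a model on the enlarged sector realising $K_\mathfrak{k}$ for the abstract integration map $\mathcal{I}_\mathfrak{k}$.

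The main obstacle I anticipate is bookkeeping at the tree-product step: one must check that the tensor product on $\mathcal{T}$ arising from $\langle T^1\rangle\otimes\cdots\otimes\langle T^n\rangle \to \langle T^1\cdots T^n\rangle$ is indeed $\gamma$-precise with the bi-grading we have fixed (so that condition~\eqref{asdff} of Definition~\ref{definition product} holds), and that the subsequent partial trace $\Tr$ is realised by the model in the sense needed to apply the trace lemmas of Section~\ref{section Local operations}. This in particular requires controlling how the homogeneities $\|\cdot\|_{\delta_0}$ behave under gluing (Section~\ref{sec:homTree}), which is exactly why the second grading $\triangle$ was built to track the $\delta_0$-truncation in the first place; once this compatibility is spelled out, the three model estimates propagate automatically through the induction.
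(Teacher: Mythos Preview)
Your inductive skeleton matches the paper's proof exactly (base cases via Proposition~\ref{proposition on polynomial model} and smoothness of the noise, then product and integration steps). However, there are two points where your execution diverges from the paper and one of them is a genuine gap.

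\textbf{The product step.} Invoking Lemma~\ref{lemma product} is a category error: that lemma bounds the $\mathcal{D}^\gamma$-norm of a product of \emph{modelled distributions}, whereas here you must verify the three model axioms for fixed elements $\eta^1\cdots\eta^n\in\mathcal{T}$. Lemma~\ref{lemma product} says nothing about $\Pi$-bounds. The paper's argument is far more elementary: since $\pmb\Pi$ is the canonical lift of smooth noises, $\Pi_p(\eta^1\cdots\eta^n)=\Tr\bigodot_i\Pi_p\eta^i$ is a pointwise product of smooth functions, so bound~(1) follows from multiplying the pointwise estimates $|\Pi_p\eta^i(q)|\lesssim d_\fraks(p,q)^{\alpha_i}$. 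Bound~(2) is immediate from multiplicativity of $\Gamma$ (Remark~\ref{remark on Gamma}) and the additive grading. Bound~(3) is \emph{not} itself multiplicative, but follows by writing $\bigodot_i\Pi_p\eta^i-\bigodot_i\Pi_q\Gamma_{q,p}\eta^i$ as a telescoping sum where each summand contains exactly one factor of $(\Pi_p-\Pi_q\Gamma_{q,p})\eta^j$; the definition of $\|\cdot\|_{\delta_0}$ in Section~\ref{sec:homTree} is tuned precisely so that the resulting exponent matches. Your closing paragraph about ``bookkeeping at the tree-product step'' identifies the right place where care is needed, but the tools you reach for (Definition~\ref{definition product}, $\gamma$-precision, the trace lemmas) are all machinery for modelled distributions and are neither needed nor applicable here.

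\textbf{The integration step.} Your appeal to Theorem~\ref{thm:extension theorem} is morally correct but not quite free: that theorem produces \emph{some} extension $(\hat\Pi,\hat\Gamma)$, and you must still check it coincides with the $(\Pi,\Gamma)$ already defined via $\gamma_p$, $\gamma_{p,q}$. Unwinding this check amounts to exactly the direct computation the paper gives: using Remark~\ref{remark on Gamma} one expands $\Pi_p\mathcal{I}_\frakk\tau$, $\Gamma_{q,p}\mathcal{I}_\frakk\tau$ and $(\Pi_p-\Pi_q\Gamma_{q,p})\mathcal{I}_\frakk\tau$, identifies the pieces with $-RJ_p\tau$, $J_{q,p}\tau$, $E_q\tau$ respectively, and then invokes Lemmas~\ref{lemma well definednes of I}, \ref{lemma J_qp} and~\ref{lemma bound on error}. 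So the two routes are equivalent in content.
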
 
\begin{proof}
We need to check that the bounds from Definition~\ref{def model} hold for
\begin{equation}\label{eq:whattubound}
\Pi_p \tau_p, \qquad
 \Gamma_{q,p}\tau_p,  \qquad (\Pi_p- \Pi_q \Gamma_{q,p})\tau_p \ .
 \end{equation}
Observe that all three estimates hold if $\tau_p\in J_p^\varepsilon$ for every $\varepsilon\in \mathcal{E}_0$ by Proposition~\ref{proposition on polynomial model} or if $\tau_p$ is a noise.
Next, since all three terms are multiplicative with respect to the forest product, it suffices to check that if the desired bound holds for $\tau$, then it holds for 
$\mathcal{I}\tau$. This follows essentially from the considerations in Section~\ref{Section Singular Kernels}, thus we use the notation $J_{p}$ and $J_{p,q}$ from therein for the kernel $K_{\mathfrak{k}}$. 

We have by Remark~\ref{remark on Gamma} 
$$\Pi_p \mathcal{I}_{\frakk}\tau= \pmb{\Pi} \Gamma_{\gamma_{p}} \mathcal{I}_{\frakk}\tau = \pmb{\Pi} \gamma_p(\mathcal{I}_{\frakk}\tau )+ \pmb{\Pi} \mathcal{I}_{\frakk}( \Gamma_{\gamma_p} \tau) = -R J_p\tau_p + K_{\frakk}( \Pi_p \tau_p)$$ which satisfies the desired bound by Lemma~\ref{lemma well definednes of I}.

Again by Remark~\ref{remark on Gamma} 
$$\Gamma_{q,p}\mathcal{I}_{\frakk}\tau_p= \gamma_{q,p}(\mathcal{I}_{\frakk}\tau )+  \mathcal{I}_{\frakk}( \Gamma_{q,p} \tau)= J_{q,p}\tau  + \mathcal{I}_{\frakk}( \Gamma_{q,p} \tau)$$
and the first term satisfies the desired bound by Lemma~\ref{lemma J_qp} and the second term by the definition of an abstract integration map (Definition~\ref{def of abstract integration map}).
For the last bound we find that 
\begin{align*}
&(\Pi_p- \Pi_q \Gamma_{q,p})\mathcal{I}_{\frakk}\tau_p\\ 
&= -R J_p\tau_p + K_{\frakk}( \Pi_p \tau_p)- (R J_{q,p}\tau  + {\Pi}_q\mathcal{I}_{\frakk}( \Gamma_{q,p} \tau_p))\\
&= -R J_p\tau_p + K_{\frakk}( \Pi_p \tau_p)- (R J_{q,p}\tau)  + RJ_q( \Gamma_{q,p} \tau)) - K(\Pi_q \Gamma_{q,p} \tau_p)\\
&= -R J_p\tau_p + K_{\frakk}( \Pi_p \tau_p)- R(J_q\Gamma_{q,p}\tau_p- \Gamma_{q,p}J_p\tau_p +E_q\tau_p)  +R J_q( \Gamma_{q,p} \tau_p)) - K(\Pi_q \Gamma_{q,p} \tau_p)\\
&= -R J_p\tau_p + K_{\frakk}( \Pi_p \tau_p)- R(- \Gamma_{q,p}J_p\tau_p +E_q\tau_p)  - K(\Pi_q \Gamma_{q,p} \tau_p)\\
&=  \big(K_{\frakk}( \Pi_p -\Pi_q \Gamma_{q,p} \tau_p)-RE_q\tau_p\big) - (\Pi_p-\Pi_q\Gamma_{q,p})J_p\tau_p 
\end{align*}
where the first summand satisfies the desired bound by Lemma~\ref{lemma bound on error} and the second summand by the fact that the jet model is a model, i.e.\ Proposition~\ref{proposition on polynomial model}.
\end{proof}

\begin{definition}
We call a model $(\Pi, \Gamma)$ admissible, if it is of the form $\Pi_p= \pmb{\Pi}\circ \Gamma_{\gamma_p(\pmb{\Pi})}$, $\Gamma_{p,q}= \Gamma_{\gamma_{p,q}(\pmb{\Pi})}$ for an admissible map $\pmb{\Pi}$. 
We say that an admissible map $\pmb{\Pi}$ generates a model, if $(\Pi, \Gamma)$ given by $\Pi_p= \pmb{\Pi}\circ \Gamma_{\gamma_p(\pmb{\Pi})}$, $\Gamma_{p,q}= \Gamma_{\gamma_{p,q}(\pmb{\Pi})}$ is a model.
\end{definition}
Note that two different maps $\pmb{\Pi}$ might generate the same model $(\Pi, \Gamma)$. In that sense $\pmb{\Pi}$ contains more information. 
The next proposition is the main Proposition of this section.
\begin{prop}\label{prop orbit generates models}
If $\pmb{\Pi}$ is an element of the orbit under the renormalisation group $\mathfrak{G}$ of the canonical lift  of smooth noises $\{\xi_{\mathfrak{l}}\}_{l\in \mathfrak{L}_-}$, then $\pmb{\Pi}$ generates a model. 
\end{prop}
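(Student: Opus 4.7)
The strategy is to imitate the inductive argument used in the proof of Proposition~\ref{prop canonical lift is a a model}, replacing the canonical lift $\pmb{\Pi}$ by the renormalised map $\pmb{\Pi}^g = (\pmb{\Pi}^{\mathrm{can}})^g$. The case $g = e$ is exactly Proposition~\ref{prop canonical lift is a a model}, so the content here is that the renormalisation-group action sends admissible lifts that generate a model to admissible lifts that generate a model. Two structural facts set the stage. First, that $\pmb{\Pi}^g$ is again an admissible lift of the smooth noises $\xi^\varepsilon - g(\Xi^\varepsilon)$ was already shown, so the recursive prescriptions defining $\gamma_p(\pmb{\Pi}^g) \in \mathfrak{G}_{+,\loc}$ and $\gamma_{p,q}(\pmb{\Pi}^g) \in \mathfrak{G}_+$, hence the pair $(\Pi^g, \Gamma^g)$, make sense by the same formulas as for $\pmb{\Pi}^{\mathrm{can}}$. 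Second, the renormalisation formula \eqref{eq:bfPi_renormalisation_formula} shows that $\pmb{\Pi}^g \tau$ is a finite sum of differential operators $g(\tau_0)$ applied to smooth factors $\mathring{\pmb{\Pi}}^g \eta^{\tau_0, n}$, so $\pmb{\Pi}^g$ still takes values in $\mathcal{C}^\infty(V)$.

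I would then proceed by induction on trees using the recursive construction from Remark~\ref{rem recursive construction}, checking each of the three bounds of Definition~\ref{def model} for $\Pi^g_p \tau$, $\Gamma^g_{p,q} \tau$ and $(\Pi^g_p - \Pi^g_q \Gamma^g_{q,p}) \tau$. The base cases are immediate: on the jet sector $J$ one has $\pmb{\Pi}^g|_J = \pmb{\Pi}^{\mathrm{can}}|_J = R$ so Proposition~\ref{proposition on polynomial model} applies, and on each $\Xi^\varepsilon$ the value $\pmb{\Pi}^g \Xi^\varepsilon = \xi^\varepsilon - g(\Xi^\varepsilon)$ is still smooth so the three bounds hold trivially. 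For the tree-product step the multiplicativity of $\Gamma^g$ from Remark~\ref{remark on Gamma} and of $\pmb{\Pi}^g$ in the sense of \eqref{eq:canonical_bfPi1} reduce the bounds on products to bounds on factors. For the integration step $\tau \mapsto \mathcal{I}_{\mathfrak{k}} \tau$, one reproduces the computation at the end of the proof of Proposition~\ref{prop canonical lift is a a model} verbatim, since Lemma~\ref{J lemma}, Lemma~\ref{lemma well definednes of I}, Lemma~\ref{lemma J_qp} and Lemma~\ref{lemma bound on error} only use the inductive hypothesis that the three model bounds hold for the input $\tau$; they are insensitive to the concrete form of $\pmb{\Pi}$.

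The main obstacle is the tree-product step in the presence of a non-trivial counterterm piece
\[
\sum_{\tau_0\in \mathfrak{T}^-_T,\ \rho_T = \rho_{\tau_0}} g(\tau_0)\Big(\prod_{n\in N_{\tau_0}} \mathring{\pmb{\Pi}}^g \eta^{\tau_0,n}(\cdot_n)\Big)
\]
in \eqref{eq:bfPi_renormalisation_formula}, which has no analogue in the canonical case. Since $g(\tau_0) \in \mathfrak{Dif}_{\tau_0}$ is a differential operator of order at most $-|\tau_0|$ and each $\mathring{\pmb{\Pi}}^g \eta^{\tau_0,n}$ is smooth, this piece is itself a smooth section, so its contribution to $\Pi^g_p \tau(\phi_p^\lambda)$ scales trivially like $\lambda^{|\fraks|}$. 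What needs to be checked is that this scaling is compatible with the required homogeneity $\alpha$ of the bundle component in which the corresponding basis element lives; this will follow by construction, because the term containing $g(\tau_0)$ only appears inside sectors of homogeneity $\alpha \geq |\tau_0| + \sum_n \underline{\alpha}(G^{\tau_0,n}) \geq \|T\|_{\delta_0} \wedge |T|$, and subcriticality together with the second condition in the definition of a rule guarantees this homogeneity is bounded above by $|\fraks|$ whenever a non-trivial counterterm arises. Once this is in place, the third bound follows by the same rewriting used at the end of the proof of Proposition~\ref{prop canonical lift is a a model}, since $\gamma_p(\pmb{\Pi}^g)$ and $\gamma_{p,q}(\pmb{\Pi}^g)$ are defined by the \emph{same} jet-projection recipes as in the unrenormalised case.
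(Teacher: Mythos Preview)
Your overall inductive plan and your treatment of the base cases and of the integration step $\tau\mapsto\mathcal{I}_{\mathfrak{k}}\tau$ are correct and match the paper: these steps only use admissibility and the estimates of Section~\ref{Section Singular Kernels}, so they transfer verbatim from the proof of Proposition~\ref{prop canonical lift is a a model}.

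The genuine gap is in the tree-product step, and specifically in the third bound $(\Pi^g_p-\Pi^g_q\Gamma^g_{q,p})\eta$. In the canonical case this bound for products was obtained by noting that all three objects $\Pi_p$, $\Gamma_{q,p}$ and $\Pi_q\Gamma_{q,p}$ are multiplicative, so the product case reduces to planted factors. After renormalisation this fails: $\pmb{\Pi}^g$ is \emph{not} multiplicative in the sense of \eqref{eq:canonical_bfPi1}, precisely because of the counterterm piece you identify. Your claim that ``the third bound follows by the same rewriting used at the end of the proof of Proposition~\ref{prop canonical lift is a a model}'' therefore does not hold: that rewriting concerned only the integration step. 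What the paper does instead is to first upgrade the formula \eqref{eq:bfPi_renormalisation_formula} from the level of $\pmb{\Pi}^g$ to the level of $\Pi^g_p$ (Lemma~\ref{lemma pi renormalized}) and then to the level of $\Pi^g_q\Gamma^g_{q,p}$ (the lemma immediately after it), using the observation of Remark~\ref{extracted formula} that the product formula survives composition with any $\Gamma_f$. Subtracting these two formulas and using a telescopic sum (Remark~\ref{rem telescopic sum}) expresses $(\Pi^g_p-\Pi^g_q\Gamma^g_{q,p})\eta$ as a sum of terms, each of which carries the difference $(\Pi^g_p-\Pi^g_q\Gamma^g_{q,p})$ on exactly one planted subfactor (contributing a $\lambda^{\|\cdot\|}$) while the remaining factors contribute $\lambda^{\underline{\alpha}(\cdot)}$. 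This is precisely why the second homogeneity $\|T\|$ was defined by the recursion in point~(\ref{fou}) of Section~\ref{sec:homTree}: that minimum over decompositions $T=\mathring{\tau}(G)$ is tailored to match the telescopic bound. None of this mechanism appears in your proposal.

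Your treatment of bound~(1) is also off. The counterterm does not ``scale trivially like $\lambda^{|\fraks|}$'' (testing a bounded smooth function against $\phi_p^\lambda$ gives $O(1)$, not $O(\lambda^{|\fraks|})$), and the chain of inequalities $\alpha\geq|\tau_0|+\sum_n\underline{\alpha}(G^{\tau_0,n})\geq\|T\|_{\delta_0}\wedge|T|$ mixes up the $\alpha$-grading with the $\delta$-grading. The correct argument again uses Lemma~\ref{lemma pi renormalized}: in the factorised formula for $\Pi^g_p\eta$, each factor $\mathring{\Pi}^g_p\eta^{\tau,n}$ is a product of $\Pi^g_p$ on planted subtrees, controlled by the inductive hypothesis, and the multilinear differential operator $g(\tau)$ of order at most $-|\tau|$ shifts the scaling exactly so that the total exponent matches $\alpha=|\tau|+\sum_n\alpha_n$.
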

The following lemmas will be useful in the proof.
\begin{lemma}\label{lemma pi renormalized}
Let $(\Pi, \Gamma)$ be the canonical model (generated by the canonical $\pmb{\Pi}$) and $g\in \mathfrak{G}_-$ such that $g(\Xi^\varepsilon)=0$ for all $\varepsilon\in \mathcal{E}_-$.
For planted trees $T^1,\ldots,T^m\in \mathfrak{T}$ and $\eta^i \in \Func_W \langle T^i\rangle$, such that $\eta =\eta^1\cdots \eta^m\in \mathcal{T}$ one has
\begin{equation}\label{eq:lemma pi renormalsed}
\Pi_p^g \eta = \Tr \bigodot_i (\Pi_p^g \eta^i) + \sum_{{\tau}\in \mathfrak{T}^{-}_T \ : \rho_T\in {\tau}} g({\tau}) \Big( \prod_{n\in N_{{\tau}}} \Tr \big(\bigodot_i\Pi_p^g \eta_i^{{\tau}, n}(\cdot_n)\big)\Big) \ ,
\end{equation}
where the sum runs over all negative subtrees $\tau$ of $T$ containing the root and where 
for each ${\tau}$, $\mathcal{H}^{{\tau}}$ denotes a family $\mathcal{H}^{{\tau}}=\{\eta^{{\tau},n}\}_{n\in N_{{\tau}}}$ such that 
$\eta= {\tau}(\mathcal{H}^{{\tau}}) $ as in Remark~\ref{remark attaching trees to negative trees} and furthermore write $\eta^{\tau,n}=\prod_i\eta^{\tau,n}_i$ where each $\eta^{\tau,n}_i\in \Func_W (G_i^{{\tau}, n})$ is planted.
\end{lemma}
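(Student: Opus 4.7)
The plan is to combine the identity $\Pi_p^g = \pmb{\Pi}^g\circ \Gamma_{\gamma_p(\pmb{\Pi}^g)}$ defining admissible models with the renormalisation formula \eqref{eq:bfPi_renormalisation_formula} for $\pmb{\Pi}^g$ applied to a tree product. First I would observe that $\Gamma_{\gamma_p}$ is multiplicative with respect to the tree product (see Remark~\ref{remark on Gamma}), so that
\[
\Pi_p^g \eta \;=\; \pmb{\Pi}^g\bigl(\Gamma_{\gamma_p}\eta^1\cdots\Gamma_{\gamma_p}\eta^m\bigr),
\]
and that for a planted tree $\eta^i=\mathcal I_{\frakk_i}\tau^i$ the expansion $\Gamma_{\gamma_p}\eta^i=\gamma_p(\mathcal I_{\frakk_i}\tau^i)+\mathcal I_{\frakk_i}(\Gamma_{\gamma_p}\tau^i)$ is a sum of a jet term and of a term that is again planted with root edge of type $\frakk_i$. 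Expanding the product of these expressions therefore writes $\Gamma_{\gamma_p}\eta$ as a linear combination of products of planted trees, each of which has a well-defined root node that coincides with the root of $\eta$.

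Then I would apply \eqref{eq:bfPi_renormalisation_formula} in its second form (the ``$\Tr\bigodot$'' formulation) to each such product of planted trees. The sum over negative subtrees $\tau$ passing through the root splits the contributions as follows. For the ``identity'' part, the sum of the trace-of-product terms reassembles, via the multiplicativity of $\pmb{\Pi}^g$ on planted branches composed with $\Gamma_{\gamma_p}$ on each branch, into $\Tr\bigodot_i \pmb{\Pi}^g\Gamma_{\gamma_p}\eta^i = \Tr\bigodot_i \Pi_p^g\eta^i$, yielding the first summand of \eqref{eq:lemma pi renormalsed}. For the correction part, I would use that any negative subtree $\tau$ of a tree appearing in the $\Gamma_{\gamma_p}$-expansion passing through the root must consist only of $\mathcal E_-$-edges and therefore is unaffected by positive renormalisation: such subtrees are in canonical bijection with negative subtrees of the original $T$ containing $\rho_T$. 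Moreover, on these subtrees $g(\tau)$ depends only on the isomorphism class of $\tau$ as a negative tree, hence pulls out of the expansion.

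What remains is to identify the ``leftover branches'' hanging from such a $\tau$. Here I would use the decomposition $\eta = \tau(\mathcal H^\tau)$ with $\eta^{\tau,n}=\prod_i \eta^{\tau,n}_i$ planted, and argue (using again that $\Gamma_{\gamma_p}$ acts trivially on jets by \eqref{triviality on jets} and that it is multiplicative and commutes with the operation of attaching subtrees to nodes of $\tau$) that the restriction of $\Gamma_{\gamma_p}\eta$ to the branches above each node $n\in N_\tau$ equals $\prod_i \Gamma_{\gamma_p}\eta^{\tau,n}_i$, whose $\pmb{\Pi}^g$-image is, by definition, $\prod_i \Pi_p^g\eta^{\tau,n}_i$. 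Symmetrising and taking the trace then produces the factor $\prod_n \Tr\bigodot_i \Pi_p^g\eta^{\tau,n}_i(\cdot_n)$ of the second term of \eqref{eq:lemma pi renormalsed}.

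The main obstacle will be the combinatorial bookkeeping in Step~2 and Step~3, namely rewriting the product of expansions $\prod_i\Gamma_{\gamma_p}\eta^i$ as a sum over pairs consisting of a negative subtree $\tau\subset T$ through $\rho_T$ together with $\Gamma_{\gamma_p}$-cuts performed on each of the resulting branches $\eta^{\tau,n}_i$, and matching this with the sum in \eqref{eq:bfPi_renormalisation_formula}. The algebraic ingredients that make this work are that $\gamma_p$ is a section of $\mathfrak G_{+,\mathrm{loc}}$ and hence acts only on $\mathcal E_+\cup\mathcal E_0$-edges, that $\Gamma_{\gamma_p}$ is a coalgebra-like map commuting with the grafting operation $\tau(\cdot)$ of Remark~\ref{remark attaching trees to negative trees}, and Lemma~\ref{lem:symset_composition} which ensures compatibility of the associated symmetric-set morphisms. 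Once these commutations are checked at the level of symmetric sets, the identity \eqref{eq:lemma pi renormalsed} follows.
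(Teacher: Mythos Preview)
Your plan is correct and follows essentially the same route as the paper's proof: start from $\Pi_p^g=\pmb{\Pi}^g\Gamma_{\gamma_p}$, use multiplicativity of $\Gamma_{\gamma_p}$ to factor over the planted branches, apply the formula \eqref{eq:bfPi_renormalisation_formula} for $\pmb{\Pi}^g$ on a tree product, and then use that negative subtrees contain no $\mathcal E_0$-edges so they are unaffected by positive cuts, allowing the sums over positive forests and over negative subtrees to be interchanged and the positive cuts to be resummed branchwise into $\Gamma_{\gamma_p}\eta^{\tau,n}_i$.

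Two minor remarks. First, the paper works directly with the expansion $\Gamma_{\gamma_p}\eta=\sum_{F\in\mathfrak F_T^+}\eta\setminus_{\gamma_p}F$ and factors $F=\prod_i F_i$ with $F_i=F\cap T^i$, rather than invoking the two-term formula $\Gamma_{\gamma_p}\mathcal I_\frakk\tau=\gamma_p(\mathcal I_\frakk\tau)+\mathcal I_\frakk(\Gamma_{\gamma_p}\tau)$; this makes the combinatorial interchange step cleaner and avoids any need to track jet versus non-jet terms separately. Second, the appeal to Lemma~\ref{lem:symset_composition} is not needed here: the compatibility you require is simply that the decomposition $\eta=\tau(\mathcal H^\tau)$ and the positive-forest cut commute because $\tau$ has no $\mathcal E_0$- or $\mathcal E_+$-edges, so every $F\in\mathfrak F_T^+$ lies entirely inside the branches $G^{\tau,n}_j$.
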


\begin{remark}\label{def:mathringfPi}
It is useful to introduce for a smooth model $Z= (\Pi, \Gamma)$ the maps
$\mathring{{\Pi}}_p : \mathcal{T}\to \mathcal{C}^\infty (V)$ as the unique map satisfying the following properties:
\begin{itemize}
\item for each planted $T \in \mathfrak{T}$, one has $$\mathring{\Pi}_p|_{\Func_{ W}\langle T \rangle} ={\Pi}_p|_{\Func_{ W}\langle T \rangle}$$
\item For $\eta^{1}_p,\ldots,\eta^n_p \in \mathring{\mathcal{T}}$ such that $\eta_p^{i}\in \Func_{V}(T^i)$ where each $T^i$ is planted,
one has
$$ \mathring{{\Pi}}_p\big(\prod_{i=1}^n \eta^i \big) = \Tr\big( {\bigodot_{i=1}^n } \mathring{{\Pi}}_p(\eta^i) \big) \ .$$
\end{itemize}
Then, one can write \eqref{eq:lemma pi renormalsed} as
$$
\Pi_p^g \eta = (\mathring{\Pi}_p^g \eta) + \sum_{{\tau}\in \mathfrak{T}^{-}_T \ : \rho_T\in {\tau}} g({\tau}) \Big( \prod_{n\in N_{{\tau}}} \mathring{\Pi}_p^g \eta^{{\tau}, n}(\cdot_n)\Big) \ .
$$

\end{remark}

\begin{proof}
Throughout this proof we simply write $\gamma_p$ instead of $\gamma_p(\pmb{\Pi}^g)$ in order to make notation lighter.
Writing $T= T^1\cdots T^n$ and for $F\in \mathfrak{F}_T^+$, $F_i= F\cap T^i$ we compute
$$
\Pi_p^g \eta = \pmb{\Pi}^g \Gamma_{\gamma_p} \eta
 = \sum_{F\in \mathfrak{F}^+_{{T}}} \pmb{\Pi}^g( {\eta}\setminus_{\gamma} F)
 = \sum_{F\in \mathfrak{F}^+_{{T}}} \pmb{\Pi}^g\Big( \prod_{i} \eta^{i}  \setminus_{\gamma_p} F_i \Big)
 =  \pmb{\Pi}^g\Big( \prod_{i}  \sum_{F_i\in \mathfrak{F}^+_{{T}_i}}\eta^{i}  \setminus_{\gamma_p} F_i \Big)
$$
Since
$$\pmb{\Pi}^g\Big( \prod_{i} \eta^{i}  \setminus_{\gamma_p} F_i \Big)= \Tr\bigodot_{i} (\pmb{\Pi}^g \eta^{i}  \setminus_{\gamma_p} F_i ) + \sum_{{\tau}\in \mathfrak{T}_{{T}_\setminus F}^- : \rho\in \tau} 
g({\tau})\Bigg( \prod_{n\in N_{{\tau}}} \big(\bigodot_j \pmb{\Pi}^g\eta^{\tau,n}_j\setminus_{\gamma_p}( G^{\tau,n}_j\cap F_i)(\cdot_{n})\Big)\Bigg)\, $$
we have
\begin{align*}
\Pi_p^g \eta &= \sum_{F\in \mathfrak{F}^+_{{T}}} \Tr\bigodot_{i} (\pmb{\Pi}^g \eta^{i}  \setminus_{\gamma_p} F_i ) + \sum_{F\in \mathfrak{F}^+_{{T}}}\sum_{{\tau}\in \mathfrak{T}_{{T}_\setminus F}^- : \rho\in {\tau}}
g({\tau}) \Bigg(\prod_{n\in N_{\tau}} \big(\bigodot_j \pmb{\Pi}^g\eta^{{\tau},n}_j\setminus_{\gamma_p}( G^{{\tau},n}_j\cap F_i)(\cdot_{n})\Big)\Bigg) \\
&=\Tr\bigodot_{i} (\pmb{\Pi}^g \sum_{F_i\in \mathfrak{F}^+_{{T}_i}} \eta^{i}  \setminus_{\gamma_p} F_i )  +\sum_{F\in \mathfrak{F}^+_{{T}}}\sum_{{\tau}\in \mathfrak{T}_{{T}_\setminus F}^- : \rho\in {\tau}} 
g({\tau}) \Bigg(\prod_{n\in N_{{\tau}}} \big(\bigodot_j \pmb{\Pi}^g\eta^{\tau,n}_j\setminus_{\gamma_p}( G^{{\tau},n}_j\cap F_i)(\cdot_{n})\Big)\Bigg) \\
&=  \Tr\bigodot_{i} \pmb{\Pi}^g\Gamma_{\gamma_p} \eta^{i}    +\sum_{F_i\in \mathfrak{F}^+_{{T}_i}} \sum_{{\tau}\in \mathfrak{T}_{{T}_\setminus F}^- : \rho\in {\tau}} 
g({\tau}) \Bigg(\prod_{n\in N_{\tau}} \big(\bigodot_j \pmb{\Pi}^g\eta^{{\tau},n}_j\setminus_{\gamma_p} ( G^{{\tau},n}_j\cap F_i)(\cdot_{n})\Big)\Bigg) \\
&=  \Tr\bigodot_{i} \pmb{\Pi}^g\Gamma_{\gamma_p} \eta^{i}   + \sum_{\tau\in \mathfrak{T}_{{T}}^- : \rho\in \tau} \sum_{F_j\in \mathfrak{F}^+_{{G^{\tau,n}_j}}}
g(\tau) \Bigg( \prod_{n\in N_{\tau}} \big(\bigodot_j \pmb{\Pi}^g\eta^{\tau,n}_j\setminus_{\gamma_p} F_j)(\cdot_{n})\Big)\\
&= \Tr\bigodot_{i} \pmb{\Pi}^g\Gamma_{\gamma_p} \eta^{i}   + \sum_{\tau\in \mathfrak{T}_{{T}}^- : \rho\in \tau} 
g(\tau) \Big(\prod_{n\in N_{\tau}} \big(\bigodot_j \pmb{\Pi}^g \Gamma_{\gamma_p} \eta^{\tau,n}_j\big)(\cdot_{n})\Big) \\
&=  \Tr\bigodot_{i} {\Pi}^g_{p} \eta^{i}    + \sum_{\tau\in \mathfrak{T}_{{T}}^- : \rho\in \tau} 
g(\tau) \Big(\prod_{n\in N_{\tau}} \big(\bigodot_j {\Pi}_p^g\eta^{\tau,n}_j\big) (\cdot_{n})\Big)\;,
\end{align*}
as claimed.
\end{proof}

\begin{remark}\label{extracted formula}
Note that in the previous proof we only used the explicit choice of $\gamma_p$ in the last line. In particular we can extract the following general formula from the proof above. For any $g\in \mathfrak{G}_-$ and $f\in \mathfrak{G}_+$
$$\pmb{\Pi}^g \Gamma_f \eta=  \Tr\bigodot_{i} \pmb{\Pi}^g\Gamma_f \eta^{i} + \sum_{\tau\in \mathfrak{T}_{{T}}^- : \rho\in \tau} 
g(\tau)\Big( \prod_{n\in N_{\tau}} \big(\bigodot_j \pmb{\Pi}^g \Gamma_f \eta^{\tau,n}_j(\cdot_{n})\big)\Big) \\$$
\end{remark}

\begin{lemma}
In the setting and notation of the previous lemma, the following formula holds.

\begin{align*}
\Pi^g_q \Gamma_{q,p}^g \eta &=  \Tr\bigodot_{i} \Pi^g_q \Gamma_{q,p}^g\eta^{i} + \sum_{\tau\in \mathfrak{T}_{{T}}^- : \rho\in \tau} 
g({\tau})\Big( \prod_{n\in N_{{\tau}}} \big(\bigodot_j \Pi^g_q \Gamma_{q,p}^g \eta^{{\tau},n}_j(\cdot_{n})\big)\Big) 
\end{align*}
\end{lemma}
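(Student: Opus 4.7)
The strategy is to reduce the statement to the general formula extracted in Remark~\ref{extracted formula}. Unwinding the definitions,
\[
\Pi^g_q \Gamma^g_{q,p}\eta \;=\; \pmb{\Pi}^g \circ \Gamma_{\gamma_q} \circ \Gamma_{\gamma_{q,p}}\eta\;,
\]
where $\gamma_q := \gamma_q(\pmb{\Pi}^g) \in \mathfrak{G}_{+,\loc}$ and $\gamma_{q,p} := \gamma_{q,p}(\pmb{\Pi}^g) \in \mathfrak{G}_{+}$ (evaluated at $(p,q)$). The crux is that the composition $\Gamma_{\gamma_q}\circ \Gamma_{\gamma_{q,p}}$ is itself of the form $\Gamma_h$ for a suitable $h\in \mathfrak{G}_+$, so that Remark~\ref{extracted formula} applies verbatim.

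\textbf{Step 1 (semigroup property of $\Gamma$).} I would define $h\in \mathfrak{G}_+$ recursively on planted trees. For a jet $\sigma\in J$, set $h(\sigma) := \gamma_{q,p}(\sigma)$; for a planted tree $\mathcal{I}_\frakk\tilde\eta\in\mathcal{T}_+^\frakk$ with $\tilde\eta$ already handled, set
\[
h(\mathcal{I}_\frakk\tilde\eta) \;:=\; \gamma_{q,p}(\mathcal{I}_\frakk\tilde\eta) \;+\; \gamma_q\bigl(\mathcal{I}_\frakk\,\Gamma_{\gamma_{q,p}}\tilde\eta\bigr)\;\in\; J^{\iota(\frakk)}\;.
\]
Using that $\gamma_q|_J = \id$ (so $\Gamma_{\gamma_q}$ acts trivially on jets), the recursion relation from Remark~\ref{remark on Gamma}, and the multiplicativity of $\Gamma_{\gamma_q}$, $\Gamma_{\gamma_{q,p}}$ with respect to the tree product, a direct induction on the tree size yields
\[
\Gamma_{\gamma_q}\circ \Gamma_{\gamma_{q,p}} \;=\; \Gamma_h\;.
\]
Indeed, at the inductive step one computes
\begin{align*}
\Gamma_{\gamma_q}\Gamma_{\gamma_{q,p}}(\mathcal{I}_\frakk\tilde\eta)
&= \Gamma_{\gamma_q}\bigl(\gamma_{q,p}(\mathcal{I}_\frakk\tilde\eta) + \mathcal{I}_\frakk\Gamma_{\gamma_{q,p}}\tilde\eta\bigr) \\
&= \gamma_{q,p}(\mathcal{I}_\frakk\tilde\eta) + \gamma_q(\mathcal{I}_\frakk\Gamma_{\gamma_{q,p}}\tilde\eta) + \mathcal{I}_\frakk\Gamma_{\gamma_q}\Gamma_{\gamma_{q,p}}\tilde\eta \\
&= h(\mathcal{I}_\frakk\tilde\eta) + \mathcal{I}_\frakk\Gamma_h\tilde\eta \;=\;\Gamma_h(\mathcal{I}_\frakk\tilde\eta)\;.
\end{align*}
Multiplicativity then extends this to all of $\mathcal{T}$.

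\textbf{Step 2 (applying the extracted formula).} With $h\in\mathfrak{G}_+$ as above, Remark~\ref{extracted formula} applied with this $f=h$ and the given $g\in\mathfrak{G}_-$ yields
\[
\pmb{\Pi}^g\Gamma_h\eta \;=\; \Tr\bigodot_i \pmb{\Pi}^g\Gamma_h\eta^i \;+\; \sum_{\tau\in\mathfrak{T}^-_T:\rho\in\tau} g(\tau)\Big(\prod_{n\in N_\tau}\bigodot_j \pmb{\Pi}^g\Gamma_h\eta^{\tau,n}_j(\cdot_n)\Big)\;.
\]
Since $\pmb{\Pi}^g\Gamma_h = \pmb{\Pi}^g\Gamma_{\gamma_q}\Gamma_{\gamma_{q,p}} = \Pi^g_q\Gamma^g_{q,p}$, this is precisely the claimed identity.

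\textbf{Main obstacle.} The only nontrivial point is Step~1, namely showing that the two-step composition $\Gamma_{\gamma_q}\circ\Gamma_{\gamma_{q,p}}$ can be realised as a single $\Gamma_h$ with $h\in \mathfrak{G}_+$. This is essentially a semigroup-type property of the action of $\mathfrak{G}_+$ via $\Gamma_{(\cdot)}$; it is intuitively clear from multiplicativity and the recursive formula of Remark~\ref{remark on Gamma}, but requires some care since $\Gamma_{\gamma_{q,p}}$ does \emph{not} act as the identity on jets (in contrast to $\Gamma_{\gamma_q}$), so the recursive definition of $h$ has to absorb the jet-part of $\gamma_{q,p}$ before $\gamma_q$ sees it. Once Step~1 is in place, the rest is a direct application of the material already developed.
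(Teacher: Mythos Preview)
Your proposal is correct and follows essentially the same approach as the paper: both reduce to showing that $\Gamma_{\gamma_q}\circ\Gamma_{\gamma_{q,p}}=\Gamma_\beta$ for a single $\beta\in\mathfrak{G}_+$, and then invoke Remark~\ref{extracted formula}. The only difference is how this composition identity is established: the paper defines $\beta_{q,p}(\tau):=\sum_{\tilde F\in\mathfrak{F}^+_T}\gamma_q(\tau\setminus_{\gamma_{q,p}}\tilde F)$ and checks the identity directly by collapsing the double sum over nested forests, whereas you define $h$ recursively on planted trees and prove the identity by induction using Remark~\ref{remark on Gamma}; the two $\beta_{q,p}$ and $h$ coincide.
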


\begin{proof}
Again we we simply write $\gamma_p$ and $\gamma_{p,q}$ instead of $\gamma_p(\Pi^g)$ and $\gamma_{p,q}(\Pi^g)$ in order to make notation lighter.
Note that
\begin{align*}
\Pi^g_q\Gamma^g_{q,p}\tau&= \pmb{\Pi}^g\Gamma_{\gamma_q}\Gamma^g_{q,p}\tau =\pmb{\Pi}^g \Gamma_{\gamma_q} \sum_{\tilde{F}\in \mathfrak{F}^+_T} \tau\setminus_{\gamma_{q,p}} \tilde{F} = 
\pmb{\Pi}^g \sum_{{F}\in \mathfrak{F}^+_{T\setminus \tilde{F}}} \sum_{\tilde{F}\in \mathfrak{F}^+_T} (\tau\setminus_{\gamma_{q,p}} \tilde{F}) \setminus_{\gamma_{p}} F\\
&=\pmb{\Pi}^g \sum_{{F}\in \mathfrak{F}^+_T} \tau\setminus_{\beta_{q,p}} {F} 
= \pmb{\Pi}^g  \Gamma_{\beta_{q,p}}\tau 
\end{align*}
where we define $\beta_{q,p}\in (\mathfrak{G}_+)_{q,p}$ by
$$\beta_{q,p}(\tau)= \sum_{\tilde{F}\in \mathfrak{F}^+_T} \gamma_q (\tau\setminus_{\gamma_{q,p}} \tilde{F}) \ $$
for $\tau\in\Func_W \langle T\rangle_p \subset \mathcal{T}_+$.
Thus we find by Remark~\ref{extracted formula}
\begin{align*}
\Pi^g_q \Gamma_{q,p}^g&=  \Tr\bigodot_{i} \pmb{\Pi}^g\Gamma_{\beta_{q,p}} \tau_{i} + \sum_{\tau\in \mathfrak{T}_{{T}}^- : \rho\in \tau} 
g(\tau)\Big( \prod_{n\in N_{\tau}} \big(\bigodot_j \pmb{\Pi}^g \Gamma_{\beta_{q,p}}  \eta^{\tau,n}_j(\cdot_{n})\big)\Big) \\
&=  \Tr\bigodot_{i} \Pi^g_q \Gamma_{q,p}^g\tau_{i} + \sum_{\tau\in \mathfrak{T}_{{T}}^- : \rho\in \tau} 
g(\tau)\Big( \prod_{n\in N_{\tau}} \big(\bigodot_j \Pi^g_q \Gamma_{q,p}^g \eta^{\tau,n}_j(\cdot_{n})\big)\Big) \ ,
\end{align*}
which completes the proof.
\end{proof}

\begin{remark}\label{rem telescopic sum}
Using a telescopic sum and combining the previous two lemmas on finds that 
\begin{align*}
&(\Pi_p^g -\Pi^g_q \Gamma_{q,p}^g) \eta \\
&= \Tr\sum_{j=1}^n (\Pi_p^g \eta^1)\odot\ldots\odot(\Pi_p^g \eta^{j-1}) \odot (\Pi_p^g\eta^j -\Pi^g_q \Gamma_{q,p}^g\eta^j) \odot( \Pi^g_q \Gamma_{q,p}^g\eta^{j+1}) \odot\ldots\odot( \Pi^g_q \Gamma_{q,p}^g \eta^{m}) \\
&+ \sum_{\text{``telescopic sum''}} \sum_{\tau\in \mathfrak{T}^{-}_T \ : \rho_T\in \tau} g(\tau) \Big( \prod_{n\in N_{\tau}} \Tr \big(\bigodot_i\big(\cdots\big)\eta_i^{\tau, n}(\cdot_n)\big)\Big)
\end{align*}
where $\big(\cdots\big)$ stands for either $\Pi_p^g$, $\Pi_p^g -\Pi^g_q\Gamma_{q,p}^g$, or $\Pi_p^g -\Pi^g_q\Gamma_{q,p}^g$ with $\Pi_p^g -\Pi^g_q\Gamma_{q,p}^g$ appearing exactly once
\end{remark}

\begin{proof}[Proof of Proposition~\ref{prop orbit generates models}]
First observe that any element in the orbit can be written as $\pmb{\Pi}^{f}$ for some $f\in \mathfrak{G}_-$. We proceed by induction as in the proof of Proposition~\ref{prop canonical lift is a a model}. The desired estimates hold for jets and noises. To observe the bound on the terms in \eqref{eq:whattubound} we use the above lemmas and observe the following:
\begin{enumerate}
\item To see that the desired bound on $\Pi_p \tau_p$ holds, use Lemma~\ref{lemma pi renormalized}.
\item The bound on $\Gamma_{q,p}\tau_p$ follows as in Proposition~\ref{prop canonical lift is a a model}.
\item Note that $\|T\|$ was defined in exactly such a way that it imposes a bound on $(\Pi_p- \Pi_q \Gamma_{q,p})\tau_p$ which is satisfied by Remark~\ref{rem telescopic sum}.
\end{enumerate}
\end{proof}

\begin{remark}
Instead of attaching jets one could directly attach smooth sections to $\mathcal{E}_0$ type edges and all of this sections easily adapts.
But besides the minor drawback of having to work with infinite dimensional vector bundles one would not gain much, since while the definition 
of the map $\pmb{\Pi}$ would then be more canonical, still the choice of an admissible realisation of the jet bundle would have to be made for the definition of the maps $\Pi_p$.
\end{remark}

\begin{remark}\label{rem:identifying_under_can product on jets}
Recall from Section~\ref{remark algebraic structure} that is often useful to impose further restrictions on the choice of admissible realisations and from Section~\ref{section:lifting general nonlinearities} to enforce the canonical counterpart on products of jets. The regularity structure ensembles in this section do not enforce any such algebraic constraints. This can be remedied by working with quotient spaces of $\mathcal{T}^{\sigma}/{\sim}$, such that Assumption~\ref{Assumption Nonlinearites} is satisfied. 
That is, whenever a tree contains two or more $\mathcal{E}_0$ type edges incident to the same node, one applies the canonical product $ \star : J^\sigma \otimes J^{\bar{\sigma}}  \to J^{\sigma +\bar{\sigma}}$ composed with the projection $Q_{<\delta_0}$ at that node.

It follows by construction, that by choosing an admissible realisation $R$ of $J$ respecting these constraints, which can be done by the constructions in Section~\ref{remark algebraic structure}, all considerations in this section extend to the quotient space.
A similar procedure is executed in \cite{BCCH20}, see also \cite[Remark~4.35]{Che22}.
\end{remark}

\begin{remark}
Let us emphasise that the constructions of this section are conceptually very close to the ones in \cite{BHZ19}.
One also expects to be able to reformulate many of the constructions here in a framework closer to the one herein, i.e.\ one involving Hopf algebra bundles. Some of the novelties one would need to incorporate on the abstract (algebraic) level are the following 
\begin{itemize}
\item A non-triangular action of the positive coproduct, when acting on jets and negative trees,
\item further ``extending" the extended decorations therein.
\end{itemize}
But once this has been executed one can then turn the trees into vector bundles by working with a direct product of frame bundles with some structure group $G$. Then associating to each tree a symmetric set and to each symmetric set a representation of $G$ one can thus construct the desired vector bundles as associated bundles. By the categorical properties of symmetric sets and the thus constructed vector bundle assignment one then expects to be able to "push through" the algebraic structure on trees to the vector bundles, somewhat analogously to what is done in \cite{CCHS22}.
\end{remark}

\section{Application to SPDEs}\label{section application to spdes}
In this section we first explain, starting with an SPDE to choose a rule to construct then the regularity structure ensemble in order to give a solution theory. Then, in Proposition~\ref{prop:renormalised equation} we give a formula how the renormalisation group affects the concrete equation one wants to solve.

For index sets $\mathfrak{L}_+$ and $\mathfrak{L}_-$ and a vector bundle assignment $\{V^\mathfrak{l}\}_{\frakl \in \mathfrak{L}_+\cup \mathfrak{L}_-}$, we investigate the following system of equations 
\begin{equation}\label{eq main}
\mathcal{L}u^\frakt=F^{\frakt}\big( (u^\frakt)_{\frakt\in \mathfrak{L}_+},(Du^\frakt)_{\frakt\in \mathfrak{L}_+},\ldots,(D^n u^\frakt)_{\frakt\in \mathfrak{L}_+},(\xi^\frakf)_{\frakf\in \mathfrak{L}_-}\big) \qquad \text{for } \frakt \in \mathfrak{L}_+ \ ,
\end{equation}
where in this section we can  $u^\frakt\in \mathcal{D}'(V^\frakt)$ and $\xi^\frakf\in \mathcal{D}'(V^\frakf)$.
Let $\mathring{\mathfrak{L}}=(\mathfrak{L}_+\times \NN )\cup  \mathfrak{L}_-$ and $\tilde{\mathfrak{L}}$ be a disjoint copy of $\mathring{\mathfrak{L}}$, for $\frakf\in\mathring{\mathfrak{L}}$ we write $\tilde{\frakf}$ for the corresponding element of $\tilde{\mathfrak{L}}$. We define $\mathfrak{L}= \mathring{\mathfrak{L}}\cup \tilde{\mathfrak{L}}$ and 
$$*:\mathfrak{L}\to \mathfrak{L}, \ \mathfrak{l}\mapsto \mathfrak{l}^* \ ,$$
to be the unique involution without fixed points satisfying  $\frakf^*=\tilde{\frakf}$ for all $\frakf\in\mathring{\mathfrak{L}}$.
We define the vector bundle assignment $(V^\frakl)_{l\in \mathfrak{L}}$ the be the assignment 
$$ \{V^{(\frakt,n)}\}_{(\frakt,n)\in \mathfrak{L}_+\times \NN}= \{V^\frakt\otimes (T^*M)^{ n}\}_{(\frakt,n)\in \mathfrak{L}_+\times \NN} \quad \text{and} \quad \{V^{\frakf}\}_{\frakf \in \mathfrak{L}_-}$$
extended to $\mathfrak{L}$ so that \eqref{eq:assigment_dual} is satisfied.\footnote{Recall the definition of $(T^*M)^n$ in the presence of a scaling $\fraks$ from Section~\ref{section higher order derivatives} .} 

This puts us in the setting of Section~\ref{section indexing vector bundels}.
and we introduce the edge types
\begin{itemize}
\item $\mathcal{E}_+\subset\mathfrak{L}_+\times \NN$, where $(\frakk,n)\in \mathcal{E}_+$ if the term $(D^n u^\frakk)$ appears in (\ref{eq main})\footnote{Note that this is not quite well defined in the sense that one can always artificially increase this set. But there is generically a canonical minimal choice.},
\item $\mathcal{E}_-=\mathfrak{L}_-$ 
\item $\mathcal{E}_0=\mathcal{E}_0^+\cup \mathcal{E}_0^0$, where $\mathcal{E}_0^+$ is a disjoint copy of $\mathcal{E}_+$
and  $$\mathcal{E}_0^0\subset \{(\frakt,\sigma)\subset \mathfrak{L}_+\times \NN^{\mathcal{E}_+\cup \mathcal{E}_-} \ :\  D^{\sigma} F^\frakt \neq 0 \} \, 
$$
where there is usually a choice of of the exact subset.\footnote{It is made depending on the parameters $\gamma<\delta_0$ suggested by the equation.}
\end{itemize}
It is important that we do \textit{not} identify $\mathcal{E}_0^+$ and $\mathcal{E}_+$! But we define the injection 
\begin{equation}\label{eq:iota}
\iota: \mathcal{E}_+\to \mathcal{E}_0, \quad (\mathfrak{l},n) \mapsto (\mathfrak{l},n) \  .
\end{equation}

\begin{remark}
Note that $\mathcal{E}_0^0$ typically contains an infinite number of elements. 
\end{remark}
We also fix the indexing maps
\begin{itemize}
\item $\ind_+=  (\ind^+_+, \ind_+^-) : \mathcal{E}_+\to \mathfrak{L}\times\mathfrak{L},\ ( \mathfrak{k},n)\mapsto (\mathfrak{k},( \mathfrak{k},n))$,
\item $\ind_- : \mathcal{E}_-\to \mathfrak{L},\  \mathfrak{f} \mapsto \frakf$,
\item $\ind_0 : \mathcal{E}_0\to \mathbb{N}^\mathfrak{L}$ is defined as 
\begin{enumerate}
\item $\ind_0|_{\mathcal{E}^0_0}: (\frakk,\sigma) \mapsto \frakk-\sum_{\varepsilon\in \mathcal{E}_+\cup\mathcal{E}_-} \sigma_{\varepsilon} \varepsilon_- \ ,$
\item $\ind_0|_{\mathcal{E}^+_0} = \ind_+^- \circ \iota^{-1},$ 
\end{enumerate}
\end{itemize}
a homogeneity assignment $|\cdot |: \mathcal{E}\to \mathbb{R}$ and $\delta_0> \gamma>0$, which in general is suggested by the Equation (\ref{eq main}).
This then puts us in the setting of Section~\ref{section typed rooted trees}.
It is now natural to rewrite the equation in the form 
$$\mathfrak{L}u^\frakt=F^{\frakt}\big((u^o)_{o\in \mathcal{E}_+}, (\xi^\frakf)_{\frakf\in \mathcal{E}_-} \big)\  .$$

We introduce the space $\bar{E}= \bigoplus_{\varepsilon\in \mathcal{E}_+\cup \mathcal{E}_-} V^\varepsilon$ and in view of Section~\ref{section nonlinearities} we shall lift the local non-linearity $F^\frakt$ to an element of
\begin{align*}
\mathcal{C}^\infty\big(\bar{E}\ltimes  (JE^{\frakt}\otimes P(\bar{E}))\big)&= \bigoplus_{n\in \NN}\mathcal{C}^\infty\big(\bar{E} \ltimes (JE^{\frakt}\otimes (\bar{E}^*)^n) \big)\\
&= \bigoplus_{\sigma\in \NN^{\mathcal{E}_+\cup \mathcal{E}_-}}\mathcal{C}^\infty\big(\bar{E} \ltimes ( JE^{\frakt}\otimes (V^\sigma)^*) \big)\ .
\end{align*}
Here again, the exact truncation is chosen depending on the parameters $\gamma, \delta_0$.
This motivates the definition of a rule ${R}:\mathcal{E}\to \mathcal{P}(\mathcal{N})\setminus \{\emptyset\}$ as the normalisation of the following naive rule $\mathring{R}$.
\begin{itemize}
\item $\mathring{R}(\varepsilon)={\emptyset}$ for $\varepsilon\in \mathcal{E}_0\cup \mathcal{E}_-$
\item for $(\mathfrak{k},n)\in \mathcal{E}_+$, 
$$\mathring{R}((\mathfrak{k},n))=\big\{ [(\frakk,\sigma) ]\sqcup \sigma \ :\  (\frakk ,\sigma)\in \mathcal{E}^0_0  \big\} \ ,$$
where we interpret $\sigma\in \mathbb{N}^{\mathcal{E}_+}$ as a multi-set and recall the notation $\sqcup$ for the union of multi-sets.
\end{itemize}
Since the rule $\mathring{R}$ is equation-like, $R$ is as well.

\begin{assumption}
We assume that the rule $R$ constructed above is subcritical.
\end{assumption}

We define for $o\in \mathcal{E}_+$ the set 
$$\mathring{\mathfrak{T}}^{o}=\bigcup_{\nu\in R(o)} (\mathfrak{T}^\nu)_{<\gamma - |o|}, 
\quad 
{\mathfrak{T}}^{o}=\{\mathcal{I}_o(T) : T\in \bigcup_{\nu \in R(o)} \mathfrak{T}^\nu\} $$
and make the following observations.
\begin{enumerate}
\item For $o=(k,n)\in \mathcal{E}_+$ and $T\in \mathring{\mathfrak{T}}^o\cup {\mathfrak{T}}^{o}$, the node type $\mathfrak{n}(n)$ of every node $n\in N_T$ which is not maximal  nor minimal (i.e.\ the root) has exactly one incident edge of type in $\mathcal{E}^0_0$.

\item For $o=(\frakk, n)\in \mathcal{E}_+$ and $o'=(\frakk, m)\in \mathcal{E}_+$ 
one has $\mathring{\mathfrak{T}}^{o}= \mathring{\mathfrak{T}}^{o'}$.
\end{enumerate}
\begin{remark}

The trees in ${ \mathfrak{T}}^o\cup \{\delta^{\iota(o)}\} $ are used for the expansion of $\{u^o\}_{o\in \mathcal{E}_+},$ while the trees in $\mathring{\mathfrak{T}}^{o}$ are used for the expansion of the right-hand side of Equation~\ref{eq main} .
\end{remark}
\begin{assumption}\label{assumption polynomial in noise}
We assume $F$ is mulitilinear in the noise variables and the modelled distribution describing $\xi^\frakf$ is given by $p\mapsto \Xi_p$. 
\end{assumption}

\begin{remark}
A weaker natural assumption would be to impose that the lift of the noise $\xi^\frakf$ is given by a modelled distribution taking values in a sector of negative regularity. This, combined with the assumption of sub-criticality actually that the non-linearity is polynomial in the noises. 
\end{remark}

The above assumption tells us that for each $\frakt\in \mathfrak{L}_+$ one has
\begin{equation}\label{eq:non-linearity-decompostion}
F^{\frakt}\big((u^o)_{o\in \mathcal{E}_+}, (\xi^\frakf)_{\frakf\in \mathcal{E}_-} \big)\ = F^{\frakt} \big((u^o)_{o\in \mathcal{E}_+},0 \big)+\sum_{\sigma\in \NN^{\mathcal{E}_-}} D^{\sigma} F^{\frakt}\big((u^o)_{o\in \mathcal{E}_+}, 0 \big) \xi^\sigma
\end{equation}
where we used the shorthand notation $\xi^\sigma= \prod_{\frakf\in \mathcal{E}_-} (\xi^\frakf)^{\sigma_\frakf}$ .

It will be useful to denote for $\frakf\in \mathcal{E}_-$ by $U^\frakf$ the modelled distribution $p\mapsto \Xi_p$ and interpret it as modelled distribution where the component of $U^\frakf$ in $T^\frakf_{0,:}$ is constantly $0$. This allows to write the lift of the non-linearity simply as 
\begin{equation}\label{rhs explicit lift}
F^\frakt(U)(p)= \sum_{m\in\NN^{\mathcal{E}_+\cup \mathcal{E}_- }} j^E_p \big(\frac{D^m F^\frakt}{m!}\big) (Q_{0} U(p)) \cdot \tilde{U}^{ m}(p) \ 
\end{equation}
and to formulate the lift of Equation~\ref{eq main} as
\begin{equation}\label{eq abstract}
U^o= \mathcal{I}_{o} Q_{<\gamma-|o|} F^\frakt\big((U^o)_{o\in \mathcal{E}_+\cup \mathcal{E}_-} \big) + u^{o}  \quad \text{ for } o \in \mathcal{E}_+ \ ,
\end{equation}
where $u^{o}$ is a section of the jet bundle $J^{\iota (o)}_{<\gamma,:}$.
\begin{remark}
It is important to note that the solution depends on the choice of model!
\end{remark}
\begin{prop}
Let $\{U^\frakt \}_{\frakt}$ be the solution of the abstract fixed point problem~\eqref{eq abstract} with respect to the canonical model for smooth noises $\{\xi_{\mathfrak{l}} \}_{l\in \mathfrak{L}_-}$. Then $v^\frakt=\mathcal{R}(U^\frakt)$ solves Equation~\ref{eq main} in the classical sense.
\end{prop}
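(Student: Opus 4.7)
My plan is to verify the identity by pushing the reconstruction operator through the abstract fixed point equation and exploiting the fact that, for the canonical model of smooth noises, everything is genuinely smooth so reconstruction collapses to pointwise evaluation.

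First, I would observe that since all noises $\xi^\mathfrak{f}$ are smooth, Proposition~\ref{prop canonical lift is a a model} produces an admissible model for which $\Pi_p \tau$ is smooth for every $\tau \in \mathcal{T}$. Hence Corollary~\ref{prop:reconstruction for continous models} applies and gives the pointwise identity $v^\mathfrak{t}(p) = \mathcal{R}(U^\mathfrak{t})(p) = (\Pi_p U^\mathfrak{t}(p))(p)$ for every $\mathfrak{t} \in \mathfrak{L}_+$. In particular, for each edge type $o = (\mathfrak{k},n) \in \mathcal{E}_+$ the component of the solution of the abstract fixed point in the planted sector $\mathcal{T}^o$ reconstructs to a smooth function which I will denote $w^o$. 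Using admissibility $\pmb{\Pi}\mathcal{I}_\mathfrak{k}\tau = K_\mathfrak{k}(\pmb{\Pi}\tau)$, the fact that the canonical lift realises the differential operators $D^n$ for the corresponding abstract differential operators, and the fact that $\mathcal{L}_\mathfrak{k} K_\mathfrak{k} = \mathrm{id}$ (modulo the smooth remainder absorbed in the initial condition part), one checks $w^{(\mathfrak{k},n)} = D^n v^\mathfrak{k}$.

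Next, I would apply $\mathcal{R}$ to the abstract equation~\eqref{eq abstract}. Theorem~\ref{schauder theorem} provides the convolution identity $K_o \mathcal{R} f = \mathcal{R} \mathcal{K}_\gamma f$, and the analogue for the smooth part $K_{-1}$ is immediate since this kernel acts by ordinary convolution on the (smooth) reconstruction. Combined with the compatibility of admissible realisations with the jet sector, this yields
\begin{equs}
v^o = K_o\bigl(\mathcal{R} F^\mathfrak{t}(U)\bigr) + K_{-1}\bigl(\mathcal{R} F^\mathfrak{t}(U)\bigr) + \text{(classical initial term)} \;.
\end{equs}
The crux is therefore to identify $\mathcal{R} F^\mathfrak{t}(U)$ with the classical right-hand side $F^\mathfrak{t}(v,Dv,\ldots,\xi)$.

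For this I would use the explicit form~\eqref{rhs explicit lift} of the lifted nonlinearity together with Lemma~\ref{lem lin map}, Theorem~\ref{theorem composition singular}, and the identification from Remark~\ref{rem:identifying_under_can product on jets} which ensures that products of jet sectors behave as pointwise products under reconstruction. Since the canonical lift is multiplicative in the sense of~\eqref{eq:canonical_bfPi1} (the partial trace reducing to the natural pairing when acting on smooth functions), and since $\Pi_p U^o(p)$ evaluated at $p$ equals $w^o(p) = D^n v^\mathfrak{k}(p)$ by the discussion above while $\Pi_p U^\mathfrak{f}(p)$ evaluated at $p$ equals $\xi^\mathfrak{f}(p)$, the pointwise evaluation $(\Pi_p F^\mathfrak{t}(U)(p))(p)$ collapses to the classical expression $F^\mathfrak{t}\bigl(v, Dv, \ldots, D^n v, \xi\bigr)(p)$.

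Finally, applying the differential operator $\mathcal{L}$ to both sides and using that $K = K_o + K_{-1}$ is a fundamental solution of $\partial_t + \mathcal{L}$ (Section~\ref{section:excoursion}) together with the initial condition handling, one recovers~\eqref{eq main} in the classical sense. The main obstacle I anticipate is the bookkeeping in the third paragraph: one must verify carefully that the identification $U^{(\mathfrak{k},n)}\mapsto D^n v^\mathfrak{k}$ is consistent with the tensor-product and trace structure encoded by the symmetric sets, so that the multilinear action of $j^E_p(D^m F^\mathfrak{t})$ in~\eqref{rhs explicit lift} matches the classical evaluation after tracing against the canonical pairing, as made precise in Remark~\ref{rem pairing differential operator} and Remark~\ref{rem:identifying_under_can product on jets}. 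Everything else is a routine combination of Schauder estimates, continuity of reconstruction, and the explicit form of the canonical lift.
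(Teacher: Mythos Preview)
Your proposal is correct and takes essentially the same approach as the paper: apply reconstruction to the abstract equation, use the Schauder convolution identity $\mathcal{R}\mathcal{K}_\gamma = K\mathcal{R}$, then exploit Corollary~\ref{prop:reconstruction for continous models} together with the multiplicativity~\eqref{eq:canonical_bfPi1} of the canonical lift to collapse the Taylor-type expansion~\eqref{rhs explicit lift} back to $F^\mathfrak{t}$ evaluated at the reconstructions. The paper's proof is just a more compressed version of the same computation, and your bookkeeping concern about the symmetric-set structure is indeed routine here because for the canonical model all products are genuine pointwise products and the traces reduce to ordinary contractions.
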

\begin{proof}
Indeed we have
$$
v^\frakt= \mathcal{R}U^\frakt\\
= \mathcal{R}\mathcal{K}_{\frakt} F^\frakt \big((U^o)_{o\in \mathcal{E}_+\cup \mathcal{E}_-} \big) \\
= K_\frakt \mathcal{R}F^\frakt \big((U^o)_{o\in \mathcal{E}_+\cup \mathcal{E}_-} \big)
$$
and thus 
\begin{align*}
\mathcal{L}^\frakt v^\frakt(p)&= \mathcal{R} F^\frakt \big((U^o)_{o\in \mathcal{E}_+\cup \mathcal{E}_-} \big)(p)\\
&=\Pi_p F^\frakt \big((U^o)_{o\in \mathcal{E}_+\cup \mathcal{E}_-} \big) (p)\\
&= \Pi_p \sum_{\sigma\in\NN^{\mathcal{E}_+\cup \mathcal{E}_- }} j^E_p \big(\frac{D^\sigma F^\frakt}{\sigma!}\big) (Q_0 U(p))   \tilde{U}^{ \sigma} (p) \\
&= \Tr \sum_{\sigma\in\NN^{\mathcal{E}_+\cup \mathcal{E}_- }}  \big(\frac{D^\sigma F^\frakt}{\sigma!}\big) (Q_0 U_0(p)) \odot (\Pi_p\tilde{U}(p))^{\odot \sigma} \\
&= F^\frakt\big( \Pi_pU(p) (p)\big) \\
&= F^\frakt\big((v^o)_{o\in \mathcal{E}_+} \big)(p) 
\end{align*}
where we used Corollary~\ref{prop:reconstruction for continous models} several times.
\end{proof}
%


\subsection{Renormalised equation}
For $\tau\in \mathfrak{T}_-$ and $\frakt\in \mathfrak{L}_+$, define the new local non-linearity $(D_\tau F)^{\frakt}$
acting on families of smooth sections $v=\{v^o\}_{o\in \mathcal{E}_+\cup\mathcal{E}_-}$ where $v^{o}\in C^{\infty}(V^o)$ for each $o \in \mathcal{E}_+\cup \mathcal{E}_-$.
First, we define for $\tau\in \mathfrak{T}_-$ and $n \in N_\tau\setminus L_\tau$, where $n\neq \rho_\tau$
$$(D_{\tau,n}F)^{\frakt} : \prod_{o\in \mathcal{E}_+\cup\mathcal{E}_- } C^\infty (V^o)\to C^\infty (V^{\dif_\tau(n)}) ,
\quad
(D_{\tau,n}F)(v)^{\frakt}=D^{\mathfrak{n}(n)} F^{\mathfrak{e}(n\downarrow)_+} ( v)\ , $$
where $n\downarrow$ denotes the unique edge $e\in E_{\tau}$ such that $e_+=n$. Furthermore, set 
$$(D_{\tau,\rho_\tau}F)^{\frakt} : \prod_{o\in \mathcal{E}_+\cup\mathcal{E}_- } C^\infty (V^o)\to \mathcal{C}^\infty (V^{\red_* (\frakt-\ind(\rho))}) , \quad (D_{\tau,\rho_\tau}F)^{\frakt}(v) =D^{\mathfrak{n}(\rho_\tau)} F^\frakt (v) \ .$$
%
%
%
Finally, we define
\begin{align*}
(D_\tau F)^\frakt: \prod_{o\in \mathcal{E}_+\cup\mathcal{E}_-} \mathcal{C}^{\infty}(V^{o})\to &\mathcal{C}^\infty (V^{\red_* (\frakt-\ind(\rho))})\times \prod_{n\in N_\tau\setminus (L_\tau\cup\{\rho_\tau\})} \mathcal{C}^\infty (V^{\dif_T(n)}) 
\end{align*}
%
%
as 
$$(D_\tau F)^{\frakt}(v)= \prod_{n\in N_\tau \setminus L_\tau }\frac{1}{S_n(\tau)} (D_\tau F)^{\frakt, n}\big(v(\cdot_n) \big)\ ,$$
where $S_n(\tau)$ is defined as follows: If $\tau$ at the node $n\in N_\tau$ has the form
 $$\tau_{\geq n}= \prod_{i=1}^{m}(\Xi_i)^{\alpha_i} \prod_{j=1}^l (\mathcal{I}_{o_i}(T_i))^{\beta_j}$$ where terms are paired in such a way that 
$$ (o_i, T_i)\neq (o_j, T_j) \quad \text{and} \quad \Xi_i\neq \Xi_j$$
whenever $i\neq j$, we set
$$S_n(\tau)= \prod_{i=1}^m \alpha_i ! \cdot \prod_{j=1}^l \beta_j ! \ .$$
\begin{remark}
Note that $S(\tau):=\prod_{n\in N_\tau} S_n(\tau)$ agrees with the corresponding definition in \cite{BCCH20} and $S(\tau)_n$ has the interpretation of ``the number of tree symmetries at the node $n$'', see also Remark~\ref{rem:relating S(tau) to bar tau}
\end{remark}
\begin{prop}\label{prop:renormalised equation}
If $U^g\in \mathcal{D}^\gamma$ solves the abstract fixed point problem for the renormalised model and for $\gamma$ large enough, then $u= \mathcal{R}U^g$ solves the equation 
$$\mathfrak{L}u^\frakt= F^{\frakt}\big((u^o)_{o\in \mathcal{E}_+}, (\xi^\frakf)_{\frakf\in \mathcal{E}_-} \big)  + \sum_{\tau\in \mathfrak{T}_-} \left\langle (D_\tau F)^{\frakt}\big((u^o)_{o\in \mathcal{E}_+}, (\xi^\frakf)_{\frakf\in \mathcal{E}_-} \big),  g(\tau) \right\rangle\ \,  $$ 
where the bracket $\langle \cdot , \cdot \rangle$ denotes the canonical pairing between $\mathfrak{Dif}_\tau$ and 
$$\mathcal{C}^\infty (V^{\red_* (\frakt-\ind(\rho))})\times \prod_{n\in N_\tau\setminus (L_\tau\cup\{\rho\})}\mathcal{C}^\infty (V^{\dif_T(n)})$$ defined in Remark~\ref{rem pairing differential operator}.
\end{prop}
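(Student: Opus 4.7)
The plan is to mirror the reconstruction argument from the classical case (stated just before the statement) applied to the renormalised model $Z^g=(\Pi^g,\Gamma^g)$, and then to expand $\Pi^g_p F^\frakt(U^g)(p)$ at the base point $p$ using the renormalisation identity of Lemma~\ref{lemma pi renormalized} (reformulated via Remark~\ref{def:mathringfPi}), before finally identifying each coefficient of $g(\tau)$ with the prescribed differential-operator counterterm.

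First I would note that since $\pmb\Pi^g$ differs from the smooth canonical $\pmb\Pi$ only by smooth corrections, $\Pi^g_p\tau$ is continuous for every $\tau$. Corollary~\ref{prop:reconstruction for continous models}, the fixed-point identity \eqref{eq abstract}, Assumption~\ref{assumption polynomial in noise}, and the identity $\mathcal{R}\mathcal{K}_o = K_o\mathcal{R}$ from Theorem~\ref{schauder theorem} then give, exactly as for the canonical model,
\[
\mathcal{L}u^\frakt(p) \;=\; \big(\Pi^g_p F^\frakt(U^g)(p)\big)(p),
\]
provided $\gamma$ is large enough that $F^\frakt(U^g)$ admits a reconstruction. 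Next I would observe that in the expansion \eqref{rhs explicit lift}, each elementary basis element appearing in $F^\frakt(U^g)(p)\in\mathcal{T}^\frakt_p$ is of the form $\tau_0(\mathcal{H})$ with $\tau_0\in\mathfrak{T}_-\cup\{\bullet\}$ the connected component of the root in the negative part and $\mathcal{H}$ a family of planted sub-trees attached at the nodes of $\tau_0$ (cf.\ Remark~\ref{remark attaching trees to negative trees}). Applying Lemma~\ref{lemma pi renormalized} term by term and summing then yields
\[
\Pi^g_p F^\frakt(U^g)(p) \;=\; \mathring{\Pi}^g_p F^\frakt(U^g)(p) + \sum_{\tau\in\mathfrak{T}_-} g(\tau)\Big(\prod_{n\in N_\tau} \mathring{\Pi}^g_p\,\eta^{\tau,n}(\cdot_n)\Big),
\]
where $\eta^{\tau,n}$ collects the planted sub-trees of $F^\frakt(U^g)(p)$ sitting above the node $n$ of $\tau$.

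For the first summand I would use that $\mathring{\Pi}^g_p$ is multiplicative across the tree product (Definition~\ref{def:mathringbfPi}) and agrees with $\Pi^g_p$ on each planted factor; together with Corollary~\ref{prop:reconstruction for continous models} applied to each planted subtree and the characterisation of the lift of a smooth local nonlinearity in Theorem~\ref{theorem composition}, this identifies the evaluation $\mathring{\Pi}^g_p F^\frakt(U^g)(p)(p)$ with $F^\frakt((u^o)_{o\in\mathcal{E}_+},(\xi^\frakf)_{\frakf\in\mathcal{E}_-})(p)$. For each fixed $\tau\in\mathfrak{T}_-$ I would then sum over all hanging patterns $\mathcal{H}$ contributing to the second summand: because $F$ is lifted via its Taylor expansion in each argument, this sum reproduces at every internal node $n\in N_\tau\setminus L_\tau\setminus\{\rho_\tau\}$ the total derivative $D^{\mathfrak{n}(n)}F^{\mathfrak{e}(n\downarrow)_+}$, and at the root the derivative $D^{\mathfrak{n}(\rho_\tau)}F^\frakt$, all evaluated at the reconstructed values $(u^o,\xi^\frakf)(p)$. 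Contracting the resulting family of multilinear maps against $g(\tau)\in\mathfrak{Dif}_\tau$ via the canonical pairing of Remark~\ref{rem pairing differential operator} (extended across connected components through Remark~\ref{remark symmetrisation}) then produces exactly $\langle (D_\tau F)^\frakt(u,\xi)(p), g(\tau)\rangle$.

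The main obstacle I expect lies in this last combinatorial identification: reconciling the bundle-valued combinatorics, namely the symmetric tensor products inside $\Func_W$, the canonical products and traces of Section~\ref{section indexing vector bundels}, and in particular the symmetry factors $1/S_n(\tau)$, with the definition of $(D_\tau F)^\frakt$ as a family of differential operators acting through the jet bundle. This is the analog of the ``coherent equation'' identification of \cite{BCCH20} carried out in the bundle/symmetric-set framework of the present paper, and the explicit form of $S_n(\tau)$ is dictated precisely by making this matching work; it should be verifiable node-by-node using the decomposition of Remark~\ref{remark attaching trees to negative trees} together with the definition of $\mathring{\Pi}^g$.
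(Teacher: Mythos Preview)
Your overall strategy is correct and matches the paper's: reduce to evaluating $\Pi^g_p F^\frakt(U^g)(p)$ at $p$, apply the renormalisation identity of Lemma~\ref{lemma pi renormalized}, and identify the non-trivial summand for each $\tau\in\mathfrak{T}_-$ with $\langle (D_\tau F)^\frakt(u,\xi),g(\tau)\rangle$. The paper's additional ingredient, which resolves exactly the combinatorial obstacle you flag, is to pass through \emph{rooted plane trees} conforming to a fixed total order $\prec$ on edge types. One defines maps $\Upsilon[\pT]$ on such plane trees and proves a preparatory lemma giving the explicit expansion
\[
U^o \;=\; Q_0 u^o + Q_{<\gamma}\sum_{\pT\in\pTset^o\cup\{\delta^{\iota(o)}\}} \Upsilon[\pT](u)\;.
\]
Working with plane trees unfolds the symmetries: each configuration is counted with multiplicity one, and the identity $|\{\bar\tau\in\pTset_-:\lfloor\bar\tau\rfloor=\tau\}| = \prod_{n\in N_\tau} k_\tau(n)!/S_n(\tau)$ (where $k_\tau(n)\in\mathbb{N}^{\mathcal{E}_+\cup\mathcal{E}_-}$ records the edge types above $n$ in $\tau$) converts back to combinatorial trees. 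When one then fixes $\tau$ and sums over the plane trees containing a plane copy of it at the root, the count of embeddings at each node $n$ is $\binom{\sigma(n)+k_\tau(n)}{k_\tau(n)}\frac{k_\tau(n)!}{S_n(\tau)}$, and this combines with the Taylor coefficient $1/(\sigma(n)+k_\tau(n))!$ from \eqref{rhs explicit lift} to produce $\frac{1}{S_n(\tau)}\cdot\frac{1}{\sigma(n)!}D^{\sigma(n)+k_\tau(n)}F$. Summing over $\sigma(n)$ then rebuilds $D^{k_\tau(n)}F$ evaluated at $u$, which is precisely $(D_{\tau,n}F)^\frakt$. This is the node-by-node verification you anticipate; the plane-tree bookkeeping is what makes it clean.
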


In order to prove this theorem, it will be useful to introduce certain rooted plane trees.
Recall that a rooted plane tree $\bar{T}$, consists of a rooted tree together with an order of the incoming edges of each node, that is for each $n\in N_T$, the set
 $E^{\mathtt{in}}_{T,n}=\{e\in E_T \ | \ e_-= n \}$ is equipped with a strict total order $<_{T,n}$. 
For rooted pane trees $\pT$ we denote by $\lfloor \pT \rfloor$
the rooted tree obtained by forgetting the order on edges.

Next, we fix a strict total order $\prec$ on $\mathcal{E}_0^0 \cup\mathcal{E}_+ \cup \mathcal{E}_-$
such that 
$\varepsilon_0\prec \varepsilon_+ \prec \varepsilon_-$ for every  $\varepsilon_0\in \mathcal{E}_0^0, \ \varepsilon_+\in \mathcal{E}_+, \ \varepsilon_-\in \mathcal{E}_-$.

Lastly, we declare a tree in $\pT$ to conform to the order $\prec$, if 
for each $n\in N_T$ and any two edges $e_1,e_2\in E^{\mathtt{in}}_n$ such that $e_1<_{T,n} e_2$ one has 
\begin{itemize}
\item if $\mathfrak{e}(e_1), \ \mathfrak{e}(e_2) \notin \mathcal{E}_0^+$,
$$\mathfrak{e}(e_1) \prec \mathfrak{e}(e_2) \ ,$$
\item if $\mathfrak{e}(e_1) \notin \mathcal{E}_0^+$ but  $\mathfrak{e}(e_2) \in \mathcal{E}_0^+$
$$\mathfrak{e}(e_1) \prec \iota^{-1}(\mathfrak{e}(e_2)) \ ,$$
and similarly, if the roles of $e_1$ and $e_2$ are reversed,
\item if $\mathfrak{e}(e_1),\ \mathfrak{e}(e_2) \in \mathcal{E}_0^+$
$$\iota^{-1}(\mathfrak{e}(e_1)) \prec \iota^{-1}(\mathfrak{e}(e_2)) \ .$$
\end{itemize}

At this point, let us introduce the following analogues of the objects already introduced for plane trees:
\begin{enumerate}
\item For $o\in \mathcal{E}_+$ denote by $\pTset^o$ the set of plane rooted trees $\pT$ that conform to $\prec$ and such that $\lfloor \pT \rfloor \in \mathfrak{T}^o$. Similarly, we set $\pTset_-$ to consist of plane trees $\bar{\tau}$ which conform to $\prec$ and such that $\lfloor \tau\rfloor \in \mathfrak{T}_-$.
\item For plane rooted trees $\pT_1, \ldots , \pT_n$, denote by $\pT=\pT_1\cdot \ldots \cdot \pT_n$ the plane rooted tree such that
\begin{itemize}
\item 
 One has $\lfloor \pT \rfloor = \lfloor \pT_1 \rfloor \cdots \lfloor \pT_n \rfloor$,
 where the latter tree product was introduced in Remark~\ref{rem recursive construction}.
\item The order of the edges $E^{\mathtt{in}}_{\pT,n}$ for $n\in N_{\bar T}\setminus \rho_{\bar T}$ agrees with the order inherited from the $\pT_1,\ldots, \pT_n$.
\item The order of $E^{\mathtt{in}}_{\pT,\rho}$ is the unique order such for each $i, j \in \NN$ and $e_i \in E_{\pT_i, \rho }\subset E_{\pT, \rho }$  one has $e_i \prec e_j$ whenever $i<j$, and such that the order on $E_{\pT_i, \rho_{\pT} }\subset E_{\pT, \rho_{\pT} }$ is simply inherited from the order in $\pT_i$.
\end{itemize}
We call this the \textit{plane tree product}.
\end{enumerate}
\begin{remark}
Instead of working with a rule on rooted trees together with an order on edges, we could have introduced an (obvious) notion of a plane rule.
\end{remark}

For $\pT\in \pTset$ we define maps $\Upsilon[\pT]: JE\to \Func_W \langle \lfloor \pT \rfloor \rangle$ as follows:
\begin{enumerate}
\item If $\pT= \delta^{\iota{(o)}}$ consists of exactly one edge of type $\iota(o)\in \mathcal{E}_0^+$, then
$$\Upsilon[\pT](u_p)= \Upsilon[\delta^{\iota{(o)}}](u_p)=Q_{>0}u^{\iota(o)}_p\ .$$
\item If $\pT=\delta^{e}$ consists of exactly one edge of type $e=(\frakt,\sigma)\in \mathcal{E}_0^0$ ,  then
$$\Upsilon[\pT](u_p)=j^E_p \big(\frac{D^\sigma F (Q_0 u_p) }{\sigma!}\big)\  .$$
\item If $\pT=\Xi^{\frakf}$ consists of exactly one edge of type $\frakf\in \mathcal{E}_-$, then
 $$\Upsilon[\Xi^{\frakf}](u_p)=\Xi^{\frakf}_p\ .$$
\end{enumerate}
We further set 
\begin{enumerate}
\item[(4)] for $\pT =\pT^1\cdots\pT^m$ set
$\Upsilon[\pT](u_p)=  \Upsilon[\pT^1](u_p)  \cdots  \Upsilon[\pT^m](u_p)\ .$ 
\item[(5)] $\Upsilon[\mathcal{I}_\frakt \pT](u_p)= \mathcal{I}_\frakt\Upsilon[\pT](u_p)$.
\end{enumerate}

The next lemma now follows directly from the definition of the maps $\Upsilon$.
\begin{lemma}
Let $U= \{U^o\}_{o\in \mathcal{E}_+\cup \mathcal{E}_-}$ be the abstract solution of (\ref{eq main}). The solution can be uniquely decomposed as $U^o= u^o + R^o$, where $u^{ o}_p\in J_p V^{\iota (o)}_{<\gamma}$ and $R^o_p\in {\mathcal{T}}^o_p$ and it holds that
\begin{equation}\label{eq:formula for the solution}
U^o= u^o+ Q_{<\gamma}\sum_{\pT \in {\pTset}^o} \Upsilon[T](u) = Q_0 u^o+ Q_{<\gamma}\sum_{\pT \in {{\pTset}}^o \cup \{\delta^{\iota(o)} \}} \Upsilon[T](u)\ .
\end{equation}
\end{lemma}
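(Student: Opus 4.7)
The decomposition $U^o = u^o + R^o$ is unique because inside $T^o$, the jet subspace $JV^{\iota(o)}_{<\gamma}$ and the sector $\mathcal{T}^o$ are built from trees with disjoint root-edge types ($\iota(o) \in \mathcal{E}_0^+$ versus $o \in \mathcal{E}_+$, and recall that $\iota$ takes values in a disjoint copy of $\mathcal{E}_+$). The two formulas stated in the lemma are interchangeable via the identity $\Upsilon[\delta^{\iota(o)}](u) = Q_{>0} u^{\iota(o)}$ built into the recursive definition of $\Upsilon$.

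The plan is to show that the candidate
\begin{equation*}
V^o := u^o + Q_{<\gamma} \sum_{\pT \in \pTset^o} \Upsilon[\pT](u) \quad (o \in \mathcal{E}_+), \qquad V^\mathfrak{f} := \Xi^\mathfrak{f}\quad (\mathfrak{f} \in \mathcal{E}_-)
\end{equation*}
satisfies the abstract fixed point equation \eqref{eq abstract}; the uniqueness part of Theorem~\ref{thm:abstract_fixed point} then forces $V^o = U^o$. Fix $o = (\mathfrak{t}, n) \in \mathcal{E}_+$ and note that every $\pT \in \pTset^o$ decomposes uniquely as $\pT = \mathcal{I}_o(\bar{S})$ with $\bar{S}$ a $\prec$-conforming plane tree whose root type belongs to $R(o)$. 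After stripping the common $\mathcal{I}_o$, the required identity becomes
\begin{equation*}
Q_{<\gamma - |o|} \sum_{\bar{S}} \Upsilon[\bar{S}](u) \;=\; Q_{<\gamma - |o|} F^\mathfrak{t}(V)\;,
\end{equation*}
with the sum running over plane trees $\bar{S}$ such that $\mathcal{I}_o(\bar{S}) \in \pTset^o$. Expand the right-hand side via \eqref{rhs explicit lift}. Since $R^o \in \mathcal{T}^o$ carries no zero-homogeneity component and $V^\mathfrak{f}$ has $Q_0 V^\mathfrak{f} = 0$, one has $Q_0 V = Q_0 u$, so $j_p^E(D^m F^\mathfrak{t}/m!)(Q_0 V(p)) = \Upsilon[\delta^{(\mathfrak{t}, m)}](u)(p)$ for each multi-index $m$. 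Moreover $\tilde V^o = Q_{<\gamma} \sum_{\pT \in \pTset^o \cup \{\delta^{\iota(o)}\}} \Upsilon[\pT](u)$ and $\tilde V^\mathfrak{f} = \Upsilon[\Xi^\mathfrak{f}](u)$. Multiplicativity of $\Upsilon$ under the plane-tree product then identifies, for each $m$, the symmetric tensor $\frac{1}{m!} \prod_o (\tilde V^o)^{\otimes_s m_o}$ with a sum of $\Upsilon[\bar{S}_1 \cdots \bar{S}_M](u)$ over ordered tuples of plane subtrees that can be grafted onto a leading $\delta^{(\mathfrak{t}, m)}$ edge to produce a $\prec$-conforming $\bar{S}$.

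The principal obstacle lies in this last combinatorial identification. When several children of a node share an edge type, the order $\prec$ does not distinguish them and any relative arrangement conforms, so each combinatorial tree produces $\prod_o m_o! = m!$ distinct $\prec$-conforming plane embeddings. The plane-tree product is commutative and the space $\Func_W \langle \lfloor \pT \rfloor \rangle$ is invariant under the resulting permutations of identical factors (by construction via symmetric sets), so these embeddings contribute the same element; the overcounting factor $m!$ is thereby cancelled by the Taylor prefactor $1/m!$. Termination of the unfolding is guaranteed by Assumption~\ref{assumption:truncation}: subcriticality together with the equation-like nature of the rule (cf.\ Remark~\ref{rem:eq_like_rules}) ensures that only finitely many plane trees have $|\lfloor \pT \rfloor| < \gamma$ and hence contribute after truncation by $Q_{<\gamma}$.
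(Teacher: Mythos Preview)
Your strategy—defining a candidate $V^o$ by the right-hand side of \eqref{eq:formula for the solution} and verifying it solves the fixed point equation \eqref{eq abstract}—is sound, and the paper's one-line ``follows directly from the definition of the maps $\Upsilon$'' amounts to the same recursive verification. The decomposition, the identification $Q_0 V = Q_0 u$, and the use of multiplicativity of $\Upsilon$ are all correct.

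However, your final paragraph contains a genuine combinatorial error. You correctly record that the Taylor prefactor $1/m!$ is absorbed into $\Upsilon[\delta^{(\mathfrak{t},m)}]$ via its definition, yet you then reintroduce a separate factor $1/m!$ in front of $\prod_o(\tilde V^o)^{m_o}$ and attempt to cancel it against an ``overcounting factor $m!$'' coming from plane embeddings. Neither of these is present. The expansion of $\tilde V^m = \prod_o(\tilde V^o)^{m_o}$ is a sum over \emph{ordered} tuples $(\bar T^{(o,i)})_{o,i}$, and the map sending such a tuple to the plane tree $\delta^{(\mathfrak{t},m)}\cdot\prod_{o,i}\bar T^{(o,i)}$ (product taken in the $\prec$-order) is a \emph{bijection} onto the $\prec$-conforming plane trees $\bar S$ with root type $[\delta^{(\mathfrak{t},m)}]\sqcup m$: different orderings of subtrees of the same edge type give \emph{distinct} plane trees, all of which appear once in $\sum_{\bar S}\Upsilon[\bar S](u)$. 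So the identity
\[
\sum_m \Upsilon[\delta^{(\mathfrak{t},m)}](u)\cdot \tilde V^m \;=\; \sum_{\bar S}\Upsilon[\bar S](u)
\]
holds directly with no combinatorial factors. Your claim that ``each combinatorial tree produces $m!$ distinct $\prec$-conforming plane embeddings'' is also false when subtrees repeat, and in any case the target sum is over plane trees, not combinatorial ones. Remove the last paragraph and replace it by the bijection just described; the proof is then complete.
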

\begin{remark}
The fact that we work with plane trees accounts for the combinatorial factors usually encountered. A version of this observation was already made in the case of $\Phi^4$ equations in \cite{CMW23}. Similarly, the fact that we do not identify the abstract product on jets with the canonical product also simplifies the expression.
\end{remark}

\begin{remark}\label{rem:relating S(tau) to bar tau}
For a tree $\tau \in \mathfrak{T}_-$, we define
$k_\tau: N_\tau \to \mathbb{N}^{\mathcal{E}_+ \cup \mathcal{E}_{-}}$ to be the function such that for $n\in N_\tau$ and $o\in \mathcal{E}_+ \cup \mathcal{E}_{-}$ the number $k_\tau(n)_o\in \mathbb{N}$ denote the number of edges of type $o$ attached to $n$ from above.
Then one finds that for  $\tau \in \mathfrak{T}_-$ one has
$$\left| \big\{  \bar{\tau} \in \pTset_- \ : \ \lfloor  \bar{\tau} \rfloor = \tau   \big\} \right| = \prod_{n\in N_\tau } \frac{k_\tau (n)!}{S_n(\tau)} \ .$$
\end{remark}

\begin{remark}
We extend Remark~\ref{rem gluing construction first version} and Remark~\ref{remark attaching trees to negative trees} to plane trees (in some way conforming with $\prec$ ) and write $\bar{\tau}(G)$ for a tree obtained from 
$\bar{\tau}\in {\pTset}_-$ and $\bar{G}=\{\bar{G}_n\}_{n\in N_\tau}$ as therein. Note that this definition is not unique, but we make a choice (which turns out to be irrelevant).
Then, it if follows from the fact that $\bar{\tau}$ contains no edges of type in $\mathcal{E}_0$ that for $\tau:= \lfloor \bar{\tau} \rfloor$ 
 $$\Upsilon[\bar{\tau}(\bar{G})]= \tau(\{\Upsilon[\bar{G}_n]\}_{n\in N_\tau}) \ .$$
\end{remark}
%

\begin{proof}[Proof of Proposition~\ref{prop:renormalised equation}]
We shall use the common notation for $m, k \in \mathbb{N}\in \mathbb{N}^{\mathcal{E}_+ \cup \mathcal{E}_{-}}$ and set for $m\geq k$
$$k!:= \prod_{o\in \mathcal{E}_+ \cup \mathcal{E}_{-}} k_o! \ , \qquad {m \choose k}:=  \prod_{o\in \mathcal{E}_+ \cup \mathcal{E}_{-}}{m_o \choose k_o}\ .$$
Furthermore, whenever we have a sum over $\mathbb{N}$ or another infinite set such as $\mathbb{N}^{\mathcal{E}_+ \cup \mathcal{E}_{-}}$ it is understood that the sum is in reality truncated (with the truncation depending on the parameter $\gamma$).

We shall write  $\bar{u}= Q_{0 } U$ and $\tilde{U}= U-\bar u$ one use for $m \in \mathbb{N}^{\mathcal{E}_+ \cup \mathcal{E}_{-}}$ the multi-index notation $\tilde{U}^m:= \prod_{o\in \mathcal{E}_+ \cup \mathcal{E}_{-} } (\tilde{U}^o )^{m_o}$. 
We also fix the convention that whenever we write $\prod_{o\in \mathcal{E}_+ \cup \mathcal{E}_{-} } \bar{T}^{o}$, it is understood that the product is taken in the order respecting $\prec$ . For each multi-index we $m\in \mathbb{N}^{\mathcal{E}_+ \cup \mathcal{E}_{-}}$ we introduce the set 
$$\pTset^{(m)}:= \left\{ \prod_{o\in \mathcal{E}_+ \cup \mathcal{E}_{-}} \prod_{i=1}^{m_o} \pT^{(o, i)} \ : \ \pT^{(o, i)}\in \pTset^{o}\cup \delta{\iota(o)} \right\}$$
Then it follows for $m\in \mathbb{N}^{\mathcal{E}_+ \cup \mathcal{E}_{-}}$ that
$$\Pi^g_p \left( \tilde{U}^m\right) 
= \Pi^g_p \Big( \prod_{o\in \mathcal{E}_+ \cup \mathcal{E}_{-}} \prod_{i=1}^{m_o} \big( \sum_{\pT^{(o,i)} \in \pTset^{o}\cup \delta^{\iota (o) } } \Upsilon [ \pT^{(o,i)} ] \big) \Big) 
= \Pi^g_p \sum_{\pT\in \pTset^{(m)} } \Upsilon [ \pT ]  
$$
and therefore from Lemma~\ref{lemma pi renormalized} that
\begin{align*}
\Pi^g_p \left( \tilde{U}^m\right) &= \Tr  \bigodot_{\substack{o\in \mathcal{E}_+ \cup \mathcal{E}_{-}\\
i\in \{1,\ldots,m_o\}}}
  \Big( \sum_{\pT^{(o,i)} \in \pTset^{o}\cup \delta^{\iota (o) } } \Pi^g_p \Upsilon [ \pT^{(o,i)} ]  \Big)  \\
&\qquad +\sum_{\pT\in \pTset^{(m)} } 
 \sum_{\tau\subset_\rho \pT } g(\bar \tau)\Big( \prod_{n\in {N}_\tau} \mathring{\Pi}^g_p \Upsilon[(\pT \setminus \bar \tau)_{\geq n}] (\cdot_n)\Big) \ ,
\end{align*}
where $\bar \tau\subset_\rho \pT$ means that $\bar \tau\subset \pT$ contains the root of $\pT$.
In particular,
\begin{align*}
\Pi^g_pF^\frakt(U)= & \mathring{\Pi}^g_p\sum_{m\in \mathbb{N}^{\mathcal{E}_+ \cup \mathcal{E}_{-}}}  j^E_p \frac{D^m F^\frakt(\bar{U})}{m!}   \tilde{U}^{  m} \\
&+ \sum_{m\in \mathbb{N}^{\mathcal{E}_+ \cup \mathcal{E}_{-}}} \sum_{\pT\in \pTset^{(m)} } 
 \sum_{\bar \tau\subset_\rho \pT } g(\bar \tau)\big( \prod_{n\in {N}_{\bar \tau}} \mathring{\Pi}^g_p \Upsilon[(\delta^{(\frakt,m)}\cdot \pT \setminus \bar \tau)_{\geq n}] (\cdot_n)\big)
\end{align*}
where we used in the last line, the fact that the differential operator $g(\tau)$ acts multiplicatively at the root, c.f.\ Remark~\ref{rem pairing differential operator}. 
For the first term we find 
\begin{equation}\label{eq:first term renormalised eq}
\mathring{\Pi}^g_p \left(\sum_{m\in \mathbb{N}^{\mathcal{E}_+ \cup \mathcal{E}_{-}}}  j^E_p \frac{D^m F^\frakt (\bar{U})}{m!}   \tilde{U}^{  m} \right)(p) = F^\frakt(u)(p) \ .
\end{equation}
%
Next fix ${\tau}\in \mathfrak{T}_-$ and recall the map $k_{\tau} : N_\tau \to \mathbb{N}^{\mathcal{E}_+ \cup \mathcal{E}_{-}}$ from Remark~\ref{rem:relating S(tau) to bar tau}. 
Looking only at summands such that $\lfloor\bar \tau \rfloor$ is tree isomorphic to $\tau$ we find
\begin{align*}
& \sum_{m\in \mathbb{N}^{\mathcal{E}_+ \cup \mathcal{E}_{-}}}  \sum_{\pT\in \pTset^{(m)} }  \sum_{\bar \tau\subset_\rho \pT } \mathbf{1}_{\lfloor\tau \rfloor \sim  \bar \tau} g(\bar \tau)
\Big( \prod_{n \in N_\tau} \mathring \Pi^g_p (\Upsilon[(\delta^m T\setminus \bar \tau)_{\geq n}] (\cdot_n) \Big)\\
&= \sum_{\sigma: N_\tau \to \mathbb{N}^{\mathcal{E}_+ \cup \mathcal{E}_{-}}} g({\tau})
\Bigg( \prod_{n \in N_{{\tau}} } {\sigma(n)+k_\tau(n) \choose k_\tau(n)} \frac{k_\tau(n)!}{S_n({\tau})} \Big( \sum_{\pT^{(n)} \in \pTset^{(\sigma(n))} }\mathring \Pi^g_p\Upsilon[\delta^{(\sigma(n)+k_\tau(n))} \pT^{(n)}] (\cdot_n) \Big) \Bigg) \ ,
\end{align*}
where we sum over all maps $\sigma: N_\tau \to \NN^{\mathcal{E}_+ \cup
 \mathcal{E}_-}$ (with the finite truncation implicitly understood). Here $\sigma(n)$ for number of edges attached at a node $n\in N_\tau$ which do not belong to $\tau$ and are not of type $\mathcal{E}_0^0$.  To see the line introducing the combinatorial factor ${\sigma(n)+k_\tau(n) \choose k(n)} \frac{k(n)!}{S_n({\tau})}$,
 one counts the number of ways the planted subtree $\bar \tau $ can appear as a subtree. Say, counting from the top nodes downwards\footnote{Alternatively, one can also count from the root up.}, for the top nodes of ${\tau}$ the combinatorial factor is trivial (since $k_\tau (n)=0$). Otherwise, one has to choose $k(n)$ edges of the $\sigma(n)+k(n)$ possible edges, which are part of $\tau$.  Then, it follows as in Remark~\ref{rem:relating S(tau) to bar tau} that one has exactly $\frac{k(n)!}{S_n(\tau)!}$ ways to assign the continuation $\tau$ at $n$. 

Next note that
\begin{align*}
&\sum_{\sigma: N_\tau \to \mathbb{N}^{\mathcal{E}_+ \cup \mathcal{E}_{-}}} g({\tau})
\Bigg( \prod_{n \in N_{{\tau}} } {\sigma(n)+k_\tau(n) \choose k_\tau(n)} \frac{k_\tau(n)!}{S_n({\tau})} \Big( \sum_{\pT^{(n)} \in \pTset^{(\sigma(n))} }\mathring \Pi^g_p\Upsilon[\delta^{(\sigma(n)+k_\tau(n))} \pT^{(n)}] (\cdot_n) \Big) \Bigg) \\
&=\sum_{\sigma: N_\tau \to \mathbb{N}^{\mathcal{E}_+ \cup \mathcal{E}_{-}}} g({\tau})
\Bigg( \prod_{n \in N_{{\tau}} } {\sigma(n)+k_\tau(n) \choose k_\tau(n)} \frac{k_\tau(n)!}{S_n({\tau})} \\
&\qquad\qquad\qquad\qquad\qquad \times \Big( \sum_{\pT^{(n)} \in \pTset^{(\sigma(n))} }\mathring \Pi^g_p j^E_p\frac{D^{\sigma(n)+k_\tau (n)}F(\bar{U})}{(\sigma(n)+k_\tau(n))!}\Upsilon[\pT^{(n)}] (\cdot_n)  \Big) \Bigg)\\
&=\sum_{\sigma: N_\tau \to \NN^{\mathcal{E}_+ \cup \mathcal{E}_{-}}}g({\tau})
\left( \prod_{n \in N_{\tau}} \frac{1}{S_n({\tau})} \sum_{\pT^{(n)} \in \pTset^{(\sigma(n))} }\mathring \Pi^g_p j^E_p\frac{D^{\sigma(n)+k_\tau(n)}F(\bar{U})}{\sigma(n)!}\Upsilon[\pT^{(n)}] (\cdot_n) \right)\\
&=\sum_{\sigma: N_\tau \to \NN^{\mathcal{E}_+ \cup \mathcal{E}_{-}}}g({\tau})
\left( \prod_{n \in {\tau}} \frac{1}{S_n(\bar{\tau})} \mathring \Pi^g_p j^E_p\frac{D^{\sigma(n)+k_\tau (n)}F(\bar{U})}{\sigma(n)!}   \tilde{U}^{  \sigma(n)} (\cdot_n) \right)\\
&=g({\tau})
\left( \prod_{n \in N_{\tau}} \frac{1}{S_n({\tau})} \sum_{m\in \NN^{\mathcal{E}_+ \cup \mathcal{E}_{-}}} \mathring \Pi^g_p j^E_p\frac{D^{m+k_\tau(n)}F(\bar{U})}{m!}   \tilde{U}^{  m} (\cdot_n) \right)\ .
\end{align*}
Finally, we find that 
\begin{align*}
&g({\tau})
\left( \prod_{n \in N_{\tau}} \frac{1}{S_n({\tau})} \sum_{m\in \NN^{\mathcal{E}_+ \cup \mathcal{E}_{-}}}  \mathring \Pi^g_p j^E_p\frac{D^{m+k_\tau(n)}F(\bar{U})}{m!} \tilde{U}^{  m} (\cdot_n) \right)(p)\\
&=g({\tau})
\big( \prod_{n \in N_{\tau}} \frac{1}{S_n(\bar{\tau})} (D^{k_\tau (n)}F)(u)  (\cdot_n) \big)(p)\\
&=\langle \big( D_{\tau} F\big)(u),g(\tau) \rangle (p)\ ,
\end{align*}
where in the first equality we used that 
$$\sum_{m\in \NN^{\mathcal{E}_+ \cup \mathcal{E}_{-}}} \left(\mathring  \Pi^g_p j^E_p \frac{D^{m+k_\tau (n)}F^\frakt(\bar{U})}{m!}   \tilde{U}^{  m}\right) (\cdot_n)- (D^{k(n)}F)(u)(\cdot_n)$$
vanishes at $p\in M$ faster than $-|\tau|$, which is the case whenever $\gamma>0$ is chosen large enough.
This combined with \eqref{eq:first term renormalised eq} completes the proof.

\end{proof}

\begin{remark}
Observe that we did not enforce the products of jets to be the canonical product, but that the results of this section extend to the case when such an identification is made, c.f.\ Remark~\ref{rem:identifying_under_can product on jets}.
\end{remark}

\section{Canonical Renormalisation Procedures}\label{sec:canonical renormalisation}
One observes that for a given equation we have constructed a rather large renormalisation group $\mathfrak{G}_-$. In our setting, compared to the usual setting of translation invariant SPDEs in flat space, one a priori obtains a family of solutions indexed by the (infinite-dimensional) space of sections of certain vector bundles over $M$. 
Fortunately, one can often still single out a canonical, \textit{finite dimensional} family of renormalised solutions. Let us informally describe the corresponding procedure: Typically one is working with a mollification scheme which depends on a scale $\epsilon$. If this mollification scheme is ``sufficiently covariant''  it is possible to decompose the corresponding family of renormalisation differential operators  $$g_\epsilon (T)= \sum_{i} f_i(\epsilon) \cdot g^i(T)  \ ,$$
where 
\begin{itemize}
\item each $f_i: (0,1)\to \mathbb{R}$ typically
\footnote{
It can sometimes be useful to also include $f_i$, which just converge to a finite limit in the above decomposition.
}
converges to $\pm \infty$ as $\epsilon \to 0$.
\item the $g^i(T)$ are differential operators which depend explicitly on the curvature and the differential operator of the linear part of the equation in a local manner  and thus are arguably ``canonical''. 
\end{itemize}
Thus, for such regularisation schemes it is then natural to work with only the finite dimensional subgroup ${\mathfrak{G}}'_-\subset \mathfrak{G}_-$ containing the differential operators $g_\epsilon (T)$. One thus obtains a canonical (finite dimensional) solution theory.

\begin{remark}
We note that, when working with an arbitrary mollification scheme, one in general needs elements of $g_\epsilon \in \mathfrak{G}_-$ which are not contained in ${\mathfrak{G}}'_-$ in order to converge to one of the  ``canonical" solutions. Thus, in this case the canonical solutions are only reached by considering the cosets  ${\mathfrak{G}}'_- g_\epsilon \subset \mathfrak{G}_-$.
\end{remark}

Note that in general it non-trivial to extract the operators $g^i(T)$ described above explicitly. When working for example with generalised Laplacians and white noise this is a rather straightforward consequence of the following: 
\begin{enumerate}
\item Explicit expansions of the heat kernel, c.f.\ Theorem~\ref{thm:heat_kernel_expansion}, which can formally be written as
\begin{equation}\label{eq:heat_kernel_asymptotics}
G_t(p,q)=  \frac{1}{(4\pi t)^{d/2}}e^{-d(p,q)/4t} \sum_{i=0}^N t^i \Phi_i(p,q) +O(t^{N-n/2}) \ ,
\end{equation}
for $0<t\ll 1$.
\item Explicit expansion of the density in exponential coordinates, c.f.\ \cite[Cor.~2.10]{Gra74}. Fur us the following will be sufficient
\begin{equation}\label{eq:volume_asymptotics}
 \dVol_p= \left(1-\frac{1}{6}\sum_{i,j} R_{i,j} x_ix_ j  \right) dx +\mathcal{O}(|x|^3) dx \ .
\end{equation}
\end{enumerate}

\begin{remark}
This explains, why for many equations of interest when working with a heat kernel regularisation, renormalisation constants are sufficient. 
More precisely, when working with a generalised Laplacian and white noise, as long as the most negative tree has homogeneity higher than $-2$, one does not expect any geometric counterterms to appear. 
In Section~\ref{sec:phi34} we treat an equation where this condition is violated and indeed a logarithmic divergence proportional to the scalar curvature $s(x)$ appears.
\end{remark}

\section{Concrete applications}\label{sec:concrete applications}
Finally, we turn to the three examples described in the introduction. Recall that for a vector bundle $E\to M$, $E$-valued (spatial) white noise can be characterised as the unique random distributions $\xi\in \mathcal{D}'(E)$,
such that for any $\psi, \psi' \in \mathcal{D}(E)$, 
\begin{enumerate}
\item $\xi  ( \psi)$ is a centred Gaussian random variable,
\item $E[\xi  ( \psi), \xi  ( \psi')]= \int_M \langle \psi, \psi' \rangle_E \dVol_g$.  
\end{enumerate}
We speak of $E$-valued space-time white noise, when the bundle above is replaced by the pull-back bundle $\pi^*E$, where $\pi: \mathbb{R}\times M \to M$ is the canonical projection.

We shall often writhe $\langle \xi  , \psi \rangle = \xi  ( \psi)$, where the absence of a subscript means the canonical pairing of $\mathcal{D}'(E)\times \mathcal{D}(E)$ or $\mathcal{D}'(\pi^*E)\times \mathcal{D}(\pi^*E)$ respectively.

We shall be working with the following two regularisation procedures. Let $G_t\in \mathcal{C}^\infty (E \hat{\otimes} E^*)$ be the heat kernel of the Laplacian in the equation.
\begin{enumerate}
\item If $\xi\in \mathcal{D}'(E)$ is spatial white noise, set $$\xi_\epsilon (p)= \langle \xi, G_{{\epsilon}^2} (p,\cdot )\rangle  $$
\item If $\xi\in \mathcal{D}'(\pi^*E)$ is space time white noise, furthermore fix 
an additional even, smooth compactly supported function 
$\phi: \mathbb{R}\to \mathbb{R}$ integrating to $1$. 
Writing
 $\phi^\epsilon(t) :=  \frac{1}{\epsilon^2}\phi \left( \frac{t}{\epsilon^2} \right) $,
  we shall set 
\begin{equation}\label{eq:mollified space-time white noise}
 \xi_\epsilon (t,p)= \langle \xi, \phi^\epsilon(t-\cdot) G_{{\epsilon}^2} (p,\cdot )\rangle \ ,
 \end{equation} 
 which can informally be written as $\xi_\epsilon (t,p)= \int_{\mathbb{R}\times M} \phi^\epsilon(t-s) G_{{\epsilon}^2} (p,q )\xi(ds, dq)$. 
\end{enumerate}

Furthermore, fix a cut-off $\kappa: \mathbb{R}\to [0,1]$ such that the following properties are satisfied
\begin{itemize}
 \item $\kappa(t)=0$ for $t<0$ or $t>2$,
 \item $\kappa(t)=1$ for $t\in [0,1)$.
 \item $\kappa|_{\mathbb{R}_+}$ is smooth.
 \end{itemize}

For fixed $N \in \mathbb{N}$ large enough\footnote{This number is conveniently fixed depending on the equation, for the examples discussed here $N=10$ is sufficent.} we denote by $\bar{r}= \frac{\min_{p \in M} r_p}{N}$, where $r_p$ is the injectivity radius at $p\in M$.
We introduce the following notation for the leading terms of the scalar heat kernel $G$, c.f.\ \eqref{eq:heat_kernel_asymptotics}  
\begin{align}\label{eq:diverging parts of kernel}
Z_t(p,q):&=  \frac{1}{(4\pi t)^{d/2}}e^{-d(p,q)/4t} \kappa(d(p,q)/\bar{r}) \kappa(t)\ ,\\
Z^1_t(p,q):& = t \, Z_t(p,q)\Phi_1(p,q) \, 
\end{align}
as well 
$Z^E_t(p,q)=  Z_t (p,q) \tau_{p,q} $
for the $E$-valued kernel, where we recall that $\tau_{p,q}: E|_q\to E|_p$ is the parallel transport map.

Lastly, we use the following notation for the corresponding truncation of the heat kernel in Euclidean space
$$\bar{Z}_t (x)= \frac{1}{(4\pi t)^{d/2}}e^{-|x|^2 /4t} \kappa(t) .$$


\subsection{The g-PAM equation}
For $\epsilon>0$, we define the following counterterms:
\begin{align}
{C_\epsilon}&=\int_{\mathbb{R}} \int_{B_{r}(0)} \bar{Z}_{t-s+\epsilon^2}(x) \bar{Z}_{\epsilon^2}(x) \ dx\  
 ds ,\label{pam counterterm1} \\
{C_\epsilon'}&=\int_{\mathbb{R}^2}	\int_{B_{r}(0)} \partial_i \bar{Z}_{t-s+\epsilon^2}(x ) \partial_i \bar{Z}_{t-s'+\epsilon^2}(x ) dx\ 
ds ds' \  \label{pam counterterm2} 
\end{align}
where the latter is the same for $i\in \{1,2\}$.
In this section we proof the following theorem.
\begin{theorem}\label{thm:g-pam}
Let $M$ be a compact two dimensional Riemannian manifold, $\xi$ be white noise on $M$, and $\xi_\epsilon$ its regularisation as described above.
Furthermore, let $f\in \cC^{\infty}(\RR)$, $A(\cdot, \cdot)$ be a smooth bilinear form on the tangent bundle $TM$ and let $u_0\in \cC^{\alpha}(M) $ for $\alpha>0$.
For $C_\epsilon, C'_\epsilon$ as in \eqref{pam counterterm1}\& \eqref{pam counterterm2} and for $ u_\epsilon$ solving 
\begin{equation}\label{gpam}
\partial_t u_\epsilon + \Delta u_\epsilon = 
A ( \nabla u_\epsilon, \nabla u_\epsilon)- C_\epsilon' f^2(u_\epsilon)(\tr A) +f(u_\epsilon)\left(\xi_\epsilon- C_\epsilon f'(u_\epsilon)\right), \qquad u_\epsilon(0)=u_0 \ ,
\end{equation}
there exists a (random) $T>0$ and $u: [0,T]\times M \to \mathbb{R}$, such that $u_\epsilon \to u$ uniformly as $\epsilon\to 0$ in probability.
\end{theorem}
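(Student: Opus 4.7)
The plan is to realise $u_\epsilon$ as the reconstruction of an abstract fixed point, with the counterterms $C_\epsilon, C_\epsilon'$ arising from a distinguished one-parameter family of elements of $\mathfrak{G}_-$ that renormalise the canonical model to a convergent limit. To set up the regularity structure I would take $\mathfrak{L}_+ = \{\mathfrak{u}\}$ with $V^{\mathfrak{u}} = M \times \mathbb{R}$ and $\mathfrak{L}_- = \{\mathfrak{\xi}\}$ with $V^{\mathfrak{\xi}} = M \times \mathbb{R}$, together with edge types encoding the heat kernel (order $2$) and, via an edge in $\mathcal{E}_0^+$ of order $1$, its gradient, so that the rule encodes the right hand side of \eqref{gpam_intro}. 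Subcriticality (with $\reg(\mathcal{I}) = 1 - \kappa$ and $\reg(\Xi) = -1 - \kappa$ for $\kappa > 0$ small) is standard, so Section~\ref{section application to spdes} yields a regularity structure ensemble $\mathcal{T}$ with renormalisation group $\mathfrak{G}_-$. Choosing $\gamma \in (1 + 2\kappa,\, 2 - 2\kappa)$ and $\eta \in (0,\alpha\wedge(1-\kappa))$, Theorem~\ref{thm:abstract_fixed point} gives, for any admissible model $Z$ and any lift of $u_0$ to the jet sector, a unique local-in-time solution $U(Z) \in \mathcal{D}^{\gamma,\eta}$ depending continuously on $(Z, u_0)$.

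Next I would establish convergence of renormalised models. Let $\pmb{\Pi}^\epsilon$ be the canonical lift of $\xi_\epsilon$ and $Z^\epsilon = (\Pi^\epsilon, \Gamma^\epsilon)$ the associated canonical model (Proposition~\ref{prop canonical lift is a a model}). By Remark~\ref{rem:eq_like_rules} only finitely many trees in $\mathfrak{T}_-$ have negative homogeneity; a direct inspection shows they are exhausted by $\tau_1 = \Xi \cdot \mathcal{I}(\Xi)$ and $\tau_2 = \partial_i\mathcal{I}(\Xi) \cdot \partial_i\mathcal{I}(\Xi)$ (together with planted variants producing the same renormalisation). Let $g_\epsilon \in \mathfrak{G}_-$ be the element with $g_\epsilon(\tau_i) = -\mathbb{E}[\pmb{\Pi}^\epsilon \tau_i]$ acting as multiplication by a constant (thinking of constants as order $0$ differential operators) and $g_\epsilon(\tau) = 0$ otherwise. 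Using the expansions \eqref{eq:heat_kernel_asymptotics}--\eqref{eq:volume_asymptotics} to write $G_t = Z_t^E + Z_t^1 + \text{(smoother)}$ and the fact that $\tau_1, \tau_2$ have homogeneity $-2\kappa > -2$, a chaos-decomposition computation shows
\begin{equ}
\mathbb{E}[\pmb{\Pi}^\epsilon \tau_1] = C_\epsilon + r^1_\epsilon(p), \qquad \mathbb{E}[\pmb{\Pi}^\epsilon \tau_2] = C_\epsilon'(\tr A)(p) + r^2_\epsilon(p),
\end{equ}
where the $r^i_\epsilon$ are smooth functions converging in $\mathcal{C}^\infty(M)$ as $\epsilon \to 0$. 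The Wiener-chaos bounds on $\pmb{\Pi}^\epsilon \tau$ and on $(\Pi^\epsilon_q \Gamma^\epsilon_{q,p} - \Pi^\epsilon_p)\tau$ for all $\tau \in \mathcal{T}^{(-)}$ of degree $< \gamma$ then follow from the kernel estimates of Section~\ref{sec:Kernels estimates} as in the flat case, giving convergence in probability of $\hat Z^\epsilon := (Z^\epsilon)^{g_\epsilon}$ to a limit model $\hat Z$.

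The identification of the equation solved by $\hat u_\epsilon := \mathcal{R}U(\hat Z^\epsilon)$ proceeds through Proposition~\ref{prop:renormalised equation}: only $\tau_1, \tau_2$ contribute and a direct calculation of $(D_{\tau_1}F)(u,\xi) = f(u)f'(u)$ and $(D_{\tau_2}F)(u,\xi) = f^2(u)(\tr A)$ shows
\begin{equ}
\partial_t \hat u_\epsilon + \Delta \hat u_\epsilon = A(\nabla \hat u_\epsilon, \nabla \hat u_\epsilon) + f(\hat u_\epsilon)\xi_\epsilon - C_\epsilon f'(\hat u_\epsilon) f(\hat u_\epsilon) - C_\epsilon'(\tr A)\, f^2(\hat u_\epsilon),
\end{equ}
which is \eqref{gpam}; hence $\hat u_\epsilon = u_\epsilon$. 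Continuity of the reconstruction and fixed point maps yields $u_\epsilon \to u := \mathcal{R}U(\hat Z)$ uniformly on $[0,T] \times M$ in probability, for a random $T > 0$.

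The main obstacle is to verify that the divergent parts of $\mathbb{E}[\pmb{\Pi}^\epsilon \tau_i]$ are genuinely given by the \emph{flat-space} constants \eqref{pam counterterm1}--\eqref{pam counterterm2} (in particular, independent of $p \in M$) rather than by some curvature-dependent function. This is the content of the heuristic in Section~\ref{sec:canonical renormalisation}: writing $G_{t-s} = Z_{t-s}^E + Z_{t-s}^1 + \text{smoother}$ and using $\Phi_0(p,p) = 1$, the leading singular part localises near the diagonal and, after changing to exponential coordinates and using \eqref{eq:volume_asymptotics}, agrees with the flat expression up to terms that, because of the extra factor $t$ in $Z^1$ and the extra factor $|x|^2$ in the volume correction, yield $O(1)$ contributions as $\epsilon \to 0$ (the threshold $|\tau| > -2$ is precisely what makes these terms convergent). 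All other bounds are direct adaptations of the Hairer--Quastel kernel estimates as used in Section~\ref{sec:Kernels estimates}, and the only remaining novelty compared with the flat setting is the bookkeeping associated with the abandoned lower-triangularity of $\Gamma$, which is however already absorbed into the framework of Sections~\ref{Section Singular Kernels}--\ref{section solution to semilinear...}.
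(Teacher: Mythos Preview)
Your overall architecture matches the paper's proof: set up the regularity structure from Section~\ref{section application to spdes}, identify the two negative trees, renormalise with an element of $\mathfrak{G}_-$, invoke Theorem~\ref{thm:abstract_fixed point} and Proposition~\ref{prop:renormalised equation}, and defer convergence of models to stochastic estimates. The logic is sound, but several bookkeeping points are muddled and would not survive a careful check.

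First, the gradient does not enter via ``an edge in $\mathcal{E}_0^+$ of order $1$''; in the paper's setup $\nabla\mathcal{I}$ is a genuine kernel type in $\mathcal{E}_+$ realising $K^\nabla_t = (\nabla_1 G_t)\kappa(t)\kappa(d/\bar r)$, with $|\nabla\mathcal{I}| \in (1-\tfrac{1}{10},1)$. Second, and more substantively, your treatment of $\tau_2$ conflates two different objects. The tree $\<b2> = (\nabla\mathcal{I}\Xi)^2$ takes values in $TM^{\otimes 2}$, so $\mathbb{E}[\pmb{\Pi}^\epsilon \<b2>](p) \in T_pM \otimes T_pM$ is a section, not a scalar; the paper shows (Lemma~\ref{lem:second counterterm}) that it equals $C'_\epsilon\, g^{\sharp}(p) + R^\epsilon_3(p)$ with $R^\epsilon_3$ converging. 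The bilinear form $A$ and hence $\tr A$ do not appear at this stage at all: $\tr A$ only emerges when Proposition~\ref{prop:renormalised equation} pairs $(D_{\<b2>}F)^{\mathfrak{u}} = f^2(u)\, A$ (as a $(T^*M)^{\otimes 2}$-valued quantity, after the symmetry factor) with $g(\<b2>) = C'_\epsilon g^{\sharp}$, yielding $C'_\epsilon f^2(u)\, A(g^{\sharp}) = C'_\epsilon f^2(u)\,\tr A$. Your formulae $\mathbb{E}[\pmb{\Pi}^\epsilon \tau_2] = C'_\epsilon(\tr A)(p) + r^2_\epsilon$ and $(D_{\tau_2}F) = f^2(u)(\tr A)$ are therefore both wrong as written, even though the final counterterm is right.

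Third, your choice $g_\epsilon(\tau_i) = -\mathbb{E}[\pmb{\Pi}^\epsilon \tau_i]$ is the BPHZ choice, which for $\tau_2$ is a position-dependent section, so the reconstructed solution would solve an equation with a position-dependent counterterm rather than exactly \eqref{gpam}. The paper instead takes $g_\epsilon = \mathfrak{g}_{C_\epsilon,C'_\epsilon}$ with the \emph{constant} flat-space values directly, and shows (Lemmas~\ref{lem:first counterterm} and~\ref{lem:second counterterm}) that the ``blue'' remainders converge uniformly. This is what guarantees that the renormalised model converges \emph{and} that the equation is literally \eqref{gpam}. Your final paragraph essentially recognises this point, but the argument should be reorganised accordingly: fix $g_\epsilon$ to be the constant-valued element first, and then verify convergence of the blue terms. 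Finally, for the two negative trees here one has $\hat\Pi^\epsilon_q\hat\Gamma^\epsilon_{q,p}\tau_p - \hat\Pi^\epsilon_p\tau_p = 0$, so the additional $(\Pi\Gamma-\Pi)$ bounds you mention are in fact trivial for this equation.
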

The proof of the theorem follows along the usual lines, when applying the theory of regularity structures. 
%
First, introduce the following types:
\begin{itemize}
\item kernel types $\mathcal{E}_+= \{\mathcal{I}, \nabla\mathcal{I} \}$,
\item noise types $\mathcal{E}_-= \{ \Xi \}$,
\item jet types
 $\mathcal{E}^0_0 = \mathcal{E}^A_0 \cup \mathcal{E}^f_0 \ ,$ 
where\footnote{
It is notationally convenient to treat the right-hand side as two different non-linearities and introduce the edge types 
$\delta^{f, n}$,  $\delta^{A, n}$. 
Comparing to \eqref{eq:non-linearity-decompostion}, the $\delta^{f, n}$ edges correspond to a derivative in the $\xi $ direction.
} $\mathcal{E}^f_0=\{ \delta^{(f,n)} \ : \ n\in \mathbb{N} \}$ and $\mathcal{E}^A_0= \{ \delta^{(A,n )}\  :\  n\in \mathbb{N} \}$ to encode the non-linearities,
\item further jet types $\mathcal{E}^+_0= \{\delta^{\mathcal{I}}, \delta^{\nabla \mathcal{I}}\}$. We fix the injection $\iota:\mathcal{E}_+\to  \mathcal{E}^+_0$ from \eqref{eq:iota} to be 
$$\mathcal{I}\mapsto \delta^{\mathcal{I}},\qquad \nabla \mathcal{I} \mapsto \delta^{\nabla \mathcal{I}} \ .$$
\end{itemize}
Since the equation is rather simple, we do not introduce the vector bundle assignments and the indexing maps explicitly. 
Set 
$$|\mathcal{I}|\in (2-{1}/{10}, 2), \qquad  |\nabla \mathcal{I}| \in (1-{1}/{10}, 1), \qquad |\Xi|\in (-1-{1}/{10}, -1)\ $$
and define the naive rule
$$\mathring{R}(\mathcal{I})= \mathring{R}(\nabla \mathcal{I})= \{ [\delta^{f,n}] \sqcup[\mathcal{I}]^n\sqcup [\Xi] ,\  [\delta^{A,n}] \sqcup [\nabla \mathcal{I}]^2 \ : n\geq 0 \} $$
as well as its normalisation $R$. 
Let us introduce the the analogue of the usual graphical notation. We write $\Xi= \<X>$ (using the common convention of not writing it as an edge) and denote an edge of type $\mathcal{I}$ by $\<I>$ as well as an edge of type $\nabla \mathcal{I}$ by $\<bI>$.
Thus, we for example write 
$\mathcal{I}\Xi= \<1>$. Denote edges of type $\delta^{\mathcal{I}}$ by $\<1'>$.
Since it suffices to solve the abstract in $\mathcal{D}^\gamma$ for some $\gamma>1$,
we can work with 
$$\mathfrak{T}^{\mathcal{I}}:= \{ \<1>,\ \<1'> \}$$ to describe the abstract solution
and
\begin{align*}
\mathfrak{T}^r
&_= \{ \delta^{f,0}\Xi,\  \ \delta^{f,1} \<Xi2> ,\  \delta^{f,1}  \<1'Xi> , \  \delta^{A,0} \<b2> \}
\end{align*}
to describe the right-hand side.
We observe that the only negative subtrees appearing are given by 
$$\mathfrak{T}_-=\{  \<Xi2> ,\ \<b2> \} \ .$$

Thus, we next fix $\delta_0> 1$ and work with the regularity structure
constructed form the above trees. We quotient it, so that the product of jets agrees with the canonical product, c.f.\ Remark~\ref{rem:identifying_under_can product on jets} and
denote by $Z^\epsilon=(\Pi^\epsilon, \Gamma^\epsilon)$ the canonical model for the smooth noise $\xi_\epsilon$, such that the abstract Integration map $\mathcal{I}$ realises $K_t(p,q):=G_t(p,q) \kappa(t)\kappa(d(p,q)/\bar{r})$ and $\nabla\mathcal{I}$ realises $K^\nabla_t (p,q):= (\nabla_1 G_t(p,q)) \kappa(t)\kappa(d(p,q)/\bar{r})$.
Next, in order to identify the correct two dimensional solution family,
we restrict attention to the subgroup $\mathfrak{G}'_-$ of elements $\mathfrak{g}_{c,c' }\in \mathfrak{G}_-$such that 
$$\mathfrak{g}_{c,c' }( \<X> ) = 0,\qquad 
\mathfrak{g}_{c,c' }( \<Xi2>) = c,\qquad   \mathfrak{g}_{c,c' }( \<b2>)= c' g^{\sharp},$$
 for some $c,c'\in \mathbb{R}$ and where
where $g^{\sharp}\in \mathcal{C}(TM^{\otimes 2})$ denotes the adjoint tensor field of the metric $g$.
To lighten notation, we denote the renormalised model $(Z^\epsilon)^{\mathfrak{g}_{C_\epsilon, C'_\epsilon}}$ by 
$\hat{Z}^\epsilon= (\hat{\Pi}^\epsilon, \hat{\Gamma}^\epsilon)$. 

\begin{proof}[Proof of Theorem~\ref{thm:g-pam}]
In the setting described above, let $\eta, \gamma\in \mathbb{R}$ such that
$0<\eta<\alpha$ and $-|\Xi|<\gamma<\delta_0$.
It follows from 
Theorem~\ref{thm:abstract_fixed point} there exists a unique abstract solution $\hat{U}^\epsilon\in \mathcal{D}^{\gamma, \eta}(\mathcal{T}^{\mathcal{I}})$ 
of the abstract lift of 
$$
\partial_t u + \Delta u = A( \nabla u, \nabla u) +f(u)\xi, 
$$
for the renormalised model $\hat{Z}^\epsilon$.
Then, it is simple special case of Proposition~\ref{prop:renormalised equation} and the observation $A(g^{\sharp})= \tr A$, that 
$\hat{u}= \mathcal{R} U^{\epsilon}$ solves \eqref{gpam}.
Thus, the proof is complete once Proposition~\ref{prop:convergence of models} below is shown.
\end{proof}

%
%

\begin{prop}\label{prop:convergence of models}
The renormalised models ${\hat{Z}}_\epsilon$ converge to a limiting model $\hat{Z}$
as $\epsilon\to 0$.
\end{prop}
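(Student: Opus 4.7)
The plan is to adapt the standard strategy for convergence of regularity-structure models---Wiener chaos moment bounds, Nelson equivalence of moments, and a Kolmogorov-type argument---to the present geometric setting. The only genuinely new ingredient compared with \cite{Hai14,HQ18} is the need to control, in addition to the usual quantities $\hat\Pi^\epsilon_p\tau(\phi^\lambda_p)$, the ``$\Gamma$-error'' terms $(\hat\Pi^\epsilon_q\hat\Gamma^\epsilon_{q,p}\tau_p-\hat\Pi^\epsilon_p\tau_p)(\phi^\lambda_q)$ required by condition~(3) of Definition~\ref{def model}.

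First I would reduce to a finite list of trees. By admissibility of $\hat Z^\epsilon$ and the multiplicative structure of the canonical pre-model, together with the Schauder and product estimates of Theorem~\ref{thm:extension theorem} and Lemma~\ref{lemma product}, it suffices to bound the relevant stochastic quantities on the subtrees appearing in $\mathfrak{T}^{\mathcal{I}}\cup\mathfrak{T}^r$, i.e.\ on $\Xi$, $\<1>$, $\<1'>$, $\<bI>$, $\<Xi2>$, $\<1'Xi>$ and $\<b2>$. Of these, the trees of non-negative homogeneity, the jet trees, the single noise $\Xi$ and the mixed tree $\<1'Xi>$ are handled directly without renormalisation: a Gaussian computation of the second moment of $\hat\Pi^\epsilon_p\tau(\phi^\lambda_p)$ combined with the kernel bounds adapted from \cite[Sec.~10]{Hai14} gives uniform-in-$\epsilon$ estimates of the correct order $\lambda^{2|\tau|}$, and the obvious comparison between two regularisation scales yields the Cauchy property in $\epsilon$.

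The genuine stochastic work is for the renormalised trees $\<Xi2>$ and $\<b2>$. Writing $K_{\mathcal I}=K_{\mathcal I,1}+K_{\mathcal I,-1}$ with $K_{\mathcal I,1}$ locally given by the scalar heat kernel, Wick's formula for $\mathbb{E}[(\hat\Pi^\epsilon_p\<Xi2>)(\phi^\lambda_p)^2]$ produces the usual sum of Feynman diagrams, among which a divergent ``bubble'' is exactly cancelled by the choice of counterterm $\mathfrak{g}_{c,c'}(\<Xi2>)=c$ with $c=C_\epsilon$. Using the heat kernel expansion of Theorem~\ref{thm:heat_kernel_expansion} together with the volume-form expansion \eqref{eq:volume_asymptotics}, one sees that $C_\epsilon$ differs from its Euclidean analogue only by an $O(1)$ curvature correction; the logarithmic divergence is the flat one and is absorbed by subtracting the Euclidean counterterm \eqref{pam counterterm1}. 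An analogous computation for $\<b2>$, in which each kernel is replaced by $\nabla K_{\mathcal I}$, produces a tensor-valued counterterm whose contraction with the metric $g^\sharp$ reproduces $C_\epsilon'\,g^\sharp$ as in \eqref{pam counterterm2}, since at leading order the gradient of $G_t$ coincides with the Euclidean gradient and the two transverse directions contribute the identity.

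The hard part will be the $\Gamma$-error terms for these two trees. Using the recursive formulae for $\gamma_p$ and $\gamma_{p,q}$ together with Lemma~\ref{technical Lemma for Kernels}, Lemma~\ref{lemma J_qp} and Lemma~\ref{lemma compensating error term}, one decomposes $(\hat\Pi^\epsilon_q\hat\Gamma^\epsilon_{q,p}-\hat\Pi^\epsilon_p)\tau(\phi^\lambda_q)$ into a leading contribution which, after the $C_\epsilon$ and $C_\epsilon'$ subtractions, has the same diagrammatic form as the corresponding quantity in \cite{Hai14}, plus remainders controlled either by the smooth part $K_{\mathcal I,-1}$ of the kernel or by the subleading, curvature-dependent terms in the expansion of $G_t$; each of these is easily bounded uniformly in $\epsilon$ with the required $\lambda$-decay. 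Once the $L^2(\Omega)$ bounds for both $\hat\Pi^\epsilon_p\tau(\phi^\lambda_p)$ and the $\Gamma$-error are in place with the correct scaling and a Cauchy property in $\epsilon$, Nelson's inequality on a fixed Wiener chaos promotes them to $L^p(\Omega)$ for every $p<\infty$, and a Kolmogorov continuity argument applied to the seminorms of Definition~\ref{def:norm on models} yields convergence of $\hat Z^\epsilon$ in probability in $\mathcal{M}_{\mathcal{T}}$.
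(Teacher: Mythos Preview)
Your overall strategy is the same as the paper's: moment bounds on a finite list of trees, Nelson's inequality, Kolmogorov, then the Extension Theorem~\ref{thm:extension theorem} to propagate. The reduction you describe and the treatment of the counterterms via Lemmas~\ref{lem:first counterterm} and~\ref{lem:second counterterm} also match.

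The one point where you diverge is what you call ``the hard part'': the $\Gamma$-error terms
\[
\bigl(\hat\Pi^\epsilon_q\hat\Gamma^\epsilon_{q,p}\tau_p-\hat\Pi^\epsilon_p\tau_p\bigr)(\phi^\lambda_q)
\]
for $\tau\in\{\<Xi2>,\<b2>\}$. The paper does not decompose these via Lemmas~\ref{technical Lemma for Kernels}, \ref{lemma J_qp}, \ref{lemma compensating error term}; it simply observes that they \emph{vanish identically}. The reason is that these trees contain no $\mathcal{E}_0$-type edges, so $\Func_W\langle T\rangle$ is the trivial line bundle, and every subtree obtained by integrating the noise has negative homogeneity. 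Hence $J_p=0$ and $\gamma_{q,p}(\mathcal{I}\Xi)=0$, which forces $\hat\Gamma^\epsilon_{q,p}\tau_p=\tau_q$ and $\hat\Pi^\epsilon_p\tau_p$ to be literally independent of the base point $p$. Once this is noted, the bounds on all of $\mathcal{T}^r$ follow because every element there is a jet times one of these two trees, and the jet factor is handled by the deterministic jet-model estimates.

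Your more elaborate route would also work, but the machinery you invoke is designed for trees that genuinely carry jet decorations on integrated branches (such as $\<3'2>$ in the $\Phi^4_3(E)$ section), where the $\Gamma$-error is nonzero and requires the analysis of Lemma~\ref{lem:stochastic bounds 3'2}. For g-PAM at precision $\gamma<\delta_0$ none of that is needed.
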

\begin{proof}
It follows from Proposition~\ref{prop:stochastic estimate for trees in nonlinearity} below combined with a Kolmogorov type criterion, obtained by going to charts,
that for each $p,q\in \mathbb{R}\times M$ , $\tau_p \in \mathcal{T}^r$ there exist random distributions
$$ \hat{\Pi}_p \tau_p \qquad \hat{\Pi}_q \hat{\Gamma}_{q,p} \tau_p,  $$
such that\footnote{Here we use this notation to denote the restriction of the corresponding expressions for $\|\ \cdot \  \|_{[-1,T+1]\times M} $ and $\llbracket\  \cdot \   \rrbracket_{[-1,T+1] \times M} $  in Definition~\ref{def:norm on models} to only $\tau\in\mathcal{T}^r $.} 
$$\|\  \hat{\Pi}^\epsilon-\hat{\Pi}\  | \ {\mathcal{T}^r }   \|_{[-1,T+1]\times M} + \llbracket \hat{Z}^\epsilon  \ ;\hat{Z}\ | \ {\mathcal{T}^r}  \rrbracket_{[-1,T+1] \times M} \to 0 \ .$$
Then, it follows from Theorem~\ref{thm:extension theorem} that 
$\|\hat{Z}^\epsilon;\hat {Z}\|_{[-1,T+1]\times M} \to 0$.
\end{proof}

\subsubsection{Stochastic estimates}\label{sec:stoch_estimates}
This section is dedicated to proving the following.
\begin{prop}\label{prop:stochastic estimate for trees in nonlinearity}
There exists a $\kappa>0$, such that for any $p,q\in \mathbb{R}\times M$, $\tau_p \in {\mathcal{T}}^r_{\alpha, \delta}$ and $N\in \mathbb{N}$, it holds that uniformly over $\lambda\in (0,1]$ and $\phi_q^\lambda\in  \mathcal{B}^{r, \lambda}_q$ that 
\begin{equation}\label{eq: stochastic bound on model}
\E\left[\left(\hat \Pi_q^\epsilon \tau_q) (\phi_q^\lambda)\right)^N \right]^{1/N}\ \lesssim \lambda^{\alpha+2\kappa} \ , \qquad
\E\left[\left((\hat{\Pi}^\epsilon_q\hat{\Gamma}^\epsilon_{q,p} \tau_p -\hat{\Pi}_p^\epsilon \tau_p) (\phi_q^\lambda)\right)^N \right]^{1/N}\ \lesssim \lambda^{\delta+2\kappa} 
\end{equation}
as well as the same estimate for $ \hat \Pi_p^\epsilon$ and $\hat{ \Gamma}^\epsilon_{p,q}$ replaced by $ \hat \Pi_p$ and $\hat{ \Gamma}_{p,q}$.
Furthermore, it holds that 
\begin{align*}
&\E\left[\left(\hat \Pi_p^\epsilon \tau_p- \hat \Pi_p \tau_p) (\phi_q^\lambda)\right)^N \right]^{1/N}\ \lesssim \epsilon^{\kappa} \lambda^{\alpha+\kappa} \\
&\E\left[\left((\hat{\Pi}^\epsilon_q\hat{\Gamma}^\epsilon_{q,p} \tau_p -\hat{\Pi}_p^\epsilon \tau_p) (\phi_q^\lambda)-(\hat{\Pi}_q\hat{\Gamma}_{q,p} \tau_p +\hat{\Pi}_p \tau_p) (\phi_q^\lambda)\right)^N \right]^{1/N}\ \lesssim \epsilon^{\kappa} \lambda^{\delta+\kappa} \ . 
\end{align*}
\end{prop}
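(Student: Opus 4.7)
[Proof sketch of Proposition~\ref{prop:stochastic estimate for trees in nonlinearity}]
The plan is to proceed by the standard three-step scheme: (i) reduce to second moments via Gaussian hypercontractivity, (ii) express the renormalised second moments as explicit kernel integrals that are uniformly bounded in $\epsilon$, (iii) extract the convergence rate $\epsilon^\kappa$ from the residue in the heat-kernel expansion of Theorem~\ref{thm:heat_kernel_expansion}.

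More concretely, for any $\tau_q\in\mathcal{T}^r$ both $\hat\Pi^\epsilon_q\tau_q(\phi^\lambda_q)$ and $(\hat\Pi^\epsilon_q\hat\Gamma^\epsilon_{q,p}\tau_p-\hat\Pi^\epsilon_p\tau_p)(\phi^\lambda_q)$ lie in an inhomogeneous Wiener chaos of order at most $2$ (first for $\delta^{f,0}\Xi$ and $\delta^{f,1}\<1'Xi>$, second for the Wick renormalised trees $\delta^{f,1}\<Xi2>$ and $\delta^{A,0}\<b2>$). Nelson's equivalence of moments therefore reduces all bounds \eqref{eq: stochastic bound on model} to the case $N=2$, which can be rewritten as a deterministic bound on an integral kernel. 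The trees involving a jet ($\delta^{f,1}\<1'Xi>$ and the jet factor inside the $\delta^{f,1}$ factor) contribute smooth functions via Proposition~\ref{proposition on polynomial model} and Lemma~\ref{bound on admissable realisations}, so they factor out of the stochastic estimates up to geometric prefactors that vary smoothly in $p$.

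The remaining work is to bound the Wick-renormalised Gaussian kernels. For $\delta^{f,1}\<Xi2>$, the second moment of $\hat\Pi^\epsilon_q(\<Xi2>)(\phi_q^\lambda)$ is expressed as
\[
\int \! \bigl(K\ast\varrho_\epsilon\bigr)(z_1,z)\bigl(K\ast\varrho_\epsilon\bigr)(z_2,z)\phi_q^\lambda(z_1)\phi_q^\lambda(z_2)\, \dVol(z)\dVol(z_1)\dVol(z_2)\, - \, C_\epsilon\|\phi_q^\lambda\|_{L^1}^2,
\]
where $\varrho_\epsilon$ denotes the mollification of the noise. The choice of $C_\epsilon$ cancels exactly the singular on-diagonal piece $(K\ast\varrho_\epsilon)(z,z)$; the remaining kernel can be estimated by the flat-space bounds of \cite[Sec.~10]{Hai14}, transported to the manifold via an $\fraks$-exponential chart (cf.\ Section~\ref{section geometric setting}) and the kernel-decomposition estimates of Section~\ref{Section Singular Kernels}, yielding the bound $\lambda^{\alpha+2\kappa}$. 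The case $\delta^{A,0}\<b2>$ is completely analogous, with the only geometric subtlety being the identification of the divergent part of $\nabla_1 K(p,q)\otimes\nabla_1 K(p,q)$ integrated on the diagonal: Theorem~\ref{thm:heat_kernel_expansion} together with \eqref{eq:heat_kernel_asymptotics}, \eqref{eq:volume_asymptotics} gives
\[
\int \nabla_1 G_{t+\epsilon^2}(p,q)\otimes\nabla_1 G_{t+\epsilon^2}(p,q)\,\kappa(t)\dVol(q)\,dt = C_\epsilon'\, g^\sharp(p) + O(1),
\]
which justifies the choice of the geometric counterterm $C_\epsilon' g^\sharp$ in $\mathfrak{g}_{C_\epsilon,C_\epsilon'}$.

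The $\Gamma$-bounds — i.e.\ the second set of estimates in \eqref{eq: stochastic bound on model} — are handled by combining the reformulation above with the kernel estimates of Section~\ref{Section Singular Kernels}. Specifically, writing $\hat\Pi_q\hat\Gamma_{q,p}-\hat\Pi_p$ explicitly via the operators $J_{p}$, $J_{p,q}$ and $E_q$ of that section and applying Lemma~\ref{technical Lemma for Kernels} and Lemma~\ref{lemma well definednes of I} reduces the second-moment estimate to a deterministic sum over $n$ of bounds on $K^{W,\alpha}_{n,p,q}$, which yields the required factor $\lambda^{\delta+2\kappa}$. Convergence as $\epsilon\to 0$ is obtained in the same way: the second-moment kernels for $(\hat\Pi^\epsilon-\hat\Pi)\tau$ and for the corresponding $\Gamma$-differences are expressed as differences of the above kernel integrals, and the remainder term in \eqref{eq:heat_kernel_asymptotics} provides a quantitative rate of order $\epsilon^\kappa$. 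The main obstacle is the last tree $\delta^{A,0}\<b2>$: tracking the geometric counterterm through the non-triangular $\Gamma$-maps (and the additional $E_q$-term that compensates \eqref{locloc}) requires a careful cancellation between the explicit $g^\sharp$ contribution and the curvature terms appearing in the heat-kernel expansion, but subcriticality together with the fact that $|\<b2>|>-2$ ensures that only the leading two coefficients $\Phi_0,\Phi_1$ of \eqref{eq:heat_kernel_asymptotics} enter the estimate, which is why a constant multiple of $g^\sharp$ suffices.
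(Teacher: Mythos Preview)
Your overall plan—reduce to second moments by hypercontractivity, express them as integral kernels, control the Wick-renormalised pieces via the heat-kernel expansion and the bounds of Section~\ref{sec:Kernels estimates}—matches the paper's approach for the first set of bounds in \eqref{eq: stochastic bound on model}, and Lemmas~\ref{lem:first counterterm} and~\ref{lem:second counterterm} do exactly the work you describe for the zeroth-chaos pieces.

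However, your treatment of the second bound in \eqref{eq: stochastic bound on model} (the $\hat\Pi_q\hat\Gamma_{q,p}-\hat\Pi_p$ estimate) invokes machinery that is not relevant here and misses the actual reason the bound holds. The operators $J_p$, $J_{p,q}$ and $E_q$ of Section~\ref{Section Singular Kernels} appear in the Schauder estimate for $\mathcal{K}_\gamma$ acting on modelled distributions; they play no role in bounding the model on $\mathcal{T}^r$. The point is much simpler: the trees $\<Xi2>$ and $\<b2>$ contain no edge of type $\mathcal{E}_0$, hence have $\|T\|=+\infty$ in the sense of Section~\ref{sec:homTree}, and for such trees the recentring is exact, i.e.\ $\hat\Pi^\epsilon_q\hat\Gamma^\epsilon_{q,p}\tau_p-\hat\Pi^\epsilon_p\tau_p=0$ identically. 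Every element of $\mathcal{T}^r$ is then the product of a jet with one of these two, so the general second bound follows from multiplicativity of $\Gamma$ and the jet-model estimates of Proposition~\ref{proposition on polynomial model}. In particular there is no ``careful cancellation between the $g^\sharp$ contribution and curvature terms'' to track through $E_q$; that passage of your sketch is addressing a difficulty that does not arise.

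A minor point: your displayed second-moment formula for $\<Xi2>$ is not the correct kernel—$\<Xi2>=\mathcal{I}(\Xi)\cdot\Xi$, not a product of two smoothed stochastic convolutions—so the chaos decomposition has the structure \eqref{eq:renormalised_chaos_dec_tree1} (a second-chaos piece plus a zeroth-chaos piece), and $C_\epsilon$ removes the divergent part of the latter rather than an on-diagonal value of $K\ast\varrho_\epsilon$.
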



We first introduce similar notation to \cite{HP15}, \cite{HQ18}.
Recall, that a random variable belonging to the $k$th homogeneous Wiener chaos can
be described by a kernel in $L^2( M)^{\otimes k}$, i.e.\ the
space of square-integrable functions in $k$ space variables via the correspondence 
$f \mapsto I_k(f)$ given in \cite[p.~8]{Nua06}.

We shall graphically represent $\bigl(\Pi_\star^\epsilon \tau\bigr)(\phi^\lambda_\star)$,
where $\phi^\lambda_\star\in \mathcal{B}^{r,\lambda}_\star$ denotes a test function that is localised around the
point $\star \in \RR\times M$. Nodes in our graph will represent variables in $M$ or $\mathbb{R}\times M$, with one
special node \tikz[baseline=-3] \node [root] {}; representing the point $\star\in \mathbb{R}\times M$.
The nodes \tikz[baseline=-3] \node [var] {}; represent the arguments of our kernel,
so that a random variable in the $k$th (homogeneous) Wiener chaos is represented by
a graph with exactly $k$ such nodes. The remaining nodes, which we draw as \tikz[baseline=-3] \node [dot] {};, represent dummy space time variables that are to be integrated out.

Each line then represents a space or space-time kernel, with 
\tikz[baseline=-0.1cm] \draw[kernel] (0,0) to (1,0) \enlarge;
representing the kernel $K$, 
\tikz[baseline=-0.1cm] \draw[rho] (0,0) to (1,0);
representing the kernel $\rho_\epsilon=G_{{\epsilon}^2}$ and
\tikz[baseline=-0.1cm] \draw[testfcn] (1,0) to (0.0,0) \enlarge;
representing a generic test function $\phi_\star^\lambda$. Note that since 
$\rho_\epsilon (p,q)= \rho_\epsilon (q,p)$, the notation \tikz[baseline=-0.1cm] \draw[rho] (0.0,0) to (1,0);
is unambiguous.


Whenever we draw a barred arrow 
\tikz[baseline=-0.1cm] \draw[kernel1] (0.0,0) to (1,0);
this represents a factor $K((t,p), (s,q)) - K(\star,  (s,q))$, where
$(s,q)\in \mathbb{R}\times M$ and $(t,p)\in \mathbb{R}\times M$ are the coordinates of the starting and end
point respectively. 

\subsubsection{A first stochastic term}
Using the notation introduced above, one can write
\begin{equation}
\bigl(\Pi_\star^{\epsilon} \<Xi2>\bigr)(\phi_\star^\lambda) = \;
\begin{tikzpicture}[scale=0.35,baseline=0.3cm]
	\node at (0,-1)  [root] (root) {};
	\node at (-2,1)  [dot] (left) {};
	\node at (-2,3)  [dot] (left1) {};
	\node at (0,1) [var] (variable1) {};
	\node at (0,3) [var] (variable2) {};
	
	\draw[testfcn] (left) to  (root);
	
	\draw[kernel1] (left1) to (left);
	\draw[rho] (variable2) to (left1); 
	\draw[rho] (variable1) to (left); 
\end{tikzpicture}\;
+ \;
\begin{tikzpicture}[scale=0.35,baseline=0.3cm]
	\node at (0,-1)  [root] (root) {};
	\node at (-2,1)  [dot] (left) {};
	\node at (-2,3)  [dot] (left1) {};
	\node at (0,2) [dot] (variable1) {};
	\node at (0,2) [dot] (variable2) {};
	
	\draw[testfcn] (left) to (root);
	
	\draw[kernel1] (left1) to (left);
	\draw[rho] (variable2) to (left1); 
	\draw[rho] (variable1) to (left); 
\end{tikzpicture}\;
=\;
\begin{tikzpicture}[scale=0.35,baseline=0.3cm]
	\node at (0,-1)  [root] (root) {};
	\node at (-2,1)  [dot] (left) {};
	\node at (-2,3)  [dot] (left1) {};
	\node at (0,1) [var] (variable1) {};
	\node at (0,3) [var] (variable2) {};
	
	\draw[testfcn] (left) to  (root);
	
	\draw[kernel1] (left1) to (left);
	\draw[rho] (variable2) to (left1); 
	\draw[rho] (variable1) to (left); 
\end{tikzpicture}\;
+\;
\begin{tikzpicture}[scale=0.35,baseline=0.3cm]
	\node at (0.0,-1)  [root] (root) {};
	\node at (-2,1)  [dot] (left) {};
	\node at (-2,3)  [dot] (left1) {};
	\node at (0,2) [dot] (variable1) {};
	\node at (0,2) [dot] (variable2) {};
	
	\draw[testfcn] (left) to (root);
	
	\draw[kernel] (left1) to (left);
	\draw[rho] (variable2) to (left1); 
	\draw[rho] (variable1) to (left); 
\end{tikzpicture}\;
- \;
\begin{tikzpicture}[scale=0.35,baseline=0.3cm]
	\node at (0.0,-1)  [root] (root) {};
	\node at (-1,1)  [dot] (left) {};
	\node at (0,3)  [dot] (top) {};
	\node at (1,1) [dot] (right) {};
	
	\draw[testfcn] (left) to  (root);
	
	\draw[kernel] (right) to (root);
	\draw[rho] (top) to (right); 
	\draw[rho] (top) to (left); 
\end{tikzpicture}\;
.
\end{equation}
%
%
%
%
%
%
We further introduce the graphical notation 
$$
\begin{tikzpicture}[scale=0.35,baseline=0.6cm]
	\node at (-2,1)  [root] (left) {};
	\node at (-2,3)  [reddot] (left1) {};
	\node at (0,2) [reddot] (variable) {};
	
	\draw[red, kernel] (left1) to (left);
	\draw[red, rho] (variable) to (left1); 
	\draw[red, rho] (variable) to (left); 
\end{tikzpicture}\; :=
C_\epsilon
$$
and decompose the function
\begin{equation}\label{eq:decom_counterterm1}
\begin{tikzpicture}[scale=0.35,baseline=0.6cm]
	\node at (-2,1)  [root] (left) {};
	\node at (-2,3)  [dot] (left1) {};
	\node at (0,2) [dot] (variable) {};
	
	\draw[kernel] (left1) to (left);
	\draw[rho] (variable) to (left1); 
	\draw[rho] (variable) to (left); 
\end{tikzpicture}\;
= \;
\begin{tikzpicture}[scale=0.35,baseline=0.6cm]
	\node at (-2,1)  [root] (left) {};
	\node at (-2,3)  [reddot] (left1) {};
	\node at (0,2) [reddot] (variable) {};
	
	\draw[red, kernel] (left1) to (left);
	\draw[red, rho] (variable) to (left1); 
	\draw[red, rho] (variable) to (left); 
\end{tikzpicture}\; +\;
\begin{tikzpicture}[scale=0.35,baseline=0.6cm]
	\node at (-2,1)  [root] (left) {};
	\node at (-2,3)  [bluedot] (left1) {};
	\node at (0,2) [bluedot] (variable) {};
	
	\draw[blue, kernel] (left1) to (left);
	\draw[blue, rho] (variable) to (left1); 
	\draw[blue, rho] (variable) to (left); 
\end{tikzpicture}\; ,
\end{equation}
where the blue term is determined by \eqref{eq:decom_counterterm1}.

\begin{lemma}\label{lem:first counterterm}
The function  $
\begin{tikzpicture}[scale=0.35,baseline=0.6cm]
	\node at (-2,1)  [root] (left) {};
	\node at (-2,3)  [bluedot] (left1) {};
	\node at (0,2) [bluedot] (variable) {};
	
	\draw[blue, kernel] (left1) to (left);
	\draw[blue, rho] (variable) to (left1); 
	\draw[blue, rho] (variable) to (left); 
\end{tikzpicture}\;
$ 
is uniformly bounded for $\epsilon\in (0,1]$ and converges uniformly as $\epsilon \to 0$.
\end{lemma}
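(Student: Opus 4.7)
The plan is to evaluate the blue function explicitly by carrying out the inner integrations in closed form and then using the short-time asymptotic expansion of the heat kernel from Theorem~\ref{thm:heat_kernel_expansion} together with the asymptotic expansion \eqref{eq:volume_asymptotics} of the Riemannian volume form to compare the manifold expression with the Euclidean counterterm $C_\epsilon$. First, I would apply the semigroup property $\int_M G_{\epsilon^2}(p,u)\,G_{\epsilon^2}(z,u)\,dVol(u) = G_{2\epsilon^2}(p,z)$ (valid since the regularising kernel is defined without any truncation) to reduce the black diagram to the single integral
\[
f^{\text{black}}_\epsilon(t,p) \;=\; \int_{(-\infty,t]\times M} K\bigl((t,p),(s,z)\bigr)\, G_{2\epsilon^2}(p,z)\, ds\, dVol(z)\,.
\]
The component $K_{-1}$ of $K$ is smooth and contributes a term that is trivially bounded and converges uniformly as $\epsilon \to 0$, so attention reduces to the singular part, which (up to a smooth remainder) has the form $G_{t-s}(p,z)\,\kappa(t-s)\,\kappa(d(p,z)/\bar{r})$.

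Next, working in the geodesic exponential chart at $p$ and changing variable $\tau := t-s$, I would substitute the Minakshisundaram--Pleijel expansion of $G_\tau(p,z)$ and of $G_{2\epsilon^2}(p,z)$ from Theorem~\ref{thm:heat_kernel_expansion}, along with \eqref{eq:volume_asymptotics}. The leading term in both expansions is the Euclidean Gaussian $\bar{Z}_\tau$, and the product of the two leading terms, integrated over $s$ and the chart $B_{\bar{r}}(0) \subset \mathbb{R}^2$, produces exactly the expression defining $C_\epsilon$ up to truncation-compatible error. Subtracting $C_\epsilon$ therefore removes the leading logarithmic divergence, and the blue function reduces to a sum of integrals each of which involves at least one factor of $\tau$ (from the $\tau\,\Phi_1$ subleading term in $G_\tau$) or at least one factor of $|x|^2$ (from the volume density correction or from the difference $\bar{Z}_\tau(x) - \bar{Z}_\tau(0)$ paired against $\bar{Z}_{\epsilon^2}$).

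Each such correction is uniformly bounded: in two dimensions a factor of $\tau$ multiplied against $(4\pi(\tau+2\epsilon^2))^{-1}$ and integrated against $\kappa(\tau)\, d\tau$ over $[0,2]$ gives a bounded integral independent of $\epsilon$, while a factor of $|x|^2$ absorbs the Gaussian scaling of $\bar{Z}_{\epsilon^2}$ and produces a similarly bounded integral. Since all coefficients in the expansion ($\Phi_1$, the Ricci tensor, etc.) are smooth on the compact manifold $M$, the resulting bound is uniform in $p \in M$. Uniform convergence as $\epsilon \to 0$ then follows by dominated convergence, using that the integrands converge pointwise and the previous step yields an $\epsilon$-independent envelope.

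The main obstacle will be bookkeeping of the cutoffs: the manifold kernel carries the geometric cutoff $\kappa(d(p,z)/\bar{r})$ while $C_\epsilon$ is defined with the Euclidean cutoff on $B_{r}(0)$, and one needs to ensure these can be identified (up to exponentially small errors in $\epsilon$) after passing to normal coordinates. This is handled by noting that the Gaussian factor $\bar{Z}_{2\epsilon^2}(x)$ is negligible outside $|x|\leq \epsilon|\log\epsilon|$, so both cutoffs act trivially on the dominant mass of the integrand, and the mismatch contributes only exponentially small terms. Once this reduction is in place, all remaining estimates follow from explicit Gaussian integrals.
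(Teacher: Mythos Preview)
Your approach is correct and follows essentially the same strategy as the paper: reduce to exponential coordinates, invoke the heat-kernel asymptotics of Theorem~\ref{thm:heat_kernel_expansion} together with the volume expansion~\eqref{eq:volume_asymptotics}, and compare the leading Euclidean piece with $C_\epsilon$. The one genuine (but minor) difference is where you apply the semigroup property. The paper applies it to $K*\rho_\epsilon$, producing $\int_M G_{t-s+\epsilon^2}(p,z)\,G_{\epsilon^2}(p,z)\,d\mathrm{Vol}_z$, which at leading order literally matches the integrand $\bar Z_{t-s+\epsilon^2}(x)\bar Z_{\epsilon^2}(x)$ defining $C_\epsilon$. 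You instead apply it to $\rho_\epsilon*\rho_\epsilon$, arriving at $\int_M K((t,p),(s,z))\,G_{2\epsilon^2}(p,z)\,d\mathrm{Vol}_z$, whose leading term is $\bar Z_{t-s}(x)\bar Z_{2\epsilon^2}(x)$. These two integrands are not pointwise equal, so your claim that the leading term ``produces exactly the expression defining $C_\epsilon$'' is slightly off; you need the extra observation that their full $x$-integrals both equal $\bar Z_{t-s+2\epsilon^2}(0)$, after which only cutoff mismatches remain (and those you already handle). Also a small slip: in this setup $K$ is already the truncated singular kernel $G_t\kappa(t)\kappa(d/\bar r)$ realised by $\mathcal I$, so there is no $K_{-1}$ component to split off. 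Neither point affects the validity of the argument.
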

\begin{proof}
Recall that
$$\pmb{\Pi}^{\epsilon} \<1>(t,p)=\int_{\mathbb{R}}\int_{M} K_{t-s}(p,q) \xi_{\epsilon}(q) \dVol_q ds
= \int_{\mathbb{R}} \langle \xi, G_{t-s+\epsilon^2} (p,\cdot )\rangle \kappa(t-s) ds + r^\epsilon(t,p)$$
and thus
\begin{align*}
\begin{tikzpicture}[scale=0.35,baseline=0.6cm]
	\node at (-2,1)  [root] (left) {};
	\node at (-2,3)  [dot] (left1) {};
	\node at (0.0,2) [dot] (variable) {};
	\draw[kernel] (left1) to (left);
	\draw[rho] (variable) to (left1); 
	\draw[rho] (variable) to (left); 
\end{tikzpicture}\;
&=\E\left[\pmb{\Pi}^{\epsilon} \<Xi2> (t,p)\right]\\
&= \E\left[ \int_{\mathbb{R}} \langle \xi, G_{t-s+\epsilon^2}(p,\cdot )\rangle \kappa(t-s)  \langle \xi, G_{\epsilon^2}(p,\cdot )\rangle  ds\right] +\E\left[ r^\epsilon (t,p)\xi_{\epsilon}(p)\right] \ .
\end{align*} 
Since the conclusion of the lemma holds for the map $p\mapsto \E\left[ R^\epsilon_1(t,p)\xi_{\epsilon}(p)\right]$ we proceed by noting that
\begin{align*}
& \E\left[ \int_{\mathbb{R}} \langle \xi, G_{t-s+\epsilon^2}(p,\cdot )\rangle \kappa(t-s)  \langle \xi, G_{\epsilon^2}(p,\cdot )\rangle  ds\right] \\
&= \int_{\mathbb{R}} \int_M G_{t-s+\epsilon^2}(p,z) G_{\epsilon^2}(p,z) \dVol_z \kappa(t-s) ds \\ 
&= \int_{\mathbb{R}} \int_{B_{r}(p)} Z_{t-s+\epsilon^2}(p,z) Z_{\epsilon^2}(p,z) \dVol_z \kappa(t-s) ds\\
&+ \int_{\mathbb{R}} \int_{M} \left(G_{t-s+\epsilon^2}(p,z) -Z_{t-s+\epsilon^2}(p,z) \right) G_{\epsilon^2}(p,z) \dVol_z \kappa(t-s) ds\\
&+\int_{\mathbb{R}} \int_{M} Z_{t-s+\epsilon^2}(p,z) \left( G_{\epsilon^2}(p,z)-Z_{\epsilon^2}(p,z)\right)  \dVol_z \kappa(t-s) ds \ , \\
\end{align*} 
where if follows from Theorem~\ref{thm:heat_kernel_expansion} that the latter two summands converge uniformly as $\epsilon \to 0$.
Finally, if follows from\eqref{pam counterterm1} and \eqref{eq:volume_asymptotics} that
$$ \int_{\mathbb{R}} \int_{B_{r}(p)} Z_{t-s+\epsilon^2}(p,z) Z_{\epsilon^2}(p,z) \dVol_z \kappa(t-s) ds- C_\epsilon$$ 
converges uniformly to a limiting function. 
\end{proof}
Thus, after renormalisation we find
\begin{equation}\label{eq:renormalised_chaos_dec_tree1}
\bigl(\hat \Pi_\star^{\epsilon} \<Xi2>\bigr)(\phi_\star^\lambda) = \;
\begin{tikzpicture}[scale=0.35,baseline=0.3cm]
	\node at (0,-1)  [root] (root) {};
	\node at (-2,1)  [dot] (left) {};
	\node at (-2,3)  [dot] (left1) {};
	\node at (0,1) [var] (variable1) {};
	\node at (0,3) [var] (variable2) {};
	
	\draw[testfcn] (left) to  (root);
	
	\draw[kernel1] (left1) to (left);
	\draw[rho] (variable2) to (left1); 
	\draw[rho] (variable1) to (left); 
\end{tikzpicture}\;
 - \;
\begin{tikzpicture}[scale=0.35,baseline=0.3cm]
	\node at (0.0,-1)  [root] (root) {};
	\node at (-1,1)  [dot] (left) {};
	\node at (0,3)  [dot] (top) {};
	\node at (1,1) [dot] (right) {};
	
	\draw[testfcn] (left) to  (root);
	
	\draw[kernel] (right) to (root);
	\draw[rho] (top) to (right); 
	\draw[rho] (top) to (left); 
\end{tikzpicture}\;
+\;
\begin{tikzpicture}[scale=0.35,baseline=0.3cm]
	\node at (0.0,-1)  [root] (root) {};
	\node at (-2,1)  [dot] (left) {};
	\node at (-2,3)  [bluedot] (left1) {};
	\node at (0,2) [bluedot] (variable1) {};
	
	\draw[testfcn] (left) to (root);
	
	\draw[blue,kernel] (left1) to (left);
	\draw[blue, rho] (variable1) to (left1); 
	\draw[blue, rho] (variable1) to (left); 
\end{tikzpicture}\;.
\end{equation}

\subsubsection{A second stochastic term}
Here we use a slight variation of the above notation and use
\tikz[baseline=-0.1cm] \draw[keps] (0,0) to (1,0) \enlarge;
to represent the kernel 
$$ \left( K^{\nabla}_t * \rho_\epsilon\right) (p, q)= \int_M   K^{\nabla}_t (p,z) \rho_\epsilon (z,q) \dVol_z \ ,  $$ and
\tikz[baseline=-0.1cm] \draw[testfcn] (1,0) to (0,0) \enlarge;
represents a localised test function $\phi_\star^\lambda\in \mathcal{C}^\infty (TM^{\otimes 2})$. So

\begin{equation*}
\bigl(\Pi_\star^{\epsilon}\<b2>\bigr)(\phi_\star^\lambda)
= 
\begin{tikzpicture}[scale=0.35,baseline=0.3cm]
	\node at (0,-1)  [root] (root) {};
	\node at (0,1)  [dot] (int) {};
	\node at (-1,3.5)  [var] (left) {};
	\node at (1,3.5)  [var] (right) {};
	
	\draw[testfcn] (int) to  (root);
	
	\draw[keps] (left) to (int);
	\draw[keps] (right) to (int);
\end{tikzpicture}\;
+ \;
\begin{tikzpicture}[scale=0.35,baseline=0.3cm]
	\node at (0,-1)  [root] (root) {};
	\node at (0,1)  [dot] (int) {};
	\node at (0,3.5) [dot] (top) {};
	
	\draw[testfcn] (int) to  (root);
	
	\draw[keps] (top) to[bend left=60] (int); 
	\draw[keps] (top) to[bend right=60] (int); 
\end{tikzpicture}\;.
\end{equation*}
Similarly to above, we write
$$
\begin{tikzpicture}[scale=0.35,baseline=0.3cm]
	\node at (0,0)  [root] (root) {};
	\node at (0,2.5) [reddot] (top) {};
	\draw[red, keps] (top) to[bend left=60] (root); 
	\draw[red, keps] (top) to[bend right=60] (root); 
\end{tikzpicture}\; =  C'_\epsilon g^{\sharp} 
\qquad \text{and} \qquad
\begin{tikzpicture}[scale=0.35,baseline=0.3cm]
	\node at (0,0)  [root] (root) {};
	\node at (0,2.5) [bluedot] (top) {};
	\draw[blue,keps] (top) to[bend left=60] (root); 
	\draw[blue,keps] (top) to[bend right=60] (root); 
\end{tikzpicture}\; = \;
\begin{tikzpicture}[scale=0.35,baseline=0.3cm]
	\node at (0,0)  [root] (root) {};
	\node at (0,2.5) [dot] (top) {};
	\draw[keps] (top) to[bend left=60] (root); 
	\draw[keps] (top) to[bend right=60] (root); 
\end{tikzpicture}\; - \;
\begin{tikzpicture}[scale=0.35,baseline=0.3cm]
	\node at (0,0)  [root] (root) {};
	\node at (0,2.5) [reddot] (top) {};
	\draw[red, keps] (top) to[bend left=60] (root); 
	\draw[red, keps] (top) to[bend right=60] (root); 
\end{tikzpicture}\;  \ .
$$
%
%
%
%
%
%
%
%
%
%

\begin{lemma}\label{lem:second counterterm}
The section $\begin{tikzpicture}[scale=0.35,baseline=0.3cm]
	\node at (0,0)  [root] (int) {};
	\node at (0,2.5) [bluedot] (top) {};
	\draw[blue, keps] (top) to[bend left=60] (int); 
	\draw[blue, keps] (top) to[bend right=60] (int); 
\end{tikzpicture}\;$ of $\mathcal{C}^\infty(T^*M^{\otimes_s 2})$ 
is uniformly bounded for $\epsilon\in (0,1]$ and converges uniformly.

\end{lemma}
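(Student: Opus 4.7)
The plan is to follow the strategy of Lemma~\ref{lem:first counterterm}. Unravelling the expectation using stochastic Fubini together with the heat-semigroup identity $G_a\ast G_b=G_{a+b}$, one has
\begin{equation*}
\mathbb{E}\bigl[\pmb{\Pi}^\epsilon \<b2>(t,p)\bigr]=\int_{\mathbb{R}^2}\!\kappa(t-s)\kappa(t-s')\!\int_M \nabla_1 G_{t-s+\epsilon^2}(p,z)\odot_s \nabla_1 G_{t-s'+\epsilon^2}(p,z)\,\dVol_z\,ds\,ds'+r^\epsilon(t,p),
\end{equation*}
where $r^\epsilon$ collects the contributions involving the spatial cut-off $1-\kappa(d(\cdot,\cdot)/\bar r)$. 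Such contributions are supported away from the diagonal, so $r^\epsilon$ is smooth in $(t,p)$ and converges uniformly as $\epsilon\to 0$.

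I would then split $\int_M=\int_{B_{\bar r}(p)}+\int_{M\setminus B_{\bar r}(p)}$, with the far-diagonal piece again smooth and uniformly convergent by the same argument as for $r^\epsilon$. For the near-diagonal piece, pass to $\mathfrak{s}$-normal coordinates $\phi_p$ at $p$ together with the parallel-transported orthonormal frame from $p$; by Theorem~\ref{thm:heat_kernel_expansion} (differentiated in the first variable) one has
\begin{equation*}
\nabla_1 G_t(p,\phi_p(y))=-\frac{y}{2t}\bar Z_t(y)\bigl(1+\mathcal{O}(t)+\mathcal{O}(|y|^2)\bigr)+\text{smooth remainder},
\end{equation*}
and $\dVol_{\phi_p(y)}=(1+\mathcal{O}(|y|^2))\,dy$ by \eqref{eq:volume_asymptotics}. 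Substituting these expansions, the only contribution to $\mathbb{E}[\pmb{\Pi}^\epsilon \<b2>]$ that fails to be manifestly convergent is the flat leading term
\begin{equation*}
L^\epsilon(t,p):=\int_{\mathbb{R}^2}\!\kappa(t-s)\kappa(t-s')\!\int_{B_{\bar r}(0)}\frac{y\odot_s y}{4(t-s+\epsilon^2)(t-s'+\epsilon^2)}\bar Z_{t-s+\epsilon^2}(y)\bar Z_{t-s'+\epsilon^2}(y)\,dy\,ds\,ds'.
\end{equation*}
All other terms in the expansion carry at least one extra factor of $|y|$ or of $(t-s)$; explicit flat Gaussian integrals then show that each such term is uniformly bounded in $\epsilon\in(0,1]$ and uniformly convergent as $\epsilon\to 0$, with smooth dependence in $p$ (since $\phi_p$ and the parallel-transport frame depend smoothly on $p$ and $M$ is compact).

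Finally, by rotational symmetry of $\bar Z$ in dimension two, $\int_{B_{\bar r}(0)}y_iy_j \bar Z_a(y)\bar Z_b(y)\,dy=\tfrac12\delta_{ij}\int_{B_{\bar r}(0)}|y|^2\bar Z_a(y)\bar Z_b(y)\,dy$, and combined with $\partial_i\bar Z_a(y)=-\tfrac{y_i}{2a}\bar Z_a(y)$ and the definition \eqref{pam counterterm2} of $C'_\epsilon$, this identifies $L^\epsilon(t,p)$ with $C'_\epsilon g^\sharp_p$ in the chosen orthonormal frame, up to a further uniformly convergent remainder. Subtracting $C'_\epsilon g^\sharp$ therefore cancels the only divergent contribution, giving the blue section as a smooth, uniformly bounded and uniformly convergent family. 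The main obstacle is bookkeeping rather than conceptual: one must verify that each subleading term of the heat-kernel and volume expansions yields an integrand whose flat Gaussian integral converges uniformly as $\epsilon\to 0$, which is routine thanks to the super-polynomial decay of $\bar Z$ and the compactness of $M$.
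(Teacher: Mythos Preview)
Your proposal is correct and follows essentially the same route as the paper: express the expectation via the heat-semigroup identity, pass to normal coordinates using the leading term of the heat-kernel expansion, identify the resulting flat Gaussian integral with $C'_\epsilon g^\sharp$ by rotational symmetry (the paper phrases this as $C^{(i,j)}_\epsilon=\delta_{ij}C'_\epsilon$), and absorb all subleading contributions into uniformly convergent remainders. The paper's write-up is terser but the content is the same.
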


\begin{proof}
Let $\{e_{p,i}\}_{i=1,2}$ be a basis of $T_p M$.
We proceed exactly as in Lemma~\ref{lem:first counterterm} and find that 
\begin{align*}
\begin{tikzpicture}[scale=0.35,baseline=0.3cm]
	\node at (0.0,0)  [root] (int) {};
	\node at (0,2.5) [dot] (top) {};
	\draw[keps] (top) to[bend left=60] (int); 
	\draw[keps] (top) to[bend right=60] (int); 
\end{tikzpicture}\;
  &= 
 \E\left[ \int_{\mathbb{R}} \langle \xi, \nabla G_{t-s+\epsilon^2}(p,\cdot )\rangle \kappa(t-s) ds \otimes \int_{\mathbb{R}} \langle \xi, \nabla G_{t-s'+\epsilon^2}(p,\cdot )\rangle \kappa(t-s') ds'\right] + R^\epsilon_1 (p)
 \\
&= \sum_{i,j} e_{p,i}\otimes e_{p,j} \int_{\mathbb{R}^2}	\int_{B_{r}(p)} \partial_i Z_{t-s+\epsilon^2}(p,z )\otimes \partial_j Z_{t-s'+\epsilon^2}(p,z )\dVol_z ds ds' +R^\epsilon_2\\
&=   \sum_{i,j}  {C}^{(i,j)}_\epsilon e_i\otimes e_j   + R^\epsilon_3\ ,
\end{align*} 	
where 
$$
{C}^{(i,j)}_\epsilon:=\int_{\mathbb{R}^2}	\int_{B_{r}(0)} \partial_i \bar{Z}_{t-s+\epsilon^2}(z )\otimes \partial_j \bar{Z}_{t-s'+\epsilon^2}(z ) dz ds ds' \ 
$$
and the sections $R^\epsilon_1, \, R^\epsilon_2, \, R^\epsilon_3$ of $TM^{\otimes 2}$ converge uniformly as $\epsilon \to 0$.
Noting that $C'_\epsilon={C}^{(1,1)}_\epsilon={C}^{(2,2)}_\epsilon$ and ${C}^{(1,2)}_\epsilon={C}^{(2,1)}_\epsilon=0$, this concludes the proof.
%
%
\end{proof}

Thus, after renormalisation,
\begin{equation}\label{eq:renormalised_chaos_dec_tree2}
\bigl(\hat \Pi_\star^{\epsilon}\<b2>\bigr)(\phi_\star^\lambda)
= 
\begin{tikzpicture}[scale=0.35,baseline=0.3cm]
	\node at (0,-1)  [root] (root) {};
	\node at (0,1)  [dot] (int) {};
	\node at (-1,3.5)  [var] (left) {};
	\node at (1,3.5)  [var] (right) {};
	
	\draw[testfcn] (int) to  (root);
	
	\draw[keps] (left) to (int);
	\draw[keps] (right) to (int);
\end{tikzpicture}\;
%
%
+\;
\begin{tikzpicture}[scale=0.35,baseline=0.3cm]
	\node at (0,-1)  [root] (root) {};
	\node at (0,1)  [dot] (int) {};
	\node at (0,3.5) [bluedot] (top) {};
	
	\draw[testfcn] (int) to  (root);
	
	\draw[blue, keps] (top) to[bend left=60] (int); 
	\draw[blue, keps] (top) to[bend right=60] (int); 
\end{tikzpicture}\; .
\end{equation}

\begin{proof}[Proof of Proposition~\ref{prop:stochastic estimate for trees in nonlinearity} ]
Note that it follows from \eqref{eq:renormalised_chaos_dec_tree1} and  \eqref{eq:renormalised_chaos_dec_tree2} that
\begin{equation}
\E \bigl|\bigl(\hat \Pi_\star^{\epsilon} \<Xi2>\bigr)(\phi_\star^\lambda)\bigr|^2 \le 2\;
\begin{tikzpicture}[scale=0.35,baseline=0.3cm]
	\node at (0,-1)  [root] (root) {};
	\node at (-1.5,1)  [dot] (left) {};
	\node at (-1.5,3)  [dot] (left1) {};
	\node at (1.5,1) [dot] (variable1) {};
	\node at (1.5,3) [dot] (variable2) {};
	
	\draw[testfcn] (left) to  (root);
	\draw[testfcn] (variable1) to  (root);
	
	\draw[kernel1] (left1) to (left);
	\draw[kernel1] (variable2) to (variable1);
	\draw[rho] (variable2) to node[dot] {} (left1); 
	\draw[rho] (variable1) to node[dot] {} (left); 
\end{tikzpicture}\; + \; 2 \;
\left(
\begin{tikzpicture}[scale=0.35,baseline=0.3cm]
	\node at (0,-1)  [root] (root) {};
	\node at (-1,1)  [dot] (left) {};
	\node at (0,3)  [dot] (top) {};
	\node at (1,1) [dot] (right) {};
	
	\draw[testfcn] (left) to  (root);
	
	\draw[kernel] (right) to (root);
	\draw[rho] (top) to (right); 
	\draw[rho] (top) to (left); 
\end{tikzpicture}
\right)^2\; +\; 2 \;
\left(
\begin{tikzpicture}[scale=0.35,baseline=0.3cm]
	\node at (0,-1)  [root] (root) {};
	\node at (-2,1)  [dot] (left) {};
	\node at (-2,3)  [bluedot] (left1) {};
	\node at (0,2) [bluedot] (variable1) {};
	\node at (0,2) [bluedot] (variable2) {};
	
	\draw[testfcn] (left) to (root);
	
	\draw[blue,kernel] (left1) to (left);
	\draw[rho] (variable2) to (left1); 
	\draw[rho] (variable1) to (left); 
\end{tikzpicture}
\right)^2\; 
\end{equation}
and
\begin{equation}
\E |\bigl(\hat \Pi_\star^{\epsilon}\<b2>\bigr)(\phi_\star^\lambda)|^2
\le 2\;
\begin{tikzpicture}[scale=0.35,baseline=0.25cm]
	\node at (0.0,-1)  [root] (root) {};
	\node at (-2,1)  [dot] (intl) {};
	\node at (2,1)  [dot] (intr) {};
	\node at (0,1)  [dot] (top1) {};
	\node at (0,3)  [dot] (top2) {};
	
	\draw[testfcn] (intl) to  (root);
	\draw[testfcn] (intr) to  (root);
	
	\draw[keps] (top1) to (intl);
	\draw[keps] (top1) to (intr);
	\draw[keps] (top2) to (intl);
	\draw[keps] (top2) to (intr);
\end{tikzpicture}\;.
%
%
+ \; \left(
\begin{tikzpicture}[scale=0.35,baseline=0.3cm]
	\node at (0.0,-1)  [root] (root) {};
	\node at (0,1)  [dot] (int) {};
	\node at (0,3.5) [bluedot] (top) {};
	
	\draw[testfcn] (int) to  (root);
	
	\draw[blue, keps] (top) to[bend left=60] (int); 
	\draw[blue, keps] (top) to[bend right=60] (int); 
\end{tikzpicture}
\; \right)^2\  .
\end{equation}
The last term of each has already been bounded in Lemma~\ref{lem:first counterterm}, resp.\ Lemma~\ref{lem:second counterterm}, while the rest can be estimated using the bounds of Section~\ref{sec:Kernels with prescribed singularities} exactly as in the proof of \cite[Theorem~10.19]{Hai14}.
Combining this  
with the fact that for
$\tau_p =\<Xi2>$ and $\<2>$ one has
$$ \hat{\Pi}^\epsilon_q \hat{\Gamma}^\epsilon_{q,p} \tau_p -\hat{\Pi}^\epsilon_p \tau_p= 0 \ ,$$
implies that for these elements \eqref{eq: stochastic bound on model} is satisfied. 
Since every element $\tau_p\in {\mathcal{T}}^r$ is the product of a jet with either $\<Xi2>$ or $ \<2>$ \eqref{eq: stochastic bound on model} follows generally.
In order to conclude the remainder of Proposition~\ref{prop:stochastic estimate for trees in nonlinearity} one proceeds as usual, edge by edge replacing  
$\rho_\epsilon$ by $\rho_\epsilon (p,q)- \delta_p(q)$. 
\end{proof}

\subsection{The $\Phi^4_3(E)$ equation}\label{section:phi43}
Let 
\begin{align}
C_\epsilon &= \int_\mathbb{R^2} \int_{B_r(0)}  \phi^\epsilon (s- r )\phi^\epsilon (s'- r )
\bar{Z}_{t-s+\epsilon^2}(z)
 \bar{Z}_{t-s'+\epsilon^2}( z)
 ds  ds' dr dz \label{phi4 counterterm1} \\
C'_\epsilon &= \int_{\mathbb{R}\times B_r(0)} \left(\bar{Z}^{(\epsilon)}(z)\right)^2 \bar{Z} (z) dz \label{phi4 counterterm2}
\end{align}
where $\bar{Z}^{(\epsilon)}(t,x)=\int_\mathbb{R^2} \int_{B_r}   \phi^\epsilon (s- r )\phi^\epsilon (s'- r )
\bar{Z}_{t-s-s'+2\epsilon^2}(x)
 ds  ds' dr dx
$.

We shall prove the following theorem.

\begin{theorem}\label{thm:phi4_3}
Let $E\to M$ be a vector bundle over a compact 3-manifold equipped with with a metric $h$ and compatible connection $\nabla^{E}$. Let $|\cdot|_h$ be the norm on $E$ induced by $h$ and $\Delta^E$ 
a generalised Laplacian on $E$ differing form the connection Laplacian only by a $0$-th order term.
Let $\xi$ be an $E$-valued space-time white noise, and denote by $\xi_\epsilon$ its regularisation as in \eqref{eq:mollified space-time white noise}.
Then, let $u_0\in \cC^{\alpha}(E)$ for $\alpha>-\frac{2}{3}$, let $C_\epsilon$, $C'_\epsilon$ as in \eqref{phi4 counterterm1} and \eqref{phi4 counterterm2} and let $u_\epsilon$ solve
\begin{equation}\label{phi4}
\partial_t u_\epsilon + \Delta^E u_\epsilon= -u_\epsilon |u_\epsilon |^2_h+ 3 C_\epsilon u_\epsilon - 9 C_\epsilon' u_\epsilon +\xi_\epsilon\ , \qquad
u_\epsilon(0) =u_0 \ .
\end{equation}
Then, there exists a (random) $T>0$ and $u\in  \mathcal{D}'((0,T)\times E)$, where we canonically identify $(0,T)\times E$ with a subset of $\pi^* E$, such that $u_\epsilon \to u$ in $\mathcal{D}'((0,T)\times E)$ as $\epsilon\to 0$ in probability.
\end{theorem}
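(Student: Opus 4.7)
The plan is to follow the three-step programme of Sections~\ref{section application to spdes} and \ref{sec:canonical renormalisation}: build a regularity structure tailored to \eqref{phi4}, solve the lifted fixed-point problem for a renormalised admissible model, and then show convergence of those models. For the setup I would take edge types $\mathcal{E}_+ = \{\mathcal{I}\}$ for the $E$-valued heat kernel of $\Delta^E$, $\mathcal{E}_- = \{\Xi\}$ for space-time white noise, and the jet types $\mathcal{E}_0$ prescribed by Section~\ref{section application to spdes}. Since $h \in \mathcal{C}^\infty((E^*)^{\otimes_s 2})$ is a smooth coefficient, the nonlinearity $-u|u|_h^2$ factors through a symmetric trilinear contraction of three copies of $u$ against $h$, so the naive rule is simply $\mathring{R}(\mathcal{I}) = \{[\mathcal{I}]^3,\, [\Xi]\}$, augmented by the appropriate $\mathcal{E}_0^0$ decorations and normalised. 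Assigning homogeneities $|\mathcal{I}| = 2 - \kappa$, $|\Xi| = -\tfrac{5}{2} - \kappa$ with scaling $\mathfrak{s} = (2,1,1,1)$ makes $R$ subcritical for small $\kappa > 0$. The truncated $E$-valued heat kernel built as in Section~\ref{sec:concrete applications} satisfies Assumption~\ref{Assumption kernel decomposition} with $\beta = 2$ by Theorem~\ref{thm:heat_kernel_expansion}.

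On the analytic side, Theorem~\ref{thm:abstract_fixed point} produces a unique local solution $U^\epsilon \in \mathcal{D}^{\gamma,\eta}$ of the abstract lift of \eqref{phi4} for any admissible model, with $\eta \in (-\tfrac{2}{3}, \alpha)$ and $\gamma$ slightly larger than $-|\Xi|$; a mild reformulation handling the rough initial condition (Remark after Theorem~\ref{thm:abstract_fixed point}) is needed since $\alpha$ may be below $-\beta$. Restricting to the two-parameter subgroup of $\mathfrak{G}_-$ generated by the scalar renormalisations of the two divergent trees, namely the Wick square of $\mathcal{I}\Xi$ contracted with $h$ and the sunset-type tree $\mathcal{I}((\mathcal{I}\Xi)^3)\cdot \mathcal{I}\Xi$, Proposition~\ref{prop:renormalised equation} identifies $u_\epsilon := \mathcal{R}U^\epsilon$ as the classical solution of \eqref{phi4}; the combinatorial factors $3$ and $9$ come from the symmetry factors $S_n(\tau)$ and the multinomial coefficients in the derivatives of the cubic nonlinearity.

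The core analytic task is convergence of the renormalised models $\hat Z^\epsilon \to \hat Z$. Reducing through a Kolmogorov-type argument in charts via the pushforward bounds of Lemma~\ref{lemma push forward}, I would verify, for each relevant symbol $\tau \in T^{\mathfrak{t}}_{\alpha,\delta}$ and locally uniformly in $p,q$, $\lambda \in (0,1]$, $\phi^\lambda \in \mathcal{B}^{r,\lambda}$, moment bounds of the form
\begin{equation*}
\mathbb{E}\bigl|(\hat\Pi^\epsilon_p \tau_p)(\phi^\lambda_p)\bigr|^2 \lesssim \lambda^{2\alpha + \kappa'}, \qquad \mathbb{E}\bigl|\bigl((\hat\Pi^\epsilon_q \hat\Gamma^\epsilon_{q,p} - \hat\Pi^\epsilon_p)\tau_p\bigr)(\phi^\lambda_q)\bigr|^2 \lesssim \lambda^{2\delta + \kappa'},
\end{equation*}
together with their $\epsilon$-Cauchy analogues (with an extra factor $\epsilon^{\kappa'}$). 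The trees to check are $\mathcal{I}\Xi$, $(\mathcal{I}\Xi)^2$, $(\mathcal{I}\Xi)^3$, $\mathcal{I}((\mathcal{I}\Xi)^3)$ and the composite symbols obtained by multiplying the latter with $\mathcal{I}\Xi$ and contracting against $h$, together with the jet-decorated variants indicated in the outline. I would expand each into Wiener chaoses and control them via the kernel estimates of Section~\ref{sec:Kernels estimates}, in close analogy with \cite[Section~10]{Hai14} and \cite{HQ18}; the counterterms \eqref{phi4 counterterm1}--\eqref{phi4 counterterm2} are precisely those cancelling the divergent zeroth-chaos parts, with convergent remainders handled as in Lemmas~\ref{lem:first counterterm} and \ref{lem:second counterterm}.

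The hard part will be controlling the $\Gamma$-increment on symbols that carry a jet decoration, exemplified by the $\<3'2>_{(e_p)}$ symbol of Lemma~\ref{lem:stochastic bounds 3'2}. In the flat, translation-invariant setting this difference typically vanishes or is trivially small because the local jet realisation and the extraction of polynomial parts commute. Here, the jet realisation is built via parallel transport along geodesics, see \eqref{paralell}, so the discrepancy between $\hat\Gamma_{q,p}$-translation and genuine translation of sections involves the curvature of $(E,\nabla^E)$ in a non-trivial way. The strategy is to split the singular kernel using Theorem~\ref{thm:heat_kernel_expansion} as $G_t(p,q) = (4\pi t)^{-d/2} e^{-d(p,q)^2/4t}(\Phi_0(p,q) + t\Phi_1(p,q) + \cdots)$, where $\Phi_0$ is parallel transport and the $\Phi_i$ for $i \geq 1$ are smooth: the leading singular part then reduces after parallel transport to a scalar Euclidean computation, matching the flat counterterms $C_\epsilon, C'_\epsilon$, while each subleading $\Phi_i$ comes with an extra factor of $t$ that improves the singularity, so no geometric counterterm is generated. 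Closing the stochastic bounds on both $\hat\Pi^\epsilon_p$ and the $\hat\Gamma$-increment uniformly in $p,q \in M$ with enough room for the Kolmogorov criterion is the main technical obstacle, and is what Lemma~\ref{lem:stochastic bounds 3'2} is designed to address.
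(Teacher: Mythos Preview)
Your outline matches the paper's proof: the same rule $\mathring{R}(\mathcal{I})=\{[\mathcal{I}]^3,[\Xi]\}$, the same two-dimensional renormalisation subgroup acting on the Wick and sunset trees, the reduction via Proposition~\ref{prop:renormalised equation} and Proposition~\ref{prop2:convergence of models} to the stochastic estimates of Proposition~\ref{prop2:stochastic estimate for trees in nonlinearity}, and the correct identification of Lemma~\ref{lem:stochastic bounds 3'2} as the place where the bundle geometry enters non-trivially through the discrepancy between $\Gamma_{q,p}$ and parallel transport. Three small slips to fix: the second negative tree carrying $C'_\epsilon$ is $\<22>$ (root with two noises, one $\mathcal{I}$-edge to a node with two further noises), not $\mathcal{I}((\mathcal{I}\Xi)^3)\cdot\mathcal{I}\Xi$; the Da~Prato--Debussche split is needed because $|\Xi|\approx -5/2<-2=-\beta$ violates $\bar\zeta>-\beta$, not because of the initial condition (here $\alpha>-2/3>-\beta$); and the paper imposes $\eta<(|\Xi|+|\mathcal{I}|)\wedge\alpha$, which is strictly stronger than your $\eta<\alpha$ when $\alpha>-1/2$.
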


\begin{remark}
The condition that $\Delta^E$ 
differs form the connection Laplacian by only a $0$-th order term is not a real loss of generality, c.f.\ Remark~\ref{rem:generalised_laplacian}.
\end{remark}

\begin{remark}
It is well known that one can take $T= \infty$ a.s.\ in Theorem~\ref{thm:phi4_3}, see \cite{MW17}, \cite{MW20}, \cite{GR23}, \cite{JP23}\cite{Duc21} \&  \cite{BDFT23a}.
Furthermore, it is well understood that $u$ can actually be realised as a continuous function $u : [0,\infty) \to \mathcal{D}'(E)$.
\end{remark}
%

First, we define a new symmetric nonlinearity
$$H: E^{\otimes 3}\to E, \qquad (u_1,u_2,u_3)\mapsto \frac{1}{3!}\sum_{\sigma\in S_3} u_{\sigma(1)} h(u_{\sigma(2)}, u_{\sigma(3)}) \ , $$
where $S_3$ denotes the permutation group of the set $\{1, 2, 3\}\subset \NN$
 and study
\begin{equation}\label{eq:phi}
\partial_t u + \Delta^E u =  -H(u,u,u) +\xi\ .
\end{equation}
We define the edge types:
\begin{itemize}
\item kernel types $\mathcal{E}_+= \{\mathcal{I} \}$,
\item noise types $\mathcal{E}_-= \{ \Xi \}$,
\item jet types $\mathcal{E}^0_0 =\{ \delta^{(3)} \}$ 
and $\mathcal{E}^+_0= \{\delta^{\mathcal{I}}\}$ with the injection $\iota:\mathcal{E}_+\to  \mathcal{E}^+_0$.
\end{itemize}

We fix homogeneities $|\mathcal{I}|\in (2-\frac{1}{10}, 2)$, $|\Xi|\in (-\frac{5}{2}-\frac{1}{10}, -\frac{5}{2})$
and
the naive rule characterised by $\mathring{R}(\mathcal{I})= \{ [\mathcal{I}]^3,\ [\Xi] \}$. Denote by $R$ its normalisation.
Using graphical notation we write $\Xi= \<X>$,  denote an edge of type $\mathcal{I}$ by $\<I>$ and edges of type $\delta^{\mathcal{I}}$ by $\<1'>$.
Since the nonlinearity is given by a parallel section of $H\in L(E^{\otimes 3}, E)$, we shall suppress the $\delta^{(3)} $ edges in the notation and use the convention that it is implicitly present, whenever a node has three incident edges from above. For example, we write
$$\delta^{(3)}(\mathcal{I}\Xi)^3= \<3>\ , \qquad \delta^{(3)} \delta^{\mathcal{I}} (\mathcal{I}\Xi)^2= \<3'>\ , \qquad \delta^{(3)} (\delta^{\mathcal{I}})^2 \mathcal{I}\Xi = \<3''>, \qquad  \delta^{(3)} (\delta^{\mathcal{I}})^3=  \<3'''>\ .$$ 

By exactly the same power counting as for the usual $\phi^4$ equation, it suffices to solve the equation in $\mathcal{D}^\gamma$ for some $\gamma>1$.
Thus, the relevant trees to describe the solution are
$$\mathfrak{T}^\mathcal{J}:=\left\{\<1'>, \; \<1>, \; \<30>, \; \<3'0> \right\} $$
and to describe the right-hand side we use
$${\mathfrak{T}}^r:=\left\{ \<X>, \; \<3>, \;\<3'>, \; \<3''>, \;  \<3'''>  ,\; \<32>,\; \<3'2>,\; \<32'>   \right\} \ .$$
For $\delta_0\in (1,2)$ we work with the regularity structure constructed in Section~\ref{section application to spdes}. Note that this time, as we are working with a multi-linear nonlinearity, we do not need to identify products of jets, c.f.\ Section~\ref{section Local operations}. Denote by $Z^\epsilon=(\Pi^\epsilon, \Gamma^\epsilon)$ the canonical model for the smooth noise $\xi_\epsilon$, such that the abstract Integration map $\mathcal{I}$ realises $K_t(p,q):=G_t(p,q) \kappa(t)\kappa(d(p,q)/\bar{r})$, where 
$G$ denotes the heat kernel of $\Delta^E$.

Finally, we shall work with the subgroup $\mathfrak{G}'_- \subset \mathfrak{G}_-$ consisting of elements  $\mathfrak{g}_{c,c'} \in \mathfrak{G}_-$ 
such that 
$$\mathfrak{g}(\<2>)= c h^{\sharp}, \qquad \mathfrak{g}( \<22>)= 2c' \id_E \   $$
for some $c, c'\in \mathbb{R}$ 
and such that for other $\tau \in \mathfrak{T}_-^{(-)}$ one has $\mathfrak{g}(\tau)= e(\tau)$, where $e\in \mathfrak{G}_-$ denotes the unit element.
We again write  $\hat{Z}^\epsilon:= (Z^\epsilon)^{\mathfrak{g}_{C_\epsilon, C'_\epsilon}}$ and
$\hat{Z}^\epsilon= (\hat{\Pi}^\epsilon, \hat{\Gamma}^\epsilon)$. 

\begin{remark}
Note that we have incorporated another simplification due to the fact that $H$ is a parallel non-linearity. A priory $\mathfrak{g}( \<22>)$ is a $0$-th order differential operator 
$$\mathfrak{g}( \<22>): \mathcal{C}^\infty\left( (E\otimes L(E^{\otimes_s 3}, E))\times L(E^{\otimes_s 3}, E) \right) \to \mathcal{C}^\infty(E)\ .$$ Only after contracting both sets of indices corresponding to $L(E^{\otimes_s 3}, E)$ with $H$ one obtains a $0$-th order differential operator $\mathcal{C}^\infty(E)\to \mathcal{C}^\infty(E)$.
\end{remark}

\begin{proof}[Proof of Theorem~\ref{thm:phi4_3}]
In the setting described above, let $\eta, \gamma\in \mathbb{R}$ such that
$\big| 2|\Xi| + 2|\mathcal{I}|\big|<\gamma <\delta_0$ and
$-{2}/3<\eta<\left( |\Xi| + |\mathcal{I}|\right)\wedge \alpha$ as well as $\eta\leq \alpha$.
Note that the homogeneity of $|\Xi|$ is a priori too low in order to apply the  Theorem~\ref{thm:abstract_fixed point} directly, but this can be resolved exactly as in \cite[Sec.~9.4]{Hai14} for the classical $\Phi^4_3$ equation.
One then finds that there exists a unique abstract (local) solution $\hat{U}^\epsilon\in \mathcal{D}^{\gamma, \eta}(\mathcal{T}^{\mathcal{I}})$ 
of the abstract lift of 
\eqref{eq:phi}
for the renormalised models $\hat{Z}^\epsilon$.
Then, it follows from Proposition~\ref{prop:renormalised equation} that
$\hat{u}= \mathcal{R} \hat{U}^{\epsilon}$ solves 
$$\partial_t \hat{u} + \Delta^E \hat{u}= -H(\hat{u}) + 3C_\epsilon H (\hat{u}\odot h^{\sharp})  - 9 C'_\epsilon H(\hat{u} \odot h^{\sharp}) +\xi_\epsilon$$
which is exactly \eqref{phi4}.
The proof is thus complete once Proposition~\ref{prop2:convergence of models} below is shown.
\end{proof}

%
%

\begin{prop}\label{prop2:convergence of models}
The restriction of the renormalised model ${\hat{Z}}_\epsilon$ to $\mathcal{T}^\mathcal{I}_{\leq 1, :}$ and $\mathcal{T}^r_{\leq 0, :}$ converges 
as $\epsilon\to 0$.
\end{prop}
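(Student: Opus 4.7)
My plan is to proceed tree by tree through the elements of $\mathcal{T}^\mathcal{I}_{\leq 1, :}$ and $\mathcal{T}^r_{\leq 0, :}$, adapting the strategy already carried out for g-PAM in Section~\ref{sec:stoch_estimates} to the $\Phi^4_3$ setting. Concretely, since jets are handled automatically by Proposition~\ref{proposition on polynomial model}, the list of symbols for which stochastic estimates are required reduces to $\{\<1>, \<30>, \<3'0>\}$ in the solution sector and $\{\<X>, \<3>, \<3'>, \<3''>, \<3'''>, \<32>, \<3'2>, \<32'>\}$ in the right-hand side sector. For each of these I would expand $\hat{\Pi}^\epsilon_p \tau_p(\phi_p^\lambda)$ and $\bigl(\hat{\Pi}^\epsilon_q\hat{\Gamma}^\epsilon_{q,p}\tau_p-\hat{\Pi}^\epsilon_p\tau_p\bigr)(\phi_q^\lambda)$ into Wiener chaos pieces exactly as in the proof of Proposition~\ref{prop:stochastic estimate for trees in nonlinearity}, apply Nelson-type hypercontractivity, then bound each resulting kernel integral by the generalised convolution bounds from the kernel estimates of Section~\ref{sec:Kernels estimates}.

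The role of the renormalisation maps $\mathfrak{g}_{C_\epsilon, C_\epsilon'}$ is to cancel, at the level of second moments, precisely the two divergent subgraphs appearing in the chaos decompositions of $\<3>$ and $\<32>$, which are the contraction of a single and a double $\<2>$ loop respectively. Here the strategy outlined in Section~\ref{sec:canonical renormalisation} applies: using the asymptotic expansions~\eqref{eq:heat_kernel_asymptotics} of the heat kernel $G_t(p,q)$ for $\Delta^E$ together with the volume asymptotics~\eqref{eq:volume_asymptotics}, one shows that the local counterterm built from $G$ differs from the Euclidean constants $C_\epsilon, C_\epsilon'$ defined via $\bar Z$ only by quantities that remain bounded as $\epsilon\to 0$. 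Since the parallel-transport leading term $\Phi_0$ of Theorem~\ref{thm:heat_kernel_expansion} is exactly what intertwines with the metric $h$ to produce the factors $h^\sharp$ and $\id_E$ appearing in the definition of $\mathfrak{g}_{C_\epsilon,C_\epsilon'}$, the covariance structure of $\xi$ is respected and Lemma~\ref{lem:first counterterm} and Lemma~\ref{lem:second counterterm} carry over with minor bookkeeping.

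The key step, and the one flagged in the introduction as the true technical novelty, is the bound on $\bigl(\hat{\Pi}^\epsilon_q\hat{\Gamma}^\epsilon_{q,p}-\hat{\Pi}^\epsilon_p\bigr)(\<3'2>_{(e_p)})$; this will be the content of the separate Lemma~\ref{lem:stochastic bounds 3'2}. The reason this term is delicate is that $\<3'2>$ carries a jet decoration $\<1'>$, so its Taylor-like re-expansion under $\hat\Gamma^\epsilon_{q,p}$ no longer reduces to the flat translation that appears in~\cite{Hai14}, but picks up non-trivial curvature contributions through the admissible realisation $R$ of $J$ (compare Lemma~\ref{lemma compensating error term}). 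I would handle this by splitting $\hat\Gamma^\epsilon_{q,p}$ according to the cut set producing the jet, comparing the genuine bundle-valued reproducing kernel with the parallel-transport model in an exponential chart around $p$, and then using Lemma~\ref{technical Lemma for Kernels} in the vector bundle version; the gain of regularity necessary for summability over dyadic scales comes from the usual cancellations together with the fact that the barred-arrow kernel $K^{\gamma}_{n,p,q}$ vanishes at $q=p$.

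Once uniform-in-$\epsilon$ moment bounds of the form $\lambda^{\alpha+2\kappa}$ respectively $\lambda^{\delta+2\kappa}$ are obtained, together with $\epsilon^\kappa\lambda^{\alpha+\kappa}$ respectively $\epsilon^\kappa\lambda^{\delta+\kappa}$ estimates for the differences, I would conclude by a Kolmogorov-type criterion (after pushing forward to trivialising charts and invoking Lemma~\ref{lemma push forward}) that $\hat\Pi^\epsilon_p\tau_p$ and $\hat\Pi^\epsilon_q\hat\Gamma^\epsilon_{q,p}\tau_p$ converge in probability to distribution-valued limits $\hat\Pi_p\tau_p$ and $\hat\Pi_q\hat\Gamma_{q,p}\tau_p$ on the restricted sectors, and then invoke the extension Theorem~\ref{thm:extension theorem} (as at the end of Proposition~\ref{prop:convergence of models}) to promote the convergence from the right-hand side sector to the full restricted model. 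The main obstacle, to reiterate, is the coupling between positive renormalisation and the non-canonical jet decomposition in the presence of curvature, which is precisely the content of Lemma~\ref{lem:stochastic bounds 3'2}.
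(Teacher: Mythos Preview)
Your proposal is correct and matches the paper's approach exactly: the paper's proof of this proposition is a one-line reduction to Proposition~\ref{prop2:stochastic estimate for trees in nonlinearity} via the same Kolmogorov-plus-extension-theorem argument used for g-PAM in Proposition~\ref{prop:convergence of models}, with the stochastic estimates themselves carried out tree by tree in the subsequent Lemmas~\ref{lem:phi43 first counterterm}, \ref{lem:phi43 second counterterm}, \ref{lem:stochstic estimats wick trees}, \ref{lem:stochastic bounds 32 and 32'}, and \ref{lem:stochastic bounds 3'2}. You have correctly identified the structure, the role of the Euclidean counterterms, and the genuinely novel step in Lemma~\ref{lem:stochastic bounds 3'2}.
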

\begin{proof}
This follows exactly as in the proof of Proposition~\ref{prop:convergence of models} from Proposition~\ref{prop2:stochastic estimate for trees in nonlinearity} below.
\end{proof}
\begin{remark}
A difference between Propositions~\ref{prop:convergence of models} and~\ref{prop2:convergence of models} is that the latter only claims convergence of the model restricted to objects relevant for the equation at hand. This is done purely for convenience, since this reduces the number of stochastic estimates we need to show below. An appropriate modification of the general results in \cite{CH16}, \cite{HS23} would be desirable and is expected to hold in our setting as well.
\end{remark}

\subsubsection{Stochastic estimates}\label{sec:stochastic_estimates2}
This section is dedicated to proving the following Proposition, which is the combination of Lemma~\ref{lem:stochstic estimats wick trees}, Lemma~\ref{lem:stochastic bounds 32 and 32'} and Lemma~\ref{lem:stochastic bounds 3'2} below.
\begin{prop}\label{prop2:stochastic estimate for trees in nonlinearity}
The conclusion of Proposition~\ref{prop:stochastic estimate for trees in nonlinearity} holds for $\tau_p \in {\mathcal{T}}^r_{\leq 1, :}$.
\end{prop}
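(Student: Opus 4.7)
[Proof proposal for Proposition~\ref{prop2:stochastic estimate for trees in nonlinearity}]
The plan is to follow the strategy of Section~\ref{sec:stoch_estimates}, decomposing each renormalised object into homogeneous Wiener chaos components and estimating them via graph-theoretic kernel bounds. For each $\tau \in \mathcal{T}^r_{\leq 1,:}$ we would write $\hat\Pi^\epsilon_\star \tau(\phi^\lambda_\star)$ as a sum of Wiener integrals, represented diagrammatically as in Section~\ref{sec:stoch_estimates} with nodes standing for points of $\RR\times M$, kernel edges standing for $K$, mollifier edges standing for $\rho_\epsilon$ built from the heat kernel $G$ and the time mollifier $\phi^\epsilon$, and test-function edges for $\phi^\lambda_\star$. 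The bounds on each diagram will then follow from a Kolmogorov-type criterion after going to $\fraks$-exponential charts on $\RR\times M$ together with the kernel estimates of Section~\ref{sec:Kernels estimates}, since in exponential charts the heat kernel truncations $Z_t(p,q)$ and $Z^E_t(p,q)$ satisfy the same scaling bounds as the Euclidean $\bar Z_t$ modulo smooth errors.

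I would organise the estimates into three groups matching the three cited lemmas. The first group, Lemma~\ref{lem:stochstic estimats wick trees}, covers the ``Wick trees'' $\<X>, \<3>, \<3'>, \<3''>, \<3'''>$; these live in a single Wiener chaos after Wick renormalisation, so one simply checks the bound directly on the graph. Here the renormalisation plays no role at the level of these particular trees (one only needs the noise to have been lifted), and the analysis reduces to standard Gaussian bounds exactly as in \cite[Sec.~10]{Hai14}. The second group, Lemma~\ref{lem:stochastic bounds 32 and 32'}, deals with $\<32>$ and $\<32'>$; these produce contributions in lower Wiener chaos whose expectations give rise to the logarithmic divergence $C_\epsilon$ defined in \eqref{phi4 counterterm1}. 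I would decompose the relevant diagrams into a ``red'' part equal to $C_\epsilon$ times a simpler diagram (mirroring the decomposition \eqref{eq:decom_counterterm1}) and a ``blue'' remainder whose uniform convergence as $\epsilon\to 0$ follows from Theorem~\ref{thm:heat_kernel_expansion} and the density expansion \eqref{eq:volume_asymptotics}. Since $|\<2>|>-2$, one expects only renormalisation constants proportional to $h^\sharp$ (no curvature terms), which is exactly the structure of $\mathfrak{g}_{C_\epsilon,C'_\epsilon}$.

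The main obstacle will be Lemma~\ref{lem:stochastic bounds 3'2}, already flagged in the introduction as a technical novelty in the vector-bundle setting. The tree $\<3'2>$ mixes the abstract integration map with a jet decoration, so a priori its contribution involves not only $\Pi^\epsilon_p$ but also differences $(\hat\Pi^\epsilon_q\hat\Gamma^\epsilon_{q,p}-\hat\Pi^\epsilon_p)\tau_p$ where the $\Gamma$ map is genuinely non-lower-triangular (because of the $E_p$ operator in Section~\ref{Section Singular Kernels}). I would handle this by using Proposition~\ref{prop errorterm} to split the error into a ``kernel'' part, controlled via the $K^{W,\alpha}_{n,p,q}$ estimates of Lemma~\ref{technical Lemma for Kernels} applied to the Wick contractions, and a ``polynomial'' part controlled by Lemma~\ref{lemma J_qp}. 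The fact that $\Phi_0(p,q)$ is the parallel transport map (Theorem~\ref{thm:heat_kernel_expansion}) and that parallel transport around small geodesic loops is the identity to leading order, will ensure that the leading singularity is spatially constant and hence absorbed by the renormalisation constant $C'_\epsilon$ in \eqref{phi4 counterterm2}, with no curvature counterterm required; any putative curvature contribution arises only at subleading order and is integrable because $|\<32>|>-3$.

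Finally, the $\epsilon\to 0$ convergence bounds of \eqref{eq: stochastic bound on model} with rate $\epsilon^\kappa$ follow by the standard edge-by-edge telescoping trick: replacing $\rho_\epsilon$ by $\rho_\epsilon(p,q)-\delta_p(q)$ in each edge in turn and using the extra H\"older regularity of the mollified noise, exactly as at the end of the proof of Proposition~\ref{prop:stochastic estimate for trees in nonlinearity}.
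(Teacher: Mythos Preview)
Your overall architecture is right: the paper indeed reduces the proposition to the three cited lemmas, and your treatment of the Wick trees and of $\<32>,\<32'>$ is essentially the same as the paper's (minor point: for $\<32>$ both counterterms enter, $C_\epsilon$ through the $\<2>$-subdivergence and $C'_\epsilon$ through the sunset $Q_\epsilon$ of \eqref{eq:renormalised sunset}, so your attribution of $C'_\epsilon$ only to the $\<3'2>$ analysis is slightly off).

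The real gap is in your plan for $\<3'2>$. You propose to control $(\hat\Pi^\epsilon_q\hat\Gamma^\epsilon_{q,p}-\hat\Pi^\epsilon_p)\<3'2>_{(e_p)}$ by invoking Proposition~\ref{prop errorterm} and the $K^{W,\alpha}_{n,p,q}$ bounds of Lemma~\ref{technical Lemma for Kernels}. Those tools are deterministic statements about how the abstract operators $I,J,E$ interact with $\Gamma$; they do not by themselves estimate the stochastic diagram that arises here. The paper's actual argument first splits the difference into parts $(\cdot)_{\hat A}$ and $(\cdot)_{\hat B}$ as in \eqref{eq:renormalised error bound 3'2 A}--\eqref{eq:renormalised error bound 3'2 B}, and most summands are indeed handled by the generic kernel estimates as you suggest. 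But one diagram, $\mathcal{P}^\epsilon(\phi^\lambda_\star)$ in \eqref{eq:some more work}, combines the positive renormalisation (the $Q_{\leq 1}j_\star K$ subtraction) and the negative renormalisation (the blue sunset) on the \emph{same} pair of vertices, and the paper explicitly remarks that this falls outside the scope of the Hairer--Quastel criterion (Theorem~\ref{thm:HQ}). It is handled by a direct hands-on estimate: one writes out the integral, decomposes the integration domain into $\{|x|<\tfrac12 d(x,z)\}$ and its complement, and uses different mechanisms on each piece (Taylor expansion of $K$ around $\star$ on the first, the smallness of $f(z)-\tau_{z,x}f(x)$ on the second). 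Your parallel-transport-and-curvature heuristic explains why the counterterm is a constant, but it does not substitute for this domain-splitting argument, which is the genuine technical novelty flagged in the introduction.
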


%
We use similar notation as in Section~\ref{sec:stoch_estimates}, where
this time a random variable belonging to the $k$th homogeneous Wiener chaos can
be described by a kernel in $L^2(\pi^* E )^{\hat{\otimes} k}$, where $\pi: \mathbb{R}\times M \to M$ is the canonical projection. That is, the space of of square-integrable sections of $k$ space-time variables.

As previously, we introduce convenient graphical notation:
Nodes in our graph will represent variables in $\RR\times M$, with one
special node \tikz[baseline=-3] \node [root] {}; representing the point $\star\in \mathbb{R}\times M$.
As previously, the nodes \tikz[baseline=-3] \node [var] {}; represent the arguments of our kernel and the the nodes which we draw as \tikz[baseline=-3] \node [dot] {};, represent dummy variables that are to be integrated out.
Each line then represents a kernel, with 
\tikz[baseline=-0.1cm] \draw[kernel] (0,0) to (1,0) \enlarge;
representing the kernel  $K((t,p),(s,q)):=G_{t-s}(p,q) \kappa(t-s)\kappa(d(p,q)/\bar{r})$, 
with
\tikz[baseline=-0.1cm] \draw[keps] (0,0) to (1,0) \enlarge;
representing the kernel 
\begin{equation}\label{eq:space_time mollified kernel}
 \left(K * \rho_\epsilon\right) (z_1, z_2)= \int_{\mathbb{R}\times M}   K (z_1,z') \rho_\epsilon (z',z_2) \dVol_z 
\end{equation} 
for $\rho_\epsilon(z_1, z_2) = \phi^\epsilon(t_1-{t}_2) G_{{\epsilon}^2} (p_1 , {p}_2 )$
and
\tikz[baseline=-0.1cm] \draw[testfcn] (1,0) to (0,0) \enlarge;
representing a representing a localised test function $\phi_\star^\lambda\in \mathcal{C}^\infty (\pi^*E^{\otimes i})$, for some $i\in \{1,2,3\}$ depending on the context.
Whenever we draw a barred arrow 
\tikz[baseline=-0.1cm] \draw[kernel1] (0,0) to (1,0) \enlarge;
this represents a factor $K(z_1, z_2) - R \left(Q_{\leq 0} j_\star K(\cdot, z_2)\right) (z_1)$, where
$z_1$ and $z_2$ are the coordinates of the starting and end
point respectively. 
Similarly\newline \tikz[baseline=-0.1cm] \draw[kernel2] (0,0) to (1,0) \enlarge;
represents a factor $K(z_1, z_2) -R \left(Q_{\leq 1} j_\star K(\cdot, z_2)\right) (z_1)$. 
In the calculations that follow it will be convenient to recall that the heat kernel $G_t(p,q)\in  E|_p \otimes E^*|_q$ satisfies 
$G^{\star}_t(q,p)= G_t(p,q)$, c.f.\ Remark~\ref{rem:generalised_laplacian}. 
Similarly to above we can write
\begin{equation}
\bigl(\Pi_\star^{(\epsilon)}\<2>\bigr)(\phi_\star^\lambda)
= 
\begin{tikzpicture}[scale=0.35,baseline=0.3cm]
	\node at (0,-1)  [root] (root) {};
	\node at (0,1)  [dot] (int) {};
	\node at (-1,3.5)  [var] (left) {};
	\node at (1,3.5)  [var] (right) {};
	
	\draw[testfcn] (int) to  (root);
	
	\draw[keps] (left) to (int);
	\draw[keps] (right) to (int);
\end{tikzpicture}\;
+ \;
\begin{tikzpicture}[scale=0.35,baseline=0.3cm]
	\node at (0,-1)  [root] (root) {};
	\node at (0,1)  [dot] (int) {};
	\node at (0,3.5) [dot] (top) {};
	
	\draw[testfcn] (int) to  (root);
	
	\draw[keps] (top) to[bend left=60] (int); 
	\draw[keps] (top) to[bend right=60] (int); 
\end{tikzpicture}\;.
\end{equation}
as well as
$$
\begin{tikzpicture}[scale=0.35,baseline=0.3cm]
	\node at (0,0)  [root] (root) {};
	\node at (0,2.5) [reddot] (top) {};
	\draw[red, keps] (top) to[bend left=60] (root); 
	\draw[red, keps] (top) to[bend right=60] (root); 
\end{tikzpicture}\; =   C_\epsilon h^{\sharp} 
\qquad \text{and } \qquad
\begin{tikzpicture}[scale=0.35,baseline=0.3cm]
	\node at (0,0)  [root] (root) {};
	\node at (0,2.5) [bluedot] (top) {};
	\draw[blue,keps] (top) to[bend left=60] (root); 
	\draw[blue,keps] (top) to[bend right=60] (root); 
\end{tikzpicture}\; = \;
\begin{tikzpicture}[scale=0.35,baseline=0.3cm]
	\node at (0,0)  [root] (root) {};
	\node at (0,2.5) [dot] (top) {};
	\draw[keps] (top) to[bend left=60] (root); 
	\draw[keps] (top) to[bend right=60] (root); 
\end{tikzpicture}\; - \;
\begin{tikzpicture}[scale=0.35,baseline=0.3cm]
	\node at (0,0)  [root] (root) {};
	\node at (0,2.5) [reddot] (top) {};
	\draw[red, keps] (top) to[bend left=60] (root); 
	\draw[red, keps] (top) to[bend right=60] (root); 
\end{tikzpicture}\;  .
$$

\subsubsection{A first counterterm.}
\begin{lemma}\label{lem:phi43 first counterterm}
The section $\begin{tikzpicture}[scale=0.35,baseline=0.3cm]
	\node at (0,0)  [root] (int) {};
	\node at (0,2.5) [bluedot] (top) {};
	\draw[blue, keps] (top) to[bend left=60] (int); 
	\draw[blue, keps] (top) to[bend right=60] (int); 
\end{tikzpicture}\;$ of $\mathcal{C}^\infty(\pi^* E^{\otimes 2})$ 
is uniformly bounded for $\epsilon\in (0,1]$ and converges uniformly.
\end{lemma}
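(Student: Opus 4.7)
The plan is to mimic the strategy used for Lemma~\ref{lem:first counterterm} and Lemma~\ref{lem:second counterterm}, adapting the argument to the bundle-valued setting of $\pi^* E$. Unravelling the definition of $\pmb{\Pi}^\epsilon \<2>$ and using the covariance structure of the $\pi^* E$-valued space-time white noise together with Wick's theorem, one first shows that
\[
\begin{tikzpicture}[scale=0.35,baseline=0.3cm]
	\node at (0,0)  [root] (int) {};
	\node at (0,2.5) [dot] (top) {};
	\draw[keps] (top) to[bend left=60] (int);
	\draw[keps] (top) to[bend right=60] (int);
\end{tikzpicture}
\;(t,p) \;=\; \int_{\RR\times M} (K*\rho_\epsilon)(z_0, z) \otimes (K*\rho_\epsilon)^\star(z_0, z)\, \dVol_z \;+\; R^{\epsilon}_0(t,p),
\]
where $z_0 = (t,p)$, the tensor $\otimes$ uses the canonical pairing dual to $h$, and $R^{\epsilon}_0$ is the contribution from the ``short arm'' of $K * \rho_\epsilon$ (arising from the decomposition of $K$ as a full heat kernel minus a smooth remainder), which converges uniformly by standard arguments since it pairs a bona fide smooth kernel with $\xi_\epsilon$.

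Second, I would replace each factor of the full heat kernel $G$ by its Gaussian-like leading term $Z^E_t(p,q) = Z_t(p,q)\tau_{p,q}$ from Theorem~\ref{thm:heat_kernel_expansion}, writing $G = Z^E + G^{\mathrm{rem}}$ where $G^{\mathrm{rem}}$ satisfies sharper bounds. This yields
\[
\int_{\RR\times M} (K^E*\rho_\epsilon)(z_0, z) \otimes (K^E*\rho_\epsilon)^\star(z_0, z)\, \dVol_z
\]
plus mixed terms involving $G^{\mathrm{rem}}$, each of which converges uniformly as $\epsilon \to 0$ since each time a Gaussian kernel is replaced by a smoother remainder one gains a positive power that more than compensates for the marginal $0^+$ divergence. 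The contributions involving $\tau_{z_0, z} \otimes \tau_{z_0, z}^\star$ collapse to $h^{\sharp}(z_0)$ modulo a quantity that vanishes at the diagonal at rate $d(z_0, z)^2$ by the elementary Taylor estimate for parallel transport, and this latter error is integrable against the Gaussian factors uniformly in $\epsilon$.

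Third, I would go to a $\fraks$-exponential chart around $p$ and pull back to Euclidean space. Using the expansion \eqref{eq:volume_asymptotics} of the volume form and the fact that in normal coordinates the leading Gaussian heat kernel agrees with the Euclidean one to the required order, one identifies the Euclidean integral
\[
h^{\sharp}(p) \cdot \int_{\RR\times\RR^2}\!\!\int_{B_r(0)}  \phi^\epsilon(s-r)\phi^\epsilon(s'-r)\, \bar{Z}_{t-s+\epsilon^2}(z)\, \bar{Z}_{t-s'+\epsilon^2}(z)\,ds\,ds'\,dr\,dz \;=\; C_\epsilon \, h^{\sharp}(p),
\]
which is exactly the renormalisation constant. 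What is left is a uniformly convergent remainder that, by the same ``gain a positive power'' principle, is bounded uniformly in $\epsilon \in (0,1]$ and converges uniformly in $p$ as $\epsilon \to 0$.

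The main obstacle, as in the g-PAM lemmas, is bookkeeping rather than new analysis: one must organise the many remainder terms (from replacing $G$ by $Z^E$, the parallel transport by the identity, the curved volume form by the Euclidean one, and the truncation by the full heat kernel) so that each of them carries a strictly positive gain in $\epsilon$. The additional subtlety compared to the scalar case is that one has to carefully track the tensor structure: the leading divergence is genuinely $E\otimes E$-valued via $h^{\sharp}$, and one has to verify that the symmetry and parallel-transport properties combine to give exactly the coefficient $1$ in front of $C_\epsilon h^{\sharp}$, with all bundle-geometric corrections absorbed into convergent remainders.
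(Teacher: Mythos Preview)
Your proposal is correct and follows essentially the same approach as the paper: replace the full heat kernel by its leading term $Z^E_t(p,q)=Z_t(p,q)\tau_{p,q}$, absorb all remainders into a uniformly convergent $R^\epsilon$, and identify the surviving scalar integral as $C_\epsilon$ after passing to normal coordinates and the Euclidean volume form.

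One small point where the paper is cleaner: you claim that $\tau_{z_0,z}\otimes\tau_{z_0,z}^\star$ collapses to $h^\sharp(z_0)$ only modulo an $O(d(z_0,z)^2)$ correction. In fact the collapse is \emph{exact}. The paper sees this by fixing an orthonormal frame $\{e_{p,i}\}$ of $E|_p$, writing $\xi$ as a sum of independent scalar white noises $\xi_{i,p}=\langle\tau_{\cdot,p}e_{p,i},\xi\rangle_E$, and noting that $Z^E_{t}(p,z)(\tau_{z,p}e_{p,i})=Z_t(p,z)\,\tau_{p,z}\tau_{z,p}e_{p,i}=Z_t(p,z)\,e_{p,i}$, so the diagram factors as $\sum_i e_{p,i}\otimes e_{p,i}=h^\sharp(p)$ times a scalar integral with no further geometric error. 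Equivalently, in your formulation, contracting the two $E^*|_q$-factors via the white-noise covariance $h^\sharp|_q$ and using that parallel transport is an isometry gives $\sum_i(\tau_{p,q}e_{q,i})\otimes(\tau_{p,q}e_{q,i})=h^\sharp|_p$ on the nose. Your extra $d^2$ estimate is harmless but unnecessary.
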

\begin{proof}
We proceed as in the proof of Lemma~\ref{lem:second counterterm}, the only slight difference being the fact the $\xi$ is an $E$-valued space time white noise, instead of a scalar valued spatial white noise.
First, fix a local frame $\{e_{p,i}\}_{i=1}^{\dim E} \in E|_p$ and let 
$\xi_{i,p}= \langle \tau_{z,p}e_{p,i}, \xi\rangle_E\in \mathcal{D}'(M)$.
%
Thus, we find writing $\left(\phi^\epsilon\right)^{\star 2}(s-s')= \int \phi^\epsilon (s- r )\phi^\epsilon (s'- r )  dr$ (recall that $\phi$ is even)
\begin{align*}
\begin{tikzpicture}[scale=0.35,baseline=0.3cm]
	\node at (0,0)  [root] (root) {};
	\node at (0,2.5) [dot] (top) {};
	\draw[keps] (top) to[bend left=60] (root); 
	\draw[keps] (top) to[bend right=60] (root); 
\end{tikzpicture} 
&= \sum_i \int_\mathbb{R^2}  \left(\phi^\epsilon\right)^{\star 2}(s-s') \\
&\qquad \times \left(\int_{B_r} 
\left( Z^E_{t-s+\epsilon^2}(p, z)(\tau_{z,p}e_{p,i}) \right) \otimes
 \left(Z^E_{t-s'+\epsilon^2}(p, z)(\tau_{z,p}e_{p,i})  \right)
  \dVol_z\right)  ds  ds' + R^\epsilon\\
 &= \sum_i e_{p,i}\otimes e_{p,i} \int_\mathbb{R^2} \int_{B_r(p)}  \left(\phi^\epsilon\right)^{\star 2}(s-s')
 {Z}_{t-s+\epsilon^2}(p, z)
 {Z}_{t-s'+\epsilon^2}(p, z)
 ds  ds'  \dVol_z+ R^\epsilon \ ,
\end{align*}
where we used that the $\xi_{i,p}(r,z)$ for $i=1,\ldots,d$ are independent. Finally, note the conclusion of the Lemma holds for $R^\epsilon$ and
\begin{align*}
\int_\mathbb{R^2} \int_{B_r(p)}  \phi^\epsilon (s- r )\phi^\epsilon (s'- r )&
 {Z}_{t-s+\epsilon^2}(p, z)
 {Z}_{t-s'+\epsilon^2}(p, z)
 ds  ds' dr \dVol_z  -C_\epsilon
 \end{align*}
 converges uniformly in $p$ as $\epsilon \to 0$.
\end{proof}

Thus, we find as previously,
\begin{equation}
\bigl(\hat \Pi_\star^{(\epsilon)}\<2>\bigr)(\phi_\star^\lambda)
= 
\begin{tikzpicture}[scale=0.35,baseline=0.3cm]
	\node at (0,-1)  [root] (root) {};
	\node at (0,1)  [dot] (int) {};
	\node at (-1,3.5)  [var] (left) {};
	\node at (1,3.5)  [var] (right) {};
	
	\draw[testfcn] (int) to  (root);
	
	\draw[keps] (left) to (int);
	\draw[keps] (right) to (int);
\end{tikzpicture}\;
%
%
+\;
\begin{tikzpicture}[scale=0.35,baseline=0.3cm]
	\node at (0,-1)  [root] (root) {};
	\node at (0,1)  [dot] (int) {};
	\node at (0,3.5) [bluedot] (top) {};
	
	\draw[testfcn] (int) to  (root);
	
	\draw[blue, keps] (top) to[bend left=60] (int); 
	\draw[blue, keps] (top) to[bend right=60] (int); 
\end{tikzpicture}\; .
\end{equation}
Since,
\begin{align}
\bigl(\Pi_\star^{(\epsilon)}\<3>\bigr)(\phi_\star^\lambda)&= 
\begin{tikzpicture}[scale=0.35,baseline=0.25cm]
	\node at (0,1)  [int] (int) {}; 
	\node at (-1.5,3.5)  [var] (left) {};
	\node at (0,3.8)  [var] (middle) {};
	\node at (1.5,3.5)  [var] (right) {};
	\node at (0,-1)  [root] (root) {};
	\draw[keps] (left) to  (int);	
	\draw[keps] (right) to (int);
	\draw[keps] (middle) to (int);
	\draw[testfcn] (int) to  (root);
\end{tikzpicture}\;
 + 3\;
\begin{tikzpicture}[scale=0.35,baseline=0.25cm]
   \node at (0,-1)  [root] (root) {};
	\node at (0,1)  [int] (int) {};
	\node at (-1,3.6)  [int] (top) {};
	\node at (1,3.6)  [var] (topr) {};
	\draw[keps,bend right = 60] (top) to  (int);	
	\draw[keps,bend left = 60] (top) to (int);
	\draw[keps,bend left = 60] (topr) to (int);
	\draw[testfcn] (int) to  (root);
\end{tikzpicture}\;,
\end{align}
we also find that
\begin{align}
\bigl(\hat \Pi_\star^{(\epsilon)}\<3>\bigr)(\phi_\star^\lambda)&= 
\begin{tikzpicture}[scale=0.35,baseline=0.25cm]
	\node at  (0,1)  [int] (int) {};
	\node at (-1.5,3.5)  [var] (left) {};
	\node at (0,3.8)  [var] (middle) {};
	\node at (1.5,3.5)  [var] (right) {};
	\node at (0,-1)  [root] (root) {};
	\draw[keps] (left) to  (int);	
	\draw[keps] (right) to (int);
	\draw[keps] (middle) to (int);
	\draw[testfcn] (int) to  (root);
\end{tikzpicture}\;
 + 3\;
\begin{tikzpicture}[scale=0.35,baseline=0.25cm]
   \node at (0,-1)  [root] (root) {};
	\node at (0,1)  [int] (int) {};
	\node at (-1,3.6)  [bluedot] (top) {};
	\node at (1,3.6)  [var] (topr) {};
	\draw[blue, keps,bend right = 60] (top) to  (int);	
	\draw[blue, keps,bend left = 60] (top) to (int);
	\draw[keps,bend left = 60] (topr) to (int);
	\draw[testfcn] (int) to  (root);
\end{tikzpicture}\;.
\end{align}

The next lemma summarises what we can conclude so far, arguing exactly as in the proof of Proposition~\ref{prop:stochastic estimate for trees in nonlinearity}.
\begin{lemma}\label{lem:stochstic estimats wick trees}
The conclusion of Proposition~\ref{prop:stochastic estimate for trees in nonlinearity} holds for $\tau_p\in \Func_W(T)$ for 
$$T\in\left\{ \<3>, \ \<3'>, \  \<3''>, \  \<3'''>   \right\}\ . $$
\end{lemma}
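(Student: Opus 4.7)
The plan is to mirror the proof of Proposition~\ref{prop:stochastic estimate for trees in nonlinearity} tree by tree, combining the Wiener chaos decomposition with the kernel bounds of Section~\ref{sec:Kernels estimates} and the moment-graph machinery used in the flat setting. Concretely, for each of $\tau \in \{\<3>, \<3'>, \<3''>, \<3'''>\}$, I would begin by decomposing $\pmb{\Pi}^\epsilon \tau$ into homogeneous Wiener chaoses, drawn in the graphical notation of Section~\ref{sec:stochastic_estimates2}. The only negative subtree of any of these trees is $\<2>$, which contributes either twice (for $\<3>$), once (for $\<3'>$), or not at all (for $\<3''>$ and $\<3'''>$); the associated divergent zeroth-chaos contractions are precisely what the counterterm $C_\epsilon$ from \eqref{phi4 counterterm1} cancels, using Lemma~\ref{lem:phi43 first counterterm} to control the finite remainder.

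After this cancellation, the second-moment estimate $\E|\bigl(\hat\Pi_\star^\epsilon \tau\bigr)(\phi_\star^\lambda)|^2$ reduces, by the same diagrammatic argument as in Proposition~\ref{prop:stochastic estimate for trees in nonlinearity}, to a finite sum of ``squared graphs'' in which each solid arrow carries $K$ or $K * \rho_\epsilon$ (appropriately barred when it originates at the special node) and each factor of the uniformly bounded renormalised loop $
\begin{tikzpicture}[scale=0.25,baseline=0.3cm]
	\node at (0,0)  [root] (int) {};
	\node at (0,2.5) [bluedot] (top) {};
	\draw[blue, keps] (top) to[bend left=60] (int);
	\draw[blue, keps] (top) to[bend right=60] (int);
\end{tikzpicture}\;$ contributes at most a bounded factor. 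The resulting integrals are then controlled by the straightforward adaptation to the bundle setting of the kernel estimates of Section~\ref{sec:Kernels estimates}, yielding the $\lambda^{2\alpha + 4\kappa}$ bound. A Kolmogorov-type criterion, obtained locally in $\mathfrak{s}$-charts, upgrades this to the required $N$-th moment bound. The estimate for $\hat{\Pi}_q^\epsilon \hat{\Gamma}^\epsilon_{q,p}\tau_p - \hat{\Pi}_p^\epsilon\tau_p$ proceeds identically: one expands $\hat{\Gamma}^\epsilon_{q,p}$ and observes that for these four trees the algebraic contributions collapse to expressions involving only the barred kernels $K - R(Q_{\leq n} j_\star K)$ at the root, which satisfy the same power-counting bounds but with an additional factor $\lambda^{\delta - \alpha}$.

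For the $\epsilon\to 0$ convergence, one uses the standard telescoping: replace one factor of $\rho_\epsilon(p,q)$ by $\rho_\epsilon(p,q) - \delta_p(q)$ at a time, and use the improved bound that such a difference, when composed with a kernel, gains a factor $\epsilon^\kappa$ at the cost of slightly worsening the homogeneity. Summing finitely many such replacements yields the difference bound for $\hat\Pi_p^\epsilon - \hat\Pi_p$, and similarly for the $\Gamma$-estimate. The trees $\<3''>$ and $\<3'''>$ are the simplest, since they either involve only a single noise (living in the first chaos) or no noise at all (deterministic, purely jet), and the required bound reduces to Lemma~\ref{bound on admissable realisations} combined with standard estimates for $K * \rho_\epsilon$.

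The main conceptual obstacle, compared to the scalar flat-space proofs of \cite{Hai14, HQ18}, is that the kernels $K$ and $K*\rho_\epsilon$ take values in $E|_\star \hat\otimes E^*|_\cdot$, so that every contraction in a moment graph carries a parallel-transport factor $\tau_{p,q}$ coming from the leading term $\Phi_0$ of the heat-kernel expansion in Theorem~\ref{thm:heat_kernel_expansion}. One must verify that these transport factors are absorbed into bounded bundle endomorphisms so that the scalar moment estimates apply unchanged; this is routine for $\<3>$, $\<3'>$, $\<3''>$, $\<3'''>$ because the tensor structure is dictated by the parallel section $H$ and the induced contractions at the root node, but the bookkeeping with jets on the $\delta^{\mathcal{I}}$-edges (for $\<3'>, \<3''>, \<3'''>$) requires one to apply Corollary~\ref{lemma gradedness exponential chart} to express $R(v_p)$ in a parallel frame so that the resulting graph integrals split as (scalar graph bounds)$\times$(uniformly bounded bundle tensors), at which point the proof of Proposition~\ref{prop:stochastic estimate for trees in nonlinearity} transfers verbatim.
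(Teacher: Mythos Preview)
Your approach is essentially that of the paper, which simply says the lemma follows ``arguing exactly as in the proof of Proposition~\ref{prop:stochastic estimate for trees in nonlinearity}'': chaos decomposition (already computed before the lemma), the blue-loop cancellation via Lemma~\ref{lem:phi43 first counterterm}, and then the kernel estimates of Section~\ref{sec:Kernels with prescribed singularities} applied to each resulting graph. Two small slips worth correcting: the number of $\<2>$-subtrees in $\<3>$ is three, not two (hence the factor $3$ in the displayed chaos expansion of $\bigl(\Pi_\star^{(\epsilon)}\<3>\bigr)(\phi_\star^\lambda)$); and the passage from second moments to $N$-th moments is by Gaussian hypercontractivity (equivalence of moments in a fixed Wiener chaos), not by a Kolmogorov criterion---the latter is used only later, in Proposition~\ref{prop2:convergence of models}, to pass from moment bounds to convergence of the model. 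Your elaboration on parallel transport and Corollary~\ref{lemma gradedness exponential chart} is more than the paper needs: the kernel estimates of Section~\ref{sec:Kernels estimates} are already stated for bundle-valued kernels, so the scalar moment-graph bounds carry over without additional bookkeeping.
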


\subsubsection{A second counterterm}
				We introduce the kernel of two variables
\begin{equation}
Q_\epsilon = 
\begin{tikzpicture}[scale=0.35,baseline=-0.1cm]
	\node at (4,0)  [root] (root) {};
	\node at (0,0)  [root] (middle) {};
	\node at (2,-1.5)  [int] (left) {};
	\node at (2,1.5)  [int] (right) {};
	
	\draw[keps,bend right=30] (left) to  (root);	
	\draw[keps,bend left=30] (left) to  (middle);	
	\draw[keps,bend left=30] (right) to  (root);	
	\draw[keps,bend right=30] (right) to  (middle);	
	\draw[kernel] (middle) to (root);
\end{tikzpicture}\;.
\end{equation}
with the convention that the left green dot represents the variable $\star_1\in \mathbb{R}\times M$ and the right one $\star_2\in \mathbb{R}\times M$.
Further set
\begin{equation}\label{eq:renormalised sunset}
\begin{tikzpicture}[scale=0.35,baseline=-0.1cm]
	\node at (4,0)  [root] (root) {};
	\node at (0,0)  [root] (middle) {};
	\node at (2,-1.5)  [reddot] (left) {};
	\node at (2,1.5)  [reddot] (right) {};
	
	\draw[red,keps,bend right=30] (left) to  (root);	
	\draw[red,keps,bend left=30] (left) to  (middle);	
	\draw[red,keps,bend left=30] (right) to  (root);	
	\draw[red,keps,bend right=30] (right) to  (middle);	
	\draw[red,kernel] (middle) to (root);
\end{tikzpicture}\; := C_\epsilon \delta_{\star_2} (\star_1) 
\qquad  \textit{ and } \qquad
\begin{tikzpicture}[scale=0.35,baseline=-0.1cm]
	\node at (4,0)  [root] (root) {};
	\node at (0,0)  [root] (middle) {};
	\node at (2,-1.5)  [bluedot] (left) {};
	\node at (2,1.5)  [bluedot] (right) {};
	
	\draw[blue,keps,bend right=30] (left) to  (root);	
	\draw[blue,keps,bend left=30] (left) to  (middle);	
	\draw[blue,keps,bend left=30] (right) to  (root);	
	\draw[blue,keps,bend right=30] (right) to  (middle);	
	\draw[blue,kernel] (middle) to (root);
\end{tikzpicture}\; = Q_\epsilon (\star_1, \star_2) -C'_\epsilon \delta_{\star_2} (\star_1) \ .
\end{equation}
where $\delta_{\star_2} ( \ \cdot \ )$ denotes the Dirac distribution at $\star_2$ acting on $\phi\in \mathcal{D}(\pi^* E)$ by
$\langle \delta_{\star_2} , \phi\rangle= \phi(\star_2)$.

%
%
\begin{lemma}\label{lem:phi43 second counterterm}
One has $$\begin{tikzpicture}[scale=0.35,baseline=-0.1cm]
	\node at (4,0)  [root] (root) {};
	\node at (0,0)  [root] (middle) {};
	
	\node at (2,-1.5)  [bluedot] (left) {};
	\node at (2,1.5)  [bluedot] (right) {};
	
	\draw[blue,keps,bend right=30] (left) to  (root);	
	\draw[blue,keps,bend left=30] (left) to  (middle);	
	\draw[blue,keps,bend left=30] (right) to  (root);	
	\draw[blue,keps,bend right=30] (right) to  (middle);	
	\draw[blue,kernel] (middle) to (root);
\end{tikzpicture}\; = \mathcal{R}Q_\epsilon (\star_1, \star_2) -k_\epsilon(\star_2) \delta_{\star_2} (\star_1) \ $$ 
where the $k_\epsilon\in \mathcal{C}^\infty\left( L(\pi^*E, \pi^*E)\right)$ is uniformly bounded for $\epsilon \in (0,1]$ and converges uniformly as $\epsilon\to 0$.
%
\end{lemma}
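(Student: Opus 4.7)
}
The plan is to isolate the logarithmic divergence of the sunset kernel $Q_\epsilon(\star_1,\star_2)$ on the diagonal and show that it is scalar (proportional to $\id_{E|_{\star_2}}$) up to a uniformly convergent remainder, which is the content of the statement. The starting point is the short-time expansion of the heat kernel from Theorem~\ref{thm:heat_kernel_expansion}, which we write schematically as
\begin{equation*}
G_t(p,q) \;=\; Z_t(p,q)\,\tau_{p,q} \;+\; t\,Z_t(p,q)\,\Phi_1(p,q) \;+\; R_t(p,q),
\end{equation*}
where $\tau_{p,q}$ is parallel transport along the unique short geodesic and $R_t$ has the better bound $|R_t(p,q)|\lesssim t^{N-d/2-l/2}$ for any $N$. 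Substituting this decomposition in each of the four kernel factors of $Q_\epsilon$ expands $Q_\epsilon$ as a sum of diagrams.

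All diagrams in which at least one factor is replaced by $tZ_t\Phi_1$ or $R_t$ gain enough regularity (by the improved short-time bound, combined with the elementary bounds of Section~\ref{sec:stoch_estimates}) that the resulting kernels are uniformly bounded in $\epsilon\in(0,1]$ and converge as smooth sections in a neighbourhood of the diagonal; all their contributions can be placed in the uniformly convergent part. Hence it remains only to analyse the leading diagram $Q^{\mathrm{lead}}_\epsilon$ in which every kernel is replaced by $Z_t\tau$.

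For $Q^{\mathrm{lead}}_\epsilon$, work in an exponential $\fraks$-chart around $\star_2$ of size less than the injectivity radius and trivialise $E$ by parallel transport from $\star_2$ as in the proof of Theorem~\ref{thm:heat_kernel_expansion}. Using the volume expansion \eqref{eq:volume_asymptotics} and the fact that the parallel transport loop closing up the four $Z$-factors at the inner node equals $\id_{E|_{\star_2}} + O(d^2)$ (the $O(d^2)$ correction being a curvature tensor contraction), one reduces $Q^{\mathrm{lead}}_\epsilon$, modulo kernels that are uniformly bounded for $\epsilon\in(0,1]$ and converge uniformly, to the scalar Euclidean sunset kernel $\bar Q_\epsilon(x_1,x_2)$ tensored with $\id_{E|_{\star_2}}$. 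The key property of $\bar Q_\epsilon$ is that it is well known to satisfy $\bar Q_\epsilon(x_1,x_2) - C'_\epsilon\,\bar\delta(x_1-x_2)$ converging as a distribution to the renormalised Euclidean sunset, by the very choice of $C'_\epsilon$ in \eqref{phi4 counterterm2} (this is the usual scalar $\Phi^4_3$ calculation, cf.\ \cite[Sec.~10]{Hai14}). Pulling back through the chart, this yields the claimed decomposition
\begin{equation*}
Q_\epsilon(\star_1,\star_2) - C'_\epsilon\,\delta_{\star_2}(\star_1)\;=\;\mathcal{R}Q_\epsilon(\star_1,\star_2) - k_\epsilon(\star_2)\,\delta_{\star_2}(\star_1),
\end{equation*}
where $k_\epsilon(\star_2)\in L(E|_{\star_2},E|_{\star_2})$ collects (i)~the difference between the scalar $C'_\epsilon\,\id_E$ and the true diagonal singularity arising from the volume Jacobian, (ii)~the loop holonomy correction up to second order, and (iii)~the subleading $\Phi_1$ piece evaluated on the diagonal.

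\emph{Main obstacle.} The critical point is to argue that no additional logarithmic divergence survives beyond the scalar one. This is precisely where the absence of a curvature counterterm for $\Phi^4_3$ is encoded: the first correction from the $\Phi_1$-term and from the volume Jacobian gains enough short-time decay (one power of $t$, respectively two powers of the coordinate) to cancel one propagator and thereby render the corresponding integral absolutely convergent. Organising the corrections and the loop holonomy contribution so that this cancellation is manifest, while keeping track of the fact that everything takes values in $L(E|_{\star_2},E|_{\star_2})$ rather than in $\RR$, is the delicate part. Once this is done, the convergence of $k_\epsilon$ to a limiting smooth section $k_0\in\mathcal{C}^\infty(L(\pi^*E,\pi^*E))$ is a standard consequence of the Kolmogorov-type argument already used in Proposition~\ref{prop:stochastic estimate for trees in nonlinearity}, applied to the kernel differences.
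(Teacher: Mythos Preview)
Your approach is correct in spirit but takes a longer and less precise route than the paper's. Rather than expanding the heat kernel in every edge of $Q_\epsilon$ and tracking a ``loop holonomy'' correction, the paper first isolates the single bubble---the pair of mollified edges joining the two green nodes---and observes directly (using only $Z^E_t(p,z)\tau_{z,p}e_{p,i}=Z_t(p,z)e_{p,i}$) that it equals a scalar kernel $\tilde Z^{(\epsilon)}$ times $\sum_i e_{p,i}\otimes\tau_{q,p}e_{p,i}$, up to a remainder of strictly better order. Squaring this through the specific algebraic form of $H$ and multiplying by the remaining edge $K(\star_1,\star_2)$ gives $Q_\epsilon=S_\epsilon(\star_1,\star_2)\,\tau_{q,p}+R^\epsilon_2$, where $S_\epsilon=(\tilde Z^{(\epsilon)})^2 Z$ is purely scalar and $R^\epsilon_2$ has order strictly better than $-|\fraks|$. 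From there $\mathcal R S^E_\epsilon=S^E_\epsilon-\bigl(\int S_\epsilon(\star_1,z)\,dz\bigr)\delta_{\star_1}$ is explicit, and the proof concludes by checking that $\int S_\epsilon-C'_\epsilon$ converges uniformly via \eqref{eq:volume_asymptotics}, exactly as in Lemma~\ref{lem:phi43 first counterterm}.

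Two minor corrections to your write-up: there are five kernel factors in $Q_\epsilon$, not four; and the closing appeal to a Kolmogorov-type argument is out of place---convergence of $k_\epsilon$ is a purely deterministic statement about uniform convergence of remainder integrals. More substantively, your ``loop holonomy'' picture is not quite right: the parallel transports in the diagram do not close into a geometric loop but are contracted through the metric $h$ via the nonlinearity $H$, and it is the specific structure of $H$ that makes the bundle-valued part collapse to $\tau_{q,p}$. The paper's approach makes this transparent in one line; your route would eventually have to unwind the same identity, only after more bookkeeping.
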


\begin{proof}
Let us first calculate, denoting the first green node as $\star_1=(t,p)$ and the second as $\star_2=(t',q)$
\begin{align*}
&\begin{tikzpicture}[scale=0.35,baseline=-0.1cm]
	\node at (4,0)  [root] (root) {};
	\node at (0,0)  [root] (middle) {};
	\node at (2,1.5)  [int] (right) {};
	\draw[keps,bend left=30] (right) to  (root);	
	\draw[keps,bend right=30] (right) to  (middle);	
\end{tikzpicture}\;\\
 &= \sum_i e_{p,i}\otimes \tau_{q,p}e_{p,i} \int_\mathbb{R^2} \int_{B_r(p)}   \phi^\epsilon (s- r )\phi^\epsilon (s'- r )
Z_{t+t'-s-s'+2\epsilon^2}(p, q)
 ds  ds' dr \dVol_z\\
 &\qquad + R^\epsilon_1 
 \end{align*}
%
Writing 
$$\tilde{Z}^{(\epsilon)}((t,p),(t',q)):=\int_\mathbb{R^2} \int_{B_r}   \phi^\epsilon (s- r )\phi^\epsilon (s'- r )
Z_{t+t'-s-s'+2\epsilon^2}(p, q)
 ds  ds' dr \dVol_z$$
we define 
$S_\epsilon(\star_1, \star_2) = \tilde{Z}^{(\epsilon)} (\star_1, \star_2)^2 {Z}(\star_1, \star_2)$ and $S^E_\epsilon (\star_1, \star_2)=S_\epsilon (\star_1, \star_2) \tau_{q,p}$ and find
\begin{align*}
\begin{tikzpicture}[scale=0.35,baseline=-0.1cm]
	\node at (4,0)  [root] (root) {};
	\node at (0,0)  [root] (middle) {};
	\node at (2,-1.5)  [int] (left) {};
	\node at (2,1.5)  [int] (right) {};
	\draw[keps,bend right=30] (left) to  (root);	
	\draw[keps,bend left=30] (left) to  (middle);	
	\draw[keps,bend left=30] (right) to  (root);	
	\draw[keps,bend right=30] (right) to  (middle);	
	\draw[kernel] (middle) to (root);
\end{tikzpicture}\;&
=  H\left( \tau_{q,p} H(\cdot, e_{i,p}, e_{i,p}), \tau_{q,p} e_{i,p}, \tau_{q,p} e_{i,p}\right) S_\epsilon (\star_1, \star_2) +R_2^\epsilon(\star_1, \star_2) \\
&=  S^E_\epsilon (\star_1, \star_2) +R_2^\epsilon(\star_1, \star_2)  \ .
\end{align*}
where $R_2^\epsilon(p,q)$ satisfies $\vertiii{R_2^\epsilon}_{-|\fraks|+1/2, m}<\infty$ uniformly over $\epsilon$.
Since $\mathcal{R} S^E_\epsilon= S^E_\epsilon (\star_1, \star_2) - \int S_\epsilon(\star_1, z) dz \cdot \delta_{\star_1} (\star_2)$, 
we conclude by observing that
$\int S_\epsilon(\star_1, z) dz - C'_\epsilon$ converges uniformly as $\epsilon \to 0$.
\end{proof}

\subsubsection{Larger trees}
Note that
\begin{align*}
 \bigl( \Pi_\star^{(\epsilon)}\<32>\bigr)(\phi_\star^\lambda) &= 
\begin{tikzpicture}[scale=0.35,baseline=0.25cm]
\node at (0,-2)  [root] (root) {};
	\node at (0,0)  [int] (int) {};
	\node at (0,2.5)  [int] (middle) {};
	\node at (-1.6,2)  [var] (left) {};
	\node at (1.6,2)  [var] (right) {};
	\node at (-1.5,4.5)  [var] (tl) {};
	\node at (1.5,4.5)  [var] (tr) {};
	\node at (0,5)  [var] (tm) {};	
	\draw[keps] (left) to  (int);	
	\draw[keps] (right) to (int);
	\draw[keps] (tl) to  (middle);	
	\draw[keps] (tr) to (middle);
	\draw[keps] (tm) to  (middle);	
	\draw[kernel1] (middle) to (int);
	\draw[testfcn] (int) to  (root);
\end{tikzpicture}\; + 
6\;
\begin{tikzpicture}[scale=0.35,baseline=0.25cm]
\node at (0,-2)  [root] (root) {};
	\node at (0,-0)  [int] (int) {};
	\node at (0,2.5)  [int] (middle) {};
	\node at (-1.6,1.25)  [int] (left) {};
	\node at (1.6,2)  [var] (right) {};
	\node at (1.5,4.5)  [var] (tr) {};
	\node at (0,6)  [var] (tm) {};
	\draw[keps,bend right=40] (left) to  (int);	
	\draw[keps] (right) to (int);
	\draw[keps,bend left=40] (left) to  (middle);	
	\draw[keps] (tr) to (middle);
	\draw[keps] (tm) to  (middle);	
	\draw[kernel1] (middle) to (int);
	\draw[testfcn] (int) to  (root);
\end{tikzpicture}\; + 
3\;
\begin{tikzpicture}[scale=0.35,baseline=0.25cm]
\node at (0,-2)  [root] (root) {};
	\node at (0,0)  [int] (int) {};
	\node at (0,2.5)  [int] (middle) {};
	\node at (-1.6,2)  [var] (left) {};
	\node at (1.6,2)  [var] (right) {};
	\node at (-1,4.6)  [int] (tl) {};
	\node at (1,4.6)  [var] (tr) {};
	\draw[keps] (left) to  (int);	
	\draw[keps] (right) to (int);
	\draw[keps,bend right = 60] (tl) to  (middle);	
	\draw[keps,bend left = 60] (tl) to (middle);
	\draw[keps,bend left = 60] (tr) to (middle);
	\draw[kernel1] (middle) to (int);
	\draw[testfcn] (int) to  (root);
\end{tikzpicture}\; + \;
\begin{tikzpicture}[scale=0.35,baseline=0.25cm]
\node at (0,-2)  [root] (root) {};
	\node at (0,0)  [int] (int) {};
	\node at (1,2.6)  [int] (middle) {};
	\node at (-1,2.6)  [int] (left) {};
	\node at (-0.5,4.5)  [var] (tl) {};
	\node at (2.5,4.5)  [var] (tr) {};
	\node at (1,5)  [var] (tm) {};
	\draw[keps,bend right = 60] (left) to  (int);	
	\draw[keps,bend left = 60] (left) to (int);
	\draw[kernel1,bend left = 60] (middle) to (int);
	\draw[keps] (tl) to  (middle);	
	\draw[keps] (tr) to (middle);
	\draw[keps] (tm) to  (middle);	
	\draw[testfcn] (int) to  (root);
\end{tikzpicture}\\ 
&\qquad + 
3\;
\begin{tikzpicture}[scale=0.35,baseline=0.25cm]
\node at (0,-2)  [root] (root) {};
	\node at (0,0)  [int] (int) {};
	\node at (1,2.6)  [int] (middle) {};
	\node at (-1,2.6)  [int] (left) {};
	\node at (0,4.6)  [int] (tl) {};
	\node at (2,4.6)  [var] (tr) {};
	\draw[keps,bend right = 60] (left) to  (int);	
	\draw[keps,bend left = 60] (left) to (int);
	\draw[kernel1,bend left = 60] (middle) to (int);
	\draw[keps,bend right = 60] (tl) to  (middle);	
	\draw[keps,bend left = 60] (tl) to (middle);
	\draw[keps,bend left = 60] (tr) to (middle);
	\draw[testfcn] (int) to  (root);
\end{tikzpicture}\; + 6\;
\begin{tikzpicture}[scale=0.35,baseline=0.25cm]
\node at (0,-2)  [root] (root) {};
	\node at (0,0)  [int] (int) {};
	\node at (0,2.5)  [int] (middle) {};
	\node at (-1.6,1.25)  [int] (left) {};
	\node at (1.6,2)  [var] (right) {};
	\node at (0,4.5)  [int] (top) {};
	\draw[keps,bend right=40] (left) to  (int);	
	\draw[keps] (right) to (int);
	\draw[keps,bend left=40] (left) to  (middle);	
	\draw[keps,bend right = 60] (top) to (middle);
	\draw[keps,bend left = 60] (top) to  (middle);	
	\draw[kernel1] (middle) to (int);
	\draw[testfcn] (int) to  (root);
\end{tikzpicture}\; + 6\;
\begin{tikzpicture}[scale=0.35,baseline=0.25cm]
\node at (0,-2)  [root] (root) {};
	\node at (0,0)  [int] (int) {};
	\node at (0,2.5)  [int] (middle) {};
	\node at (-1.6,1.25)  [int] (left) {};
	\node at (1.6,1.25)  [int] (right) {};
	\node at (0,5)  [var] (tm) {};
	\draw[keps,bend right=40] (left) to  (int);	
	\draw[keps,bend left=40] (left) to  (middle);	
	\draw[keps,bend left=40] (right) to  (int);	
	\draw[keps,bend right=40] (right) to  (middle);	
	\draw[keps] (tm) to  (middle);	
	\draw[kernel1] (middle) to (int);
	\draw[testfcn] (int) to  (root);
\end{tikzpicture}\;,
\end{align*}
from which it follows that
\begin{align}\label{eq:renormalised 32}
 \bigl( \Pi_\star^{(\epsilon)}\<32>\bigr)(\phi_\star^\lambda) &= 
\begin{tikzpicture}[scale=0.35,baseline=0.25cm]
\node at (0,-2)  [root] (root) {};
	\node at (0,0)  [int] (int) {};
	\node at (0,2.5)  [int] (middle) {};
	\node at (-1.6,2)  [var] (left) {};
	\node at (1.6,2)  [var] (right) {};
	\node at (-1.5,4.5)  [var] (tl) {};
	\node at (1.5,4.5)  [var] (tr) {};
	\node at (0,5)  [var] (tm) {};	
	\draw[keps] (left) to  (int);	
	\draw[keps] (right) to (int);
	\draw[keps] (tl) to  (middle);	
	\draw[keps] (tr) to (middle);
	\draw[keps] (tm) to  (middle);	
	\draw[kernel1] (middle) to (int);
	\draw[testfcn] (int) to  (root);
\end{tikzpicture}\; + 
6\;
\begin{tikzpicture}[scale=0.35,baseline=0.25cm]
\node at (0,-2)  [root] (root) {};
	\node at (0,-0)  [int] (int) {};
	\node at (0,2.5)  [int] (middle) {};
	\node at (-1.6,1.25)  [int] (left) {};
	\node at (1.6,2)  [var] (right) {};
	\node at (1.5,4.5)  [var] (tr) {};
	\node at (0,6)  [var] (tm) {};
	\draw[keps,bend right=40] (left) to  (int);	
	\draw[keps] (right) to (int);
	\draw[keps,bend left=40] (left) to  (middle);	
	\draw[keps] (tr) to (middle);
	\draw[keps] (tm) to  (middle);	
	\draw[kernel1] (middle) to (int);
	\draw[testfcn] (int) to  (root);
\end{tikzpicture}\; + 
3\;
\begin{tikzpicture}[scale=0.35,baseline=0.25cm]
\node at (0,-2)  [root] (root) {};
	\node at (0,0)  [int] (int) {};
	\node at (0,2.5)  [int] (middle) {};
	\node at (-1.6,2)  [var] (left) {};
	\node at (1.6,2)  [var] (right) {};
	\node at (-1,4.6)  [bluedot] (tl) {};
	\node at (1,4.6)  [var] (tr) {};
	\draw[keps] (left) to  (int);	
	\draw[keps] (right) to (int);
	\draw[blue, keps,bend right = 60] (tl) to  (middle);	
	\draw[blue, keps,bend left = 60] (tl) to (middle);
	\draw[keps,bend left = 60] (tr) to (middle);
	\draw[kernel1] (middle) to (int);
	\draw[testfcn] (int) to  (root);
\end{tikzpicture}\; + \;
\begin{tikzpicture}[scale=0.35,baseline=0.25cm]
\node at (0,-2)  [root] (root) {};
	\node at (0,0)  [int] (int) {};
	\node at (1,2.6)  [int] (middle) {};
	\node at (-1,2.6)  [bluedot] (left) {};
	\node at (-0.5,4.5)  [var] (tl) {};
	\node at (2.5,4.5)  [var] (tr) {};
	\node at (1,5)  [var] (tm) {};
	\draw[blue, keps,bend right = 60] (left) to  (int);	
	\draw[blue, keps,bend left = 60] (left) to (int);
	\draw[kernel1,bend left = 60] (middle) to (int);
	\draw[keps] (tl) to  (middle);	
	\draw[keps] (tr) to (middle);
	\draw[keps] (tm) to  (middle);	
	\draw[testfcn] (int) to  (root);
\end{tikzpicture}\\ 
&\qquad + 
3\;
\begin{tikzpicture}[scale=0.35,baseline=0.25cm]
\node at (0,-2)  [root] (root) {};
	\node at (0,0)  [int] (int) {};
	\node at (1,2.6)  [int] (middle) {};
	\node at (-1,2.6)  [bluedot] (left) {};
	\node at (0,4.6)  [bluedot] (tl) {};
	\node at (2,4.6)  [var] (tr) {};
	\draw[blue, keps,bend right = 60] (left) to  (int);	
	\draw[blue, keps,bend left = 60] (left) to (int);
	\draw[kernel1,bend left = 60] (middle) to (int);
	\draw[blue, keps,bend right = 60] (tl) to  (middle);	
	\draw[blue, keps,bend left = 60] (tl) to (middle);
	\draw[keps,bend left = 60] (tr) to (middle);
	\draw[testfcn] (int) to  (root);
\end{tikzpicture}\; + 6\;
\begin{tikzpicture}[scale=0.35,baseline=0.25cm]
\node at (0,-2)  [root] (root) {};
	\node at (0,0)  [int] (int) {};
	\node at (0,2.5)  [int] (middle) {};
	\node at (-1.6,1.25)  [int] (left) {};
	\node at (1.6,2)  [var] (right) {};
	\node at (0,4.5)  [bluedot] (top) {};
	\draw[keps,bend right=40] (left) to  (int);	
	\draw[keps] (right) to (int);
	\draw[keps,bend left=40] (left) to  (middle);	
	\draw[blue, keps,bend right = 60] (top) to (middle);
	\draw[blue, keps,bend left = 60] (top) to  (middle);	
	\draw[kernel1] (middle) to (int);
	\draw[testfcn] (int) to  (root);
\end{tikzpicture}\; + 6\;
\begin{tikzpicture}[scale=0.35,baseline=0.25cm]
\node at (0,-2)  [root] (root) {};
	\node at (0,0)  [dot] (int) {};
	\node at (0,2.5)  [bluedot] (middle) {};
	\node at (-1.6,1.25)  [bluedot] (left) {};
	\node at (1.6,1.25)  [bluedot] (right) {};
	\node at (0,5)  [var] (tm) {};
	\draw[blue, keps,bend right=40] (left) to  (int);	
	\draw[blue, keps,bend left=40] (left) to  (middle);	
	\draw[blue, keps,bend left=40] (right) to  (int);	
	\draw[blue, keps,bend right=40] (right) to  (middle);	
	\draw[keps] (tm) to  (middle);	
	\draw[blue, kernel] (middle) to (int);
	\draw[testfcn] (int) to  (root);
\end{tikzpicture}\;
+ 6\;
\begin{tikzpicture}[scale=0.35,baseline=0.25cm]
\node at (0,-2)  [root] (root) {};
	\node at (0,0)  [int] (int) {};
	\node at (0,2.5)  [int] (middle) {};
	\node at (-1.6,1.25)  [int] (left) {};
	\node at (0,1.25)  [int] (right) {};
	\node at (0,5)  [var] (tm) {};
	\node[above] at (2,0) {${\scriptstyle 0}$};	 
	\draw[keps,bend right=40] (left) to  (int);	
	\draw[keps,bend left=40] (left) to  (middle);	
	\draw[keps] (right) to  (int);	
	\draw[keps] (right) to  (middle);	
	\draw[keps] (tm) to  (middle);	
	\draw[kernel, bend left= 90] (middle) to (root);
	\draw[gray, kernel, bend right= 70] (root) to (int);
	\draw[testfcn] (int) to  (root);
\end{tikzpicture}\;. \nonumber
\end{align}	 
where the gray arrow and the $0$ in the last digram are to be read as follows:  For $n\in \mathbb{N}$
$$
\begin{tikzpicture}[baseline=-0.1cm]
\draw[kernel] (0,0) to (0.95,0); \node at (1,0)  [root] (root) {}; \draw[gray, kernel] (1.1,0) to (2,0);
\node[above] at (0.5,0) {${\scriptstyle n}$}; 
\end{tikzpicture}
$$
represents $\left(R Q_{\leq n} j_\star K(\cdot,z_1)\right) (z_2)$ where $z_1$ is the variable where the left arrow starts, and $z_2$ is the variable where the right gray arrow ends.

\begin{lemma}\label{lem:stochastic bounds 32 and 32'}
The conclusion of Proposition~\ref{prop:stochastic estimate for trees in nonlinearity} holds for $\tau_p\in \Func_W(T)$ for 
$T\in\left\{ \<32>, \<32'>   \right\} $.
\end{lemma}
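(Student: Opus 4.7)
The plan is to proceed by chaos expansion, exactly as indicated by the identity \eqref{eq:renormalised 32}, and estimate each homogeneous chaos component separately. After renormalisation by $C_\epsilon$ and $C_\epsilon'$, the random variable $\bigl(\hat\Pi^\epsilon_\star \<32>\bigr)(\phi_\star^\lambda)$ lives in the first, third and fifth Wiener chaos. For the fifth chaos (the fully expanded diagram), one applies a standard equivalence of moments for Gaussians to reduce the stochastic estimate to a deterministic integral bound, which in turn follows from the kernel estimates in Section~\ref{sec:Kernels estimates}, themselves straightforward adaptations of \cite{Hai14} and \cite{HQ18} to our geometric setting. The argument here is essentially the same as in the translation invariant case, since after going to an exponential chart the kernels $K$ and $K\ast\rho_\epsilon$ satisfy bounds of the same form as their flat counterparts.

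For the third chaos terms, we factor out the (uniformly bounded, uniformly convergent) blue counterterm provided by Lemma~\ref{lem:phi43 first counterterm}, and then estimate the remaining graph by a similar kernel bound as before. For the first chaos terms, the first five diagrams are estimated analogously, pulling out one or two blue counterterms each, while the sixth and seventh are estimated by applying Lemma~\ref{lem:phi43 first counterterm} together with the pointwise kernel bounds (using that the innermost loop decays sufficiently fast). The continuity in $\epsilon$, i.e.\ the bounds on $\hat\Pi^\epsilon - \hat\Pi$, is then obtained in the usual way, by replacing each kernel $\rho_\epsilon$ by $\rho_\epsilon - \rho_{\bar\epsilon}$ one edge at a time, which produces an extra factor $\epsilon^\kappa$ at the cost of losing some regularity on that edge.

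The last diagram in \eqref{eq:renormalised 32}, involving the gray arrow and the centred subtree of potential degree~$0$, is the main obstacle, as it is precisely the term where the positive renormalisation (i.e.\ the subtraction of a Taylor jet at the base-point) interacts non-trivially with the negative renormalisation by $C_\epsilon'$ of the ``sunset'' subtree. The key is to recognise, following the discussion around \eqref{eq:renormalised sunset} and Lemma~\ref{lem:phi43 second counterterm}, that the inner two-point object that is being integrated against $K$ and tested against $\phi_\star^\lambda$ is precisely $\mathcal{R} Q_\epsilon - k_\epsilon \delta$: the Dirac-mass piece reproduces exactly the counterterm $C_\epsilon'$ absorbed in the renormalisation, and the smooth part $\mathcal{R} Q_\epsilon$ is uniformly bounded and convergent by Lemma~\ref{lem:phi43 second counterterm}. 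Once this identification is made, the remaining estimate is a standard one, completely analogous to the treatment of the corresponding term in \cite[Section 10]{Hai14}, and one concludes in the same fashion.

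The second tree $\<32'>$ is treated by the exact same strategy: its chaos decomposition has a similar structure (with one fewer leaf but one jet attached), the relevant renormalisation subtree is again $\<2>$, and the bounds proceed identically, the only bookkeeping difference being an extra factor $d_\fraks(\star,\cdot)$ coming from the jet decoration, which only improves the scaling.
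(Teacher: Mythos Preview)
Your treatment of $\<32>$ is essentially the paper's: use the chaos decomposition \eqref{eq:renormalised 32} and bound each diagram via the kernel estimates of Section~\ref{sec:Kernels estimates}, with the last two first-chaos diagrams handled through the renormalised sunset object of Lemma~\ref{lem:phi43 second counterterm}. One point you leave implicit but which the paper singles out: since $\<32>$ contains no jet edges, one has $\hat\Pi^\epsilon_q\hat\Gamma^\epsilon_{q,p}\<32> = \hat\Pi^\epsilon_p\<32>$ identically, so the second ($\delta$-)bound in Proposition~\ref{prop:stochastic estimate for trees in nonlinearity} is zero and only the $\alpha$-bound needs work.

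For $\<32'>$ your argument diverges from the paper and is a bit loose. You propose to rerun the chaos expansion with a jet leg attached and claim the extra factor ``only improves the scaling''. But the presence of the jet means the identity $\hat\Pi^\epsilon_q\hat\Gamma^\epsilon_{q,p} = \hat\Pi^\epsilon_p$ now fails on $\Func_W\langle\<32'>\rangle$, so the $\delta$-bound is no longer automatic and your outline does not say how to obtain it. The paper avoids this by a reduction: it first establishes the full conclusion for $\<31>$ (which again has no jets, so the $\delta$-bound is again trivial and the $\alpha$-bound follows by the same diagram estimates), and then observes that every element of $\Func_W\langle\<32'>\rangle$ is the product of an element of $\Func_W\langle\<31>\rangle$ with a jet. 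Both bounds for $\<32'>$ then follow from those for $\<31>$ together with multiplicativity of $\Pi$ and $\Gamma$ and the jet-model estimates. This route is shorter and handles the $\delta$-bound without any extra diagrammatic work.
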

\begin{proof}
Equation~\ref{eq:renormalised 32} together with $ \hat{\Pi}_q^{(\epsilon)} \hat{\Gamma}_{q,p} \<32>= \hat{\Pi}_p^{(\epsilon)} \<32>$ imply the conclusion of the lemma for $T=\<32>$ by repeated application of the Lemmas in Section~\ref{sec:Kernels with prescribed singularities} exactly as in the proof of \cite[Thm.~10.22]{Hai14}.

In order to treat the case $T=\<32'>$, first note that $\<31>$ satisfies by a similar argument as above the conclusion of Proposition~\ref{prop:stochastic estimate for trees in nonlinearity} and then note that elements of $\Func_W(\<32'>)$ are obtained from it by multiplication with a jet.
\end{proof}


\subsubsection{A last tree}
We introduce the following notation. For $v\in \bigoplus_{p\in M} JE$, we write
\begin{equation}
\begin{tikzpicture}[scale=0.35,baseline=-0.1cm]
						\node at (4,0)  [root] (root) {};
						\node [left] at (0.2,0) {$v$};
	\draw[kernelBig] (0,0) to (root);
\end{tikzpicture}\;.
\end{equation}
for the function $R v (\star)$.
Then, writing  $\<3'2>_{(e_p)} \in \Func_W \<3'2>$ for the element canonically isomorphic to $e_p\in E|_p \subset J_p E$, one finds that
\begin{align*}
 \bigl( \Pi_\star^{(\epsilon)}\<3'2>_{(e_\star)}\bigr)(\phi_\star^\lambda) &= 
\begin{tikzpicture}[scale=0.35,baseline=0.25cm]
\node at (0,-2)  [root] (root) {};
	\node at (0,0)  [int] (int) {};
	\node at (0,2.5)  [int] (middle) {};
	\node at (-1.6,2)  [var] (left) {};
	\node at (1.6,2)  [var] (right) {};
	\node at (-1.5,4.5)  [var] (tl) {};
	\node at (1.5,4.5)  [var] (tr) {};
	\node [above] at (0,4.8) {$e_\star$};	  
	\draw[keps] (left) to  (int);	
	\draw[keps] (right) to (int);
	\draw[keps] (tl) to  (middle);	
	\draw[keps] (tr) to (middle);
	\draw[kernelBig] (0,5) to  (middle);	
	\draw[kernel1] (middle) to (int);
	\draw[testfcn] (int) to  (root);
\end{tikzpicture}\; + 
4\;
\begin{tikzpicture}[scale=0.35,baseline=0.25cm]
\node at (0,-2)  [root] (root) {};
	\node at (0,-0)  [int] (int) {};
	\node at (0,2.5)  [int] (middle) {};
	\node at (-1.6,1.25)  [int] (left) {};
	\node at (1.6,2)  [var] (right) {};
	\node at (1.5,4.5)  [var] (tr) {};
\node [above] at (0,4.8) {$e_\star$};	  
	\draw[keps,bend right=40] (left) to  (int);	
	\draw[keps] (right) to (int);
	\draw[keps,bend left=40] (left) to  (middle);	
	\draw[keps] (tr) to (middle);
	\draw[kernelBig] (0,5) to  (middle);	
	\draw[kernel1] (middle) to (int);
	\draw[testfcn] (int) to  (root);
\end{tikzpicture}\; + 
\;
\begin{tikzpicture}[scale=0.35,baseline=0.25cm]
\node at (0,-2)  [root] (root) {};
	\node at (0,0)  [int] (int) {};
	\node at (0,2.5)  [int] (middle) {};
	\node at (-1.6,2)  [var] (left) {};
	\node at (1.6,2)  [var] (right) {};
	\node at (-1,4.6)  [int] (tl) {};
\node [above] at (1.1,4.4) {$e_\star$};	  
	\draw[keps] (left) to  (int);	
	\draw[keps] (right) to (int);
	\draw[keps,bend right = 60] (tl) to  (middle);	
	\draw[keps,bend left = 60] (tl) to (middle);
	\draw[kernelBig,bend left = 60] (1,4.6)  to (middle);
	\draw[kernel1] (middle) to (int);
	\draw[testfcn] (int) to  (root);
\end{tikzpicture}\; + \;
\begin{tikzpicture}[scale=0.35,baseline=0.25cm]
\node at (0,-2)  [root] (root) {};
	\node at (0,0)  [int] (int) {};
	\node at (1,2.6)  [int] (middle) {};
	\node at (-1,2.6)  [int] (left) {};
	\node at (-0.5,4.5)  [var] (tl) {};
	\node at (1,5)  [var] (tm) {};
	\node [above] at (2.8,4.2) {$e_\star$};	  
	\draw[keps,bend right = 60] (left) to  (int);	
	\draw[keps,bend left = 60] (left) to (int);
	\draw[kernel1,bend left = 60] (middle) to (int);
	\draw[keps] (tl) to  (middle);	
	\draw[kernelBig] (2.5,4.5)  to (middle);
	\draw[keps] (tm) to  (middle);	
	\draw[testfcn] (int) to  (root);
\end{tikzpicture}\\ 
&\qquad + 
\;
\begin{tikzpicture}[scale=0.35,baseline=0.25cm]
\node at (0,-2)  [root] (root) {};
	\node at (0,0)  [int] (int) {};
	\node at (1,2.6)  [int] (middle) {};
	\node at (-1,2.6)  [int] (left) {};
	\node at (0,4.6)  [int] (tl) {};
\node [above] at (2.1,4.4) {$e_\star$};	  
	\draw[keps,bend right = 60] (left) to  (int);	
	\draw[keps,bend left = 60] (left) to (int);
	\draw[kernel1,bend left = 60] (middle) to (int);
	\draw[keps,bend right = 60] (tl) to  (middle);	
	\draw[keps,bend left = 60] (tl) to (middle);
	\draw[kernelBig,bend left = 60] (2,4.6) to (middle);
	\draw[testfcn] (int) to  (root);
\end{tikzpicture}\; +  \;
2\; \begin{tikzpicture}[scale=0.35,baseline=0.25cm]
\node at (0,-2)  [root] (root) {};
	\node at (0,0)  [int] (int) {};
	\node at (0,2.5)  [int] (middle) {};
	\node at (-1.6,1.25)  [int] (left) {};
	\node at (1.6,1.25)  [int] (right) {};
\node [above] at (0,4.8) {$e_\star$};	  
	\draw[keps,bend right=40] (left) to  (int);	
	\draw[keps,bend left=40] (left) to  (middle);	
	\draw[keps,bend left=40] (right) to  (int);	
	\draw[keps,bend right=40] (right) to  (middle);	
	\draw[kernelBig] (0,5) to  (middle);	
	\draw[kernel1] (middle) to (int);
	\draw[testfcn] (int) to  (root);
\end{tikzpicture}\;.  
\end{align*}
and thus
\begin{align}\label{eq:renormalised 3'2}
 \bigl( \hat \Pi_\star^{(\epsilon)}\<3'2>_{(e_\star)}\bigr)(\phi_\star^\lambda) &= 
\begin{tikzpicture}[scale=0.35,baseline=0.25cm]
\node at (0,-2)  [root] (root) {};
	\node at (0,0)  [int] (int) {};
	\node at (0,2.5)  [int] (middle) {};
	\node at (-1.6,2)  [var] (left) {};
	\node at (1.6,2)  [var] (right) {};
	\node at (-1.5,4.5)  [var] (tl) {};
	\node at (1.5,4.5)  [var] (tr) {};
\node [above] at (0,4.8) {$e_\star$};	  
	\draw[keps] (left) to  (int);	
	\draw[keps] (right) to (int);
	\draw[keps] (tl) to  (middle);	
	\draw[keps] (tr) to (middle);
	\draw[kernelBig] (0,5) to  (middle);	
	\draw[kernel1] (middle) to (int);
	\draw[testfcn] (int) to  (root);
\end{tikzpicture}\; + 
4\;
\begin{tikzpicture}[scale=0.35,baseline=0.25cm]
\node at (0,-2)  [root] (root) {};
	\node at (0,-0)  [int] (int) {};
	\node at (0,2.5)  [int] (middle) {};
	\node at (-1.6,1.25)  [int] (left) {};
	\node at (1.6,2)  [var] (right) {};
	\node at (1.5,4.5)  [var] (tr) {};
\node [above] at (0,4.8) {$e_\star$};	  
	\draw[keps,bend right=40] (left) to  (int);	
	\draw[keps] (right) to (int);
	\draw[keps,bend left=40] (left) to  (middle);	
	\draw[keps] (tr) to (middle);
	\draw[kernelBig] (0,5) to  (middle);	
	\draw[kernel1] (middle) to (int);
	\draw[testfcn] (int) to  (root);
\end{tikzpicture}\; + 
\;
\begin{tikzpicture}[scale=0.35,baseline=0.25cm]
\node at (0,-2)  [root] (root) {};
	\node at (0,0)  [int] (int) {};
	\node at (0,2.5)  [int] (middle) {};
	\node at (-1.6,2)  [var] (left) {};
	\node at (1.6,2)  [var] (right) {};
	\node at (-1,4.6)  [bluedot] (tl) {};
\node [above] at (1.1,4.4) {$e_\star$};	  
	\draw[keps] (left) to  (int);	
	\draw[keps] (right) to (int);
	\draw[blue, keps,bend right = 60] (tl) to  (middle);	
	\draw[blue, keps,bend left = 60] (tl) to (middle);
	\draw[kernelBig,bend left = 60] (1,4.6)  to (middle);
	\draw[kernel1] (middle) to (int);
	\draw[testfcn] (int) to  (root);
\end{tikzpicture}\; + \;
\begin{tikzpicture}[scale=0.35,baseline=0.25cm]
\node at (0,-2)  [root] (root) {};
	\node at (0,0)  [int] (int) {};
	\node at (1,2.6)  [int] (middle) {};
	\node at (-1,2.6)  [bluedot] (left) {};
	\node at (-0.5,4.5)  [var] (tl) {};
	\node at (1,5)  [var] (tm) {};
	\node [above] at (2.8,4.2) {$e_\star$};	  
	\draw[blue, keps,bend right = 60] (left) to  (int);	
	\draw[blue, keps,bend left = 60] (left) to (int);
	\draw[kernel1,bend left = 60] (middle) to (int);
	\draw[keps] (tl) to  (middle);	
	\draw[kernelBig] (2.5,4.5)  to (middle);
	\draw[keps] (tm) to  (middle);	
	\draw[testfcn] (int) to  (root);
\end{tikzpicture}\\ 
&\qquad + 
\;
\begin{tikzpicture}[scale=0.35,baseline=0.25cm]
\node at (0,-2)  [root] (root) {};
	\node at (0,0)  [int] (int) {};
	\node at (1,2.6)  [int] (middle) {};
	\node at (-1,2.6)  [bluedot] (left) {};
	\node at (0,4.6)  [bluedot] (tl) {};
\node [above] at (2.1,4.4) {$e_\star$};	  
	\draw[blue, keps,bend right = 60] (left) to  (int);	
	\draw[blue, keps,bend left = 60] (left) to (int);
	\draw[kernel1,bend left = 60] (middle) to (int);
	\draw[blue, keps,bend right = 60] (tl) to  (middle);	
	\draw[blue, keps,bend left = 60] (tl) to (middle);
	\draw[kernelBig,bend left = 60] (2,4.6) to (middle);
	\draw[testfcn] (int) to  (root);
\end{tikzpicture}\; +  \;
2\; 
\begin{tikzpicture}[scale=0.35,baseline=0.25cm]
\node at (0,-2)  [root] (root) {};
	\node at (0,0)  [int] (int) {};
	\node at (0,2.5)  [bluedot] (middle) {};
	\node at (-1.6,1.25)  [bluedot] (left) {};
	\node at (1.6,1.25)  [bluedot] (right) {};
\node [above] at (0,4.8) {$e_\star$};	  
	\draw[blue, keps,bend right=40] (left) to  (int);	
	\draw[blue, keps,bend left=40] (left) to  (middle);	
	\draw[blue, keps,bend left=40] (right) to  (int);	
	\draw[blue, keps,bend right=40] (right) to  (middle);	
	\draw[kernelBig] (0,5) to  (middle);	
	\draw[blue, kernel] (middle) to (int);
	\draw[testfcn] (int) to  (root);
\end{tikzpicture}\; + \;
2\; \begin{tikzpicture}[scale=0.35,baseline=0.25cm]
\node at (0,-2)  [root] (root) {};
	\node at (0,0)  [int] (int) {};
	\node at (0,2.5)  [int] (middle) {};
	\node at (-1.6,1.25)  [int] (left) {};
	\node at (0,1.25)  [int] (right) {};
\node [above] at (0,4.8) {$e_\star$};	
\node[above] at (2,0) {${\scriptstyle 0}$};	   
	\draw[keps,bend right=40] (left) to  (int);	
	\draw[keps,bend left=40] (left) to  (middle);	
	\draw[keps] (right) to  (int);	
	\draw[keps] (right) to  (middle);	
	\draw[kernelBig] (0,5) to  (middle);	
	\draw[kernel, bend left= 90] (middle) to (root);
	\draw[testfcn] (int) to  (root);
	\draw[gray, kernel, bend right= 70] (root) to (int);
\end{tikzpicture}\;. \nonumber
\end{align}

Next we find that for the canonical model
\begin{align*}
\Pi^\epsilon_q  \Gamma^\epsilon_{q,p} \<3'0>_{(e_p)}- \Pi^\epsilon_p \<3'0>_{(e_p)} =
& \pmb{\Pi}^\epsilon\left( \<3'0>_{(\Gamma_{q,p} e_p)} -\<3'0>_{(e_p)}\right) - R \left( Q_{\leq 1} j_q K(\pmb{\Pi}^\epsilon \<3'>_{(Q_0\Gamma_{q,p} e_p)}-\pmb{\Pi}^\epsilon\<3'>_{(e_p)})\right) \\
&- \left( \Pi^\epsilon_q  \Gamma^\epsilon_{q,p} - \Pi_q^\epsilon\right) Q_{\leq 0} j_{p} K( \pmb{\Pi}^\epsilon \<3'>_{(e_p)} )(p)\\
=& \pmb{\Pi}^\epsilon\left( \<3'0>_{(\Gamma_{q,p} e_p)} -\<3'0>_{(e_p)}\right) - R \left( Q_{\leq 1} j_q K(\pmb{\Pi}\epsilon \<3'>_{(Q_0\Gamma_{q,p} e_p)}-\pmb{\Pi}^\epsilon \<3'>_{(e_p)})\right) \\
&- R \left(  \Gamma_{q,p} - \id\right)  \left( Q_{\leq 0} j_{p}K( \pmb{\Pi}^\epsilon \<3'>_{(e_p)}) \right)(p)
\end{align*}
And therefore 
\begin{align*}
\Pi^\epsilon_q  \Gamma_{q,p}^\epsilon \<3'2>_{(e_p)}- \Pi^\epsilon_p \<3'2>_{(e_p)} =&\underbrace{
\pmb{\Pi}^\epsilon\left( \<3'2>_{(\Gamma_{q,p} e_p)} -\<3'2>_{(e_p)}\right)
- R \left( Q_{\leq 1} j_q K(\pmb{\Pi}^\epsilon \<3'>_{(Q_0\Gamma_{q,p} e_p -e_p)})\right) \cdot \left(\pmb{\Pi}\<2>\right))}_{=:\bigl( \Pi^\epsilon_q  \Gamma_{q,p}^\epsilon \<3'2>_{(e_p)}- \Pi^\epsilon_p \<3'2>_{(e_p)}\bigr)_A} \\
&-  \underbrace{\left(\pmb{\Pi}^\epsilon \<2>\right) \cdot R\left(  \Gamma_{q,p} - \id\right) Q_{\leq 0} j_{p}K( \pmb{\Pi}^\epsilon \<3'>_{(e_p)} )}_{=:\bigl( \Pi^\epsilon_q  \Gamma_{q,p}^\epsilon \<3'2>_{(e_p)}- \Pi^\epsilon_p \<3'2>_{(e_p)}\bigr)_B}
\end{align*}
We suggestively define
\begin{equation}\label{eq:renormalised error bound 3'2 B}
\bigl( \hat{\Pi}^\epsilon_q  \hat{\Gamma}_{q,p}^\epsilon \<3'2>_{(e_p)}- \hat{\Pi}^\epsilon_p  \<3'2>_{(e_p)} \bigr)_{\hat{B}}:=
\left(\hat{\pmb{\Pi}}^\epsilon\<2>\right) \cdot R\left(  {\Gamma}_{q,p} - \id\right)Q_{\leq 0} j_p  K( \hat{\pmb{\Pi}}^\epsilon (\<3'>_{(e_p)}) ) \ 
\end{equation}
and set 
\begin{align*}
\bigl( \hat{\Pi}^\epsilon_q  \hat{\Gamma}_{q,p}^\epsilon \<3'2>_{(e_p)}- \hat{\Pi}^\epsilon_p  \<3'2>_{(e_p)} \bigr)_{\hat{A}} &=
\hat{\Pi}^\epsilon_q  \hat{\Gamma}_{q,p}^\epsilon \<3'2>_{(e_p)}- \hat{\Pi}^\epsilon_p  \<3'2>_{(e_p)} \\
&-\bigl( \hat{\Pi}^\epsilon_q  \hat{\Gamma}_{q,p}^\epsilon \<3'2>_{(e_p)}- \hat{\Pi}^\epsilon_p  \<3'2>_{(e_p)} \bigr)_{\hat{B}} .
\end{align*}
Since
\begin{align*}
\bigl( \Pi^\epsilon_\star  \Gamma_{\star,p} \<3'2>_{(e_p)}- \Pi^\epsilon_p \<3'2>_{(e_p)}\bigr)_A  (\phi_\star^\lambda) &= 
\begin{tikzpicture}[scale=0.35,baseline=0.25cm]
\node at (0,-2)  [root] (root) {};
	\node at (0,0)  [int] (int) {};
	\node at (0,2.5)  [int] (middle) {};
	\node at (-1.6,2)  [var] (left) {};
	\node at (1.6,2)  [var] (right) {};
	\node at (-1.5,4.5)  [var] (tl) {};
	\node at (1.5,4.5)  [var] (tr) {};
\node [above] at (0,4.8) {${\scriptscriptstyle \Gamma_{\star,p} e_p -e_p}$};	  
	\draw[keps] (left) to  (int);	
	\draw[keps] (right) to (int);
	\draw[keps] (tl) to  (middle);	
	\draw[keps] (tr) to (middle);
	\draw[kernelBig] (0,5) to  (middle);	
	\draw[kernel2] (middle) to (int);
	\draw[testfcn] (int) to  (root);
\end{tikzpicture}\; + 
4\;
\begin{tikzpicture}[scale=0.35,baseline=0.25cm]
\node at (0,-2)  [root] (root) {};
	\node at (0,-0)  [int] (int) {};
	\node at (0,2.5)  [int] (middle) {};
	\node at (-1.6,1.25)  [int] (left) {};
	\node at (1.6,2)  [var] (right) {};
	\node at (1.5,4.5)  [var] (tr) {};
\node [above] at (0,4.8) {${\scriptscriptstyle \Gamma_{\star,p} e_p -e_p}$};	  
	\draw[keps,bend right=40] (left) to  (int);	
	\draw[keps] (right) to (int);
	\draw[keps,bend left=40] (left) to  (middle);	
	\draw[keps] (tr) to (middle);
	\draw[kernelBig] (0,5) to  (middle);	
	\draw[kernel2] (middle) to (int);
	\draw[testfcn] (int) to  (root);
\end{tikzpicture}\; + 
\;
\begin{tikzpicture}[scale=0.35,baseline=0.25cm]
\node at (0,-2)  [root] (root) {};
	\node at (0,0)  [int] (int) {};
	\node at (0,2.5)  [int] (middle) {};
	\node at (-1.6,2)  [var] (left) {};
	\node at (1.6,2)  [var] (right) {};
	\node at (-1,4.6)  [int] (tl) {};
\node [above] at (1.1,4.4) {${\scriptscriptstyle \Gamma_{\star,p} e_p -e_p}$};	  
	\draw[keps] (left) to  (int);	
	\draw[keps] (right) to (int);
	\draw[keps,bend right = 60] (tl) to  (middle);	
	\draw[keps,bend left = 60] (tl) to (middle);
	\draw[kernelBig,bend left = 60] (1,4.6)  to (middle);
	\draw[kernel2] (middle) to (int);
	\draw[testfcn] (int) to  (root);
\end{tikzpicture}
\\ 
&\qquad + 
\;
\begin{tikzpicture}[scale=0.35,baseline=0.25cm]
\node at (0,-2)  [root] (root) {};
	\node at (0,0)  [int] (int) {};
	\node at (1,2.6)  [int] (middle) {};
	\node at (-1,2.6)  [int] (left) {};
	\node at (-0.5,4.5)  [var] (tl) {};
	\node at (1,5)  [var] (tm) {};
	\node [above] at (3,4.2)  {${\scriptscriptstyle \Gamma_{\star,p} e_p -e_p}$};	  
	\draw[keps,bend right = 60] (left) to  (int);	
	\draw[keps,bend left = 60] (left) to (int);
	\draw[kernel2,bend left = 60] (middle) to (int);
	\draw[keps] (tl) to  (middle);	
	\draw[kernelBig] (2.5,4.5)  to (middle);
	\draw[keps] (tm) to  (middle);	
	\draw[testfcn] (int) to  (root);
\end{tikzpicture}\; +
\;
\begin{tikzpicture}[scale=0.35,baseline=0.25cm]
\node at (0,-2)  [root] (root) {};
	\node at (0,0)  [int] (int) {};
	\node at (1,2.6)  [int] (middle) {};
	\node at (-1,2.6)  [int] (left) {};
	\node at (0,4.6)  [int] (tl) {};
\node [above] at (2.1,4.4) {${\scriptscriptstyle \Gamma_{\star,p} e_p -e_p}$};	  
	\draw[keps,bend right = 60] (left) to  (int);	
	\draw[keps,bend left = 60] (left) to (int);
	\draw[kernel2,bend left = 60] (middle) to (int);
	\draw[keps,bend right = 60] (tl) to  (middle);	
	\draw[keps,bend left = 60] (tl) to (middle);
	\draw[kernelBig,bend left = 60] (2,4.6) to (middle);
	\draw[testfcn] (int) to  (root);
\end{tikzpicture}\; +  \;
2\; \begin{tikzpicture}[scale=0.35,baseline=0.25cm]
\node at (0,-2)  [root] (root) {};
	\node at (0,0)  [int] (int) {};
	\node at (0,2.5)  [int] (middle) {};
	\node at (-1.6,1.25)  [int] (left) {};
	\node at (1.6,1.25)  [int] (right) {};
\node [above] at (0,4.8) {${\scriptscriptstyle \Gamma_{\star,p} e_p -e_p}$};	  
	\draw[keps,bend right=40] (left) to  (int);	
	\draw[keps,bend left=40] (left) to  (middle);	
	\draw[keps,bend left=40] (right) to  (int);	
	\draw[keps,bend right=40] (right) to  (middle);	
	\draw[kernelBig] (0,5) to  (middle);	
	\draw[kernel2] (middle) to (int);
	\draw[testfcn] (int) to  (root);
\end{tikzpicture}\;,  
\end{align*}
we find that the latter term is given by
\begin{align}\label{eq:renormalised error bound 3'2 A}
\bigl( \hat{\Pi}^\epsilon_\star  \hat{\Gamma}_{\star,p} \<3'2>_{(e_p)}-& \hat{\Pi}^\epsilon_p \<3'2>_{(e_p)}\bigr)_{\hat{A}}  (\phi_\star^\lambda) = 
\begin{tikzpicture}[scale=0.35,baseline=0.25cm]
\node at (0,-2)  [root] (root) {};
	\node at (0,0)  [int] (int) {};
	\node at (0,2.5)  [int] (middle) {};
	\node at (-1.6,2)  [var] (left) {};
	\node at (1.6,2)  [var] (right) {};
	\node at (-1.5,4.5)  [var] (tl) {};
	\node at (1.5,4.5)  [var] (tr) {};
	\node [above] at (0,4.8) {${\scriptscriptstyle \Gamma_{\star,p} e_p -e_p}$};	  
	\draw[keps] (left) to  (int);	
	\draw[keps] (right) to (int);
	\draw[keps] (tl) to  (middle);	
	\draw[keps] (tr) to (middle);
	\draw[kernelBig] (0,5) to  (middle);	
	\draw[kernel2] (middle) to (int);
	\draw[testfcn] (int) to  (root);
\end{tikzpicture}\; + 
4\;
\begin{tikzpicture}[scale=0.35,baseline=0.25cm]
\node at (0,-2)  [root] (root) {};
	\node at (0,-0)  [int] (int) {};
	\node at (0,2.5)  [int] (middle) {};
	\node at (-1.6,1.25)  [int] (left) {};
	\node at (1.6,2)  [var] (right) {};
	\node at (1.5,4.5)  [var] (tr) {};
\node [above] at (0,4.8) {${\scriptscriptstyle \Gamma_{\star,p} e_p -e_p}$};	  
	\draw[keps,bend right=40] (left) to  (int);	
	\draw[keps] (right) to (int);
	\draw[keps,bend left=40] (left) to  (middle);	
	\draw[keps] (tr) to (middle);
	\draw[kernelBig] (0,5) to  (middle);	
	\draw[kernel2] (middle) to (int);
	\draw[testfcn] (int) to  (root);
\end{tikzpicture}\; + 
\;
\begin{tikzpicture}[scale=0.35,baseline=0.25cm]
\node at (0,-2)  [root] (root) {};
	\node at (0,0)  [int] (int) {};
	\node at (0,2.5)  [int] (middle) {};
	\node at (-1.6,2)  [var] (left) {};
	\node at (1.6,2)  [var] (right) {};
	\node at (-1,4.6)  [bluedot] (tl) {};
\node [above] at (1.1,4.4) {${\scriptscriptstyle \Gamma_{\star,p} e_p -e_p}$};	  
	\draw[keps] (left) to  (int);	
	\draw[keps] (right) to (int);
	\draw[blue, keps,bend right = 60] (tl) to  (middle);	
	\draw[blue, keps,bend left = 60] (tl) to (middle);
	\draw[kernelBig,bend left = 60] (1,4.6)  to (middle);
	\draw[kernel2] (middle) to (int);
	\draw[testfcn] (int) to  (root);
\end{tikzpicture}
\\ 
&\qquad + \;
\begin{tikzpicture}[scale=0.35,baseline=0.25cm]
\node at (0,-2)  [root] (root) {};
	\node at (0,0)  [int] (int) {};
	\node at (1,2.6)  [int] (middle) {};
	\node at (-1,2.6)  [bluedot] (left) {};
	\node at (-0.5,4.5)  [var] (tl) {};
	\node at (1,5)  [var] (tm) {};
		\node [above] at (3,4.2)  {${\scriptscriptstyle \Gamma_{\star,p} e_p -e_p}$};	  
	\draw[blue, keps,bend right = 60] (left) to  (int);	
	\draw[blue, keps,bend left = 60] (left) to (int);
	\draw[kernel2,bend left = 60] (middle) to (int);
	\draw[keps] (tl) to  (middle);	
	\draw[kernelBig] (2.5,4.5)  to (middle);
	\draw[keps] (tm) to  (middle);	
	\draw[testfcn] (int) to  (root);
\end{tikzpicture} + 
\;
\begin{tikzpicture}[scale=0.35,baseline=0.25cm]
\node at (0,-2)  [root] (root) {};
	\node at (0,0)  [int] (int) {};
	\node at (1,2.6)  [int] (middle) {};
	\node at (-1,2.6)  [bluedot] (left) {};
	\node at (0,4.6)  [bluedot] (tl) {};
\node [above] at (2.1,4.4) {${\scriptscriptstyle \Gamma_{\star,p} e_p -e_p}$};	  
	\draw[blue, keps,bend right = 60] (left) to  (int);	
	\draw[blue, keps,bend left = 60] (left) to (int);
	\draw[kernel2,bend left = 60] (middle) to (int);
	\draw[blue, keps,bend right = 60] (tl) to  (middle);	
	\draw[blue, keps,bend left = 60] (tl) to (middle);
	\draw[kernelBig,bend left = 60] (2,4.6) to (middle);
	\draw[testfcn] (int) to  (root);
\end{tikzpicture}\; + 
2\; 
\begin{tikzpicture}[scale=0.35,baseline=0.25cm]
\node at (0,-2)  [root] (root) {};
	\node at (0,0)  [int] (int) {};
	\node at (0,2.5)  [bluedot] (middle) {};
	\node at (-1.6,1.25)  [bluedot] (left) {};
	\node at (1.6,1.25)  [bluedot] (right) {};
\node [above] at (0,4.8) {${\scriptscriptstyle \Gamma_{\star,p} e_p -e_p}$};	  
	\draw[blue, keps,bend right=40] (left) to  (int);	
	\draw[blue, keps,bend left=40] (left) to  (middle);	
	\draw[blue, keps,bend left=40] (right) to  (int);	
	\draw[blue, keps,bend right=40] (right) to  (middle);	
	\draw[kernelBig] (0,5) to  (middle);	
	\draw[blue, kernel] (middle) to (int);
	\draw[testfcn] (int) to  (root);
\end{tikzpicture}\; + \;
2\; \begin{tikzpicture}[scale=0.35,baseline=0.25cm]
\node at (0,-2)  [root] (root) {};
	\node at (0,0)  [int] (int) {};
	\node at (0,2.5)  [int] (middle) {};
	\node at (-1.6,1.25)  [int] (left) {};
	\node at (0,1.25)  [int] (right) {};
\node [above] at (0,4.8) {${\scriptscriptstyle \Gamma_{\star,p} e_p -e_p}$};	  
\node[above] at (2,0) {${\scriptstyle 1}$};	 
	\draw[keps,bend right=40] (left) to  (int);	
	\draw[keps,bend left=40] (left) to  (middle);	
	\draw[keps] (right) to  (int);	
	\draw[keps] (right) to  (middle);	
	\draw[kernelBig] (0,5) to  (middle);	
	\draw[kernel, bend left= 90] (middle) to (root);
	\draw[testfcn] (int) to  (root);
	\draw[gray, kernel, bend right= 70] (root) to (int);
\end{tikzpicture}\;. \nonumber
\end{align}

\begin{lemma}\label{lem:stochastic bounds 3'2}
The conclusion of Proposition~\ref{prop:stochastic estimate for trees in nonlinearity} holds for $\tau_p\in \Func_W(\<3'2>)$.
\end{lemma}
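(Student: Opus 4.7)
The plan is to argue along the same lines as for the trees $\<3'>, \<3''>, \<3'''>, \<32>, \<32'>$ treated above (Lemmas~\ref{lem:stochstic estimats wick trees} and~\ref{lem:stochastic bounds 32 and 32'}), using the chaos decomposition \eqref{eq:renormalised 3'2} together with the pointwise decomposition of the difference into $\hat A$ and $\hat B$ pieces supplied by \eqref{eq:renormalised error bound 3'2 A} and \eqref{eq:renormalised error bound 3'2 B}. In each case I would compute the second moment, reduce to the labelled graphs of Section~\ref{sec:Kernels estimates}, and apply Wiener isometry followed by the kernel estimates there, exactly as in the proof of \cite[Theorem~10.22]{Hai14}. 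The smooth-factor diagrams (those containing a blue dot) are uniformly bounded thanks to Lemmas~\ref{lem:phi43 first counterterm} and~\ref{lem:phi43 second counterterm}, so they contribute as test functions against a uniformly bounded deterministic factor.

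To obtain the bounds uniformly in $\epsilon$ and with the correct powers of $\lambda$ in \eqref{eq: stochastic bound on model}, the decisive observation is that the slot which in the scalar case would carry the $\mathcal{E}_0$-edge labelled by $e_p$ carries instead the \emph{vector bundle valued} jet $e_\star \in E|_\star$ for $\hat\Pi^\epsilon_\star \<3'2>_{(e_\star)}$, and $\Gamma_{\star,p} e_p - e_p$ for the $\hat A$-part of the difference. Here $R(e_\star)$ is the canonical admissible realisation (parallel transport along geodesics, cf.\ Section~\ref{remark algebraic structure}), so $R(e_\star)(\star) = e_\star$ and the pointwise estimate Lemma~\ref{bound on admissable realisations} gives $|R(e_\star)(z)| \lesssim 1$ for $z$ in a compact neighbourhood of $\star$. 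This yields the bound on $\hat\Pi^\epsilon_\star \<3'2>_{(e_\star)}$. For the $\hat A$-part, Lemma~\ref{bound on admissable realisations} applied to the jet $\Gamma_{\star,p} e_p - e_p \in J_\star E$ (which lives in components of positive grading once $\star = p$) gives an extra factor $d_\fraks(\star,p)^\kappa$ where $\kappa$ is the smallest positive grading component present — this is exactly the additional regularity needed to translate the estimate for $\<3'2>$ into one for the difference.

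For the $\hat B$-part \eqref{eq:renormalised error bound 3'2 B}, the factor $R(\Gamma_{q,p}-\id)Q_{\leq 0} j_p K(\hat{\pmb\Pi}^\epsilon \<3'>_{(e_p)})$ is bounded, via the definition of $\gamma_{q,p}$ and Lemma~\ref{lemma J_qp} (specialised to the polynomial sector where $\alpha = 0$), by $d_\fraks(q,p)^{|\mathcal I|+|\<3'>|-k}$ at each homogeneity $k$, which supplies the missing $\lambda^{\delta-|\<2>|}$ factor after multiplication by $\hat\pmb\Pi^\epsilon\<2>$ and testing against $\phi^\lambda_q$. This $\hat B$-piece is the exact analogue of the $E$-operator correction introduced in Section~\ref{Section Singular Kernels}, and accounts for the large-scale mismatch between $\Gamma$-composition and the admissible realisation on $JE$.

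The hard part will be handling the $\hat A$-term: in the flat Euclidean setting $\Gamma_{\star,p} e_p - e_p$ would vanish linearly in $d_\fraks(\star,p)$, whereas here the leading non-trivial contribution to $\Gamma_{\star,p}e_p - e_p$ comes from the curvature of $E$ (parallel transport is trivial to first order in the exponential chart of Corollary~\ref{lemma gradedness exponential chart}, and only contributes at order $d_\fraks(\star,p)^2$ via $R^E$, cf.\ the discussion after the statement of Lemma~\ref{i well defined}). One has to track this carefully through each of the six Wiener-chaos contributions in \eqref{eq:renormalised error bound 3'2 A}; the bookkeeping is analogous to the treatment of $\hat\Pi^\epsilon_q \hat\Gamma^\epsilon_{q,p} \<3'2>_{(e_p)} - \hat\Pi^\epsilon_p \<3'2>_{(e_p)}$ on the torus but with the concrete admissible realisation $R$ entering explicitly. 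The convergence as $\epsilon \to 0$, as well as the analogous bounds for $\hat Z$ and for the difference $\hat Z^\epsilon - \hat Z$, then follow by the standard trick of replacing $\rho_\epsilon(z,z')$ by $\rho_\epsilon(z,z') - \delta_z(z')$ along one edge at a time and applying the same kernel estimates with slightly worse singular exponents.
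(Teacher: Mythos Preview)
Your overall structure—use \eqref{eq:renormalised 3'2} for $\hat\Pi^\epsilon_\star\<3'2>_{(e_\star)}$, split the difference into the $\hat A$ and $\hat B$ pieces \eqref{eq:renormalised error bound 3'2 A}, \eqref{eq:renormalised error bound 3'2 B}, and handle $\hat B$ as a product of a smooth function with an already-bounded stochastic object—matches the paper's strategy. The first five diagrams in \eqref{eq:renormalised error bound 3'2 A} are indeed handled by the kernel estimates of Section~\ref{sec:Kernels estimates} exactly as you suggest.

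The gap is in your treatment of the \emph{last two} diagrams of \eqref{eq:renormalised error bound 3'2 A}: the renormalised sunset
multiplied by $R(\Gamma_{\star,p}e_p-e_p)$, and the positive-renormalisation correction with the gray arrow (the $Q_{\leq 1}j_\star K$ term). These cannot be bounded separately by the elementary kernel lemmas, nor by a naive application of Theorem~\ref{thm:HQ}: the pair of edges joining the same two vertices simultaneously carries negative renormalisation (the $\mathcal{R}$-subtraction in \eqref{eq:renormalised sunset}) and positive renormalisation (the Taylor subtraction of the kernel), and neither of the two standard machineries sees this combined cancellation. The paper treats their sum $\mathcal{P}^\epsilon$ by hand: writing $f(z)=R(\Gamma_{\star,p}e_p-e_p)(z)$ with $|f(z)|\lesssim d_\fraks(z,\star)^{\delta_0}$, expanding $\mathcal{P}^\epsilon(\phi^\lambda_\star)$ into integrals involving $K^\epsilon(x,z)K(x,z)[f(z)-\tau_{z,x}f(x)]$ and the Taylor-remainder of $K$, and then splitting the integration domain into the regions $\{|x|<\tfrac12 d(x,z)\}$ and $\{|x|>\tfrac12 d(x,z)\}$. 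In the first region one exploits the second-order Taylor subtraction of the kernel; in the second one uses the increment bound on $f$. Without isolating this term and performing this domain-splitting argument, your ``track carefully through each contribution'' plan does not close—indeed the paper remarks after its proof that estimating this diagram falls outside the scope of naively applying Theorem~\ref{thm:HQ}. Your claim that the bookkeeping is ``analogous to the torus'' is also misleading here: in the scalar case one can choose $R(j_\cdot 1)=1$, so $\Gamma_{\star,p}e_p-e_p=0$ and the difficult term vanishes identically (see the Remark following the paper's proof).
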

\begin{proof}
The desired bounds on $\bigl( \hat \Pi_\star^{(\epsilon)}\<3'2>_{(e_\star)}\bigr)(\phi_\star^\lambda)$ follow from \eqref{eq:renormalised 3'2} as in the proof of Lemma~\ref{lem:stochastic bounds 32 and 32'}. 
In order to establish the desired bounds on 
$$\bigl( \hat{\Pi}^\epsilon_\star  \hat{\Gamma}_{\star,p} \<3'2>_{(e_p)}- \hat{\Pi}^\epsilon_p \<3'2>_{(e_p)}\bigr) (\phi_\star^\lambda) $$
we estimate \eqref{eq:renormalised error bound 3'2 B} and \eqref{eq:renormalised error bound 3'2 A} separately. 
Indeed, the former is treated easily using the bounds already established in Lemma~\ref{lem:stochstic estimats wick trees} since it only involves a product of a smooth function with a distribution. 
For the latter, note that for all terms in the Wiener Chaos decomposition except
\begin{equation}\label{eq:some more work}
\mathcal{P}^\epsilon (\phi^\lambda_\star):=\begin{tikzpicture}[scale=0.35,baseline=0.25cm]
\node at (0,-2)  [root] (root) {};
	\node at (0,0)  [int] (int) {};
	\node at (0,2.5)  [bluedot] (middle) {};
	\node at (-1.6,1.25)  [bluedot] (left) {};
	\node at (1.6,1.25)  [bluedot] (right) {};
\node [above] at (0,4.8) {${\scriptscriptstyle \Gamma_{\star,p} e_p -e_p}$};	  
	\draw[blue, keps,bend right=40] (left) to  (int);	
	\draw[blue, keps,bend left=40] (left) to  (middle);	
	\draw[blue, keps,bend left=40] (right) to  (int);	
	\draw[blue, keps,bend right=40] (right) to  (middle);	
	\draw[kernelBig] (0,5) to  (middle);	
	\draw[blue, kernel] (middle) to (int);
	\draw[testfcn] (int) to  (root);
\end{tikzpicture}\; + \;
 \begin{tikzpicture}[scale=0.35,baseline=0.25cm]
\node at (0,-2)  [root] (root) {};
	\node at (0,0)  [int] (int) {};
	\node at (0,2.5)  [int] (middle) {};
	\node at (-1.6,1.25)  [int] (left) {};
	\node at (0,1.25)  [int] (right) {};
\node [above] at (0,4.8) {${\scriptscriptstyle \Gamma_{\star,p} e_p -e_p}$};	  
\node[above] at (2,0) {${\scriptstyle 1}$};	 
	\draw[keps,bend right=40] (left) to  (int);	
	\draw[keps,bend left=40] (left) to  (middle);	
	\draw[keps] (right) to  (int);	
	\draw[keps] (right) to  (middle);	
	\draw[kernelBig] (0,5) to  (middle);	
	\draw[kernel, bend left= 90] (middle) to (root);
	\draw[testfcn] (int) to  (root);
	\draw[gray, kernel, bend right= 70] (root) to (int);
\end{tikzpicture}\;
\end{equation}
one can argue as for the terms in \eqref{eq:renormalised 3'2}.
In order to treat this last term, let us introduce the following notation:
\begin{itemize}
\item $f(z)= R\left(\Gamma_{\star,p} e_p -e_p\right)(z)$, which satisfies $|f(z)|\lesssim d_\fraks (z,\star)^{\delta_0}$
\item $K^\epsilon(\star_1,\star_2)= 
\tikzsetnextfilename{nondiv_kernel}
\begin{tikzpicture}[scale=0.35,baseline=-0.1cm]
	\node at (4,0)  [root] (root) {};
	\node at (0,0)  [root] (middle) {};
	
	\node at (2,-1.5)  [int] (left) {};
	\node at (2,1.5)  [int] (right) {};
	\draw[keps,bend right=30] (left) to  (root);	
	\draw[keps,bend left=30] (left) to  (middle);	
	\draw[keps,bend left=30] (right) to  (root);	
	\draw[keps,bend right=30] (right) to  (middle);	
\end{tikzpicture}\;,
$
which satisfies $K^\epsilon(\star_1,\star_2)\lesssim d_\fraks(\star_1,\star_2)^{-2}$,
\end{itemize}
and note that \eqref{eq:some more work} is given by
\begin{align*}
\mathcal{P}^\epsilon (\phi^\lambda_\star)=&\int_{\mathbb{R}\times M}\int_{\mathbb{R}\times M} K^\epsilon(x,z)K(x,z )f(z)\phi^\lambda(x)- K^\epsilon(x,z)K(x,z )\tau_{z,x} f(x)\phi^\lambda_\star(x)dx dz\\
&- \int_{\mathbb{R}\times M}\int_{\mathbb{R}\times M}K^\epsilon (x,z) R\left(Q_{\leq 1} j_\star K(\cdot, z)\right)(x)f(z)\phi_\star^\lambda(x) 
dx dz \\
&+ \int_{\mathbb{R}\times M}\int_{\mathbb{R}\times M}  k_\epsilon (x)f(x)\phi^\lambda_\star (x) dx dz.
\end{align*}
Clearly $\int_{\mathbb{R}\times M}\int_{\mathbb{R}\times M} | k_\epsilon (x)f(x)\phi^\lambda_\star (x) |dx dz\lesssim \lambda^{\delta_0}$, so we turn to the other term. On the one hand, first integrating over $x\in \mathbb{R}\times M$ one finds uniformly in $\epsilon>0$.
\begin{align*} 
&\Bigg| \int_{|x|<\frac{1}{2}d(x,z)} \Big( K^\epsilon(x,z)K(x,z )f(z)\phi^\lambda_\star (x) - K^\epsilon (x,z) R\left(Q_{\leq 1} j_\star K(\cdot, z)\right)(x)f(z)\phi^\lambda_\star(x) \Big)dx dz \\
&\qquad-\int_{|x|<\frac{1}{2}d(x,z)} K^\epsilon(x,z)K(x,z ) \tau_{z,x}f(x)\phi^\lambda_\star(x)
dx dz\Bigg| \\
&\lesssim  \int_{|x|<\frac{1}{2}d(x,z)} |K^\epsilon(x,z)|d(x,\star)^2  \sup_{|y|<\frac{1}{2}d(x,z)} |Q_{\leq 2} j_y K(\cdot, z)|  |f(z)| |\phi^\lambda_\star| dx dz \\
&\qquad +\int_{|x|<\frac{1}{2}d(x,z)} |K^\epsilon(x,z)K(x,z ) \tau_{z,x}f(x)\phi^\lambda_\star(x)| dx dz\\
&\lesssim \lambda^{\delta_0-\kappa}
\end{align*}
Similarly, integrating first over $z$
\begin{align*} 
&\Bigg| \int_{|x|<\frac{1}{2}d(x,z)} \Big( K^\epsilon(x,z)K(x,z )f(z)\phi^\lambda_\star (x) - K^\epsilon (x,z) R\left(Q_{\leq 1} j_\star K(\cdot, z)\right)(x)f(z)\phi^\lambda_\star(x) \Big) dx dz \\
&\qquad -\int_{|x|<\frac{1}{2}d(x,z)} K^\epsilon(x,z)K(x,z ) \tau_{z,x}f(x)\phi^\lambda_\star (x)
dx dz\Bigg| \\
&\leq \Bigg|\int_{|x|>\frac{1}{2}d(x,z)}  K^\epsilon(x,z)K(x,z )\left(f(z)-\tau_{z,x}f(x)\right)\phi^\lambda_\star(x)  dx dz\Bigg|\\
&\qquad + \int_{|x|>\frac{1}{2}d(x,z)}  | K^\epsilon (x,z) R\left(Q_{\leq 1} j_\star K(\cdot, z)\right)(x)f(z)\phi^\lambda_\star (x) 
dx dz\\
&\lesssim \lambda^{\delta_0}
\end{align*}
\end{proof}
\begin{remark}
Note that in order to estimate \eqref{eq:some more work}, we needed to leverage positive and negative renormalisation for the set of edges connecting the same two vertices. Thus, estimating this diagram falls outside the scope of (naively) applying Theorem~\ref{thm:HQ}.
\end{remark}

\begin{remark}
Note that the scalar $\phi^4_3$ equation can essentially be treated as in \cite{Hai14} by working with the choice of admissible realisation such that $R j_p 1= 1$ for all $p\in M$, since then $\hat{\Pi}^\epsilon_q  \hat{\Gamma}_{q,p}^\epsilon \<3'2>_{(j_p 1)}- \hat{\Pi}^\epsilon_p  \<3'2>_{(j_p 1)} =0 \ .$
\end{remark}

%
%
%
%

\subsection{The $\phi^3_4$ equation.}\label{sec:phi34}
Let 
$$
C^{ \<2>, 1}_\epsilon =\int_{\mathbb{R}^3}   \phi^\epsilon (t-s+r)\phi^\epsilon (t-s+r )
\bar{Z}_{s+s'+2\epsilon^2}(0)\ 
 ds  ds' dr \ ,
$$ and 
$$C^{ \<2>,2}_\epsilon =\frac{1}{3}\int_{\mathbb{R}^3}  \phi^\epsilon (t-s+r)\phi^\epsilon (t-s+r )
(s+s'+2\epsilon)  \bar{Z}_{s+s'+2\epsilon^2}(0)\ 
 ds  ds' dr  \ .
$$
In order to describe the remaining counterterms, we use again graphical notation, with 
$\tikzsetnextfilename{arrrr} \tikz[baseline=-0.1cm] \draw[red, kernel] (0,0) to (1,0) \enlarge;$
representing $\bar{Z}$,
$\tikzsetnextfilename{arr} \tikz[baseline=-0.1cm] \draw[red, keps] (0,0) to (1,0) \enlarge; $
representing the kernel 
$\bar{Z}* \bar{\rho}_\epsilon$
for $\bar{\rho}_\epsilon(t,z) = \phi^\epsilon(t) \bar{Z}_{{\epsilon}^2}(x)$.
The special node $\tikzsetnextfilename{rootdotbg} \tikz[baseline=-3] \node[broot] {};$ represents $0\in \mathbb{R}^5$, while 
$\tikzsetnextfilename{arbred} \tikz[baseline=-3] \node [reddot] {};$ represents a variable in $\mathbb{R}^5$ to be integrated out. 
Set
\begin{equation*}
C^{\!\!\<211>}_\epsilon = 2\;
\tikzsetnextfilename{phi34ct'brg}
\begin{tikzpicture}[scale=0.35,baseline=0.8cm]
	\node at (-2,1)  [broot] (left) {};
	\node at (0.0,3)  [reddot] (left1) {};
	\node at (-2,5)  [reddot] (left2) {};
	\node at (-2,3) [reddot] (variable3) {};
	\node at (0,5.7) [reddot] (variable4) {};
	\draw[red, kernel] (left1) to   (left);
	\draw[red, kernel] (left2) to  (left1);
	\draw[red, keps] (variable3) to  (left); 
	\draw[red, keps] (variable4) to   (left1); 
	\draw[red, keps] (variable3) to   (left2); 
	\draw[red, keps] (variable4) to   (left2);
\end{tikzpicture}\; ,
\qquad
C^{\<22j>}_\epsilon = 2\; \tikzsetnextfilename{phi34ct211''brg}
\begin{tikzpicture}[scale=0.35,baseline=.9cm]
	\node at (0.0,2)  [broot] (int) {};
	\node at (-2,3)  [reddot] (left) {};
	\node at (0,5.5)  [reddot] (ttop) {};
	\node at (2,3)  [reddot] (right) {};
	\node at (0,4) [reddot] (top) {};
	\draw[red, kernel] (left) to  (int);
	\draw[red, kernel] (right) to  (int);
	\draw[red, keps] (ttop) to  (right); 
	\draw[red, keps] (top) to  (right); 
	\draw[red, keps] (top) to  (left); 
	\draw[red, keps] (ttop) to (left);
	\end{tikzpicture} \ ,
	\qquad
C^{\<11> }_\epsilon =
\tikzsetnextfilename{phi34ct11'br'g}
\begin{tikzpicture}[scale=0.35,baseline=0.6cm]
	\node at (-2,1)  [broot] (left) {};
	\node at (-2,3)  [reddot] (left1) {};
	\node at (0.0,2) [reddot] (variable) {};
	\draw[red, kernel] (left1) to (left);
	\draw[red, keps] (variable) to (left1); 
	\draw[red, keps] (left) to (variable); 
\end{tikzpicture}\; .
\end{equation*}
Finally, set 
\begin{equation}\label{countertermm}
\bar{C}_\epsilon= (C^{ \<2>, 1}_\epsilon,
C^{ \<2>,2}_\epsilon ,
C^{\!\!\<211>}_\epsilon ,
C^{\<22j>}_\epsilon ,
C^{\<11> }_\epsilon )\in \mathbb{R}^5 \ .
\end{equation}
We shall prove the following theorem.
\begin{theorem}\label{thm:phi3_4}
Let $M$ be a compact four dimensional Riemannian manifold, $\xi$ be space time white noise on $M$ 
and $\xi_\epsilon$ its regularisation as described above. Let $u_0 \in \cC^{\alpha}(M) $ for $\alpha>-1$
and denote by $\Delta$ the Laplace--Beltrami operator and by $s:M\to \mathbb{R}$ the scalar curvature.
Then, for $\bar{C}_\epsilon$ as in \eqref{countertermm} and $u_\epsilon$ solving 
\begin{equation}\label{eq:phi3_4}
\partial_t u_\epsilon + \Delta u_\epsilon = u^2_\epsilon - C^{\<2>, 1}_\epsilon  - C_\epsilon^{\<2>, 2} s -4 C^{\<11>}_\epsilon u_\epsilon  - 4C^{\!\!\<211>}_\epsilon - C^{\<22j>}_\epsilon +  \xi_\epsilon, \qquad u_\epsilon(0) =u_0 \ ,
\end{equation}
there exists a (random) $T>0$ and $u\in  \mathcal{D}'((0,T)\times M)$ such that $u_\epsilon \to u$ as $\epsilon\to 0$ in probability.
\end{theorem}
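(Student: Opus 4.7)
The plan is to follow the same three-step blueprint as for Theorems~\ref{thm:g-pam} and \ref{thm:phi4_3}. First I would set up the regularity structure: one kernel type $\mathcal{I}$ of homogeneity $|\mathcal{I}|\in(2-\tfrac{1}{10},2)$ (associated with the heat kernel of $\Delta$), one noise type $\Xi$ of homogeneity $|\Xi|\in(-3-\tfrac{1}{10},-3)$ (reflecting the fact that space-time white noise on $\mathbb{R}\times M^4$ with parabolic scaling has scaled dimension $|\fraks|=6$), and jet edge types for the quadratic nonlinearity. The naive rule is $\mathring R(\mathcal{I})=\{[\mathcal{I}]^2,[\Xi]\}$, whose normalisation $R$ is subcritical since $|\mathcal{I}\Xi|>-1$ and $2|\mathcal{I}\Xi|>|\Xi|$; for $\delta_0>\gamma$ large enough the construction of Section~\ref{section regularity structure, models,...} then yields a finite-dimensional regularity structure ensemble together with the canonical model $Z^\epsilon$ associated to $\xi_\epsilon$, where $\mathcal{I}$ realises $K_t(p,q)=G_t(p,q)\kappa(t)\kappa(d(p,q)/\bar r)$.

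Next I would isolate the finite-dimensional subgroup $\mathfrak{G}'_-\subset\mathfrak{G}_-$ relevant for the equation, in the spirit of Section~\ref{sec:canonical renormalisation}. Power-counting identifies finitely many negative-homogeneity trees, and for each I would prescribe $\mathfrak{g}(\tau)\in\mathfrak{Dif}_\tau$ to be a constant multiple of a canonical $0$-th order operator built from $\id_E$ and $s(\cdot)\id_E$; this yields a five-parameter family indexed by $\bar C_\epsilon$ as in \eqref{countertermm}. Setting $\hat Z^\epsilon=(Z^\epsilon)^{\mathfrak{g}_{\bar C_\epsilon}}$, Theorem~\ref{thm:abstract_fixed point} produces a unique local abstract solution $\hat U^\epsilon\in\mathcal{D}^{\gamma,\eta}$ for $-1<\eta<\alpha\wedge(|\Xi|+|\mathcal{I}|)$ (once the low-regularity modification from \cite[Sec.~9.4]{Hai14} is carried out), and Proposition~\ref{prop:renormalised equation} identifies $\hat u^\epsilon=\mathcal{R}\hat U^\epsilon$ as a classical solution of \eqref{eq:phi3_4}. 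Convergence of $\hat Z^\epsilon$ to a limiting model $\hat Z$ on the sub-bundle $\mathcal{T}^r_{\leq 0,:}$ then reduces, exactly as in Proposition~\ref{prop2:convergence of models}, to a Kolmogorov-type criterion combined with stochastic estimates on the elements of the form $\E|\hat\Pi^\epsilon_\star\tau_\star(\phi^\lambda_\star)|^N$ and on the ``telescopic'' differences.

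The bulk of the work is therefore in the stochastic estimates. For the four ``flat'' counterterms $C^{\<2>,1}_\epsilon$, $C^{\!\!\<211>}_\epsilon$, $C^{\<22j>}_\epsilon$ and $C^{\<11>}_\epsilon$, the argument is a direct adaptation of Lemmas~\ref{lem:first counterterm}, \ref{lem:second counterterm} and \ref{lem:phi43 second counterterm}: write the chaos decomposition, pass to exponential normal coordinates at the base point, extract the divergent contraction using the leading term $Z_t$ of the heat kernel, and show that the remainder is uniformly bounded and convergent thanks to Theorem~\ref{thm:heat_kernel_expansion} and the kernel estimates of Section~\ref{sec:Kernels estimates}. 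The only genuinely new difficulty lies in the counterterm $C^{\<2>,2}_\epsilon\cdot s(p)$: here the naive subtraction by a constant multiple of $\id_E$ is not enough, and one must use the subleading term of the heat kernel, namely
\begin{equation*}
G_t(p,q)=Z_t(p,q)+t\,Z_t(p,q)\Phi_1(p,q)+O(t^{2-d/2})
\end{equation*}
together with $\Phi_1(p,p)=\tfrac{1}{3}s(p)$ and the volume expansion \eqref{eq:volume_asymptotics}. Integrating the square of the relevant kernel against the curved volume form in normal coordinates and carrying out the Gaussian $q$-integration splits the divergence as a constant part plus a part proportional to $s(p)$, exactly matching $C^{\<2>,1}_\epsilon+C^{\<2>,2}_\epsilon\cdot s(p)$ up to a uniformly bounded and uniformly convergent remainder. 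This is the step that is expected to be the main obstacle, as it is precisely the point where the geometry of $M$ enters the renormalisation procedure in a non-trivial way; once it is in place, the remaining bounds and the passage to the limit follow the template of Proposition~\ref{prop2:convergence of models} without further difficulty.
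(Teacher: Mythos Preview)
Your sketch is correct and follows essentially the same route as the paper: the same regularity structure, the same five-parameter renormalisation subgroup, the fixed-point argument via Theorem~\ref{thm:abstract_fixed point} (with the low-regularity fix for both $\Xi$ and $\<2>$), identification of the renormalised equation via Proposition~\ref{prop:renormalised equation}, and the scalar-curvature counterterm arising from the subleading heat-kernel coefficient $\Phi_1(p,p)=\tfrac13 s(p)$ together with \eqref{eq:volume_asymptotics}. One point deserves sharpening: your claim that after the curvature counterterm ``the remaining bounds follow the template of Proposition~\ref{prop2:convergence of models} without further difficulty'' is a bit optimistic. In the paper's Wiener-chaos decompositions of $\hat\Pi^\epsilon_\star\<22j>$ and $\hat\Pi^\epsilon_\star\<211>$ there are specific diagrams (those in \eqref{eq:for HQ-1} and \eqref{eq:for HQ-2}) for which the elementary kernel estimates of Section~\ref{sec:Kernels with prescribed singularities} are insufficient; these require the Hairer--Quastel multiscale criterion (Theorem~\ref{thm:HQ}), adapted here to the non-translation-invariant manifold setting. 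This is a genuine technical ingredient beyond what was needed for $\Phi^4_3$, so your sketch should flag it rather than fold it into the ``routine'' part.
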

\begin{remark}
Note that for this equation one expects that the maximal existence time of the solutions is a.s. finite. 
\end{remark}
We proceed as by now standard and define edge types
\begin{itemize}
\item $\mathcal{E}_+= \{\mathcal{I} \}$,
\item $\mathcal{E}_-= \{ \Xi \}$,
\item $\mathcal{E}^+_0= \{\delta^{\mathcal{I}}\}$ with the injection $\iota:\mathcal{E}_+\to  \mathcal{E}^+_0$.
\item Since the equations is scalar valued and the non-linearity parallel, we suppress the 
$\mathcal{E}^{0}_{0}$ type edges.
\end{itemize}
We fix homogeneities $|\mathcal{I}|\in (2-\frac{1}{10}, 2)$, $|\Xi|\in (-3-\frac{1}{10}, -3)$
and the rule characterised by
$${R}(\mathcal{I})= \big\{ [\delta^{\mathcal{I}}]^2,\ [\delta^{\mathcal{I}} \mathcal{I}],\ [\mathcal{I}]^2, \ [\Xi]  \big\}\ .$$
Again suffices to solve the equation in $\mathcal{D}^\gamma$ for some $\gamma>1$ and thus, using again graphical notation where $\Xi= \<X>$, $\mathcal{I}$ is written as $\<I>$  and $\delta^{\mathcal{I}}$ as $\<1'>$,  the relevant trees are
$$\mathfrak{T}^\mathcal{J}:=\left\{ \<1>, \; \<20>, \;\<210>
\;\<1'>, \; \<2'0>\ \right\}$$
and
$${\mathfrak{T}}^r:= 
\left\{\ \<X>, \; \<2>, \; \<21>,  \; \<211>, \; \<22j>,
\; \<2'>, \; \<2''>, \; \<2'1> ,\; \<21'>\ \right\}\ . $$
Counterterms are only needed for negative subtrees with an even number of noises, namely 
\<2>, \<211>, \<22j>, \<11>.
We denote for $$\bar{C}=(C^{( \<2>, 1)}, C^{ \<2>,2}, C^{\<211>}, C^{\<22j>}, C^{\<11>}) \in \mathbb{R}^5$$
by  $g_ {\bar C}\in \mathfrak{G}$ the element such that
$\mathfrak{g}_{\bar C}(\<2>)= C^{( \<2>, 1)} s + C^{ \<2>,2}, $
$\mathfrak{g}_{\bar C}(\tau)= C^\tau$ for $\tau\in \left\{\<211>, \ \<22j>,\ \<11> \right\} $ and $\mathfrak{g}_{\bar C}(\bar \tau)= e(\bar \tau)$ for any other negative subtree $\bar \tau\in \mathfrak{T}_-\setminus \left\{ \<2>,  \<211>, \<22j>,\<11> \right\}$ . We denote by $\mathfrak{G}_-'\subset \mathfrak{G}_-$ the subgroup generated by these $\mathfrak{g}_{\bar C} \in \mathfrak{G}_-$.
Denote by $Z^\epsilon=(\Pi^\epsilon, \Gamma^\epsilon)$ the canonical model for the smooth noise $\xi_\epsilon$, such that the abstract Integration map $\mathcal{I}$ realises $K_t(p,q):=G_t(p,q) \kappa(t)\kappa(d(p,q)/\bar{r})$, where 
$G$ denotes the heat kernel, and such that the admissible realisation has the property that $R(j_\cdot 1)=1$.
We again write  $\hat{Z}^\epsilon:= (Z^\epsilon)^{\mathfrak{g}_{\bar C}}$ and
$\hat{Z}^\epsilon= (\hat{\Pi}^\epsilon, \hat{\Gamma}^\epsilon)$. 
%
%
%

\begin{proof}[Proof of Theorem~\ref{thm:phi3_4}]
In the setting described above, let $\eta, \gamma\in \mathbb{R}$ such that
$-1<\eta<\alpha$ and $\big| |\Xi| + |\mathcal{I}|\big|<\gamma <\delta_0$.
Note that this time the homogeneity of $|\Xi|$ and $| \<2> |$ are 
too low to apply the fixed point theorem~\ref{thm:abstract_fixed point} directly. Fortunately, this can again be resolved in \cite[Sec.~9.4]{Hai14}.
One then finds that there exists a unique abstract (local) solution $\hat{U}^\epsilon\in \mathcal{D}^{\gamma, \eta}(\mathcal{T}^{\mathcal{I}})$ 
of the abstract lift of 
$$\partial_t u-\Delta u= u^2 +\xi$$
for the renormalised models $\hat{Z}^\epsilon$.
Then, by Proposition~\ref{prop:renormalised equation}
$\hat{u}= \mathcal{R} \hat{U}^{\epsilon}$ solves \eqref{eq:phi3_4}.
The proof is thus complete, once the analogue of Proposition~\ref{prop2:convergence of models} for the model $\hat{Z}^\epsilon$ of this section  is shown, which in turn follows Proposition~\ref{prop3:stochastic estimate for trees in nonlinearity} below.
\end{proof}

\subsubsection{Stochastic estimates}
In this section we prove the following proposition.
\begin{prop}\label{prop3:stochastic estimate for trees in nonlinearity}
The conclusion of Proposition~\ref{prop:stochastic estimate for trees in nonlinearity} holds for $\tau_p \in {\mathcal{T}}^r_{\alpha, :}$ whenever $\alpha\leq 0$.
\end{prop}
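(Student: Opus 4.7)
My plan is to proceed in close analogy with the stochastic estimates for the $\Phi^4_3(E)$ equation carried out in Section~\ref{sec:stochastic_estimates2}, treating the negative-homogeneity trees case by case via chaos decomposition, Lemmas from Section~\ref{sec:Kernels estimates} on kernels with prescribed singularities, and a Kolmogorov-type criterion after localising in charts. The trees in $\mathfrak{T}^r$ of non-positive homogeneity that require attention are $\<2>$, $\<21>$, $\<211>$, $\<22j>$ (together with their ``primed'' analogues $\<2'>$, $\<2''>$, $\<2'1>$, $\<21'>$). For the ``Wick-type'' trees containing only one divergent subgraph (such as $\<21>$ and its primed versions), one can proceed essentially as in Lemma~\ref{lem:stochstic estimats wick trees}: write out the chaos decomposition, note that the only non-integrable contractions are precisely those removed by the $\mathfrak{g}_{\bar{C}}$-renormalisation of $\<2>$, and then invoke the standard kernel bounds. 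Because of the choice of admissible realisation satisfying $R(j_{\cdot} 1)=1$, the coloured $\Gamma$-terms can be handled in the same simplified way mentioned at the end of Section~\ref{sec:stochastic_estimates2}, which is what reduces the curvature bookkeeping here to a single term attached to $\<2>$.

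The central new step is the identification of the constants $C_{\epsilon}^{\<2>,1}$ and $C_{\epsilon}^{\<2>,2}$ appearing in the renormalisation of $\<2>$. Here I would expand, in a neighbourhood of the diagonal,
\[
\E\bigl[\pmb{\Pi}^{\epsilon}\<2>(t,p)\bigr] \;=\; \int_{\mathbb{R}}\!\int_{M}\!\bigl(\phi^{\epsilon}\!\star\!\phi^{\epsilon}\bigr)(s-s')\, G_{t-s+\epsilon^{2}}(p,z)\, G_{t-s'+\epsilon^{2}}(p,z)\,\dVol_{z}\,ds\,ds'\;,
\]
and apply Theorem~\ref{thm:heat_kernel_expansion} together with the volume expansion~\eqref{eq:volume_asymptotics}. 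The leading singular term is cancelled by $C_{\epsilon}^{\<2>,1}$; the subleading logarithmic divergence arises from the Seeley--DeWitt coefficient $\Phi_{1}(p,p)=\tfrac{1}{3}s(p)\,\id_{E|_{p}}$ and from the Riemannian volume expansion, and it is precisely this combination that is absorbed by $C_{\epsilon}^{\<2>,2}\,s(p)$. All further remainders are uniformly bounded and convergent in $\epsilon$, giving the analogue of Lemma~\ref{lem:first counterterm}/Lemma~\ref{lem:phi43 first counterterm} for $\<2>$ on a four-manifold.

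Once $\<2>$ is under control, the ``two-loop'' diagrams $\<211>$ and $\<22j>$ are handled by the same renormalised--kernel technique used for the renormalised sunset in Lemma~\ref{lem:phi43 second counterterm}: one replaces the innermost divergent subgraph by its renormalised version, uses the identity $\mathcal{R}Q_{\epsilon} = Q_{\epsilon}-C_{\epsilon}\delta$ to extract a smooth kernel plus a local counterterm, and then estimates the outer contractions with Lemma~\ref{technical Lemma for Kernels} and the results of Section~\ref{sec:Kernels with prescribed singularities}. The chaos decomposition of $\hat{\Pi}^{\epsilon}_{\star}\tau(\phi^{\lambda}_{\star})$ is then a finite sum of diagrams satisfying the hypotheses of the graph-theoretic kernel bound (Theorem~\ref{thm:HQ} below/Section~\ref{sec:Kernels estimates}), which yields the desired $\lambda^{\alpha+2\kappa}$ estimate.

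The main obstacle I anticipate is the $\Gamma$-bound for the tree $\<21'>$ (and the corresponding control on $(\hat{\Pi}^{\epsilon}_{q}\hat{\Gamma}^{\epsilon}_{q,p}-\hat{\Pi}^{\epsilon}_{p})\<21'>_{(e_{p})}$), which plays here the role that $\<3'2>_{(e_{p})}$ played in Lemma~\ref{lem:stochastic bounds 3'2}: positive and negative renormalisations must be used \emph{simultaneously} on the same edge, so naive application of the graph estimates of Section~\ref{sec:Kernels estimates} is insufficient. I would handle it by the same splitting as in~\eqref{eq:renormalised error bound 3'2 A}--\eqref{eq:renormalised error bound 3'2 B}, decomposing the difference into an ``$A$-part'' (where the jet factor is replaced by $R(\Gamma_{\star,p}e_{p}-e_{p})$ and the $\kappa$-improvement comes from the resulting H\"older factor) and a ``$B$-part'' (which involves only a smooth multiplication with an already-controlled distribution), and integrating the resulting singular kernel first in one and then in the other variable according to the position of $|x|$ relative to $\tfrac{1}{2}d(x,z)$, exactly as in the proof of Lemma~\ref{lem:stochastic bounds 3'2}. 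The $\epsilon$-dependent continuity statements then follow by the standard trick of replacing $\rho_{\epsilon}$ by $\rho_{\epsilon}-\delta$ edge by edge.
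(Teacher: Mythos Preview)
Your overall strategy—chaos decomposition, extraction of the $\<2>$ counterterms via the heat-kernel and volume expansions of Theorem~\ref{thm:heat_kernel_expansion} and~\eqref{eq:volume_asymptotics}, renormalised-kernel treatment of the $\<11>$ subdivergence, and kernel estimates from Section~\ref{sec:Kernels estimates}—matches the paper's approach. However, you misplace the main technical difficulty.

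You single out the $\Gamma$-bound for $\<21'>$ as the central obstacle, to be handled by the A/B splitting of Lemma~\ref{lem:stochastic bounds 3'2}. This is unnecessary. The tree actually analogous to $\<3'2>$ (jet inside the integrated subtree) is $\<2'1>$, not $\<21'>$; but more importantly, because $\phi^3_4$ is scalar and the admissible realisation is chosen with $R(j_\cdot 1)=1$, the paper disposes of \emph{all} primed trees $\<2'>$, $\<21'>$, $\<2'1>$ in one line: $\<21'>$ is a product with a jet (as for $\<32'>$ in Lemma~\ref{lem:stochastic bounds 32 and 32'}), and for $\<2'1>$ the choice $R(j_\cdot 1)=1$ trivialises the $\Gamma$-difference, exactly as the remark following Lemma~\ref{lem:stochastic bounds 3'2} anticipates. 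You even mention this simplification early in your proposal but then fail to apply it.

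The genuine work in the paper's proof lies in the chaos expansions of $\<22j>$ and $\<211>$: after renormalisation, certain diagrams with overlapping subdivergences (those in \eqref{eq:for HQ-1} and \eqref{eq:for HQ-2}) cannot be bounded by the elementary lemmas of Section~\ref{sec:Kernels with prescribed singularities} alone and require the Hairer--Quastel criterion, Theorem~\ref{thm:HQ}. You do invoke this theorem, but as a generic tool applied to ``a finite sum of diagrams''; in the paper it is precisely the identification and treatment of these specific overlapping diagrams that constitutes the substance of the argument.
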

Throughout this section, we use (essentially) the same graphical as in Section~\ref{sec:stochastic_estimates2}.
Thus, we write
$$
\begin{tikzpicture}[scale=0.35,baseline=0.3cm]
	\node at (0,0.0)  [root] (root) {};
	\node at (0,2.5) [reddot] (top) {};
	\draw[red, keps] (top) to[bend left=60] (root); 
	\draw[red, keps] (top) to[bend right=60] (root); 
\end{tikzpicture}\; =   C^{( \<2>, 1)}_\epsilon + 
C^{ \<2>,2}_\epsilon s(\star)
\qquad \text{and } \qquad
\begin{tikzpicture}[scale=0.35,baseline=0.3cm]
	\node at (0.0,0)  [root] (root) {};
	\node at (0,2.5) [bluedot] (top) {};
	\draw[blue,keps] (top) to[bend left=60] (root); 
	\draw[blue,keps] (top) to[bend right=60] (root); 
\end{tikzpicture}\; = \;
\begin{tikzpicture}[scale=0.35,baseline=0.3cm]
	\node at (0.0,0)  [root] (root) {};
	\node at (0,2.5) [dot] (top) {};
	\draw[keps] (top) to[bend left=60] (root); 
	\draw[keps] (top) to[bend right=60] (root); 
\end{tikzpicture}\; - \;
\begin{tikzpicture}[scale=0.35,baseline=0.3cm]
	\node at (0.0,0)  [root] (root) {};
	\node at (0,2.5) [reddot] (top) {};
	\draw[red, keps] (top) to[bend left=60] (root); 
	\draw[red, keps] (top) to[bend right=60] (root); 
\end{tikzpicture}\;  .
$$

\begin{lemma}
The section $\begin{tikzpicture}[scale=0.35,baseline=0.3cm]
	\node at (0.0,0)  [root] (int) {};
	\node at (0,2.5) [bluedot] (top) {};
	\draw[blue, keps] (top) to[bend left=60] (int); 
	\draw[blue, keps] (top) to[bend right=60] (int); 
\end{tikzpicture}\;$ of $\mathcal{C}^\infty(\RR\times M)$ 
is uniformly bounded for $\epsilon\in (0,1]$ and converges uniformly.
\end{lemma}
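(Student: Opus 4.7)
The plan is to compute $\mathbb{E}[\pmb{\Pi}^\epsilon\<2>(\star)]$ explicitly and identify the two divergent components of it, which should match $C_\epsilon^{(\<2>,1)}$ and $C_\epsilon^{(\<2>,2)}s(\star)$ respectively. Writing $\star = (t,p)$ and using the covariance of the mollified noise
\[
\mathbb{E}[\xi_\epsilon(s_1,q_1)\xi_\epsilon(s_2,q_2)] = \int_\mathbb{R} \phi^\epsilon(s_1-r)\phi^\epsilon(s_2-r)\,dr\cdot G_{2\epsilon^2}(q_1,q_2)\;,
\]
together with the semigroup property $\int_M G_{t_1}(p,q_1)G_{2\epsilon^2}(q_1,q_2)G_{t_2}(q_2,p)\,dq_1\,dq_2 = G_{t_1+t_2+2\epsilon^2}(p,p)$, I would first reduce to the expression
\[
\mathbb{E}[\pmb{\Pi}^\epsilon\<2>(t,p)] = \int_{\mathbb{R}^3}\phi^\epsilon(s_1-r)\phi^\epsilon(s_2-r)\,G_{2t-s_1-s_2+2\epsilon^2}(p,p)\,ds_1\,ds_2\,dr + R_1^\epsilon(t,p)\;,
\]
where $R_1^\epsilon$ is uniformly bounded and converges as $\epsilon\to 0$ (the truncations $\kappa$ only affect large times, producing smooth remainders).

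Next I would use Theorem~\ref{thm:heat_kernel_expansion} in dimension four to expand the diagonal heat kernel as
\[
G_\tau(p,p) = \frac{1}{(4\pi\tau)^2}\Phi_0(p,p) + \frac{1}{(4\pi\tau)^2}\tau\,\Phi_1(p,p) + O(1)
\]
uniformly in $p$ and in $\tau\in (0,1]$, with $\Phi_0(p,p)=1$ and $\Phi_1(p,p) = \tfrac{1}{3}s(p)$. Substituting this expansion into the previous display and changing variables via $\tau_i := t-s_i$, the leading term reproduces exactly the flat‐space kernel $\bar Z_{\tau_1+\tau_2+2\epsilon^2}(0)$ (note that $\bar Z_\tau(0)=(4\pi\tau)^{-2}\kappa(\tau)$), and so after relabelling integration variables its contribution to $\mathbb{E}[\pmb{\Pi}^\epsilon\<2>](\star)$ is precisely $C_\epsilon^{(\<2>,1)}$. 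Similarly, the subleading $\tfrac{1}{3}s(p)\cdot\tau$‐term gives exactly $C_\epsilon^{\<2>,2}s(p)$ thanks to the extra factor $(s+s'+2\epsilon^2)$ in \eqref{countertermm}. The $O(1)$ tail contributes a term that is integrable in $(\tau_1,\tau_2)$ uniformly in $\epsilon$ (since $\phi^\epsilon\star\phi^\epsilon$ concentrates on a bounded interval and the remainder is at most weakly singular), and by dominated convergence this term converges uniformly as $\epsilon\to 0$.

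The last step is to check that $R_1^\epsilon$, together with the uniformly convergent piece from the heat kernel expansion, converges uniformly in $p\in M$ (not just pointwise). This follows because all bounds in Theorem~\ref{thm:heat_kernel_expansion} are uniform in $p\in M$ (as $M$ is compact), and the dependence on $p$ in the remainder is through smooth sections $\Phi_i(p,p)$ and through $G_\tau(p,p)$, so the small–$\tau$ remainder is smooth in $p$ with uniform bounds. The main (mild) obstacle is bookkeeping: one must verify that the truncations $\kappa(\tau)\kappa(d(p,q)/\bar r)$ built into $K$ and into $\bar Z$ match up in such a way that the divergent parts cancel \emph{exactly}, i.e.\ without leaving an $\epsilon$-dependent mismatch. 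This is handled by observing that any discrepancy introduced by these cut-offs comes from regions where $\tau\geq c>0$ or $d(p,q)\geq c>0$, on which the integrands are smooth and uniformly bounded in $\epsilon$, so the differences converge uniformly as $\epsilon\to 0$ and can be absorbed into $R_1^\epsilon$.
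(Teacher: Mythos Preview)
Your proposal is correct and follows essentially the same route as the paper: reduce $\mathbb{E}[\pmb{\Pi}^\epsilon\<2>(t,p)]$ via the semigroup property to an integral of the diagonal heat kernel $G_{s+s'+2\epsilon^2}(p,p)$, then invoke Theorem~\ref{thm:heat_kernel_expansion}. The paper's proof is terser (it stops at ``the lemma follows from Theorem~\ref{thm:heat_kernel_expansion}''), whereas you explicitly identify how the $\Phi_0$ and $\tfrac{1}{3}s(p)\,\Phi_1$ terms of the expansion match $C_\epsilon^{\<2>,1}$ and $C_\epsilon^{\<2>,2}s(p)$ and why the remaining terms are uniformly integrable; this is exactly the content the paper leaves implicit.
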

\begin{proof}
We proceed as previously and find that
\begin{align*}
\pmb{\Pi}^{(\epsilon)}  \<1>  (t,p)&= \int_\mathbb{R}  \langle \xi , \phi^\epsilon (s- \cdot ) G_{t-s+\epsilon^2}(p, \cdot) \rangle \kappa (t-s) ds + r_\epsilon(t,p) .
\end{align*}
Therefore,
$$
\begin{tikzpicture}[scale=0.35,baseline=0.3cm]
	\node at (0.0,0)  [root] (root) {};
	\node at (0,2.5) [dot] (top) {};
	\draw[keps] (top) to[bend left=60] (root); 
	\draw[keps] (top) to[bend right=60] (root); 
\end{tikzpicture} 
 =  \int_{\mathbb{R}^3} \int_M  \phi^\epsilon (t-s+r)\phi^\epsilon (t-s+r )
G_{s+s'+2\epsilon^2}(p, p) \kappa(s)\kappa (s')
 ds  ds' dr + R_\epsilon \ ,
$$
where $R_\epsilon$ satisfies the conclusion of the lemma.
Thus, the lemma follows from Theorem~\ref{thm:heat_kernel_expansion}.
\end{proof}
The Lemma follows from a simplified ad verbatim adaptation of the proofs of Lemma~\ref{lem:phi43 first counterterm} and~\ref{lem:phi43 second counterterm}.
\begin{lemma}
The smooth functions 
$$
\tikzsetnextfilename{phi34ct'b}
\begin{tikzpicture}[scale=0.35,baseline=0.8cm]
	\node at (-2,1)  [root] (left) {};
	\node at (0.0,3)  [bluedot] (left1) {};
	\node at (-2,5)  [bluedot] (left2) {};
	\node at (-2,3) [bluedot] (variable3) {};
	\node at (0,5.7) [bluedot] (variable4) {};
	\draw[blue, kernel] (left1) to   (left);
	\draw[blue, kernel] (left2) to  (left1);
	\draw[blue, keps] (variable3) to  (left); 
	\draw[blue, keps] (variable4) to   (left1); 
	\draw[blue, keps] (variable3) to   (left2); 
	\draw[blue, keps] (variable4) to   (left2);
\end{tikzpicture}
\; 
:=
\tikzsetnextfilename{phi34ct'bl}
\begin{tikzpicture}[scale=0.35,baseline=0.8cm]
	\node at (-2,1)  [root] (left) {};
	\node at (0.0,3)  [dot] (left1) {};
	\node at (-2,5)  [dot] (left2) {};
	\node at (-2,3) [dot] (variable3) {};
	\node at (0,5.7) [dot] (variable4) {};
	\draw[kernel] (left1) to   (left);
	\draw[kernel] (left2) to  (left1);
	\draw[keps] (variable3) to  (left); 
	\draw[keps] (variable4) to   (left1); 
	\draw[keps] (variable3) to   (left2); 
	\draw[keps] (variable4) to   (left2);
\end{tikzpicture}\;
 -C^{\<211>}_\epsilon\ ,
\qquad
\tikzsetnextfilename{phi34ct211''b}
\begin{tikzpicture}[scale=0.35,baseline=.9cm]
	\node at (0,2)  [root] (int) {};
	\node at (-2,3)  [bluedot] (left) {};
	\node at (0.0,5.5)  [bluedot] (ttop) {};
	\node at (2,3)  [bluedot] (right) {};
	\node at (0,4) [bluedot] (top) {};
	\draw[blue, kernel] (left) to  (int);
	\draw[blue, kernel] (right) to  (int);
	\draw[blue, keps] (ttop) to  (right); 
	\draw[blue, keps] (top) to  (right); 
	\draw[blue, keps] (top) to  (left); 
	\draw[blue, keps] (ttop) to (left);
	\end{tikzpicture} \;
	:=
	\tikzsetnextfilename{phi34ct211''bl}
\begin{tikzpicture}[scale=0.35,baseline=.9cm]
	\node at (0.0,2)  [root] (int) {};
	\node at (-2,3)  [dot] (left) {};
	\node at (0,5.5)  [dot] (ttop) {};
	\node at (2,3)  [dot] (right) {};
	\node at (0,4) [dot] (top) {};
	\draw[kernel] (left) to  (int);
	\draw[kernel] (right) to  (int);
	\draw[keps] (ttop) to  (right); 
	\draw[keps] (top) to  (right); 
	\draw[keps] (top) to  (left); 
	\draw[keps] (ttop) to (left);
	\end{tikzpicture} \;
	-C^{\<22j>}_\epsilon 
$$
are uniformly bounded for $\epsilon\in (0,1]$ and converge uniformly as $\epsilon\to 0$. Furthermore, denoting the variable corresponding to the higher green dot 
by $\star_1$ and the one corresponding to the lower one by $\star_2$,
one has 
$$
\tikzsetnextfilename{phi34ct11''bl}
\begin{tikzpicture}[scale=0.35,baseline=0.6cm]
	\node at (-2,1)  [root] (left) {};
	\node at (-2,3)  [root] (left1) {};
	\node at (0.0,2) [bluedot] (variable) {};
	\draw[blue, kernel] (left1) to (left);
	\draw[blue, keps] (variable) to (left1); 
	\draw[blue, keps] (left) to (variable); 
\end{tikzpicture}\; 
:= 
\tikzsetnextfilename{phi34ct11''bldef}
\begin{tikzpicture}[scale=0.35,baseline=0.6cm]
	\node at (-2,1)  [root] (left) {};
	\node at (-2,3)  [root] (left1) {};
	\node at (0.0,2) [dot] (variable) {};
	\draw[kernel] (left1) to (left);
	\draw[keps] (variable) to (left1); 
	\draw[keps] (left) to (variable); 
\end{tikzpicture}\; - C^{\<11> }_\epsilon \delta_{\star_2} (\star_1) =
\mathcal{R}Q_\epsilon (\star_1, \star_2) -k_\epsilon(\star_2) \delta_{\star_2} (\star_1) 
$$
where the $k_\epsilon\in \mathcal{C}^\infty (\RR\times M)$ is uniformly bounded for $\epsilon \in (0,1]$ and converges uniformly as $\epsilon\to 0$.
\end{lemma}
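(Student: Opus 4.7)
The plan is to adapt the strategy used in the proofs of Lemmas~\ref{lem:phi43 first counterterm} and~\ref{lem:phi43 second counterterm} to the scalar-valued four-dimensional setting. I would treat the three diagrams in turn, with the first two being essentially analogous and the third requiring the additional sunset-type analysis.

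For the first two diagrams (corresponding to the trees \<211> and \<22j>), I would first apply Wick's theorem to write each blue diagram as an absolutely convergent integral over $(\RR\times M)^k$ of a product of the factors $K\ast\rho_\epsilon$ and $K$. Localising to a small neighbourhood of the diagonal via the cut-off $\kappa$ (the complement contributes a uniformly convergent smooth remainder), I would substitute the heat-kernel expansion of Theorem~\ref{thm:heat_kernel_expansion}, namely $G_t(p,q) = Z_t(p,q) + Z_t^1(p,q) + \mathcal{O}(t^{2-d/2})$, together with the volume expansion \eqref{eq:volume_asymptotics}. This splits the integral into a leading Euclidean part, obtained by replacing $Z_t(p,q)$ with $\bar Z_t(x)$ in normal coordinates and $\dVol$ with Lebesgue measure, plus subleading terms carrying either an extra factor of $t$ (from $\Phi_1$) or an extra factor of $|x|^2$ (from the density expansion), and a smooth error. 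By the very definition of $C^{\<211>}_\epsilon$ and $C^{\<22j>}_\epsilon$ as Euclidean integrals, the leading part cancels. The subleading pieces carry at least two additional units of parabolic regularity, which by a direct power-counting check (using the kernel bounds of Section~\ref{sec:Kernels with prescribed singularities}) produces integrals that are uniformly bounded in $\epsilon$ and converge as $\epsilon\to 0$.

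For the third diagram, I would follow Lemma~\ref{lem:phi43 second counterterm} line by line, the only change being that $\tau_{q,p}$ and $h^\sharp$ are replaced by scalars. Setting $Q_\epsilon(z_1, z_2) = (K\ast\rho_\epsilon)(z_1,z_2)^2\,K(z_1,z_2)$ as in \eqref{eq:renormalised sunset}, one has
\begin{equation*}
\mathcal R Q_\epsilon(z_1,z_2) \;=\; Q_\epsilon(z_1,z_2) - \Bigl(\int Q_\epsilon(z_1',z_2)\,\dVol_{z_1'}\Bigr)\,\delta_{z_2}(z_1)\;,
\end{equation*}
so that it suffices to show that $k_\epsilon(\star_2) := \int Q_\epsilon(z_1,\star_2)\,\dVol_{z_1} - C^{\<11>}_\epsilon$ is uniformly bounded and converges. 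Using the heat-kernel expansion in a normal-coordinate chart around $\star_2$ and comparing against the Euclidean integral defining $C^{\<11>}_\epsilon$, the divergence at $\epsilon = 0$ cancels and the remainder is bounded by the same power-counting as above.

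The main obstacle is the bookkeeping required to verify that no geometric counterterm (involving the scalar curvature $s$) is produced for these three diagrams. For the diagram \<2> in the same equation, the first subleading term $Z_t^1 \sim t\,\Phi_1$ in the heat-kernel expansion, with $\Phi_1(p,p) = \tfrac13 s(p)\,\mathrm{id}$, yields a logarithmically divergent contribution proportional to $s$, which is precisely why $C^{\<2>,2}_\epsilon s(\star)$ appears in the renormalisation. For the three diagrams in the present lemma, one must check that the additional unit of regularity carried by the $\Phi_1$-correction is enough to render the corresponding integrals uniformly convergent. This is a straightforward power-count in each case, as all three trees have homogeneity strictly greater than the critical value at which the curvature correction would itself produce a divergence. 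Once this is verified, the rest of the argument is a routine adaptation of the two earlier lemmas.
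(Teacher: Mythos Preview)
Your proposal is correct and follows exactly the approach indicated in the paper, which simply states that the lemma follows from a simplified ad verbatim adaptation of the proofs of Lemmas~\ref{lem:phi43 first counterterm} and~\ref{lem:phi43 second counterterm}. Your sketch in fact supplies considerably more detail than the paper does, including the power-counting check explaining why no curvature counterterm is needed for these three diagrams (in contrast to \<2>), which is a useful clarification.
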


\begin{proof}[Proof of Proposition~\ref{prop3:stochastic estimate for trees in nonlinearity}]
We proceed exactly as in the proof of Propositions~\ref{prop:stochastic estimate for trees in nonlinearity}~\&~\ref{prop2:stochastic estimate for trees in nonlinearity}. The term $\bigl(\hat \Pi_\star^{(\epsilon)}\<2>\bigr)(\phi_\star^\lambda)$ can be treated exactly as in the estimate of the analogue term in Proposition~\ref{prop:stochastic estimate for trees in nonlinearity}. 
We next study the Wiener Chaos expansion of
$$\bigl( \hat{\Pi}_\star^\epsilon \<21>\bigr)(\phi_\star^\lambda), \qquad \bigl( \hat{\Pi}_\star^\epsilon \<22j>\bigr)(\phi_\star^\lambda), \qquad
	\bigl( \hat{\Pi}^\epsilon_\star\<211>\bigr)(\phi_\star^\lambda) \ .$$ 
Since
 $$
\bigl( \Pi_{\star}^\epsilon \<21>\bigr) (\phi_\star^\lambda)
=
\begin{tikzpicture}[scale=0.35,baseline=1.1cm]
	\node at (0.8,1.5)  [root] (root) {};
	\node at (-1.3,3)  [dot] (left1) {};
	\node at (-1.3,5)  [dot] (left2) {};
	\node at (0.8,3) [var] (variable2) {};
	\node at (0.8,4.3) [var] (variable3) {};
	\node at (0.8,5.7) [var] (variable4) {};
	\draw[testfcn] (left1) to (root);
	\draw[kernel] (left2) to  (left1);
	\draw[keps] (variable2) to  (left1); 
	\draw[keps] (variable3) to  (left2); 
	\draw[keps] (variable4) to (left2);
\end{tikzpicture}
+
\begin{tikzpicture}[scale=0.35,baseline=1.1cm]
	\node at (0.8,1.5)  [root] (root) {};
	\node at (-1.3,3)  [dot] (left1) {};
	\node at (-1.3,5)  [dot] (left2) {};
	\node at (0.8,3) [var] (variable2) {};
	\node at (0.8,5) [dot] (variable4) {};
	\draw[testfcn] (left1) to (root);
	\draw[kernel] (left2) to  (left1);
	\draw[keps] (variable2) to  (left1); 
	\draw[keps,bend left=40] (variable4) to  (left2); 
	\draw[keps,bend right=40] (variable4) to (left2);
\end{tikzpicture}
+ 2\;
\begin{tikzpicture}[scale=0.35,baseline=1.1cm]
	\node at (0.8,1.5)  [root] (root) {};
	\node at (-1.3,3)  [dot] (left1) {};
	\node at (-1.3,5)  [dot] (left2) {};
	\node at (0.8,4) [dot] (variable2) {};
	\node at (0.8,5.7) [var] (variable3) {};	
	\draw[testfcn] (left1) to (root);
	\draw[kernel] (left2) to  (left1);
	\draw[keps] (variable2) to  (left1); 
	\draw[keps] (variable3) to  (left2); 
	\draw[keps] (variable2) to (left2); 
 \end{tikzpicture}\;,
 $$
we find that
  $$
\bigl( \hat{\Pi}_\star^\epsilon \<21>\bigr)(\phi_\star^\lambda)
=
\begin{tikzpicture}[scale=0.35,baseline=1.1cm]
	\node at (0.8,1.5)  [root] (root) {};
	\node at (-1.3,3)  [dot] (left1) {};
	\node at (-1.3,5)  [dot] (left2) {};
	\node at (0.8,3) [var] (variable2) {};
	\node at (0.8,4.3) [var] (variable3) {};
	\node at (0.8,5.7) [var] (variable4) {};	
	\draw[testfcn] (left1) to (root);
	\draw[kernel] (left2) to  (left1);
	\draw[keps] (variable2) to  (left1); 
	\draw[keps] (variable3) to  (left2); 
	\draw[keps] (variable4) to (left2);
\end{tikzpicture}
+
\begin{tikzpicture}[scale=0.35,baseline=1.1cm]
	\node at (0.8,1.5)  [root] (root) {};
	\node at (-1.3,3)  [dot] (left1) {};
	\node at (-1.3,5)  [bluedot] (left2) {};
	\node at (0.8,3) [var] (variable2) {};
	\node at (0.8,5) [bluedot] (variable4) {};
	\draw[testfcn] (left1) to (root);
	\draw[kernel] (left2) to  (left1);
	\draw[keps] (variable2) to  (left1); 
	\draw[blue, keps,bend left=40] (variable4) to  (left2); 
	\draw[blue, keps,bend right=40] (variable4) to (left2);
\end{tikzpicture}
+ 2\;
\begin{tikzpicture}[scale=0.35,baseline=1.1cm]
	\node at (0.8,1.5)  [root] (root) {};
	\node at (-1.3,3)  [dot] (left1) {};
	\node at (-1.3,5)  [dot] (left2) {};
	\node at (0.8,4) [bluedot] (variable2) {};
	\node at (0.8,5.7) [var] (variable3) {};	
	\draw[testfcn] (left1) to (root);
	\draw[blue, kernel] (left2) to  (left1);
	\draw[blue, keps] (variable2) to  (left1); 
	\draw[keps] (variable3) to  (left2); 
	\draw[blue, keps] (variable2) to (left2); 
 \end{tikzpicture}\;.
 $$
 We observe that all three trees can by bounded straightforwardly using the estimates of Section~\ref{sec:Kernels with prescribed singularities}.
Similarly one finds that
\begin{align*}
 \bigl( \Pi_\star^\epsilon \<22j>\bigr)(\phi_\star^\lambda)
=&
\begin{tikzpicture}[scale=0.35,baseline=.9cm]
	\node at (0,0)  [root] (root) {};
	\node at (0,2)  [dot] (int) {};
	\node at (-2,3)  [dot] (left) {};
	\node at (-1,5) [var] (variable3) {};
	\node at (-3,5) [var] (variable4) {};
	\node at (2,3)  [dot] (right) {};
	\node at (1,5) [var] (variable1) {};
	\node at (3,5) [var] (variable2) {};
	\draw[testfcn] (int) to (root);
	\draw[kernel] (left) to  (int);
	\draw[kernel] (right) to  (int);
	\draw[keps] (variable2) to  (right); 
	\draw[keps] (variable1) to  (right); 
	\draw[keps] (variable3) to  (left); 
	\draw[keps] (variable4) to (left);
\end{tikzpicture}
+ 2
\begin{tikzpicture}[scale=0.35,baseline=.9cm]
	\node at (0,0)  [root] (root) {};
	\node at (0,2)  [dot] (int) {};
	\node at (-2,3)  [dot] (left) {};
	\node at (-2,5) [dot] (variable3) {};
	\node at (2,3)  [dot] (right) {};
	\node at (1,5) [var] (variable1) {};
	\node at (3,5) [var] (variable2) {};
	\draw[testfcn] (int) to (root);
	\draw[kernel] (left) to  (int);
	\draw[kernel] (right) to  (int);
	\draw[keps] (variable2) to  (right); 
	\draw[keps] (variable1) to  (right); 
	\draw[keps, bend left=40] (variable3) to  (left); 
	\draw[keps, bend right=40] (variable3) to (left);
\end{tikzpicture}\\
&
+ 4\;
\begin{tikzpicture}[scale=0.35,baseline=.9cm]
	\node at (0,0)  [root] (root) {};
	\node at (0,2)  [dot] (int) {};
	\node at (-2,3)  [dot] (left) {};
	\node at (-3,5) [var] (variable4) {};
	\node at (2,3)  [dot] (right) {};
	\node at (0,4) [dot] (top) {};
	\node at (3,5) [var] (variable2) {};
	\draw[testfcn] (int) to (root);
	\draw[kernel] (left) to  (int);
	\draw[kernel] (right) to  (int);
	\draw[keps] (variable2) to  (right); 
	\draw[keps] (top) to  (right); 
	\draw[keps] (top) to  (left); 
	\draw[keps] (variable4) to (left);
	\end{tikzpicture}
	+ \begin{tikzpicture}[scale=0.35,baseline=.9cm]
	\node at (0,0)  [root] (root) {};
	\node at (0,2)  [dot] (int) {};
	\node at (-2,3)  [dot] (left) {};
	\node at (-2,5) [dot] (variable3) {};
	\node at (2,3)  [dot] (right) {};
	\node at (2,5) [dot] (variable1) {};
	\draw[testfcn] (int) to (root);
	\draw[kernel] (left) to  (int);
	\draw[kernel] (right) to  (int);
	\draw[keps, bend left=40] (variable1) to  (right); 
	\draw[keps, bend right=40] (variable1) to  (right); 
	\draw[keps, bend left=40] (variable3) to  (left); 
	\draw[keps, bend right=40] (variable3) to (left);
\end{tikzpicture} 
+2 
\begin{tikzpicture}[scale=0.35,baseline=.9cm]
	\node at (0,0)  [root] (root) {};
	\node at (0,2)  [dot] (int) {};
	\node at (-2,3)  [dot] (left) {};
	\node at (0,5.5)  [dot] (ttop) {};
	\node at (2,3)  [dot] (right) {};
	\node at (0,4) [dot] (top) {};
	\draw[testfcn] (int) to (root);
	\draw[kernel] (left) to  (int);
	\draw[kernel] (right) to  (int);
	\draw[keps] (ttop) to  (right); 
	\draw[keps] (top) to  (right); 
	\draw[keps] (top) to  (left); 
	\draw[keps] (ttop) to (left);
	\end{tikzpicture}
	\end{align*} 
	and thus
	
\begin{align*}
	\bigl( \hat{\Pi}_\star^\epsilon \<22j>\bigr)(\phi_\star^\lambda)
= &
\begin{tikzpicture}[scale=0.35,baseline=.9cm]
	\node at (0,0.0)  [root] (root) {};
	\node at (0,2)  [dot] (int) {};
	\node at (-2,3)  [dot] (left) {};
	\node at (-1,5) [var] (variable3) {};
	\node at (-3,5) [var] (variable4) {};
	\node at (2,3)  [dot] (right) {};
	\node at (1,5) [var] (variable1) {};
	\node at (3,5) [var] (variable2) {};
	\draw[testfcn] (int) to (root);
	\draw[kernel] (left) to  (int);
	\draw[kernel] (right) to  (int);
	\draw[keps] (variable2) to  (right); 
	\draw[keps] (variable1) to  (right); 
	\draw[keps] (variable3) to  (left); 
	\draw[keps] (variable4) to (left);
\end{tikzpicture}
+ 2
\begin{tikzpicture}[scale=0.35,baseline=.9cm]
	\node at (0,0.0)  [root] (root) {};
	\node at (0,2)  [dot] (int) {};
	\node at (-2,3)  [dot] (left) {};
	\node at (-2,5) [bluedot] (variable3) {};
	\node at (2,3)  [dot] (right) {};
	\node at (1,5) [var] (variable1) {};
	\node at (3,5) [var] (variable2) {};
	\draw[testfcn] (int) to (root);
	\draw[kernel] (left) to  (int);
	\draw[kernel] (right) to  (int);
	\draw[keps] (variable2) to  (right); 
	\draw[keps] (variable1) to  (right); 
	\draw[blue, keps, bend left=40] (variable3) to  (left); 
	\draw[blue, keps, bend right=40] (variable3) to (left);
\end{tikzpicture}\\
&
+ 4\;
\begin{tikzpicture}[scale=0.35,baseline=.9cm]
	\node at (0,0.0)  [root] (root) {};
	\node at (0,2)  [dot] (int) {};
	\node at (-2,3)  [dot] (left) {};
	\node at (-3,5) [var] (variable4) {};
	\node at (2,3)  [dot] (right) {};
	\node at (0,4) [dot] (top) {};
	\node at (3,5) [var] (variable2) {};
	\draw[testfcn] (int) to (root);
	\draw[kernel] (left) to  (int);
	\draw[kernel] (right) to  (int);
	\draw[keps] (variable2) to  (right); 
	\draw[keps] (top) to  (right); 
	\draw[keps] (top) to  (left); 
	\draw[keps] (variable4) to (left);
	\end{tikzpicture}
	+ \begin{tikzpicture}[scale=0.35,baseline=.9cm]
	\node at (0,0.0)  [root] (root) {};
	\node at (0,2)  [dot] (int) {};
	\node at (-2,3)  [dot] (left) {};
	\node at (-2,5) [bluedot] (variable3) {};
	\node at (2,3)  [dot] (right) {};
	\node at (2,5) [bluedot] (variable1) {};
	\draw[testfcn] (int) to (root);
	\draw[kernel] (left) to  (int);
	\draw[kernel] (right) to  (int);
	\draw[blue, keps, bend left=40] (variable1) to  (right); 
	\draw[blue, keps, bend right=40] (variable1) to  (right); 
	\draw[blue, keps, bend left=40] (variable3) to  (left); 
	\draw[blue, keps, bend right=40] (variable3) to (left);
\end{tikzpicture} 
+2 \;
\begin{tikzpicture}[scale=0.35,baseline=.9cm]
	\node at (0.0,0)  [root] (root) {};
	\node at (0,2)  [bluedot] (int) {};
	\node at (-2,3)  [bluedot] (left) {};
	\node at (0,5.5)  [bluedot] (ttop) {};
	\node at (2,3)  [bluedot] (right) {};
	\node at (0,4) [bluedot] (top) {};
	\draw[testfcn] (int) to (root);
	\draw[blue, kernel] (left) to  (int);
	\draw[blue, kernel] (right) to  (int);
	\draw[blue, keps] (ttop) to  (right); 
	\draw[blue, keps] (top) to  (right); 
	\draw[blue, keps] (top) to  (left); 
	\draw[blue, keps] (ttop) to (left);
	\end{tikzpicture} \ .
\end{align*}	
In order to estimate the term
\begin{equation}\label{eq:for HQ-1}
\tikzsetnextfilename{phi34a1}
\begin{tikzpicture}[scale=0.35,baseline=.9cm]
	\node at (0,0)  [root] (root) {};
	\node at (0,2)  [dot] (int) {};
	\node at (-2,3)  [dot] (left) {};
	\node at (-3,5) [var] (variable4) {};
	\node at (2,3)  [dot] (right) {};
	\node at (0,4) [dot] (top) {};
	\node at (3,5) [var] (variable2) {};
	\draw[testfcn] (int) to (root);
	\draw[kernel] (left) to  (int);
	\draw[kernel] (right) to  (int);
	\draw[keps] (variable2) to  (right); 
	\draw[keps] (top) to  (right); 
	\draw[keps] (top) to  (left); 
	\draw[keps] (variable4) to (left);
	\end{tikzpicture}
\end{equation}
we apply Theorem~\ref{thm:HQ}, while the other terms are easily bounded using the estimates of Section~\ref{sec:Kernels with prescribed singularities}. Next note that
$$
	\bigl( \Pi_\star^\epsilon\<211>\bigr)(\phi_\star^\lambda)
=
\begin{tikzpicture}[scale=0.35,baseline=0.8cm]
	\node at (0.0,-0.8)  [root] (root) {};
	\node at (-2,1)  [dot] (left) {};
	\node at (-2,3)  [dot] (left1) {};
	\node at (-2,5)  [dot] (left2) {};
	\node at (0,1) [var] (variable1) {};
	\node at (0,3) [var] (variable2) {};
	\node at (0,4.3) [var] (variable3) {};
	\node at (0,5.7) [var] (variable4) {};
	
	\draw[testfcn] (left) to (root);

	\draw[kernel1] (left1) to   (left);
	\draw[kernel] (left2) to  (left1);
	\draw[keps] (variable2) to  (left1); 
	\draw[keps] (variable1) to   (left); 
	\draw[keps] (variable3) to   (left2); 
	\draw[keps] (variable4) to   (left2);
\end{tikzpicture}
+\; 
\begin{tikzpicture}[scale=0.35,baseline=0.8cm]
	\node at (0.0,-0.8)  [root] (root) {};
	\node at (-2,1)  [dot] (left) {};
	\node at (-2,3)  [dot] (left1) {};
	\node at (-2,5)  [dot] (left2) {};
	\node at (0,1) [var] (variable1) {};
	\node at (0,3) [var] (variable2) {};
	\node at (0,5) [dot] (variable3) {};
	
	\draw[testfcn] (left) to (root);

	\draw[kernel1] (left1) to   (left);
	\draw[kernel] (left2) to  (left1);
	\draw[keps] (variable2) to  (left1); 
	\draw[keps] (variable1) to   (left); 
	\draw[keps, bend left=40] (variable3) to   (left2); 
	\draw[keps, bend right=40] (variable3) to   (left2);
\end{tikzpicture}
\;+\;
\begin{tikzpicture}[scale=0.35,baseline=0.8cm]
	\node at (0.0,-0.8)  [root] (root) {};
	\node at (-2,1)  [dot] (left) {};
	\node at (-2,3)  [dot] (left1) {};
	\node at (-2,5)  [dot] (left2) {};
	\node at (0,2) [dot] (variable1) {};
	\node at (0,4.3) [var] (variable3) {};
	\node at (0,5.7) [var] (variable4) {};
	
	\draw[testfcn] (left) to (root);

	\draw[kernel1] (left1) to   (left);
	\draw[kernel] (left2) to  (left1);
	\draw[keps] (variable1) to  (left1); 
	\draw[keps] (variable1) to   (left); 
	\draw[keps] (variable3) to   (left2); 
	\draw[keps] (variable4) to   (left2);
\end{tikzpicture}
\;+2\;
\begin{tikzpicture}[scale=0.35,baseline=0.8cm]
	\node at (0.0,-0.8)  [root] (root) {};
	\node at (-2,1)  [dot] (left) {};
	\node at (-2,3)  [dot] (left1) {};
	\node at (-2,5)  [dot] (left2) {};
	\node at (0,1) [var] (variable1) {};
	\node at (0,4) [dot] (variable3) {};
	\node at (0,5.7) [var] (variable4) {};
	
	\draw[testfcn] (left) to (root);

	\draw[kernel1] (left1) to   (left);
	\draw[kernel] (left2) to  (left1);
	\draw[keps] (variable3) to  (left1); 
	\draw[keps] (variable1) to   (left); 
	\draw[keps] (variable3) to   (left2); 
	\draw[keps] (variable4) to   (left2);
\end{tikzpicture}
\;+2\;
\begin{tikzpicture}[scale=0.35,baseline=0.8cm]
	\node at (0.0,-0.8)  [root] (root) {};
	\node at (-2,1)  [dot] (left) {};
	\node at (0,3)  [dot] (left1) {};
	\node at (-2,5)  [dot] (left2) {};
	\node at (0,5) [var] (variable1) {};
	\node at (-2,3) [dot] (variable3) {};
	\node at (-0.5,6) [var] (variable4) {};
	
	\draw[testfcn] (left) to (root);

	\draw[kernel1] (left1) to   (left);
	\draw[kernel] (left2) to  (left1);
	\draw[keps] (variable3) to  (left); 
	\draw[keps] (variable1) to   (left1); 
	\draw[keps] (variable3) to   (left2); 
	\draw[keps] (variable4) to   (left2);
\end{tikzpicture}
\;+2\;
\begin{tikzpicture}[scale=0.35,baseline=0.8cm]
	\node at (0.0,-0.8)  [root] (root) {};
	\node at (-2,1)  [dot] (left) {};
	\node at (0,3)  [dot] (left1) {};
	\node at (-2,5)  [dot] (left2) {};
	\node at (-2,3) [dot] (variable3) {};
	\node at (0,5.7) [dot] (variable4) {};
	
	\draw[testfcn] (left) to (root);

	\draw[kernel1] (left1) to   (left);
	\draw[kernel] (left2) to  (left1);
	\draw[keps] (variable3) to  (left); 
	\draw[keps] (variable4) to   (left1); 
	\draw[keps] (variable3) to   (left2); 
	\draw[keps] (variable4) to   (left2);
\end{tikzpicture}
+\;
\begin{tikzpicture}[scale=0.35,baseline=0.8cm]
	\node at (0.0,-0.8)  [root] (root) {};
	\node at (-2,1)  [dot] (left) {};
	\node at (-2,3)  [dot] (left1) {};
	\node at (-2,5)  [dot] (left2) {};
	\node at (0,2) [dot] (variable1) {};
	\node at (0,5) [dot] (variable3) {};
	
	\draw[testfcn] (left) to (root);

	\draw[kernel1] (left1) to   (left);
	\draw[kernel] (left2) to  (left1);
	\draw[keps] (variable1) to  (left1); 
	\draw[keps] (variable1) to   (left); 
	\draw[keps, bend left=40] (variable3) to   (left2); 
	\draw[keps, bend right=40] (variable3) to   (left2);
\end{tikzpicture}
$$
and thus
\begin{align*}
	\bigl( \hat{\Pi}^\epsilon_\star\<211>\bigr)(\phi_\star^\lambda)
=&
\begin{tikzpicture}[scale=0.35,baseline=0.8cm]
	\node at (0.0,-0.8)  [root] (root) {};
	\node at (-2,1)  [dot] (left) {};
	\node at (-2,3)  [dot] (left1) {};
	\node at (-2,5)  [dot] (left2) {};
	\node at (0,1) [var] (variable1) {};
	\node at (0,3) [var] (variable2) {};
	\node at (0,4.3) [var] (variable3) {};
	\node at (0,5.7) [var] (variable4) {};
	\draw[testfcn] (left) to (root);
	\draw[kernel1] (left1) to   (left);
	\draw[kernel] (left2) to  (left1);
	\draw[keps] (variable2) to  (left1); 
	\draw[keps] (variable1) to   (left); 
	\draw[keps] (variable3) to   (left2); 
	\draw[keps] (variable4) to   (left2);
\end{tikzpicture}
+\; 
\begin{tikzpicture}[scale=0.35,baseline=0.8cm]
	\node at (0.0,-0.8)  [root] (root) {};
	\node at (-2,1)  [dot] (left) {};
	\node at (-2,3)  [dot] (left1) {};
	\node at (-2,5)  [dot] (left2) {};
	\node at (0,1) [var] (variable1) {};
	\node at (0,3) [var] (variable2) {};
	\node at (0,5) [bluedot] (variable3) {};
	\draw[testfcn] (left) to (root);
	\draw[kernel1] (left1) to   (left);
	\draw[kernel] (left2) to  (left1);
	\draw[keps] (variable2) to  (left1); 
	\draw[keps] (variable1) to   (left); 
	\draw[blue, keps, bend left=40] (variable3) to   (left2); 
	\draw[blue, keps, bend right=40] (variable3) to   (left2);
\end{tikzpicture}
\;+\;
\begin{tikzpicture}[scale=0.35,baseline=0.8cm]
	\node at (0.0,-0.8)  [root] (root) {};
	\node at (-2,1)  [dot] (left) {};
	\node at (-2,3)  [dot] (left1) {};
	\node at (-2,5)  [dot] (left2) {};
	\node at (0,2) [bluedot] (variable1) {};
	\node at (0,4.3) [var] (variable3) {};
	\node at (0,5.7) [var] (variable4) {};
	\draw[testfcn] (left) to (root);
	\draw[blue, kernel] (left1) to   (left);
	\draw[kernel] (left2) to  (left1);
	\draw[blue, keps] (variable1) to  (left1); 
	\draw[blue, keps] (variable1) to   (left); 
	\draw[keps] (variable3) to   (left2); 
	\draw[keps] (variable4) to   (left2);
\end{tikzpicture}
\;+\;
\begin{tikzpicture}[scale=0.35,baseline=0.8cm]
	\node at (0.0,-0.8)  [root] (root) {};
	\node at (-2,1)  [dot] (left) {};
	\node at (-2,3)  [dot] (left1) {};
	\node at (-2,5)  [dot] (left2) {};
	\node at (0,2) [dot] (variable1) {};
	\node at (0,4.3) [var] (variable3) {};
	\node at (0,5.7) [var] (variable4) {};
	\draw[testfcn] (left) to (root);
	\draw[kernel, bend right=60] (left1) to   (root);
	\draw[kernel] (left2) to  (left1);
	\draw[keps] (variable1) to  (left1); 
	\draw[keps] (variable1) to   (left); 
	\draw[keps] (variable3) to   (left2); 
	\draw[keps] (variable4) to   (left2);
\end{tikzpicture}
\;+2\;
\begin{tikzpicture}[scale=0.35,baseline=0.8cm]
	\node at (0.0,-0.8)  [root] (root) {};
	\node at (-2,1)  [dot] (left) {};
	\node at (-2,3)  [dot] (left1) {};
	\node at (-2,5)  [bluedot] (left2) {};
	\node at (0,1) [var] (variable1) {};
	\node at (0,4) [bluedot] (variable3) {};
	\node at (0,5.7) [var] (variable4) {};
	\draw[testfcn] (left) to (root);
	\draw[kernel1] (left1) to   (left);
	\draw[kernel] (left2) to  (left1);
	\draw[blue, keps] (variable3) to  (left1); 
	\draw[keps] (variable1) to   (left); 
	\draw[blue, keps] (variable3) to   (left2); 
	\draw[keps] (variable4) to   (left2);
\end{tikzpicture}
\;+2\;
\begin{tikzpicture}[scale=0.35,baseline=0.8cm]
	\node at (0.0,-0.8)  [root] (root) {};
	\node at (-2,1)  [dot] (left) {};
	\node at (0,3)  [dot] (left1) {};
	\node at (-2,5)  [dot] (left2) {};
	\node at (0,5) [var] (variable1) {};
	\node at (-2,3) [dot] (variable3) {};
	\node at (-0.5,6) [var] (variable4) {};
	\draw[testfcn] (left) to (root);
	\draw[kernel1] (left1) to   (left);
	\draw[kernel] (left2) to  (left1);
	\draw[keps] (variable3) to  (left); 
	\draw[keps] (variable1) to   (left1); 
	\draw[keps] (variable3) to   (left2); 
	\draw[keps] (variable4) to   (left2);
\end{tikzpicture}
\;\\
&+2\;
\begin{tikzpicture}[scale=0.35,baseline=0.8cm]
	\node at (0.0,-0.8)  [root] (root) {};
	\node at (-2,1)  [dot] (left) {};
	\node at (0,3)  [bluedot] (left1) {};
	\node at (-2,5)  [bluedot] (left2) {};
	\node at (-2,3) [bluedot] (variable3) {};
	\node at (0,5.7) [bluedot] (variable4) {};
	\draw[testfcn] (left) to (root);
	\draw[blue, kernel] (left1) to   (left);
	\draw[blue, kernel] (left2) to  (left1);
	\draw[blue, keps] (variable3) to  (left); 
	\draw[blue, keps] (variable4) to   (left1); 
	\draw[blue, keps] (variable3) to   (left2); 
	\draw[blue, keps] (variable4) to   (left2);
\end{tikzpicture}
\;+2\;
\begin{tikzpicture}[scale=0.35,baseline=0.8cm]
	\node at (0.0,-0.8)  [root] (root) {};
	\node at (-2,1)  [dot] (left) {};
	\node at (0,3)  [dot] (left1) {};
	\node at (-2,5)  [dot] (left2) {};
	\node at (-2,3) [dot] (variable3) {};
	\node at (0,5.7) [dot] (variable4) {};
	\draw[testfcn] (left) to (root);
	\draw[kernel] (left1) to   (root);
	\draw[kernel] (left2) to  (left1);
	\draw[keps] (variable3) to  (left); 
	\draw[keps] (variable4) to   (left1); 
	\draw[keps] (variable3) to   (left2); 
	\draw[keps] (variable4) to   (left2);
\end{tikzpicture}
+\;
\begin{tikzpicture}[scale=0.35,baseline=0.8cm]
	\node at (0.0,-0.8)  [root] (root) {};
	\node at (-2,1)  [dot] (left) {};
	\node at (-2,3)  [bluedot] (left1) {};
	\node at (-2,5)  [dot] (left2) {};
	\node at (0,2) [bluedot] (variable1) {};
	\node at (0,5) [bluedot] (variable3) {};
	\draw[testfcn] (left) to (root);	
	\draw[blue, kernel] (left1) to   (left);
	\draw[kernel] (left2) to  (left1);
	\draw[blue, keps] (variable1) to  (left1); 
	\draw[blue, keps] (variable1) to   (left); 
	\draw[blue, keps, bend left=40] (variable3) to   (left2); 
	\draw[blue, keps, bend right=40] (variable3) to   (left2);
\end{tikzpicture} \;
+\;
\begin{tikzpicture}[scale=0.35,baseline=0.8cm]
	\node at (0.0,-0.8)  [root] (root) {};
	\node at (-2,1)  [dot] (left) {};
	\node at (-2,3)  [dot] (left1) {};
	\node at (-2,5)  [dot] (left2) {};
	\node at (0,2) [dot] (variable1) {};
	\node at (0,5) [bluedot] (variable3) {};
	\draw[testfcn] (left) to (root);
	\draw[kernel , bend right=60] (left1) to   (root);
	\draw[kernel] (left2) to  (left1);
	\draw[keps] (variable1) to  (left1); 
	\draw[keps] (variable1) to   (left); 
	\draw[blue, keps, bend left=40] (variable3) to   (left2); 
	\draw[blue, keps, bend right=40] (variable3) to   (left2);
\end{tikzpicture} \;.
\end{align*}
Here we apply Theorem~\ref{thm:HQ} to bound the terms
\begin{equation}\label{eq:for HQ-2}
\tikzsetnextfilename{phi34a2}
\begin{tikzpicture}[scale=0.35,baseline=0.8cm]
	\node at (0.0,-0.8)  [root] (root) {};
	\node at (-2,1)  [dot] (left) {};
	\node at (-2,3)  [dot] (left1) {};
	\node at (-2,5)  [dot] (left2) {};
	\node at (0,2) [dot] (variable1) {};
	\node at (0,4.3) [var] (variable3) {};
	\node at (0,5.7) [var] (variable4) {};
	\draw[testfcn] (left) to (root);
	\draw[kernel, bend right=60] (left1) to   (root);
	\draw[kernel] (left2) to  (left1);
	\draw[keps] (variable1) to  (left1); 
	\draw[keps] (variable1) to   (left); 
	\draw[keps] (variable3) to   (left2); 
	\draw[keps] (variable4) to   (left2);
\end{tikzpicture}
\; \qquad \text{and} \qquad
\tikzsetnextfilename{phi34a3}
\begin{tikzpicture}[scale=0.35,baseline=0.8cm]
	\node at (0.0,-0.8)  [root] (root) {};
	\node at (-2,1)  [dot] (left) {};
	\node at (0,3)  [dot] (left1) {};
	\node at (-2,5)  [dot] (left2) {};
	\node at (0,5) [var] (variable1) {};
	\node at (-2,3) [dot] (variable3) {};
	\node at (-0.5,6) [var] (variable4) {};
	\draw[testfcn] (left) to (root);
	\draw[kernel1] (left1) to   (left);
	\draw[kernel] (left2) to  (left1);
	\draw[keps] (variable3) to  (left); 
	\draw[keps] (variable1) to   (left1); 
	\draw[keps] (variable3) to   (left2); 
	\draw[keps] (variable4) to   (left2);
\end{tikzpicture} \; ,
\end{equation}
while the remaining terms can again be bounded using the estimates in Section~\ref{sec:Kernels with prescribed singularities}
The bounds on $$\bigl( \hat{\Pi}_\star^\epsilon \<2'>\bigr)(\phi_\star^\lambda), \qquad \bigl( \hat{\Pi}_\star^\epsilon \<21'>\bigr)(\phi_\star^\lambda) , \qquad \bigl( \hat{\Pi}_\star^\epsilon \<2'1>\bigr)(\phi_\star^\lambda),
$$ follow straightforwardly, where for the last term we use that admissible realisation satisfies $R(j_\cdot 1)=1$.
\end{proof}
\begin{remark}
Let us observe that the trees appearing for the $\phi^3_4$ equation are formally identical to those for the KPZ equation and the power-counting is also similar.
\end{remark}

\section{Kernels with Prescribed Singularities}\label{sec:Kernels estimates}
Throughout this section we fix vector bundles  $(E^{i},\nabla^E, \langle\cdot, \cdot\rangle_E)$ and $(F^{i},\nabla^F, \langle\cdot, \cdot\rangle_F)$ over $M$ for $i\in \mathbb{N}$. 
A smooth kernel $K\in \cC^\infty(F\hotimes E^* \setminus \triangle)$, where $\triangle\subset M\times M$ denotes the diagonal\footnote{
Throughout this section, we shall use the lighter notation 
$F\hotimes E^* \setminus \triangle$ to denote the restricted bundle with base space $M\times M\setminus \triangle$.
},
 is called of order $\zeta$ if for any for $m \in \mathbb{N}$ 
$$\vertiii{K}_{\zeta; m} := \sup_{k\leq m} \sup_{p,q\in M} d_\fraks (p,q)^{k-\zeta} |j_{p,q} K|_k <+\infty \ .$$

\subsection{Elementary estimates}\label{sec:Kernels with prescribed singularities}
In this section, we collect the analogue estimates to \cite[Section~10.3]{Hai14}, the proofs of which adapt mutatis mutandis.
\begin{lemma}
For $i=1,2$ let $K_{i}\in \cC^\infty(F^{(i)}\hotimes (E^{(i)})^* \setminus\triangle)$ be supported on $\{(p,q)\in (M\times M)\setminus \triangle\   | \ d_\fraks(p,q)<1\}$
and of order $\zeta_i$.
Then, up to the canonical isomorphism $\left(F^{(1)}\hotimes (E^{(1)})^*\right) \otimes\left( F^{(1)}\hotimes (E^{(1)})^*\right)\simeq (F^{(1)}\otimes F^{(2)})\hotimes (E^{(1)}\otimes E^{(2)} )^* \ , $
$$ K_1\otimes K_{2}\in \cC^\infty\left( (F^{(1)}\otimes F^{(2)})\hotimes (E^{(1)}\otimes E^{(2)} )^* \setminus \triangle\right)\ .$$  Furthermore,  there exists $C>0$ such that 
$$\vertiii{K_1\otimes K_2}_{\zeta_1+\zeta_2; m}\leq C\vertiii{K_1}_{\zeta_1; m}\vertiii{K_1}_{\zeta_2; m}\ .$$
\end{lemma}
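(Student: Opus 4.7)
The plan is to reduce the claim to a Leibniz-type estimate on jets together with elementary arithmetic manipulation of the degree exponents, after which the support condition is used only to ensure the resulting bound is finite on the supports of the kernels.

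First, I would unpack the definition of $\vertiii{\cdot}_{\zeta;m}$: for every $k\leq m$ and every $p,q\in M$ the goal is to bound
\[
d_\fraks(p,q)^{k-\zeta_1-\zeta_2}\,\bigl|j_{p,q}(K_1\otimes K_2)\bigr|_k
\]
by $C\vertiii{K_1}_{\zeta_1;m}\vertiii{K_2}_{\zeta_2;m}$ with a constant $C$ depending only on $m$ and on the bundles (through the implicit constants in the identification of jets with covariant tensors via Proposition~\ref{prop relation to cotangent}). The smoothness and support claim for $K_1\otimes K_2$ is immediate from the construction of the hatted tensor product in Section~\ref{section hat tensorproduct}: the product lives on $M\times M\times M\times M$, but upon restriction along the canonical diagonal embedding that identifies $(M\times M)\times (M\times M)$ with itself (on the first and third, second and fourth factors) one lands in the prescribed bundle over $(M\times M)\setminus \triangle$; and the support of the product is contained in the intersection of the supports of the factors.

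The main step is the Leibniz-type bound for jets. Using the canonical isometric identification of $J^k$ with direct sums of cotangent tensors (Proposition~\ref{prop relation to cotangent} and Remark~\ref{rem isometry}) together with the compatibility of the connection and metric with tensor products established in Section~\ref{section bundles and metric}, one verifies that the $k$-graded piece $Q_k\, j_{p,q}(K_1\otimes K_2)$ decomposes as a sum over $k_1+k_2=k$ of tensor contractions of $Q_{k_1}j_{p,q}K_1$ with $Q_{k_2}j_{p,q}K_2$. Taking norms and using submultiplicativity of $|\cdot|$ with respect to these tensor products gives
\[
\bigl|j_{p,q}(K_1\otimes K_2)\bigr|_k \;\le\; C_k \sum_{k_1+k_2=k} \bigl|j_{p,q}K_1\bigr|_{k_1}\,\bigl|j_{p,q}K_2\bigr|_{k_2}\;,
\]
for some combinatorial constant $C_k$ depending only on $k$.

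Multiplying both sides by $d_\fraks(p,q)^{k-\zeta_1-\zeta_2}$ and writing the exponent as $(k_1-\zeta_1)+(k_2-\zeta_2)$ inside each summand yields
\[
d_\fraks(p,q)^{k-\zeta_1-\zeta_2}\bigl|j_{p,q}(K_1\otimes K_2)\bigr|_k
\le C_k \!\!\sum_{k_1+k_2=k}\!\! \Bigl(d_\fraks(p,q)^{k_1-\zeta_1}\bigl|j_{p,q}K_1\bigr|_{k_1}\Bigr)\!\Bigl(d_\fraks(p,q)^{k_2-\zeta_2}\bigl|j_{p,q}K_2\bigr|_{k_2}\Bigr).
\]
Each factor is bounded by $\vertiii{K_1}_{\zeta_1;m}$, resp.\ $\vertiii{K_2}_{\zeta_2;m}$, since $k_i\le k\le m$. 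Taking the supremum over $k\leq m$ and over $(p,q)\in M\times M$ (using the support condition $d_\fraks(p,q)<1$ to avoid any issues when $\zeta_i>k_i$) gives the desired bound with $C=\max_{k\leq m}C_k \cdot (m+1)$.

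The only mildly delicate point is the Leibniz decomposition: one has to be careful that the grading on $J(F^{(1)}\otimes F^{(2)})\hotimes (E^{(1)}\otimes E^{(2)})^*$ is the one induced from the graded algebra structure discussed in Point~\ref{tensor products on jets} of Section~\ref{remark algebraic structure}, and that the chosen scalar product makes the Leibniz rule a bona fide inequality with a controlled constant. This follows directly from the isometric identification of $J^k$ with cotangent tensors and the fact that the product on jets corresponds, at the level of this identification, to the obvious graded pointwise product of tensors at $(p,q)$. No further analytic input is needed.
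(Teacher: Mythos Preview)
Your proof is correct and is precisely the standard Leibniz-rule argument that the paper has in mind: the paper does not write out a proof of this lemma but simply states that ``the proofs of [these estimates] adapt mutatis mutandis'' from \cite[Section~10.3]{Hai14}, and your argument is exactly that adaptation. One minor remark: the parenthetical about the support condition is unnecessary, since each factor $d_\fraks(p,q)^{k_i-\zeta_i}|j_{p,q}K_i|_{k_i}$ is already bounded by $\vertiii{K_i}_{\zeta_i;m}$ by definition of the norm, regardless of whether $\zeta_i>k_i$ or not.
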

%

\begin{lemma}
For $i=1,2$ let $K_{ i+1,i}\in \cC^\infty(E^{(i+1)}\hotimes (E^{(i)})^* \setminus\triangle)$ be supported on $\{(p,q)\in (M\times M)\setminus \triangle\   | \ d_\fraks(p,q)<1\}$ and of order $\zeta_i$.
If $\zeta_{1}\wedge\zeta_2> -|\fraks|$, we write
$$K_{3,2} * K_{2,1} (p,q):= \int_M   K_{3,2}(p,z) K_{2,1}(z,q) \dVol_z \ ,$$
where $K_{3,2}(p,z) K_{2,1}(z,q) \in \left( E^{(3)}\hotimes (E^{(1)})^* \right)|_{p,q}$ is obtained 
by contracting the factors in $(E^{(2)})^*$ and $E^{(2)}$.
Then,
$K_{3,2} * K_{2,1} \in \cC^\infty\left( E^{(3)}\hotimes (E^{(1)})^* \setminus\triangle \right)$.
If furthermore $\bar \zeta :=  \zeta_{1}+ \zeta_2 + |\fraks| < 0$, then
$$\vertiii{K_{3,2} * K_{2,1}}_{\bar{\zeta}; m}\leq C\vertiii{K_1}_{\zeta_1; m}\vertiii{K_1}_{\zeta_2; m}\ .$$
If $\bar{\zeta}\in \mathbb{R}_+\setminus \mathbb{N}$, then 
$K_{3,2} * K_{2,1} \in \cC^{[\bar \zeta ]}(E^{(3)}\hotimes (E^{(1)})^* \setminus\triangle)$ and
$$K_{3,1}(p,q):=  K_{3,2} * K_{2,1}(p,q) - R \left(   j^{[\bar \zeta ]}_q K_{3,1}(\cdot, q ) \right) (p)$$
satisfies 
$$\vertiii{K_{3,1}}_{\bar{\zeta}; m}\leq C\vertiii{K_1}_{\zeta_1; \bar{m}}\vertiii{K_1}_{\zeta_2; \bar{m}}\ ,$$
for $\bar{m}= m \vee ([\bar{\zeta}] + \max_{i} \fraks_i)$ for any admissible realisation $R$. 
\end{lemma}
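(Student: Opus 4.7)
The plan is to reduce the manifold statement to the corresponding flat-space convolution estimate, which is the well-known \cite[Lemma~10.14]{Hai14}, by a chart-based localisation argument. The support assumption $\supp K_{i+1,i}\subset\{d_\fraks<1\}$ together with the factor $\dVol_z$ localise the integration to a bounded region, so everything can be carried out in finitely many exponential $\fraks$-charts covering a neighbourhood of the diagonal.

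First I would verify that $K_{3,2}*K_{2,1}$ is smooth away from the diagonal. Fixing $(p,q)\in M\times M\setminus\triangle$ one splits the $z$-integration into the regions $d_\fraks(z,p)\leq \tfrac12 d_\fraks(p,q)$ (where $K_{2,1}(\cdot,q)$ is smooth and $K_{3,2}(p,\cdot)$ is integrable because $\zeta_1>-|\fraks|$), the symmetric region, and the region where both arguments are bounded away from their respective singularities (where the integrand is smooth with parameters). Differentiating under the integral in both $p$ and $q$ is then justified by dominated convergence using the pointwise bounds defining $\vertiii{\cdot}_{\zeta_i;m}$.

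Next, for the order $\bar\zeta<0$ bound, I would fix $(p,q)$ with $d_\fraks(p,q)$ small (larger separations are handled by the uniform bound and compactness of the support constraint), choose an exponential $\fraks$-chart around the midpoint, and pull back both kernels together with trivialisations of $E^{(i)}$, $F^{(i)}$ using parallel transport as in \eqref{paralell}. The pulled-back kernels satisfy the same singular estimates of orders $\zeta_1,\zeta_2$ in the flat sense (thanks to Corollary~\ref{lemma gradedness exponential chart} and Lemma~\ref{pointwise bound}, which ensure that jet-norms and their flat counterparts agree up to constants on a fixed compact chart). Applying the classical flat convolution estimate componentwise then gives the bound on $\vertiii{K_{3,2}*K_{2,1}}_{\bar\zeta;m}$, and transporting back along the trivialisation only changes constants.

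For the final statement with $\bar\zeta>0$ non-integer, the singular part of the convolution lies in the kernel of the jet map $j^{[\bar\zeta]}_q$ at $q$. The subtraction $R\bigl(j^{[\bar\zeta]}_q K_{3,1}(\cdot,q)\bigr)(p)$ replaces the genuine Taylor polynomial of the flat proof, and by Lemma~\ref{comparing admissible realisations} the difference between two admissible realisations is already of order $d_\fraks(p,q)^{[\bar\zeta]+1}$, so the choice of $R$ only affects constants. The bound then follows by applying \cite[Prop.~A.1]{Hai14} to the pulled-back kernel in an exponential chart and using Remark~\ref{norm remark} to translate the resulting control on partial derivatives back into a bound on $|j_{p,q}K_{3,1}|_k$. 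The need to take $\bar m = m\vee([\bar\zeta]+\max_i \fraks_i)$ comes precisely from the fact that $j^{[\bar\zeta]}$ records derivatives up to scaled order $[\bar\zeta]$, which must be controlled by the input seminorms.

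The main obstacle is the bookkeeping in the last part: one has to ensure that the Taylor remainder of the \emph{admissible realisation} $R$ (which is not literally a polynomial on the manifold) still produces the correct pointwise singular behaviour, and that derivatives in the two arguments interact cleanly with parallel transport when switching between jet-norms and chart-derivatives. This is handled by combining Proposition~\ref{prop relation to cotangent}, Remark~\ref{norm remark} and Lemma~\ref{pointwise bound}, all of which are specifically designed to allow exactly this kind of chart-reduction argument with uniform constants over any fixed compact set.
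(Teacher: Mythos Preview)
Your proposal is correct and follows exactly the approach the paper indicates: the paper does not give a detailed proof but simply states that these elementary estimates ``adapt \emph{mutatis mutandis}'' from \cite[Section~10.3]{Hai14}, and your chart-based reduction to \cite[Lemma~10.14]{Hai14} is precisely what this means. Your outline is in fact more explicit than the paper's own treatment, and the tools you invoke (Corollary~\ref{lemma gradedness exponential chart}, Remark~\ref{norm remark}, Lemma~\ref{comparing admissible realisations}) are the correct ones for controlling the passage between jet-norms on the manifold and flat derivatives in exponential $\fraks$-charts.
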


\begin{definition}
Let $-|\fraks|-1<\zeta \leq |\fraks|$ and $K\in \cC^\infty(F\hotimes E^* \setminus \triangle)$ of order $\zeta$ 
such that $\supp(K(p,\cdot) )\subset B_{r_p}(p)$. 
We define the renormalised operator $\mathcal{R}K: \cC^\infty(E) \to \cC^\infty(F)$ by 
$$\mathcal{R}K (\varphi)= \int_M K(p,z) \left(\phi(z)- \tau_{z,p}\phi(p)\right) \dVol_z \ .$$
\end{definition}
\begin{lemma}
For $i=1,2$ let $K_{ i+1,i}\in \cC^\infty(E^{(i+1)}\hotimes (E^{(i)})^* \setminus\triangle)$ be supported on $\{(p,q)\in (M\times M)\setminus \triangle\   | \ d_\fraks(p,q)<1\}$ and of order $\zeta_i$.
If $-|\fraks|-1<\zeta_{2}\leq -|\fraks|$ and  $-2|\fraks|-\zeta_2 <\zeta_1<0$, then
$$\mathcal{R}K_{3,2} * K_{2,1} \in \cC^\infty\left( E^{(3)}\hotimes (E^{(1)})^* \setminus \triangle \right)\ ,$$
satisfies for $\bar{\zeta}= 0\wedge (\zeta_1+\zeta_2+|\fraks|)$,
$$\vertiii{\mathcal{R}K_{3,2} * K_{2,1}}_{\bar{\zeta}; m}\leq C\vertiii{K_1}_{\zeta_1; m}\vertiii{K_1}_{\zeta_2; \bar{m}}\ ,$$
for every $m \geq 0$ and 
$\bar{m} = m + \max_i \fraks_i$.
\end{lemma}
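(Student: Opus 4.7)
My plan is to adapt the flat-space argument of \cite[Lemma~10.14]{Hai14} to the geometric setting, the key input being that the renormalisation $\mathcal{R}K_{3,2}$ turns the $z$-integral against $K_{2,1}(\cdot,q)$ into a convergent expression despite $\zeta_{2} \le -|\fraks|$, by subtracting the parallel transport $\tau_{z,p} K_{2,1}(p,q)$. Concretely, for $p\neq q$ I would work with the explicit representation
\begin{equation*}
(\mathcal{R}K_{3,2}\!*\!K_{2,1})(p,q) = \int_M K_{3,2}(p,z)\bigl(K_{2,1}(z,q) - \tau_{z,p} K_{2,1}(p,q)\bigr) \dVol_z,
\end{equation*}
noting first that this integral is absolutely convergent: near $z=p$ the renormalised integrand is bounded by $d_\fraks(p,z)^{\zeta_{2}+1} d_\fraks(p,q)^{\zeta_{1}-1}$, which is integrable because $\zeta_{2}+1 > -|\fraks|$, while smoothness on $M\times M \setminus \triangle$ follows by differentiation under the integral sign.

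For the quantitative bound, set $d := d_\fraks(p,q)$ and split the integration domain into the near region $A = \{d_\fraks(p,z)\leq d/2\}$ and the far region $B = \{d_\fraks(p,z) > d/2\}\cap \supp K_{3,2}(p,\cdot)$. On $A$, the triangle inequality forces $d_\fraks(z,q)\asymp d$, and an application of the mean value theorem along the geodesic from $p$ to $z$ (using the compatibility of $\nabla^{E^{(2)}}$ with parallel transport and the fact that $K_{2,1}$ is of order $\zeta_1$) gives
\begin{equation*}
|K_{2,1}(z,q) - \tau_{z,p} K_{2,1}(p,q)| \lesssim d_\fraks(p,z)\, d^{\zeta_{1}-1},
\end{equation*}
so that integrating against the bound $|K_{3,2}(p,z)|\lesssim d_\fraks(p,z)^{\zeta_{2}}$ yields the desired $d^{\zeta_{1}+\zeta_{2}+|\fraks|}$. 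On $B$ I would use triangle inequality to split the renormalised integrand into the sum of $|K_{3,2}(p,z)K_{2,1}(z,q)|$ and $|K_{3,2}(p,z)|\cdot|K_{2,1}(p,q)|$; for the first term a standard ``convolution of singular kernels'' estimate (using $\zeta_{1} > -|\fraks|$ from the hypothesis $-2|\fraks|-\zeta_2<\zeta_1$ together with $\zeta_{2}+|\fraks|\le 0$) produces $d^{\zeta_{1}+\zeta_{2}+|\fraks|}$, and for the second term the bound $|K_{2,1}(p,q)|\lesssim d^{\zeta_1}$ combined with $\int_{d/2 < d_\fraks(p,z) < 1} d_\fraks(p,z)^{\zeta_2}\dVol_z \lesssim d^{\zeta_2+|\fraks|}$ (integrability at infinity being immediate from compact support) again gives $d^{\zeta_{1}+\zeta_{2}+|\fraks|}$. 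In every subcase, if $\zeta_{1}+\zeta_{2}+|\fraks|\ge 0$ the bound collapses to a uniform constant, accounting for the $0\wedge(\cdot)$ in the definition of $\bar\zeta$.

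The bound on the jet $|j_{p,q}(\mathcal{R}K_{3,2}*K_{2,1})|_k$ for $k\leq m$ is obtained by distributing the (covariant) derivatives in the variables $p$ and $q$ onto the two kernels inside the integral; each derivative on $K_{3,2}$ costs a factor $d_\fraks(p,z)^{-1}$ in its order, each derivative on $K_{2,1}$ a factor $d_\fraks(z,q)^{-1}$ or $d_\fraks(p,q)^{-1}$, and the argument above is rerun with the new (shifted) exponents -- this explains the appearance of $\bar m = m + \max_i\fraks_i$, since in the near region one needs one extra derivative on $K_{2,1}$ to perform the mean value expansion.

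The main obstacle I anticipate is the boundary case $\zeta_{2} = -|\fraks|$, where $\int_{B} |K_{3,2}(p,z)|\dVol_z$ produces a logarithmic divergence $\log(1/d)$ rather than a clean power; this log must either be absorbed into an arbitrarily small loss on $\zeta_{1}$ (using that $\zeta_{1}<0$ is strict), or handled by a finer dyadic decomposition of $B$. A secondary point requiring care, absent from the flat case, is that differentiating $\tau_{z,p}K_{2,1}(p,q)$ in $p$ generates curvature terms coming from the derivatives of parallel transport along radial geodesics; these are bounded locally uniformly and contribute only lower-order corrections, so they do not affect the scaling.
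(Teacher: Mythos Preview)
Your approach is correct and coincides with what the paper intends: it does not give a proof but states at the start of the section that these estimates adapt \emph{mutatis mutandis} from \cite[Section~10.3]{Hai14}, and your argument is precisely the adaptation of \cite[Lemma~10.17]{Hai14} to the geometric setting. The two ingredients you flag as requiring care beyond the flat case---the curvature corrections from differentiating $\tau_{z,p}$ and the potential logarithm at the borderline $\zeta_2 = -|\fraks|$---are indeed the only nontrivial points.
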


\begin{lemma}
Let $K\in \cC^\infty(F\hotimes E^* \setminus \triangle)$ be supported on $\{(p,q)\in (M\times M)\setminus \triangle\   | \ d_\fraks(p,q)<1\}$
and of order $\zeta$. 
If $M=\bar{M}$ has constant scaling, set $\rho_\epsilon= G_{\epsilon^2}$, while if $M=\RR\times \bar{M}$ has scaling $\fraks=(2, 1)$ let $\rho_\epsilon$ be given as in \eqref{eq:space_time mollified kernel}. 
Then,
 $K_\epsilon:= K * \rho_\epsilon$ has bounded derivatives of all orders and
$$|j_{p,q} K_\epsilon|_k\leq C (d_\fraks(p,q)+\epsilon)^{\zeta-k} \vertiii{K}_{\zeta, k} \ .$$
Finally, for $\bar{\zeta}\in [\zeta-1, {\zeta})$ and $m\geq 0$, one has the bound 
$$\vertiii{K-K_\epsilon}_{\bar{\zeta}, m}\lesssim \epsilon^{\zeta-\bar{\zeta}} \vertiii{K}_{\zeta,\bar{m}} \ ,$$
where $\bar{m}= n + \max_i \fraks_i$. 
\end{lemma}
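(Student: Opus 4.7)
The strategy is to reduce these estimates to essentially local computations via charts and trivialisations, and then mimic the Euclidean arguments of \cite[Lemma~10.17]{Hai14}. Since $K$ is supported on $\{d_\fraks(p,q)<1\}$, around any point $p$ we may pass to an $\fraks$-exponential chart together with trivialisations of $E$ and $F$ obtained by parallel transport (as in Lemma~\ref{i well defined}); the claimed bounds on $|j_{p,q}K_\epsilon|_k$ are \emph{equivalent} to bounds on derivatives of the pulled-back kernel by the discussion around Proposition~\ref{prop relation to cotangent}, so it suffices to work as if everything were happening on $\RR^d$ (resp.\ $\RR\times \RR^d$) with a standard scaling. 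The key ingredients about $\rho_\epsilon$ that I will use are that (i) $\int \rho_\epsilon(z,q)\,\dVol_z = 1 + \mathcal{O}(\epsilon^N)$ for any $N$, (ii) $\rho_\epsilon(z,q)$ is essentially concentrated in $\{d_\fraks(z,q)\lesssim \epsilon\}$ with polynomially decaying tails in $d_\fraks(z,q)/\epsilon$, and (iii) its derivatives satisfy $|j_{z,q}\rho_\epsilon|_k \lesssim \epsilon^{-|\fraks|-k}$ weighted by the same Gaussian-type cutoff; these all follow from the asymptotics in Theorem~\ref{thm:heat_kernel_expansion} combined with the explicit form $\phi^\epsilon(t-s)G_{\epsilon^2}(p,q)$ in the spacetime case.

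For the pointwise bound on $|j_{p,q}K_\epsilon|_k$, I would split $K_\epsilon(p,q)=\int K(p,z)\rho_\epsilon(z,q)\,\dVol_z$ into the two regimes $d_\fraks(p,q)\geq 2\epsilon$ and $d_\fraks(p,q)<2\epsilon$. In the first regime, all $z$ contributing non-negligibly to the integral satisfy $d_\fraks(p,z)\gtrsim d_\fraks(p,q)$, so using $|K(p,z)|\lesssim d_\fraks(p,z)^{\zeta}\,\vertiii{K}_{\zeta,0}$ together with $\|\rho_\epsilon(\cdot,q)\|_{L^1}\lesssim 1$ yields $|K_\epsilon(p,q)|\lesssim d_\fraks(p,q)^{\zeta}\vertiii{K}_{\zeta,0}$, which is bounded by $(d_\fraks(p,q)+\epsilon)^{\zeta}\vertiii{K}_{\zeta,0}$ up to a constant since the two quantities are comparable. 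In the regime $d_\fraks(p,q)<2\epsilon$ one integrates $|K(p,z)|\lesssim d_\fraks(p,z)^{\zeta}$ against $|\rho_\epsilon(z,q)|$, decomposes dyadically in $d_\fraks(p,z)$, and uses $\zeta>-|\fraks|$-type integrability on each shell (here the assumption $\zeta\leq|\fraks|$ is used only to control the upper behaviour); this gives $\epsilon^{\zeta}\vertiii{K}_{\zeta,0}\sim (d_\fraks(p,q)+\epsilon)^{\zeta}\vertiii{K}_{\zeta,0}$. Jet derivatives in $p$ can be passed directly onto $K$ (giving an extra factor $d_\fraks(p,z)^{-k_1}$), while jet derivatives in $q$ fall on $\rho_\epsilon$ and yield factors of $\epsilon^{-k_2}$ localised to $d_\fraks(z,q)\lesssim \epsilon$; the same two-regime argument then produces $(d_\fraks(p,q)+\epsilon)^{\zeta-k}$.

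For the second bound on $\vertiii{K-K_\epsilon}_{\bar\zeta,m}$, I would write
\begin{equation*}
K(p,q)-K_\epsilon(p,q) = \int \bigl(K(p,q)-K(p,z)\bigr)\rho_\epsilon(z,q)\,\dVol_z + K(p,q)\Bigl(1-\int\rho_\epsilon(z,q)\,\dVol_z\Bigr),
\end{equation*}
where the second term is smoothing with rapidly decaying prefactor. For the first, again split into the regimes $d_\fraks(p,q)\geq 2\epsilon$ and $d_\fraks(p,q)<2\epsilon$. In the former, a Taylor expansion (applied in a trivialisation obtained by parallel transport along geodesics, so that order-$n$ remainders are controlled by $|j_{p,\cdot}K|_n$ as in Lemma~\ref{pointwise bound}) gives $|K(p,q)-K(p,z)|\lesssim d_\fraks(z,q)\, d_\fraks(p,q)^{\zeta-1}\,\vertiii{K}_{\zeta,\bar{m}}$, which after integration against $\rho_\epsilon$ produces the factor $\epsilon\cdot d_\fraks(p,q)^{\zeta-1}\leq \epsilon^{\zeta-\bar\zeta}d_\fraks(p,q)^{\bar\zeta}$ since $\bar\zeta\geq \zeta-1$ and $d_\fraks(p,q)\gtrsim\epsilon$; in the latter regime one uses the pointwise bound from the previous paragraph together with $|K(p,q)|\lesssim d_\fraks(p,q)^{\zeta}$ and concludes via $d_\fraks(p,q)+\epsilon\lesssim \epsilon$. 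Derivatives are treated by the same dichotomy, paying $\bar{m}=m+\max_i\fraks_i$ regularity on $K$ in order to apply Lemma~\ref{pointwise bound} after differentiating.

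The main obstacle I anticipate is bookkeeping the jet norms on a manifold: on $\RR^d$ one simply differentiates under the integral, but here derivatives in $q$ falling on $\rho_\epsilon$ must be compared with the jet $j_{p,q}$ in both variables simultaneously, and the parallel-transport-based trivialisations introduce curvature corrections of order $d_\fraks(p,z)^2$ that must be absorbed into the $\vertiii{K}_{\zeta,k}$ bound. These corrections are harmless (they are dominated by the leading singularity as $\zeta>-|\fraks|-1$), but verifying this cleanly requires carefully tracking how $i_{k-1,k}$, parallel transport, and the estimates of Remark~\ref{norm remark} interact when applied to the convolution. The one further subtlety is that in the spacetime case the non-anticipativity of $K$ and the support properties of $\phi^\epsilon$ must be combined to ensure that all dyadic sums truncate correctly; this is a direct adaptation of the corresponding step in \cite[Sec.~10.3]{Hai14}.
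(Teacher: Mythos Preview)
Your approach is correct and matches the paper's: the paper does not give an independent proof of this lemma but simply states that the estimates of \cite[Section~10.3]{Hai14} ``adapt mutatis mutandis'', and what you have written is precisely the sketch of that adaptation---pass to an $\fraks$-exponential chart with parallel-transport trivialisations so that jet norms become ordinary derivative bounds (via Proposition~\ref{prop relation to cotangent} and Remark~\ref{norm remark}), then run the two-regime argument of \cite[Lemma~10.17]{Hai14}. The only remark is that your parenthetical about an assumption ``$\zeta\leq|\fraks|$'' is not actually present in the lemma statement and is not needed for the argument; the integrability near the diagonal in the regime $d_\fraks(p,q)<2\epsilon$ follows from the Gaussian decay of $\rho_\epsilon$ alone, without any upper restriction on $\zeta$.
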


\begin{lemma}
Let $K\in \cC^\infty(F\hotimes E^* \setminus \triangle)$ be supported on $\{(p,q)\in (M\times M)\setminus \triangle\   | \ d_\fraks(p,q)<1\}$
and of order $\zeta\leq 0$. Then, for every $\alpha\in [0,1]$, one has the bound
$$|K( z_1,z)-\tau_{z_1, z_2}K(z_2,z)| \lesssim d(z_1,z_2)^\alpha ( d(z,z_1)^{\zeta-\alpha}+ d(z,z_2)^{\zeta-\alpha} ) \vertiii{K}_{\zeta,m}, $$
where $m= \sup_i \fraks_i$.
\end{lemma}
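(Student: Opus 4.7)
My plan is to establish two complementary bounds, corresponding to the endpoints $\alpha=0$ and $\alpha=1$, and then interpolate between them.

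The $\alpha=0$ bound is essentially trivial: since parallel transport preserves fibrewise norms, the triangle inequality combined with $\vertiii{K}_{\zeta,0}<\infty$ gives
$$|K(z_1,z)-\tau_{z_1,z_2}K(z_2,z)|\leq|K(z_1,z)|+|K(z_2,z)|\lesssim \vertiii{K}_{\zeta,0}\bigl(d_\fraks(z,z_1)^\zeta+d_\fraks(z,z_2)^\zeta\bigr).$$

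The $\alpha=1$ bound is the substantive step. It suffices to prove it under the assumption $d_\fraks(z_1,z_2)\leq \tfrac{1}{2}\bigl(d_\fraks(z,z_1)\wedge d_\fraks(z,z_2)\bigr)$, because in the opposite regime $d_\fraks(z_1,z_2)$ is comparable to $d_\fraks(z,z_1)\wedge d_\fraks(z,z_2)$ and the claim already follows from the $\alpha=0$ bound (using $\zeta\leq 0$, so that $d_\fraks(z,z_i)^\zeta$ is monotone non-increasing in $d_\fraks(z,z_i)$). Under the smallness assumption, I would work in an $\fraks$-exponential chart centred at $z_1$ together with a trivialisation of $F$ obtained by parallel translation along radial geodesics as in~\eqref{paralell}; in this trivialisation $\tau_{z_1,z_2}$ corresponds to the identity between local fibres. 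Connecting $z_1$ to $z_2$ by a concatenation of at most $n$ coordinate segments in the $\fraks$-chart, each of scaled length at most $d_\fraks(z_1,z_2)$, the fundamental theorem of calculus rewrites $K(z_1,z)-\tau_{z_1,z_2}K(z_2,z)$ as a sum of integrals along these segments of $\nabla_{W_i}K(\cdot,z)$, where $W_i$ is an $\fraks_i$-vector field. By the assumption $\vertiii{K}_{\zeta,m}<\infty$ with $m=\sup_i\fraks_i$, together with the way the jet norm controls covariant derivatives (Remark~\ref{norm remark}), one has $|\nabla_{W_i}K(z',z)|\lesssim \vertiii{K}_{\zeta,m}\, d_\fraks(z',z)^{\zeta-\fraks_i}$; since along the path $d_\fraks(z',z)\sim d_\fraks(z_1,z)\sim d_\fraks(z_2,z)$ by the smallness assumption, integrating yields
$$|K(z_1,z)-\tau_{z_1,z_2}K(z_2,z)|\lesssim d_\fraks(z_1,z_2)\bigl(d_\fraks(z,z_1)^{\zeta-1}+d_\fraks(z,z_2)^{\zeta-1}\bigr)\vertiii{K}_{\zeta,m}.$$

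For general $\alpha\in[0,1]$, the claim then follows from the elementary inequality $\min(A,B)\leq A^\alpha B^{1-\alpha}$ applied to the two bounds just established. The main technical subtlety lies in the second step: one has to check that the trivialisation of $F$ by parallel transport along radial geodesics indeed identifies $\tau_{z_1,z_2}$ with the identity up to an error that is negligible compared with the main term (the curvature corrections vanish to sufficient order at $z_1$), and one has to combine coordinate segments of different scaling weights $\fraks_i$ so that the total length in the scaled sense is $d_\fraks(z_1,z_2)$; both of these are handled by working in an exponential $\fraks$-chart and using Remark~\ref{norm remark} to translate jet bounds into bounds on directional covariant derivatives.
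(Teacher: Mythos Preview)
Your proposal is correct and follows the standard interpolation approach for this type of estimate. The paper does not actually give a proof of this lemma: the subsection opens with the remark that all estimates therein are the analogues of \cite[Section~10.3]{Hai14} whose proofs ``adapt mutatis mutandis'', and your argument is precisely such an adaptation, with the additional care needed to handle the parallel transport $\tau_{z_1,z_2}$ and the different scaling weights $\fraks_i$ that you correctly flag at the end.
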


\subsection{A Hairer--Quastel type criterion}
In this section we state a non-translation invariant weaker version of \cite[Theorem~A.3]{HQ18} which is sufficient to estimate the terms in \eqref{eq:for HQ-1} and \eqref{eq:for HQ-2}. 
Let us first recall the setting: We are given a finite directed multigraph $\CCG= (\CCV, \CCE)$ with edges $e \in \CCE$ labelled by $(a_e, r_e)\in \mathbb{R}_+\times \{0,1\}$, and kernels $K_e: M\times M\setminus \triangle \to \mathbb{R}$ compactly supported on $\{(p,q)\in (M\times M)\setminus \triangle\   | \ d_\fraks(p,q)<1\}$ and satisfying $\vertiii{K_e}_{a_e; 2}<\infty .$
We assume $\CCG$ contains $M\geq 0$ distinguished edges\footnote{The green arrows in Section~\ref{sec:concrete applications}.} $e_{\star,1},\ldots,e_{\star,M}$ connecting $\star \in \CCV$ distinguished vertices $v_{\star,1},\ldots,v_{\star,M}$. We write $\CCV_\star= \{ \star,  v_{\star,1},\ldots,v_{\star,M}\}$ 
$\CCV_0= \CCV\setminus \{\star \}$.
For $e\in E$, we write $(e_-,e_+)$ for the pair of vertices such that that $e$ is directed from the vertex $e_-$ to $e_+$. 
For simplicity\footnote{This is not needed for the argument but generally satisfied for diagrams appearing in regularity structures.}
assume that for any pair of vertices 
$v_1, v_2$ there exists at most one edge connecting them such that $r_e=1$.

Then, we define a new fully connected directed graph $\hat \CCG =(\CCV , \hat \CCE)$ where for each edge $e\in \hat \CCE$ the label
$(\hat a_e, r_e)$ is the sum of the labels over all edges directed from $e_-$ to $e_+$ (with the empty sum being understood to be $0$ and $K_e=1$ for all such newly added edges).

For a subset $\bar\CCV \subset \CCV$ we call 
\begin{itemize}
\item $\CCE^\uparrow (\bar \CCV)  =  
 \{ e\in \CCE : e\cap \bar\CCV = e_-\}$ outgoing edges,
 \item $\CCE^\downarrow (\bar \CCV)  =   \{ e\in \CCE : e\cap \bar\CCV = e_+\}$ incoming edges,
 \item $\CCE_0 (\bar \CCV)  =   \{ e\in \CCE : e\cap \bar\CCV = e\}$ internal edges,
 \item $\CCE (\bar \CCV)  =   \{ e\in \CCE : e\cap \bar\CCV \neq \emptyset\}$ incident edges
\end{itemize}
and write
$\CCE_+ (\bar \CCV)  =   \{ e\in \CCE(\bar \CCV) :  r_e>0\}$, $\CCE_+^\uparrow = \CCE_+ \cap \CCE^\uparrow$ and $\CCE_+^\downarrow = \CCE_+ \cap \CCE^\downarrow$. 

\begin{assumption}\label{ass:mainGraph}
The resulting directed graph $(\CCV,\hat\CCE)$ with labels $(\hat a_e,r_e)$ satisfies
$r_e = 0$ for all edges connected to $\star$ or connecting two elements of $\CCV_\star$. Furthermore assume the following.
\begin{enumerate}
\item For every subset $\bar \CCV \subset \CCV_0$ of cardinality at least $2$,  
\begin{equation}\label{e:assEdges}
\sum_{e  \in \hat\CCE_0 (\bar \CCV)} \hat a_e < (|\bar \CCV| - 1)|\fraks|\;;
\end{equation}
\item For every subset $\bar \CCV \subset \CCV$ containing $0$ of cardinality at least $2$,  
\begin{equation}\label{e:assEdges2}
\sum_{e  \in \hat \CCE_0 (\bar \CCV) } \hat a_e 
+\sum_{e  \in \hat\CCE^{\uparrow}_+ (\bar \CCV)  } (\hat a_e+ r_e -1)  - \sum_{e  \in \hat\CCE^{\downarrow}_+ (\bar \CCV)  }  r_e< (|\bar \CCV| - 1)|\fraks|\;;
\end{equation}
\item  For every non-empty subset $\bar \CCV \subset \CCV\setminus \CCV_\star$,
\begin{equation}\label{e:assEdges3}
 \sum_{e\in \hat\CCE(\bar\CCV)\setminus \hat\CCE^{\downarrow}_+(\bar \CCV) }  \hat a_e 
 +\sum_{e\in \hat \CCE^{\uparrow}_+(\bar\CCV)}  r_e
- \sum_{e \in \hat \CCE^\downarrow_+(\bar \CCV)} (r_e-1)
> |\bar \CCV||\fraks|\;.
\end{equation}
\end{enumerate}
\end{assumption}
Next, for edges $e\in \CCE$ such that $r_e=0$ set $\hat{K}_e(p,q)={K}_e(p,q)$ while for edges $e\in \CCE$ such that $r_e=1$ set
$\hat{K}_e(p,q)={K}_e(p,q)-K_e(\star. q)$ and let
$$\mathcal{I}^{\CCG}(\phi^\lambda_\star, K):= \int_{M^{\CCV_0}}  \prod_{e\in \CCE} \hat{K}_e(x_{e_+},x_{e_-})\prod_{i=1}^M \phi^\lambda ({x_{v_{\star,i}}}) \dVol_x \ .$$

\begin{theorem}[Theorem~A.3 \cite{HQ18}]\label{thm:HQ}
 Let $\CCG=(\CCV,\CCE)$ be a finite directed multigraph with labels $\{a_e,r_e\}_{e\in \CCE}$ and kernels
$\{K_e\}_{e\in \CCE}$ with  the resulting graph satisfying Assumption~\ref{ass:mainGraph} and its preamble.
Then, there exist  $C<\infty$ depending only on the 
structure of the graph $(\CCV,\CCE)$ and the labels $r_e$ such that 
\begin{equation}\label{genconvBound}
{I}^\CCG(\phi_\lambda,K) 
\le C\lambda^{\tilde \alpha}\prod_{e\in \CCE} \|K_e\|_{a_e;2}\;,
\end{equation}
for $0<\lambda\le 1$, where \begin{equation}
\tilde \alpha = |\fraks||\CCV\setminus \CCV_\star| - \sum_{e\in \CCE} a_e. 
\end{equation}
\end{theorem}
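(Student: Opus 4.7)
The plan is to reduce the manifold statement to the classical Euclidean Hairer--Quastel bound via a localisation argument. The essential observation is that each kernel $K_e$ is supported in the $1$-neighbourhood of the diagonal, so only configurations in which all vertices stay within uniformly bounded $d_\fraks$-distance of $\star$ contribute to $\mathcal{I}^{\CCG}(\phi^\lambda_\star,K)$. By compactness, one can cover the relevant region by finitely many exponential $\fraks$-charts centred on points $\star$, with subordinate smooth partitions of unity inserted into the integrand. Working in such a chart, the Riemannian density is uniformly comparable to Lebesgue measure, $d_\fraks$ is uniformly comparable to the Euclidean scaled distance, and the kernel bound $\vertiii{K_e}_{a_e;2}<\infty$ translates, via Remark~\ref{norm remark}, into the pointwise estimates $|D^k_{1,2} K_e(x,y)|\lesssim |x-y|_\fraks^{a_e-k}$ for $k\leq 2$, where derivatives are taken in coordinates. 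All $E^*,F$ factors can be trivialised using the parallel frames at the chart centre, reducing the integrand to a sum of scalar-valued expressions with the same scaling behaviour.

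Once in Euclidean coordinates, one runs the usual multi-scale argument. Partition $M^{\CCV_0}$ according to the hierarchy of dyadic scales of all pairwise distances $|x_v-x_w|_\fraks$; such a hierarchy is encoded by a rooted binary tree whose leaves are indexed by $\CCV_0\cup\{\star\}$ and whose internal nodes carry dyadic scales. On each cell of the partition the integral factorises into one-dimensional scale integrals, and the three conditions of Assumption~\ref{ass:mainGraph} correspond respectively to: (i) convergence of internal integrations over subsets $\bar\CCV\subset\CCV_0$ not containing $\star$, which requires $\sum_{e\in \hat\CCE_0(\bar\CCV)}\hat a_e<(|\bar\CCV|-1)|\fraks|$; (ii) convergence near $\star$ for subsets $\bar\CCV\ni\star$, where the telescoped factors $\hat K_e = K_e(\cdot,\cdot)-K_e(\star,\cdot)$ on edges with $r_e=1$ contribute an additional $|x_{e_+}-\star|_\fraks^{r_e}$, yielding the gain $+r_e$ on outgoing edges and the cost $-r_e$ on incoming edges that appear in \eqref{e:assEdges2}; and (iii) integrability at the ``outer'' scale, governing subsets $\bar\CCV$ disjoint from $\CCV_\star$, as encoded in \eqref{e:assEdges3}. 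Summing the resulting geometric series over all hierarchies produces the claimed factor $\lambda^{\tilde\alpha}$.

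The main obstacle is a clean book-keeping of the renormalised edges in the non-flat setting: the expression $K_e(x,y)-K_e(\star,y)$ is well-defined only after identifying the fibres of $F^{(e_+)}\hotimes (E^{(e_-)})^*$ above $x$ and $\star$, which in an exponential chart is done by parallel transport along the radial geodesic, and one must verify that the resulting difference still satisfies $|K_e(x,y)-K_e(\star,y)|\lesssim |x-\star|_\fraks \cdot \sup_{z\in[\star,x]}|j_{z,y} K_e|_1$ uniformly in the chart; this is precisely the content of Lemma~\ref{pointwise bound} applied to $K_e(\cdot,y)$ along coordinate vector fields. Once this geometric comparison is in place, the counting argument from \cite[Appendix~A]{HQ18} (itself an elaboration of \cite[Sec.~10.5]{Hai14}) transfers verbatim, the constants depending only on the combinatorics of $(\CCV,\CCE)$ and the $r_e$, while the kernel dependence enters only through the factor $\prod_{e\in \CCE}\|K_e\|_{a_e;2}$.
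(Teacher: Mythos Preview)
Your identification of the multiscale clustering argument from \cite{HQ18} as the core mechanism is correct, and your reading of how the three conditions in Assumption~\ref{ass:mainGraph} control convergence at the various scales is accurate. The issue lies in the reduction step.

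You propose to localise to exponential charts and then run the argument in Euclidean coordinates. But the support constraints only force contributing configurations into a $d_\fraks$-ball of radius roughly $|\CCV|$ around $\star$, which need not fit in a single exponential chart once this exceeds the injectivity radius. If you instead insert a partition of unity subordinate to a cover by several charts, a single piece of the resulting sum has different vertices sitting in different charts; the kernel $K_e(x_{e_+}, x_{e_-})$ then couples variables across chart boundaries and there is no common Euclidean target to push forward to, nor any meaningful Euclidean distance between coordinates from distinct charts. So the localisation does not actually land you in the setting of the Euclidean \cite[Theorem~A.3]{HQ18}, and re-running the multiscale argument on this mixed-chart integrand is no easier than running it on the manifold directly.

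This is exactly what the paper does. The observation is that the argument of \cite[Appendix~A]{HQ18} is purely metric-theoretic: the dyadic partition uses $\varphi^{(k)}(d_\fraks(p,q))$ (and, for renormalised edges, also $\varphi^{(l)}(d_\fraks(\star,p))$ and $\varphi^{(m)}(d_\fraks(\star,q))$), the labelled binary trees encode the hierarchical $d_\fraks$-clustering of $x \in M^{\CCV}$, and the naive bound \eqref{naive bound} only uses the volume estimate $\Vol(B_r) \lesssim r^{|\fraks|}$ together with the pointwise kernel bounds in $\vertiii{K_e}_{a_e;2}$. No linear structure enters, so the argument runs on $M$ without any charts. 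A further simplification specific to the present setting---$r_e \in \{0,1\}$ and no negative subdivergences in the applications---is that this naive bound already suffices, so the safe-forest machinery of \cite[Section~A.5]{HQ18} can be omitted entirely.
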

This theorem is a simpler version of the one in \cite{HQ18}, so we shall only outline the (minor) changes necessary to adapt it to manifolds.
One first introduces $\phi: \mathbb{R}\to [0,1]$ supported on $[3/8,1]$ such that for $\Phi^{(n)}(s)= \varphi(2^n s)$ one has for every $s>0$
$$\sum_{n\in \mathbb{Z}} \varphi^{(n)} (s)=1\ .$$
Then, for $\mathbf{n}\in \mathbb{N}^3$ and an edges $e\in \CCE$ such that $r_e=0$ set
$$\hat{K}^{k,l,m}_e (p,q) = 
\begin{cases}  
\varphi^{(k)}(d(p,q))  K_e (p,q)  & \text{if }  l=m=0 \ , \\
0 & \text{else. } 
\end{cases}
$$
One observes that the analogue of \cite[Lemma~A.4]{HQ18} holds.
If $r_e=1$, set 
$$\hat{K}^{k,l,m}_e (p,q) = \varphi^{(k)}(d(p,q)) \varphi^{(l)}(d(\star, p)) \varphi^{(m)} (d(\star, q)) \left( K_e (p,q)- K(p, \star)\right) \ . $$ 
After possibly enhancing the edge set $\CCE$ to include for any $(v,w)\in \CCV^{2}$ an edge $e$ from $v$ to $w$ and setting $K_e=1$ for such edges, define for $\mathbf{n}: \CCE \to \mathbb{N}^3$
$$ \hat{K}^{(n)}(x)= \prod_{e\in \CCE} \hat{K}^{(n_e)}_e (x_{e_-}, x_{e_+}) \ .$$ One observes
that if we set $\mathcal{N}_\lambda=\big\{\mathbf{n}: \CCE \to \mathbb{N}^3 \ : \ 2^{-{\mathbf{n}_{e_{\star, i}} }}  \leq \lambda\ \big\} $
it suffices to show that the absolute value of
$$
I^\mathbb{G}_{\lambda}(K) = \sum_{\mathbf{n}\in \mathcal{N}_\lambda} \int_{M^{\CCV_0}} \hat{K}^{(\mathbf{n})}(x) \dVol_x$$
 is bounded by the right-hand side of \eqref{genconvBound}, see \cite[Lemma~A.7]{HQ18}.
 
At this point, we can apply the multiscale clustering argument in \cite{HQ18} essentially \textit{ad verbatim}, even with the further simplification that in our case the naive bound of \cite[Section~A.4]{HQ18} is sufficient. Let us therefore only recall the main cornerstones of that argument.
To every configuration $x\in M^{\CCV}$ one associates a labelled binary rooted tree $(T,l)$ where
the leave set $L_T$ of $T$ is given by $\CCV$ and 
the inner nodes $T^\circ= N_T\setminus L_T$ have a labels 
$$l:T^\circ \to \mathbb{N}$$
satisfying the following properties.
\begin{enumerate}
\item The labels are order preserving, i.e.\ for inner nodes $\nu, \omega\in T^\circ$,
$$\nu\geq \omega \Rightarrow l(\nu)\geq l(\omega) \ .$$
\item For every $v,w\in \CCV$, one has 
$$2^{-l(v\wedge w)}\leq d_\fraks(x_v,x_w) \leq |\CCV| 2^{-l(v \wedge w)} \ ,$$
where $v\wedge w\in T^\circ$ denotes the largest common ancestor of $v$ and $w$.
\end{enumerate}
%
After defining $\mathcal{N}(T,l)$ and $\CCT_\lambda(\CCV)$ as in \cite[Definition A.8]{HQ18} and right thereafter,
it follows by the same argument as for \cite[Lemma~A.13]{HQ18} that 
\begin{equation}
|{I}_\lambda^\CCG (K)| \lesssim \sum_{(T,\ell) \in \CCT_\lambda(\CCV)} \sum_{n \in \mathcal{N}(T,\ell)} \Bigl|  \int_{M^{\CCV}} \hat K^{(\mathbf{n} )}(x)\,\dVol_x \Bigr|\;.
\end{equation}
Next, it follows as in \cite[Lemma~A.15]{HQ18}
that for  $\eta\colon T^\circ \to \RR$ given by
$\eta(v) =|\fraks|+ \sum_{e \in \hat\CCE} \eta_e(v)$, where
\begin{align*}
\eta_e(v)   &= - \hat{a}_e \mathbf{1}_{e_\uparrow} (v)
+ r_e  (\mathbf{1}_{e_+\wedge 0} (v) 
- \mathbf{1}_{e_\uparrow} (v)\bigr) \mathbf{1}_{r_e >0, e_+\wedge 0 > e_\uparrow} \\
&\quad + (1-r_e -  \hat{a}_e) (\mathbf{1}_{e_-\wedge 0} (v)
- \mathbf{1}_{e_\uparrow} (v)\bigr) \mathbf{1}_{r_e >0, e_-\wedge 0 > e_\uparrow}\;,
\end{align*}
with $\mathbf{1}_v(v) = 1$ and $\mathbf{1}_v(w) = 0$ for $w \neq v$, one has the bound
\begin{equation}\label{naive bound}
\Bigl(\prod_{v \in T^\circ} 2^{-\ell_v |\fraks|}\Bigr)\sup_x |\hat K^{(\mathbf{n})}(x)| \lesssim \prod_{v \in T^\circ} 2^{-\ell_v \eta(v)}\; .
\end{equation}
Finally, combining \eqref{naive bound} with \cite[Lemma~A.10 \& Lemma~A.16]{HQ18} concludes the argument. Note that we do not consider any negative renormalisation here and therefore we can omit the considerations of \cite[Section~A.5]{HQ18}.
	\bibliographystyle{Martin}
		\bibliography{./Manifolds.bib}

\newcommand{\etalchar}[1]{$^{#1}$}
\begin{thebibliography}{BDFT23b}
\def\myhref#1#2{\href{#2}{\nolinkurl{#1}}}

\bibitem[Alb17]{Alb17}
\textsc{P.~Albin}.
\newblock Linear analysis on manifolds.
\newblock \emph{AMS Open Math Notes} (2017).
\newblock
  \myhref{https://www.ams.org/open-math-notes/omn-view-listing?listingId=110667}{https://www.ams.org/open-math-notes/omn-view-listing?listingId=110667}.

\bibitem[BB16]{BB16}
\textsc{I.~Bailleul} and \textsc{F.~Bernicot}.
\newblock Heat semigroup and singular {PDE}s.
\newblock \emph{Journal of Functional Analysis} \textbf{270}, no.~9, (2016),
  3344--3452.

\bibitem[BB21]{BB21}
\textsc{I.~Bailleul} and \textsc{Y.~Bruned}.
\newblock Locality for singular stochastic {PDE}s.
\newblock \emph{arXiv preprint} (2021).
\newblock \myhref{arXiv:2109.00399}{https://arxiv.org/abs/2109.00399}.

\bibitem[BCCH20]{BCCH20}
\textsc{Y.~Bruned}, \textsc{A.~Chandra}, \textsc{I.~Chevyrev}, and
  \textsc{M.~Hairer}.
\newblock Renormalising {SPDE}s in regularity structures.
\newblock \emph{Journal of the European Mathematical Society} \textbf{23},
  no.~3, (2020), 869--947.

\bibitem[BDFT23a]{BDFT23a}
\textsc{I.~Bailleul}, \textsc{N.~Dang}, \textsc{L.~Ferdinand}, and
  \textsc{T.~T{\^o}}.
\newblock $\phi^4_3$ measures on compact {R}iemannian 3-manifolds.
\newblock \emph{arXiv preprint} (2023).
\newblock \myhref{arXiv:2304.10185}{https://arxiv.org/abs/2304.10185}.

\bibitem[BDFT23b]{BDFT23b}
\textsc{I.~Bailleul}, \textsc{N.~Dang}, \textsc{L.~Ferdinand}, and
  \textsc{T.~D. T{\^o}}.
\newblock Global harmonic analysis for $\phi^4_3 $ on closed {R}iemannian
  manifolds.
\newblock \emph{arXiv preprint} (2023).
\newblock \myhref{arXiv:2306.07757}{https://arxiv.org/abs/2306.07757}.

\bibitem[BF06]{BF06}
\textsc{A.~Bejancu} and \textsc{H.~R. Farran}.
\newblock \emph{Foliations and geometric structures}, vol. 580.
\newblock Springer Science \& Business Media, 2006.

\bibitem[BG{\etalchar{+}}13]{BG13}
\textsc{G.~Barbatis}, \textsc{F.~Gazzola}, \textsc{et~al.}
\newblock Higher order linear parabolic equations.
\newblock \emph{Recent trends in nonlinear partial differential equations. I.
  Evolution problems} \textbf{594}, (2013), 77--97.

\bibitem[BGV03]{BGV03}
\textsc{N.~Berline}, \textsc{E.~Getzler}, and \textsc{M.~Vergne}.
\newblock \emph{Heat kernels and {D}irac operators}.
\newblock Springer Science \& Business Media, 2003.

\bibitem[BH21]{BH21}
\textsc{I.~Bailleul} and \textsc{M.~Hoshino}.
\newblock Paracontrolled calculus and regularity structures i.
\newblock \emph{Journal of the Mathematical Society of Japan} \textbf{73},
  no.~2, (2021), 553--595.

\bibitem[BHZ19]{BHZ19}
\textsc{Y.~Bruned}, \textsc{M.~Hairer}, and \textsc{L.~Zambotti}.
\newblock Algebraic renormalisation of regularity structures.
\newblock \emph{Inventiones mathematicae} \textbf{215}, (2019), 1039--1156.

\bibitem[Bru18]{Bru18}
\textsc{Y.~Bruned}.
\newblock Recursive formulae in regularity structures.
\newblock \emph{Stochastics and Partial Differential Equations: Analysis and
  Computations} \textbf{6}, (2018), 525--564.

\bibitem[CCHS22]{CCHS22}
\textsc{A.~Chandra}, \textsc{I.~Chevyrev}, \textsc{M.~Hairer}, and
  \textsc{H.~Shen}.
\newblock Langevin dynamic for the {2D} {Y}ang--{M}ills measure.
\newblock \emph{Publications math{\'e}matiques de l'IH{\'E}S} (2022), 1--147.

\bibitem[CH16]{CH16}
\textsc{A.~Chandra} and \textsc{M.~Hairer}.
\newblock An analytic {BPHZ} theorem for regularity structures.
\newblock \emph{arXiv preprint} (2016).
\newblock \myhref{arXiv:1612.08138}{https://arxiv.org/abs/1612.08138}.

\bibitem[Che22]{Che22}
\textsc{I.~Chevyrev}.
\newblock Hopf and pre-{L}ie algebras in regularity structures.
\newblock \emph{arXiv preprint} (2022).
\newblock \myhref{arXiv:2206.14557}{https://arxiv.org/abs/2206.14557}.

\bibitem[CMW23]{CMW23}
\textsc{A.~Chandra}, \textsc{A.~Moinat}, and \textsc{H.~Weber}.
\newblock A {P}riori {B}ounds for the $\phi^4$ {E}quation in the {F}ull
  {S}ub-critical {R}egime.
\newblock \emph{Archive for Rational Mechanics and Analysis} \textbf{247},
  no.~3, (2023), 48.

\bibitem[Dav95]{Dav95}
\textsc{E.~B. Davies}.
\newblock Heat kernel bounds for higher order elliptic operators.
\newblock \emph{Journ{\'e}es {\'E}quations aux d{\'e}riv{\'e}es partielles}
  (1995), 1--11.

\bibitem[DDD19]{DDD19}
\textsc{A.~Dahlqvist}, \textsc{J.~Diehl}, and \textsc{B.~K. Driver}.
\newblock The parabolic {A}nderson model on {R}iemann surfaces.
\newblock \emph{Probability Theory and Related Fields} \textbf{174}, no. 1-2,
  (2019), 369--444.

\bibitem[Duc21]{Duc21}
\textsc{P.~Duch}.
\newblock Flow equation approach to singular stochastic {PDE}s.
\newblock \emph{arXiv preprint} (2021).
\newblock \myhref{arXiv:2109.11380}{https://arxiv.org/abs/2109.11380}.

\bibitem[EHS12]{EHS12}
\textsc{C.~Elliott}, \textsc{M.~Hairer}, and \textsc{M.~Scott}.
\newblock Stochastic partial differential equations on evolving surfaces and
  evolving riemannian manifolds.
\newblock \emph{arXiv preprint} (2012).
\newblock \myhref{arXiv:1208.5958}{https://arxiv.org/abs/1208.5958}.

\bibitem[Ejd94]{Ejd94}
\textsc{S.~D. Ejdel'man}.
\newblock Parabolic equations.
\newblock In \textsc{Y.~V. Egorov} and \textsc{M.~A. Shubin}, eds.,
  \emph{Partial Differential Equations VI: Elliptic and Parabolic Operators},
  203--316. Springer, 1994.

\bibitem[FH20]{FH20}
\textsc{P.~K. Friz} and \textsc{M.~Hairer}.
\newblock \emph{A course on rough paths}.
\newblock Springer, 2020.

\bibitem[Fri08]{Fri08}
\textsc{A.~Friedman}.
\newblock \emph{Partial differential equations of parabolic type}.
\newblock Courier Dover Publications, 2008.

\bibitem[GIP15]{GIP15}
\textsc{M.~Gubinelli}, \textsc{P.~Imkeller}, and \textsc{N.~Perkowski}.
\newblock Paracontrolled distributions and singular pdes.
\newblock In \emph{Forum of Mathematics, Pi}, vol.~3, ~e6. Cambridge University
  Press, 2015.

\bibitem[GR23]{GR23}
\textsc{M.~Gubinelli} and \textsc{P.~Rinaldi}.
\newblock Stochastic quantisation of the fractional $\phi_4^3$ model in the
  full subcritical regime.
\newblock \emph{arXiv preprint} (2023).
\newblock \myhref{arXiv:2303.18112}{https://arxiv.org/abs/2303.18112}.

\bibitem[Gra74]{Gra74}
\textsc{A.~Gray}.
\newblock The volume of a small geodesic ball of a {R}iemannian manifold.
\newblock \emph{Michigan Mathematical Journal} \textbf{20}, no.~4, (1974), 329
  -- 344.
\newblock
  \myhref{doi:10.1307/mmj/1029001150}{https://doi.org/10.1307/mmj/1029001150}.

\bibitem[Hai14]{Hai14}
\textsc{M.~Hairer}.
\newblock A theory of regularity structures.
\newblock \emph{Inventiones mathematicae} \textbf{198}, no.~2, (2014),
  269--504.

\bibitem[Hir12]{Hir12}
\textsc{M.~W. Hirsch}.
\newblock \emph{Differential topology}, vol.~33.
\newblock Springer Science \& Business Media, 2012.

\bibitem[HP15]{HP15}
\textsc{M.~Hairer} and \textsc{{\'E}.~Pardoux}.
\newblock A {W}ong-{Z}akai theorem for stochastic {PDE}s.
\newblock \emph{Journal of the Mathematical Society of Japan} \textbf{67},
  no.~4, (2015), 1551--1604.

\bibitem[HQ18]{HQ18}
\textsc{M.~Hairer} and \textsc{J.~Quastel}.
\newblock A class of growth models rescaling to {KPZ}.
\newblock In \emph{Forum of Mathematics, Pi}, vol.~6, ~e3. Cambridge University
  Press, 2018.

\bibitem[HS23]{HS23}
\textsc{M.~Hairer} and \textsc{R.~Steele}.
\newblock The {BPHZ} theorem for regularity structures via the spectral gap
  inequality.
\newblock \emph{arXiv preprint} (2023).
\newblock \myhref{arXiv:2301.10081}{https://arxiv.org/abs/2301.10081}.

\bibitem[JP23]{JP23}
\textsc{A.~Jagannath} and \textsc{N.~Perkowski}.
\newblock A simple construction of the dynamical $\phi^4_3$ model.
\newblock \emph{Transactions of the American Mathematical Society}
  \textbf{376}, no.~03, (2023), 1507--1522.

\bibitem[Kot16]{Kot16}
\textsc{C.~Kottke}.
\newblock Linear {A}nalysis on {M}anifolds: Notes for math 7376, spring 2016.
\newblock \emph{URl: https://ckottke. ncf. edu/neu/7376\_sp16/lacm. pdf}
  (2016).

\bibitem[Lun92]{Lun92}
\textsc{A.~T. Lundell}.
\newblock A short proof of the {F}robenius theorem.
\newblock \emph{Proceedings of the American Mathematical Society} \textbf{116},
  no.~4, (1992), 1131--1133.

\bibitem[Mou22]{Mou22}
\textsc{A.~Mouzard}.
\newblock Weyl law for the {A}nderson {H}amiltonian on a two-dimensional
  manifold.
\newblock \emph{Annales de l'Institut Henri Poincare (B) Probabilites et
  statistiques} \textbf{58}, no.~3, (2022), 1385--1425.

\bibitem[MS23]{MS23}
\textsc{A.~Mayorcas} and \textsc{H.~Singh}.
\newblock Singular {SPDE}s on {H}omogeneous {L}ie {G}roups.
\newblock \emph{arXiv preprint} (2023).
\newblock \myhref{arXiv:2301.05121}{https://arxiv.org/abs/2301.05121}.

\bibitem[MW17]{MW17}
\textsc{J.-C. Mourrat} and \textsc{H.~Weber}.
\newblock The dynamic $\phi^4_3$ model comes down from infinity.
\newblock \emph{Communications in Mathematical Physics} \textbf{356}, (2017),
  673--753.

\bibitem[MW20]{MW20}
\textsc{A.~Moinat} and \textsc{H.~Weber}.
\newblock Space-{T}ime {L}ocalisation for the {D}ynamic $\phi^4$ {M}odel.
\newblock \emph{Communications on Pure and Applied Mathematics} \textbf{73},
  no.~12, (2020), 2519--2555.

\bibitem[Nua06]{Nua06}
\textsc{D.~Nualart}.
\newblock \emph{The {M}alliavin calculus and related topics}, vol. 1995.
\newblock Springer, 2006.

\bibitem[OW19]{OW19}
\textsc{F.~Otto} and \textsc{H.~Weber}.
\newblock Quasilinear {SPDE}s via rough paths.
\newblock \emph{Archive for Rational Mechanics and Analysis} \textbf{232},
  (2019), 873--950.

\bibitem[Pal65]{Pal65}
\textsc{R.~S. Palais}.
\newblock \emph{Seminar on the Atiyah-Singer index theorem}, vol.~57.
\newblock Princeton University Press, 1965.

\bibitem[Sau89]{Sau89}
\textsc{D.~J. Saunders}.
\newblock \emph{The geometry of jet bundles}, vol. 142.
\newblock Cambridge University Press, 1989.

\bibitem[ST18]{ST18}
\textsc{H.~Singh} and \textsc{J.~Teichmann}.
\newblock An elementary proof of the reconstruction theorem.
\newblock \emph{arXiv preprint} (2018).
\newblock \myhref{arXiv:1812.03082}{https://arxiv.org/abs/1812.03082}.

\bibitem[Tin82]{Tin82}
\textsc{K.~Tintarew}.
\newblock Short time asymptotics for fundamental solutions of higher order
  parabolic equation.
\newblock \emph{Communications in Statistics Theory and Methods} \textbf{7},
  no.~4, (1982), 371--391.

\end{thebibliography}

\end{document}